\def\Luoma#1{\uppercase\expandafter{\romannumeral#1}}
\def\luoma#1{\romannumeral#1}
\newtheorem{mythm}{Theorem}[section]
\newtheorem{mylem}[mythm]{Lemma}
\newtheorem{myprop}[mythm]{Proposition}
\newtheorem{mycor}[mythm]{Corollary}
\newtheorem{myconj}[mythm]{Conjecture}
\theoremstyle{definition}
\newtheorem{mydefn}[mythm]{Definition}
\theoremstyle{remark}
\newtheorem{myrem}[mythm]{Remark}
\newtheorem{mypara}[mythm]{}
\newcommand{\bb}{\mathbb}
\newcommand{\ca}{\mathcal}
\newcommand{\ak}{\mathfrak}
\newcommand{\scr}{\mathscr}
\newcommand{\mbf}{\mathbf}
\newcommand{\mrm}{\mathrm}
\newcommand{\trm}{\textrm}
\def\op#1{\mathop{\mathrm{#1}}}
\newcommand{\mo}{\mrm{Mor}}
\newcommand{\ho}{\mrm{Hom}}
\newcommand{\ke}{\mrm{Ker}}
\newcommand{\cok}{\mrm{Coker}}
\newcommand{\im}{\mrm{Im}}
\newcommand{\df}{\mrm{d}}
\newcommand{\id}{\mrm{id}}
\newcommand{\spec}{\op{Spec}}
\newcommand{\colim}{\op{colim}}
\newcommand{\rr}{\mrm{R}}
\newcommand{\dl}{\mrm{L}}
\newcommand{\iso}{\stackrel{\sim}{\longrightarrow}}
\newcommand{\plim}{\varprojlim}
\newcommand{\spa}{\op{Spa}}
\newcommand{\al}{\mrm{al}}
\newcommand{\triv}{\mrm{tr}}
\newcommand{\sen}{\mrm{Sen}}
\newcommand{\geo}{\mrm{geo}}
\newcommand{\lie}{\mrm{Lie}}
\newcommand{\gal}{\mrm{Gal}}
\newcommand{\sch}{\mbf{Sch}}
\newcommand{\schqcqs}{\mbf{Sch}^\mrm{coh}}
\newcommand{\repn}{\mbf{Rep}_{\mrm{cont}}}
\newcommand{\repnpr}{\mbf{Rep}^{\mrm{proj}}_{\mrm{cont}}}
\newcommand{\repnfr}{\mbf{Rep}^{\mrm{free}}_{\mrm{cont}}}
\def\repnan#1{\mbf{Rep}^{\mrm{proj}}_{\mrm{cont},{#1}\trm{-}\mrm{an}}}
\newcommand{\pro}{\mbf{Pro}}
\newcommand{\nbd}{\mbf{Nbd}}
\newcommand{\et}{\mrm{\acute{e}t}}
\newcommand{\fet}{\mrm{f\acute{e}t}}
\newcommand{\ket}{\mrm{k\acute{e}t}}
\newcommand{\aetale}{\mrm{a\acute{e}t}}
\newcommand{\profet}{\mrm{prof\acute{e}t}}
\newcommand{\proket}{\mrm{prok\acute{e}t}}
\newcommand{\fal}{\mbf{E}}
\newcommand{\falb}{\overline{\scr{B}}}
\newcommand{\rz}{\mrm{RZ}}
\def\ff#1{\scr{F}^{\mrm{fini}}_{\overline{#1}/#1}}
\title[Perfectoidness via Sen Theory]{Perfectoidness via Sen Theory and Applications to Shimura Varieties}%
\author{Tongmu He}
\date{\today}
\address{Tongmu He, Institute for Advanced Study, 1 Einstein Drive, 08540 New Jersey, the United States}
\email{hetm@ias.edu}
\numberwithin{equation}{mythm}
\begin{document}
	\maketitle
	
\begin{abstract}
	Sen's theorem on the ramification of a $p$-adic analytic Galois extension of $p$-adic local fields shows that its perfectoidness is equivalent to the non-vanishing of its arithmetic Sen operator. By developing $p$-adic Hodge theory for general valuation rings, we establish a geometric analogue of Sen's criterion for any $p$-adic analytic Galois extension of $p$-adic varieties: its (Riemann-Zariski) stalkwise perfectoidness is necessary for the non-vanishing of the geometric Sen operators. As the latter is verified for general Shimura varieties by Pan and Rodr\'iguez Camargo, we obtain the perfectoidness of every completed stalk of general Shimura varieties at infinite level at $p$. As an application, we prove that the integral completed cohomology groups vanish in higher degrees, verifying a conjecture of Calegari-Emerton for general Shimura varieties.
\end{abstract}
\footnotetext{\emph{2020 Mathematics Subject Classification} 14G45 (primary), 11F80, 14G35.\\Keywords: perfectoid, Sen theory, criterion, Shimura variety, completed cohomology\\First published in \textit{Journal of the American Mathematical Society}, DOI: \url{https://doi.org/10.1090/jams/1060}.}
	
\tableofcontents

\section{Introduction}
\begin{mypara}
	Let $G$ be a reductive group over $\bb{Q}$. The $p$-adic Langlands program aims to establish a correspondence between algebraic automorphic representations of $G(\bb{A})$ (where $\bb{A}$ is the ring of ad\`eles of $\bb{Q}$) and $p$-adic Galois representations $\gal(\overline{\bb{Q}}/\bb{Q})\to {}^{L}G(\overline{\bb{Q}}_p)$ (where ${}^{L}G$ is the Langlands dual of $G$). To connect the automorphic and Galois representations, Emerton \cite{emerton2006interpolation} introduced the notion of \emph{completed cohomology} for $G$, which serves as a suitable space of $p$-adic automorphic forms. 
	
	Assume that $G$ admits a Shimura datum $(G,X)$. Let $(\mrm{Sh}_{K,\bb{C}})_K$ be the associated inverse system of Shimura varieties over $\bb{C}$, where $K\subseteq G(\bb{A}_f)$ runs through all the neat compact open subgroups. We note that each $\mrm{Sh}_{K,\bb{C}}$ is a quasi-projective smooth $\bb{C}$-scheme whose $\bb{C}$-points are canonically identified with
	\begin{align}
		\mrm{Sh}_K(\bb{C})=G(\bb{Q})\backslash (X\times G(\bb{A}_f))/K.
	\end{align}
	For any compact open subgroup $K^p\subseteq G(\bb{A}_f^p)$ (of level prime to $p$), Emerton's completed cohomology group for $(G,X)$ is defined as  
	\begin{align}
		\widetilde{H}^q(K^p,\bb{Z}_p)=\lim_{n\in\bb{N}}\colim_{K_p\subseteq G(\bb{Q}_p)}H^q_{\et}(\mrm{Sh}_{K^pK_p,\bb{C}},\bb{Z}/p^n\bb{Z}),
	\end{align}
	where $K_p\subseteq G(\bb{Q}_p)$ runs through all the neat compact open subgroups. Motivated by the $p$-adic Langlands program, Calegari-Emerton \cite{calegariemerton2012completed} predict the vanishing of completed cohomology in higher degrees:
\end{mypara}

\begin{myconj}[{Calegari-Emerton \cite[1.5]{calegariemerton2012completed}, cf. \cite[1.3]{hansenjohansson2023perfectoid}}]\label{conj:calegari-emerton}
	Let $d$ be the common dimension of $\mrm{Sh}_{K,\bb{C}}$. Then, for any integer $q>d$, we have
	\begin{align}
		\widetilde{H}^q(K^p,\bb{Z}_p)=0.
	\end{align}
\end{myconj}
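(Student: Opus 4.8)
The plan is to reduce the conjecture to a cohomological-dimension estimate for the infinite-level Shimura variety over $\bb{C}_p$ and to extract that estimate from the stalkwise perfectoidness established in this paper. First I would reduce to mod-$p$ coefficients at infinite level. Writing $\widetilde{H}^q(K^p,\bb{Z}/p^n)=\colim_{K_p}H^q_{\et}(\mrm{Sh}_{K^pK_p,\bb{C}},\bb{Z}/p^n)$, so that $\widetilde{H}^q(K^p,\bb{Z}_p)=\varprojlim_n\widetilde{H}^q(K^p,\bb{Z}/p^n)$, the short exact sequences $0\to\bb{Z}/p^{n-1}\to\bb{Z}/p^n\to\bb{Z}/p\to 0$ yield long exact sequences after applying $\colim_{K_p}H^{\bullet}_{\et}(\mrm{Sh}_{K^pK_p,\bb{C}},-)$, so by induction on $n$ it suffices to prove $\widetilde{H}^q(K^p,\bb{Z}/p)=0$ for $q>d$. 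Choosing a place of the reflex field above $p$, the comparison of étale cohomology over $\bb{C}$ and over $\bb{C}_p$ together with invariance of torsion étale cohomology under extension of separably closed fields identifies each term with the étale cohomology of the rigid-analytic variety $\mrm{Sh}_{K^pK_p,\bb{C}_p}$; since the transition maps of the tower are finite étale between quasi-compact separated spaces, étale cohomology commutes with the inverse limit, and therefore
\[
\widetilde{H}^q(K^p,\bb{Z}/p)\;\cong\;H^q_{\et}(\mrm{Sh}_{K^p},\bb{Z}/p),\qquad\text{where}\quad\mrm{Sh}_{K^p}:=\varprojlim_{K_p}\mrm{Sh}_{K^pK_p,\bb{C}_p}
\]
is the infinite-level Shimura variety, a diamond over $\bb{C}_p$.

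Next I would trade the constant coefficients for the integral structure sheaf, which is where properness and perfectoidness enter. I would pass to the minimal (Baily--Borel) compactification $\mrm{Sh}^*_{K^p}:=\varprojlim_{K_p}\mrm{Sh}^*_{K^pK_p,\bb{C}_p}$, which is proper and contains $\mrm{Sh}_{K^p}$ as the complement of the boundary. By the main theorem of this paper, the non-vanishing of the geometric Sen operators --- verified for general Shimura varieties by Pan and Camargo --- implies that every completed stalk of $\mrm{Sh}^*_{K^p}$ is perfectoid, that is, $\mrm{Sh}^*_{K^p}$ is Riemann--Zariski stalkwise perfectoid. A primitive comparison theorem over general valuation rings (furnished by the $p$-adic Hodge theory developed here), applied on the proper space $\mrm{Sh}^*_{K^p}$, then gives an almost isomorphism between $R\Gamma_{\et}(\mrm{Sh}_{K^p},\bb{Z}/p)\otimes_{\bb{F}_p}\scr{O}_{\bb{C}_p}/p$ and $R\Gamma(\mrm{Sh}_{K^p},\scr{O}^+/p)$.

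It then remains to bound $R\Gamma(\mrm{Sh}_{K^p},\scr{O}^+/p)$ in degrees $\le d$. Locally on the stalkwise-perfectoid space $\mrm{Sh}^*_{K^p}$ the sheaf $\scr{O}^+/p$ is almost acyclic: on each completed stalk it is the mod-$p$ reduction of a perfectoid valuation ring, to which the almost-purity and almost-acyclicity statements apply in their valuation-ring generalization. Hence the cohomology of $\scr{O}^+/p$ is computed on a base of dimension $d$ --- concretely, on a fixed finite-level model $\mrm{Sh}^*_{K^pK_p}$, a proper rigid variety of dimension $d$, or equivalently on its $d$-dimensional Riemann--Zariski space --- because the tower above it is pro-(finite étale) with profinite fibres, so the derived pushforward of $\scr{O}^+/p$ is (almost) an honest sheaf placed in degree $0$; Grothendieck's vanishing theorem then forces $H^q(\mrm{Sh}_{K^p},\scr{O}^+/p)$ to be almost zero for $q>d$. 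Consequently $H^q_{\et}(\mrm{Sh}_{K^p},\bb{Z}/p)\otimes_{\bb{F}_p}\scr{O}_{\bb{C}_p}/p$ is almost zero for $q>d$; since $\scr{O}_{\bb{C}_p}/p$ is not almost zero and $H^q_{\et}(\mrm{Sh}_{K^p},\bb{Z}/p)$ is an $\bb{F}_p$-vector space, it vanishes, and so $\widetilde{H}^q(K^p,\bb{Z}_p)=0$ for all $q>d$.

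The decisive difficulty is the input from the main theorem: passing from the non-vanishing of the geometric Sen operators to stalkwise perfectoidness demands a full $p$-adic Hodge theory --- decompletion, the Sen operator, the almost purity theorem --- over arbitrary valuation rings rather than over discretely valued or perfectoid bases, together with a geometric analogue of Sen's ramification criterion. Within the deduction above, the delicate points are the treatment of the boundary (one must know that the compactified tower, and the geometric Sen operators, behave well there, and that primitive comparison is compatible with the open immersion $\mrm{Sh}_{K^p}\hookrightarrow\mrm{Sh}^*_{K^p}$), and, above all, pinning the controlling cohomological dimension to exactly $d=\dim\mrm{Sh}_K$: it is precisely the passage to $\scr{O}^+/p$-coefficients on a proper compactification, where the almost acyclicity of perfectoids removes the extra cohomological degree present for constant coefficients, that sharpens the bound available at finite level down to the optimal $d$.
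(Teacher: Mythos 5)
Your treatment of the interior is essentially the paper's strategy (Sections \ref{sec:vanish} and \ref{sec:shimura}): reduce to $\bb{Z}/p$ at infinite level, trade constant coefficients for integral structure-sheaf coefficients via a primitive comparison on a proper object, and kill degrees $>d$ by stalkwise almost acyclicity plus Grothendieck vanishing on $d$-dimensional proper models (the paper does this with flat proper integral models over $\ca{O}_{\bb{C}_p}$ and the Faltings topos rather than with diamonds and $\scr{O}^+/p$, but that is a cosmetic difference). The genuine gap is at the boundary. You apply the primitive comparison theorem to $R\Gamma_{\et}(\mrm{Sh}_{K^p},\bb{Z}/p)$, i.e.\ to the \emph{open} Shimura variety, after merely ``passing to'' the proper $\mrm{Sh}^*_{K^p}$; but primitive comparison equates $R\Gamma_{\et}(X,\bb{F}_p)\otimes\ca{O}_{\bb{C}_p}/p$ with $R\Gamma(X,\scr{O}^+/p)$ only for $X$ \emph{proper}, so as written your comparison step computes the cohomology of the compactification (or, at best, the compactly supported cohomology of the interior), not $\widetilde{H}^q(K^p,\bb{Z}/p)$. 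This is precisely the obstruction that confined earlier work to compactly supported completed cohomology or to special Shimura data. The missing idea is the second output of the Sen-theoretic criterion, which you never use: at infinite level the boundary coordinates of a \emph{toroidal} compactification acquire compatible systems of $p$-power roots (Theorems \ref{thm:sen-val-perfd} and \ref{thm:perfd-val-log-2}), and then Abhyankar's lemma together with the $K(\pi,1)$ property of the complement of a normal crossings divisor gives $\rr^qj_{\et*}(\bb{Z}/p^n\bb{Z})=0$ for $q>0$ (Proposition \ref{prop:open-et-coh}), whence the \'etale cohomology of the open infinite-level variety agrees with that of its compactification. Without this step your argument does not reach the non-compactly-supported completed cohomology, which is the content of the conjecture.

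A secondary but compounding problem is your choice of the minimal (Baily--Borel) compactification. The perfectoidness criterion of this paper is established at geometric valuative points of a \emph{smooth} variety equipped with a normal crossings divisor — it requires log smooth, adequate local models (Sections \ref{sec:aet}--\ref{sec:fal-ext}) — and Pan--Camargo's non-vanishing of the geometric Sen operators is proved on toroidal compactifications. The minimal compactification is in general singular along the boundary, so neither input is available there in the form you invoke. The argument should be run on $\mrm{Sh}^{\mrm{tor}}_{K^p}$, where Theorem \ref{thm:sen-val-perfd} delivers the stalkwise perfectoidness and the $p$-infinite boundary ramification simultaneously; these two conditions together are exactly the hypotheses of the vanishing theorem \ref{thm:vanish}.
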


\begin{mypara}
	Scholze \cite{scholze2015torsion} made the first fundamental progress on this conjecture. Indeed, for Shimura varieties of Hodge type, he proved that they are perfectoid as $p$-adic analytic spaces at infinite level at $p$. This established a profound connection between the \'etale cohomology of Shimura varieties with the analytic cohomology of certain coherent sheaves, leading to resolutions of numerous conjectures including the higher vanishing of the compactly supported completed cohomology for $(G,X)$ of Hodge type (a variant of Conjecture \ref{conj:calegari-emerton}).
	
	Afterwards, Scholze's perfectoidness and vanishing results were extended to proper Shimura varieties of abelian type by Shen \cite{shen2017perfectoid} and then to (non-proper) Shimura varieties of pre-abelian type by Hansen-Johansson \cite{hansenjohansson2023perfectoid}. Furthermore, by a careful analysis of the boundary cohomology, Hansen-Johansson also proved the vanishing for (non-compactly supported) completed cohomology, i.e., Conjecture \ref{conj:calegari-emerton} for $(G,X)$ of pre-abelian type. However, the perfectoidness for general Shimura varieties remains an open question, which forms a primary obstacle in fully proving this conjecture.
	
	More recently, motivated by Pan's work \cite{pan2021locally} on the locally analytic vectors in completed cohomology for modular curves, Rodr\'iguez Camargo \cite{camargo2022completed} proved a rational version of Conjecture \ref{conj:calegari-emerton} for general Shimura varieties, i.e., $\widetilde{H}^q(K^p,\bb{Z}_p)[1/p]=0$ for any integer $q>d$, using geometric Sen theory without proving the perfectoidness. 
\end{mypara}

\begin{mypara}\label{para:intro-claim}
	In this article, we prove Conjecture \ref{conj:calegari-emerton} completely. Indeed, for any Shimura variety at infinite level at $p$, we prove that its completed stalks are perfectoid and that the ramification at every boundary point has infinite exponent at $p$ (see \ref{intro-para:vanish-shimura}). This is sufficient to relate the \'etale cohomology with the analytic cohomology of certain coherent sheaves (see \ref{intro-para:vanish}) and thus sufficient to verify Calegari-Emerton's conjecture for general Shimura varieties (see \ref{intro-thm:vanish-shimura}). We remark that the implication from stalkwise perfectoidness to perfectoidness is discussed in \cite{he2024purity} and that Calegari-Emerton also predict the vanishing for locally symmetric spaces in \cite[1.5]{calegariemerton2012completed}. 
\end{mypara}

\begin{mypara}
	In fact, we establish a criterion for stalkwise perfectoidness via Sen theory. The very starting point is Sen's theorem on the ramification of a $p$-adic analytic Galois extension of $p$-adic local fields (contained in $\overline{\bb{Q}}_p$), which shows that the perfectoidness of the Galois extension is equivalent to the non-vanishing of its arithmetic Sen operator (see \ref{rem:geom-sen-nonzero}). Recently, the author \cite{he2022sen} gave a canonical construction of (arithmetic and geometric) Sen operators over $p$-adic varieties. It enables us to establish a geometric analogue of Sen's criterion for any $p$-adic analytic Galois extension of $p$-adic varieties over $\overline{\bb{Q}}_p$: its stalkwise perfectoidness (and $p$-infinite ramification at boundary points) is necessary for the non-vanishing of the geometric Sen operators (see \ref{intro-thm:sen-val-perfd}). As the latter condition is verified for general Shimura varieties by Pan \cite{pan2021locally} (for the curve case) and Rodr\'iguez Camargo \cite{camargo2022completed} (in general), we thus obtain the results claimed in \ref{para:intro-claim}. This approach to the perfectoidness of Shimura varieties is completely different from Scholze's and avoids any specialized analysis of their boundaries.
\end{mypara}

\begin{mypara}
	Although the statement of our perfectoidness criterion involves only the geometric Sen operators, the arithmetic ones are indispensable for its proof. In fact, arithmetic and geometric Sen operators are unified in a canonical way as an action by a canonical Lie algebra in \cite{he2022sen}. This canonical Lie algebra is defined as the (twisted) dual of the Faltings extension of the $p$-adic variety, where the latter is a central object in $p$-adic Hodge theory connecting differentials with Galois cohomology. On the one hand, the non-vanishing of the geometric Sen operators implies that the Faltings extension has enough many Galois invariants. On the other hand, the latter is related to the computation of Galois cohomology by differentials, and thus to the ramification of valuation rings. These relations are realized through Gabber-Ramero's computation of differentials, our construction of Galois equivariant Faltings extensions, and Tate's normalized trace maps for general (non-discrete) valuation rings (see \ref{intro-thm:perfd-val}), which enable us to prove the perfectoidness criterion and, furthermore, open up $p$-adic Hodge theory for general valuation rings.
\end{mypara}

\begin{mypara}
	We start by our study on the ramification of general valuation ring extensions. Let $K$ be a complete discrete valuation field extension of $\bb{Q}_p$ with perfect residue field, $\ca{F}$ a Henselian valuation field of height $1$ extension of $K$ with finite transcendental degree $\mrm{trdeg}_K(\ca{F})<\infty$, $\overline{\ca{F}}$ an algebraic closure of $\ca{F}$ whose valuation ring $\ca{O}_{\overline{\ca{F}}}$ is the integral closure of the valuation ring $\ca{O}_{\ca{F}}$ of $\ca{F}$ in $\overline{\ca{F}}$, $\widehat{\overline{\ca{F}}}$ the completion of $\overline{\ca{F}}$ with respect to the $p$-adic topology of $\ca{O}_{\overline{\ca{F}}}$. Then, there is a canonical exact sequence of finite free $\widehat{\overline{\ca{F}}}$-representations of $\gal(\overline{\ca{F}}/\ca{F})$ (see \ref{thm:fal-ext-val}),
	\begin{align}\label{eq:intro-thm:intro-fal-ext-val-3}
		\xymatrix{
			0\ar[r]&\widehat{\overline{\ca{F}}}(1)\ar[r]^-{\iota}&\scr{E}_{\ca{O}_{\ca{F}}}\ar[r]^-{\jmath}&\widehat{\overline{\ca{F}}}\otimes_{\ca{F}}\Omega^1_{\ca{F}/K}\ar[r]&0,
		}
	\end{align}
	called the \emph{Faltings extension of $\ca{O}_{\ca{F}}$ over $\ca{O}_K$ at $\ca{O}_{\overline{\ca{F}}}$}. Its construction is essentially the same as the construction for complete discrete valuation rings in \cite{he2021faltingsext}, but here we make full use of Gabber-Ramero's computation of the cotangent complexes of valuation rings \cite[\textsection6]{gabber2003almost}. Our perfectoidness criterion via Sen theory originates from the following criterion for geometric valuation rings via Faltings extension:
\end{mypara}

\begin{mythm}[{see \ref{thm:perfd-val}}]\label{intro-thm:perfd-val}
	Assume that $\ca{F}$ contains an algebraic closure $\overline{K}$ of $K$ and that the subspace $\scr{E}_{\ca{O}_{\ca{F}}}^{\gal(\overline{\ca{F}}/\ca{F})}$ of $\gal(\overline{\ca{F}}/\ca{F})$-invariant elements of $\scr{E}_{\ca{O}_{\ca{F}}}$ has dimension $1+\mrm{trdeg}_K(\ca{F})$ over $\widehat{\ca{F}}$. Then, $\Omega^1_{\ca{O}_{\ca{F}}/\ca{O}_{\overline{K}}}$ is $p$-divisible. In particular, $\ca{F}$ is a pre-perfectoid field {\rm(\ref{para:notation-perfd})}.
\end{mythm}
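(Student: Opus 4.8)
The plan is to reformulate the hypothesis as a Galois-equivariant splitting of the Faltings extension \eqref{eq:intro-thm:intro-fal-ext-val-3}, and then to transport that splitting through the construction of $\scr{E}_{\ca{O}_{\ca{F}}}$ (from the $p$-adically completed differentials of $\ca{O}_{\overline{\ca{F}}}$, following Gabber--Ramero's cotangent-complex computation) so as to force the deep ramification of $\ca{O}_{\ca{F}}/\ca{O}_{\overline{K}}$, which by the Gabber--Ramero criterion is equivalent both to the $p$-divisibility of $\Omega^1_{\ca{O}_{\ca{F}}/\ca{O}_{\overline{K}}}$ and to pre-perfectoidness.

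\emph{Step 1 (cohomological reformulation).} Put $G=\gal(\overline{\ca{F}}/\ca{F})$ and $n=\mrm{trdeg}_K(\ca{F})$. As $\ca{F}$ has characteristic $0$ and is (separable) algebraic over $K(t_1,\dots,t_n)$ for a transcendence basis $t_1,\dots,t_n$, one has $\Omega^1_{\ca{F}/K}=\bigoplus_i\ca{F}\,dt_i$, so \eqref{eq:intro-thm:intro-fal-ext-val-3} is an extension of $\widehat{\overline{\ca{F}}}$-representations of $G$ of rank $1+n$. Since $\ca{F}\supseteq\overline{K}$, the cyclotomic character is trivial on $G$, whence $\widehat{\overline{\ca{F}}}(1)^G=\widehat{\overline{\ca{F}}}^G=\widehat{\ca{F}}$ by the Ax--Sen--Tate theorem in this valuation-ring setting, and $(\widehat{\overline{\ca{F}}}\otimes_{\ca{F}}\Omega^1_{\ca{F}/K})^G=\widehat{\ca{F}}\otimes_{\ca{F}}\Omega^1_{\ca{F}/K}$. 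The long exact continuous-cohomology sequence of \eqref{eq:intro-thm:intro-fal-ext-val-3} then shows $\dim_{\widehat{\ca{F}}}\scr{E}_{\ca{O}_{\ca{F}}}^G\le 1+n$, with equality if and only if $\jmath$ is surjective on $G$-invariants, if and only if the connecting map $\widehat{\ca{F}}\otimes_{\ca{F}}\Omega^1_{\ca{F}/K}\to H^1(G,\widehat{\overline{\ca{F}}}(1))$ is zero, if and only if \eqref{eq:intro-thm:intro-fal-ext-val-3} splits $G$-equivariantly. Under the hypothesis we therefore obtain $G$-invariant elements $e_1,\dots,e_n\in\scr{E}_{\ca{O}_{\ca{F}}}$ with $\jmath(e_i)=1\otimes dt_i$, i.e.\ a $G$-equivariant $\widehat{\overline{\ca{F}}}$-linear section $\sigma$ of $\jmath$.

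\emph{Step 2 (back to differentials and $p$-divisibility).} Next I would invoke the construction of $\scr{E}_{\ca{O}_{\ca{F}}}$ in \ref{thm:fal-ext-val}, which realizes it through the $p$-adically completed module $\widehat{\Omega}^1_{\ca{O}_{\overline{\ca{F}}}/\ca{O}_K}$ — using Gabber--Ramero's cotangent complex of the valuation ring $\ca{O}_{\overline{\ca{F}}}$ and Fontaine's identification of the arithmetic part with $\widehat{\overline{\ca{F}}}(1)$. Transporting $\sigma$ through this comparison, the $G$-invariant lift $e_i$ of $dt_i$ unwinds into a compatible system of $p$-power roots of $t_i$ (up to $\mu_{p^\infty}$) in $\ca{O}_{\overline{\ca{F}}}$; more intrinsically, the vanishing of the $H^1(G,\widehat{\overline{\ca{F}}})$-obstructions supplied by Step 1 is exactly the assertion that $\ca{O}_{\ca{F}}\hookrightarrow\ca{O}_{\overline{\ca{F}}}$ carries enough of Tate's normalized trace maps, in the valuation-ring generality developed in the paper. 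Consequently the image of $\ca{O}_{\overline{\ca{F}}}\otimes_{\ca{O}_{\ca{F}}}\Omega^1_{\ca{O}_{\ca{F}}/\ca{O}_{\overline{K}}}$ in $\Omega^1_{\ca{O}_{\overline{\ca{F}}}/\ca{O}_{\overline{K}}}$ is $p$-divisible; and if $\Omega^1_{\ca{O}_{\ca{F}}/\ca{O}_{\overline{K}}}$ itself were not $p$-divisible, Gabber--Ramero's description of the differentials of a valuation-ring extension would produce a non-trivial different for $\ca{O}_{\overline{\ca{F}}}/\ca{O}_{\ca{F}}$ in some $t_i$-direction, contradicting the $G$-equivariant splitting of Step 1. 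Hence $\Omega^1_{\ca{O}_{\ca{F}}/\ca{O}_{\overline{K}}}$ is $p$-divisible. (The transitivity triangle for $\ca{O}_K\to\ca{O}_{\overline{K}}\to\ca{O}_{\ca{F}}$, together with the $p$-divisibility of $\Omega^1_{\ca{O}_{\overline{K}}/\ca{O}_K}$ coming from Fontaine, lets one pass between $p$-divisibility over $\ca{O}_K$ and over $\ca{O}_{\overline{K}}$ at will.) Finally, $p$-divisibility of $\Omega^1_{\ca{O}_{\ca{F}}/\ca{O}_{\overline{K}}}$ is, by the Gabber--Ramero theory of deeply ramified extensions, equivalent to $\widehat{\ca{F}}$ being perfectoid, so $\ca{F}$ is pre-perfectoid.

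\emph{Main obstacle.} The delicate point is Step 2: faithfully carrying the $G$-equivariant splitting of the free, rationalized representation $\scr{E}_{\ca{O}_{\ca{F}}}$ back to an integral statement about the torsion module $\Omega^1_{\ca{O}_{\ca{F}}/\ca{O}_{\overline{K}}}$, while controlling the bounded ($p$-power) discrepancies introduced by the Gabber--Ramero cotangent-complex computation and by Fontaine's identification of the arithmetic part, and matching ``$G$-invariants of $\scr{E}_{\ca{O}_{\ca{F}}}$'' with ``existence of $p$-power roots'' / deep ramification. This is precisely where the $p$-adic Hodge theory for general valuation rings built in the paper is needed: the Galois-equivariant Faltings extension of \ref{thm:fal-ext-val} and Tate's normalized trace maps in this generality.
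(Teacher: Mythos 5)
Your Step~1 is correct and coincides with the paper's Lemma \ref{lem:perfd-val}: taking continuous $\gal(\overline{\ca{F}}/\ca{F})$-invariants of the Faltings extension and applying Ax--Sen--Tate (which is where the Henselian hypothesis on $\ca{F}$ enters) identifies the dimension hypothesis with the surjectivity of $\jmath$ on invariants and with the vanishing of the coboundary $\delta$; and your overall plan is the paper's own outline. The gap is Step~2, which you flag yourself: it is not carried out, and two of its assertions point in the wrong direction. First, the claim that the vanishing of the $H^1$-obstruction ``is exactly the assertion that $\ca{O}_{\ca{F}}\hookrightarrow\ca{O}_{\overline{\ca{F}}}$ carries enough of Tate's normalized trace maps'' inverts the logic of the actual proof: one assumes for contradiction that some $\df t=\df t_i$ is \emph{not} $p^\infty$-divisible in $\Omega^1_{\ca{O}_{\ca{F}}/\ca{O}_{\overline{K}}}$, and it is precisely in this bad case that the Kummer tower $\ca{F}_n=\ca{F}(t^{1/p^n})$ is nontrivial and admits a normalized trace; the trace is then used to show $\delta(\df\log t)\neq 0$, contradicting Step~1. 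Second, ``a non-trivial different for $\ca{O}_{\overline{\ca{F}}}/\ca{O}_{\ca{F}}$ in some $t_i$-direction'' is not the relevant quantity: what is needed is a two-sided bound on the differents of the finite layers of the Kummer tower, $p^{n-n_0}\ak{m}_{\ca{F}_n}\subseteq \scr{D}_{\ca{O}_{\ca{F}_n}/\ca{O}_{\ca{F}}}\subseteq p^{n-n_1}\ca{O}_{\ca{F}_n}$ (\ref{prop:different-range}).

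The upper bound is the genuinely nontrivial input and is entirely missing from your sketch: one translates ``$\df t$ is not divisible by $p^{n_1}$'' into ``$\mrm{Ann}_{\ca{O}_{\ca{F}_n}}(\df t_{p^n})\subseteq p^{n-n_1}\ca{O}_{\ca{F}_n}$ inside $\Omega^1_{\ca{O}_{\ca{F}_n}/\ca{O}_{\ca{F}}}$'' via the conormal sequence for $\ca{O}_{\overline{K}}\to\ca{O}_{\ca{F}}\to\ca{O}_{\ca{F}_n}$ and the flatness of $\Omega^1_{\ca{O}_{\ca{F}}/\ca{O}_{\overline{K}}}$ (Gabber--Ramero), then invokes the identification of the different with the Fitting ideal of $\Omega^1_{\ca{O}_{\ca{F}_n}/\ca{O}_{\ca{F}}}$ (\ref{lem:ann-differential}). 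With this bound, $p^{-n}\mrm{Tr}_{\ca{F}_n/\ca{F}}$ is uniformly bounded, Tate's normalized trace $\widehat{\ca{T}}:\widehat{\ca{F}_\infty}\to\widehat{\ca{F}}$ exists as a continuous Galois-equivariant retraction, and since $(\tau-1)x\in\ke(\widehat{\ca{T}})$ for $x\in\widehat{\ca{F}_\infty}$ while $\ke(\widehat{\ca{T}})\cap\widehat{\ca{F}}=0$, the cocycle $\tau\mapsto\xi_t(\tau)$ cannot be a coboundary (\ref{cor:ax-sen-tate}); Ax--Sen--Tate is used once more to reduce a putative solution from $\widehat{\overline{\ca{F}}}$ to $\widehat{\ca{F}_\infty}$. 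Your phrase about $e_i$ ``unwinding into a compatible system of $p$-power roots of $t_i$ in $\ca{O}_{\overline{\ca{F}}}$'' is vacuous ($\overline{\ca{F}}$ is algebraically closed) and does not substitute for this chain. The endgame is fine: torsion-freeness of $\Omega^1_{\ca{O}_{\ca{F}}/\ca{O}_{\overline{K}}}$ gives $\Omega^1_{\ca{O}_{\ca{F}}/\ca{O}_{\overline{K}}}=\Omega^1_{\ca{F}/\overline{K}}$ once every $\df t_i$ is $p^\infty$-divisible, and pre-perfectoidness then follows from the deep-ramification criterion as cited.
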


\begin{mypara}
	We expected such a perfectoidness criterion for the following philosophical reason revealed in \cite{he2022sen}: the (twisted) dual of Faltings extension should be regarded as the ``$p$-adic Lie algebra" of the arithmetic fundamental group of a $p$-adic variety. Previously, concrete examples of this principle only existed for complete discrete valuation rings by the work of Sen \cite{sen1972ramification} (see \ref{rem:geom-sen-nonzero}) and Ohkubo \cite{ohkubo2010bdr} (see \ref{rem:thm:perfd-val-3}). The proof of \ref{intro-thm:perfd-val} consists of the following main steps:
\begin{enumerate}
	\setcounter{enumi}{-1}
	\renewcommand{\labelenumi}{{\rm(\theenumi)}}
	\item Suppose that $\df t\in \Omega^1_{\ca{O}_{\ca{F}}/\ca{O}_{\overline{K}}}$ is not $p^\infty$-divisible for some $t\in \ca{O}_{\ca{F}}$. Then, the Kummer tower $(\ca{F}_n=\ca{F}(t^{1/p^n}))_{n\in\bb{N}}$ is non-trivial.
	\item As $\Omega^1_{\ca{O}_{\ca{F}}/\ca{O}_{\overline{K}}}$ is torsion-free (due to Gabber-Ramero), we can bound the valuation of the different ideals $\scr{D}_{\ca{O}_{\ca{F}_n}/\ca{O}_{\ca{F}}}$ by Gabber-Ramero's generalization of the relation between differentials and different ideals to general valuation rings (see \ref{prop:different-range}). This reflects the highly ramified nature of the tower $(\ca{F}_n)_{n\in\bb{N}}$.
	\item It allows us to construct Tate's normalized trace map $\widehat{\ca{F}_\infty}\to \widehat{\ca{F}}$, which is a Galois equivariant continuous $\widehat{\ca{F}}$-linear retraction of the inclusion $\widehat{\ca{F}}\to \widehat{\ca{F}_\infty}$ (see \ref{prop:tate-trace}).
	\item The existence of Tate's normalized trace map implies that the coboundary map $\delta:\widehat{\ca{F}}\otimes_{\ca{F}}\Omega^1_{\ca{F}/K}\to H^1(\gal(\overline{\ca{F}}/\ca{F}),\widehat{\overline{\ca{F}}}(1))$ induced by the Faltings extension \eqref{eq:intro-thm:intro-fal-ext-val-3} does not vanish on the element $\df t$ (see \ref{cor:ax-sen-tate}). This contradicts the assumption in \ref{intro-thm:perfd-val}, which implies that $\jmath:\scr{E}_{\ca{O}_{\ca{F}}}^{\gal(\overline{\ca{F}}/\ca{F})}\to \widehat{\ca{F}}\otimes_{\ca{F}}\Omega^1_{\ca{F}/K}$ is surjective (and thus $\delta=0$, see \ref{lem:perfd-val}).
\end{enumerate}
	Our proof is a continuation of the ideas of Tate \cite{tate1967p}, Fontaine \cite{fontaine1982formes} and Faltings \cite{faltings1988p}. Moreover, it also allows us to compute the Galois cohomology group $H^q(\gal(\overline{\ca{F}}/\ca{F}),\widehat{\overline{\ca{F}}})$ in terms of differentials via $\delta$ (see \ref{rem:thm:perfd-val-1}). This computation will be presented in the subsequent paper \cite{he2025galoiscoh} on the $p$-adic Galois cohomology of general valuation fields.
\end{mypara}

\begin{mypara}\label{intro-para:notation-Y}
	Then, we move to a geometric situation. Let $Y$ be an irreducible smooth $K$-scheme of finite type with a normal crossings divisor $D$, $\overline{\ca{K}}$ an algebraic closure of the fraction field $\ca{K}$ of $Y$, $\ca{L}$ a Galois extension of $\ca{K}$ contained in $\overline{\ca{K}}$ such that the integral closure $Y^{\ca{L}}$ of $Y$ in $\ca{L}$ is pro-finite \'etale over $Y^{\triv}=Y\setminus D$ and that $\ca{G}=\gal(\ca{L}/\ca{K})$ is a compact $p$-adic analytic group (i.e., a closed subgroup of $\mrm{GL}_n(\bb{Z}_p)$, see \cite[3.9]{he2022sen}). We would like to define Sen operators associated to the $p$-adic analytic Galois extension $Y^{\ca{L}}/Y$. To avoid introducing complicated topoi, we focus only on the construction at each \emph{geometric valuative point} of $Y$, that is, a point $\overline{y}=\spec(\overline{\ca{F}})$ of the integral closure $Y^{\overline{\ca{K}}}$ of $Y$ in $\overline{\ca{K}}$ endowed with a valuation ring $\ca{O}_{\overline{\ca{F}}}$ of height $1$ extension of $\ca{O}_K$ with fraction field $\overline{\ca{F}}$. This is sufficient for our applications.
\end{mypara}

\begin{mythm}[{see \ref{thm:fal-ext-val-pt} and \ref{thm:sen-val-pt}}]\label{intro-thm:sen-val-pt}
	There is a canonical exact sequence of finite free $\widehat{\overline{\ca{F}}}$-representations of $\gal(\overline{\ca{K}}/\ca{K}^{\mrm{vh}})$ (the decomposition group of $Y$ over $\ca{O}_K$ at the geometric valuative point $\overline{y}$, see {\rm\ref{para:notation-neighborhood-2}}),
	\begin{align}\label{eq:intro-thm:fal-ext-val-pt-3}
		0\longrightarrow \widehat{\overline{\ca{F}}}(1)\stackrel{\iota}{\longrightarrow}\scr{E}_{Y^{\triv}\to Y,\overline{y}}\stackrel{\jmath}{\longrightarrow} \widehat{\overline{\ca{F}}}\otimes_{\ca{O}_Y}\Omega^1_{(Y,\scr{M}_Y)/K}\longrightarrow 0,
	\end{align}
	where $\scr{M}_Y$ is the compactifying log structure associated to the open immersion $Y^{\triv}\to Y$, and a canonical $\gal(\overline{\ca{K}}/\ca{K}^{\mrm{vh}})$-equivariant homomorphism of $\widehat{\overline{\ca{F}}}$-linear Lie algebras,
	\begin{align}\label{eq:intro-thm:sen-val-pt-1}
		\varphi_{\sen}|_{\ca{G},\overline{y}}: \scr{E}^*_{Y^{\triv}\to Y,\overline{y}}(1)\longrightarrow \widehat{\overline{\ca{F}}}\otimes_{\bb{Q}_p}\lie(\ca{G})
	\end{align}
	where $\scr{E}^*_{Y^{\triv}\to Y,\overline{y}}(1)=\ho_{\widehat{\overline{\ca{F}}}}(\scr{E}_{Y^{\triv}\to Y,\overline{y}}(-1),\widehat{\overline{\ca{F}}})$ is endowed with the canonical Lie algebra structure associated to the linear form $\iota^*:\scr{E}^*_{Y^{\triv}\to Y,\overline{y}}(1)\to \widehat{\overline{\ca{F}}}$ (see {\rm\ref{para:val-fal-ext-dual}}).
\end{mythm}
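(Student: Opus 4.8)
The plan is to deduce the exact sequence \eqref{eq:intro-thm:fal-ext-val-pt-3} from the valuation-theoretic Faltings extension \ref{thm:fal-ext-val} by passing to the decomposition field, and then to build $\varphi_{\sen}|_{\ca{G},\overline{y}}$ by running the Sen-operator construction of \cite{he2022sen} on top of it. \emph{Reduction.} The geometric valuative point $\overline{y}$ amounts to a height-$1$ valuation ring $\ca{O}_{\overline{\ca{F}}}$ of $\overline{\ca{F}}=\overline{\ca{K}}$ over $\ca{O}_K$; let $\ca{F}:=\ca{K}^{\mrm{vh}}\subseteq\overline{\ca{K}}$ be its decomposition field, with valuation ring $\ca{O}_\ca{F}:=\ca{O}_{\overline{\ca{F}}}\cap\ca{F}$. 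Then $\ca{O}_\ca{F}$ is Henselian of height $1$, $\mrm{trdeg}_K(\ca{F})=\dim Y=:d<\infty$ (henselization being algebraic), $\overline{\ca{F}}$ is an algebraic closure of $\ca{F}$, $\ca{O}_{\overline{\ca{F}}}$ — the unique valuation of $\overline{\ca{F}}$ over $\ca{O}_\ca{F}$, since $\ca{O}_\ca{F}$ is Henselian — is the integral closure of $\ca{O}_\ca{F}$ in $\overline{\ca{F}}$, and $\gal(\overline{\ca{F}}/\ca{F})=\gal(\overline{\ca{K}}/\ca{K}^{\mrm{vh}})=:\Gamma$. Thus $(\ca{F},\ca{O}_\ca{F},\overline{\ca{F}})$ satisfies the hypotheses of \ref{thm:fal-ext-val}.

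Applying \ref{thm:fal-ext-val} produces the canonical exact sequence $0\to\widehat{\overline{\ca{F}}}(1)\xrightarrow{\iota}\scr{E}_{\ca{O}_\ca{F}}\xrightarrow{\jmath}\widehat{\overline{\ca{F}}}\otimes_\ca{F}\Omega^1_{\ca{F}/K}\to 0$ of finite free $\widehat{\overline{\ca{F}}}$-representations of $\Gamma$, and we set $\scr{E}_{Y^{\triv}\to Y,\overline{y}}:=\scr{E}_{\ca{O}_\ca{F}}$. It remains to rewrite the last term: since $\ca{F}/\ca{K}$ is ind-\'etale, $\Omega^1_{\ca{F}/K}=\ca{F}\otimes_\ca{K}\Omega^1_{\ca{K}/K}$, and since the compactifying log structure $\scr{M}_Y$ is trivial at the generic point of $Y$, $\Omega^1_{\ca{K}/K}=\ca{K}\otimes_{\ca{O}_Y}\Omega^1_{(Y,\scr{M}_Y)/K}$; tensoring up along the flat $\ca{K}$-algebra $\widehat{\overline{\ca{F}}}$ gives a $\Gamma$-equivariant isomorphism $\widehat{\overline{\ca{F}}}\otimes_\ca{F}\Omega^1_{\ca{F}/K}\cong\widehat{\overline{\ca{F}}}\otimes_{\ca{O}_Y}\Omega^1_{(Y,\scr{M}_Y)/K}$ (the $\Gamma$-action being trivial on $\Omega^1_{\ca{K}/K}$). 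This yields \eqref{eq:intro-thm:fal-ext-val-pt-3}, canonical because \ref{thm:fal-ext-val} is, settling \ref{thm:fal-ext-val-pt}.

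For the second assertion, recall that on $\scr{E}^*_{Y^{\triv}\to Y,\overline{y}}(1)=\ho_{\widehat{\overline{\ca{F}}}}(\scr{E}_{\ca{O}_\ca{F}}(-1),\widehat{\overline{\ca{F}}})$ the bracket $[x,y]:=\iota^*(x)y-\iota^*(y)x$ is a Lie bracket attached to the linear form $\iota^*$ (the Jacobi identity holds for any vector space with a linear form; cf. \ref{para:val-fal-ext-dual}), with abelian ideal $\ker\iota^*=(\widehat{\overline{\ca{F}}}\otimes_{\ca{O}_Y}\Omega^1_{(Y,\scr{M}_Y)/K})^*(1)$. An embedding $\ca{L}\hookrightarrow\overline{\ca{K}}$ yields a continuous $\rho:\Gamma\to\ca{G}\hookrightarrow\mrm{GL}_n(\bb{Z}_p)$ with closed $p$-adic analytic image, hence $\lie(\rho(\Gamma))\subseteq\lie(\ca{G})$. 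Running the construction of \cite{he2022sen} for $\rho$, now carried out directly over the valuation ring $\ca{O}_\ca{F}$ on the Faltings extension $\scr{E}_{\ca{O}_\ca{F}}$ in place of its global analogue, produces the $\Gamma$-equivariant $\widehat{\overline{\ca{F}}}$-linear map $\varphi_{\sen}|_{\ca{G},\overline{y}}$ into $\widehat{\overline{\ca{F}}}\otimes_{\bb{Q}_p}\lie(\ca{G})$, factoring through $\widehat{\overline{\ca{F}}}\otimes_{\bb{Q}_p}\lie(\rho(\Gamma))$; it is independent of the chosen embedding, since two embeddings differ by conjugation by an element of $\ca{G}$, for which the construction is equivariant while the target is intrinsic. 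Evaluating on the tautological representation and comparing with the exact sequence above via Tate's normalized trace maps (\ref{prop:tate-trace}) — the same mechanism as in the proof of \ref{intro-thm:perfd-val} — one checks that $\varphi_{\sen}|_{\ca{G},\overline{y}}$ carries the $\iota^*$-bracket to the bracket of $\widehat{\overline{\ca{F}}}\otimes_{\bb{Q}_p}\lie(\ca{G})$, i.e. $[\varphi_{\sen}(x),\varphi_{\sen}(y)]=\iota^*(x)\,\varphi_{\sen}(y)-\iota^*(y)\,\varphi_{\sen}(x)$; this identity records that the arithmetic Sen operator acts with weight $1$ on the geometric Sen operators and that the geometric Sen operators commute, and it completes \ref{thm:sen-val-pt}.

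\emph{Main obstacle.} The first assertion is bookkeeping once \ref{thm:fal-ext-val} is available. The genuine difficulty is in the second: one must run the Sen-operator machinery of \cite{he2022sen} over a \emph{general} valuation ring $\ca{O}_\ca{F}$ and identify its source with $\scr{E}^*_{Y^{\triv}\to Y,\overline{y}}(1)$ equipped with the $\iota^*$-bracket, and then verify that the resulting $\widehat{\overline{\ca{F}}}$-linear map is a morphism of Lie algebras. Both steps go through expressing the $\ca{G}$-action on the pro-finite-\'etale tower in terms of differentials via the coboundary map $\delta$ attached to \eqref{eq:intro-thm:fal-ext-val-pt-3}, and through the existence of Tate's normalized traces for the Kummer subtowers appearing there — precisely the ingredients underlying \ref{intro-thm:perfd-val}.
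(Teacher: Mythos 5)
Your reduction rests on the identification $\overline{\ca{F}}=\overline{\ca{K}}$, and this is where the argument breaks down. In the setting of \ref{para:notation-neighborhood-1}, $\overline{y}=\spec(\overline{\ca{F}})$ is an \emph{arbitrary} point of $Y^{\overline{\ca{K}}}$, so $\overline{\ca{F}}$ is an algebraic closure of the residue field $\ca{F}$ of the image $y\in Y$, and $\mrm{trdeg}_K(\ca{F})=\dim\overline{\{y\}}$, which is strictly less than $d=\dim(Y)$ unless $y$ is the generic point. The field you call $\ca{F}$ (namely $\ca{K}^{\mrm{vh}}\subseteq\overline{\ca{K}}$) is a subfield of the function field's algebraic closure, of transcendence degree $d$; it is not the residue field carrying the valuation ring $\ca{O}_{\overline{\ca{F}}}$ of the statement. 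Consequently the valuation-theoretic extension $\scr{E}_{\ca{O}_{\overline{\ca{F}}}}$ of \ref{thm:fal-ext-val} is a free $\widehat{\overline{\ca{F}}}$-module of rank $1+\mrm{trdeg}_K(\ca{F})$, whereas $\scr{E}_{Y^{\triv}\to Y,\overline{y}}$ must have rank $1+d$ (its quotient is $\widehat{\overline{\ca{F}}}\otimes_{\ca{O}_Y}\Omega^1_{(Y,\scr{M}_Y)/K}$, of rank $d$); setting one equal to the other is impossible away from the generic point. A second, independent defect is that the construction of Section \ref{sec:fal-ext-val} never sees the divisor $D$: even where the ranks happen to agree, the quotient it produces is $\widehat{\overline{\ca{F}}}\otimes_{\ca{F}}\Omega^1_{\ca{F}/K}$, not the log differentials, and the log contribution at boundary points is exactly what Theorem \ref{intro-thm:sen-val-perfd}.(2) needs. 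The paper's actual construction goes the other way: by the uniformization theorem \ref{thm:uniformization} and \ref{prop:aet-neighborhood}, adequate algebras form a cofinal system of admissibly \'etale neighborhoods of $\overline{y}$, and $\scr{E}_{Y^{\triv}\to Y,\overline{y}}$ is defined as the filtered colimit of $\widehat{\overline{\ca{F}}}\otimes_{\widehat{\overline{A}}}\scr{E}_A$ over such neighborhoods, with $\gal(\overline{\ca{K}}/\ca{K}^{\mrm{vh}})$-equivariance coming from the Galois action on the neighborhood category (\ref{lem:galois-action-adq-nbd}); the valuation-theoretic $\scr{E}_{\ca{O}_{\overline{\ca{F}}}}$ enters only later, as the target of a canonical surjection from $\scr{E}_{Y\to Y,\overline{y}}$ (\ref{lem:fal-ext-val-pt-nolog-2}) used to descend the invariance hypothesis to where \ref{thm:perfd-val} applies.

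The same problem propagates to the second half. "Running the construction of \cite{he2022sen} directly over the valuation ring $\ca{O}_{\ca{F}}$" is not something that is available: the Sen-operator machinery there requires the adequate-algebra structure (Kummer towers, the decompletion statement \ref{thm:descent} via $\Delta$-analytic vectors), none of which exists for a general valuation ring — indeed the paper defers $p$-adic Hodge theory over general valuation rings beyond the Faltings extension to future work. The paper instead obtains $\varphi_{\sen}|_{\ca{G},\overline{y}}$ as the limit over adequate neighborhoods $X=\spec(A)$ of the universal Sen actions $\varphi_{\sen}|_{\ca{G}_X}$ of \ref{thm:sen-lie-lift-A}, using the functoriality \ref{rem:sen-lie-lift-A}.(\ref{item:rem:sen-lie-lift-A-2}) to see that the transition maps are isomorphisms; the Lie-algebra homomorphism property is inherited from \cite[11.21]{he2022sen}, not re-proved via Tate's normalized traces, which in this paper serve only the perfectoidness criterion. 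To repair your argument you would need, at minimum, to replace the valuation-theoretic source by the colimit over log smooth local models and supply the decompletion input there.
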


\begin{mydefn}\label{intro-defn:sen-val-pt}
	We call the exact sequence \eqref{eq:intro-thm:fal-ext-val-pt-3}  the \emph{Faltings extension of the open immersion $Y^{\triv}\to Y$ over $\ca{O}_K$ at the geometric valuative point $\overline{y}=\spec(\overline{\ca{F}})$}, and we call the homomorphism \eqref{eq:intro-thm:sen-val-pt-1} the \emph{universal Sen action of the $p$-adic analytic Galois extension $\ca{L}$ of $\ca{K}$ at the geometric valuative point $\overline{y}=\spec(\overline{\ca{F}})$ of the open immersion $Y^{\triv}\to Y$ over $\ca{O}_K$}.
\end{mydefn}

\begin{mypara}
	Recall that in \cite{he2022sen} we constructed Faltings extensions and the universal Sen actions over certain affine $\ca{O}_K$-schemes given by a class of $\ca{O}_K$-algebras called \emph{adequate $\ca{O}_K$-algebras} (see Section \ref{sec:adequate}). These are local models of log smooth schemes by a theorem of Abbes-Gros and Tsuji. In order to construct Faltings extension and the universal Sen actions at each geometric valuative point of a general smooth $K$-scheme, it suffices to show that adequate $\ca{O}_K$-algebras (or log smooth schemes) form a neighborhood basis. This follows directly from the following variant of Temkin's admissibly \'etale local uniformization theorem (see \ref{prop:aet-neighborhood}):
\end{mypara}

\begin{mythm}[{see \ref{thm:uniformization}}]\label{intro-thm:uniformization}
	We put $\eta=\spec(K)$, $S=\spec(\ca{O}_K)$. Let $X$ be an $S$-scheme of finite presentation with smooth generic fibre $X_\eta$, $D_\eta$ a strict normal crossings divisor of $X_\eta$, $X^{\triv}=X_\eta\setminus D_\eta$. Then, there exists an admissibly \'etale covering (see {\rm\ref{defn:adm-etale}}),
	\begin{align}
		X'\longrightarrow S'\times_SX,
	\end{align}
	where $S'=\spec(\ca{O}_{K'})$, $K'$ is a finite field extension of $K$, and $X'$ is a flat $S'$-scheme of finite presentation such that the log scheme $(X',\scr{M}_{X'})$ endowed with the compactifying log structure associated to the open immersion $X'^{\triv}=X^{\triv}\times_XX'\to X'$ is a regular fs log scheme, smooth and saturated over the regular fs log scheme $(S',\scr{M}_{S'})$ endowed with the compactifying log structure defined by the closed point of $S'$.
\end{mythm}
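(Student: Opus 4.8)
The plan is to reduce the statement to Temkin's local uniformization of valuations, glued over a Riemann-Zariski space. The properties to be achieved --- that $(X',\scr{M}_{X'})$ be a regular fs log scheme which is log smooth and saturated over $(S',\scr{M}_{S'})$ --- are \'etale-local on $X'$, so it suffices to produce, for each valuation ring $\ca{O}_v$ of the function field of $X_\eta$ that extends $\ca{O}_K$ and is centered on $X$, a finite extension $K_v/K$ (with $S_v=\spec(\ca{O}_{K_v})$) and an admissibly \'etale morphism $U_v\to S_v\times_S X$ whose image meets the center of $v$, such that $U_v$, equipped with the compactifying log structure of $U_v^{\triv}:=U_v\times_X X^{\triv}$, is in a Zariski neighborhood of the center of $v$ a regular fs log scheme, log smooth and saturated over $(S_v,\scr{M}_{S_v})$. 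The set of such valuations is the Riemann-Zariski space of $X_\eta\to X$, which is quasi-compact, so finitely many of the $U_{v_i}$ have images covering it. Here an admissibly \'etale covering (see \ref{defn:adm-etale}) is by definition a morphism surjective on Riemann-Zariski points and built out of \'etale morphisms, admissible blowups, Zariski localizations and finite base changes $S'\to S$; crucially such a covering need not be surjective on special fibres, which is why no alteration is needed.

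For the local step I would apply Temkin's admissibly \'etale uniformization (in the logarithmic form simultaneously monomializing the divisor) to the valuation $v$: after the finite base change $S_v\to S$ and an admissibly \'etale neighborhood $U_v$ of the center $x$ of $v$ in $S_v\times_S X$, one may assume $U_v$ is \'etale-locally near $x$ of the form $\spec(\ca{O}_{K_v}[P]/(e_\pi-\pi))\times_{S_v}\mathbb{A}^m_{S_v}$, for a uniformizer $\pi$ of $\ca{O}_{K_v}$, a sharp fs monoid $P$ and an element $e_\pi\in P$ lifting $\pi$, with $D_\eta$ a union of toric divisors of this chart. Since $X_\eta$ is smooth and $D_\eta$ has \emph{strict} normal crossings, the generic fibre requires no modification, so the uniformizing morphism is genuinely admissibly \'etale (not an alteration), and the toric singularities of the chart all lie in the special fibre; resolving them by a further admissible blowup, which is an isomorphism over $X_\eta$, we may assume the chart has regular underlying scheme, so that with the compactifying log structure of its trivial locus it is a regular fs log scheme, log smooth over $(S_v,\scr{M}_{S_v})$. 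Saturatedness of the structure morphism, automatic for semistable-type charts, is then arranged if necessary by one more log blowup in the special fibre.

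It remains to assemble the pieces and check finiteness. Take for $K'$ the compositum of the finitely many $K_{v_i}$, a finite extension of $K$, put $S'=\spec(\ca{O}_{K'})$, and set $X'=\coprod_i(U_{v_i}\times_{S_{v_i}}S')$, equipped with the compactifying log structure $\scr{M}_{X'}$ of $X'^{\triv}=X^{\triv}\times_X X'$. By construction $X'\to S'\times_S X$ is a composition of the operations allowed in \ref{defn:adm-etale} and is surjective on Riemann-Zariski points, hence an admissibly \'etale covering. Flatness of $X'$ over $S'$ is automatic from log smoothness over $(S',\scr{M}_{S'})$, finite presentation follows from the Noetherianity of $S'$, and $(X',\scr{M}_{X'})$ is a regular fs log scheme, log smooth and saturated over $(S',\scr{M}_{S'})$ since these are \'etale-local properties holding on each chart, the compactifying log structure matching the monomial one chart by chart because $D_\eta$ has strict normal crossings.

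I expect the main obstacle to be the local step: turning Temkin's valuation-theoretic uniformization into an honest admissibly \'etale \emph{scheme} morphism forming an open neighborhood of $v$ in the Riemann-Zariski space, and upgrading the resulting monomial chart to one whose underlying scheme is regular and whose structure morphism to $(S_v,\scr{M}_{S_v})$ is \emph{saturated}, all compatibly with the prescribed normal crossings divisor. By contrast, passing from the infinitely many local extensions $K_v$ to a single finite $K'$ is precisely what quasi-compactness of the Riemann-Zariski space delivers, and is formal.
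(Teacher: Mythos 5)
Your proposal defers the entire mathematical content of the theorem to the ``local step'', which you then do not carry out: you invoke ``Temkin's admissibly \'etale uniformization (in the logarithmic form simultaneously monomializing the divisor)'', but no such logarithmic form is available --- Temkin's theorem \cite[2.5.2]{temkin2017etale} is exactly the case $D_\eta=\emptyset$, and producing a version compatible with a prescribed strict normal crossings divisor on $X_\eta$ is precisely what \ref{thm:uniformization} asserts. The reduction to charts near each valuation, the gluing by quasi-compactness of the Riemann--Zariski space, and the passage from the infinitely many $K_v$ to a single finite $K'$ are all formal, as you yourself note; but the statement you feed into that formalism at each valuation is the theorem itself, so nothing has been proved. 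A secondary gap: saturatedness of the structure morphism is not arranged ``by one more log blowup in the special fibre''. A log blowup does not alter the relevant behaviour of the characteristic monoids over the base; in general an integral log smooth morphism is made saturated only after a Kummer base change (Tsuji's criterion), i.e.\ by further enlarging $K'$ --- in the paper this issue is dissolved because saturatedness is verified directly on the explicit semi-stable charts $A[u,v]/(uv-a)$ and $A[T]$ in \ref{lem:log-smooth-1} and \ref{lem:log-smooth-2}.

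The paper's proof is structured quite differently and is where the local content actually gets supplied. Following Temkin's own strategy for \cite[2.5.2]{temkin2017etale}, it reduces to $X$ affine with an \'etale map $X_\eta\to\spec(K[T_1,\dots,T_d])$ monomializing $D_\eta$ as $T_1\cdots T_r=0$, then fibres $X$ over $\bb{A}^{d-1}_S$ via $T_2,\dots,T_d$ so that the generic fibre becomes a relative curve carrying the marked divisor $D^1_\eta=\{T_1=0\}$. Raynaud--Gruson flattening and Temkin's stable modification theorem for relative curves produce a semi-stable multi-pointed curve over a base $Z$; the base, of dimension $d-1$ and carrying the residual divisor $T_2\cdots T_r=0$, is handled by induction on dimension; and \ref{prop:log-smooth} then verifies that a semi-stable multi-pointed curve over a regular fs log scheme is, with its compactifying log structure, regular, log smooth and saturated. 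In other words, the divisor is split between the fibre direction (absorbed as marked points of the curve) and the base (absorbed into the inductive hypothesis), rather than being monomialized valuation-locally. If you want to pursue your RZ-space organization, you would still have to reprove this fibration-and-induction argument to obtain the local charts, at which point the global statement follows anyway.
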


\begin{mypara}
	Temkin \cite[2.5.2]{temkin2017etale} actually proved the case where $D_\eta=\emptyset$ (but over general valuation rings with stronger requirement that $X'$ is strictly semi-stable over $S'$). We show that his arguments still work in our setting. 
\end{mypara}

\begin{mypara}
	Combining \ref{intro-thm:sen-val-pt} with \ref{intro-thm:perfd-val}, we obtain the following perfectoidness criterion via the non-vanishing of the universal geometric Sen action:
\end{mypara}
\begin{mythm}[{see \ref{thm:sen-val-perfd}}]\label{intro-thm:sen-val-perfd}
	With the notation in {\rm\ref{intro-para:notation-Y}}, let $y\in Y$ and $y_{\ca{L}}\in Y^{\ca{L}}$ be the images of $\overline{y}\in Y^{\overline{\ca{K}}}$, and let $\{t_1,\dots, t_s\}$ be a regular system of parameters of the strict Henselization $\ca{O}_{Y,y}^{\mrm{sh}}$ of $Y$ at $\overline{y}$ such that $t_1\cdots t_r=0$ defines the normal crossings divisor $D$, where $0\leq r\leq s\leq\dim(Y)$. Assume that $\ca{L}$ contains a compatible system of primitive $p$-power roots of unity and that the restriction of the universal Sen action \eqref{eq:intro-thm:sen-val-pt-1} via the inclusion $\jmath^*$ (called the \emph{universal geometric Sen action}),
	\begin{align}\label{eq:intro-thm:sen-val-perfd-1}
		\varphi^{\mrm{geo}}_{\sen}|_{\ca{G},\overline{y}}: \ho_{\ca{O}_Y}(\Omega^1_{(Y,\scr{M}_Y)/K}(-1),\widehat{\overline{\ca{F}}})\longrightarrow \widehat{\overline{\ca{F}}}\otimes_{\bb{Q}_p}\lie(\ca{G}),
	\end{align}
	is injective. Then, we have the following properties:
	\begin{enumerate}
		\renewcommand{\labelenumi}{{\rm(\theenumi)}}
		\item (Stalkwise perfectoidness) The residue field $\ca{F}_{\ca{L}}$ of $y_{\ca{L}}$ is a pre-perfectoid field with respect to the valuation ring $\ca{O}_{\ca{F}_{\ca{L}}}=\ca{F}_{\ca{L}}\cap\ca{O}_{\overline{\ca{F}}}$.
		\item ($p$-infinite ramification on boundary) The elements $t_1,\dots,t_r$ admit compatible systems of $p$-power roots in the strict Henselization $\ca{O}_{Y^{\ca{L}},y_{\ca{L}}}^{\mrm{sh}}$ of $Y^{\ca{L}}$ at $\overline{y}$.
	\end{enumerate}
\end{mythm}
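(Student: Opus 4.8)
The plan is to reduce everything to the valuation-ring criterion of Theorem~\ref{intro-thm:perfd-val}. The first task is to manufacture, from the geometric valuative point $\overline{y}$ and the tower $Y^{\ca{L}}$, a Henselian valuation field $\ca{F}$ of height $1$ to which that criterion applies: one wants $\ca{F}\supseteq\overline{K}$ for some algebraic closure $\overline{K}$ of $K$ (adjoined inside the strict Henselization of $Y^{\ca{L}}$ at $\overline{y}$, where it embeds because finite \'etale algebras split over a strictly Henselian ring), $\mrm{trdeg}_K(\ca{F})=\dim(Y)$, $\ca{O}_{\overline{\ca{F}}}$ the integral closure of $\ca{O}_{\ca{F}}$ in the algebraic closure $\overline{\ca{F}}$ of $\ca{F}$, and --- this is the decisive compatibility --- the Faltings extension $\scr{E}_{\ca{O}_{\ca{F}}}$ of $\ca{O}_{\ca{F}}$ over $\ca{O}_K$ of Theorem~\ref{thm:fal-ext-val} canonically identified, compatibly with $\iota$ and $\jmath$, with the base change along the inclusion of Galois groups of the Faltings extension $\scr{E}_{Y^{\triv}\to Y,\overline{y}}$ of Theorem~\ref{intro-thm:sen-val-pt}. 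The last point is where Theorem~\ref{intro-thm:uniformization} and the theory of adequate $\ca{O}_K$-algebras enter: they let one present $\scr{E}_{Y^{\triv}\to Y,\overline{y}}$ via the logarithmic cotangent complex in a manner compatible with Gabber-Ramero's computation of the cotangent complex of $\ca{O}_{\ca{F}}$ underlying $\scr{E}_{\ca{O}_{\ca{F}}}$.

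With $\ca{F}$ and this identification in hand, the heart of the argument is to verify the hypothesis of Theorem~\ref{intro-thm:perfd-val}, namely $\dim_{\widehat{\ca{F}}}\scr{E}_{\ca{O}_{\ca{F}}}^{\gal(\overline{\ca{F}}/\ca{F})}=1+\mrm{trdeg}_K(\ca{F})$. Since $\ca{L}\supseteq\mu_{p^\infty}$ forces $\ca{F}\supseteq K(\mu_{p^\infty})$, we have $\widehat{\overline{\ca{F}}}(1)^{\gal(\overline{\ca{F}}/\ca{F})}=\widehat{\ca{F}}$ by Ax-Sen-Tate, and $\gal(\overline{\ca{F}}/\ca{F})$ acts on $\Omega^1_{(Y,\scr{M}_Y)/K}$ through a finite quotient, so the invariants of $\widehat{\overline{\ca{F}}}\otimes_{\ca{O}_Y}\Omega^1_{(Y,\scr{M}_Y)/K}$ have $\widehat{\ca{F}}$-dimension $\dim(Y)$; hence the required equality is equivalent to surjectivity of $\jmath$ on $\gal(\overline{\ca{F}}/\ca{F})$-invariants, equivalently to the vanishing of the connecting homomorphism $\delta$ of the Faltings extension, exactly as in step~(4) of the proof of Theorem~\ref{intro-thm:perfd-val}. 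Now I would invoke the description, from \cite{he2022sen}, of the universal Sen action \eqref{eq:intro-thm:sen-val-pt-1} as the infinitesimal form of the Galois action on $\scr{E}_{Y^{\triv}\to Y,\overline{y}}$: injectivity of $\varphi_{\sen}|_{\ca{G},\overline{y}}$ says precisely that this infinitesimal action, hence $\delta$, vanishes. The hypothesis of the theorem gives injectivity only of the geometric part $\varphi^{\mrm{geo}}_{\sen}|_{\ca{G},\overline{y}}=\varphi_{\sen}|_{\ca{G},\overline{y}}\circ\jmath^*$; to upgrade this to injectivity of the whole $\varphi_{\sen}|_{\ca{G},\overline{y}}$ on $\scr{E}^*_{Y^{\triv}\to Y,\overline{y}}(1)$, one uses that $\ca{G}$ surjects onto the cyclotomic quotient of $\gal(\ca{L}/\ca{K})$, so that Sen's classical theorem (see \ref{rem:geom-sen-nonzero}) makes the arithmetic Sen operator nonzero, giving injectivity on the line complementary to $\im\jmath^*$ cut out by $\iota^*$; a diagram chase with the dual exact sequence then completes the step. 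I expect this translation --- matching the linear-algebraic injectivity of the Sen action with the vanishing of $\delta$ and the full-dimensionality of the invariants, together with the correct interlocking of arithmetic and geometric Sen operators --- to be the main obstacle, as it rests on unwinding the construction of the universal Sen action from the Faltings extension.

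Granting the dimension equality, Theorem~\ref{intro-thm:perfd-val} yields that $\Omega^1_{\ca{O}_{\ca{F}}/\ca{O}_{\overline{K}}}$ is $p$-divisible and that $\ca{F}$ is a pre-perfectoid field. For assertion~(2), each $t_i$ with $1\le i\le r$ lies in $\ca{O}_{\ca{F}}$, so $\df t_i$ is $p^\infty$-divisible; feeding $t_i$ into the Kummer-tower dichotomy of steps~(0)--(2) of the proof of Theorem~\ref{intro-thm:perfd-val}, and using that $\df t_i$ is, up to a unit, $t_i\cdot\df\log t_i$ with $\df\log t_i$ a section of the logarithmic differentials $\Omega^1_{(Y,\scr{M}_Y)/K}$ (so that the relevant Kummer tower is pro-finite \'etale over $Y^{\triv}$ and already refined by $\ca{L}$), one obtains compatible systems of $p$-power roots of $t_i$ inside $\ca{O}^{\mrm{sh}}_{Y^{\ca{L}},y_{\ca{L}}}$. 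For assertion~(1), one descends pre-perfectoidness from $\ca{F}$ to $\ca{F}_{\ca{L}}$: the value group of $\ca{F}_{\ca{L}}$ already contains that of $K(\mu_{p^\infty})$ and is therefore $p$-divisible, while the extension $\ca{F}/\ca{F}_{\ca{L}}$, obtained by adjoining $\overline{K}$, has ramification controlled by the deeply ramified extension $\overline{K}/K(\mu_{p^\infty})$; combining a transitivity argument for $\Omega^1$ with the torsion-freeness of differentials of valuation-ring extensions (Gabber-Ramero) then forces $\Omega^1_{\ca{O}_{\ca{F}_{\ca{L}}}/\ca{O}_{K(\mu_{p^\infty})}}$ to be $p$-divisible, which is equivalent to $\ca{F}_{\ca{L}}$ being pre-perfectoid with respect to $\ca{O}_{\ca{F}_{\ca{L}}}=\ca{F}_{\ca{L}}\cap\ca{O}_{\overline{\ca{F}}}$.
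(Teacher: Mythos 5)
Your overall architecture matches the paper's: upgrade injectivity of $\varphi^{\mrm{geo}}_{\sen}|_{\ca{G},\overline{y}}$ to injectivity of the full $\varphi_{\sen}|_{\ca{G},\overline{y}}$ using the non-vanishing of the arithmetic Sen operator for the cyclotomic extension (this is \ref{prop:val-geom-sen-nonzero}), dualize to get "enough invariants" of the Faltings extension under the decomposition group of $Y^{\ca{L}}$ at $\overline{y}$, and feed this into the valuation-ring criterion \ref{intro-thm:perfd-val}. Two points, however, are genuine gaps. First, there is no canonical identification of $\scr{E}_{\ca{O}_{\ca{F}}}$ with $\scr{E}_{Y^{\triv}\to Y,\overline{y}}$: the latter has rank $1+\dim(Y)$ over $\widehat{\overline{\ca{F}}}$ and maps onto the \emph{log} differentials, while the former has rank $1+\mrm{trdeg}_K(\ca{F})$ (which is smaller unless $y$ is the generic point) and maps onto $\Omega^1_{\ca{F}/K}$. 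What the paper actually produces (\ref{lem:fal-ext-val-pt-nolog-1}, \ref{lem:fal-ext-val-pt-nolog-2}) is a chain $\scr{E}_{Y\to Y,\overline{y}}\hookrightarrow \scr{E}_{Y^{\triv}\to Y,\overline{y}}$ and $\scr{E}_{Y\to Y,\overline{y}}\twoheadrightarrow \scr{E}_{\ca{O}_{\overline{\ca{F}}}}$, and propagates the vanishing of $\delta$, equivalently the surjectivity of $\jmath$ on invariants, along the injection and then the surjection. Your dimension count on invariants of $\scr{E}_{\ca{O}_{\ca{F}}}$ does not follow from the claimed "identification"; it has to be extracted via this chain. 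Relatedly, the sentence "injectivity of $\varphi_{\sen}|_{\ca{G},\overline{y}}$ says precisely that this infinitesimal action, hence $\delta$, vanishes" is not correct as stated: the implication goes through duality — injectivity of $\varphi_{\sen}|_{\ca{G},\overline{y}}$ gives surjectivity of $\widehat{\overline{\ca{F}}}\otimes_{\bb{Q}_p}\lie(\ca{G})^*(1)\to\scr{E}_{Y^{\triv}\to Y,\overline{y}}$, and since $\gal(\overline{\ca{K}}/\ca{L}^{\mrm{vh}})$ acts trivially on $\lie(\ca{G})^*(1)$ and Ax--Sen--Tate computes its invariants in $\widehat{\overline{\ca{F}}}$, the source is generated by invariants, hence so is the target; that is what kills $\delta$.

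The more serious gap is assertion (2). For $1\le i\le r$ and $y\in D$ the elements $t_i$ lie in the maximal ideal of $\ca{O}_{Y,y}^{\mrm{sh}}$, so their images in the residue field $\ca{F}=\kappa(y)$ are \emph{zero}; the case $y\notin D$ forces $r=0$ and is vacuous. Consequently "feeding $t_i$ into the Kummer-tower dichotomy" inside $\ca{O}_{\ca{F}}$ sees nothing — the relevant ramification is invisible to the valuation ring of the residue field and to Theorem \ref{intro-thm:perfd-val}. (Also, even where it applies, the dichotomy only gives: $\df t$ not $p^\infty$-divisible $\Rightarrow$ $\delta(\df t)\neq 0$; the converse you would need, that $p^\infty$-divisibility produces roots, is not part of it.) The paper's argument (\ref{thm:perfd-val-log-2}) instead treats $t_i$ as a section of $\scr{M}_{Y,\overline{y}}\subseteq\ca{K}^{\mrm{sh}}$, so that $\df\log(t_i)$ is a genuine basis element of the log differentials, defines the cocycle $\xi_{t_i}$ on the decomposition group $\gal(\overline{\ca{K}}/\ca{L}^{\mrm{vh}})$, and derives the contradiction from the fact that the \emph{inertia} subgroup $\gal(\overline{\ca{K}}/\ca{L}^{\mrm{sh}})$ acts trivially on $\widehat{\overline{\ca{F}}}$ — so $(\tau_0-1)(x)=0$ for $\tau_0$ in inertia — while $\xi_{t_i}(\tau_0)\neq 0$ if $t_i$ had no compatible $p$-power roots in $\ca{L}^{\mrm{sh}}$. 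This is a parallel but distinct cocycle argument that your proposal is missing. Your descent of pre-perfectoidness from $\ca{F}\supseteq\overline{K}$ down to $\ca{F}_{\ca{L}}$ is workable in principle, though the paper does it more simply by almost purity ($\ca{O}_{\ca{F}_{\overline{K}\ca{L}}}$ is a filtered colimit of almost finite \'etale $\ca{O}_{\ca{F}_{\ca{L}}}$-algebras, and pre-perfectoidness descends along almost weakly \'etale faithfully flat maps).
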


\begin{mypara}
	Indeed, the injectivity of $\varphi^{\mrm{geo}}_{\sen}|_{\ca{G},\overline{y}}$ implies the injectivity of $\varphi_{\sen}|_{\ca{G},\overline{y}}$ as $\ca{L}$ contains a cyclotomic extension of $K$ (see \ref{prop:val-geom-sen-nonzero}). Taking (twisted) dual, we see that $\ho_{\bb{Q}_p}(\lie(\ca{G}),\widehat{\overline{\ca{F}}}(1))\to \scr{E}_{Y^{\triv}\to Y,\overline{y}}$ is surjective, which implies that there are enough many invariants of $\scr{E}_{Y^{\triv}\to Y,\overline{y}}$ under the action of the decomposition group $\gal(\overline{\ca{K}}/\ca{L}^{\mrm{vh}})$ of $Y^{\ca{L}}$ over $\ca{O}_K$ at the geometric valuative point $\overline{y}$ (\ref{para:notation-neighborhood-2}). Thus, the stalkwise perfectoidness essentially follows from \ref{intro-thm:perfd-val} (see \ref{thm:perfd-val-log-1}). On the other hand, analyzing the action of the inertia subgroup $\gal(\overline{\ca{K}}/\ca{L}^{\mrm{sh}})$ of $Y^{\ca{L}}$ over $\ca{O}_K$ at the geometric valuative point $\overline{y}$ (\ref{para:notation-neighborhood-2}) on the Faltings extension, we obtain the $p$-infinite ramification at boundary points by a similar argument as in \ref{intro-thm:perfd-val} (see \ref{thm:perfd-val-log-2}).
	
	The stalkwise perfectoidness and $p$-infinite ramification at boundary points are sufficient for the vanishing of \'etale cohomology in higher degrees:
\end{mypara}

\begin{mythm}[{see \ref{thm:vanish}}]\label{intro-thm:vanish}
	Let $Y$ be a proper smooth $\overline{K}$-scheme, $D$ a normal crossings divisor on $Y$, $\widetilde{Y}^{\triv}$ a coherent scheme pro-finite \'etale over $Y^{\triv}=Y\setminus D$, $\widetilde{Y}$ the integral closure of $Y$ in $\widetilde{Y}^{\triv}$. Assume that the following conditions hold for any point $\widetilde{y}\in \widetilde{Y}$:
	\begin{enumerate}
		\renewcommand{\labelenumi}{{\rm(\theenumi)}}
		\item (Stalkwise perfectoidness) Its residue field $\kappa(\widetilde{y})$ is a pre-perfectoid field with respect to any valuation ring $W$ of height $1$ extension of $\ca{O}_{\overline{K}}$ with fraction field $W[1/p]=\kappa(\widetilde{y})$.\label{item:intro-thm:vanish-1}
		\item ($p$-infinite ramification on boundary) There exists a regular system of parameters $\{t_1,\dots,t_s\}$ of the strict Henselization $\ca{O}_{Y,y}^{\mrm{sh}}$ of $\ca{O}_{Y,y}$ (where $y\in Y$ is the image of $\widetilde{y}\in\widetilde{Y}$) such that $D$ is defined by $t_1\cdots t_r=0$ over $\ca{O}_{Y,y}^{\mrm{sh}}$ for some integer $0\leq r\leq s$ and that $t_1,\dots,t_r$ admit compatible systems of $p$-power roots in the strict Henselization $\ca{O}_{\widetilde{Y},\widetilde{y}}^{\mrm{sh}}$ of $\ca{O}_{\widetilde{Y},\widetilde{y}}$.\label{item:intro-thm:vanish-2}
	\end{enumerate}
	Then, for any integer $q>\dim(Y)$ and $n\in\bb{N}$, we have
	\begin{align}\label{eq:intro-thm:vanish-1}
		H^q_\et(\widetilde{Y}^{\triv},\bb{Z}/p^n\bb{Z})=0.
	\end{align}
\end{mythm}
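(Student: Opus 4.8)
The plan is to prove the stronger statement that $\mrm{R}\Gamma_\et(\widetilde{Y}^{\triv},\bb{Z}/p^n\bb{Z})$ is concentrated in cohomological degrees $\leq d:=\dim(Y)$, in two steps: first use the boundary hypothesis (2) to replace the open $\widetilde{Y}^{\triv}$ by the proper scheme $\widetilde{Y}$ without changing cohomology, then use the stalkwise perfectoidness (1) to bound the cohomological $p$-dimension of $\widetilde{Y}$ by $d$. Since $\widetilde{Y}^{\triv}$ is a cofiltered limit of finite type $\overline{K}$-schemes, its étale cohomology is insensitive to the base change $\overline{K}\to\bb{C}:=\widehat{\overline{K}}$, so we may assume $\overline{K}=\bb{C}$ is perfectoid. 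Write $\widetilde{Y}^{\triv}=\plim_\lambda Y^{\triv}_\lambda$ with $Y^{\triv}_\lambda\to Y^{\triv}$ finite étale, let $Y_\lambda$ be the integral closure of $Y$ in $Y^{\triv}_\lambda$ (normal, proper over $\bb{C}$, of dimension $d$, with affine transition morphisms), so that $\widetilde{Y}=\plim_\lambda Y_\lambda$ is qcqs with $\widetilde{Y}_\et\simeq\plim_\lambda(Y_\lambda)_\et$ and $H^q_\et(\widetilde{Y}^{\triv},\bb{Z}/p^n\bb{Z})=\colim_\lambda H^q_\et(Y^{\triv}_\lambda,\bb{Z}/p^n\bb{Z})$.

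\emph{Step 1.} Let $j:\widetilde{Y}^{\triv}\hookrightarrow\widetilde{Y}$ and $j_\lambda:Y^{\triv}_\lambda\hookrightarrow Y_\lambda$ denote the open immersions. Since $\widetilde{Y}$ is normal with dense open $\widetilde{Y}^{\triv}$ we have $j_*(\bb{Z}/p^n\bb{Z})=\bb{Z}/p^n\bb{Z}$, and I claim $\mrm{R}^qj_*(\bb{Z}/p^n\bb{Z})=0$ for $q>0$, which gives $\mrm{R}\Gamma_\et(\widetilde{Y}^{\triv},\bb{Z}/p^n\bb{Z})\simeq\mrm{R}\Gamma_\et(\widetilde{Y},\bb{Z}/p^n\bb{Z})$. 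Computing stalks through the limit, the claim amounts to $\colim_\lambda H^q_\et(U_{\widetilde{y},\lambda},\bb{Z}/p^n\bb{Z})=0$ for $q>0$ at every geometric point $\widetilde{y}$ of $\widetilde{Y}$, where $U_{\widetilde{y},\lambda}=\spec(\ca{O}^{\mrm{sh}}_{Y_\lambda,\widetilde{y}})\times_{Y_\lambda}Y^{\triv}_\lambda$. When $\widetilde{y}$ lies over $Y^{\triv}$ this is a strictly henselian local scheme, hence cohomologically trivial in positive degrees. When $\widetilde{y}$ lies over $D$, one pulls $\mrm{R}j_{\lambda*}$ back from $Y$, where smoothness and the normal crossings hypothesis on $D$ are in force, so that $\spec(\ca{O}^{\mrm{sh}}_{Y,y})\setminus V(t_1\cdots t_r)$ is the complement of a normal crossings divisor in a regular strictly henselian local scheme: by Zariski--Nagata purity its étale cohomology with $\bb{Z}/p^n\bb{Z}$-coefficients is the exterior algebra on the Kummer classes of $t_1,\dots,t_r$, concentrated in degrees $\leq r\leq d$, and by hypothesis (2) the pro-finite cover $\plim_\lambda U_{\widetilde{y},\lambda}$ dominates the tower obtained by adjoining all $p$-power roots of $t_1,\dots,t_r$, along which restriction multiplies every $H^{\geq1}$ by a positive power of $p$; hence the colimit vanishes in positive degrees.

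\emph{Step 2.} It remains to show $H^q_\et(\widetilde{Y},\bb{Z}/p^n\bb{Z})=0$ for $q>d$. By hypothesis (1) and the passage from stalkwise perfectoidness to perfectoidness of \cite{he2024purity}, the adic space $\ca{Y}:=\plim_\lambda(Y_{\lambda,\bb{C}})^{\mrm{ad}}$ is a quasi-compact perfectoid space admitting a proper formal model $\ak{Y}$ over $\spf(\ca{O}_{\bb{C}})$ of relative dimension $d$. Since the $Y_\lambda$ are proper, the comparison of algebraic and adic étale cohomology gives $H^q_\et(\widetilde{Y},\bb{Z}/p^n\bb{Z})=\colim_\lambda H^q_\et((Y_{\lambda,\bb{C}})^{\mrm{ad}},\bb{Z}/p^n\bb{Z})=H^q_\et(\ca{Y},\bb{Z}/p^n\bb{Z})$, and Scholze's tilting equivalence of étale topoi gives $H^q_\et(\ca{Y},\bb{Z}/p^n\bb{Z})=H^q_\et(\ca{Y}^\flat,\bb{Z}/p^n\bb{Z})$ with $\ca{Y}^\flat$ a perfectoid space of characteristic $p$. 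Artin--Schreier--Witt now expresses $\mrm{R}\Gamma_\et(\ca{Y}^\flat,\bb{Z}/p^n\bb{Z})$ as the fibre of $F-1$ acting on $\mrm{R}\Gamma_\et(\ca{Y}^\flat,W_n\ca{O})$, the latter being an iterated extension of copies of $\mrm{R}\Gamma_\et(\ca{Y}^\flat,\ca{O})=\mrm{R}\Gamma_{\mrm{an}}(\ca{Y}^\flat,\ca{O})$. Finally $\mrm{R}\Gamma_{\mrm{an}}(\ca{Y}^\flat,\ca{O})$ is computed, via the tilted formal model $\ak{Y}^\flat$, by the $t$-adically completed coherent cohomology of the special fibre $\ak{Y}^\flat_s$, a qcqs scheme of Krull dimension $d$ in characteristic $p$, which vanishes in degrees $>d$ by Grothendieck vanishing. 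Hence everything is concentrated in degrees $\leq d$, which is the assertion.

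\emph{Main obstacle.} The crux is Step 2, and more precisely the two inputs it takes for granted. The first is the upgrade of the pointwise pre-perfectoidness of residue fields in (1) to an honest perfectoid formal model $\ak{Y}$: this is the stalkwise-to-global perfectoidness statement, the content of \cite{he2024purity}, and it relies on $\widetilde{Y}$ being normal and pro-(of finite type). The second is the chain of comparison isomorphisms --- algebraic versus adic étale cohomology, the tilting equivalence, étale versus analytic cohomology of $\ca{O}$, and the $t$-adic completion of coherent cohomology --- each routine for smooth finite type data but requiring care for the singular, non-Noetherian proper limit $\widetilde{Y}$; organizing this cleanly, for instance $p$-adically arc-locally or inside the Faltings topos of $Y^{\triv}\to Y$ with a primitive comparison theorem in the style of \cite{he2022sen}, is where the real work lies. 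Step 1 is more elementary, but one must be careful that $Y_\lambda$ is only normal, so the exterior-algebra computation of the boundary stalks has to be carried out on the smooth $Y$ and pulled back, rather than attempted on $Y_\lambda$ directly.
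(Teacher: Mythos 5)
Your Step 1 is essentially the paper's route (reduce from $\widetilde{Y}^{\triv}$ to $\widetilde{Y}$ using the $p$-power roots of the boundary equations), but note that you compute the colimit of boundary stalks only along the explicit Kummer tower $t_i\mapsto t_i^{1/p^m}$: the stalk of $\rr^qj_*$ at $\widetilde{y}$ is the cohomology of a further pro-finite \'etale cover of that tower, so you still need the $K(\pi,1)$ property of the punctured strictly Henselian scheme (which passes to pro-finite \'etale covers) together with Abhyankar's lemma to see that the residual covering group has $p$-cohomological dimension $0$. This is exactly how the paper closes the argument in \ref{lem:open-et-coh} and \ref{prop:open-et-coh}, and it is fixable in your setup.

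The genuine gap is in Step 2. You pass from the \emph{stalkwise} hypothesis (1) --- pre-perfectoidness of the residue fields $\kappa(\widetilde{y})$ with respect to height-one valuation rings --- to the assertion that $\ca{Y}=\plim_\lambda(Y_{\lambda,\bb{C}})^{\mrm{ad}}$ is an honest perfectoid space with a proper formal model, and everything downstream (tilting, Artin--Schreier--Witt, coherent vanishing on the tilted special fibre) depends on this. That globalization is precisely what the paper does \emph{not} have available and deliberately circumvents: the introduction only says the implication from stalkwise perfectoidness to perfectoidness ``is discussed'' in \cite{he2024purity}, and \ref{rem:vanish-shimura-2} makes explicit that the route through global perfectoidness (purity of perfectoidness) is conditional on the poly-stable modification conjecture. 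The paper's actual argument stays at the stalkwise level: it forms the limit $\widetilde{X}$ of the Riemann--Zariski spaces of all flat proper integral models (\ref{lem:vanish}), checks that hypothesis (1) makes every local ring $\ca{O}_{\widetilde{X},\widetilde{x}}$ a pre-perfectoid $\ca{O}_L$-algebra, and then combines Faltings' main $p$-adic comparison theorem at each finite level with almost cohomological descent for the Faltings ringed topos (\ref{thm:fal-comp}, \ref{lem:fal-comp-perfd}, \ref{cor:fal-comp-perfd}) to identify $H^q_\et(\widetilde{Y},\bb{Z}/p^n\bb{Z})\otimes\ca{O}_L$ almost with $H^q(\widetilde{X},\ca{O}_{\widetilde{X}}/p^n\ca{O}_{\widetilde{X}})$, which vanishes for $q>d$ by Grothendieck vanishing on the special fibres. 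So your overall architecture (boundary reduction, then a primitive-comparison-type identification with coherent cohomology in dimension $\leq d$) matches the paper, but the bridge you choose in Step 2 rests on an input that is not established in the required generality; replacing it by the Faltings-topos/almost-descent argument, which consumes only the pointwise hypothesis you actually have, is the essential missing idea. A secondary point: even granting perfectoidness, your Artin--Schreier--Witt step needs surjectivity of $F-1$ on $H^d_\et(\ca{Y}^\flat,\ca{O})$ to kill the boundary map into degree $d+1$, which in this non-Noetherian setting is itself only an almost statement and requires the same kind of care.
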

\begin{mypara}\label{intro-para:vanish}
	Indeed, the $p$-infinite ramification on boundary implies that
	\begin{align}\label{eq:intro-thm:vanish-2}
		H^q_\et(\widetilde{Y}^{\triv},\bb{Z}/p^n\bb{Z})=H^q_\et(\widetilde{Y},\bb{Z}/p^n\bb{Z})
	\end{align}
	by Abhyankar's lemma and results on $K(\pi,1)$-schemes (see \ref{prop:open-et-coh}). Let $\widetilde{X}$ be the inverse limit of all flat proper integral models of finite subextensions of $\widetilde{Y}/Y$ as locally ringed spaces (see \ref{lem:vanish}). Then, the stalkwise perfectoidness implies that there is a canonical isomorphism of almost $\ca{O}_{\overline{K}}$-modules
	\begin{align}\label{eq:intro-thm:vanish-3}
		H^q_\et(\widetilde{Y},\bb{Z}/p^n\bb{Z})\iso H^q(\widetilde{X},\ca{O}_{\widetilde{X}}/p^n\ca{O}_{\widetilde{X}})
	\end{align}
	 by Faltings' main $p$-adic comparison theorem for (non-smooth) proper $\ca{O}_{\overline{K}}$-schemes \cite[5.17]{he2024falmain} and the cohomological descent for Faltings ringed topos \cite[8.24]{he2024coh} (see \ref{cor:fal-comp-perfd}). Then, the vanishing follows from Grothendieck's vanishing of sheaf cohomology on limits of Noetherian spectral spaces.
\end{mypara}

\begin{mypara}
	As an application, we consider a Shimura datum $(G,X)$ (\cite[2.1.1]{deligne1979shimura}, see also \cite[5.5]{milne2005shimura}) and let $E\subseteq \bb{C}$ be its reflex field. We denote by $\bb{A}_f$ (resp.  $\bb{A}_f^p$) the ring of (resp. prime-to-$p$) finite ad\`eles of $\bb{Q}$. For any neat compact open subgroup $K\subseteq G(\bb{A}_f)$, we denote by $\mrm{Sh}_K$ the canonical model of the Shimura variety associated to $(G,X)$ of level $K$. It is a quasi-projective smooth $E$-scheme, whose $\bb{C}$-points are canonically identified with
	\begin{align}
		\mrm{Sh}_K(\bb{C})=G(\bb{Q})\backslash (X\times G(\bb{A}_f))/K.
	\end{align}
	Moreover, these canonical models form a directed inverse system of $E$-schemes $(\mrm{Sh}_K)_{K\subseteq G(\bb{A}_f)}$ with finite \'etale transition morphisms. We denote by $d$ the common dimension of $\mrm{Sh}_K$.
\end{mypara}

\begin{mythm}[{see \ref{thm:vanish-shimura}}]\label{intro-thm:vanish-shimura}
	Let $K^p\subseteq G(\bb{A}_f^p)$ be a compact open subgroup. Then, for any integers $q>d$ and $n\in\bb{N}$, we have
	\begin{align}
		H^q_\et(\mrm{Sh}_{K^p,\bb{C}},\bb{Z}/p^n\bb{Z})=\colim_{K_p\subseteq G(\bb{Q}_p)}H^q_\et(\mrm{Sh}_{K^pK_p,\bb{C}},\bb{Z}/p^n\bb{Z})=0,
	\end{align}
	where $K_p\subseteq G(\bb{Q}_p)$ runs through all the neat compact open subgroups and $\mrm{Sh}_{K^p,\bb{C}}=\spec(\bb{C})\times_{\spec(E)}\mrm{Sh}_{K^p}=\lim_{K_p\subseteq G(\bb{Q}_p)} \mrm{Sh}_{K^pK_p,\bb{C}}$ is the Shimura variety of $(G,X)$ over $\bb{C}$ at the infinite level $K^p$.
\end{mythm}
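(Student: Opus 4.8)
The strategy is to realize the Shimura tower at infinite level at $p$ as a $p$-adic analytic Galois cover of a toroidal compactification, to verify the injectivity hypothesis of the geometric Sen criterion of Theorem~\ref{intro-thm:sen-val-perfd} using the work of Pan and Camargo, and then to conclude by Theorem~\ref{intro-thm:vanish}. First I would reduce to a $p$-adic base field. Fix a place $v\mid p$ of the reflex field $E$ and put $K=E_v$, a finite extension of $\bb{Q}_p$, hence a complete discrete valuation field with perfect residue field; fix compatible embeddings of an algebraic closure $\overline{E}$ into $\overline{K}$ and into $\bb{C}$. Since each $\mrm{Sh}_{K^pK_p}$ is of finite type over $E$, the invariance of \'etale cohomology with finite coefficients under extension of algebraically closed fields gives $H^q_\et(\mrm{Sh}_{K^pK_p,\bb{C}},\bb{Z}/p^n\bb{Z})\cong H^q_\et(\mrm{Sh}_{K^pK_p,\overline{K}},\bb{Z}/p^n\bb{Z})$ for all $K_p$, while the first equality in the statement is the continuity of \'etale cohomology along the limit $\mrm{Sh}_{K^p}=\lim_{K_p}\mrm{Sh}_{K^pK_p}$, whose transition morphisms are finite. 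Hence it suffices to show $H^q_\et(\mrm{Sh}_{K^p,\overline{K}},\bb{Z}/p^n\bb{Z})=0$ for all $q>d$ and $n\in\bb{N}$.

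Next I would set up the geometry. Fix a neat compact open subgroup $K_p^0\subseteq G(\bb{Q}_p)$ and, by the theory of toroidal compactifications of Shimura varieties (Ash--Mumford--Rapoport--Tai, Pink), choose a smooth proper toroidal compactification $\mrm{Sh}^{\mrm{tor}}_{K^pK_p^0}$ over $E$ whose boundary is a normal crossings divisor. After replacing $K$ by a finite extension $K'$, I may assume that a chosen geometrically connected component $Y$ of $\mrm{Sh}^{\mrm{tor}}_{K^pK_p^0,\overline{K}}$, its boundary divisor $D$ (so that $Y^{\triv}=Y\setminus D$ is the corresponding component of $\mrm{Sh}_{K^pK_p^0,\overline{K}}$), and a connected component $\widetilde{Y}^{\triv}$ of the preimage of $Y^{\triv}$ in $\mrm{Sh}_{K^p,\overline{K}}$ are all defined and geometrically connected over $K'$. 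As the Hecke action of $K_p^0$ on $\mrm{Sh}_{K^p}$ is defined over $E$, the morphism $\widetilde{Y}^{\triv}\to Y^{\triv}$ is pro-finite \'etale with Galois group a compact $p$-adic analytic group $\ca{H}\subseteq K_p^0$. Writing $\ca{K}$ for the function field of $Y$, let $\ca{L}$ be the compositum of the function field of $\widetilde{Y}^{\triv}$ with $\ca{K}\cdot K'(\mu_{p^\infty})$; then $\ca{G}=\gal(\ca{L}/\ca{K})\cong\ca{H}\times\gal(K'(\mu_{p^\infty})/K')$ is a compact $p$-adic analytic group containing a compatible system of $p$-power roots of unity, and the integral closure $Y^{\ca{L}}$ of $Y$ in $\ca{L}$ restricts to a pro-finite \'etale cover $\widetilde{Y}^{\triv}_{\ca{L}}=Y^{\ca{L}}\times_Y Y^{\triv}$ of $Y^{\triv}$. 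Summing over the finitely many components of $\mrm{Sh}^{\mrm{tor}}_{K^pK_p^0,\overline{K}}$, using continuity, and using that over $\overline{K}$ the cover $\widetilde{Y}^{\triv}_{\ca{L},\overline{K}}$ becomes a profinite disjoint union of copies of $\widetilde{Y}^{\triv}_{\overline{K}}$ (the cyclotomic factor being split over $\overline{K}$), it is equivalent to prove $H^q_\et(\widetilde{Y}^{\triv}_{\ca{L},\overline{K}},\bb{Z}/p^n\bb{Z})=0$ for $q>d=\dim Y$.

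Then I would invoke the geometric Sen criterion over $K'$. The image of the universal geometric Sen action~\eqref{eq:intro-thm:sen-val-perfd-1} of $\ca{L}/\ca{K}$ is contained in $\widehat{\overline{\ca{F}}}\otimes_{\bb{Q}_p}\lie(\ca{H})\subseteq\widehat{\overline{\ca{F}}}\otimes_{\bb{Q}_p}\lie(\ca{G})$ --- the cyclotomic direction being arithmetic, hence with vanishing geometric Sen operator --- and there it agrees with the universal geometric Sen action of the geometric cover $\widetilde{Y}^{\triv}/Y^{\triv}$, which by Pan~\cite{pan2021locally} (curve case) and Camargo~\cite{camargo2022completed} (general case) is injective at every geometric valuative point $\overline{y}$ of $Y$ (it is computed there by the Higgs field of the canonical automorphic local system, essentially the Kodaira--Spencer map of the Shimura datum extended over the log-smooth toroidal compactification, which is injective including along the boundary). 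Theorem~\ref{intro-thm:sen-val-perfd} then yields, at each such $\overline{y}$, that the residue field of the image point in $Y^{\ca{L}}$ is a pre-perfectoid field and that the boundary parameters admit compatible systems of $p$-power roots in the corresponding strict Henselization. Since every point $\widetilde{y}$ of the integral closure $\widetilde{Y}$ of $Y_{\overline{K}}$ in $\widetilde{Y}^{\triv}_{\ca{L},\overline{K}}$, equipped with an arbitrary height-$1$ valuation ring over $\ca{O}_{\overline{K}}$, lies over such a point of $Y^{\ca{L}}$ (extend the valuation to an algebraic closure of $\ca{K}$), and since pre-perfectoidness of residue fields and existence of $p$-power roots of the boundary parameters are preserved under the base change from $K'$ to $\overline{K}$ (elementary, as $\overline{K}$ is algebraic over $K'$ with perfectoid completion; compare \cite{he2024purity}), both hypotheses of Theorem~\ref{intro-thm:vanish} hold for $(Y_{\overline{K}},D_{\overline{K}},\widetilde{Y}^{\triv}_{\ca{L},\overline{K}})$. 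Hence $H^q_\et(\widetilde{Y}^{\triv}_{\ca{L},\overline{K}},\bb{Z}/p^n\bb{Z})=0$ for $q>d$, and unwinding the reductions above proves the theorem.

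I expect the main obstacle to be the Sen-theoretic input in the third step: one must upgrade Pan's and Camargo's computation of the geometric Sen operator --- which they carry out on the adic Shimura variety, in effect at the generic point --- to the injectivity of the universal geometric Sen action at \emph{every} geometric valuative point of the toroidal compactification, in particular controlling the operator along the boundary divisor through the compactifying log structure. The remaining ingredients --- existence of smooth proper toroidal compactifications over number fields, continuity and algebraically-closed-base-change invariance of \'etale cohomology, the irrelevance of the cyclotomic twist for the geometric Sen action, and the stability of (pre-)perfectoidness under algebraic base extension --- are standard, so the theorem reduces to combining Theorems~\ref{intro-thm:sen-val-perfd} and~\ref{intro-thm:vanish}.
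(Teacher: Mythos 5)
Your proposal is correct and follows essentially the same route as the paper: verify the injectivity of the universal geometric Sen action at every geometric valuative point of the toroidal compactification via Pan--Camargo, then combine Theorems \ref{intro-thm:sen-val-perfd} and \ref{intro-thm:vanish}, with the same auxiliary reductions (base change between algebraically closed fields, connected components, and adjoining the cyclotomic tower, which the paper packages in \ref{cor:vanish}). The obstacle you single out --- transporting Camargo's pro-Kummer-\'etale computation on the log adic space to the pointwise universal geometric Sen action, including along the boundary --- is exactly what the paper's comparison result \ref{prop:camargo-sen-comp} supplies, using that Camargo's map has locally free cokernel and hence stays injective after pullback to any log geometric point.
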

\begin{mypara}\label{intro-para:vanish-shimura}
	Indeed, by Pan and Rodr\'iguez Camargo's computation of geometric Sen operators associated to the $p$-adic analytic Galois extension of toroidal compactifications of $\mrm{Sh}_{K^p,\overline{\bb{Q}}_p}$ over $\mrm{Sh}_{K^pK_p,\overline{\bb{Q}}_p}$, the universal geometric Sen action is injective at any geometric valuative point (see \ref{thm:camargo} and \ref{prop:camargo-sen-comp}). Then, our criterion \ref{intro-thm:sen-val-perfd} implies the stalkwise perfectoidness and $p$-infinite ramification on boundary of $\mrm{Sh}_{K^p,\overline{\bb{Q}}_p}$ (see \ref{rem:vanish-shimura-1}). Therefore, the vanishing of higher cohomology of Shimura varieties follows directly from \ref{intro-thm:vanish}. 
\end{mypara}

\begin{mypara}
	The article is structured as follows. In Section \ref{sec:fal-ext-val}, we construct Faltings extensions for general valuation rings. Then, we deduce a perfectoidness criterion for geometric valuation rings via Faltings extensions in Section \ref{sec:trace} by constructing Tate's normalized trace map. In Section \ref{sec:aet}, we prove a variant of Temkin's admissibly \'etale local uniformization theorem. We review the construction of Faltings extension and Sen operators over adequate algebras in Section \ref{sec:adequate} and discuss their functoriality. Then, we extend this construction to geometric valuative points of a smooth $p$-adic variety in Section \ref{sec:fal-ext} and \ref{sec:sen}, where we also extend our perfectoidness criterion to this geometric situation. In Section \ref{sec:vanish}, we prove that the stalkwise perfectoidness and $p$-infinite ramification on boundary are sufficient for the vanishing of \'etale cohomology in higher degrees. By comparison with Rodr\'iguez Camargo's construction of geometric Sen operators over log smooth adic spaces in Section \ref{sec:camargo}, we finally prove the Calegari-Emerton's conjecture on the vanishing of higher completed cohomology groups of Shimura varieties in Section \ref{sec:shimura}.
\end{mypara}

\subsection*{Acknowledgements}
I am grateful to Lue Pan and Juan Esteban Rodr\'{i}guez Camargo for their questions on perfectoidness after my Luminy lecture in June 2022, which was the starting point of this research. Many ingredients of this article originate from several projects during my doctoral studies under the guidance of Ahmed Abbes. I also thank him for reviewing and providing suggestions on the initial draft of this manuscript. Additionally, I would like to extend my gratitude to Kiran S. Kedlaya, Peter Scholze, and Takeshi Tsuji for their invaluable feedback and comments.

This material is based upon work supported by the National Science Foundation under Grant No. DMS-1926686 during my membership at Institute for Advanced Study in the ``$p$-adic Arithmetic Geometry" special year.

\section{Notation and Conventions}\label{sec:notation}
\begin{mypara}\label{para:product}
	Let $d$ be a natural number, i.e., $d\in\bb{N}$. We endow the set $(\bb{N}\cup\{\infty\})^d$ with the partial order defined by $\underline{m}\leq \underline{m'}$ if ether $m'_i=\infty$ or $m_i\leq m'_i<\infty$ for any $1\leq i\leq d$, where $\underline{m}=(m_1,\dots,m_d)$ and $\underline{m'}=(m_1',\dots,m_d')$. For any $r\in \bb{N}\cup\{\infty\}$, we set $\underline{r}=(r,\dots,r)\in (\bb{N}\cup\{\infty\})^d$.
	
	On the other hand, we endow the set $(\bb{N}_{>0}\cup\{\infty\})^d$ with the partial order defined by $\underline{N}| \underline{N'}$ if either $N'_i=\infty$ or $N_i$ divides $N'_i<\infty$ for any $1\leq i\leq d$, where $\underline{N}=(N_1,\dots,N_d)$ and $\underline{N'}=(N_1',\dots,N_d')$.
\end{mypara}

\begin{mypara}\label{chap1-para:notation-intclos}
	Following \cite[\Luoma{6}.1.22]{sga4-2}, a \emph{coherent} scheme (resp. morphism of schemes) stands for a quasi-compact and quasi-separated scheme (resp. morphism of schemes). For a coherent morphism $Y \to X$ of schemes, we denote by $X^Y$ the integral closure of $X$ in $Y$ (\cite[\href{https://stacks.math.columbia.edu/tag/0BAK}{0BAK}]{stacks-project}). When $Y=\spec(B)$ is affine, we also put $X^B=X^Y$.
\end{mypara}

\begin{mypara}\label{para:notation-Tate-mod}
	All monoids and rings considered in this article are unitary and commutative, and we fix a prime number $p$ throughout this article. For an abelian group $M$, we set
	\begin{align}
		T_p(M)=&\plim_{x\mapsto px}M[p^n]=\ho_{\bb{Z}}(\bb{Z}[1/p]/\bb{Z},M),\label{eq:para:notation-Tate-mod-1}\\
		V_p(M)=&\plim_{x\mapsto px}M=\ho_{\bb{Z}}(\bb{Z}[1/p],M).\label{eq:para:notation-Tate-mod-2}
	\end{align}
	We remark that $T_p(M)$ is a $p$-adically complete $\bb{Z}_p$-module (\cite[4.4]{jannsen1988cont}), and that if $M=M[p^\infty]$ (i.e., $M$ is $p$-primary torsion) then $V_p(M)=T_p(M)\otimes_{\bb{Z}_p}\bb{Q}_p$. We fix an algebraic closure $\overline{\bb{Q}}_p$ of $\bb{Q}_p$. We set $\bb{Z}_p(1)=T_p(\overline{\bb{Q}}_p^\times)$, which is a free $\bb{Z}_p$-module of rank $1$ and any compatible system of primitive $p$-power roots of unity $\zeta=(\zeta_{p^n})_{n\in \bb{N}}$ in $\overline{\bb{Q}}_p$ (i.e., $\zeta_{p^{n+1}}^p=\zeta_{p^n}$, $\zeta_1=1$, $\zeta_p\neq 1$) forms a basis of it. We endow $\bb{Z}_p(1)$ with the natural continuous action of the Galois group $\gal(\overline{\bb{Q}}_p/\bb{Q}_p)$. For any $\bb{Z}_p$-module $M$ and $r\in \bb{Z}$, we set $M(r)=M\otimes_{\bb{Z}_p}\bb{Z}_p(1)^{\otimes r}$, the $r$-th Tate twist of $M$.
\end{mypara}

\begin{mypara}\label{defn:repn}
	Let $G$ be a topological group, $A$ a topological ring endowed with a continuous action by $G$. An \emph{$A$-representation $(W,\rho_W)$ of $G$} is a topological $A$-module $W$ endowed with a continuous semi-linear action $\rho_W:G\times W\to W$ of $G$. We usually denote $(W,\rho_W)$ simply by $W$. A morphism $W\to W'$ of $A$-representations of $G$ is a continuous $A$-linear homomorphism compatible with the action of $G$. We denote by $\repn(G,A)$ the category of $A$-representations of $G$. 
	
	We say that an $A$-representation $W$ of $G$ is \emph{finite projective} (resp. \emph{finite free}) if $W$ is a finite projective (resp. finite free) $A$-module endowed with the canonical topology (\cite[page 820]{tsuji2018localsimpson}, see also \cite[2.3, 2.4]{he2022sen}). We denote by $\repnpr(G,A)$ (resp. $\repnfr(G,A)$) the full subcategory of  $\repn(G,A)$ consisting of finite projective (resp. finite free) $A$-representations of $G$. 
	
	Assume that the topology on $A$ is linear. Let $A'$ be a linearly topologized ring endowed with a continuous action of a topological group $G'$, $G'\to G$ a continuous group homomorphism, $A\to A'$ a continuous ring homomorphism compatible with the actions of $G$ and $G'$. Then, the tensor product defines a natural functor (see \cite[2.4]{he2022sen})
	\begin{align}
		\repnpr(G,A)\longrightarrow \repnpr(G',A'),\ W\mapsto A'\otimes_A W.
	\end{align}
\end{mypara}

\begin{mypara}\label{para:notation-val}
	A \emph{valuation field} is a pair $(K,\ca{O}_K)$ where $\ca{O}_K$ is a valuation ring with fraction field $K$ (\cite[\Luoma{6}.\textsection1.2, D\'efinition 2]{bourbaki2006commalg5-7}). We denote by $\ak{m}_K$ the maximal ideal of $\ca{O}_K$ and call the number of the nonzero prime ideals of $\ca{O}_K$ the \emph{height} (or \emph{rank}) of $K$ (\cite[\Luoma{6}.\textsection4.4, Proposition 5]{bourbaki2006commalg5-7}). We also refer to \cite[\href{https://stacks.math.columbia.edu/tag/00I8}{00I8}]{stacks-project} for basic properties on valuation rings. For a non-discrete valuation field $K$ of height $1$ (so that $\ak{m}_K=\ak{m}_K^2$), when referring to almost modules over $\ca{O}_K$ we always take $(\ca{O}_K,\ak{m}_K)$ as the basic setup (\cite[2.1.1]{gabber2003almost}). We also refer to \cite[\textsection5]{he2024coh} for a brief review on basic notions in almost ring theory.
\end{mypara}

\begin{mypara}\label{para:notation-perfd}
	Following \cite[\textsection5]{he2024coh}, a \emph{pre-perfectoid field} is a valuation field $K$ of height $1$ with non-discrete valuation of residue characteristic $p$ such that the Frobenius map on $\ca{O}_K/p\ca{O}_K$ is surjective (\cite[5.1]{he2024coh}). We note that for any nonzero element $\pi\in\ak{m}_K$ the fraction field $\widehat{K}$ of the $\pi$-adic completion of $\ca{O}_K$ is a perfectoid field in the sense of \cite[3.1]{scholze2012perfectoid} (see \cite[5.2]{he2024coh}). Given a pre-perfectoid field $K$, we say that an $\ca{O}_K$-algebra $R$ is (resp. \emph{almost}) \emph{pre-perfectoid} if there is a nonzero element $\pi\in\ak{m}_K$ such that $p\in \pi^p\ca{O}_K$, that the $\pi$-adic completion $\widehat{R}$ is (resp. almost) flat over $\ca{O}_{\widehat{K}}$ and that the Frobenius induces an (resp. almost) isomorphism $R/\pi R\to R/\pi^p R$ (see \cite[5.19]{he2024coh}). This definition does not depend on the choice of $\pi$ by \cite[5.23]{he2024coh} and is equivalent to the fact that the $\ca{O}_{\widehat{K}}$-algebra $\widehat{R}$ (resp. almost $\ca{O}_{\widehat{K}}$-algebra $\widehat{R}^{\al}$ associated to $\widehat{R}$) is perfectoid in the sense of \cite[3.10.(\luoma{2})]{bhattmorrowscholze2018integral} (resp. \cite[5.1.(\luoma{2})]{scholze2012perfectoid}, see \cite[5.18]{he2024coh}).
\end{mypara}

\section{Faltings Extension of General Valuation Rings}\label{sec:fal-ext-val}
We extend the construction of Faltings extension of complete discrete valuation rings \cite[4.4]{he2021faltingsext} to more general valuation rings of height $1$, using Gabber-Ramero's computation \cite[\textsection6]{gabber2003almost} of cotangent complexes of general valuation rings (see \ref{thm:fal-ext-val}).

\begin{mypara}\label{para:notation-fal-ext}
	In this section, we fix a complete discrete valuation field $K$ extension of $\bb{Q}_p$ with perfect residue field, and an algebraically closed valuation field $\ca{F}$ of height $1$ extension of $K$ with finite transcendental degree $d=\mrm{trdeg}_K(\ca{F})<\infty$. We identify $\overline{\bb{Q}}_p$ (fixed in \ref{para:notation-Tate-mod}) with the algebraic closure of $\bb{Q}_p$ in $\ca{F}$.
\end{mypara}

\begin{mypara}\label{para:notation-fal-ext-2}
	Let $A$ be an $\ca{O}_K$-subalgebra of $\ca{O}_{\ca{F}}$ such that $\ca{F}$ is algebraic over the fraction field $\ca{K}$ of $A$. Note that the valuation on $\ca{F}$ induces a valuation on $\ca{K}$ (i.e., $\ca{O}_{\ca{K}}=\ca{K}\cap \ca{O}_{\ca{F}}$, see \cite[3.1]{he2024purity}). Moreover, $\ca{O}_{\ca{F}}$ is the localization of the integral closure of $\ca{O}_{\ca{K}}$ in $\ca{F}$ at a maximal ideal (\cite[\Luoma{6}.\textsection8.6, Proposition 6]{bourbaki2006commalg5-7}). Let $\ca{O}_{\ca{K}^{\mrm{h}}}$ be the Henselization of $\ca{O}_{\ca{K}}$, which is still a valuation ring with fraction field $\ca{K}^{\mrm{h}}$ algebraic over $\ca{K}$ (\cite[6.1.12.(\luoma{6})]{gabber2003almost}). Then, there is a canonical extension $\ca{O}_{\ca{K}^{\mrm{h}}}\to \ca{O}_{\ca{F}}$ of valuation rings (\cite[\href{https://stacks.math.columbia.edu/tag/04GS}{04GS}]{stacks-project}).
	\begin{align}\label{eq:para:notation-fal-ext-2-1}
		\ca{O}_{\ca{F}}\longleftarrow \ca{O}_{\ca{K}^{\mrm{h}}}\longleftarrow \ca{O}_{\ca{K}}\longleftarrow A.
	\end{align}
	Notice that the group of $A$-algebra automorphisms of $\ca{O}_{\ca{F}}$ is identified with the decomposition subgroup of the absolute Galois group of $\ca{K}$ at the maximal ideal of $\ca{O}_{\ca{F}}$ and also with the absolute Galois group of $\ca{K}^{\mrm{h}}$ (see \cite[\Luoma{5}.\textsection2.3, Remarque 2]{bourbaki2006commalg5-7} and \cite[\href{https://stacks.math.columbia.edu/tag/0BSD}{0BSD}]{stacks-project}):
	\begin{align}
		\mrm{Aut}_A(\ca{O}_{\ca{F}})=\{\sigma\in \gal(\ca{F}/\ca{K})\ |\ \sigma(\ca{O}_{\ca{F}})=\ca{O}_{\ca{F}}\}=\gal(\ca{F}/\ca{K}^{\mrm{h}}).
	\end{align}
	It naturally acts on the module of differentials $\Omega^1_{\ca{O}_{\ca{F}}/A}$. Note that the Tate module $T_p(\Omega^1_{\ca{O}_{\ca{F}}/A})=\lim_{x\mapsto px}\Omega^1_{\ca{O}_{\ca{F}}/A}[p^n]$ is $p$-adically complete and that $\Omega^1_{\ca{O}_{\ca{F}}/A}[1/p]=\Omega^1_{\ca{F}/\ca{K}}=0$. Thus, 
	\begin{align}
		V_p(\Omega^1_{\ca{O}_{\ca{F}}/A})=\lim_{x\mapsto px}\Omega^1_{\ca{O}_{\ca{F}}/A}=T_p(\Omega^1_{\ca{O}_{\ca{F}}/A})[1/p]
	\end{align}
	is an $\widehat{\ca{F}}$-module (see \ref{para:notation-Tate-mod}) endowed with a natural action of $\gal(\ca{F}/\ca{K}^{\mrm{h}})$, where $\widehat{\ca{F}}$ is the fraction field of the $p$-adic completion of $\ca{O}_{\ca{F}}$ (also equal to the completion of $\ca{F}$ with respect to the topology induced by the $p$-adic topology of $\ca{O}_{\ca{F}}$).
	
	We note that for any $(s_{p^n})_{n\in\bb{N}}\in V_p(\ca{F}^\times)$ and any $k\in p\bb{N}$ such that $p^ks_1, p^ks_1^{-1}\in\ca{O}_{\ca{F}}$ (so that $p^ks_{p^n}^{\pm1}\in\ca{O}_{\ca{F}}$), the element $p^{-2k}(p^ks_{p^n}^{-1}\df(p^ks_{p^n}))_{n\in\bb{N}}\in V_p(\Omega^1_{\ca{O}_{\ca{F}}/A})$ does not depend on the choice of $k$, and we denote it by $(\df\log(s_{p^n}))_{n\in\bb{N}}$ (see \cite[5.6]{he2022sen}). Thus, there is a canonical $\gal(\ca{F}/\ca{K}^{\mrm{h}})$-equivariant group homomorphism
	\begin{align}\label{eq:para:notation-fal-ext-2-4}
		V_p(\ca{F}^\times)\longrightarrow  V_p(\Omega^1_{\ca{O}_{\ca{F}}/A}),\  (s_{p^n})_{n\in\bb{N}}\mapsto (\df\log(s_{p^n}))_{n\in\bb{N}}.
	\end{align}
\end{mypara}

\begin{mylem}\label{lem:val-mod-str}
	Let $V$ be a valuation ring, $M$ a finitely presented $V$-module. Then, there exists an isomorphism of $V$-modules
	\begin{align}
		M\cong V^{\oplus n}\oplus V/a_1V\oplus \cdots \oplus V/a_mV
	\end{align}
	for some $n,m\in\bb{N}$ and $a_1,\dots,a_m\in V$.
\end{mylem}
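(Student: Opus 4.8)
The plan is to present $M$ via a matrix over $V$ and then reduce that matrix to a diagonal form by elementary row and column operations, using the key fact that a valuation ring is a Bézout domain: any finitely generated ideal is principal, equivalently any two elements $a,b\in V$ satisfy $a\mid b$ or $b\mid a$. First I would choose a finite presentation $V^{\oplus s}\xrightarrow{\ \Phi\ } V^{\oplus t}\to M\to 0$ and regard $\Phi$ as a $t\times s$ matrix $(a_{ij})$ over $V$. I then run a Smith-normal-form type argument: among all nonzero entries obtainable from $\Phi$ by invertible row and column operations, pick one, say $a_{11}$ after reordering, that divides at least one entry in its row and column (here one uses that $V$ is totally ordered by divisibility, so one can always move a ``minimal'' entry into the corner). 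Since $a_{11}\mid a_{1j}$ and $a_{11}\mid a_{i1}$ for all $i,j$ in the relevant row and column, subtracting multiples of the first column (resp.\ row) clears the first row and column except for the corner entry $a_{11}$. This splits off a summand and leaves a smaller presentation matrix; by induction on $\min(s,t)$ we arrive at a diagonal matrix $\mathrm{diag}(a_1,\dots,a_k)$ (padded with zero rows/columns), whence
\begin{align}
M\cong V/a_1V\oplus\cdots\oplus V/a_kV\oplus V^{\oplus n},
\end{align}
where $n$ counts the zero columns not matched by a pivot and the entries $a_i=0$ contribute extra free summands (absorbed into $V^{\oplus n}$). Dropping the $a_i$ that are units (those summands vanish) and collecting the rest gives the stated form.

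The one point that needs care — and which I expect to be the main obstacle — is justifying the ``clearing'' step cleanly over a general, possibly non-Noetherian valuation ring: unlike over a PID there is no Euclidean algorithm, so I cannot iterate a gcd-reduction. Instead the argument must be purely structural: because $V$ is valuation, for the first column $(a_{i1})_i$ there is an index $i_0$ with $a_{i_0 1}\mid a_{i1}$ for all $i$ (the valuation of $a_{i_0 1}$ is minimal), and a single permutation plus row operations $R_i\mapsto R_i-(a_{i1}/a_{i_0 1})R_{i_0}$ clears that column in one pass; symmetrically for the pivot row. So no iteration is needed — each pivot is handled in finitely many operations and then removed. One should also note that this is exactly the statement that a valuation ring is an elementary divisor domain (indeed every Bézout domain of this type admits Smith normal form for matrices), so an alternative is simply to cite that; but the direct proof above is short enough to include. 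Finally I would remark that $n$ and the divisibility-ordered list of nonunit, nonzero $a_i$ are in fact unique (the $a_i$ up to units), though the lemma as stated only asserts existence, so uniqueness can be omitted.
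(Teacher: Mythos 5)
Your proof is correct in substance but takes a genuinely different route from the paper. The paper first splits off the torsion: $M/M_{\mathrm{tor}}$ is finitely generated and torsion-free, hence free over a valuation ring (Bourbaki), so $M\cong M/M_{\mathrm{tor}}\oplus M_{\mathrm{tor}}$; the torsion summand is then finitely presented (being a direct summand of a finitely presented module) and its structure is obtained by citing Gabber--Ramero, 6.1.14. Your Smith-normal-form argument is self-contained and in effect reproves that cited input; both approaches are legitimate, and yours has the advantage of not outsourcing the torsion case, at the cost of having to run the matrix reduction carefully.

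One step in your reduction should be tightened. Choosing the pivot as the minimal-valuation entry of the first \emph{column} and then proceeding ``symmetrically for the pivot row'' does not quite work: after clearing the column, that pivot need not divide the entries of the first row, and the column permutation needed to install a row-minimal pivot re-populates the first column, so the naive alternation has no evident termination when the value group is not well-ordered. The correct (and simpler) choice is the entry of minimal valuation in the \emph{entire} matrix: since divisibility in $V$ is total, this entry divides every entry of $\Phi$, so a single permutation followed by one pass of row operations and one pass of column operations clears both its row and its column, and one recurses on the complementary submatrix. With that adjustment (or by simply invoking that valuation rings are elementary divisor rings, as you suggest), your argument is complete.
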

\begin{proof}
	The case for $M$ torsion is proved in \cite[6.1.14]{gabber2003almost}. In general, let $M_{\trm{tor}}$ be the submodule of torsion elements of $M$. Then, $M/M_{\trm{tor}}$ is a finitely generated torsion-free $V$-module so that it is finite free by \cite[\Luoma{6}.\textsection3.6, Lemme 1]{bourbaki2006commalg5-7}. Thus, there is a decomposition $M\cong M/M_{\trm{tor}}\oplus M_{\trm{tor}}$. In particular, the torsion $V$-module $M_{\trm{tor}}$ is finitely presented as a direct summand. The conclusion follows from the previous special case.
\end{proof}

\begin{myprop}[{cf. \cite[4.4]{he2021faltingsext}, \cite[5.7]{he2022sen}}]\label{prop:fal-ext-val}
	Let $A$ be a finitely generated $\ca{O}_K$-subalgebra of $\ca{O}_{\ca{F}}$ such that $\ca{F}$ is algebraic over the fraction field $\ca{K}$ of $A$. Then, there exists a canonical $\gal(\ca{F}/\ca{K}^{\mrm{h}})$-equivariant exact sequence of $\widehat{\ca{F}}$-modules,
	\begin{align}\label{eq:prop:fal-ext-val-1}
		\xymatrix{
			0\ar[r]&\widehat{\ca{F}}(1)\ar[r]^-{\iota}&V_p(\Omega^1_{\ca{O}_{\ca{F}}/A})\ar[r]^-{\jmath}&\widehat{\ca{F}}\otimes_{\ca{F}}\Omega^1_{\ca{F}/K}\ar[r]&0,
		}
	\end{align}
	where $\widehat{\ca{F}}(1)$ is the first Tate twist of the completion $\widehat{\ca{F}}$ of $\ca{F}$ {\rm (\ref{para:notation-Tate-mod})},
	satisfying the following properties:
	\begin{enumerate}
		\renewcommand{\labelenumi}{{\rm(\theenumi)}}
		\item For any compatible system of $p$-power roots of unity $(\zeta_{p^n})_{n\in\bb{N}}$ contained in $\ca{F}$, we have $\iota((\zeta_{p^n})_{n\in\bb{N}})=(\df\log(\zeta_{p^n}))_{n\in\bb{N}}$.\label{item:prop:fal-ext-val-1}
		\item For any $s\in \ca{F}^\times$ with compatible system of $p$-power roots $(s_{p^n})_{n\in\bb{N}}$ contained in $\ca{F}$, we have $\jmath((\df\log(s_{p^n}))_{n\in\bb{N}})=\df\log(s)$ \eqref{eq:para:notation-fal-ext-2-4}. \label{item:prop:fal-ext-val-2}
		\item For any transcendental basis $t_1,\dots,t_d$ of $\ca{F}$ over $K$ with compatible systems of $p$-power roots $(t_{1,p^n})_{n\in\bb{N}}, \dots,(t_{d,p^n})_{n\in\bb{N}}$ contained in $\ca{F}$, the $\widehat{\ca{F}}$-linear surjection $\jmath$ admits a section sending $\df\log(t_i)$ to $(\df\log(t_{i,p^n}))_{n\in\bb{N}}$.\label{item:prop:fal-ext-val-3}
	\end{enumerate}
	In particular, $V_p(\Omega^1_{\ca{O}_{\ca{F}}/A})$ is a finite free $\widehat{\ca{F}}$-module with basis $\{(\df\log(t_{i,p^n})_{n\in\bb{N}})\}_{0\leq i\leq d}$, where $t_{0,p^n}=\zeta_{p^n}$. Moreover, $\gal(\ca{F}/\ca{K}^{\mrm{h}})$ acts continuously on $V_p(\Omega^1_{\ca{O}_{\ca{F}}/A})$ with respect to the canonical topology {\rm(\cite[2.3]{he2022sen})}, where $\widehat{\ca{F}}$ is endowed with the topology induced by the $p$-adic topology of its valuation ring.
\end{myprop}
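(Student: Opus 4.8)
The plan is to reproduce the construction of the Faltings extension of a complete discrete valuation ring \cite[4.4]{he2021faltingsext} (see also \cite[5.7]{he2022sen}), the one new ingredient being Gabber-Ramero's computation \cite[\textsection6]{gabber2003almost} of the cotangent complex of a general valuation ring extension, which replaces the discreteness assumption made there.

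\emph{Independence of $A$ and reduction.} If $A\subseteq A'$ are two finitely generated $\ca{O}_K$-subalgebras of $\ca{O}_{\ca{F}}$ with $\ca{F}$ algebraic over both fraction fields, then $\Omega^1_{A'/A}$ is a finitely generated torsion $A'$-module (the extension $\mrm{Frac}(A')/\mrm{Frac}(A)$ being finite of characteristic $0$), hence annihilated by some nonzero $f\in A'\subseteq\ca{O}_{\ca{F}}$; since $\ca{O}_{\ca{F}}$ has height $1$ we have $p^N\in f\ca{O}_{\ca{F}}$ for $N\gg 0$, so every $\ca{O}_{\ca{F}}$-module killed by $f$ is killed by $p^N$, and both $V_p$ and $\mrm{Ext}^1_{\bb{Z}}(\bb{Z}[1/p],-)$ of it vanish (the relevant $p$-multiplication tower is nilpotent). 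Feeding this into the transitivity triangle of cotangent complexes for $A\to A'\to\ca{O}_{\ca{F}}$ shows that the canonical map $V_p(\Omega^1_{\ca{O}_{\ca{F}}/A})\to V_p(\Omega^1_{\ca{O}_{\ca{F}}/A'})$ is an isomorphism, compatibly with the maps \eqref{eq:para:notation-fal-ext-2-4}. Choosing (using that $\ca{F}$ is algebraically closed) a transcendence basis $t_1,\dots,t_d$ of $\ca{F}$ over $K$ inside $\ca{O}_{\ca{F}}$ with compatible systems of $p$-power roots in $\ca{F}$, and comparing $A$ with $A[t_1,\dots,t_d]\supseteq\ca{O}_K[t_1,\dots,t_d]$, we reduce to $A=\ca{O}_K[t_1,\dots,t_d]$.

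\emph{The two building blocks.} Let $\overline{K}\subseteq\ca{F}$ be the algebraic closure of $K$, $\ca{O}_{\overline{K}}=\overline{K}\cap\ca{O}_{\ca{F}}$, and $A''=\ca{O}_{\overline{K}}[t_1,\dots,t_d]=A\otimes_{\ca{O}_K}\ca{O}_{\overline{K}}$. By Gabber-Ramero, $\Omega^1_{\ca{O}_{\ca{F}}/\ca{O}_{\overline{K}}}$ is torsion-free and $L_{\ca{O}_{\ca{F}}/\ca{O}_{\overline{K}}}$ is concentrated in degree $0$; being also $p$-divisible (extract $p$-power roots) and hence, as $\ca{O}_{\ca{F}}[1/p]=\ca{F}$, an $\ca{F}$-vector space, $\Omega^1_{\ca{O}_{\ca{F}}/\ca{O}_{\overline{K}}}$ is canonically $\Omega^1_{\ca{F}/\overline{K}}=\Omega^1_{\ca{F}/K}$. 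Transitivity for $\ca{O}_{\overline{K}}\to A''\to\ca{O}_{\ca{F}}$ then identifies $\Omega^1_{\ca{O}_{\ca{F}}/A''}$ with the cokernel of the inclusion $\bigoplus_i\ca{O}_{\ca{F}}\df t_i\hookrightarrow\Omega^1_{\ca{F}/K}$; as $\ca{F}/\ca{O}_{\ca{F}}$ is $p$-primary torsion with $T_p(\ca{F}/\ca{O}_{\ca{F}})=\widehat{\ca{O}_{\ca{F}}}$, one obtains $V_p(\Omega^1_{\ca{O}_{\ca{F}}/A''})\iso\widehat{\ca{F}}\otimes_{\ca{F}}\Omega^1_{\ca{F}/K}$ and $\mrm{Ext}^1_{\bb{Z}}(\bb{Z}[1/p],\Omega^1_{\ca{O}_{\ca{F}}/A''})=0$. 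Transitivity for $A\to A''\to\ca{O}_{\ca{F}}$ (using again that $L_{\ca{O}_{\ca{F}}/\ca{O}_{\overline{K}}}$, hence $L_{\ca{O}_{\ca{F}}/A''}$, is acyclic in degree $1$) gives a short exact sequence $0\to\Omega^1_{\ca{O}_{\overline{K}}/\ca{O}_K}\otimes_{\ca{O}_{\overline{K}}}\ca{O}_{\ca{F}}\to\Omega^1_{\ca{O}_{\ca{F}}/A}\to\Omega^1_{\ca{O}_{\ca{F}}/A''}\to0$, and Fontaine's computation $V_p(\Omega^1_{\ca{O}_{\overline{K}}/\ca{O}_K})\cong\widehat{\overline{K}}(1)$ (the $d=0$ case, contained in \cite[4.4]{he2021faltingsext}) together with flat base change along $\ca{O}_{\overline{K}}\to\ca{O}_{\ca{F}}$ identifies $V_p$ of the left-hand term with $\widehat{\ca{F}}(1)$, carrying a compatible system $(\zeta_{p^n})_n$ of roots of unity to $(\df\log\zeta_{p^n})_n$.

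\emph{Assembly.} Applying $V_p$ to the last short exact sequence and using $\mrm{Ext}^1_{\bb{Z}}(\bb{Z}[1/p],\Omega^1_{\ca{O}_{\overline{K}}/\ca{O}_K}\otimes\ca{O}_{\ca{F}})=0$ ($p$-divisibility) yields the exact sequence \eqref{eq:prop:fal-ext-val-1}, whose middle term is therefore finite free over $\widehat{\ca{F}}$ of rank $d+1$; here $\iota$ is the $\widehat{\ca{F}}$-linear extension of $(\zeta_{p^n})_n\mapsto(\df\log\zeta_{p^n})_n$ and $\jmath$ is $V_p$ of the canonical surjection $\Omega^1_{\ca{O}_{\ca{F}}/A}\to\Omega^1_{\ca{O}_{\ca{F}}/A''}$ followed by the identification above. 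Property (1) is then immediate; property (2) follows because the maps \eqref{eq:para:notation-fal-ext-2-4} are compatible under $\Omega^1_{\ca{O}_{\ca{F}}/A}\to\Omega^1_{\ca{O}_{\ca{F}}/A''}$ and because under $V_p(\ca{F}/\ca{O}_{\ca{F}})\iso\widehat{\ca{F}}$ the class $(\df\log s_{p^n})_n$ is carried to $\df\log s$; and since $\{\df\log t_i\}_{1\leq i\leq d}$ is a basis of $\Omega^1_{\ca{F}/K}$, property (2) makes $\df\log t_i\mapsto(\df\log t_{i,p^n})_n$ a section of $\jmath$, which gives property (3), the asserted basis, and — with \cite[2.3]{he2022sen} — the continuity of the $\gal(\ca{F}/\ca{K}^{\mrm{h}})$-action; it also shows the construction is independent of the auxiliary $A''$, so descends to arbitrary $A$ by the first step. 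The main difficulty is extracting the needed statements from Gabber-Ramero (degree-$0$ concentration and torsion-freeness of $L_{\ca{O}_{\ca{F}}/\ca{O}_{\overline{K}}}$) and checking that $V_p$ and $T_p$ commute with the flat base change $\ca{O}_{\overline{K}}\to\ca{O}_{\ca{F}}$ on the $p$-primary torsion module $\Omega^1_{\ca{O}_{\overline{K}}/\ca{O}_K}$; everything else is tracking canonical identifications.
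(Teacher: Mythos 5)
Your construction is correct in substance and produces the same extension, but it is organized differently from the paper's proof, so a comparison is worthwhile. The paper works with an arbitrary finitely generated $A$ from the start: it takes the Jacobi--Zariski sequence for $\ca{O}_K\to A\to\ca{O}_{\ca{F}}$, uses the structure theorem for finitely presented modules over valuation rings (\ref{lem:val-mod-str}) to show that $\ke\bigl(\ca{O}_{\ca{F}}\otimes_A\Omega^1_{A/\ca{O}_K}\to\Omega^1_{\ca{O}_{\ca{F}}/\ca{O}_K}\bigr)$ and the torsion of $\ca{O}_{\ca{F}}\otimes_A\Omega^1_{A/\ca{O}_K}$ are killed by a bounded power of $p$, and then applies $\rr\lim$ to the $p^n$-torsion long exact sequences, identifying $\rr\lim\Omega^1_{\ca{O}_{\ca{F}}/\ca{O}_K}[p^n]$ with $\ca{O}_{\widehat{\ca{F}}}(1)$ via the sequence of \cite[6.5.12]{gabber2003almost} for $\ca{O}_K\to\ca{O}_{\overline{K}}\to\ca{O}_{\ca{F}}$ and Fontaine's computation. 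You instead first reduce to $A=\ca{O}_K[t_1,\dots,t_d]$ and interpolate through $A''=\ca{O}_{\overline{K}}[t_1,\dots,t_d]$, which eliminates all torsion bookkeeping (your modules become quotients of $\ca{F}$-vector spaces by lattices) --- a genuine simplification of the module theory. What it costs: (a) the left-exactness of $0\to\ca{O}_{\ca{F}}\otimes_{\ca{O}_{\overline{K}}}\Omega^1_{\ca{O}_{\overline{K}}/\ca{O}_K}\to\Omega^1_{\ca{O}_{\ca{F}}/A}\to\Omega^1_{\ca{O}_{\ca{F}}/A''}\to0$ needs the vanishing of the connecting map out of $H_1(L_{\ca{O}_{\ca{F}}/A''})$, and the implication ``$L_{\ca{O}_{\ca{F}}/\ca{O}_{\overline{K}}}$ concentrated in degree $0$ $\Rightarrow$ same for $L_{\ca{O}_{\ca{F}}/A''}$'' uses one more input, namely the injectivity of $\bigoplus_i\ca{O}_{\ca{F}}\,\df t_i\to\Omega^1_{\ca{O}_{\ca{F}}/\ca{O}_{\overline{K}}}$ (again from torsion-freeness); the paper sidesteps cotangent-complex degree-$1$ terms entirely by only invoking the already-exact sequence for the tower of valuation rings. (b) Since you choose the $t_i$ in $\ca{O}_{\ca{F}}$ rather than in $A$, the auxiliary rings $\ca{O}_K[t_1,\dots,t_d]$ and $A''$ need not be stable under $\gal(\ca{F}/\ca{K}^{\mrm{h}})$, so equivariance and independence of choices for the transported sequence are not automatic; they do follow, because $\iota$ and $\jmath$ are pinned down by properties (1)--(2) on a spanning set (every element of the algebraically closed $\ca{F}^\times$ admits compatible $p$-power roots), but this must be said explicitly --- or avoided by taking the $t_i$ inside $A$. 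The paper's direct route makes the equivariance, and the functoriality in $A$ that is needed later in \ref{rem:fal-ext-val} and \ref{thm:fal-ext-val}, immediate.
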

\begin{proof}
	Consider the exact sequence of modules of differentials associated to the maps $\ca{O}_K\to A\to \ca{O}_{\ca{F}}$,
	\begin{align}
		\xymatrix{
			\ca{O}_{\ca{F}}\otimes_A\Omega^1_{A/\ca{O}_K}\ar[r]^-{\alpha}&\Omega^1_{\ca{O}_{\ca{F}}/\ca{O}_K}\ar[r]^-{\beta}&\Omega^1_{\ca{O}_{\ca{F}}/A}\ar[r]&0.
		}
	\end{align}
	Firstly, we claim that $\ke(\alpha)$ is killed by a certain power of $p$. Indeed, there is a canonical commutative diagram
	\begin{align}
		\xymatrix{
			\ca{O}_{\ca{F}}\otimes_A\Omega^1_{A/\ca{O}_K}\ar[rr]^-{\alpha}\ar[d]&&\Omega^1_{\ca{O}_{\ca{F}}/\ca{O}_K}\ar[d]\\
			\ca{F}\otimes_A\Omega^1_{A/\ca{O}_K}\ar@{=}[r]&\ca{F}\otimes_{\ca{K}}\Omega^1_{\ca{K}/K}\ar@{=}[r]&\Omega^1_{\ca{F}/K}
		}
	\end{align}
	where the first identity follows from the fact that $\ca{K}$ is a localization of $A$, and the second identity follows from the fact that $\ca{F}$ is separable over $\ca{K}$ by assumption. In particular, $\ke(\alpha)$ is contained in the torsion submodule of $\ca{O}_{\ca{F}}\otimes_A\Omega^1_{A/\ca{O}_K}$. Since $A$ is of finite type over $\ca{O}_K$, $\Omega^1_{A/\ca{O}_K}$ is a finitely presented $A$-module (as $A$ is Noetherian). Thus, $\ca{O}_{\ca{F}}\otimes_A\Omega^1_{A/\ca{O}_K}$ is a finitely presented $\ca{O}_{\ca{F}}$-module, whose torsion submodule is killed by a certain power of $p$ by \ref{lem:val-mod-str}.
	
	Applying the functor $\ho_{\bb{Z}}(\bb{Z}/p^n\bb{Z},-)$ to the exact sequence $0\to \im(\alpha)\stackrel{\alpha}{\longrightarrow} \Omega^1_{\ca{O}_{\ca{F}}/\ca{O}_K}\stackrel{\beta}{\longrightarrow}\Omega^1_{\ca{O}_{\ca{F}}/A}\to 0$, we obtain a long exact sequence of $\ca{O}_{\ca{F}}$-modules,
	\begin{align}\label{eq:prop:fal-ext-val-4}
		\xymatrix{
			0\ar[r]&\im(\alpha)[p^n]\ar[r]&\Omega^1_{\ca{O}_{\ca{F}}/\ca{O}_K}[p^n]\ar[r]&\Omega^1_{\ca{O}_{\ca{F}}/A}[p^n]\ar[r]&\im(\alpha)/p^n\ar[r]&\Omega^1_{\ca{O}_{\ca{F}}/\ca{O}_K}/p^n.
		}
	\end{align}
	We have the following properties:
	\begin{enumerate}
		\renewcommand{\theenumi}{\roman{enumi}}
		\renewcommand{\labelenumi}{{\rm(\theenumi)}}
		\item The inverse system $(\im(\alpha)[p^n])_{n\in\bb{N}}$ is essentially zero. Indeed, since $\ke(\alpha)$ is killed by a certain power of $p$ and the torsion submodule of $\ca{O}_{\ca{F}}\otimes_A\Omega^1_{A/\ca{O}_K}$ is killed by a certain power of $p$ by \ref{lem:val-mod-str}, the torsion submodule of $\im(\alpha)$ is killed by $p^r$ for some $r\in\bb{N}$. Thus, the transition map $p^r:\im(\alpha)[p^{n+r}]\to \im(\alpha)[p^n]$ is zero for any $n\in\bb{N}$, which proves the claim.
		\item There is a canonical isomorphism $\ca{O}_{\widehat{\ca{F}}}(1)\iso \rr\lim_{n\in\bb{N}}(\Omega^1_{\ca{O}_{\ca{F}}/\ca{O}_K}[p^n])$ sending $(\zeta_{p^n})_{n\in\bb{N}}$ to $(\df\log(\zeta_{p^n}))_{n\in\bb{N}}$. Indeed, there is a canonical exact sequence (\cite[6.5.12.(\luoma{2})]{gabber2003almost}),
		\begin{align}
			\xymatrix{
				0\ar[r]&\ca{O}_{\ca{F}}\otimes_{\ca{O}_{\overline{K}}}\Omega^1_{\ca{O}_{\overline{K}}/\ca{O}_K}\ar[r]&\Omega^1_{\ca{O}_{\ca{F}}/\ca{O}_K}\ar[r]&\Omega^1_{\ca{O}_{\ca{F}}/\ca{O}_{\overline{K}}}\ar[r]&0,
			}
		\end{align}
		where $\overline{K}$ is the algebraic closure of $K$ in $\ca{F}$. Moreover, $\Omega^1_{\ca{O}_{\ca{F}}/\ca{O}_{\overline{K}}}$ is torsion-free as $\overline{K}$ is algebraically closed (\cite[6.5.20.(\luoma{1})]{gabber2003almost}). Thus, we have
		\begin{align}
			\Omega^1_{\ca{O}_{\ca{F}}/\ca{O}_K}[p^n]=&\ca{O}_{\ca{F}}\otimes_{\ca{O}_{\overline{K}}}\Omega^1_{\ca{O}_{\overline{K}}/\ca{O}_K}[p^n]=(\ca{O}_{\ca{F}}\otimes_{\ca{O}_{\overline{K}}}(\overline{K}/\ak{a})(1))[p^n]=\ca{O}_{\ca{F}}\otimes_{\ca{O}_{\overline{K}}}(p^{-n}\ak{a}/\ak{a})(1),
		\end{align}
		where the second identity follows from Fontaine's computation $\Omega^1_{\ca{O}_{\overline{K}}/\ca{O}_K}=(\overline{K}/\ak{a})(1)$ identifying $\df\log(\zeta_{p^n})$ with $p^{-n}(\zeta_{p^n})_{n\in\bb{N}}$, and where $\ak{a}$ is a fractional ideal of $\overline{K}$ containing $\ca{O}_{\overline{K}}$ (\cite[Th\'eor\`eme 1']{fontaine1982formes}). Then, we see that the inverse system $(\Omega^1_{\ca{O}_{\ca{F}}/\ca{O}_K}[p^n])_{n\in\bb{N}}$ satisfies the Mittag-Leffler condition, from which the claim follows (\cite[\href{https://stacks.math.columbia.edu/tag/07KW}{07KW}]{stacks-project}).
		\item For any $n\in\bb{N}$, $\Omega^1_{\ca{O}_{\ca{F}}/\ca{O}_K}/p^n=0$. Indeed, since $\ca{F}$ is algebraically closed, any element of $\ca{O}_{\ca{F}}$ admits a $p$-th root so that $\Omega^1_{\ca{O}_{\ca{F}}/\ca{O}_K}$ is $p$-divisible.
	\end{enumerate}
	Therefore, applying $\rr\lim_{n\in\bb{N}}$ to the exact sequence of inverse systems associated to \eqref{eq:prop:fal-ext-val-4}, we obtain an exact sequence
	\begin{align}
		\xymatrix{
			0\ar[r]&\ca{O}_{\widehat{\ca{F}}}(1)\ar[r]^-{\iota}&T_p(\Omega^1_{\ca{O}_{\ca{F}}/A})\ar[r]^-{\jmath}&\widehat{\im(\alpha)}\ar[r]&0.
		}
	\end{align}
	Again, since $\ke(\alpha)$ is killed by a certain power of $p$, the kernel and cokernel of the homomorphism of $p$-adic completions $(\ca{O}_{\ca{F}}\otimes_A\Omega^1_{A/\ca{O}_K})^\wedge\to \widehat{\im(\alpha)}$ are also killed by a certain power of $p$ (\cite[7.3]{he2022sen}). On the other hand, as $\ca{O}_{\ca{F}}\otimes_A\Omega^1_{A/\ca{O}_K}$ is a finitely presented $\ca{O}_{\ca{F}}$-module, we have $(\ca{O}_{\ca{F}}\otimes_A\Omega^1_{A/\ca{O}_K})^\wedge=\ca{O}_{\widehat{\ca{F}}}\otimes_A\Omega^1_{A/\ca{O}_K}$ by \ref{lem:val-mod-str}. In conclusion, after inverting $p$, we obtain a canonical exact sequence of $\widehat{\ca{F}}$-modules
	\begin{align}
		\xymatrix{
			0\ar[r]&\widehat{\ca{F}}(1)\ar[r]^-{\iota}&V_p(\Omega^1_{\ca{O}_{\ca{F}}/A})\ar[r]^-{\jmath}&\widehat{\ca{F}}\otimes_{\ca{F}}\Omega^1_{\ca{F}/K}\ar[r]&0,
		}
	\end{align}
	which is clearly $\gal(\ca{F}/\ca{K}^{\mrm{h}})$-equivariant by construction. Unwinding the construction, one can check easily the properties (\ref{item:prop:fal-ext-val-1}) and (\ref{item:prop:fal-ext-val-2}), so that (\ref{item:prop:fal-ext-val-3}) follows. 
	
	In particular, since $\{\df\log(t_i)\}_{1\leq i\leq d}$ forms a basis of $\Omega^1_{\ca{F}/K}$, we see that $V_p(\Omega^1_{\ca{O}_{\ca{F}}/A})$ is a finite free $\widehat{\ca{F}}$-module with basis $\{(\df\log(t_{i,p^n})_{n\in\bb{N}})\}_{0\leq i\leq d}$ by \eqref{eq:prop:fal-ext-val-1} and properties (\ref{item:prop:fal-ext-val-1}) and (\ref{item:prop:fal-ext-val-3}). The continuity of the $\gal(\ca{F}/\ca{K}^{\mrm{h}})$-action can be checked using the same argument as in \cite[5.7]{he2022sen} sketched as follows: let $\scr{E}^+$ be the finite free $\ca{O}_{\widehat{\ca{F}}}$-submodule of $V_p(\Omega^1_{\ca{O}_{\ca{F}}/A})$ generated by $\{(\df\log(t_{i,p^n})_{n\in\bb{N}})\}_{0\leq i\leq d}$. Then, it is stable under the action of $\gal(\ca{F}/\ca{K}^{\mrm{h}})$. Taking inverse limit on the continuous action $\gal(\ca{F}/\ca{K}^{\mrm{h}})\times V_p(\Omega^1_{\ca{O}_{\ca{F}}/A})/p^r\scr{E}^+\to V_p(\Omega^1_{\ca{O}_{\ca{F}}/A})/p^r\scr{E}^+$ over $r\in\bb{N}$ (where the $\ca{O}_{\widehat{\ca{F}}}$-module $V_p(\Omega^1_{\ca{O}_{\ca{F}}/A})/p^r\scr{E}^+$ is endowed with the discrete topology), we see that the action $\gal(\ca{F}/\ca{K}^{\mrm{h}})\times V_p(\Omega^1_{\ca{O}_{\ca{F}}/A})\to V_p(\Omega^1_{\ca{O}_{\ca{F}}/A})$ is continuous with respect to the limit topology on $V_p(\Omega^1_{\ca{O}_{\ca{F}}/A})$, which indeed coincides with the canonical topology on a finite free $\widehat{\ca{F}}$-module (\cite[2.3]{he2022sen}).
\end{proof}

\begin{mydefn}\label{defn:fal-ext-val-A}
	 We call the exact sequence \eqref{eq:prop:fal-ext-val-1} the \emph{Faltings extension of $A$ over $\ca{O}_K$ at $\ca{O}_{\ca{F}}$}.
\end{mydefn}

\begin{myrem}\label{rem:fal-ext-val}
	We see that the construction of \eqref{eq:prop:fal-ext-val-1} is functorial in $\ca{O}_K\to A\to\ca{O}_{\ca{F}}$. More precisely, let $K'$ be a complete discrete valuation field extension of $\bb{Q}_p$ with perfect residue field, $\ca{F}'$ an algebraically closed valuation field of height $1$ extension of $K'$ with finite transcendental degree, $A'$ a finitely generated $\ca{O}_{K'}$-subalgebra of $\ca{O}_{\ca{F}'}$ such that $\ca{F}'$ is algebraic over the fraction field $\ca{K}'$ of $A'$. Then, any commutative diagram of $\bb{Z}_p$-algebras extending the horizontal structural homomorphisms
	\begin{align}\label{eq:cor:fal-ext-val-1}
		\xymatrix{
			\ca{O}_{\ca{F}'}&A'\ar[l]&\ca{O}_{K'}\ar[l]\\
			\ca{O}_{\ca{F}}\ar[u]&A\ar[l]\ar[u]&\ca{O}_K\ar[u]\ar[l]
		}
	\end{align}
	induces a natural morphism of exact sequences
	\begin{align}\label{eq:cor:fal-ext-val-2}
		\xymatrix{
			0\ar[r]&\widehat{\ca{F}'}(1)\ar[r]^-{\iota}&V_p(\Omega^1_{\ca{O}_{\ca{F}'}/A'})\ar[r]^-{\jmath}&\widehat{\ca{F}'}\otimes_{\ca{F}'}\Omega^1_{\ca{F}'/K'}\ar[r]&0\\
			0\ar[r]&\widehat{\ca{F}}(1)\ar[u]\ar[r]^-{\iota}&V_p(\Omega^1_{\ca{O}_{\ca{F}}/A})\ar[u]\ar[r]^-{\jmath}&\widehat{\ca{F}}\otimes_{\ca{F}}\Omega^1_{\ca{F}/K}\ar[u]\ar[r]&0.
		}
	\end{align}
	In particular, if $\ca{F}'=\ca{F}$ and if $K'$ is finite over $K$, then $\widehat{\ca{F}'}(1)=\widehat{\ca{F}}(1)$ and $\Omega^1_{\ca{F}'/K'}=\Omega^1_{\ca{F}/K}$ so that the vertical homomorphisms in \eqref{eq:cor:fal-ext-val-2} are isomorphisms.
\end{myrem}

\begin{mypara}\label{para:fal-ext-val-func}
	We define a directed partially ordered set $\scr{C}_{\ca{O}_{\ca{F}}/\ca{O}_K}$ as follows: its elements are pairs $(L,B)$, where $L$ is a finite field extension of $K$ contained in $\ca{F}$ and $B$ is a finitely generated $\ca{O}_L$-subalgebra of $\ca{O}_{\ca{F}}$ such that $\ca{F}$ is algebraic over the fraction field of $B$; and $(L,B)\leq (L',B')$ if and only if $L\subseteq L'$ and $B\subseteq B'$. It is indeed directed, since for any two pairs $(L,B)$ and $(L',B')$, let $L''$ be the composite of $L$ and $L'$ in $\ca{F}$ and let $B''$ be the image of $B\otimes_{\ca{O}_{L}}\ca{O}_{L''}\otimes_{\ca{O}_{L'}}B'$ in $\ca{O}_{\ca{F}}$, then we see that $(L,B)\leq (L'',B'')\geq (L',B')$.
	
	The ordered set $\scr{C}_{\ca{O}_{\ca{F}}/\ca{O}_K}$ is endowed with a natural action of the group $\mrm{Aut}_{\ca{O}_K}(\ca{O}_{\ca{F}})$ of $\ca{O}_K$-algebra automorphisms of $\ca{O}_{\ca{F}}$ as follows: for any $\sigma\in \mrm{Aut}_{\ca{O}_K}(\ca{O}_{\ca{F}})$ and $(L,B)\in \scr{C}_{\ca{O}_{\ca{F}}/\ca{O}_K}$, the image $\sigma(L)$ is still a finite field extension of $K$ contained in $\sigma(\ca{F})=\ca{F}$ and $\sigma(B)$ is a finitely generated $\ca{O}_{\sigma(L)}$-subalgebra of $\sigma(\ca{O}_{\ca{F}})=\ca{O}_{\ca{F}}$ such that $\ca{F}$ is algebraic over the fraction field of $\sigma(B)$. Thus, we define $\sigma(L,B)=(\sigma(L),\sigma(B))$. It is clear that this action preserves the order.
	
	For any two elements $(L,B)\leq (L',B')$ in $\scr{C}_{\ca{O}_{\ca{F}}/\ca{O}_K}$, there is a natural isomorphism of exact sequences by \ref{rem:fal-ext-val},
	\begin{align}\label{eq:para:fal-ext-val-func-1}
		\xymatrix{
			0\ar[r]&\widehat{\ca{F}}(1)\ar[r]^-{\iota}&V_p(\Omega^1_{\ca{O}_{\ca{F}}/B'})\ar[r]^-{\jmath}&\widehat{\ca{F}}\otimes_{\ca{F}}\Omega^1_{\ca{F}/L'}\ar[r]&0\\
			0\ar[r]&\widehat{\ca{F}}(1)\ar[u]^-{\wr}\ar[r]^-{\iota}&V_p(\Omega^1_{\ca{O}_{\ca{F}}/B})\ar[u]^-{\wr}\ar[r]^-{\jmath}&\widehat{\ca{F}}\otimes_{\ca{F}}\Omega^1_{\ca{F}/L}\ar[u]^-{\wr}\ar[r]&0.
		}
	\end{align}
	Moreover, for any $\sigma\in \mrm{Aut}_{\ca{O}_K}(\ca{O}_{\ca{F}})$ and $(L,B)\in \scr{C}_{\ca{O}_{\ca{F}}/\ca{O}_K}$, there is a canonical isomorphism 
	\begin{align}\label{eq:para:fal-ext-val-func-2}
		\xymatrix{
			V_p(\Omega^1_{\ca{O}_{\ca{F}}/B})\ar[r]^-{\sim}_-{\sigma}&V_p(\Omega^1_{\sigma(\ca{O}_{\ca{F}})/\sigma(B)})\ar@{=}[r]&V_p(\Omega^1_{\ca{O}_{\ca{F}}/\sigma(B)})
		}
	\end{align}
	sending $(\omega_n)_{n\in\bb{N}}$ to $(\sigma(\omega_n))_{n\in\bb{N}}$. 
\end{mypara}

\begin{mythm}\label{thm:fal-ext-val}
	With the notation in {\rm\ref{para:notation-fal-ext}} and {\rm\ref{para:fal-ext-val-func}}, the $\widehat{\ca{F}}$-module
	\begin{align}\label{eq:thm:fal-ext-val-1}
		\scr{E}_{\ca{O}_{\ca{F}}}=\colim_{(L,B)\in \scr{C}_{\ca{O}_{\ca{F}}/\ca{O}_K}} V_p(\Omega^1_{\ca{O}_{\ca{F}}/B})
	\end{align}
	is endowed with a canonical action of $\mrm{Aut}_{\ca{O}_K}(\ca{O}_{\ca{F}})$. Moreover, there is a canonical $\mrm{Aut}_{\ca{O}_K}(\ca{O}_{\ca{F}})$-equivariant group homomorphism induced by \eqref{eq:para:notation-fal-ext-2-4},
	\begin{align}\label{eq:thm:fal-ext-val-2}
		V_p(\ca{F}^\times)\longrightarrow \scr{E}_{\ca{O}_{\ca{F}}},
	\end{align}
	for which we still denote the image of an element $(s_{p^n})_{n\in\bb{N}}\in V_p(\ca{F}^\times)$ by $(\df\log(s_{p^n}))_{n\in\bb{N}}$, and there is a canonical $\mrm{Aut}_{\ca{O}_K}(\ca{O}_{\ca{F}})$-equivariant exact sequence of $\widehat{\ca{F}}$-modules by taking filtered colimits of \eqref{eq:prop:fal-ext-val-1} over $\scr{C}_{\ca{O}_{\ca{F}}/\ca{O}_K}$,
	\begin{align}\label{eq:thm:fal-ext-val-3}
		\xymatrix{
			0\ar[r]&\widehat{\ca{F}}(1)\ar[r]^-{\iota}&\scr{E}_{\ca{O}_{\ca{F}}}\ar[r]^-{\jmath}&\widehat{\ca{F}}\otimes_{\ca{F}}\Omega^1_{\ca{F}/K}\ar[r]&0,
		}
	\end{align}
	satisfying the following properties:
	\begin{enumerate}
		\renewcommand{\labelenumi}{{\rm(\theenumi)}}
		\item For any compatible system of $p$-power roots of unity $(\zeta_{p^n})_{n\in\bb{N}}$ contained in $\ca{F}$, we have $\iota((\zeta_{p^n})_{n\in\bb{N}})=(\df\log(\zeta_{p^n}))_{n\in\bb{N}}$.\label{item:thm:fal-ext-val-1}
		\item For any $s\in \ca{F}^\times$ with compatible system of $p$-power roots $(s_{p^n})_{n\in\bb{N}}$ contained in $\ca{F}$, we have $\jmath((\df\log(s_{p^n}))_{n\in\bb{N}})=\df\log(s)$. \label{item:thm:fal-ext-val-2}
		\item For any transcendental basis $t_1,\dots,t_d$ of $\ca{F}$ over $K$ with compatible systems of $p$-power roots $(t_{1,p^n})_{n\in\bb{N}}, \dots,(t_{d,p^n})_{n\in\bb{N}}$ contained in $\ca{F}$, the $\widehat{\ca{F}}$-linear surjection $\jmath$ admits a section sending $\df\log(t_i)$ to $(\df\log(t_{i,p^n}))_{n\in\bb{N}}$.\label{item:thm:fal-ext-val-3}
	\end{enumerate}
\end{mythm}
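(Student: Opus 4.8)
The plan is to derive the theorem formally from Proposition \ref{prop:fal-ext-val} and the combinatorial setup of \ref{para:fal-ext-val-func} by passing to the filtered colimit along the directed poset $\scr{C}_{\ca{O}_{\ca{F}}/\ca{O}_K}$. The key observation is that the transition maps \eqref{eq:para:fal-ext-val-func-1} are isomorphisms, so the colimit loses no information; it is merely the device that renders the construction canonical (independent of the choice of $(L,B)$) and equips it with the symmetry.

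I would first set up the group action. Given $\sigma\in\mrm{Aut}_{\ca{O}_K}(\ca{O}_{\ca{F}})$ and $(L,B)\in\scr{C}_{\ca{O}_{\ca{F}}/\ca{O}_K}$, the isomorphism \eqref{eq:para:fal-ext-val-func-2}, together with the fact recalled in \ref{para:fal-ext-val-func} that $\sigma$ preserves the order on $\scr{C}_{\ca{O}_{\ca{F}}/\ca{O}_K}$, shows that $\sigma$ induces on the colimit $\scr{E}_{\ca{O}_{\ca{F}}}$ a bijection compatible with the transition maps \eqref{eq:para:fal-ext-val-func-1} and semi-linear over the action of $\sigma$ on $\widehat{\ca{F}}$ (the latter makes sense since $\sigma$ fixes $p$, hence is $p$-adically continuous). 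Functoriality of $\colim$ turns this into a genuine action of $\mrm{Aut}_{\ca{O}_K}(\ca{O}_{\ca{F}})$; since each $V_p(\Omega^1_{\ca{O}_{\ca{F}}/B})$ is finite free over $\widehat{\ca{F}}$ and the transition maps are isomorphisms, so is $\scr{E}_{\ca{O}_{\ca{F}}}$.

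For the exact sequence \eqref{eq:thm:fal-ext-val-3}, I would use that $K$, and hence each finite extension $L/K$ inside $\ca{F}$, has characteristic $0$; thus $\Omega^1_{L/K}=0$ and the natural surjection identifies $\widehat{\ca{F}}\otimes_{\ca{F}}\Omega^1_{\ca{F}/L}$ canonically with $\widehat{\ca{F}}\otimes_{\ca{F}}\Omega^1_{\ca{F}/K}$, while the term $\widehat{\ca{F}}(1)$ is manifestly independent of $(L,B)$. Hence the system of short exact sequences \eqref{eq:prop:fal-ext-val-1} indexed by $\scr{C}_{\ca{O}_{\ca{F}}/\ca{O}_K}$ has, up to canonical isomorphism, constant outer terms, and applying the exact functor $\colim$ over this filtered index set produces \eqref{eq:thm:fal-ext-val-3} with outer terms $\widehat{\ca{F}}(1)$ and $\widehat{\ca{F}}\otimes_{\ca{F}}\Omega^1_{\ca{F}/K}$. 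The maps \eqref{eq:para:notation-fal-ext-2-4} are compatible with \eqref{eq:para:fal-ext-val-func-1}, so they assemble into \eqref{eq:thm:fal-ext-val-2}; and since the structure maps $\iota$, $\jmath$ and $V_p(\ca{F}^\times)\to\scr{E}_{\ca{O}_{\ca{F}}}$ all commute with the termwise $\sigma$-isomorphisms \eqref{eq:para:fal-ext-val-func-2}, the entire diagram is $\mrm{Aut}_{\ca{O}_K}(\ca{O}_{\ca{F}})$-equivariant.

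Finally, properties \ref{item:thm:fal-ext-val-1}--\ref{item:thm:fal-ext-val-3} concern the images in $\scr{E}_{\ca{O}_{\ca{F}}}$ of elements already defined at a finite level $(L,B)$, so they follow from their counterparts in Proposition \ref{prop:fal-ext-val} together with the compatibility of \eqref{eq:prop:fal-ext-val-1} and \eqref{eq:para:notation-fal-ext-2-4} with the colimit maps: for \ref{item:thm:fal-ext-val-1} and \ref{item:thm:fal-ext-val-2} one takes any $(L,B)$ through whose fraction field the relevant compatible system of roots is visible; for \ref{item:thm:fal-ext-val-3}, given a transcendental basis $t_1,\dots,t_d$ of $\ca{F}/K$ with $p$-power roots in $\ca{F}$ (we may assume $t_i\in\ca{O}_{\ca{F}}$, replacing $t_i$ by $t_i^{-1}$ if needed, which only changes the target section by a sign), one applies Proposition \ref{prop:fal-ext-val}\ref{item:prop:fal-ext-val-3} at $(L,B)=(K,\ca{O}_K[t_1,\dots,t_d])$ --- over whose fraction field $K(t_1,\dots,t_d)$ the field $\ca{F}$ is algebraic --- and pushes the section forward. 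I do not expect a genuine obstacle here: the only points requiring care are the identification of the colimit's outer terms with the absolute ones (which rests on $\mrm{char}\,K=0$) and the bookkeeping that upgrades the termwise isomorphisms \eqref{eq:para:fal-ext-val-func-2} to a well-defined, structure-compatible action of $\mrm{Aut}_{\ca{O}_K}(\ca{O}_{\ca{F}})$ on the colimit.
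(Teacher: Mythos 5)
Your proposal is correct and follows essentially the same route as the paper: the action is exactly the one set up in \ref{para:fal-ext-val-func} via \eqref{eq:para:fal-ext-val-func-2}, the exact sequence and the map from $V_p(\ca{F}^\times)$ are obtained by passing to the filtered colimit of \eqref{eq:prop:fal-ext-val-1} and \eqref{eq:para:notation-fal-ext-2-4} along the isomorphic transition maps \eqref{eq:para:fal-ext-val-func-1}, and properties (1)--(3) are inherited directly from Proposition \ref{prop:fal-ext-val}. Your added details (the identification $\Omega^1_{\ca{F}/L}=\Omega^1_{\ca{F}/K}$ for $L/K$ finite in characteristic $0$, and the normalization $t_i\in\ca{O}_{\ca{F}}$ for property (3)) are correct fillings-in of what the paper leaves implicit.
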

\begin{proof}
	The $\mrm{Aut}_{\ca{O}_K}(\ca{O}_{\ca{F}})$-action on $\scr{E}_{\ca{O}_{\ca{F}}}$ is defined in \ref{para:fal-ext-val-func}. We see that the canonical homomorphism $V_p(\ca{F}^\times)\to  V_p(\Omega^1_{\ca{O}_{\ca{F}}/A})$ \eqref{eq:para:notation-fal-ext-2-4} is compatible with the transition morphisms in the colimit \eqref{eq:thm:fal-ext-val-1} and also compatible with the $\mrm{Aut}_{\ca{O}_K}(\ca{O}_{\ca{F}})$-action \eqref{eq:para:fal-ext-val-func-2}. Thus, we obtain a canonical $\mrm{Aut}_{\ca{O}_K}(\ca{O}_{\ca{F}})$-equivariant homomorphism $V_p(\ca{F}^\times)\to \scr{E}_{\ca{O}_{\ca{F}}}$ and we still denote the image of $(s_{p^n})_{n\in\bb{N}}$ by $(\df\log(s_{p^n}))_{n\in\bb{N}}$. Then, the rest of the properties follow directly from \ref{prop:fal-ext-val}. 
\end{proof}

\begin{mydefn}\label{defn:fal-ext-val}
	We call the exact sequence \eqref{eq:thm:fal-ext-val-3} the \emph{Faltings extension of $\ca{O}_{\ca{F}}$ over $\ca{O}_K$}.
\end{mydefn}

\begin{myrem}\label{rem:defn:fal-ext-val}
	The construction of \eqref{eq:thm:fal-ext-val-2} and \eqref{eq:thm:fal-ext-val-3} is functorial in $\ca{O}_K\to \ca{O}_{\ca{F}}$. More precisely, let $K'$ be a complete discrete valuation field extension of $\bb{Q}_p$ with perfect residue field and let $\ca{F}'$ be an algebraically closed valuation field of height $1$ extension of $K'$ with finite transcendental degree. Given a commutative diagram of $\bb{Z}_p$-algebras extending the horizontal structural homomorphisms
	\begin{align}\label{eq:rem:defn:fal-ext-val-1}
		\xymatrix{
			\ca{O}_{\ca{F}'}&\ca{O}_{K'}\ar[l]\\
			\ca{O}_{\ca{F}}\ar[u]&\ca{O}_K\ar[u]\ar[l]
		}
	\end{align}
	for any $(L,B)\in \scr{C}_{\ca{O}_{\ca{F}}/\ca{O}_K}$ and $(L',B')\in \scr{C}_{\ca{O}_{\ca{F}'}/\ca{O}_{K'}}$, we define $(L,B)\leq (L',B')$ if and only if $L\subseteq L'$ and $B\subseteq B'$ via the inclusion $\ca{F}\subseteq\ca{F}'$. It is clear that the subset $(\scr{C}_{\ca{O}_{\ca{F}'}/\ca{O}_{K'}})_{\geq (L,B)}$ is cofinal in $\scr{C}_{\ca{O}_{\ca{F}'}/\ca{O}_{K'}}$. By \ref{rem:fal-ext-val}, there is a natural morphism of exact sequences 
	\begin{align}
		\xymatrix{
			0\ar[r]&\widehat{\ca{F}'}(1)\ar[r]^-{\iota}&\colim_{(L',B')\in(\scr{C}_{\ca{O}_{\ca{F}'}/\ca{O}_{K'}})_{\geq (L,B)}}V_p(\Omega^1_{\ca{O}_{\ca{F}'}/B'})\ar[r]^-{\jmath}&\widehat{\ca{F}'}\otimes_{\ca{F}'}\Omega^1_{\ca{F}'/K'}\ar[r]&0\\
			0\ar[r]&\widehat{\ca{F}}(1)\ar[u]\ar[r]^-{\iota}&V_p(\Omega^1_{\ca{O}_{\ca{F}}/B})\ar[u]\ar[r]^-{\jmath}&\widehat{\ca{F}}\otimes_{\ca{F}}\Omega^1_{\ca{F}/K}\ar[u]\ar[r]&0.
		}
	\end{align}
	Taking filtered colimit over $(L,B)\in \scr{C}_{\ca{O}_{\ca{F}}/\ca{O}_K}$, we obtain a natural morphism of Faltings extensions
	\begin{align}\label{eq:rem:defn:fal-ext-val-4}
		\xymatrix{
			0\ar[r]&\widehat{\ca{F}'}(1)\ar[r]^-{\iota}&\scr{E}_{\ca{O}_{\ca{F}'}}\ar[r]^-{\jmath}&\widehat{\ca{F}'}\otimes_{\ca{F}'}\Omega^1_{\ca{F}'/K'}\ar[r]&0\\
			0\ar[r]&\widehat{\ca{F}}(1)\ar[u]\ar[r]^-{\iota}&\scr{E}_{\ca{O}_{\ca{F}}}\ar[u]\ar[r]^-{\jmath}&\widehat{\ca{F}}\otimes_{\ca{F}}\Omega^1_{\ca{F}/K}\ar[u]\ar[r]&0.
		}
	\end{align}
	Similarly, we obtain a natural commutative diagram
	\begin{align}\label{eq:rem:defn:fal-ext-val-5}
		\xymatrix{
			V_p(\ca{F}'^\times)\ar[r]&\scr{E}_{\ca{O}_{\ca{F}'}}\\
			V_p(\ca{F}^\times)\ar[u]\ar[r]&\scr{E}_{\ca{O}_{\ca{F}}}.\ar[u]
		}
	\end{align}
	We remark that if $\ca{F}'=\ca{F}$ and if $K'$ is finite over $K$, then the vertical homomorphisms in \eqref{eq:rem:defn:fal-ext-val-4} are isomorphisms.
\end{myrem}

\begin{myrem}\label{rem:fal-val-disc}
	One can still construct a Faltings extension for $\ca{O}_{\ca{F}}$ without the finiteness assumption on transcendental degree. Indeed, we put 
	\begin{align}\label{eq:rem:fal-val-disc-1}
		\scr{E}_{\ca{O}_{\ca{F}}}=\colim_{\ca{F}'\subseteq \ca{F}}\widehat{\ca{F}}\otimes_{\widehat{\ca{F}'}} \scr{E}_{\ca{O}_{\ca{F}'}}
	\end{align}
	where $\ca{F}'$ runs through all algebraically closed subextensions of $\ca{F}$ with finite transcendental degree over $K$ (see \ref{rem:defn:fal-ext-val}). Taking filtered colimit of the Faltings extensions of $\ca{O}_{\ca{F}'}$, we obtain a canonical $\mrm{Aut}_{\ca{O}_K}(\ca{O}_{\ca{F}})$-equivariant exact sequence of $\widehat{\ca{F}}$-modules
	\begin{align}\label{eq:rem:fal-val-disc-2}
		\xymatrix{
			0\ar[r]&\widehat{\ca{F}}(1)\ar[r]^-{\iota}&\scr{E}_{\ca{O}_{\ca{F}}}\ar[r]^-{\jmath}&\widehat{\ca{F}}\otimes_{\ca{F}}\Omega^1_{\ca{F}/K}\ar[r]&0,
		}
	\end{align}
	satisfying the same properties as in \ref{thm:fal-ext-val}. However, we don't know what topology to put on $\scr{E}_{\ca{O}_{\ca{F}}}$ such that the Galois action is continuous. More precisely, for any transcendental basis $\ak{T}=\{t_i\}_{i\in I}$ of $\ca{F}$ over $K$, $\{(\df\log(\zeta_{p^n}))_{n\in\bb{N}}\}\coprod \{(\df\log(t_{i,p^n}))_{n\in\bb{N}}\}_{i\in I}$ forms an $\widehat{\ca{F}}$-basis of $\scr{E}_{\ca{O}_{\ca{F}}}$ and generates a free $\ca{O}_{\widehat{\ca{F}}}$-submodule $\scr{E}^+_{\ak{T}}$. Then, for any Henselian subfield $\ca{F}_0$ of $\ca{F}$ with $\ca{F}/\ca{F}_0$ algebraic, the natural $\gal(\ca{F}/\ca{F}_0)$-action on $\scr{E}_{\ca{O}_{\ca{F}}}=\scr{E}^+_{\ak{T}}[1/p]$ is continuous with respect to the topology induced by the $p$-adic topology of the stable lattice $\scr{E}^+_{\ak{T}}$ (cf. \ref{prop:fal-ext-val}). However, this topology seems to rely heavily on the choice of $\ak{T}$ unless $|I|=\mrm{trdeg}_K(\ca{F})$ is finite. We wish to construct a completed version of \eqref{eq:rem:fal-val-disc-2} with natural topology in the future (as what we did in \cite[4.4]{he2021faltingsext} for any complete discrete valuation ring).
\end{myrem}

\begin{mylem}\label{lem:val-log-diff}
	Let $F$ be a valuation field of height $1$, $A\to\ca{O}_F$ a ring homomorphism. Then, the morphisms of log rings $(1\to A)\to (1\to \ca{O}_F)\to(\ca{O}_F\setminus 0\to \ca{O}_F)$ induce a canonical exact sequence of modules of log differentials
	\begin{align}
		0\longrightarrow \Omega^1_{\ca{O}_F/A}\longrightarrow \Omega^1_{(\ca{O}_F,\ca{O}_F\setminus 0)/A}\longrightarrow \ca{O}_F/\ak{m}_F\otimes_{\bb{Z}} F^\times/\ca{O}_F^\times\longrightarrow 0.
	\end{align}
	In particular, if $p\in\ak{m}_F$, then the canonical morphism
	\begin{align}
		V_p(\Omega^1_{\ca{O}_F/A})\longrightarrow V_p(\Omega^1_{(\ca{O}_F,\ca{O}_F\setminus 0)/A})
	\end{align}
	is an isomorphism.
\end{mylem}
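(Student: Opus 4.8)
The plan is to work from the explicit presentation of logarithmic differentials. Recall that $\Omega^1_{(\ca O_F,\ca O_F\setminus 0)/A}$ is the $\ca O_F$-module generated by symbols $\df f$ ($f\in\ca O_F$) and $\df\log x$ ($x\in F^\times$), subject to the relations making $\df$ an $A$-derivation, $\df\log\colon F^\times\to\Omega^1_{(\ca O_F,\ca O_F\setminus 0)/A}$ a homomorphism, and $\df f=f\cdot\df\log f$ for $f\in\ca O_F\setminus 0$; equivalently it is $(\Omega^1_{\ca O_F/A}\oplus(\ca O_F\otimes_\bb Z F^\times))/\ca K$, where $\ca K$ is the $\ca O_F$-submodule generated by the elements $(\df f,-f\otimes f)$, $f\in\ca O_F\setminus 0$. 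I would take the first arrow to be $\omega\mapsto(\omega,0)$ and the second arrow $\Omega^1_{(\ca O_F,\ca O_F\setminus 0)/A}\to\ca O_F/\ak m_F\otimes_\bb Z F^\times/\ca O_F^\times$ to be $\df f\mapsto 0$, $c\cdot\df\log x\mapsto\overline c\otimes\overline x$. The second arrow is well defined because the relation $\df f=f\cdot\df\log f$ maps to $\overline f\otimes\overline f$, which vanishes since $\overline f=0$ for $f\in\ak m_F$ and $\overline x=0$ for $x\in\ca O_F^\times$; it is obviously surjective, and its composite with the first arrow is zero. It remains to check exactness in the middle, injectivity of the first arrow, and the $V_p$-statement.

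For exactness in the middle: the cokernel of the first arrow is $(\ca O_F\otimes_\bb Z F^\times)/\langle f\otimes f:f\in\ca O_F\setminus 0\rangle$, and since $\df\log u=u^{-1}\df u$ lies in the image of $\Omega^1_{\ca O_F/A}$ for every unit $u$, this identifies (after quotienting out $\ca O_F\otimes_\bb Z\ca O_F^\times$) with $(\ca O_F\otimes_\bb Z\Gamma)/J$, where $\Gamma=F^\times/\ca O_F^\times$ is the value group and $J$ is the subgroup generated by the $w\otimes\gamma$ with $\gamma\in\Gamma$, $\gamma>0$, and $v(w)\ge\gamma$. The key point is then that $J=\ak m_F\cdot(\ca O_F\otimes_\bb Z\Gamma)$, which identifies the cokernel with $\ca O_F/\ak m_F\otimes_\bb Z\Gamma$, as wanted. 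The inclusion $\subseteq$ is immediate because $v(w)\ge\gamma>0$ forces $w\in\ak m_F$; for $\supseteq$ one uses that the value group of a height-$1$ valuation ring embeds in $\bb R$ and is hence archimedean. Given $m\in\ak m_F\setminus 0$ and $\gamma\in\Gamma$ with $\gamma>0$: if $v(m)\ge\gamma$ then $m\otimes\gamma\in J$ directly, while if $v(m)<\gamma$, choosing $n\ge 2$ minimal with $n\,v(m)\ge\gamma$ one has $m\otimes\gamma=m\otimes(\gamma-(n-1)v(m))+(n-1)\,(m\otimes v(m))$ with $0<\gamma-(n-1)v(m)\le v(m)$, so both summands lie in $J$; the case $\gamma\le 0$ reduces to $\gamma>0$ as $J$ is a subgroup. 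This is the one place where the height-$1$ hypothesis enters the exact sequence.

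The injectivity of $\Omega^1_{\ca O_F/A}\to\Omega^1_{(\ca O_F,\ca O_F\setminus 0)/A}$ is the delicate point: $\Omega^1_{\ca O_F/A}$ can have $p$-power torsion (as already for $\ca O_{\overline{\bb Q}_p}$ over $\bb Z_p$, by Fontaine \cite{fontaine1982formes}), so one cannot detect its vanishing after inverting $p$, i.e.\ after base change to $F$. My plan is to deduce it from the transitivity triangle of logarithmic cotangent complexes attached to $(1\to A)\to(1\to\ca O_F)\to(\ca O_F\setminus 0\to\ca O_F)$: writing $\bb L$ for the logarithmic cotangent complex, the triangle $\bb L_{\ca O_F/A}\to\bb L_{(\ca O_F,\ca O_F\setminus 0)/A}\to\bb L_{(\ca O_F,\ca O_F\setminus 0)/(\ca O_F,1)}\to[1]$ identifies the kernel of the first arrow with the image of $H_1(\bb L_{(\ca O_F,\ca O_F\setminus 0)/(\ca O_F,1)})$, and this last complex does not involve $A$ at all. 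Its concentration in degree $0$ — i.e.\ that the canonical log structure $\ca O_F\setminus 0$ on a valuation ring, which is integral and ``valuative'', has vanishing relative logarithmic cotangent complex over the trivial log structure in positive degrees — is precisely the type of statement furnished by Gabber-Ramero's study of cotangent complexes of valuation rings in \cite[\textsection6]{gabber2003almost} (compare the left-exact transitivity sequence \cite[6.5.12]{gabber2003almost} already used above). I expect this to be the only genuine obstacle; if one prefers to keep the cotangent complex in the background, one may alternatively reduce first to $A=\bb Z$ via transitivity for (log) differentials and then to finite transcendence degree by writing $\ca O_F$ as the filtered colimit of the valuation rings $\ca O_F\cap F_\lambda$ over finitely generated subfields $F_\lambda\subseteq F$, over which $\ca O_F$ is flat and $\ca O_F\setminus 0=\colim(\ca O_F\cap F_\lambda)\setminus 0$.

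Finally, for the $V_p$-statement: if $p\in\ak m_F$ then $\ca O_F/\ak m_F$ has characteristic $p$, so the third term $\ca O_F/\ak m_F\otimes_\bb Z F^\times/\ca O_F^\times$ is annihilated by $p$; since $\ho_\bb Z(\bb Z[1/p],-)$ kills every $p$-torsion abelian group, applying the left-exact functor $V_p(-)=\ho_\bb Z(\bb Z[1/p],-)$ \eqref{eq:para:notation-Tate-mod-2} to the exact sequence just established shows that $V_p(\Omega^1_{\ca O_F/A})\to V_p(\Omega^1_{(\ca O_F,\ca O_F\setminus 0)/A})$ is an isomorphism.
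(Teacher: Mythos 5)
Your treatment of everything except the injectivity of $\Omega^1_{\ca{O}_F/A}\to\Omega^1_{(\ca{O}_F,\ca{O}_F\setminus 0)/A}$ is correct. The presentation-based computation of the cokernel --- in particular the identification of your submodule $J$ with $\ak{m}_F\cdot(\ca{O}_F\otimes_{\bb{Z}}\Gamma)$, $\Gamma=F^\times/\ca{O}_F^\times$, via the archimedean property of a height-$1$ value group --- is sound and correctly isolates where the height-$1$ hypothesis enters. Your derivation of the $V_p$-statement is also fine, and is in fact marginally cleaner than the paper's: the paper instead notes that the kernel and cokernel of the map on differentials are killed by $p$, deduces that the kernel and cokernel after applying $V_p$ are killed by $p^2$, and concludes since $p$ acts invertibly on $V_p$.

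The gap is the injectivity, which is the only non-formal content of the lemma, and your proposal does not establish it. Your primary route rests on the vanishing of $H_1\bigl(\bb{L}_{(\ca{O}_F,\ca{O}_F\setminus 0)/(\ca{O}_F,1)}\bigr)$, which you assert is ``the type of statement furnished by Gabber--Ramero'' without a precise reference or argument; that vanishing is at least as strong as the injectivity you need, so deferring to it leaves the key step unproved. Your fallback (reduce to $A=\bb{Z}$ by transitivity, then to finite transcendence degree) is also left unfinished, and the second reduction is unnecessary. The paper's proof is precisely the first step of your fallback, carried out: for $A=\bb{Z}$ the full exact sequence, injectivity included, is a special case of \cite[6.4.15]{gabber2003almost}; for general $A$ one writes the two right-exact transitivity sequences for $\bb{Z}\to A\to\ca{O}_F$ (ordinary and logarithmic), which share the first term $\ca{O}_F\otimes_A\Omega^1_{A/\bb{Z}}$, and since the middle vertical map $\Omega^1_{\ca{O}_F/\bb{Z}}\to\Omega^1_{(\ca{O}_F,\ca{O}_F\setminus 0)/\bb{Z}}$ is injective it identifies the two images of that common first term, so the snake lemma transports both the injectivity and the cokernel from the case $A=\bb{Z}$ to general $A$. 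With that citation and diagram chase supplied your argument closes; as written, it does not.
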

\begin{proof}
	For $A=\bb{Z}$, this is a special case of \cite[6.4.15]{gabber2003almost}. The general case follows immediately from the following morphism of exact sequences
	\begin{align}
		\xymatrix{
			\ca{O}_F\otimes_{A}\Omega^1_{A/\bb{Z}}\ar[r]\ar@{=}[d]&\Omega^1_{\ca{O}_F/\bb{Z}}\ar[r]\ar[d]&\Omega^1_{\ca{O}_F/A}\ar[r]\ar[d]&0\\
			\ca{O}_F\otimes_{A}\Omega^1_{A/\bb{Z}}\ar[r]&\Omega^1_{(\ca{O}_F,\ca{O}_F\setminus 0)/\bb{Z}}\ar[r]&\Omega^1_{(\ca{O}_F,\ca{O}_F\setminus 0)/A}\ar[r]&0.
		}
	\end{align}
	In particular, if $p\in \ak{m}_F$, then the kernel and cokernel of $\Omega^1_{\ca{O}_F/A}\to \Omega^1_{(\ca{O}_F,\ca{O}_F\setminus 0)/A}$ are both killed by $p$. Thus, the kernel and cokernel of $V_p(\Omega^1_{\ca{O}_F/A})\longrightarrow V_p(\Omega^1_{(\ca{O}_F,\ca{O}_F\setminus 0)/A})$ are both killed by $p^2$ (\cite[7.3.(2)]{he2022sen}). Since multiplying by $p$ is invertible on both $V_p(\Omega^1_{\ca{O}_F/A})$ and  $V_p(\Omega^1_{(\ca{O}_F,\ca{O}_F\setminus 0)/A})$ \eqref{eq:para:notation-Tate-mod-2}, we conclude that $V_p(\Omega^1_{\ca{O}_F/A})= V_p(\Omega^1_{(\ca{O}_F,\ca{O}_F\setminus 0)/A})$.
\end{proof}

\section{Tate's Normalized Trace Map of General Valuation Rings}\label{sec:trace}
We establish a perfectoidness criterion for general geometric valuation fields by looking at the Galois action on Faltings extension (see \ref{thm:perfd-val}). The proof relies on Gabber-Ramero's computation \cite{gabber2003almost} of different ideals of general valuation rings and a construction of Tate's normalized trace map for non-discrete valuation rings (see \ref{prop:tate-trace}, cf. Tate's work \cite{tate1967p} on discrete valuation rings).

\begin{mypara}\label{para:notation-trace}
	In this section, we fix a complete discrete valuation field $K$ extension of $\bb{Q}_p$ with perfect residue field, and we fix an algebraically closed valuation field $\overline{\ca{F}}$ of height $1$ extension of $K$ with finite transcendental degree $d=\mrm{trdeg}_K(\overline{\ca{F}})<\infty$. Let $\overline{K}$ be the algebraic closure of $K$ in $\overline{\ca{F}}$ and we identify $\overline{\bb{Q}}_p$ (fixed in \ref{para:notation-Tate-mod}) with the algebraic closure of $\bb{Q}_p$ in $\overline{K}$. We also fix a Henselian valuation subfield $\ca{F}$ of $\overline{\ca{F}}$ extension of $\overline{K}$ such that $\overline{\ca{F}}$ is algebraic over $\ca{F}$ (so that $\overline{\ca{F}}$ is an algebraic closure of $\ca{F}$ and we have $d=\mrm{trdeg}_K(\ca{F})=\mrm{trdeg}_{\overline{K}}(\ca{F})$).
	\begin{align}
		\ca{O}_{\overline{\ca{F}}}\longleftarrow\ca{O}_{\ca{F}}\longleftarrow \ca{O}_{\overline{K}}\longleftarrow \ca{O}_K.
	\end{align}
	We put $G_{\ca{F}}=\gal(\overline{\ca{F}}/\ca{F})$ the absolute Galois group of $\ca{F}$. As $\ca{O}_{\ca{F}}$ is Henselian, the integral closure $\ca{O}_{\ca{F}'}$ of $\ca{O}_{\ca{F}}$ in any subextension $\ca{F}'$ of $\ca{F}$ in $\overline{\ca{F}}$ is a valuation ring of height $1$ (see \cite[\Luoma{6}.\textsection8.6, Proposition 6]{bourbaki2006commalg5-7} and \cite[\href{https://stacks.math.columbia.edu/tag/04GH}{04GH}]{stacks-project}). In particular, $\ca{O}_{\overline{\ca{F}}}$ is the integral closure of $\ca{O}_{\ca{F}}$ in $\overline{\ca{F}}$ and is stable under the action of $G_{\ca{F}}$.
\end{mypara}

\begin{mypara}\label{para:valution-norm}
	Let $v_p:\overline{\ca{F}}\to \bb{R}\cup\{\infty\}$ be a valuation map with $v_p(p)=1$ and $v_p(0)=\infty$ (\cite[\Luoma{6}.\textsection4.5, Proposition 7]{bourbaki2006commalg5-7}) and let 
	\begin{align}\label{eq:para:notation-trace-1}
		|\cdot|=p^{-v_p(\cdot)}: \overline{\ca{F}}\longrightarrow \bb{R}_{\geq 0}
	\end{align}
	be the associated ultrametric absolute value (\cite[\Luoma{6}.\textsection6.2, Proposition 3]{bourbaki2006commalg5-7}). We take $(\ca{O}_{\ca{F}},\ak{m}_{\ca{F}})$ as the basic setup for almost ring theory (\ref{para:notation-val}).
	
	A \emph{fractional ideal} $\ak{a}$ of $\ca{O}_{\ca{F}}$ is an $\ca{O}_{\ca{F}}$-submodule of $\ca{F}$ such that $x\ak{a}\subseteq \ca{O}_{\ca{F}}$ for some $x\in\ca{F}^\times$. As $\ca{F}$ is of height $1$, an $\ca{O}_{\ca{F}}$-submodule $\ak{a}$ of $\ca{F}$ is a fractional ideal if and only if $\ak{a}\neq \ca{F}$. For any fractional ideal $\ak{a}$ of $\ca{O}_{\ca{F}}$, we define the \emph{norm of $\ak{a}$} to be
	\begin{align}\label{eq:para:valution-norm-2}
		|\ak{a}|=\sup_{x\in\ak{a}}|x|\in\bb{R}_{\geq 0}.
	\end{align}
	Moreover, we have $|\ak{m}_{\ca{F}}\ak{a}|=|\ak{a}|$ as the valuation on $\ca{F}$ is non-discrete. We also note that for any fractional ideal $\ak{b}$ of $\ca{O}_{\ca{F}}$, we have $|\ak{a}\ak{b}|=|\ak{a}|\cdot |\ak{b}|$.
\end{mypara}

\begin{mypara}\label{para:different}
	For any finite field extension $\ca{F}'$ of $\ca{F}$ contained in $\overline{\ca{F}}$, recall that $\ca{O}_{\ca{F}'}$ is almost finite projective over $\ca{O}_{\ca{F}}$ (\cite[6.3.8]{gabber2003almost}). Thus, the canonical homomorphism of $\ca{O}_{\ca{F}}$-modules
	\begin{align}
		\omega_{\ca{O}_{\ca{F}'}/\ca{O}_{\ca{F}}}:\ca{O}_{\ca{F}'}\otimes_{\ca{O}_{\ca{F}}}\ca{O}_{\ca{F}'}^*\longrightarrow \mrm{End}_{\ca{O}_{\ca{F}}}(\ca{O}_{\ca{F}'}),\ x\otimes f\mapsto (y\mapsto f(y)x)
	\end{align}
	is an almost isomorphism (\cite[2.4.29.(\luoma{1}.b)]{gabber2003almost}, cf. \cite[\Luoma{5}.4.1]{abbes2016p}), where $\ca{O}_{\ca{F}'}^*=\ho_{\ca{O}_{\ca{F}}}(\ca{O}_{\ca{F}'},\ca{O}_{\ca{F}})$. We define the \emph{trace morphism} $\mrm{tr}_{\ca{O}_{\ca{F}'}/\ca{O}_{\ca{F}}}:\mrm{End}_{\ca{O}_{\ca{F}}}(\ca{O}_{\ca{F}'})\to \ho_{\ca{O}_{\ca{F}}}(\ak{m}_{\ca{F}},\ca{O}_{\ca{F}})$ to be the morphism fitting into the following commutative diagram (\cite[\Luoma{5}.4]{abbes2016p})
	\begin{align}
		\xymatrix{
			\mrm{End}_{\ca{O}_{\ca{F}}}(\ca{O}_{\ca{F}'})\ar[d]\ar[r]^-{\mrm{tr}_{\ca{O}_{\ca{F}'}/\ca{O}_{\ca{F}}}}&\ho_{\ca{O}_{\ca{F}}}(\ak{m}_{\ca{F}},\ca{O}_{\ca{F}})\\
			\ho_{\ca{O}_{\ca{F}}}(\ak{m}_{\ca{F}},\mrm{End}_{\ca{O}_{\ca{F}}}(\ca{O}_{\ca{F}'}))\ar[r]^-{\sim}_-{\omega^{-1}_{\ca{O}_{\ca{F}'}/\ca{O}_{\ca{F}}}}&\ho_{\ca{O}_{\ca{F}}}(\ak{m}_{\ca{F}},\ca{O}_{\ca{F}'}\otimes_{\ca{O}_{\ca{F}}}\ca{O}_{\ca{F}'}^*)\ar[u]
		}
	\end{align}
	where the left vertical arrow sends each element $g$ to the homomorphism $\ak{m}_{\ca{F}}\to \mrm{End}_{\ca{O}_{\ca{F}}}(\ca{O}_{\ca{F}'}),\ \epsilon\mapsto \epsilon g$, and the right vertical arrow is induced by the canonical morphism $\mrm{ev}_{\ca{O}_{\ca{F}'}/\ca{O}_{\ca{F}}}:\ca{O}_{\ca{F}'}\otimes_{\ca{O}_{\ca{F}}}\ca{O}_{\ca{F}'}^*\to \ca{O}_{\ca{F}},\ x\otimes f\mapsto f(x)$. Notice that the canonical homomorphism of $\ca{O}_{\ca{F}}$-modules
	\begin{align}
		\ca{O}_{\ca{F}}\longrightarrow \ho_{\ca{O}_{\ca{F}}}(\ak{m}_{\ca{F}},\ca{O}_{\ca{F}})
	\end{align}
	is an isomorphism as $\ca{O}_{\ca{F}}$ is a non-discrete valuation ring of height $1$ (see the proof of \cite[5.8]{he2024coh}). Then, we put  
	\begin{align}
		\mrm{Tr}_{\ca{O}_{\ca{F}'}/\ca{O}_{\ca{F}}}:\ca{O}_{\ca{F}'}\longrightarrow \ca{O}_{\ca{F}},\ x\mapsto \mrm{tr}_{\ca{O}_{\ca{F}'}/\ca{O}_{\ca{F}}}(\mu_x),
	\end{align}
	where $\mu_x\in \mrm{End}_{\ca{O}_{\ca{F}}}(\ca{O}_{\ca{F}'})$ is the multiplication by $x$. It is clear that the associated morphism of almost modules
	\begin{align}
		\mrm{Tr}_{\ca{O}_{\ca{F}'}/\ca{O}_{\ca{F}}}^{\al}:\ca{O}_{\ca{F}'}^{\al}\to \ca{O}_{\ca{F}}^{\al}
	\end{align}
	is the trace morphism defined in \cite[4.1.7]{gabber2003almost} and that after inverting $p$ we obtain the usual trace morphism for the finite field extension $\ca{F}'/\ca{F}$,
	\begin{align}
		\id_{\ca{F}}\otimes_{\ca{O}_{\ca{F}}}\mrm{Tr}_{\ca{O}_{\ca{F}'}/\ca{O}_{\ca{F}}}=\mrm{Tr}_{\ca{F}'/\ca{F}}:\ca{F}'\longrightarrow \ca{F},
	\end{align}
	which also shows that $\mrm{Tr}_{\ca{O}_{\ca{F}'}/\ca{O}_{\ca{F}}}$ is the restriction of $\mrm{Tr}_{\ca{F}'/\ca{F}}$ on $\ca{O}_{\ca{F}'}$.	Furthermore, there is a canonical homomorphism of $\ca{O}_{\ca{F}'}$-modules
	\begin{align}
		\tau_{\ca{O}_{\ca{F}'}/\ca{O}_{\ca{F}}}:\ca{O}_{\ca{F}'}\longrightarrow  \ca{O}_{\ca{F}'}^*,\ x\mapsto (y\mapsto \mrm{Tr}_{\ca{O}_{\ca{F}'}/\ca{O}_{\ca{F}}}(xy)).
	\end{align}
	We define the \emph{different ideal} of $\ca{O}_{\ca{F}'}$ over $\ca{O}_{\ca{F}}$ to be the annihilator of the $\ca{O}_{\ca{F}'}$-module $\cok(\tau_{\ca{O}_{\ca{F}'}/\ca{O}_{\ca{F}}})$,
	\begin{align}
		\scr{D}_{\ca{O}_{\ca{F}'}/\ca{O}_{\ca{F}}}=\mrm{Ann}_{\ca{O}_{\ca{F}'}}(\cok(\tau_{\ca{O}_{\ca{F}'}/\ca{O}_{\ca{F}}}))\subseteq \ca{O}_{\ca{F}'}.
	\end{align}
	It is clear that its associated almost ideal $\scr{D}_{\ca{O}_{\ca{F}'}/\ca{O}_{\ca{F}}}^\al$ is the different ideal of $\ca{O}_{\ca{F}'}^{\al}$ over $\ca{O}_{\ca{F}}^{\al}$ defined in \cite[4.1.22]{gabber2003almost}.
\end{mypara}

\begin{mylem}\label{lem:frac-ideal}
	Let $\ak{a}$ be a non-zero fractional ideal of $\ca{O}_{\ca{F}}$ and let $\ak{b}=\{x\in \ca{F}\ |\ x\ak{a}\subseteq \ca{O}_{\ca{F}}\}$. Then, we have $\ak{m}_{\ca{F}}\subseteq \ak{a}\ak{b}\subseteq \ca{O}_{\ca{F}}$.
\end{mylem}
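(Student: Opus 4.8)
We have a valuation field $\ca{F}$ of height $1$, a non-zero fractional ideal $\ak{a}$, and $\ak{b}=\{x\in\ca{F}: x\ak{a}\subseteq\ca{O}_{\ca{F}}\}$. We must show $\ak{m}_{\ca{F}}\subseteq\ak{a}\ak{b}\subseteq\ca{O}_{\ca{F}}$.

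The plan is to work directly with the total order on the value group. The inclusion $\ak{a}\ak{b}\subseteq\ca{O}_{\ca{F}}$ is immediate from the definition of $\ak{b}$: any generator $xy$ with $x\in\ak{a}$, $y\in\ak{b}$ satisfies $yx\in\ca{O}_{\ca{F}}$, and $\ak{a}\ak{b}$ is the $\ca{O}_{\ca{F}}$-module generated by such products. For the other inclusion $\ak{m}_{\ca{F}}\subseteq\ak{a}\ak{b}$, fix a nonzero $m\in\ak{m}_{\ca{F}}$; I want to exhibit $m$ as a sum (in fact a single product suffices) $xy$ with $x\in\ak{a}$, $y\in\ak{b}$. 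Since $\ak{a}$ is a fractional ideal, $|\ak{a}|=\sup_{x\in\ak{a}}|x|\in\bb{R}_{>0}$ (using the notation of \ref{para:valution-norm}); it may or may not be attained. First I would choose $x\in\ak{a}$ with $|x|$ close enough to $|\ak{a}|$ that $|m|<|x|\cdot|\ak{a}|$, equivalently $|m/x|<|\ak{a}|$ — this is possible precisely because $|m|<|\ak{m}_{\ca{F}}\ak{a}|=|\ak{a}|$ since $m\in\ak{m}_{\ca{F}}$ and the valuation is non-discrete, so there is room strictly below $|\ak{a}|$.

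The key step is then to show $y:=m/x\in\ak{b}$, i.e. that $y\ak{a}\subseteq\ca{O}_{\ca{F}}$: for any $a\in\ak{a}$ we have $|ya|=|a|\cdot|m|/|x|\le |\ak{a}|\cdot|m|/|x|<|\ak{a}|\cdot|\ak{a}|\cdot\tfrac{1}{|\ak{a}|}$... let me re-examine the bookkeeping. I chose $x$ so that $|m/x|<|\ak{a}|$; hmm, but I actually need $|m/x|\le 1/|\ak{a}^{-1}|$ type control, and $|\ak{a}|$ need not be a value in the group. The cleaner route: set $\ak{b}=\{x:x\ak{a}\subseteq\ca{O}_{\ca{F}}\}$, which is itself a nonzero fractional ideal with norm $|\ak{b}|$ satisfying $|\ak{a}|\cdot|\ak{b}|=1$ (this is a standard fact for height-$1$ valuation rings and should be extracted or proved in two lines from the definition — it is the real content). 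Granting $|\ak{a}|\cdot|\ak{b}|=1$: given $0\ne m\in\ak{m}_{\ca{F}}$, so $|m|<1=|\ak{a}|\cdot|\ak{b}|$, pick $x\in\ak{a}$, $y\in\ak{b}$ with $|x|\cdot|y|>|m|$ (possible by taking each factor close to its sup), then $|m|<|x|\cdot|y|=|xy|$ with $xy\in\ak{a}\ak{b}\subseteq\ca{O}_{\ca{F}}$, hence $m/(xy)\in\ak{m}_{\ca{F}}\subseteq\ca{O}_{\ca{F}}$ and therefore $m=(m/(xy))\cdot xy\in\ca{O}_{\ca{F}}\cdot\ak{a}\ak{b}=\ak{a}\ak{b}$.

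So the main obstacle is establishing $|\ak{a}|\cdot|\ak{b}|=1$ for a nonzero fractional ideal of a height-$1$ valuation ring. I would prove it as follows: $|\ak{a}\ak{b}|=|\ak{a}|\cdot|\ak{b}|\le 1$ since $\ak{a}\ak{b}\subseteq\ca{O}_{\ca{F}}$. For the reverse, suppose $|\ak{a}|\cdot|\ak{b}|=c<1$; then for all $x\in\ak{a}$ and all $z\in\ca{F}$ with $|z|\le 1/|\ak{a}|$ (a condition defining a fractional ideal $\ak{b}'\supseteq\ak{b}$, and in the valuation-ring/height-$1$ setting $\ak{b}'=\{z:|z|\cdot|\ak{a}|\le 1\}$ actually lies in $\ak{b}$ because $|zx|\le 1$ forces $zx\in\ca{O}_{\ca{F}}$), we'd get elements of $\ak{b}$ of norm up to $1/|\ak{a}|$, contradicting $|\ak{b}|=c/|\ak{a}|<1/|\ak{a}|$. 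Here one uses crucially that in a valuation ring, $|w|\le 1\iff w\in\ca{O}_{\ca{F}}$, and that for height $1$ the absolute value linearly orders everything, so ``$|zx|\le 1$ for all $x\in\ak{a}$'' is equivalent to ``$|z|\cdot|\ak{a}|\le 1$'' — the only subtlety being whether the sup $|\ak{a}|$ is attained, which is handled by the non-discreteness when it is not. This is exactly the kind of fact that \ref{para:valution-norm} has been set up to make routine, so the proof is short; the care is entirely in the sup-not-attained case, which is where I expect to spend the most words.

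\begin{proof}
	The inclusion $\ak{a}\ak{b}\subseteq\ca{O}_{\ca{F}}$ follows at once from the definition of $\ak{b}$, since $\ak{a}\ak{b}$ is generated as an $\ca{O}_{\ca{F}}$-module by the products $xy$ with $x\in\ak{a}$ and $y\in\ak{b}$, and each such product lies in $\ca{O}_{\ca{F}}$.

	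For the inclusion $\ak{m}_{\ca{F}}\subseteq\ak{a}\ak{b}$, note first that $\ak{b}$ is a non-zero fractional ideal of $\ca{O}_{\ca{F}}$: it is an $\ca{O}_{\ca{F}}$-submodule of $\ca{F}$, it is non-zero because $x\ak{a}\subseteq\ca{O}_{\ca{F}}$ for some $x\in\ca{F}^\times$ (as $\ak{a}$ is a fractional ideal), and $\ak{b}\neq\ca{F}$ because $1\notin\ak{b}$ whenever $\ak{a}\not\subseteq\ca{O}_{\ca{F}}$, while if $\ak{a}\subseteq\ca{O}_{\ca{F}}$ then $\ak{b}\subseteq\ak{a}^{-1}$-type bounds give $\ak{b}\neq\ca{F}$ as well; in all cases $\ak{b}$ is a proper $\ca{O}_{\ca{F}}$-submodule of $\ca{F}$, hence a fractional ideal by \ref{para:valution-norm}. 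We claim that
	\begin{align}\label{eq:lem:frac-ideal-aux}
		|\ak{a}|\cdot|\ak{b}|=1.
	\end{align}
	Indeed, $|\ak{a}\ak{b}|=|\ak{a}|\cdot|\ak{b}|$ by \ref{para:valution-norm}, and since $\ak{a}\ak{b}\subseteq\ca{O}_{\ca{F}}$ we get $|\ak{a}|\cdot|\ak{b}|\leq 1$. Conversely, let $z\in\ca{F}^\times$ be such that $|z|\cdot|\ak{a}|\leq 1$. Then for every $x\in\ak{a}$ we have $|zx|\leq |z|\cdot|\ak{a}|\leq 1$, hence $zx\in\ca{O}_{\ca{F}}$ (a valuation ring contains exactly the elements of absolute value $\leq 1$); thus $z\ak{a}\subseteq\ca{O}_{\ca{F}}$, i.e. $z\in\ak{b}$. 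Therefore $|\ak{b}|\geq\sup\{|z|\ :\ z\in\ca{F}^\times,\ |z|\cdot|\ak{a}|\leq 1\}$. Since the valuation on $\ca{F}$ is of height $1$ and the value group is non-discrete, this supremum equals $1/|\ak{a}|$: for any real $c<1/|\ak{a}|$ there is $z\in\ca{F}^\times$ with $c<|z|<1/|\ak{a}|$, and then $|z|\cdot|\ak{a}|<1\leq 1$. Hence $|\ak{b}|\geq 1/|\ak{a}|$, which together with $|\ak{a}|\cdot|\ak{b}|\leq 1$ proves \eqref{eq:lem:frac-ideal-aux}.

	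Now let $m\in\ak{m}_{\ca{F}}$ be arbitrary; we may assume $m\neq 0$. Then $|m|<1=|\ak{a}|\cdot|\ak{b}|$ by \eqref{eq:lem:frac-ideal-aux}. By definition of $|\ak{a}|$ and $|\ak{b}|$ as suprema, we may choose $x\in\ak{a}$ and $y\in\ak{b}$ with $|x|\cdot|y|>|m|$. Set $w=xy$, so that $w\in\ak{a}\ak{b}\subseteq\ca{O}_{\ca{F}}$ and $|w|>|m|$. Then $|m/w|<1$, hence $m/w\in\ca{O}_{\ca{F}}$, and consequently
	\begin{align}
		m=\frac{m}{w}\cdot w\in\ca{O}_{\ca{F}}\cdot\ak{a}\ak{b}=\ak{a}\ak{b}.
	\end{align}
	Since $m\in\ak{m}_{\ca{F}}$ was arbitrary, we conclude $\ak{m}_{\ca{F}}\subseteq\ak{a}\ak{b}\subseteq\ca{O}_{\ca{F}}$.
\end{proof}
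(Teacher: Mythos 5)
Your proof is correct and rests on the same mechanism as the paper's: the height-$1$ total order plus non-discreteness let you pick an element of $\ak{a}\ak{b}$ whose absolute value exceeds $|m|$, after which $m$ is an $\ca{O}_{\ca{F}}$-multiple of it. The paper's version is slightly more direct — for $\epsilon\in\ak{m}_{\ca{F}}$ it picks $y\in\ak{a}$ with $\ak{a}\subseteq\epsilon^{-1}y\ca{O}_{\ca{F}}$ and checks $y^{-1}\epsilon\in\ak{b}$ by hand, bypassing your auxiliary identity $|\ak{a}|\cdot|\ak{b}|=1$ — but this is a cosmetic difference, not a different route.
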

\begin{proof}
	By the definition of $\ak{b}$, we have $x\ak{a}\subseteq \ca{O}_{\ca{F}}$ for any $x\in \ak{b}$ so that $\ak{a}\ak{b}\subseteq \ca{O}_{\ca{F}}$. On the other hand, for any $\epsilon\in \ak{m}_{\ca{F}}$, since the valuation ring $\ca{O}_{\ca{F}}$ is of height $1$ and $\ak{a}\neq \ca{F}$, there exists a nonzero element $y\in \ak{a}$ such that $\ak{a}\subseteq \epsilon^{-1}y\ca{O}_{\ca{F}}$. Hence, $y^{-1}\epsilon\in \ak{b}$ which implies that $\epsilon\in \ak{a}\ak{b}$.
\end{proof}

\begin{myprop}[{cf. \cite[\Luoma{3}.\textsection3, Proposition 7]{serre1979local}}]\label{prop:different-trace}
	With the notation in {\rm\ref{para:different}}, let $\ak{a}$ (resp. $\ak{a}'$) be a non-zero fractional ideal of $\ca{O}_{\ca{F}}$ (resp. $\ca{O}_{\ca{F}'}$). Then, $\mrm{Tr}_{\ca{F}'/\ca{F}}(\ak{m}_{\ca{F}}\ak{a}')\subseteq \ak{a}$ if and only if $\ak{m}_{\ca{F}}\ak{a}'\cdot\scr{D}_{\ca{O}_{\ca{F}'}/\ca{O}_{\ca{F}}}\subseteq \ak{a}\ca{O}_{\ca{F}'}$. 
\end{myprop}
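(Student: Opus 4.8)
The plan is to reduce the statement — the valuation-theoretic analogue of \cite[\Luoma{3}.\textsection3, Proposition 7]{serre1979local} — to two elementary facts about the codifferent. Write $\ca{O}=\ca{O}_{\ca{F}}$, $\ca{O}'=\ca{O}_{\ca{F}'}$, $\ak{m}=\ak{m}_{\ca{F}}$, $\mrm{Tr}=\mrm{Tr}_{\ca{F}'/\ca{F}}$, $\scr{D}=\scr{D}_{\ca{O}_{\ca{F}'}/\ca{O}_{\ca{F}}}$, and set $\ak{C}=\{x\in\ca{F}'\mid\mrm{Tr}(x\ca{O}')\subseteq\ca{O}\}$; since $\mrm{Tr}(\ca{O}')\subseteq\ca{O}$ and $\mrm{Tr}(\ca{F}')=\ca{F}\ne\ca{O}$, this $\ak{C}$ is a fractional ideal of $\ca{O}'$ with $\ca{O}'\subseteq\ak{C}\subsetneq\ca{F}'$. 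The first task is to match the paper's definition of $\scr{D}$ with $\ak{C}$: as $\ca{F}'/\ca{F}$ is separable (characteristic $0$), the trace form is non-degenerate and the isomorphism $\ca{F}'\iso\ho_{\ca{F}}(\ca{F}',\ca{F})$ it induces restricts, via $\ca{F}'=\ca{O}'[1/\pi]$ for $\pi\in\ak{m}\setminus\{0\}$, to an isomorphism of $\ca{O}'$-modules $\ca{O}_{\ca{F}'}^{*}=\ho_{\ca{O}}(\ca{O}',\ca{O})\iso\ak{C}$, under which $\tau_{\ca{O}_{\ca{F}'}/\ca{O}_{\ca{F}}}$ of \ref{para:different} becomes the inclusion $\ca{O}'\inj\ak{C}$ (here one uses that $\mrm{Tr}_{\ca{O}_{\ca{F}'}/\ca{O}_{\ca{F}}}$ is the restriction of $\mrm{Tr}$, as noted in \ref{para:different}). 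Hence $\cok(\tau_{\ca{O}_{\ca{F}'}/\ca{O}_{\ca{F}}})\cong\ak{C}/\ca{O}'$, so $\scr{D}=\{z\in\ca{O}'\mid z\ak{C}\subseteq\ca{O}'\}$.

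With this in hand I would record two facts. First, for every $\ca{O}'$-submodule $\ak{b}$ of $\ca{F}'$ one has $\mrm{Tr}(\ak{b})\subseteq\ca{O}$ if and only if $\ak{b}\subseteq\ak{C}$: if $\ak{b}\subseteq\ak{C}$ then $\mrm{Tr}(\ak{b})\subseteq\mrm{Tr}(\ak{C})\subseteq\ca{O}$ (each $x\in\ak{C}$ has $\mrm{Tr}(x)\in\ca{O}$), and if $\mrm{Tr}(\ak{b})\subseteq\ca{O}$ then every $x\in\ak{b}$ satisfies $\mrm{Tr}(x\ca{O}')\subseteq\mrm{Tr}(\ak{b})\subseteq\ca{O}$, i.e. $x\in\ak{C}$. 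Second, $\ak{m}_{\ca{F}'}\subseteq\ak{C}\scr{D}\subseteq\ca{O}'$: this is Lemma \ref{lem:frac-ideal} applied to the nonzero fractional ideal $\ak{C}$ of the height-$1$ valuation ring $\ca{O}'$, once one observes that the ideal $\{x\in\ca{F}'\mid x\ak{C}\subseteq\ca{O}'\}$ produced there equals $\scr{D}$, since $\ca{O}'\subseteq\ak{C}$ forces any such $x$ into $\ca{O}'$.

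It then remains to prove the chain of equivalences $\mrm{Tr}(\ak{m}\ak{a}')\subseteq\ak{a}\ \Leftrightarrow\ \ak{m}\ak{a}'\subseteq\ak{a}\ak{C}\ \Leftrightarrow\ \ak{m}\ak{a}'\scr{D}\subseteq\ak{a}\ca{O}'$. For the second equivalence: ``$\Rightarrow$'' follows by multiplying by $\scr{D}$ and using $\ak{C}\scr{D}\subseteq\ca{O}'$; ``$\Leftarrow$'' follows by multiplying by $\ak{C}$ and using $\ak{m}\ca{O}'\subseteq\ak{m}_{\ca{F}'}\subseteq\ak{C}\scr{D}$ together with $\ak{m}^{2}=\ak{m}$ (the valuation on $\ca{F}$ being non-discrete of height $1$, cf. \ref{para:notation-val}), which gives $\ak{m}\ak{a}'=\ak{m}^{2}\ak{a}'\subseteq\ak{m}\ak{a}'\scr{D}\ak{C}\subseteq\ak{a}\ca{O}'\ak{C}=\ak{a}\ak{C}$. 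For the first equivalence: ``$\Leftarrow$'' follows because $\mrm{Tr}(\ak{a}\ak{C})\subseteq\ak{a}\,\mrm{Tr}(\ak{C})\subseteq\ak{a}$ by $\ca{F}$-linearity of $\mrm{Tr}$ and $\mrm{Tr}(\ak{C})\subseteq\ca{O}$. For ``$\Rightarrow$'', given $x$ in the $\ca{O}'$-module $\ak{m}\ak{a}'$ we have $\mrm{Tr}(x\ca{O}')\subseteq\ak{a}$, hence for every $c\in\ak{a}^{-1}:=\{c\in\ca{F}\mid c\ak{a}\subseteq\ca{O}\}$ the $\ca{O}'$-module $cx\ca{O}'$ satisfies $\mrm{Tr}(cx\ca{O}')\subseteq c\ak{a}\subseteq\ca{O}$, so $cx\in\ak{C}$ by the first fact; thus $x\ak{a}^{-1}\subseteq\ak{C}$, and multiplying by $\ak{a}$ and using $\ak{m}\subseteq\ak{a}\ak{a}^{-1}$ (Lemma \ref{lem:frac-ideal}) yields $\ak{m}x\subseteq\ak{a}\ak{C}$; letting $x$ range over $\ak{m}\ak{a}'$ and invoking $\ak{m}^{2}=\ak{m}$ once more gives $\ak{m}\ak{a}'\subseteq\ak{a}\ak{C}$.

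The only genuinely delicate point is the identification in the first paragraph of the paper's almost-flavoured $\scr{D}$ (defined through $\tau$ and $\ho_{\ca{O}}(\ak{m},-)$) with the classical fractional-ideal codifferent $\ak{C}$; once that is settled, the rest is formal manipulation of fractional ideals, whose one recurring trick is that $\ak{m}_{\ca{F}}^{2}=\ak{m}_{\ca{F}}$ absorbs the failure of fractional ideals over a height-$1$ valuation ring to be invertible.
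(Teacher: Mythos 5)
Your proof is correct and follows essentially the same route as the paper's: identify $\ca{O}_{\ca{F}'}^*$ with the codifferent $\ak{C}$ via the non-degenerate trace form, deduce $\scr{D}=\{x\in\ca{O}_{\ca{F}'}\mid x\ak{C}\subseteq\ca{O}_{\ca{F}'}\}$ and $\ak{m}_{\ca{F}'}\subseteq\ak{C}\scr{D}$ from Lemma \ref{lem:frac-ideal}, and then run a chain of equivalences absorbing non-invertibility of fractional ideals with $\ak{m}_{\ca{F}}^2=\ak{m}_{\ca{F}}$. The only (cosmetic) difference is your choice of intermediate term $\ak{m}\ak{a}'\subseteq\ak{a}\ak{C}$ where the paper routes through $\mrm{Tr}(\ak{m}\ak{b}\ak{a}')\subseteq\ca{O}_{\ca{F}}$ and $\ak{m}\ak{b}\ak{a}'\subseteq\ca{O}_{\ca{F}'}^*$ with $\ak{b}=\ak{a}^{-1}$.
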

\begin{proof}
	Recall that the trace morphism of $\ca{F}'$ over $\ca{F}$ induces an isomorphism $\tau_{\ca{F}'/\ca{F}}:\ca{F}'\iso \ca{F}'^*,\ x\mapsto (y\mapsto \mrm{Tr}_{\ca{F}'/\ca{F}}(xy))$ (\cite[4.1.14]{gabber2003almost}). By construction, there is a canonical commutative diagram
	\begin{align}
		\xymatrix{
			\ca{F}'\ar[r]^-{\sim}_-{\tau_{\ca{F}'/\ca{F}}}&\ca{F}'^*\\
			\ca{O}_{\ca{F}'}\ar[r]^-{\tau_{\ca{O}_{\ca{F}'}/\ca{O}_{\ca{F}}}}\ar[u]&\ca{O}_{\ca{F}'}^*\ar[u]
		}
	\end{align}
	where the vertical arrows are the canonical inclusions (as $\ca{O}_{\ca{F}'}$ and $\ca{O}_{\ca{F}'}^*$ are both torsion-free). Therefore, $\ca{O}_{\ca{F}'}^*$ is identified with the $\ca{O}_{\ca{F}'}$-submodule of $\ca{F}'$ defined by
	\begin{align}\label{eq:lem:different-trace-2}
		\ca{O}_{\ca{F}'}^*=\{x\in \ca{F}'\ |\ \mrm{Tr}_{\ca{F}'/\ca{F}}(xy)\in\ca{O}_{\ca{F}},\ \forall y\in \ca{O}_{\ca{F}'}\}.
	\end{align}
	It is clearly not equal to $\ca{F}'$ (as $\mrm{Tr}_{\ca{F}'/\ca{F}}(\ca{F}')=\ca{F}$) and is thus a fractional ideal containing $\ca{O}_{\ca{F}'}$. Then, we have 
	\begin{align}\label{eq:prop:different-trace-3}
		\scr{D}_{\ca{O}_{\ca{F}'}/\ca{O}_{\ca{F}}}=\mrm{Ann}_{\ca{O}_{\ca{F}'}}(\ca{O}_{\ca{F}'}^*/\ca{O}_{\ca{F}'})=\{x\in \ca{O}_{\ca{F}'}\ |\ x\ca{O}_{\ca{F}'}^*\subseteq \ca{O}_{\ca{F}'}\}.
	\end{align}	
	In particular, we have $\ak{m}_{\ca{F}'}\subseteq \scr{D}_{\ca{O}_{\ca{F}'}/\ca{O}_{\ca{F}}}\cdot\ca{O}_{\ca{F}'}^*\subseteq \ca{O}_{\ca{F}'}$ by \ref{lem:frac-ideal}. We put $\ak{b}=\{x\in \ca{F}\ |\ x\ak{a}\subseteq \ca{O}_{\ca{F}}\}$. The conclusion follows from the following equivalences
	\begin{align}
		\mrm{Tr}_{\ca{F}'/\ca{F}}(\ak{m}_{\ca{F}}\ak{a}')\subseteq \ak{a}\stackrel{(1)}{\Leftrightarrow} \mrm{Tr}_{\ca{F}'/\ca{F}}(\ak{m}_{\ca{F}}\ak{b}\ak{a}')\subseteq \ca{O}_{\ca{F}}\stackrel{(2)}{\Leftrightarrow} \ak{m}_{\ca{F}}\ak{b}\ak{a}'\subseteq \ca{O}_{\ca{F}'}^*\stackrel{(3)}{\Leftrightarrow} \ak{m}_{\ca{F}}\ak{a}'\scr{D}_{\ca{O}_{\ca{F}'}/\ca{O}_{\ca{F}}}\subseteq \ak{a}\ca{O}_{\ca{F}'}
	\end{align}
	proved as follows:
	\begin{enumerate}
		\renewcommand{\labelenumi}{{\rm(\theenumi)}}
		\item If $\mrm{Tr}_{\ca{F}'/\ca{F}}(\ak{m}_{\ca{F}}\ak{a}')\subseteq \ak{a}$, multiplying both sides by $\ak{b}$, we get $\mrm{Tr}_{\ca{F}'/\ca{F}}(\ak{m}_{\ca{F}}\ak{b}\ak{a}')\subseteq \ak{a}\ak{b}\subseteq \ca{O}_{\ca{F}}$ by \ref{lem:frac-ideal}. Conversely, if $\mrm{Tr}_{\ca{F}'/\ca{F}}(\ak{m}_{\ca{F}}\ak{b}\ak{a}')\subseteq \ca{O}_{\ca{F}}$, multiplying both sides by $\ak{a}$, we get $\mrm{Tr}_{\ca{F}'/\ca{F}}(\ak{m}_{\ca{F}}\ak{a}')=\mrm{Tr}_{\ca{F}'/\ca{F}}(\ak{m}_{\ca{F}}^2\ak{a}')\subseteq \mrm{Tr}_{\ca{F}'/\ca{F}}(\ak{m}_{\ca{F}}\ak{a}\ak{b}\ak{a}')\subseteq \ak{a}$ by $\ak{m}_{\ca{F}}=\ak{m}_{\ca{F}}^2$ and \ref{lem:frac-ideal}.
		\item It follows directly from \eqref{eq:lem:different-trace-2}.
		\item If $\ak{m}_{\ca{F}}\ak{b}\ak{a}'\subseteq \ca{O}_{\ca{F}'}^*$, multiplying both sides by $\ak{a}\scr{D}_{\ca{O}_{\ca{F}'}/\ca{O}_{\ca{F}}}$, we get $\ak{m}_{\ca{F}}\ak{a}'\scr{D}_{\ca{O}_{\ca{F}'}/\ca{O}_{\ca{F}}}=\ak{m}_{\ca{F}}^2\ak{a}'\scr{D}_{\ca{O}_{\ca{F}'}/\ca{O}_{\ca{F}}}\subseteq \ak{m}_{\ca{F}}\ak{a}\ak{b}\ak{a}'\scr{D}_{\ca{O}_{\ca{F}'}/\ca{O}_{\ca{F}}}\subseteq \ak{a}\scr{D}_{\ca{O}_{\ca{F}'}/\ca{O}_{\ca{F}}}\ca{O}_{\ca{F}'}^*\subseteq \ak{a}\ca{O}_{\ca{F}'}$ by $\ak{m}_{\ca{F}}=\ak{m}_{\ca{F}}^2$ and \ref{lem:frac-ideal}. Conversely, if $\ak{m}_{\ca{F}}\ak{a}'\scr{D}_{\ca{O}_{\ca{F}'}/\ca{O}_{\ca{F}}}\subseteq \ak{a}\ca{O}_{\ca{F}'}$, multiplying both sides by $\ak{b}\ca{O}_{\ca{F}'}^*$, we get $\ak{m}_{\ca{F}}\ak{b}\ak{a}'=\ak{m}_{\ca{F}}^2\ak{b}\ak{a}'\subseteq \ak{m}_{\ca{F}}\ak{b}\ak{a}'\scr{D}_{\ca{O}_{\ca{F}'}/\ca{O}_{\ca{F}}}\ca{O}_{\ca{F}'}^*\subseteq \ak{a}\ak{b} \ca{O}_{\ca{F}'}^*\subseteq \ca{O}_{\ca{F}'}^*$ by $\ak{m}_{\ca{F}}=\ak{m}_{\ca{F}}^2$ and \ref{lem:frac-ideal}.
	\end{enumerate}
\end{proof}

\begin{mycor}\label{cor:prop:different-trace}
	With the notation in {\rm\ref{para:different}}, let $|\scr{D}_{\ca{O}_{\ca{F}'}/\ca{O}_{\ca{F}}}|=\sup_{y\in \scr{D}_{\ca{O}_{\ca{F}'}/\ca{O}_{\ca{F}}}}|y|$ \eqref{eq:para:valution-norm-2}. Then, for any $x\in \ca{F}'$, we have
	\begin{align}
		|\mrm{Tr}_{\ca{F}'/\ca{F}}(x)|\leq |\scr{D}_{\ca{O}_{\ca{F}'}/\ca{O}_{\ca{F}}}|\cdot |x|.
	\end{align}
	In particular, we have $|[\ca{F}':\ca{F}]|\leq |\scr{D}_{\ca{O}_{\ca{F}'}/\ca{O}_{\ca{F}}}|$.
\end{mycor}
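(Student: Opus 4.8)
The plan is to reduce both assertions to Proposition~\ref{prop:different-trace}, applied with carefully chosen fractional ideals, and then to rescale using multiplicativity of $|\cdot|$. Write $\delta=|\scr{D}_{\ca{O}_{\ca{F}'}/\ca{O}_{\ca{F}}}|$ for brevity; note that $0<\delta\leq 1$, the positivity holding because the different ideal $\scr{D}_{\ca{O}_{\ca{F}'}/\ca{O}_{\ca{F}}}$ is nonzero ($\ca{F}'/\ca{F}$ being a separable extension of characteristic $0$ fields). The case $x=0$ is trivial, so I would assume $x\neq 0$ and first treat the integral case $x\in\ca{O}_{\ca{F}'}$.

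For the integral case, I would introduce the nonzero fractional ideal $\ak{c}=\{a\in\ca{F}\ |\ |a|\leq\delta\}$ of $\ca{O}_{\ca{F}}$. Every element of $\ak{m}_{\ca{F}}\cdot\ca{O}_{\ca{F}'}\cdot\scr{D}_{\ca{O}_{\ca{F}'}/\ca{O}_{\ca{F}}}$ is a finite sum of terms $\epsilon u d$ with $|\epsilon|<1$, $|u|\leq 1$ and $|d|\leq\delta$, hence has absolute value $<\delta$; and since the value group $v_p(\ca{F}^\times)$ is dense in $\bb{R}$ (it contains $v_p(\overline{\bb{Q}}_p^\times)=\bb{Q}$), any $y\in\ca{F}'$ with $|y|<\delta$ can be written $y=a\cdot(y/a)$ with $a\in\ca{F}^\times$, $|y|\leq|a|\leq\delta$ and $y/a\in\ca{O}_{\ca{F}'}$. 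Therefore $\ak{m}_{\ca{F}}\ca{O}_{\ca{F}'}\scr{D}_{\ca{O}_{\ca{F}'}/\ca{O}_{\ca{F}}}\subseteq\ak{c}\,\ca{O}_{\ca{F}'}$, so Proposition~\ref{prop:different-trace} applied with $\ak{a}'=\ca{O}_{\ca{F}'}$ and $\ak{a}=\ak{c}$ gives $\mrm{Tr}_{\ca{F}'/\ca{F}}(\ak{m}_{\ca{F}}\ca{O}_{\ca{F}'})\subseteq\ak{c}$. Hence for $u\in\ca{O}_{\ca{F}'}$ and $\epsilon\in\ak{m}_{\ca{F}}$ we get $|\epsilon|\cdot|\mrm{Tr}_{\ca{F}'/\ca{F}}(u)|=|\mrm{Tr}_{\ca{F}'/\ca{F}}(\epsilon u)|\leq\delta$, and letting $|\epsilon|\to 1$ (again using density, so that $\sup_{\epsilon\in\ak{m}_{\ca{F}}}|\epsilon|=1$) yields $|\mrm{Tr}_{\ca{F}'/\ca{F}}(u)|\leq\delta$ for all $u\in\ca{O}_{\ca{F}'}$.

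To pass to a general $x\in\ca{F}'^\times$, I would pick (once more by density) an element $b\in\ca{F}^\times$ with $|x|\leq|b|$ and $|b|$ as close to $|x|$ as wanted; then $x/b\in\ca{O}_{\ca{F}'}$, so $|\mrm{Tr}_{\ca{F}'/\ca{F}}(x)|=|b|\cdot|\mrm{Tr}_{\ca{F}'/\ca{F}}(x/b)|\leq|b|\,\delta$, and letting $|b|\downarrow|x|$ gives $|\mrm{Tr}_{\ca{F}'/\ca{F}}(x)|\leq\delta\,|x|$, which is the asserted inequality. For the final ``in particular'', I would take $x=1$ and use that $\mrm{Tr}_{\ca{F}'/\ca{F}}(1)=[\ca{F}':\ca{F}]$ (the trace of the identity endomorphism of the finite-dimensional $\ca{F}$-vector space $\ca{F}'$), obtaining $|[\ca{F}':\ca{F}]|\leq\delta=|\scr{D}_{\ca{O}_{\ca{F}'}/\ca{O}_{\ca{F}}}|$.

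Most of this is routine bookkeeping with the ultrametric absolute value and the trace; the only step requiring a bit of care is the translation of the norm estimate into the ideal containment $\ak{m}_{\ca{F}}\ca{O}_{\ca{F}'}\scr{D}_{\ca{O}_{\ca{F}'}/\ca{O}_{\ca{F}}}\subseteq\ak{c}\,\ca{O}_{\ca{F}'}$ — that is, identifying the extension to $\ca{O}_{\ca{F}'}$ of a fractional ideal of $\ca{O}_{\ca{F}}$ with a suitable ``norm-ball'' in $\ca{F}'$ — together with the symmetric rescaling in the last step. Both of these hinge on the density of the value group of $\ca{F}$ in $\bb{R}$, which is precisely where the hypothesis that $K$ is a complete discrete valuation field extension of $\bb{Q}_p$ (so $\overline{\bb{Q}}_p\subseteq\ca{F}$) enters.
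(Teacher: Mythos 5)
Your proof is correct and rests on the same key input as the paper's own argument: Proposition \ref{prop:different-trace} combined with the non-discreteness of the valuation on $\ca{F}$. The paper is slightly more economical — it applies the proposition directly to $\ak{a}'=x\ca{O}_{\ca{F}'}$ and varies $\ak{a}$ over all fractional ideals to identify $\sup_{y\in \ak{m}_{\ca{F}}\cdot x\ca{O}_{\ca{F}'}}|\mrm{Tr}_{\ca{F}'/\ca{F}}(y)|$ with $|\scr{D}_{\ca{O}_{\ca{F}'}/\ca{O}_{\ca{F}}}|\cdot|x|$ — whereas you fix $\ak{a}'=\ca{O}_{\ca{F}'}$, take the norm-ball $\ak{c}$ as $\ak{a}$, and rescale by elements of $\ca{F}^\times$ afterwards, but this is the same argument in substance.
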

\begin{proof} 	
	As the valuation on $\ca{F}$ is non-discrete, we have
	\begin{align}
		|\mrm{Tr}_{\ca{F}'/\ca{F}}(x)|=\sup_{y\in \ak{m}_{\ca{F}}\cdot x}|\mrm{Tr}_{\ca{F}'/\ca{F}}(y)|\leq\sup_{y\in \ak{m}_{\ca{F}}\cdot x\ca{O}_{\ca{F}'}}|\mrm{Tr}_{\ca{F}'/\ca{F}}(y)|.
	\end{align}
	Applying \ref{prop:different-trace} to the fractional ideal $\ak{a}'=x\ca{O}_{\ca{F}'}$ and varying the fractional ideal $\ak{a}\subseteq \ca{F}$, we see that 
	\begin{align}
		\sup_{y\in \ak{m}_{\ca{F}}\cdot x\ca{O}_{\ca{F}'}}|\mrm{Tr}_{\ca{F}'/\ca{F}}(y)|= \sup_{y\in \ak{m}_{\ca{F}}\cdot x\scr{D}_{\ca{O}_{\ca{F}'}/\ca{O}_{\ca{F}}}}|y|=|\scr{D}_{\ca{O}_{\ca{F}'}/\ca{O}_{\ca{F}}}|\cdot |x|,
	\end{align}
	where we used again the non-discreteness of the valuation on $\ca{F}$.
	
	The ``in particular" part follows immediately by taking $x=1$.
\end{proof}

\begin{mypara}\label{para:tower}
	Let $t$ be an element of $\ca{F}$. We fix a compatible system of $p$-power roots $(t_{p^n})_{n\in\bb{N}}$ of $t$ contained in $\overline{\ca{F}}$ and for any $n\in\bb{N}$, we put $\ca{F}_n=\ca{F}(t_{p^n})$ the extension of $\ca{F}$ generated by $t_{p^n}$, which is a finite Galois extension of $\ca{F}$ independent of the choice of the $p^n$-th root $t_{p^n}$ of $t$. We put $\ca{F}_{\infty}=\bigcup_{n\in\bb{N}}\ca{F}_n$. We fix a compatible system of primitive $p$-power roots of unity $(\zeta_{p^n})_{n\in\bb{N}}$ contained in $\overline{K}\subseteq \ca{F}$. Then, there is a continuous group homomorphism
	\begin{align}\label{eq:para:tower-1}
		\xi_t:G_{\ca{F}}\longrightarrow \bb{Z}_p
	\end{align}
	characterized by $\tau(t_{p^n})=\zeta_{p^n}^{\xi_t(\tau)}t_{p^n}$ for any $\tau\in G_{\ca{F}}$ and $n\in\bb{N}$. It factors through a continuous injection $\gal(\ca{F}_{\infty}/\ca{F})\hookrightarrow \bb{Z}_p$.
\end{mypara}

\begin{mylem}\label{lem:tower}
	With the notation in {\rm\ref{para:tower}}, assume that $\ca{F}_\infty\neq \ca{F}$. Then, there exists $n_0\in\bb{N}$ such that $\ca{F}=\ca{F}_{n_0}$ and that \eqref{eq:para:tower-1} induces an isomorphism
	\begin{align}
		\gal(\ca{F}_\infty/\ca{F}_n)\iso p^n\bb{Z}_p
	\end{align}
	for any integer $n\geq n_0$. In particular, we have $[\ca{F}_n:\ca{F}]=p^{n-n_0}$.
\end{mylem}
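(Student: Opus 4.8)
The plan is to read everything off the factorization of $\xi_t$ through a continuous injection $\gal(\ca{F}_\infty/\ca{F})\inj\bb{Z}_p$ recorded in \ref{para:tower}, together with the classification of closed subgroups of $\bb{Z}_p$. First, since $G_{\ca{F}}$ is compact and $\xi_t$ is continuous, the image $H:=\xi_t(G_{\ca{F}})$ is a closed subgroup of $\bb{Z}_p$, and it coincides with the image of the injection $\gal(\ca{F}_\infty/\ca{F})\inj\bb{Z}_p$; the hypothesis $\ca{F}_\infty\neq\ca{F}$ says exactly that $\gal(\ca{F}_\infty/\ca{F})\neq 1$, so $H\neq 0$. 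Every nonzero closed subgroup of $\bb{Z}_p$ equals $p^{n_0}\bb{Z}_p$ for a unique $n_0\in\bb{N}$ (take $n_0$ to be the minimal $p$-adic valuation attained on $H\setminus\{0\}$; an element realizing it generates $p^{n_0}\bb{Z}_p$ topologically since $\bb{Z}$ is dense in $\bb{Z}_p$, while $H\subseteq p^{n_0}\bb{Z}_p$ by minimality), so $H=p^{n_0}\bb{Z}_p$.

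Next I record the key translation: for $\tau\in G_{\ca{F}}$ and $n\in\bb{N}$, since $\ca{F}_n=\ca{F}(t_{p^n})$ and $\tau(t_{p^n})=\zeta_{p^n}^{\xi_t(\tau)}t_{p^n}$ with $\zeta_{p^n}$ a primitive $p^n$-th root of unity, $\tau$ fixes $\ca{F}_n$ if and only if $\xi_t(\tau)\in p^n\bb{Z}_p$. As $\xi_t(\tau)$ depends only on $\tau|_{\ca{F}_\infty}$, this means that under $\xi_t\colon\gal(\ca{F}_\infty/\ca{F})\inj\bb{Z}_p$ the subgroup $\gal(\ca{F}_\infty/\ca{F}_n)$ maps bijectively onto $H\cap p^n\bb{Z}_p$. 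Applying this with $n=n_0$: since $H\subseteq p^{n_0}\bb{Z}_p$, every $\tau\in G_{\ca{F}}$ fixes $\ca{F}_{n_0}$; because $\ca{F}$ has characteristic zero we have $\overline{\ca{F}}^{G_{\ca{F}}}=\ca{F}$, whence $\ca{F}_{n_0}\subseteq\ca{F}$, i.e.\ $\ca{F}=\ca{F}_{n_0}$ (and in fact $n_0$ is the largest index with this property, since $\ca{F}_m=\ca{F}$ would force $H\subseteq p^m\bb{Z}_p$).

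Finally, for any integer $n\geq n_0$ one has $H\cap p^n\bb{Z}_p=p^{n_0}\bb{Z}_p\cap p^n\bb{Z}_p=p^n\bb{Z}_p$, so the previous paragraph shows that $\xi_t$ restricts to a bijective continuous homomorphism $\gal(\ca{F}_\infty/\ca{F}_n)\to p^n\bb{Z}_p$, hence to an isomorphism $\gal(\ca{F}_\infty/\ca{F}_n)\iso p^n\bb{Z}_p$ of (profinite) topological groups, as desired. There is no serious obstacle in this argument beyond the two routine inputs just used — the structure of closed subgroups of $\bb{Z}_p$ and the identity $\overline{\ca{F}}^{G_{\ca{F}}}=\ca{F}$; the only point demanding a little care is the translation in the middle paragraph, where the primitivity of each $\zeta_{p^n}$ and the fact that it is $\xi_t(\tau)\bmod p^n$ which governs the action on $t_{p^n}$ must be invoked correctly.
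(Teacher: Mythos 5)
Your proof is correct and follows essentially the same route as the paper's: both identify $\gal(\ca{F}_\infty/\ca{F})$ via $\xi_t$ with a nonzero closed subgroup $p^{n_0}\bb{Z}_p\subseteq\bb{Z}_p$ and exploit the equivalence ``$\tau$ fixes $\ca{F}_n$ $\Leftrightarrow$ $\xi_t(\tau)\in p^n\bb{Z}_p$''. The only (cosmetic) difference is that you compute $\gal(\ca{F}_\infty/\ca{F}_n)=H\cap p^n\bb{Z}_p$ directly, whereas the paper reaches the same conclusion by induction on $n$, identifying $\gal(\ca{F}_\infty/\ca{F}_{n+1})$ as the unique index-$p$ subgroup at each step.
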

\begin{proof}
	Since $\gal(\ca{F}_{\infty}/\ca{F})$ is a profinite group, $\xi_t$ identifies it with a closed subgroup of $\bb{Z}_p$, i.e., an ideal of $\bb{Z}_p$. As $\gal(\ca{F}_{\infty}/\ca{F})$ is nonzero by the assumption, it is identified with the subgroup $p^{n_0}\bb{Z}_p$ for some $n_0\in\bb{N}$. Then, we show by induction that $\gal(\ca{F}_\infty/\ca{F}_n)=p^n\bb{Z}_p$. For $n=n_0$, since $\ca{F}_{n_0}$ is fixed by $\xi_t^{-1}(p^{n_0}\bb{Z}_p)$ (which is equal to $G_{\ca{F}}$), we have $\ca{F}_{n_0}=\ca{F}$. 
	
	Assume that the claim holds for some integer $n\geq n_0$. Notice that $t_{p^{n+1}}$ is not fixed by the action of $\xi_t^{-1}(p^n\bb{Z}_p)$. Thus, $\ca{F}_{n+1}\neq \ca{F}_n$ so that $[\ca{F}_{n+1}:\ca{F}_n]=p$ and thus $\gal(\ca{F}_\infty/\ca{F}_{n+1})$ is identified with the unique subgroup of $p^n\bb{Z}_p$ of index $p$, i.e., $\gal(\ca{F}_\infty/\ca{F}_{n+1})=p^{n+1}\bb{Z}_p$.
\end{proof}

\begin{mypara}\label{para:ramified-tower}
	Following \ref{para:tower}, assume that $t$ is an element of $\ca{O}_{\ca{F}}$ such that the element $\df t$ of $\Omega^1_{\ca{O}_{\ca{F}}/\ca{O}_{\overline{K}}}$ is not \emph{$p^\infty$-divisible}, i.e., not contained in $\bigcap_{n\in\bb{N}}p^n\Omega^1_{\ca{O}_{\ca{F}}/\ca{O}_{\overline{K}}}$. In particular, we see that $\ca{F}$ does not contain a compatible system of $p$-power roots of $t$, i.e., $\ca{F}_\infty\neq \ca{F}$.
\end{mypara}

\begin{mylem}\label{lem:ann-differential}
	Under the assumption in {\rm\ref{para:ramified-tower}} and with the same notation in {\rm\ref{para:tower}}, there exists an integer $n_1>n_0$ {\rm(\ref{lem:tower})} such that for any integer $n\geq n_1$, the annihilator of the element $\df t_{p^n}$ of the $\ca{O}_{\ca{F}_n}$-module $\Omega^1_{\ca{O}_{\ca{F}_n}/\ca{O}_{\ca{F}}}$ satisfies the following relations
	\begin{align}
		p^nt\ca{O}_{\ca{F}_n}\subseteq \mrm{Ann}_{\ca{O}_{\ca{F}_n}}(\df t_{p^n})\subseteq p^{n-n_1}\ca{O}_{\ca{F}_n}.
	\end{align}
\end{mylem}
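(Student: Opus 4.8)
The plan is to prove the two containments separately. The lower one is formal: since $t=t_{p^n}^{p^n}$ and $t\in\mathcal{O}_\mathcal{F}$, in $\Omega^1_{\mathcal{O}_{\mathcal{F}_n}/\mathcal{O}_\mathcal{F}}$ we have $0=\df t=p^nt_{p^n}^{p^n-1}\,\df t_{p^n}$, and multiplying by $t_{p^n}$ gives $p^nt\cdot\df t_{p^n}=0$; hence $p^nt\mathcal{O}_{\mathcal{F}_n}\subseteq\mathrm{Ann}_{\mathcal{O}_{\mathcal{F}_n}}(\df t_{p^n})$ for every $n$ (note $t_{p^n}\in\mathcal{O}_{\mathcal{F}_n}$, the integral closure of $\mathcal{O}_\mathcal{F}$ in $\mathcal{F}_n$). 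The substance is the upper containment, which I would derive from a bound on the valuation of the different $\scr{D}_{\mathcal{O}_{\mathcal{F}_n}/\mathcal{O}_\mathcal{F}}$, in the spirit of Tate.

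Work inside $\Omega^1_{\mathcal{F}_n/K}=\Omega^1_{\mathcal{F}/K}\otimes_\mathcal{F}\mathcal{F}_n$. Put $M=\Omega^1_{\mathcal{O}_\mathcal{F}/\mathcal{O}_{\overline{K}}}$ and $M_n=\Omega^1_{\mathcal{O}_{\mathcal{F}_n}/\mathcal{O}_{\overline{K}}}$; both are torsion-free by Gabber--Ramero (as used in the proof of \ref{prop:fal-ext-val}), so $M\hookrightarrow\Omega^1_{\mathcal{F}/K}$ and $M_n\hookrightarrow\Omega^1_{\mathcal{F}_n/K}$, and the second fundamental exact sequence for $\mathcal{O}_{\overline{K}}\to\mathcal{O}_\mathcal{F}\to\mathcal{O}_{\mathcal{F}_n}$ identifies $\Omega^1_{\mathcal{O}_{\mathcal{F}_n}/\mathcal{O}_\mathcal{F}}=M_n/(\mathcal{O}_{\mathcal{F}_n}M)$, where $\mathcal{O}_{\mathcal{F}_n}M$ is the $\mathcal{O}_{\mathcal{F}_n}$-submodule of $\Omega^1_{\mathcal{F}_n/K}$ generated by $M$. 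Differentiating $t=t_{p^n}^{p^n}$ gives $\df t_{p^n}=\frac{t_{p^n}}{p^nt}\,\df t$, so $x\,\df t_{p^n}=0$ in $\Omega^1_{\mathcal{O}_{\mathcal{F}_n}/\mathcal{O}_\mathcal{F}}$ iff $y\,\df t\in\mathcal{O}_{\mathcal{F}_n}M$, where $y=\frac{xt_{p^n}}{p^nt}$. The hypothesis that $\df t$ is not $p^\infty$-divisible in $M$ says exactly that $\mathfrak{b}=\{a\in\mathcal{F}\ |\ a\,\df t\in M\}$ is a proper fractional ideal of $\mathcal{O}_\mathcal{F}$; since $\mathcal{O}_\mathcal{F}\subseteq\mathfrak{b}$ we may set $c=\log_p|\mathfrak{b}|\in[0,\infty)$.

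The crucial step bounds $\mathcal{O}_{\mathcal{F}_n}M\cap\mathcal{F}\,\df t$ from above; the naive bound by $M_n\cap\mathcal{F}\,\df t$ is useless, as $\df t=p^nt_{p^n}^{p^n-1}\,\df t_{p^n}$ is already extremely $p$-divisible in $M_n$. Assume $y\,\df t\in\mathcal{O}_{\mathcal{F}_n}M$ (the case $x=0$ being trivial). For every $z\in\mathcal{O}_{\mathcal{F}_n}$ we then have $zy\,\df t\in\mathcal{O}_{\mathcal{F}_n}M$; applying $\mathrm{Tr}_{\mathcal{F}_n/\mathcal{F}}\otimes\mathrm{id}_{\Omega^1_{\mathcal{F}/K}}$ and using $\mathrm{Tr}(\mathcal{O}_{\mathcal{F}_n}M)\subseteq\mathrm{Tr}_{\mathcal{F}_n/\mathcal{F}}(\mathcal{O}_{\mathcal{F}_n})\cdot M$ gives $\mathrm{Tr}_{\mathcal{F}_n/\mathcal{F}}(zy)\,\df t\in\mathrm{Tr}_{\mathcal{F}_n/\mathcal{F}}(\mathcal{O}_{\mathcal{F}_n})M$. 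Intersecting with the line $\mathcal{F}\,\df t$ and using $\mathfrak{t}M\cap\mathcal{F}\,\df t=\mathfrak{t}(M\cap\mathcal{F}\,\df t)=\mathfrak{t}\mathfrak{b}\,\df t$ for any fractional ideal $\mathfrak{t}$ of the valuation ring $\mathcal{O}_\mathcal{F}$ (a finite-sum argument), we get $\mathrm{Tr}_{\mathcal{F}_n/\mathcal{F}}(y\mathcal{O}_{\mathcal{F}_n})\subseteq\mathrm{Tr}_{\mathcal{F}_n/\mathcal{F}}(\mathcal{O}_{\mathcal{F}_n})\mathfrak{b}=:\mathfrak{a}$. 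Applying \ref{prop:different-trace} to $\mathfrak{a}'=y\mathcal{O}_{\mathcal{F}_n}$ and this $\mathfrak{a}$ forces $\mathfrak{m}_\mathcal{F}\,y\,\scr{D}_{\mathcal{O}_{\mathcal{F}_n}/\mathcal{O}_\mathcal{F}}\mathcal{O}_{\mathcal{F}_n}\subseteq\mathfrak{a}\mathcal{O}_{\mathcal{F}_n}$; taking norms, with $|\mathfrak{m}_\mathcal{F}|=1$ and $|\mathrm{Tr}_{\mathcal{F}_n/\mathcal{F}}(\mathcal{O}_{\mathcal{F}_n})|=|\scr{D}_{\mathcal{O}_{\mathcal{F}_n}/\mathcal{O}_\mathcal{F}}|$ (the bound $\leq$ from \ref{cor:prop:different-trace}, the bound $\geq$ from \ref{prop:different-trace} with $\mathfrak{a}'=\mathcal{O}_{\mathcal{F}_n}$), yields $|y|\leq|\mathfrak{b}|=p^{c}$, i.e. $v_p(y)\geq -c$. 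Since $v_p(y)=v_p(x)+v_p(t)p^{-n}-n-v_p(t)$ and $v_p(t)\geq 0$, this gives $v_p(x)=v_p(y)+n+v_p(t)(1-p^{-n})\geq n-c$. Hence every $x\in\mathrm{Ann}_{\mathcal{O}_{\mathcal{F}_n}}(\df t_{p^n})$ satisfies $v_p(x)\geq n-c$, so any integer $n_1>n_0$ with $n_1\geq c$ (e.g. $n_1=\max(n_0+1,\lceil c\rceil)$) gives $\mathrm{Ann}_{\mathcal{O}_{\mathcal{F}_n}}(\df t_{p^n})\subseteq p^{n-n_1}\mathcal{O}_{\mathcal{F}_n}$ for all $n\geq n_1$.

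The main obstacle is exactly the trace estimate of the last paragraph: taking the trace of $x\,\df t_{p^n}$ directly is worthless because of cancellation, and one must instead test against all of $\mathcal{O}_{\mathcal{F}_n}$ and convert the resulting containment $\mathrm{Tr}_{\mathcal{F}_n/\mathcal{F}}(y\mathcal{O}_{\mathcal{F}_n})\subseteq\mathfrak{a}$ into a norm bound on $y$ through \ref{prop:different-trace} and the identity $|\mathrm{Tr}_{\mathcal{F}_n/\mathcal{F}}(\mathcal{O}_{\mathcal{F}_n})|=|\scr{D}_{\mathcal{O}_{\mathcal{F}_n}/\mathcal{O}_\mathcal{F}}|$. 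The valuation-ring bookkeeping ($\mathfrak{m}_\mathcal{F}=\mathfrak{m}_\mathcal{F}^2$, norms of products of fractional ideals, intersections of ideal-multiples of modules with a line) is routine but needs care; everything else reduces to differentiating $t=t_{p^n}^{p^n}$ and to torsion-freeness of $M$ and $M_n$.
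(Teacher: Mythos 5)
Your proof is correct, but it takes a genuinely different route from the paper's. The paper's argument is purely differential-module-theoretic: it invokes Gabber--Ramero's theorem that the Jacobi--Zariski sequence $0\to\ca{O}_{\ca{F}_n}\otimes_{\ca{O}_{\ca{F}}}\Omega^1_{\ca{O}_{\ca{F}}/\ca{O}_{\overline{K}}}\xrightarrow{\alpha_n}\Omega^1_{\ca{O}_{\ca{F}_n}/\ca{O}_{\overline{K}}}\to\Omega^1_{\ca{O}_{\ca{F}_n}/\ca{O}_{\ca{F}}}\to 0$ is exact \emph{on the left} for extensions of valuation rings (\cite[6.3.23]{gabber2003almost}), chooses $n_1$ so that $\df t\notin p^{n_1}\Omega^1_{\ca{O}_{\ca{F}}/\ca{O}_{\overline{K}}}$, and then deduces $p^{n-n_1}\df t_{p^n}\notin\im(\alpha_n)$ directly: a relation $p^{n-n_1}t_{p^n}^{p^n-1}\df t_{p^n}=\alpha_n(\omega)$ would give $\alpha_n(p^{n_1}\omega)=\alpha_n(\df t)$, hence $\df t=p^{n_1}\omega$ by injectivity of $\alpha_n$, and flatness of $\Omega^1_{\ca{O}_{\ca{F}}/\ca{O}_{\overline{K}}}$ over $\ca{O}_{\ca{F}}$ then forces $\df t\in p^{n_1}\Omega^1_{\ca{O}_{\ca{F}}/\ca{O}_{\overline{K}}}$, a contradiction. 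No traces or differents appear. You instead replace the appeal to left-exactness by a trace-averaging argument, controlling $\ca{O}_{\ca{F}_n}M\cap\ca{F}_n\,\df t$ via $\mrm{Tr}_{\ca{F}_n/\ca{F}}\otimes\id$, the equivalence of \ref{prop:different-trace}, and the identity $|\mrm{Tr}_{\ca{F}_n/\ca{F}}(\ca{O}_{\ca{F}_n})|=|\scr{D}_{\ca{O}_{\ca{F}_n}/\ca{O}_{\ca{F}}}|$; this is sound and non-circular, since \ref{prop:different-trace} and \ref{cor:prop:different-trace} precede the lemma and do not depend on it (the different bound \ref{prop:different-range}, which does depend on the lemma, is never used). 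It is worth noting that your route inverts the paper's logical order --- the paper proves the annihilator bound first and deduces the bound on $\scr{D}_{\ca{O}_{\ca{F}_n}/\ca{O}_{\ca{F}}}$ from it via Fitting ideals, whereas you extract the annihilator bound from the trace/different formalism --- and that your $n_1$ (any integer $\geq\log_p|\ak{b}|$) agrees with the paper's choice. What the paper's proof buys is brevity, at the cost of leaning on a deeper structural fact about cotangent complexes of valuation rings; what yours buys is independence from that left-exactness, at the cost of a longer, more delicate estimate.
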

\begin{proof}
	The assumption implies that there exists $n_1\in\bb{N}$ such that $\df t$ is not divided by $p^{n_1}$ as an element of $\Omega^1_{\ca{O}_{\ca{F}}/\ca{O}_{\overline{K}}}$ (\ref{para:ramified-tower}). In particular, $\ca{F}\neq \ca{F}_{n_1}$ so that $n_1>n_0$. For any integer $n\geq n_1$, consider the canonical exact sequence (\cite[6.3.23]{gabber2003almost})
	\begin{align}
		\xymatrix{
			0\ar[r]&\ca{O}_{\ca{F}_n}\otimes_{\ca{O}_{\ca{F}}}\Omega^1_{\ca{O}_{\ca{F}}/\ca{O}_{\overline{K}}}\ar[r]^-{\alpha_n}&\Omega^1_{\ca{O}_{\ca{F}_n}/\ca{O}_{\overline{K}}}\ar[r]^-{\beta_n}&\Omega^1_{\ca{O}_{\ca{F}_n}/\ca{O}_{\ca{F}}}\ar[r]&0.
		}
	\end{align}
	We need to show that for the element $\df t_{p^n}$ of $\Omega^1_{\ca{O}_{\ca{F}_n}/\ca{O}_{\overline{K}}}$, we have $p^nt\df t_{p^n}\in \im(\alpha_n)$ but $p^{n-n_1}\df t_{p^n}\notin \im(\alpha_n)$. 
	
	For the first assertion, we actually have $p^nt\df t_{p^n}=p^nt_{p^n}^{p^n-1}t_{p^n}\df t_{p^n}=t_{p^n}\df t\in \im(\alpha_n)$.
	 
	For the second assertion, suppose that $p^{n-n_1}\df t_{p^n}\in \im(\alpha_n)$. Thus, there exists $\omega\in \ca{O}_{\ca{F}_n}\otimes_{\ca{O}_{\ca{F}}}\Omega^1_{\ca{O}_{\ca{F}}/\ca{O}_{\overline{K}}}$ such that $\alpha_n(\omega)=p^{n-n_1}t_{p^n}^{p^n-1}\df t_{p^n}$. Thus, $\alpha_n(p^{n_1}\omega)=\alpha_n(\df t)$. The injectivity of $\alpha_n$ implies that $\df t=p^{n_1}\omega$ in $ \ca{O}_{\ca{F}_n}\otimes_{\ca{O}_{\ca{F}}}\Omega^1_{\ca{O}_{\ca{F}}/\ca{O}_{\overline{K}}}$, i.e., $\df t$ is zero in $\ca{O}_{\ca{F}_n}\otimes_{\ca{O}_{\ca{F}}}\Omega^1_{\ca{O}_{\ca{F}}/\ca{O}_{\overline{K}}}/p^{n_1}$. Notice that $\Omega^1_{\ca{O}_{\ca{F}}/\ca{O}_{\overline{K}}}$ is a flat $\ca{O}_{\ca{F}}$-module as $\overline{K}$ is algebraically closed (\cite[6.5.20.(\luoma{1})]{gabber2003almost}). Thus, the canonical homomorphism $\Omega^1_{\ca{O}_{\ca{F}}/\ca{O}_{\overline{K}}}/p^{n_1}\to \ca{O}_{\ca{F}_n}\otimes_{\ca{O}_{\ca{F}}}\Omega^1_{\ca{O}_{\ca{F}}/\ca{O}_{\overline{K}}}/p^{n_1}$ is injective as $\ca{O}_{\ca{F}}/p^{n_1}\to \ca{O}_{\ca{F}_n}/p^{n_1}$ is so. Hence, $\df t$ is also zero in $\Omega^1_{\ca{O}_{\ca{F}}/\ca{O}_{\overline{K}}}/p^{n_1}$, which contradicts our choice of $n_1$. Therefore, $p^{n-n_1}\df t_{p^n}\notin \im(\alpha_n)$.
\end{proof}

\begin{myprop}[{cf. \cite[\textsection3.1, Proposition 5]{tate1967p}}]\label{prop:different-range}
	Under the assumption in {\rm\ref{para:ramified-tower}} and with the same notation in {\rm\ref{para:tower}}, let $n_1\in\bb{N}$ be the integer defined in {\rm\ref{lem:ann-differential}}. Then, for any integer $n\geq n_1$, we have
	\begin{align}
		p^{n-n_0}\ak{m}_{\ca{F}_n}\subseteq \scr{D}_{\ca{O}_{\ca{F}_n}/\ca{O}_{\ca{F}}}\subseteq p^{n-n_1}\ca{O}_{\ca{F}_n}.
	\end{align}
	In particular, we have $|\scr{D}_{\ca{O}_{\ca{F}_n}/\ca{O}_{\ca{F}}}|\leq p^{n_1-n}$.
\end{myprop}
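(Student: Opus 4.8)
The plan is to relate the different ideal $\scr{D}_{\ca{O}_{\ca{F}_n}/\ca{O}_{\ca{F}}}$ to the annihilator of $\df t_{p^n}$ in $\Omega^1_{\ca{O}_{\ca{F}_n}/\ca{O}_{\ca{F}}}$, using Gabber--Ramero's generalization of the classical relation between different ideals and modules of differentials to valuation rings, and then invoke the sandwich bound from \ref{lem:ann-differential}. First, I would recall that the Kummer extension $\ca{F}_n=\ca{F}(t_{p^n})$ is generated by the single element $t_{p^n}$ whose minimal polynomial over $\ca{F}$ divides $X^{p^n}-t$; in particular $\ca{O}_{\ca{F}_n}$ is generated as an $\ca{O}_{\ca{F}}$-algebra (up to bounded $p$-power denominators, which is harmless after applying $V_p$ or comparing norms) by $t_{p^n}$, so that $\Omega^1_{\ca{O}_{\ca{F}_n}/\ca{O}_{\ca{F}}}$ is generated by the single element $\df t_{p^n}$ with the relation coming from differentiating $t_{p^n}^{p^n}=t$, i.e. $p^n t_{p^n}^{p^n-1}\df t_{p^n}=0$. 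Thus $\Omega^1_{\ca{O}_{\ca{F}_n}/\ca{O}_{\ca{F}}}\cong \ca{O}_{\ca{F}_n}/\mrm{Ann}_{\ca{O}_{\ca{F}_n}}(\df t_{p^n})$ as an $\ca{O}_{\ca{F}_n}$-module.

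Next, I would apply Gabber--Ramero's formula (the valuation-ring analogue of the fact that for a monogenic extension $\scr{D}=\mathrm{Fitt}_0(\Omega^1)$, or more precisely the statement relating $\scr{D}_{\ca{O}_{\ca{F}_n}/\ca{O}_{\ca{F}}}^\al$ to the $0$-th Fitting ideal or annihilator of $\Omega^1_{\ca{O}_{\ca{F}_n}/\ca{O}_{\ca{F}}}$ in \cite[\textsection6]{gabber2003almost}, cf.\ the discussion in \ref{para:different}). Since $\Omega^1_{\ca{O}_{\ca{F}_n}/\ca{O}_{\ca{F}}}$ is cyclic, its $0$-th Fitting ideal equals its annihilator $\mrm{Ann}_{\ca{O}_{\ca{F}_n}}(\df t_{p^n})$, and Gabber--Ramero's comparison gives that $\scr{D}_{\ca{O}_{\ca{F}_n}/\ca{O}_{\ca{F}}}$ and $\mrm{Ann}_{\ca{O}_{\ca{F}_n}}(\df t_{p^n})$ agree up to an explicitly bounded factor — in the monogenic case they should agree exactly (or almost exactly, which suffices since we are only after norm bounds and $\ak{m}_{\ca{F}}=\ak{m}_{\ca{F}}^2$). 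Feeding the inclusions $p^n t\,\ca{O}_{\ca{F}_n}\subseteq \mrm{Ann}_{\ca{O}_{\ca{F}_n}}(\df t_{p^n})\subseteq p^{n-n_1}\ca{O}_{\ca{F}_n}$ from \ref{lem:ann-differential} through this comparison yields $\scr{D}_{\ca{O}_{\ca{F}_n}/\ca{O}_{\ca{F}}}\subseteq p^{n-n_1}\ca{O}_{\ca{F}_n}$ on the upper side immediately. For the lower bound, $\mrm{Ann}\supseteq p^n t\,\ca{O}_{\ca{F}_n}$ gives $\scr{D}\supseteq p^n t\,\ca{O}_{\ca{F}_n}$ up to the almost/Fitting discrepancy; to convert this into the cleaner statement $p^{n-n_0}\ak{m}_{\ca{F}_n}\subseteq \scr{D}$ I would use that $\ca{F}=\ca{F}_{n_0}$ and the isomorphism $\gal(\ca{F}_\infty/\ca{F}_n)\iso p^n\bb{Z}_p$ from \ref{lem:tower} to control the valuation of $t$ relative to $p^{n_0}$: roughly, $t$ already has a $p^{n_0}$-th root in $\ca{F}$, and comparing $v_p(t)$ with the ramification data forces $|t|\geq |p^{n_0}|$ up to $\ak{m}_{\ca{F}}$, so $p^n t\,\ca{O}_{\ca{F}_n}$ contains $p^{n+n_0}\ak{m}_{\ca{F}_n}$; a small renormalization of the indices (absorbing constants, possibly enlarging $n_1$) then gives the stated $p^{n-n_0}\ak{m}_{\ca{F}_n}$. (If instead the intended normalization is that $t$ is a unit or $t\in\ak{m}_{\ca{F}}\setminus\ak{m}_{\ca{F}}^2$ up to the $n_0$ shift, the same valuation bookkeeping applies.)

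I expect the main obstacle to be the careful passage between the \emph{exact} module-theoretic identity $\Omega^1_{\ca{O}_{\ca{F}_n}/\ca{O}_{\ca{F}}}\cong \ca{O}_{\ca{F}_n}/\mrm{Ann}(\df t_{p^n})$ and the \emph{different ideal}, since over a valuation ring one only has almost-finite-projectivity of $\ca{O}_{\ca{F}_n}$ over $\ca{O}_{\ca{F}}$ and the comparison "$\scr{D}=\mathrm{Fitt}_0(\Omega^1)$" holds only up to $\ak{m}_{\ca{F}}$-torsion; one must check that $\ca{O}_{\ca{F}_n}=\ca{O}_{\ca{F}}[t_{p^n}]$ up to bounded denominators (so that the monogenic-extension formulas of Gabber--Ramero apply) and that the discrepancy does not spoil the precise $p$-exponents — here the idempotence $\ak{m}_{\ca{F}}=\ak{m}_{\ca{F}}^2$ and the freedom to shift $n_1$ by a bounded amount are what make the argument go through. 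Modulo this, and the valuation estimate relating $|t|$ to $|p^{n_0}|$ via \ref{lem:tower}, the proof is a direct combination of \ref{lem:ann-differential}, the Gabber--Ramero different/differential comparison, and the norm inequality from \ref{cor:prop:different-trace}.
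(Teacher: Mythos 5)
Your upper bound is recoverable, but the route you take to it and, more seriously, your lower bound both have problems. For the upper bound you do not need (and cannot in general have) that $\ca{O}_{\ca{F}_n}$ is monogenic over $\ca{O}_{\ca{F}}$ up to bounded denominators, nor that $\Omega^1_{\ca{O}_{\ca{F}_n}/\ca{O}_{\ca{F}}}$ is cyclic on $\df t_{p^n}$: the integral closure of a non-discrete valuation ring in a Kummer extension is typically much larger than $\ca{O}_{\ca{F}}[t_{p^n}]$, and your ``bounded denominators'' claim is left unproved. All that is needed is the general Gabber--Ramero identification $\scr{D}^{\al}_{\ca{O}_{\ca{F}_n}/\ca{O}_{\ca{F}}}\cong F_0(\Omega^{1,\al}_{\ca{O}_{\ca{F}_n}/\ca{O}_{\ca{F}}})$ for the uniformly almost finitely generated module $\Omega^1_{\ca{O}_{\ca{F}_n}/\ca{O}_{\ca{F}}}$; from this $\ak{m}_{\ca{F}}\scr{D}_{\ca{O}_{\ca{F}_n}/\ca{O}_{\ca{F}}}$ annihilates the whole module, in particular $\df t_{p^n}$, and the upper half of \ref{lem:ann-differential} gives $\ak{m}_{\ca{F}}\scr{D}_{\ca{O}_{\ca{F}_n}/\ca{O}_{\ca{F}}}\subseteq p^{n-n_1}\ca{O}_{\ca{F}_n}$, hence $\scr{D}_{\ca{O}_{\ca{F}_n}/\ca{O}_{\ca{F}}}\subseteq p^{n-n_1}\ca{O}_{\ca{F}_n}$ since the valuation is non-discrete.

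The genuine gap is the lower bound. The containment $p^nt\,\ca{O}_{\ca{F}_n}\subseteq \mrm{Ann}_{\ca{O}_{\ca{F}_n}}(\df t_{p^n})$ can at best yield $|\scr{D}_{\ca{O}_{\ca{F}_n}/\ca{O}_{\ca{F}}}|\geq |p^n|\cdot|t|$, and the estimate $|t|\geq|p^{n_0}|$ you invoke does not exist: nothing in \ref{para:ramified-tower} or \ref{lem:tower} bounds $|t|$ from below (take $t=p^Nu$ with $u$ a unit and $N$ arbitrarily large; $\df t$ need not be $p^\infty$-divisible), and even for $|t|$ just below $1$ the bound $|p^n|\cdot|t|$ already falls short of the required $|p^{n-n_0}|$. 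Moreover, if $\Omega^1_{\ca{O}_{\ca{F}_n}/\ca{O}_{\ca{F}}}$ is not cyclic, $F_0$ is strictly smaller than the annihilator of one generator, so the inclusion $\mrm{Ann}(\df t_{p^n})\subseteq\scr{D}_{\ca{O}_{\ca{F}_n}/\ca{O}_{\ca{F}}}$ fails in any case. The lower bound comes from the trace side, not the differential side: taking $x=1$ in \ref{cor:prop:different-trace} gives $|[\ca{F}_n:\ca{F}]|\leq|\scr{D}_{\ca{O}_{\ca{F}_n}/\ca{O}_{\ca{F}}}|$, and $[\ca{F}_n:\ca{F}]=p^{n-n_0}$ by \ref{lem:tower} (using $n\geq n_1>n_0$), whence $p^{n-n_0}\ak{m}_{\ca{F}_n}\subseteq\scr{D}_{\ca{O}_{\ca{F}_n}/\ca{O}_{\ca{F}}}$. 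You cite \ref{cor:prop:different-trace} at the very end but never actually deploy it; it is the entire content of the lower bound, and the lower half of \ref{lem:ann-differential} plays no role in this proposition.
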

\begin{proof}
	Recall that $\Omega^1_{\ca{O}_{\ca{F}_n}/\ca{O}_{\ca{F}}}$ is a uniformly almost finitely generated $\ca{O}_{\ca{F}_n}$-module (\cite[6.3.8]{gabber2003almost}) and thus one can define the $0$-th Fitting ideal $F_0(\Omega^{1,\al}_{\ca{O}_{\ca{F}_n}/\ca{O}_{\ca{F}}})$ of its associated almost module (\cite[2.3.24]{gabber2003almost}). By \cite[6.3.23]{gabber2003almost}, the associated almost module of the different ideal $\scr{D}_{\ca{O}_{\ca{F}_n}/\ca{O}_{\ca{F}}}^\al$ is isomorphic to $F_0(\Omega^{1,\al}_{\ca{O}_{\ca{F}_n}/\ca{O}_{\ca{F}}})$. In particular, $\ak{m}_{\ca{F}}\scr{D}_{\ca{O}_{\ca{F}_n}/\ca{O}_{\ca{F}}}$ annihilates $\Omega^1_{\ca{O}_{\ca{F}_n}/\ca{O}_{\ca{F}}}$ (\cite[6.3.6.(\luoma{3})]{gabber2003almost}). Hence, we have $\ak{m}_{\ca{F}}\scr{D}_{\ca{O}_{\ca{F}_n}/\ca{O}_{\ca{F}}}\subseteq p^{n-n_1}\ca{O}_{\ca{F}_n}$ by \ref{lem:ann-differential}. Then, it is easy to see that $\scr{D}_{\ca{O}_{\ca{F}_n}/\ca{O}_{\ca{F}}}\subseteq p^{n-n_1}\ca{O}_{\ca{F}_n}$ (cf. \ref{para:valution-norm}). In particular, we have $|\scr{D}_{\ca{O}_{\ca{F}_n}/\ca{O}_{\ca{F}}}|\leq p^{n_1-n}$.
	
	On the other hand, we have $|p^{n-n_0}|=|[\ca{F}_n:\ca{F}]|\leq |\scr{D}_{\ca{O}_{\ca{F}_n}/\ca{O}_{\ca{F}}}|$ by \ref{cor:prop:different-trace} and \ref{lem:tower} (as $n_1>n_0$). Thus, $p^{n-n_0}\ak{m}_{\ca{F}_n}\subseteq \scr{D}_{\ca{O}_{\ca{F}_n}/\ca{O}_{\ca{F}}}$, which completes the proof.
\end{proof}

\begin{mycor}[{cf. \cite[\textsection3.1, Corollary 3]{tate1967p}}]\label{cor:trace-range}
	Under the assumption in {\rm\ref{para:ramified-tower}} and with the same notation in {\rm\ref{para:tower}}, let $n_1\in\bb{N}$ be the integer defined in {\rm\ref{lem:ann-differential}}. Then, for any integer $n\geq n_1$ and any $x\in \ca{F}_n$, we have
	\begin{align}
		|p^{-n}\mrm{Tr}_{\ca{F}_n/\ca{F}}(x)|\leq p^{n_1}|x|.
	\end{align}
\end{mycor}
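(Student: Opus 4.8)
The plan is to combine the trace bound of \ref{cor:prop:different-trace} with the upper estimate on the different ideal obtained in \ref{prop:different-range}, so that no new ideas are needed beyond bookkeeping with the absolute value $|\cdot|=p^{-v_p(\cdot)}$.

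First I would apply \ref{cor:prop:different-trace} to the finite extension $\ca{F}_n/\ca{F}$: for every $x\in\ca{F}_n$ one has
\begin{align}
	|\mrm{Tr}_{\ca{F}_n/\ca{F}}(x)|\leq |\scr{D}_{\ca{O}_{\ca{F}_n}/\ca{O}_{\ca{F}}}|\cdot |x|.
\end{align}
Next, for $n\geq n_1$, \ref{prop:different-range} gives the inclusion $\scr{D}_{\ca{O}_{\ca{F}_n}/\ca{O}_{\ca{F}}}\subseteq p^{n-n_1}\ca{O}_{\ca{F}_n}$. Since the norm of a fractional ideal \eqref{eq:para:valution-norm-2} is monotone with respect to inclusion and $|p^{n-n_1}\ca{O}_{\ca{F}_n}|=|p^{n-n_1}|=p^{n_1-n}$, this yields $|\scr{D}_{\ca{O}_{\ca{F}_n}/\ca{O}_{\ca{F}}}|\leq p^{n_1-n}$, hence $|\mrm{Tr}_{\ca{F}_n/\ca{F}}(x)|\leq p^{n_1-n}|x|$.

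Finally I would multiply through by $|p^{-n}|=p^{n}$ (recall $v_p(p)=1$, so $v_p(p^{-n})=-n$), obtaining
\begin{align}
	|p^{-n}\mrm{Tr}_{\ca{F}_n/\ca{F}}(x)|=p^{n}\,|\mrm{Tr}_{\ca{F}_n/\ca{F}}(x)|\leq p^{n}\cdot p^{n_1-n}|x|=p^{n_1}|x|,
\end{align}
which is the claimed inequality. There is no real obstacle here: the only point to be careful about is that the bound on $|\scr{D}_{\ca{O}_{\ca{F}_n}/\ca{O}_{\ca{F}}}|$ comes from the \emph{upper} inclusion in \ref{prop:different-range} (the lower inclusion $p^{n-n_0}\ak{m}_{\ca{F}_n}\subseteq \scr{D}_{\ca{O}_{\ca{F}_n}/\ca{O}_{\ca{F}}}$ is not used for this corollary), and that the constant $p^{n_1}$ is uniform in $n$, which is exactly what the normalized trace construction in \ref{prop:tate-trace} will require.
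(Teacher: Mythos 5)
Your proposal is correct and matches the paper's proof, which simply cites \ref{cor:prop:different-trace} and the upper inclusion of \ref{prop:different-range}; your computation $|\scr{D}_{\ca{O}_{\ca{F}_n}/\ca{O}_{\ca{F}}}|\leq p^{n_1-n}$ and the final multiplication by $|p^{-n}|=p^{n}$ are exactly the details left implicit there.
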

\begin{proof}
	It follows directly from \ref{cor:prop:different-trace} and \ref{prop:different-range}.
\end{proof}

\begin{mypara}\label{para:tate-trace}
	With the notation in {\rm\ref{para:tower}}, for any $n\in\bb{N}$, the normalized trace map $[\ca{F}_n:\ca{F}]^{-1}\mrm{Tr}_{\ca{F}_n/\ca{F}}:\ca{F}_n\to \ca{F}$ is an $\ca{F}$-linear retraction of the inclusion $\ca{F}\to \ca{F}_n$. It is also $G_{\ca{F}}$-equivariant as $\gal(\ca{F}_\infty/\ca{F})$ is commutative (\ref{lem:tower}). In particular, we obtain a system of maps $([\ca{F}_n:\ca{F}]^{-1}\mrm{Tr}_{\ca{F}_n/\ca{F}}:\ca{F}_n\to \ca{F})_{n\in\bb{N}}$ compatible with the inclusions $\ca{F}_n\to\ca{F}_{n+1}$. Taking filtered union, we obtain a $G_{\ca{F}}$-equivariant $\ca{F}$-linear retraction
	\begin{align}\label{eq:para:tate-trace-1}
		\ca{T}:\ca{F}_\infty\longrightarrow \ca{F}
	\end{align}
	of the inclusion $\ca{F}\to\ca{F}_\infty$. In particular, there is a canonical decomposition of $\ca{F}$-modules for any $n\in\bb{N}\cup\{\infty\}$,
	\begin{align}\label{eq:para:tate-trace-2}
		\ca{F}_n=\ca{F}\oplus \ke(\ca{T}|_{\ca{F}_n}),
	\end{align}
	where $\ca{T}|_{\ca{F}_n}$ denotes the restriction of $\ca{T}$ on $\ca{F}_n$.
\end{mypara}

\begin{mylem}[{cf. \cite[\textsection3.1, Lemma 2]{tate1967p}}]\label{lem:trace-tau}
	With the notation in {\rm\ref{para:tower}}, let $\tau\in \gal(\ca{F}_\infty/\ca{F})$ be a topological generator (which exists by {\rm\ref{lem:tower}}). Then, for any $n\in \bb{N}$ and $x\in \ca{F}_{n+1}$, we have 
	\begin{align}
		|\mrm{Tr}_{\ca{F}_{n+1}/\ca{F}_n}(x)-[\ca{F}_{n+1}:\ca{F}_n]\cdot x|\leq  |\tau(x)-x|.
	\end{align}
\end{mylem}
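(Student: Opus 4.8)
The plan is to run Tate's argument (\cite[\textsection3.1, Lemma 2]{tate1967p}) in this valuation-theoretic setting, the two inputs being the procyclic structure of the Kummer tower established in \ref{lem:tower} and the $G_{\ca{F}}$-invariance of the absolute value $|\cdot|$. First I would dispose of the trivial case: if $\ca{F}_{n+1}=\ca{F}_n$ then $\mrm{Tr}_{\ca{F}_{n+1}/\ca{F}_n}(x)=x$ and $[\ca{F}_{n+1}:\ca{F}_n]=1$, so the left-hand side is $|x-x|=0$ and there is nothing to prove (this also covers the case $\ca{F}_\infty=\ca{F}$). Otherwise $\ca{F}_\infty\neq\ca{F}$, so \ref{lem:tower} applies: there is $n_0\in\bb{N}$ with $\ca{F}=\ca{F}_{n_0}$ and $\gal(\ca{F}_\infty/\ca{F}_m)\cong p^m\bb{Z}_p$ for $m\geq n_0$; in particular $n\geq n_0$ and, by the Galois correspondence in the abelian extension $\ca{F}_\infty/\ca{F}$, the group $\gal(\ca{F}_{n+1}/\ca{F}_n)\cong p^n\bb{Z}_p/p^{n+1}\bb{Z}_p$ is cyclic of order $p=[\ca{F}_{n+1}:\ca{F}_n]$, say generated by $\sigma$. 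Moreover $\gal(\ca{F}_{n+1}/\ca{F})$ is a quotient of $\gal(\ca{F}_\infty/\ca{F})\cong p^{n_0}\bb{Z}_p$, hence a finite cyclic group generated by the image $\gamma$ of the topological generator $\tau$; thus $\sigma=\gamma^{k}$ for some $k\in\bb{N}$, and $\gamma(x)=\tau(x)$ for $x\in\ca{F}_{n+1}$.

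Next I would record the elementary estimate underlying the statement. The absolute value $|\cdot|$ is $G_{\ca{F}}$-invariant, since $\ca{O}_{\overline{\ca{F}}}$ is the integral closure of the Henselian valuation ring $\ca{O}_{\ca{F}}$ and is $G_{\ca{F}}$-stable (\ref{para:notation-trace}), so $|\rho(y)|=|y|$ for every $\rho\in G_{\ca{F}}$. Hence, for any $\rho$ in a Galois group of an algebraic extension of $\ca{F}$, any integer $k\geq 1$, and any element $y$, the telescoping identity $\rho^k(y)-y=\sum_{j=0}^{k-1}\rho^j(\rho(y)-y)$ together with the ultrametric inequality gives $|\rho^k(y)-y|\leq|\rho(y)-y|$.

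Finally I would assemble the proof. Expanding the trace over the cyclic group $\langle\sigma\rangle$ yields $\mrm{Tr}_{\ca{F}_{n+1}/\ca{F}_n}(x)-[\ca{F}_{n+1}:\ca{F}_n]\,x=\sum_{i=0}^{p-1}(\sigma^i(x)-x)=\sum_{i=1}^{p-1}(\sigma^i(x)-x)$, and then by the ultrametric inequality and the estimate of the previous paragraph, applied first with $\rho=\sigma$ and then with $\rho=\gamma$, one gets the chain $|\mrm{Tr}_{\ca{F}_{n+1}/\ca{F}_n}(x)-[\ca{F}_{n+1}:\ca{F}_n]\,x|\leq\max_{1\leq i\leq p-1}|\sigma^i(x)-x|\leq|\sigma(x)-x|=|\gamma^{k}(x)-x|\leq|\gamma(x)-x|=|\tau(x)-x|$, which is exactly the claimed inequality. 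I do not expect a serious obstacle here; the only point requiring care is the bookkeeping in the first paragraph, namely extracting from \ref{lem:tower} that the relevant finite Galois groups $\gal(\ca{F}_{n+1}/\ca{F}_n)$ and $\gal(\ca{F}_{n+1}/\ca{F})$ are cyclic and that the fixed topological generator $\tau$ maps onto a generator of the latter, so that $\sigma$ is a power of the restriction of $\tau$ and the final estimate reduces to the one-step telescoping bound.
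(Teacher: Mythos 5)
Your proof is correct and is essentially the paper's argument: the telescoping identity $\rho^k(y)-y=\sum_{j=0}^{k-1}\rho^j(\rho(y)-y)$ combined with Galois-invariance of $|\cdot|$ is exactly the paper's factorization $\tau^{bi}-1=(\tau^{bi-1}+\cdots+\tau+1)(\tau-1)$, which the paper applies in one step with the generator $\tau^b$, $b=[\ca{F}_n:\ca{F}]$, of $\gal(\ca{F}_{n+1}/\ca{F}_n)$ (thereby also absorbing your trivial case $a=1$).
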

\begin{proof}
	Note that $\gal(\ca{F}_{n+1}/\ca{F}_n)$ is a cyclic group of order $a=[\ca{F}_{n+1}:\ca{F}_n]$ generated by $\tau^b$ (where $b=[\ca{F}_n:\ca{F}]$) by \ref{lem:tower}. Then, we have
	\begin{align}
		\mrm{Tr}_{\ca{F}_{n+1}/\ca{F}_n}(x)-ax=\sum_{i=0}^{a-1}(\tau^{bi}-1)(x)=\sum_{i=1}^{a-1}(\tau^{bi-1}+\cdots+\tau+1)(\tau-1)(x).
	\end{align}
	Hence, $|\mrm{Tr}_{\ca{F}_{n+1}/\ca{F}_n}(x)-ax|\leq |\tau(x)-x|$.
\end{proof}

\begin{myprop}[{cf. \cite[\textsection3.1, Proposition 6]{tate1967p}}]\label{prop:trace-tau}
	With the notation in {\rm\ref{para:tower}}, let $\tau\in \gal(\ca{F}_\infty/\ca{F})$ be a topological generator (which exists by {\rm\ref{lem:tower}}). Then, the sequence of positive real numbers
	\begin{align}\label{eq:prop:trace-tau-1}
		([\ca{F}_n:\ca{F}]\cdot |\scr{D}_{\ca{O}_{\ca{F}_n}/\ca{O}_{\ca{F}}}|)_{n\in\bb{N}}
	\end{align}
	is non-decreasing. Moreover, for any $n\in\bb{N}$ and $x\in\ca{F}_n$, we have
	\begin{align}\label{eq:prop:trace-tau-2}
		|\ca{T}(x)-x|\leq p\cdot [\ca{F}_n:\ca{F}]\cdot|\scr{D}_{\ca{O}_{\ca{F}_n}/\ca{O}_{\ca{F}}}|\cdot |\tau(x)-x|,
	\end{align}
	where $\ca{T}$ is the normalized trace map \eqref{eq:para:tate-trace-1}.
\end{myprop}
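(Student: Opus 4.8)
The plan is to adapt Tate's argument for local fields (cf. \cite[\textsection3.1, Proposition 6]{tate1967p}) to the present setting, replacing the classical ramification theory of Dedekind domains by Gabber-Ramero's almost ramification theory. I would write $c_n=[\ca{F}_n:\ca{F}]\cdot|\scr{D}_{\ca{O}_{\ca{F}_n}/\ca{O}_{\ca{F}}}|$, so that \eqref{eq:prop:trace-tau-1} is the sequence $(c_n)_{n\in\bb{N}}$, and record at the outset that $\ca{F}_n=\ca{F}_{n+1}=\ca{F}$ (hence $c_n=c_{n+1}=1$) for $n<n_0$ while $[\ca{F}_{n+1}:\ca{F}_n]=p$ for $n\geq n_0$ (\ref{lem:tower}).

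For the monotonicity I would use the transitivity of the different, $\scr{D}_{\ca{O}_{\ca{F}_{n+1}}/\ca{O}_{\ca{F}}}=\scr{D}_{\ca{O}_{\ca{F}_{n+1}}/\ca{O}_{\ca{F}_n}}\cdot(\scr{D}_{\ca{O}_{\ca{F}_n}/\ca{O}_{\ca{F}}}\ca{O}_{\ca{F}_{n+1}})$, together with the multiplicativity of the norm (\ref{para:valution-norm}) and the fact that extending a fractional ideal along the height-$1$ extension $\ca{O}_{\ca{F}_n}\to\ca{O}_{\ca{F}_{n+1}}$ preserves its norm; this gives $c_{n+1}=p\cdot|\scr{D}_{\ca{O}_{\ca{F}_{n+1}}/\ca{O}_{\ca{F}_n}}|\cdot c_n$ for $n\geq n_0$. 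Since $|\scr{D}_{\ca{O}_{\ca{F}_{n+1}}/\ca{O}_{\ca{F}_n}}|\geq|[\ca{F}_{n+1}:\ca{F}_n]|=p^{-1}$ by the ``in particular'' part of \ref{cor:prop:different-trace}, I conclude $c_{n+1}\geq c_n$; as $c_0=1$ this also shows $c_n\geq 1$ for all $n$.

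For the estimate \eqref{eq:prop:trace-tau-2} I would induct on $n$, the case $n=0$ being trivial (then $\ca{F}_0=\ca{F}$ and both sides vanish), and for the inductive step I may assume $[\ca{F}_{n+1}:\ca{F}_n]=p$. Given $x\in\ca{F}_{n+1}$, I would split it as $x=y+z$ with $y=p^{-1}\mrm{Tr}_{\ca{F}_{n+1}/\ca{F}_n}(x)\in\ca{F}_n$; transitivity of the trace and the description of $\ca{T}$ in \ref{para:tate-trace} give $\ca{T}(x)=\ca{T}(y)$, so $\ca{T}(x)-x=(\ca{T}(y)-y)-z$. I would bound $|z|$ by \ref{lem:trace-tau} applied to $\ca{F}_{n+1}/\ca{F}_n$, getting $|z|\leq p\,|\tau(x)-x|\leq p\,c_{n+1}\,|\tau(x)-x|$ since $c_{n+1}\geq 1$; and I would bound $|\ca{T}(y)-y|$ by the induction hypothesis, noting that $\tau$ commutes with $\mrm{Tr}_{\ca{F}_{n+1}/\ca{F}_n}$ (as $\gal(\ca{F}_\infty/\ca{F})$ is abelian, \ref{para:tower}) so that $\tau(y)-y=p^{-1}\mrm{Tr}_{\ca{F}_{n+1}/\ca{F}_n}(\tau(x)-x)$ and $|\tau(y)-y|\leq p\,|\scr{D}_{\ca{O}_{\ca{F}_{n+1}}/\ca{O}_{\ca{F}_n}}|\,|\tau(x)-x|$ by \ref{cor:prop:different-trace}; feeding in $c_{n+1}=p\,|\scr{D}_{\ca{O}_{\ca{F}_{n+1}}/\ca{O}_{\ca{F}_n}}|\,c_n$ then yields $|\ca{T}(y)-y|\leq p\,c_n\,|\tau(y)-y|\leq p\,c_{n+1}\,|\tau(x)-x|$, and the ultrametric inequality finishes the step.

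The part I expect to require the most care is the ramification-theoretic input, namely that the transitivity of the different and the compatibility of the norm of a fractional ideal with ring extension hold for the almost finite projective (but not étale) extensions $\ca{O}_{\ca{F}}\subseteq\ca{O}_{\ca{F}_n}\subseteq\ca{O}_{\ca{F}_{n+1}}$ of height-$1$ valuation rings; this is where one must appeal to \cite[\textsection4.1, \textsection6.3]{gabber2003almost} (cf. \cite[\Luoma{3}.\textsection4, Proposition 8]{serre1979local}) rather than to the classical Dedekind-domain statements. Once these are available, the rest is routine ultrametric estimation organized around the splitting $x=y+z$.
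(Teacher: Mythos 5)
Your proposal is correct and follows essentially the same route as the paper: monotonicity via the multiplicativity of the (almost) different ideals together with $|\scr{D}_{\ca{O}_{\ca{F}_{n+1}}/\ca{O}_{\ca{F}_n}}|\geq|[\ca{F}_{n+1}:\ca{F}_n]|$ from \ref{cor:prop:different-trace}, and the estimate by induction on $n$ combining the inductive hypothesis applied to the partial trace with \ref{lem:trace-tau}, \ref{cor:prop:different-trace} and the lower bound $[\ca{F}_{n+1}:\ca{F}]\cdot|\scr{D}_{\ca{O}_{\ca{F}_{n+1}}/\ca{O}_{\ca{F}}}|\geq 1$. The only difference is cosmetic: you split $x=y+z$ with $y$ the normalized partial trace $p^{-1}\mrm{Tr}_{\ca{F}_{n+1}/\ca{F}_n}(x)$, whereas the paper runs the identical ultrametric estimates on the unnormalized quantity $[\ca{F}_n:\ca{F}]^{-1}\mrm{Tr}_{\ca{F}_{n+1}/\ca{F}}(x)-[\ca{F}_{n+1}:\ca{F}_n]\cdot x$, which differs from yours by the overall factor $|[\ca{F}_{n+1}:\ca{F}_n]|$.
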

\begin{proof}
	Recall that the associated almost different ideals satisfy the relation (\cite[4.1.25]{gabber2003almost})
	\begin{align}
		\scr{D}_{\ca{O}_{\ca{F}_{n+1}}/\ca{O}_{\ca{F}}}^{\al}=\scr{D}_{\ca{O}_{\ca{F}_{n+1}}/\ca{O}_{\ca{F}_n}}^\al\cdot \scr{D}_{\ca{O}_{\ca{F}_n}/\ca{O}_{\ca{F}}}^\al.
	\end{align}
	Equivalently, we have $\ak{m}_{\ca{F}}\scr{D}_{\ca{O}_{\ca{F}_{n+1}}/\ca{O}_{\ca{F}}}=\ak{m}_{\ca{F}}\scr{D}_{\ca{O}_{\ca{F}_{n+1}}/\ca{O}_{\ca{F}_n}}\cdot \ak{m}_{\ca{F}}\scr{D}_{\ca{O}_{\ca{F}_n}/\ca{O}_{\ca{F}}}$. Taking norms, we obtain that (see \ref{para:valution-norm})
	\begin{align}\label{eq:prop:trace-tau-3}
		|\scr{D}_{\ca{O}_{\ca{F}_{n+1}}/\ca{O}_{\ca{F}}}|=|\scr{D}_{\ca{O}_{\ca{F}_{n+1}}/\ca{O}_{\ca{F}_n}}|\cdot |\scr{D}_{\ca{O}_{\ca{F}_n}/\ca{O}_{\ca{F}}}|.
	\end{align}
	Since $[\ca{F}_{n+1}:\ca{F}_n]$ is equal to $1$ or $p$, its norm is equal to $[\ca{F}_{n+1}:\ca{F}_n]^{-1}$. Thus,
	\begin{align}\label{eq:prop:trace-tau-4}
		[\ca{F}_{n+1}:\ca{F}_n]\cdot|\scr{D}_{\ca{O}_{\ca{F}_{n+1}}/\ca{O}_{\ca{F}_n}}|\geq [\ca{F}_{n+1}:\ca{F}_n]\cdot |[\ca{F}_{n+1}:\ca{F}_n]|=1,
	\end{align}
	where the inequality follows from \ref{cor:prop:different-trace}. This proves that $([\ca{F}_n:\ca{F}]\cdot |\scr{D}_{\ca{O}_{\ca{F}_n}/\ca{O}_{\ca{F}}}|)_{n\in\bb{N}}$ is non-decreasing.
	
	Then, we prove \eqref{eq:prop:trace-tau-2} by induction on $n$. For $n=0$, both sides are zero. Suppose that it holds for $n$. Then, for any $x\in\ca{F}_{n+1}$, applying \eqref{eq:prop:trace-tau-2} to $\mrm{Tr}_{\ca{F}_{n+1}/\ca{F}_n}(x)\in\ca{F}_n$, we obtain that
	\begin{align}
		&|[\ca{F}_n:\ca{F}]^{-1}\mrm{Tr}_{\ca{F}_{n+1}/\ca{F}}(x)-\mrm{Tr}_{\ca{F}_{n+1}/\ca{F}_n}(x)|\\
		\leq\ & p\cdot [\ca{F}_n:\ca{F}]\cdot|\scr{D}_{\ca{O}_{\ca{F}_n}/\ca{O}_{\ca{F}}}|\cdot |\tau(\mrm{Tr}_{\ca{F}_{n+1}/\ca{F}_n}(x))-\mrm{Tr}_{\ca{F}_{n+1}/\ca{F}_n}(x)|\nonumber\\
		=\ & p\cdot [\ca{F}_n:\ca{F}]\cdot|\scr{D}_{\ca{O}_{\ca{F}_n}/\ca{O}_{\ca{F}}}|\cdot |\mrm{Tr}_{\ca{F}_{n+1}/\ca{F}_n}(\tau(x)-x)|\nonumber\\
		\leq\ & p\cdot [\ca{F}_n:\ca{F}]\cdot|\scr{D}_{\ca{O}_{\ca{F}_n}/\ca{O}_{\ca{F}}}|\cdot |\scr{D}_{\ca{O}_{\ca{F}_{n+1}}/\ca{O}_{\ca{F}_n}}|\cdot|\tau(x)-x|\nonumber,
	\end{align}
	where the last inequality follows from \ref{cor:prop:different-trace}. We deduce from \eqref{eq:prop:trace-tau-3} and \eqref{eq:prop:trace-tau-4} that
	\begin{align}
		p\cdot [\ca{F}_n:\ca{F}]\cdot|\scr{D}_{\ca{O}_{\ca{F}_{n+1}}/\ca{O}_{\ca{F}}}|=p\cdot [\ca{F}_n:\ca{F}]\cdot|\scr{D}_{\ca{O}_{\ca{F}_n}/\ca{O}_{\ca{F}}}|\cdot |\scr{D}_{\ca{O}_{\ca{F}_{n+1}}/\ca{O}_{\ca{F}_n}}|\geq 1.
	\end{align}
	Combining this with \ref{lem:trace-tau}, we see that
	\begin{align}
		&|[\ca{F}_n:\ca{F}]^{-1}\mrm{Tr}_{\ca{F}_{n+1}/\ca{F}}(x)-[\ca{F}_{n+1}:\ca{F}_n]\cdot x|\\
		\leq& \max(|[\ca{F}_n:\ca{F}]^{-1}\mrm{Tr}_{\ca{F}_{n+1}/\ca{F}}(x)-\mrm{Tr}_{\ca{F}_{n+1}/\ca{F}_n}(x)|,|\mrm{Tr}_{\ca{F}_{n+1}/\ca{F}_n}(x)-[\ca{F}_{n+1}:\ca{F}_n]\cdot x|)\nonumber\\
		\leq& \max(p\cdot [\ca{F}_n:\ca{F}]\cdot|\scr{D}_{\ca{O}_{\ca{F}_{n+1}}/\ca{O}_{\ca{F}}}|,1)\cdot |\tau(x)-x|\nonumber\\
		=&p\cdot[\ca{F}_n:\ca{F}]\cdot|\scr{D}_{\ca{O}_{\ca{F}_{n+1}}/\ca{O}_{\ca{F}}}|\cdot |\tau(x)-x|,\nonumber
	\end{align}
	which completes the induction process.
\end{proof}

\begin{mycor}\label{cor:trace-tau}
	Under the assumption in {\rm\ref{para:ramified-tower}} and with the same notation in {\rm\ref{para:tower}}, let $n_0\in\bb{N}$ (resp. $n_1\in\bb{N}$) be the integer defined in {\rm\ref{lem:tower}} (resp. {\rm\ref{lem:ann-differential}}). Then, for any $x\in\ca{F}_\infty$, we have
	\begin{align}\label{eq:cor:trace-tau-1}
		|\ca{T}(x)-x|\leq p^{n_1-n_0+1}\cdot |\tau(x)-x|,
	\end{align}
	where $\ca{T}$ is the normalized trace map \eqref{eq:para:tate-trace-1}.
\end{mycor}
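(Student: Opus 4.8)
The plan is to obtain \eqref{eq:cor:trace-tau-1} by feeding the different-ideal estimate of Proposition \ref{prop:different-range} into the trace inequality of Proposition \ref{prop:trace-tau}. Recall that the latter already gives, for every $n\in\bb{N}$ and every $x\in\ca{F}_n$,
\[
|\ca{T}(x)-x|\leq p\cdot [\ca{F}_n:\ca{F}]\cdot|\scr{D}_{\ca{O}_{\ca{F}_n}/\ca{O}_{\ca{F}}}|\cdot|\tau(x)-x|,
\]
so the whole task reduces to bounding the coefficient $[\ca{F}_n:\ca{F}]\cdot|\scr{D}_{\ca{O}_{\ca{F}_n}/\ca{O}_{\ca{F}}}|$ by $p^{n_1-n_0}$, uniformly in $n$.

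First I would handle the range $n\geq n_1$. There Lemma \ref{lem:tower} identifies $\gal(\ca{F}_\infty/\ca{F})$ with $p^{n_0}\bb{Z}_p$ and $\gal(\ca{F}_\infty/\ca{F}_n)$ with $p^n\bb{Z}_p$, whence $[\ca{F}_n:\ca{F}]=p^{n-n_0}$; and the inclusion $\scr{D}_{\ca{O}_{\ca{F}_n}/\ca{O}_{\ca{F}}}\subseteq p^{n-n_1}\ca{O}_{\ca{F}_n}$ of Proposition \ref{prop:different-range}, together with the definition of the norm of a fractional ideal in \ref{para:valution-norm}, gives $|\scr{D}_{\ca{O}_{\ca{F}_n}/\ca{O}_{\ca{F}}}|\leq|p^{n-n_1}|=p^{-(n-n_1)}$. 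Multiplying the two, the exponents telescope and the coefficient is exactly $p^{n_1-n_0}$ for all $n\geq n_1$. To cover the remaining finitely many values $n<n_1$, I would invoke the monotonicity statement of Proposition \ref{prop:trace-tau}: the sequence $([\ca{F}_n:\ca{F}]\cdot|\scr{D}_{\ca{O}_{\ca{F}_n}/\ca{O}_{\ca{F}}}|)_{n\in\bb{N}}$ is non-decreasing, so each of its terms is dominated by its value at $n_1$, which we have just bounded by $p^{n_1-n_0}$.

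Finally, since any $x\in\ca{F}_\infty$ lies in $\ca{F}_n$ for some $n$, substituting the uniform bound $[\ca{F}_n:\ca{F}]\cdot|\scr{D}_{\ca{O}_{\ca{F}_n}/\ca{O}_{\ca{F}}}|\leq p^{n_1-n_0}$ into the displayed inequality yields $|\ca{T}(x)-x|\leq p^{n_1-n_0+1}\cdot|\tau(x)-x|$. There is no real obstacle here; the substantive work sits in Propositions \ref{prop:trace-tau} and \ref{prop:different-range}, and the only points needing care are the passage from the ideal inclusion to the norm inequality and the use of monotonicity to dispatch the small-$n$ case instead of arguing separately there.
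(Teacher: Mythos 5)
Your proof is correct and follows exactly the route the paper intends: its own proof of this corollary is the one-line ``It follows directly from \ref{prop:trace-tau}, \ref{lem:tower} and \ref{prop:different-range}'', and your argument — telescoping $[\ca{F}_n:\ca{F}]\cdot|\scr{D}_{\ca{O}_{\ca{F}_n}/\ca{O}_{\ca{F}}}|$ to $p^{n_1-n_0}$ for $n\geq n_1$ and using the monotonicity from \ref{prop:trace-tau} for $n<n_1$ — is precisely the intended filling-in of that citation.
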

\begin{proof}
	It follows directly from \ref{prop:trace-tau}, \ref{lem:tower} and \ref{prop:different-range}.
\end{proof}

\begin{myrem}\label{rem:trace-tau}
	In the case of complete discrete valuation rings (i.e., replacing $\ca{F}$ by $K$), there is no explicit relation as \ref{cor:prop:different-trace} between the norms of trace maps and different ideals ($|\scr{D}_{\ca{O}_{\ca{F}_{n+1}}/\ca{O}_{\ca{F}_n}}|$ may be less than $1/p$). In order to estimate the error term $1/p-|\scr{D}_{\ca{O}_{\ca{F}_{n+1}}/\ca{O}_{\ca{F}_n}}|$, Tate \cite[\textsection3.1, Proposition 5]{tate1967p} gave a much more precise estimate of $|\scr{D}_{\ca{O}_{\ca{F}_n}/\ca{O}_{\ca{F}}}|$ using local class field theory (cf. \ref{prop:different-range}). Then, he was able to bound the norm of trace maps by the (corrected) norm of different ideals in \cite[\textsection3.1, Proposition 6]{tate1967p}.
\end{myrem}

\begin{myprop}[{cf. \cite[\textsection3.1]{tate1967p}}]\label{prop:tate-trace}
	Under the assumption in {\rm\ref{para:ramified-tower}} and with the same notation in {\rm\ref{para:tower}}, the map $\ca{T}:\ca{F}_\infty\to \ca{F}$ \eqref{eq:para:tate-trace-1} is continuous with respect to the topology induced by the corresponding valuation rings (which is also induced by the absolute value \eqref{eq:para:notation-trace-1}). In particular, it induces a continuous $G_{\ca{F}}$-equivariant $\widehat{\ca{F}}$-linear retraction
	\begin{align}\label{eq:prop:tate-trace-1}
		\widehat{\ca{T}}:\widehat{\ca{F}_\infty}\longrightarrow \widehat{\ca{F}}
	\end{align}
	 of the inclusion of the completions $\widehat{\ca{F}}\to\widehat{\ca{F}_\infty}$.
\end{myprop}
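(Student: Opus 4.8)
The plan is to deduce continuity from the explicit estimate of Corollary~\ref{cor:trace-tau}, and then to extend $\ca{T}$ to the completions by a routine density argument. Recall from \ref{para:tate-trace} that $\ca{T}\colon\ca{F}_\infty\to\ca{F}$ is already known to be $\ca{F}$-linear, $G_{\ca{F}}$-equivariant, and a retraction of the inclusion $\ca{F}\hookrightarrow\ca{F}_\infty$; moreover the hypotheses of \ref{para:ramified-tower} guarantee $\ca{F}_\infty\neq\ca{F}$, so \ref{lem:tower} provides a topological generator $\tau\in G_{\ca{F}}$ and we are in the situation where \ref{cor:trace-tau} applies. What remains is therefore only (i) boundedness of $\ca{T}$ and (ii) the passage to completions.

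For (i), I would show that $|\ca{T}(x)|\leq p^{n_1-n_0+1}|x|$ for every $x\in\ca{F}_\infty$, where $n_0,n_1$ are the integers of \ref{lem:tower} and \ref{lem:ann-differential} and $|\cdot|$ is the absolute value \eqref{eq:para:notation-trace-1}. Since $\tau$ preserves $|\cdot|$, the ultrametric inequality gives $|\tau(x)-x|\leq|x|$, whence by Corollary~\ref{cor:trace-tau}
\begin{align}
	|\ca{T}(x)-x|\leq p^{n_1-n_0+1}\,|\tau(x)-x|\leq p^{n_1-n_0+1}\,|x|;
\end{align}
as $n_1>n_0$ forces $p^{n_1-n_0+1}\geq 1$, a second application of the ultrametric inequality yields $|\ca{T}(x)|\leq\max(|\ca{T}(x)-x|,|x|)\leq p^{n_1-n_0+1}|x|$. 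Being additive and bounded, $\ca{T}$ is Lipschitz, hence uniformly continuous, for the valuation topology on $\ca{F}_\infty$ and $\ca{F}$; since $p$ lies in the maximal ideals and all the valuation rings in play have height $1$, this is the same as the $p$-adic topology defined by $\ca{O}_{\ca{F}_\infty}$ and $\ca{O}_{\ca{F}}$, which is the topology appearing in the statement.

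For (ii), a uniformly continuous map into the complete field $\widehat{\ca{F}}$ extends uniquely to a continuous map $\widehat{\ca{T}}\colon\widehat{\ca{F}_\infty}\to\widehat{\ca{F}}$ on the completions, and the bound $|\widehat{\ca{T}}(x)|\leq p^{n_1-n_0+1}|x|$ persists by continuity of $|\cdot|$ and density of $\ca{F}_\infty$ in $\widehat{\ca{F}_\infty}$. Since $\ca{F}_\infty$ (resp.\ $\ca{F}$) is dense in $\widehat{\ca{F}_\infty}$ (resp.\ $\widehat{\ca{F}}$), the relations ``$\ca{T}$ is $\ca{F}$-linear'', ``$\ca{T}\circ\sigma=\sigma\circ\ca{T}$ for each $\sigma\in G_{\ca{F}}$'' (using that $G_{\ca{F}}$ acts by isometries, hence continuously, on both completions), and ``$\ca{T}|_{\ca{F}}=\id$'' all pass to $\widehat{\ca{T}}$ by continuity; thus $\widehat{\ca{T}}$ is a continuous $G_{\ca{F}}$-equivariant $\widehat{\ca{F}}$-linear retraction of $\widehat{\ca{F}}\hookrightarrow\widehat{\ca{F}_\infty}$, as desired. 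The only real content is the boundedness step, and that has already been packaged into Corollary~\ref{cor:trace-tau} (ultimately into the different-ideal bound of Proposition~\ref{prop:different-range}, in the spirit of Tate \cite{tate1967p}); the completion argument is formal, so I do not anticipate any genuine obstacle.
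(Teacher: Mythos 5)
Your proof is correct, but it routes the key boundedness step through a different corollary than the paper does. You derive $|\ca{T}(x)|\leq p^{n_1-n_0+1}|x|$ from Corollary \ref{cor:trace-tau} (the Tate-style estimate $|\ca{T}(x)-x|\leq p^{n_1-n_0+1}|\tau(x)-x|$) together with the observation that $\tau$ is an isometry, whereas the paper obtains the slightly sharper bound $|\ca{T}(x)|\leq p^{n_1-n_0}|x|$ directly from Corollary \ref{cor:trace-range}, i.e.\ from $|p^{-n}\mrm{Tr}_{\ca{F}_n/\ca{F}}(x)|\leq p^{n_1}|x|$ combined with $[\ca{F}_n:\ca{F}]=p^{n-n_0}$. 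Both estimates ultimately rest on the same different-ideal bound \ref{prop:different-range} via \ref{cor:prop:different-trace}, so the difference is small, but it is worth noting that the author deliberately arranges the proof so that \ref{lem:trace-tau}, \ref{prop:trace-tau} and \ref{cor:trace-tau} are \emph{not} needed anywhere in the article (see \ref{rem:thm:perfd-val-1}); your argument reintroduces that dependency, at the cost of a factor of $p$ in the constant. The remainder of your argument (uniform continuity of an additive bounded map, extension to completions, and transport of linearity, equivariance and the retraction property by density and continuity, using that $G_{\ca{F}}$ acts by isometries because it stabilizes $\ca{O}_{\overline{\ca{F}}}$) matches the paper's, which simply cites a general extension lemma for the completion step.
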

\begin{proof}
	By \ref{lem:tower}, there exists $n_0\in\bb{N}$ such that $[\ca{F}_n:\ca{F}]=p^{n-n_0}$ for any integer $n\geq n_0$. Combining with \ref{cor:trace-range}, we see that for any integer $n\geq n_1$ and any $x\in \ca{F}_n$ we have $|\ca{T}(x)|\leq p^{n_1-n_0}|x|$, which verifies the continuity of $\ca{T}$. Thus, it extends continuously to $\widehat{\ca{T}}:\widehat{\ca{F}_\infty}\to\widehat{\ca{F}}$ (\cite[6.1]{he2022sen}). It is clearly a retraction of the inclusion $\widehat{\ca{F}}\to \widehat{\ca{F}_\infty}$. Moreover, it is also $G_{\ca{F}}$-equivariant and $\widehat{\ca{F}}$-linear, since the action by $G_{\ca{F}}$ and the multiplication by $\ca{F}$ on $\ca{F}_\infty$ are continuous (where we used the fact that $G_{\ca{F}}$ stabilizes $\ca{O}_{\overline{\ca{F}}}$ as $\ca{F}$ is Henselian, see \ref{para:notation-trace}).
\end{proof}

\begin{mycor}[{cf. \cite[\textsection3.1, Proposition 7]{tate1967p}}]\label{cor:tate-trace}
	We keep the same notation and assumption as in {\rm\ref{para:tower}} and {\rm\ref{para:ramified-tower}} respectively. 
	\begin{enumerate}
		\renewcommand{\labelenumi}{{\rm(\theenumi)}}
		\item The completion $\widehat{\ca{F}_\infty}$ is the direct sum of $\widehat{\ca{F}}$ and $\ke(\widehat{\ca{T}})$ \eqref{eq:prop:tate-trace-1}. Moreover, $\ke(\widehat{\ca{T}})$ is a closed subspace of $\widehat{\ca{F}_\infty}$ identifying with the completion of $\ke(\ca{T})\subseteq \ca{F}_\infty$ \eqref{eq:para:tate-trace-2}.\label{item:cor:tate-trace-1}
		\item For any $\tau\in \gal(\ca{F}_\infty/\ca{F})$, the continuous $\widehat{\ca{F}}$-linear operator $\tau-1$ on $\widehat{\ca{F}_\infty}$ annihilates $\widehat{\ca{F}}$ and stabilizes $\ke(\widehat{\ca{T}})$.\label{item:cor:tate-trace-2}
		\item Let $\tau\in \gal(\ca{F}_\infty/\ca{F})$ be a topological generator (which exists by {\rm\ref{lem:tower}}). Then, the continuous $\widehat{\ca{F}}$-linear operator $\tau-1$ on $\widehat{\ca{F}_\infty}$ induces a homeomorphism $\ke(\widehat{\ca{T}})\iso \ke(\widehat{\ca{T}})$. \label{item:cor:tate-trace-3}
	\end{enumerate}
\end{mycor}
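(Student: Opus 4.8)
The plan is to extract all three statements from two inputs already in hand: the continuity of $\widehat{\ca{T}}$ established in \ref{prop:tate-trace}, and the quantitative estimate \eqref{eq:cor:trace-tau-1} of \ref{cor:trace-tau}, which in particular bounds $\tau-1$ from below on the kernel of the trace. For (\ref{item:cor:tate-trace-1}), I would first record that $\widehat{\ca{T}}\colon\widehat{\ca{F}_\infty}\to\widehat{\ca{F}}$ is a continuous $\widehat{\ca{F}}$-linear retraction of the inclusion $\widehat{\ca{F}}\hookrightarrow\widehat{\ca{F}_\infty}$, whence the $\widehat{\ca{F}}$-module decomposition $\widehat{\ca{F}_\infty}=\widehat{\ca{F}}\oplus\ke(\widehat{\ca{T}})$, the second summand being closed because $\widehat{\ca{T}}$ is continuous. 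To see that $\ke(\widehat{\ca{T}})$ is the completion of $\ke(\ca{T})$ \eqref{eq:para:tate-trace-2}, note $\ke(\ca{T})\subseteq\ke(\widehat{\ca{T}})$, and that for $x\in\ke(\widehat{\ca{T}})$ approximated by $x_m\in\ca{F}_\infty$ the elements $x_m-\ca{T}(x_m)$ lie in $\ke(\ca{T})$ and converge to $x-\widehat{\ca{T}}(x)=x$; so $\ke(\ca{T})$ is dense in the closed (hence complete) subspace $\ke(\widehat{\ca{T}})$.

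For (\ref{item:cor:tate-trace-2}), since $\ca{F}$, hence its $p$-adic completion $\widehat{\ca{F}}$, is fixed pointwise by $G_{\ca{F}}$, the operator $\tau-1$ annihilates $\widehat{\ca{F}}$; and as $\widehat{\ca{T}}$ is $G_{\ca{F}}$-equivariant with values in $\widehat{\ca{F}}$, for $x\in\ke(\widehat{\ca{T}})$ we get $\widehat{\ca{T}}((\tau-1)(x))=(\tau-1)(\widehat{\ca{T}}(x))=0$, so $\tau-1$ stabilizes $\ke(\widehat{\ca{T}})$. For (\ref{item:cor:tate-trace-3}) I would first show that $\tau-1$ is a bijection of $\ke(\ca{T})$. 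On each finite layer $\ca{F}_n$ with $n\geq n_0$, the image of the topological generator $\tau$ generates the cyclic group $\gal(\ca{F}_n/\ca{F})$ by \ref{lem:tower}, so additive Hilbert's Theorem~90 yields the exact sequence $0\to\ca{F}\to\ca{F}_n\xrightarrow{\tau-1}\ca{F}_n\xrightarrow{\mrm{Tr}_{\ca{F}_n/\ca{F}}}\ca{F}\to 0$; since $\ke(\ca{T}|_{\ca{F}_n})=\ke(\mrm{Tr}_{\ca{F}_n/\ca{F}})$ and $\ca{F}=\ke(\tau-1|_{\ca{F}_n})$ meets it trivially (as $\mathrm{char}=0$ and $[\ca{F}_n:\ca{F}]\neq 0$ in $\ca{F}$), the decomposition $\ca{F}_n=\ca{F}\oplus\ke(\ca{T}|_{\ca{F}_n})$ shows $\tau-1$ restricts to a bijection of $\ke(\ca{T}|_{\ca{F}_n})$ onto itself. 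These are compatible along the tower (the normalized traces are), so taking the filtered union (\ref{para:tate-trace}) makes $\tau-1$ a bijection of $\ke(\ca{T})$. The estimate \eqref{eq:cor:trace-tau-1} applied to $x\in\ke(\ca{T})$ reads $|x|\leq p^{n_1-n_0+1}\,|(\tau-1)(x)|$, so $\tau-1$ and its inverse are both bounded, hence uniformly continuous, on $\ke(\ca{T})$; therefore $\tau-1$ extends uniquely to a homeomorphism of the completion, which by (\ref{item:cor:tate-trace-1}) is exactly $\ke(\widehat{\ca{T}})$.

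The one point requiring a genuine argument rather than a formality is the \emph{surjectivity} of $\tau-1$ on $\ke(\ca{T})$ — this cannot be read off the norm estimate of \ref{cor:trace-tau}, and it is where the cyclicity of the finite layers (\ref{lem:tower}) together with additive Hilbert~90 and the compatibility of the decompositions \eqref{eq:para:tate-trace-2} along the Kummer tower are essential. Once bijectivity on $\ke(\ca{T})$ is known, the lower bound coming from \eqref{eq:cor:trace-tau-1} makes both $\tau-1$ and $(\tau-1)^{-1}$ uniformly continuous, so the passage to $\ke(\widehat{\ca{T}})$ is automatic; I expect no further difficulty there.
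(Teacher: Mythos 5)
Your proposal is correct and follows essentially the same route as the paper: the retraction $\widehat{\ca{T}}$ gives the closed decomposition in (1), equivariance gives (2), and for (3) one first proves bijectivity of $\tau-1$ on $\ke(\ca{T})$ layer by layer and then uses the lower bound from \ref{cor:trace-tau} to make the inverse continuous before completing. The only cosmetic difference is that you invoke additive Hilbert~90 on each finite layer where the paper runs the equivalent dimension count $\dim_{\ca{F}}\ca{F}_n=1+\dim_{\ca{F}}\ke(\ca{T}|_{\ca{F}_n})$ directly.
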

\begin{proof}
	(\ref{item:cor:tate-trace-1}) As $\widehat{\ca{T}}:\widehat{\ca{F}_\infty}\to\widehat{\ca{F}}$ is a retraction of the inclusion $\widehat{\ca{F}}\to \widehat{\ca{F}_\infty}$ by \ref{prop:tate-trace}, we see that $\widehat{\ca{F}_\infty}=\widehat{\ca{F}}\oplus \ke(\widehat{\ca{T}})$. As $\widehat{\ca{T}}$ is continuous and $\widehat{\ca{F}}$ is separated, $\ke(\widehat{\ca{T}})$ is closed in $\widehat{\ca{F}_\infty}$. Notice that $\ke(\ca{T})\subseteq \ke(\widehat{\ca{T}})$ via the inclusion $\ca{F}_\infty\subseteq \widehat{\ca{F}_\infty}$. Thus, we still have $\widehat{\ke(\ca{T})}\subseteq \ke(\widehat{\ca{T}})$. Taking completion of the canonical decomposition $\ca{F}_\infty=\ca{F}\oplus \ke(\ca{T})$ \eqref{eq:para:tate-trace-2}, we see that $\widehat{\ke(\ca{T})}= \ke(\widehat{\ca{T}})$.
	
	(\ref{item:cor:tate-trace-2}) It is clear that $(\tau-1)(\widehat{\ca{F}})=0$. As $\widehat{\ca{T}}$ is $G_{\ca{F}}$-equivariant by \ref{prop:tate-trace}, for any $x\in \widehat{\ca{F}_\infty}$, we have $\widehat{\ca{T}}(\tau(x))=\widehat{\ca{T}}(x)$. Thus, $(\tau-1)(\widehat{\ca{F}_\infty})\subseteq \ke(\widehat{\ca{T}})$.
	
	(\ref{item:cor:tate-trace-3}) Firstly, we claim that $\tau-1$ induces an $\ca{F}$-linear isomorphism $\ke(\ca{T})\iso \ke(\ca{T})$. Notice that for any $n\in\bb{N}$, the image of the $\ca{F}$-linear operator $(\tau-1)|_{\ca{F}_n}$ lies in $\ke(\ca{T}|_{\ca{F}_n})$ \eqref{eq:para:tate-trace-2} as above. Since its kernel is $\ca{F}$ (as $\gal(\ca{F}_\infty/\ca{F})=\bb{Z}_p\tau$), we obtain an exact sequence of finite free $\ca{F}$-modules 
	\begin{align}
		\xymatrix{
			0\ar[r]&\ca{F}\ar[r]&\ca{F}_n\ar[r]^-{\tau-1}&\ke (\ca{T}|_{\ca{F}_n})\ar[r]&0
		}
	\end{align}
	by the identity $\dim_{\ca{F}}\ca{F}_n=1+\dim_{\ca{F}}\ke (\ca{T}|_{\ca{F}_n})$ \eqref{eq:para:tate-trace-2}. The sequence remains exact for $n=\infty$ by taking filtered union, which proves the claim. 
	
	Then, we claim that the inverse $\ca{S}:\ke(\ca{T})\iso \ke(\ca{T})$ of $\tau-1$ is also continuous. Indeed, for any $x\in \ke(\ca{T})$ with $y=(\tau-1)(x)\in \ke(\ca{T})$ we have
	\begin{align}
		|\ca{S}(y)|=|x|=|\ca{T}(x)-x|\leq p^{n_1-n_0+1}\cdot |\tau(x)-x|=p^{n_1-n_0+1}\cdot|y|
	\end{align}
	by \ref{cor:trace-tau}, which proves the claim.
	
	Therefore, taking completion, we see that $\ca{S}:\ke(\ca{T})\iso \ke(\ca{T})$ induces a continuous map $\widehat{\ca{S}}:\ke(\widehat{\ca{T}})\to \ke(\widehat{\ca{T}})$ by (\ref{item:cor:tate-trace-1}) inverse to $\tau-1$. This proves that $\tau-1:\ke(\widehat{\ca{T}})\to \ke(\widehat{\ca{T}})$ is a homeomorphism.
\end{proof}

\begin{myrem}\label{rem:tate-trace}
	In fact, the injectivity of $\tau-1:\ke(\widehat{\ca{T}})\to \ke(\widehat{\ca{T}})$ also follows from Ax-Sen-Tate's theorem \cite[page 417]{ax1970ax} (where we used the fact that $\ca{F}$ is a Henselian valuation field). It implies that
	\begin{align}
		H^0(\gal(\ca{F}_{\infty}/\ca{F}),\widehat{\ca{F}_\infty})=H^0(\gal(\ca{F}_{\infty}/\ca{F}),\widehat{\ca{F}})=\widehat{\ca{F}}.
	\end{align}
	 On the other hand, as $\tau-1:\ke(\widehat{\ca{T}})\to \ke(\widehat{\ca{T}})$ is an injective continuous homomorphism of $\widehat{\ca{F}}$-Banach spaces, it is surjective if and only if it is a homeomorphism by the open mapping theorem (\cite[\Luoma{1}.3.3, Th\'eor\`eme 1]{bourbaki1981space1-5}, where we used the non-discreteness of the valuation on $\widehat{\ca{F}}$). Nevertheless, the surjectivity of $\tau-1$ (proved in \ref{cor:tate-trace}.(\ref{item:cor:tate-trace-3})) implies that
	 \begin{align}
	 	H^1(\gal(\ca{F}_{\infty}/\ca{F}),\widehat{\ca{F}_\infty})=H^1(\gal(\ca{F}_{\infty}/\ca{F}),\widehat{\ca{F}})\cong\widehat{\ca{F}},
	 \end{align}
	 where $H^1$ denotes taking the continuous group cohomology in degree $1$. See \cite[\textsection3.1, Proposition 8.(a)]{tate1967p} for a detailed argument for discrete valuation fields.
\end{myrem}

\begin{mycor}\label{cor:ax-sen-tate}
	Under the assumption in {\rm\ref{para:ramified-tower}} and with the same notation in {\rm\ref{para:tower}}, there is no element $x\in \widehat{\overline{\ca{F}}}$ such that $(\tau-1)(x)=\xi_t(\tau)$ for every $\tau\in G_{\ca{F}}$.
\end{mycor}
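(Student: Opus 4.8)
The plan is to argue by contradiction and reduce everything to the Tate-trace package established in this section. Suppose there were an element $x\in\widehat{\overline{\ca{F}}}$ with $(\tau-1)(x)=\xi_t(\tau)$ for every $\tau\in G_{\ca{F}}$. Since the homomorphism $\xi_t$ factors through $\gal(\ca{F}_\infty/\ca{F})$ (see \ref{para:tower}), every $\tau\in\gal(\overline{\ca{F}}/\ca{F}_\infty)$ satisfies $\xi_t(\tau)=0$, hence $\tau(x)=x$; thus $x$ lies in the $\gal(\overline{\ca{F}}/\ca{F}_\infty)$-invariants of $\widehat{\overline{\ca{F}}}$. By Ax--Sen--Tate's theorem (see \ref{rem:tate-trace}; here one uses that $\ca{F}$, and hence its algebraic extension $\ca{F}_\infty$, is a Henselian valuation field of height $1$), this space of invariants is exactly $\widehat{\ca{F}_\infty}$, so in fact $x\in\widehat{\ca{F}_\infty}$.

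Next I would invoke the hypothesis of \ref{para:ramified-tower} that $\df t$ is not $p^\infty$-divisible, equivalently $\ca{F}_\infty\neq\ca{F}$, together with \ref{lem:tower}: there is an integer $n_0$ with $\xi_t(G_{\ca{F}})=p^{n_0}\bb{Z}_p$, so one may choose $\tau\in G_{\ca{F}}$ with $\xi_t(\tau)=p^{n_0}$, a topological generator of $\gal(\ca{F}_\infty/\ca{F})$ in the sense of \ref{lem:tower}. Applying the supposed identity to this $\tau$ yields $(\tau-1)(x)=p^{n_0}$, a nonzero element of $\widehat{\ca{F}}$. On the other hand, by \ref{cor:tate-trace}(\ref{item:cor:tate-trace-1}) and (\ref{item:cor:tate-trace-2}) there is a direct sum decomposition $\widehat{\ca{F}_\infty}=\widehat{\ca{F}}\oplus\ke(\widehat{\ca{T}})$, and the operator $\tau-1$ carries $\widehat{\ca{F}_\infty}$ into $\ke(\widehat{\ca{T}})$ (it annihilates $\widehat{\ca{F}}$ and stabilizes $\ke(\widehat{\ca{T}})$). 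Hence $p^{n_0}=(\tau-1)(x)\in\widehat{\ca{F}}\cap\ke(\widehat{\ca{T}})=0$, which is absurd. This contradiction proves the corollary.

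The routine parts — passing from the cocycle relation to a fixed vector, and the splitting argument — are purely formal once \ref{cor:tate-trace} is available, so the only genuine external input is Ax--Sen--Tate, used solely to descend $x$ from $\widehat{\overline{\ca{F}}}$ down to $\widehat{\ca{F}_\infty}$. Accordingly, the one point to be careful about is that Ax's theorem applies in the present generality, namely to the algebraic closure of a Henselian (not necessarily complete) rank-$1$ valued field together with its $p$-adic completion; this is exactly the situation already recorded in \ref{rem:tate-trace}. The other place where the hypotheses are really used is the non-vanishing $\xi_t(\tau)=p^{n_0}\neq 0$, which is precisely the assumption $\ca{F}_\infty\neq\ca{F}$ of \ref{para:ramified-tower} (equivalently, $\df t$ not being $p^\infty$-divisible); without it the statement would simply be false.
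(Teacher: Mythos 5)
Your proof is correct and follows essentially the same route as the paper's: descend $x$ to $\widehat{\ca{F}_\infty}$ via Ax--Sen--Tate (using that $\xi_t$ kills $\gal(\overline{\ca{F}}/\ca{F}_\infty)$), then use the decomposition $\widehat{\ca{F}_\infty}=\widehat{\ca{F}}\oplus\ke(\widehat{\ca{T}})$ from \ref{cor:tate-trace} to see that $(\tau-1)(x)$ lies in $\ke(\widehat{\ca{T}})\cap\widehat{\ca{F}}=0$, contradicting $\xi_t\neq 0$. The only cosmetic difference is that you specialize to a $\tau$ with $\xi_t(\tau)=p^{n_0}$ whereas the paper argues with arbitrary $\tau$; this changes nothing.
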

\begin{proof}
	If there exists such an element $x$, then firstly we see that it is fixed by $\gal(\overline{\ca{F}}/\ca{F}_\infty)$ so that it lies in $\widehat{\ca{F}_\infty}$ by Ax-Sen-Tate's theorem \cite[page 417]{ax1970ax} (note that $\ca{F}_\infty$ is a Henselian valuation field). Thus, for any $\tau\in G_{\ca{F}}$, we have $(\tau-1)(x)\in \ke(\widehat{\ca{T}})$ by \ref{cor:tate-trace}.(\ref{item:cor:tate-trace-2}). On the other hand, by assumption we see that $(\tau-1)(x)\in \widehat{\ca{F}}$. Hence, $(\tau-1)(x)\in\ke(\widehat{\ca{T}})\cap   \widehat{\ca{F}}=0$ by \ref{cor:tate-trace}.(\ref{item:cor:tate-trace-1}), which contradicts the fact that $\xi_t\neq 0$.
\end{proof}

\begin{mylem}\label{lem:perfd-val}
	The following statements are equivalent:
	\begin{enumerate}
		\renewcommand{\labelenumi}{{\rm(\theenumi)}}
		\item The subspace $\scr{E}_{\ca{O}_{\overline{\ca{F}}}}^{G_{\ca{F}}}$ of $G_{\ca{F}}$-invariant elements of $\scr{E}_{\ca{O}_{\overline{\ca{F}}}}$ \eqref{eq:thm:fal-ext-val-1} has dimension $1+d$ over $\widehat{\ca{F}}$.\label{item:lem:perfd-val-1}
		\item The morphism $\jmath:\scr{E}_{\ca{O}_{\overline{\ca{F}}}}^{G_{\ca{F}}}\to\widehat{\ca{F}}\otimes_{\ca{F}}\Omega^1_{\ca{F}/K}$ induced by \eqref{eq:thm:fal-ext-val-3} is surjective.\label{item:lem:perfd-val-2}
		\item The coboundary map $\delta:\widehat{\ca{F}}\otimes_{\ca{F}}\Omega^1_{\ca{F}/K}\to H^1(G_{\ca{F}},\widehat{\overline{\ca{F}}}(1))$ induced by \eqref{eq:thm:fal-ext-val-3} is zero.\label{item:lem:perfd-val-3}
	\end{enumerate}
\end{mylem}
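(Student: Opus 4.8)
The plan is to read off all three equivalences from the long exact sequence of continuous $G_{\ca{F}}$-cohomology attached to the Faltings extension of $\ca{O}_{\overline{\ca{F}}}$ over $\ca{O}_K$,
\begin{align*}
0\longrightarrow \widehat{\overline{\ca{F}}}(1)\stackrel{\iota}{\longrightarrow}\scr{E}_{\ca{O}_{\overline{\ca{F}}}}\stackrel{\jmath}{\longrightarrow}\widehat{\overline{\ca{F}}}\otimes_{\overline{\ca{F}}}\Omega^1_{\overline{\ca{F}}/K}\longrightarrow 0,
\end{align*}
viewed as a short exact sequence of finite free $\widehat{\overline{\ca{F}}}$-representations of $G_{\ca{F}}$ (see \ref{thm:fal-ext-val}). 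First I would fix a transcendence basis $t_1,\dots,t_d$ of $\ca{F}/K$ (which is also one of $\overline{\ca{F}}/K$) together with compatible systems of $p$-power roots in $\overline{\ca{F}}$; by \ref{thm:fal-ext-val}.(\ref{item:thm:fal-ext-val-3}) the surjection $\jmath$ then admits a $\widehat{\overline{\ca{F}}}$-linear section, which is automatically continuous since $\widehat{\overline{\ca{F}}}$-linear maps between finite free $\widehat{\overline{\ca{F}}}$-modules with the canonical topology are continuous (\cite[2.3]{he2022sen}). Hence the complexes of continuous cochains form a short exact sequence, and the snake lemma furnishes a long exact sequence whose initial segment is
\begin{align*}
0\longrightarrow \widehat{\overline{\ca{F}}}(1)^{G_{\ca{F}}}\longrightarrow \scr{E}_{\ca{O}_{\overline{\ca{F}}}}^{G_{\ca{F}}}\stackrel{\jmath}{\longrightarrow}(\widehat{\overline{\ca{F}}}\otimes_{\overline{\ca{F}}}\Omega^1_{\overline{\ca{F}}/K})^{G_{\ca{F}}}\stackrel{\delta}{\longrightarrow}H^1(G_{\ca{F}},\widehat{\overline{\ca{F}}}(1)).
\end{align*}

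Next I would compute the two $H^0$-terms. Since $\ca{F}$ contains $\overline{K}$ (\ref{para:notation-trace}), it contains every $p$-power root of unity, so the cyclotomic character is trivial on $G_{\ca{F}}$ and $\widehat{\overline{\ca{F}}}(1)\cong\widehat{\overline{\ca{F}}}$ as $\widehat{\overline{\ca{F}}}$-representations of $G_{\ca{F}}$; as $\ca{F}$ is Henselian, Ax--Sen--Tate's theorem (\cite[page 417]{ax1970ax}, cf. the proof of \ref{cor:ax-sen-tate}) gives $\widehat{\overline{\ca{F}}}(1)^{G_{\ca{F}}}=\widehat{\overline{\ca{F}}}^{G_{\ca{F}}}=\widehat{\ca{F}}$, of dimension $1$ over $\widehat{\ca{F}}$. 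For the quotient term, $\df t_1,\dots,\df t_d$ is a basis of $\Omega^1_{\overline{\ca{F}}/K}$ over $\overline{\ca{F}}$ consisting of $G_{\ca{F}}$-fixed vectors (each $t_i\in\ca{F}$) and defined over $\ca{F}$; decomposing $\widehat{\overline{\ca{F}}}\otimes_{\overline{\ca{F}}}\Omega^1_{\overline{\ca{F}}/K}$ in this basis and applying Ax--Sen--Tate coordinatewise yields $(\widehat{\overline{\ca{F}}}\otimes_{\overline{\ca{F}}}\Omega^1_{\overline{\ca{F}}/K})^{G_{\ca{F}}}=\widehat{\ca{F}}\otimes_{\ca{F}}\Omega^1_{\ca{F}/K}$, of dimension $d$ over $\widehat{\ca{F}}$ (this also identifies the map $\jmath$ of the sequence above with the one in statement (\ref{item:lem:perfd-val-2})). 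In particular $\scr{E}_{\ca{O}_{\overline{\ca{F}}}}^{G_{\ca{F}}}$ is finite-dimensional over $\widehat{\ca{F}}$.

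Finally, the three equivalences fall out formally. Exactness at $\scr{E}_{\ca{O}_{\overline{\ca{F}}}}^{G_{\ca{F}}}$ gives a short exact sequence $0\to\widehat{\ca{F}}\to\scr{E}_{\ca{O}_{\overline{\ca{F}}}}^{G_{\ca{F}}}\to\jmath(\scr{E}_{\ca{O}_{\overline{\ca{F}}}}^{G_{\ca{F}}})\to 0$, hence $\dim_{\widehat{\ca{F}}}\scr{E}_{\ca{O}_{\overline{\ca{F}}}}^{G_{\ca{F}}}=1+\dim_{\widehat{\ca{F}}}\jmath(\scr{E}_{\ca{O}_{\overline{\ca{F}}}}^{G_{\ca{F}}})$; since $\jmath(\scr{E}_{\ca{O}_{\overline{\ca{F}}}}^{G_{\ca{F}}})$ is a subspace of the $d$-dimensional space $\widehat{\ca{F}}\otimes_{\ca{F}}\Omega^1_{\ca{F}/K}$, the left-hand side equals $1+d$ exactly when $\jmath$ is surjective, giving (\ref{item:lem:perfd-val-1})$\Leftrightarrow$(\ref{item:lem:perfd-val-2}). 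Exactness at $\widehat{\ca{F}}\otimes_{\ca{F}}\Omega^1_{\ca{F}/K}$ gives $\jmath(\scr{E}_{\ca{O}_{\overline{\ca{F}}}}^{G_{\ca{F}}})=\ke(\delta)$, so $\jmath$ is surjective exactly when $\ke(\delta)$ is the whole space, i.e. $\delta=0$, giving (\ref{item:lem:perfd-val-2})$\Leftrightarrow$(\ref{item:lem:perfd-val-3}). I do not expect a genuine obstacle: the only two points needing care are the exactness of the cochain complexes — handled by invoking the $\widehat{\overline{\ca{F}}}$-linear section of \ref{thm:fal-ext-val}.(\ref{item:thm:fal-ext-val-3}) as a continuous (non-equivariant) splitting of $\jmath$ — and the two applications of Ax--Sen--Tate, legitimate because $\ca{F}$ is a Henselian valuation field containing $\overline{K}$.
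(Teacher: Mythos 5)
Your proposal is correct and follows essentially the same route as the paper: take continuous $G_{\ca{F}}$-invariants of the Faltings extension of $\ca{O}_{\overline{\ca{F}}}$, identify the first and third terms of the resulting four-term exact sequence via Ax--Sen--Tate, and read off the three equivalences from $\dim_{\ca{F}}\Omega^1_{\ca{F}/K}=d$. The only difference is that you spell out the continuous splitting of $\jmath$ and the coordinatewise Ax--Sen--Tate computation, which the paper leaves implicit.
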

\begin{proof}
	Recall that the Faltings extension of $\ca{O}_{\overline{\ca{F}}}$ over $\ca{O}_K$ \eqref{eq:thm:fal-ext-val-3},
	\begin{align}\label{eq:thm:perfd-val-1}
		\xymatrix{
			0\ar[r]&\widehat{\overline{\ca{F}}}(1)\ar[r]^-{\iota}&\scr{E}_{\ca{O}_{\overline{\ca{F}}}}\ar[r]^-{\jmath}&\widehat{\overline{\ca{F}}}\otimes_{\ca{F}}\Omega^1_{\ca{F}/K}\ar[r]&0
		}
	\end{align}
	is $G_{\ca{F}}$-equivariant. Moreover, the $G_{\ca{F}}$-action on each term is continuous by \ref{prop:fal-ext-val} (with respect to the canonical topology as finite free $\widehat{\overline{\ca{F}}}$-modules). Taking $G_{\ca{F}}$-invariants, we obtain an exact sequence
	\begin{align}\label{eq:thm:perfd-val-2}
		\xymatrix{
			0\ar[r]&\widehat{\ca{F}}(1)\ar[r]^-{\iota}&\scr{E}_{\ca{O}_{\overline{\ca{F}}}}^{G_\ca{F}}\ar[r]^-{\jmath}&\widehat{\ca{F}}\otimes_{\ca{F}}\Omega^1_{\ca{F}/K}\ar[r]^-{\delta}&H^1(G_{\ca{F}},\widehat{\overline{\ca{F}}}(1)),
		}
	\end{align}
	where the expressions for the first and third terms follow from Ax-Sen-Tate's theorem \cite[page 417]{ax1970ax} (note that $\ca{F}$ is a Henselian valuation field) and $H^1$ is the continuous group cohomology. The conclusion follows from the fact that $\dim_{\ca{F}}\Omega^1_{\ca{F}/K}=\mrm{trdeg}_K(\ca{F})=d$.
\end{proof}

\begin{mythm}\label{thm:perfd-val}
	Let $\scr{E}_{\ca{O}_{\overline{\ca{F}}}}$ be the canonical finite free $\widehat{\overline{\ca{F}}}$-representation of $G_{\ca{F}}$ defined in \eqref{eq:thm:fal-ext-val-3} (by taking the $\ca{F}$ in {\rm\ref{thm:fal-ext-val}} to be the $\overline{\ca{F}}$ in this section). Assume that the equivalent conditions in {\rm\ref{lem:perfd-val}} hold. Then, $\Omega^1_{\ca{O}_{\ca{F}}/\ca{O}_{\overline{K}}}=\Omega^1_{\ca{F}/\overline{K}}$. In particular, $\ca{F}$ is a pre-perfectoid field in the sense of {\rm\ref{para:notation-perfd}}.
\end{mythm}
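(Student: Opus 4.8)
The plan is to prove the contrapositive: assuming the equivalent conditions of \ref{lem:perfd-val} --- most conveniently the vanishing of the coboundary map $\delta\colon\widehat{\ca{F}}\otimes_{\ca{F}}\Omega^1_{\ca{F}/K}\to H^1(G_{\ca{F}},\widehat{\overline{\ca{F}}}(1))$ attached to the Faltings extension \eqref{eq:thm:perfd-val-1} --- I deduce that $\Omega^1_{\ca{O}_{\ca{F}}/\ca{O}_{\overline{K}}}$ is $p$-divisible. The starting observation is a reduction: as an $\ca{O}_{\ca{F}}$-module, $\Omega^1_{\ca{O}_{\ca{F}}/\ca{O}_{\overline{K}}}$ is generated by the symbols $\df t$ with $t\in\ca{O}_{\ca{F}}$, and $\bigcap_{n\in\bb{N}}p^n\Omega^1_{\ca{O}_{\ca{F}}/\ca{O}_{\overline{K}}}$ is a submodule; so if the module failed to be $p$-divisible, some $\df t$ with $t\in\ca{O}_{\ca{F}}\setminus\{0\}$ would not be $p^\infty$-divisible. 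It is therefore enough to derive a contradiction from the existence of such a $t$, which places us precisely in the setting of \ref{para:ramified-tower}: the Kummer tower $(\ca{F}_n)_{n\in\bb{N}}$ is non-trivial, the different ideals $\scr{D}_{\ca{O}_{\ca{F}_n}/\ca{O}_{\ca{F}}}$ are bounded as in \ref{prop:different-range}, Tate's normalized trace $\widehat{\ca{T}}\colon\widehat{\ca{F}_\infty}\to\widehat{\ca{F}}$ of \ref{prop:tate-trace} exists, and the key non-existence statement \ref{cor:ax-sen-tate} applies.

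The core of the argument is then to make the coboundary explicit. Fix a compatible system $(t_{p^n})_{n\in\bb{N}}$ of $p$-power roots of $t$ in $\overline{\ca{F}}$ and the associated character $\xi_t\colon G_{\ca{F}}\to\bb{Z}_p$ of \ref{para:tower}. By \ref{thm:fal-ext-val}(ii) the element $e=(\df\log(t_{p^n}))_{n\in\bb{N}}\in\scr{E}_{\ca{O}_{\overline{\ca{F}}}}$ satisfies $\jmath(e)=\df\log t=t^{-1}\df t$, which lies in $\widehat{\ca{F}}\otimes_{\ca{F}}\Omega^1_{\ca{F}/K}$, the source of $\delta$. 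Since $\tau(t_{p^n})=\zeta_{p^n}^{\xi_t(\tau)}t_{p^n}$ for $\tau\in G_{\ca{F}}$, the $\mrm{Aut}_{\ca{O}_K}(\ca{O}_{\overline{\ca{F}}})$-equivariance of the construction together with \ref{thm:fal-ext-val}(i) gives $(\tau-1)e=\xi_t(\tau)\cdot\iota\big((\zeta_{p^n})_{n\in\bb{N}}\big)$, so $\delta(\df\log t)$ is represented by the continuous cocycle $\tau\mapsto\xi_t(\tau)\cdot(\zeta_{p^n})_{n\in\bb{N}}$ in $H^1(G_{\ca{F}},\widehat{\overline{\ca{F}}}(1))$. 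Because $\ca{F}$ contains $\overline{K}$ (see \ref{para:notation-trace}), the group $G_{\ca{F}}$ fixes every $\zeta_{p^n}$, i.e., the cyclotomic character is trivial on $G_{\ca{F}}$; writing $\widehat{\overline{\ca{F}}}(1)=\widehat{\overline{\ca{F}}}\cdot(\zeta_{p^n})_{n}$ with $G_{\ca{F}}$ acting only on the scalar, this cocycle is a coboundary if and only if there is $y\in\widehat{\overline{\ca{F}}}$ with $(\tau-1)(y)=\xi_t(\tau)$ for all $\tau\in G_{\ca{F}}$. By \ref{cor:ax-sen-tate} no such $y$ exists, hence $\delta(\df\log t)\neq0$, contradicting the vanishing of $\delta$. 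This forces $\Omega^1_{\ca{O}_{\ca{F}}/\ca{O}_{\overline{K}}}$ to be $p$-divisible.

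Since $\Omega^1_{\ca{O}_{\ca{F}}/\ca{O}_{\overline{K}}}$ is also torsion-free over the height-one valuation ring $\ca{O}_{\ca{F}}$ (Gabber--Ramero, \cite[6.5.20]{gabber2003almost}), being $p$-divisible it is divisible, and the natural map $\Omega^1_{\ca{O}_{\ca{F}}/\ca{O}_{\overline{K}}}\to\Omega^1_{\ca{O}_{\ca{F}}/\ca{O}_{\overline{K}}}\otimes_{\ca{O}_{\ca{F}}}\ca{F}=\Omega^1_{\ca{F}/\overline{K}}$ is an isomorphism, which is the asserted equality. For the last assertion, $\ca{F}$ has height one and its valuation is non-discrete of residue characteristic $p$ because $\ca{O}_{\overline{K}}\subseteq\ca{O}_{\ca{F}}$ has these properties; and the vanishing $\Omega^1_{\ca{O}_{\ca{F}}/\ca{O}_{\overline{K}}}/p=\Omega^1_{(\ca{O}_{\ca{F}}/p)/(\ca{O}_{\overline{K}}/p)}=0$, combined with the computation of the modules of (log) differentials of valuation rings over the perfectoid valuation ring $\ca{O}_{\overline{K}}$ (via \ref{lem:val-log-diff} and \cite[\textsection6]{gabber2003almost}), forces the value group of $\ca{F}$ to be $p$-divisible and its residue field to be perfect; equivalently, the Frobenius of $\ca{O}_{\ca{F}}/p\ca{O}_{\ca{F}}$ is surjective, so $\ca{F}$ is a pre-perfectoid field in the sense of \ref{para:notation-perfd}.

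The genuinely difficult inputs have already been supplied upstream: the different-ideal estimate \ref{prop:different-range} in the non-Noetherian, non-discrete setting, the construction of Tate's normalized trace map \ref{prop:tate-trace} that it feeds, and the Ax--Sen--Tate-type statement \ref{cor:ax-sen-tate}. Within the present proof the only delicate points are the reduction to a single non-$p^\infty$-divisible differential $\df t$ with $t$ in the valuation ring, and --- above all --- the handling of the cyclotomic twist in the coboundary computation: it is exactly the standing hypothesis $\overline{K}\subseteq\ca{F}$ that trivializes the twist and makes \ref{cor:ax-sen-tate} directly applicable.
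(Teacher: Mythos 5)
Your proof is correct and follows essentially the same route as the paper: reduce to the existence of some $t\in\ca{O}_{\ca{F}}$ with $\df t$ not $p^\infty$-divisible, compute $\delta(\df\log t)$ explicitly as the cocycle $\tau\mapsto\xi_t(\tau)(\zeta_{p^n})_n$ via \ref{thm:fal-ext-val}, and contradict $\delta=0$ using \ref{cor:ax-sen-tate}. The only cosmetic differences are that the paper runs the reduction through a transcendence basis $t_1,\dots,t_d\in\ca{O}_{\ca{F}}$ rather than all symbols $\df t$, and delegates the ``in particular'' step to \cite[10.17]{he2024purity} where you sketch the deep-ramification argument directly.
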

\begin{proof}
	Note that $\Omega^1_{\ca{O}_{\ca{F}}/\ca{O}_{\overline{K}}}$ is $p$-torsion-free as $\overline{K}$ is algebraically closed (\cite[6.5.20.(\luoma{1})]{gabber2003almost}). Thus, there is a canonical inclusion $\Omega^1_{\ca{O}_{\ca{F}}/\ca{O}_{\overline{K}}}\subseteq \Omega^1_{\ca{F}/\overline{K}}$. We take a transcendental basis $t_1,\dots,t_d\in \ca{O}_{\ca{F}}$ of $\ca{F}$ over $\overline{K}$. Then, the images of $\df t_1,\dots,\df t_d \in \Omega^1_{\ca{O}_{\ca{F}}/\ca{O}_{\overline{K}}}$ generate $\Omega^1_{\ca{F}/\overline{K}}$ over $\ca{F}$. If each $\df t_i$ is $p^\infty$-divisible in $\Omega^1_{\ca{O}_{\ca{F}}/\ca{O}_{\overline{K}}}$, then we see that $\Omega^1_{\ca{O}_{\ca{F}}/\ca{O}_{\overline{K}}}=\Omega^1_{\ca{F}/\overline{K}}$. It remains to show that we are in this case.
	
	Assume that there exists $1\leq i\leq d$ such that $\df t_i$ is not $p^\infty$-divisible in $\Omega^1_{\ca{O}_{\ca{F}}/\ca{O}_{\overline{K}}}$. Then, we take $t=t_i$ so that $\df t\notin \bigcap_{n\in\bb{N}} p^n\Omega^1_{\ca{O}_{\ca{F}}/\ca{O}_{\overline{K}}}$ and thus we are in the situation of \ref{para:ramified-tower}. Taking again the notation in \ref{para:tower}, we see that $\jmath((\df\log t_{p^n})_{n\in\bb{N}})=\df \log(t)$ by \ref{thm:fal-ext-val}.(\ref{item:thm:fal-ext-val-2}). By the construction of coboundary map $\delta:\widehat{\ca{F}}\otimes_{\ca{F}}\Omega^1_{\ca{F}/K}\to H^1(G_{\ca{F}},\widehat{\overline{\ca{F}}}(1))$ induced by \eqref{eq:thm:fal-ext-val-3}, we see that $\delta(\df\log(t))$ is represented by the $1$-cocycle (see \eqref{eq:para:tower-1} and \ref{thm:fal-ext-val}.(\ref{item:thm:fal-ext-val-1}))
	\begin{align}\label{eq:thm:perfd-val-3}
		G_{\ca{F}}\longrightarrow \widehat{\overline{\ca{F}}}(1),\ \tau\mapsto (\tau-1)((\df\log t_{p^n})_{n\in\bb{N}})=\xi_t(\tau)(\zeta_{p^n})_{n\in\bb{N}},
	\end{align}
	where for the last equality we used the canonical embedding $\bb{Z}_p\subseteq \widehat{\overline{\ca{F}}}$ (fixed in \ref{para:notation-trace} as $\overline{\ca{F}}$ is an extension of $\bb{Q}_p$).	On the other hand, since $\delta=0$ by assumption \ref{lem:perfd-val}, the $1$-cocycle \eqref{eq:thm:perfd-val-3} is a $1$-coboundary, i.e., there exists an element $x\in \widehat{\overline{\ca{F}}}$ such that $(\tau-1)(x)=\xi_t(\tau)$ for every $\tau\in G_{\ca{F}}$, which is impossible by \ref{cor:ax-sen-tate}. 
	
	The ``in particular" part follows from \cite[10.17]{he2024purity}.
\end{proof}

\begin{myrem}\label{rem:thm:perfd-val-1}
	The arguments of \ref{thm:perfd-val} actually show that for any $t\in\ca{O}_{\ca{F}}$ such that the element $\df t\notin \bigcap_{n\in\bb{N}}p^n\Omega^1_{\ca{O}_{\ca{F}}/\ca{O}_{\overline{K}}}$, the coboundary map $\delta$ does not vanish on $\df t$ (or equivalently on $\df\log(t)$). In general, we can use the coboundary map $\delta$ to compute $H^1(G_{\ca{F}},\widehat{\overline{\ca{F}}}(1))$ (and then $H^q(G_{\ca{F}},\widehat{\overline{\ca{F}}}(1))$ for any $q\in\bb{N}$) in terms of differentials by \ref{cor:tate-trace}.(\ref{item:cor:tate-trace-3}) (see \ref{rem:tate-trace}). This computation (and neither of \ref{cor:tate-trace}.(\ref{item:cor:tate-trace-3}), \ref{cor:trace-tau}, \ref{prop:trace-tau}, \ref{lem:trace-tau}) will not be used in this article, but will be presented in the subsequent paper \cite{he2025galoiscoh} on the $p$-adic Galois cohomology of valuation fields.
\end{myrem}

\begin{myrem}\label{rem:thm:perfd-val-2}
	Theorem \ref{thm:perfd-val} also holds without the finiteness assumption on transcendental degree of $\overline{\ca{F}}$ after replacing the definition of $\scr{E}_{\ca{O}_{\overline{\ca{F}}}}$ by \eqref{eq:rem:fal-val-disc-1},  replacing the condition \ref{lem:perfd-val}.(\ref{item:lem:perfd-val-1}) by the surjectivity of $\widehat{\overline{\ca{F}}}\otimes_{\widehat{\ca{F}}}\scr{E}_{\ca{O}_{\overline{\ca{F}}}}^{G_{\ca{F}}}\to \scr{E}_{\ca{O}_{\overline{\ca{F}}}}$ and replacing $H^1(G_{\ca{F}},\widehat{\overline{\ca{F}}}(1))$ in \ref{lem:perfd-val}.(\ref{item:lem:perfd-val-3}) by the (discrete) group cohomology (see \cite[8.5, 9.7]{he2025galoiscoh}).
\end{myrem}

\begin{myrem}\label{rem:thm:perfd-val-3}
	We take $A=\ca{O}_K[t_1,\dots,t_d]\subseteq \ca{O}_{\ca{F}}$, which is a polynomial algebra. Note that there is a canonical map $V_p(\Omega^1_{\ca{O}_{\ca{F}}/A})\to V_p(\Omega^1_{\ca{O}_{\overline{\ca{F}}}/A})^{G_{\ca{F}}}$ but we don't know if it is an isomorphism or not (the surjectivity is the mysterious part). The injectivity would imply that if  $\dim_{\widehat{\ca{F}}}V_p(\Omega^1_{\ca{O}_{\ca{F}}/A})=1+d$ then $\dim_{\widehat{\ca{F}}}V_p(\Omega^1_{\ca{O}_{\overline{\ca{F}}}/A})^{G_{\ca{F}}}=1+d$. Under a similar condition on $V_p(\Omega^1_{\ca{O}_{\ca{F}}/A})$, Ohkubo deduces a perfectoidness criterion for algebraic extensions of a complete discrete valuation field extension of $\bb{Q}_p$ with imperfect residue field, see \cite[6.3]{ohkubo2010bdr}. Thus, \ref{thm:perfd-val} could be viewed as a generalization of Ohkubo's result in the geometric setting (i.e., over $\overline{K}$ already).
\end{myrem}

\section{Log Smooth Schemes in Admissibly \'Etale Topology after Temkin}\label{sec:aet}
Temkin \cite[2.5.2]{temkin2017etale} proved that any scheme flat and of finite type over a valuation ring of height $1$ with smooth generic fibre admits an admissibly \'etale covering by strictly semi-stable (thus log smooth) schemes. Restricting ourselves over complete discrete valuation rings and aiming at coverings by log smooth schemes, we show that his arguments still work when we also consider a strict normal crossings divisor on the smooth generic fibre (see \ref{thm:uniformization}). We refer to \cite{kato1989log,kato1994toric,gabber2004foundations,ogus2018log} for a systematic development of logarithmic geometry, and to \cite[\textsection8]{he2022sen} (or \cite[\textsection4]{he2024falmain}) for a brief summary of the theory.

\begin{mylem}\label{lem:ht-1-val-lift}
	Let $K$ be a valuation field of height $1$, $X$ an $\ca{O}_K$-scheme locally of finite type, $y\leadsto x$ a specialization of a closed point $y$ of $X_K$ to a closed point $x$ of $X$. Then, there exists a valuation field $L$ of height $1$ finite extension of $K$ with an $\ca{O}_K$-morphism $\spec(\ca{O}_L)\to X$ sending the generic point to $y$ and the closed point to $x$.
\end{mylem}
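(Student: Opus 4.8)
The plan is to reduce to an affine situation and then obtain $\ca{O}_L$ as a valuation ring dominating a well-chosen local ring of the coordinate ring. The key point is that over a base valuation ring of height $1$, passing to a finite extension cannot raise the height, so no ``cutting down'' of the resulting valuation ring will be necessary.

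First I would replace $X$ by an affine open neighbourhood $\spec(A)$ of $x$ with $A$ a finitely generated $\ca{O}_K$-algebra; since $y$ is a generization of $x$, it lies in $\spec(A)$ too and remains closed in the open subscheme $\spec(A\otimes_{\ca{O}_K}K)\subseteq X_K$. Let $\ak{p}\subseteq\ak{q}\subseteq A$ be the primes corresponding to $y$ and $x$. Thus $\ak{q}$ is maximal, $\ak{p}\cap\ca{O}_K=0$, and the residue field $\kappa(y)=\kappa(\ak{p})$ is finite over $K$ by the Nullstellensatz, being a residue field of the finitely generated $K$-algebra $A\otimes_{\ca{O}_K}K$. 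We may assume $x\ne y$ (otherwise $x\in X_K$, forcing $x=y$ since $y$ is closed in $X_K$, and then no morphism of the required shape from the two-point scheme $\spec(\ca{O}_L)$ can exist); hence $x\notin X_K$, so $\ak{q}\cap\ca{O}_K=\ak{m}_K$. Set $L=\kappa(y)$, $B=A/\ak{p}$ (a domain with $\mrm{Frac}(B)=L$ and $\ca{O}_K\hookrightarrow B$), and $\overline{\ak{q}}=\ak{q}/\ak{p}$, a nonzero maximal ideal of $B$.

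Next I would choose, by the standard domination theorem (\cite[\href{https://stacks.math.columbia.edu/tag/00PH}{00PH}]{stacks-project}), a valuation ring $W\subseteq L$ dominating the local domain $B_{\overline{\ak{q}}}$, and set $\ca{O}_L=W$. Then $(L,\ca{O}_L)$ is a valuation field of height $1$: since $W$ contains $\ca{O}_K$ and is integrally closed in $L$, it contains the integral closure $\ca{O}'$ of $\ca{O}_K$ in $L$, which is a Pr\"ufer domain with fraction field $L$ and Krull dimension $\dim\ca{O}_K=1$ (integrality); being a valuation overring of $\ca{O}'$, the ring $W$ is a localization of $\ca{O}'$, so $\dim W\le 1$, while $\dim W\ge 1$ because $\ak{m}_W\cap B=\overline{\ak{q}}\ne 0$ shows $W$ is not a field. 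Moreover $L$ is finite over $K$.

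Finally, the composite $\ca{O}_K$-algebra homomorphism $A\twoheadrightarrow B\hookrightarrow B_{\overline{\ak{q}}}\hookrightarrow W=\ca{O}_L$ gives an $\ca{O}_K$-morphism $\spec(\ca{O}_L)\to\spec(A)\hookrightarrow X$, and a direct check shows that its generic point maps to $\ker(A\to L)=\ak{p}$, i.e.\ to $y$, and its closed point maps to the preimage in $A$ of $\ak{m}_W\cap B=\overline{\ak{q}}$, i.e.\ to $\ak{q}$, i.e.\ to $x$; this is the desired morphism. The only place requiring care is the bookkeeping ensuring $\overline{\ak{q}}\ne 0$ (so that $\ca{O}_L$ is a genuine height $1$ valuation ring rather than a field); the two inputs of substance are the existence of a dominating valuation ring and the observation that a valuation ring containing $\ca{O}_K$ with fraction field finite over $K$ has height at most that of $\ca{O}_K$.
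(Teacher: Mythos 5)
Your proof is correct, and its overall strategy coincides with the paper's: replace $X$ by the (reduced) closure of $y$, so that one is dominating the local ring at $x$ of a finitely generated $\ca{O}_K$-domain whose fraction field $L=\kappa(y)$ is finite over $K$, and then take a valuation ring of $L$ dominating that local ring. The one genuine difference is how height $1$ is secured. The paper localizes the dominating valuation ring $V$ at $\sqrt{(\pi)}$ for a pseudo-uniformizer $\pi\in\ak{m}_K$, using that $V[1/\pi]=L$ is a field to see that this prime has height $1$. You instead observe that no cutting down is needed: since $W$ is integrally closed in $L$ and contains $\ca{O}_K$, it contains the integral closure $\ca{O}'$ of $\ca{O}_K$ in $L$, which is a one-dimensional Pr\"ufer domain with fraction field $L$, so $W$ is a localization of $\ca{O}'$ and has height at most $1$; equivalently, heights of valuations do not grow along finite field extensions. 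Both arguments are standard; the paper's is more self-contained, yours makes the a posteriori fact that the dominating valuation ring is already of height $1$ explicit. You also flag the degenerate case $x=y$ (where $x\in X_K$ and no morphism of the required shape can exist), which the paper passes over silently by asserting that $\ak{m}$ lies over $\ak{m}_K$; your reading of the hypothesis as a proper specialization is the intended one.
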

\begin{proof}
	We may assume that $X$ is affine and let $\spec(A)$ be the closure of $y$ in $X$ with reduced structure. Note that $A$ is a domain of finite type over $\ca{O}_K$ such that $K\otimes_{\ca{O}_K}A$ is the fraction field $\kappa(y)$ of $A$ (which is a finite extension of $K$ by \cite[\href{https://stacks.math.columbia.edu/tag/00FV}{00FV}]{stacks-project}) and that $x$ is identified with a maximal ideal $\ak{m}$ of $A$ lying over that of $\ca{O}_K$. Then, the special fibre of $\spec(A)\to\spec(\ca{O}_K)$ is also of dimension $0$ (\cite[\href{https://stacks.math.columbia.edu/tag/00QK}{00QK}]{stacks-project}). In particular, $\spec(A_{\ak{m}})$ consists of two points $y$ and $x$. Let $V$ be a valuation ring dominating $A_{\ak{m}}$ with the same fraction field (which exists by \cite[\href{https://stacks.math.columbia.edu/tag/00IA}{00IA}]{stacks-project}). In particular, the fraction field of $V$ is $\kappa(y)=A_{\ak{m}}[1/\pi]=V[1/\pi]$ for some nonzero element $\pi$ of $\ak{m}_K$. Since the family of ideals of $V$ is totally ordered by the inclusion relation (\cite[\Luoma{6}.\textsection1.2, Th\'eor\`eme 1]{bourbaki2006commalg5-7}), the radical ideal $\sqrt{(\pi)}$ of $V$ is the minimal prime ideal containing $\pi$. Thus, we see that it is of height $1$ as $V[1/\pi]$ is a field. Thus, the localization $V_{\sqrt{(\pi)}}$ is a valuation ring of height $1$ extension of $\ca{O}_K$ with fraction field finite over $K$ (\cite[\href{https://stacks.math.columbia.edu/tag/088Y}{088Y}]{stacks-project}). Taking $\ca{O}_L=V_{\sqrt{(\pi)}}$, then we see that the induced morphism $\spec(\ca{O}_L)\to X$ meets our requirements.
\end{proof}

\begin{myprop}\label{prop:adm-etale}
	Let $K$ be a valuation field of height $1$, $f:Y\to X$ a morphism of finitely presented $\ca{O}_K$-schemes with $f_K:Y_K\to X_K$ flat (resp. quasi-finite and flat). Then, the following conditions are equivalent:
	\begin{enumerate}
		\renewcommand{\labelenumi}{{\rm(\theenumi)}}
		\item For any valuation field $L$ of height $1$ finite extension of $K$ and any morphism $g:\spec(\ca{O}_L)\to X$ over $\ca{O}_K$, there exists a valuation field $L'$ of height $1$ (resp. finite) extension of $L$ and a morphism $g':\spec(\ca{O}_{L'})\to Y$ lifting $g$.\label{item:lem:adm-etale-1}
		\item There exists an $X_K$-admissible blowup $X'\to X$ such that the strict transform $f':Y'\to X'$ of $f:Y\to X$ is faithfully flat of finite presentation.\label{item:lem:adm-etale-2}
	\end{enumerate}
\end{myprop}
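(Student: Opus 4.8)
The plan is to prove the two implications separately: for $\mathrm{(i)}\Rightarrow\mathrm{(ii)}$ I would invoke Raynaud--Gruson's theorem on flattening by blowing up, and for $\mathrm{(ii)}\Rightarrow\mathrm{(i)}$ I would combine the valuative criterion of properness for a blowup with Lemma~\ref{lem:ht-1-val-lift}. Note first that $X_K$ is a quasi-compact open subscheme of $X$ (the locus where a fixed nonzero $\pi\in\ak{m}_K$ is invertible), so the notion of an $X_K$-admissible blowup — a blowup along a finitely presented closed subscheme disjoint from $X_K$ — makes sense, and such blowups compose. Replacing $X$ by such a blowup and $f$ by its strict transform, I may assume $X$ is $\ca{O}_K$-flat (Raynaud--Gruson applied to $X\to\spec(\ca{O}_K)$); one checks this affects neither condition, since every $g\colon\spec(\ca{O}_L)\to X$ lifts uniquely to the blowup and a lift to $Y$ transfers to a lift to the strict transform and back.

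For $\mathrm{(i)}\Rightarrow\mathrm{(ii)}$: as $f$ is of finite presentation and $f_K$ is flat over the quasi-compact open $X_K$, Raynaud--Gruson's flattening theorem yields an $X_K$-admissible blowup $b\colon X'\to X$ whose strict transform $f'\colon Y'\to X'$ is flat; being a closed subscheme of the finitely presented $X'$-scheme $Y\times_X X'$, $Y'$ is automatically of finite presentation over $X'$, and after one more blowup I may keep $X'$ flat over $\ca{O}_K$ (the strict transform remaining flat). It remains to prove $f'$ surjective. Being flat of finite presentation, $f'$ is open, with image $U'\subseteq X'$; over $X'_K=X_K$ one has $Y'_K=Y_K$, so surjectivity there is that of $f_K$. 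If $x'\in X'\setminus U'$ lies over the closed point of $\spec(\ca{O}_K)$, then, $X'\setminus U'$ being closed and the special fibre $X'_{\kappa_K}=V(\pi)$ being Jacobson, I may take $x'$ closed in $X'$; since $X'$ is $\ca{O}_K$-flat, $x'$ has a generization in $X'_K$, so Lemma~\ref{lem:ht-1-val-lift} supplies a height-$1$ valuation field $L$ finite over $K$ and an $\ca{O}_K$-morphism $\spec(\ca{O}_L)\to X'$ hitting $x'$ at the closed point; composing with $b$ gives $g\colon\spec(\ca{O}_L)\to X$. By $\mathrm{(i)}$ this lifts to $g'\colon\spec(\ca{O}_{L'})\to Y$ over some height-$1$ (resp.\ finite) extension $L'/L$, and since $\spec(\ca{O}_{L'})$ is $\ca{O}_K$-flat with generic point landing in $Y'_K\subseteq Y\times_X X'$, the induced morphism $\spec(\ca{O}_{L'})\to Y\times_X X'$ factors through the scheme-theoretic strict transform $Y'$; hence $x'\in U'$, a contradiction. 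So $U'=X'$.

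For $\mathrm{(ii)}\Rightarrow\mathrm{(i)}$: let $b\colon X'\to X$ be as in $\mathrm{(ii)}$ and $g\colon\spec(\ca{O}_L)\to X$ over $\ca{O}_K$ with $L/K$ a height-$1$ finite extension. Its generic point factors through $X_K$, over which $b$ is an isomorphism, hence lifts to $\spec(L)\to X'$; as $b$ is projective, the valuative criterion of properness lifts $g$ to $g'\colon\spec(\ca{O}_L)\to X'$. Then $Z:=Y'\times_{X',g'}\spec(\ca{O}_L)$ is faithfully flat of finite presentation over $\spec(\ca{O}_L)$, with $Z_L\to\spec(L)$ quasi-finite in the \emph{resp.}\ case. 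Picking $z\in Z$ over $\ak{m}_L$ closed in $Z$ (possible as $Z_{\kappa_L}=V(\pi)$ is Jacobson), flatness makes it a specialization of some $z_\eta\in Z_L$, which in the quasi-finite case may be taken closed in $Z_L$ (all residue fields there are finite over $L$); then Lemma~\ref{lem:ht-1-val-lift} gives a finite $\ca{O}_{L'}/\ca{O}_L$ with $\spec(\ca{O}_{L'})\to Z$ over $\ca{O}_L$. In the general flat case I would instead pass to the local domain $T=\ca{O}_{Z,z}/\ak{q}$ for a minimal prime $\ak{q}$ — which lies over the generic point of $\spec(\ca{O}_L)$ by flatness, so $\pi\in\ak{m}_T$ and $\ca{O}_L\hookrightarrow T$ — choose a valuation ring dominating $T$, and localize and quotient it along the smallest prime containing $\pi$ to obtain a \emph{height-$1$} valuation ring $\ca{O}_{L'}$ extending $\ca{O}_L$ with $\spec(\ca{O}_{L'})\to Z$ over $\ca{O}_L$; the point is that the valuation of $\ca{O}_{L'}$ restricts on $L$ to that of $\ca{O}_L$, so $\ca{O}_L\hookrightarrow\ca{O}_{L'}$ is genuinely an extension of valuation fields. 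In either case the composite $\spec(\ca{O}_{L'})\to Z\to Y'\to Y$ (using $Y'\subseteq Y\times_X X'$) is a morphism over $g$, all arrows being compatible over $X'$ and hence over $X$; this is the required lift.

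The hard part is $\mathrm{(i)}\Rightarrow\mathrm{(ii)}$, whose essential ingredient is Raynaud--Gruson's flattening-by-blowup theorem — precisely what forces admissible blowups into the statement. Once flatness of the strict transform is in hand, its promotion to faithful flatness and the converse implication are comparatively formal; the only genuinely delicate points are the bookkeeping with heights of valuation rings (ensuring the produced $\ca{O}_{L'}$ is again of height $1$, resp.\ finite over $L$, which is where the two cases diverge) and the standard fact that a morphism from an $\ca{O}_K$-flat base factors through the strict transform $Y'$ as soon as it does so over the generic fibre.
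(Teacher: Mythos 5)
Your overall architecture coincides with the paper's: (\ref{item:lem:adm-etale-1})$\Rightarrow$(\ref{item:lem:adm-etale-2}) is Raynaud--Gruson flattening plus a valuative lifting of closed points of the special fibre through \ref{lem:ht-1-val-lift} and hypothesis (\ref{item:lem:adm-etale-1}), exactly as in the paper. For (\ref{item:lem:adm-etale-2})$\Rightarrow$(\ref{item:lem:adm-etale-1}) the paper packages the argument into the statement that $X'\amalg X_s\to X$ is an arc-covering and composes with the faithfully flat $Y'\to X'$, whereas you lift $g$ to $X'$ by the valuative criterion and then extract a height-$1$ point of $Z=Y'\times_{X'}\spec(\ca{O}_L)$ by dominating a quotient of a local ring by a valuation ring and cutting it down to height $1$ over $(\pi)$; this is a correct, more self-contained rendering of the same content (your ``localize and quotient'' step should be made precise: localize at the minimal prime $\ak{p}\supseteq(\pi)$ and quotient by the union $\ak{p}'$ of the primes strictly below it, noting $\pi\notin\ak{p}'$ so $\ak{p}'$ is a genuine predecessor and the result has height $1$ and restricts to $\ca{O}_L$ on $L$). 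Your preliminary reduction to $X$ flat is harmless but not what ``Raynaud--Gruson applied to $X\to\spec(\ca{O}_K)$'' literally gives (the only admissible blowup of $\spec(\ca{O}_K)$ is the identity); what you mean is replacing $X$ by the schematic closure of $X_K$, which is the blowup of $X$ along $(\pi)$, and then you need the standard fact that an admissible blowup of an admissible blowup is dominated by one of $X$. The paper avoids this reduction entirely by applying Raynaud--Gruson directly to $f$.

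The genuine gap is the sentence ``over $X'_K=X_K$ one has $Y'_K=Y_K$, so surjectivity there is that of $f_K$.'' Surjectivity of $f_K$ is not among the hypotheses, and condition (\ref{item:lem:adm-etale-1}) does not imply it: every morphism $\spec(\ca{O}_L)\to X$ sends its generic point into $X_K$ and its closed point into the special fibre, so (\ref{item:lem:adm-etale-1}) says nothing about points of $X_K$ that admit no specialization into $X_s$. Concretely, for $X=\spec(K\times\ca{O}_K)$ and $Y=\spec(\ca{O}_K)$ the second factor, $f_K$ is a non-surjective open immersion, every $\spec(\ca{O}_L)\to X$ factors through $Y$ (so (\ref{item:lem:adm-etale-1}) holds), yet no admissible blowup makes the strict transform surjective onto the clopen component $\spec(K)$. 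For the points of $X'_K$ that \emph{do} specialize into the special fibre your argument closes easily: the image $U'$ of the flat, finitely presented $f'$ is open and stable under generization, and your special-fibre case shows $U'\supseteq X'_s$, hence $U'$ contains every generization of $X'_s$. So the correct repair is to add this generization argument and to observe (or assume) that every point of $X'$ specializes into $X'_s$ --- which holds, e.g., once $X$ has no component contained in $X_K$. I note that the paper's own proof has the same blind spot (it applies \ref{lem:ht-1-val-lift} to locally closed points $x'$ that may lie in $X'_K$, where that lemma's proof tacitly assumes $x$ lies over $\ak{m}_K$), so this is as much a defect of the statement as of your argument; but as written your reduction of the generic-fibre case to ``surjectivity of $f_K$'' invokes a hypothesis that is not there.
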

\begin{proof}
	(\ref{item:lem:adm-etale-2})$\Rightarrow$(\ref{item:lem:adm-etale-1}): Let $X_s$ be the special fibre of $X$ over $\ca{O}_K$. Then, $X'\coprod X_s\to X$ is proper and surjective. Thus, $Y\coprod X_s\to X$ is an arc-covering (\cite[3.4]{he2024coh}). Hence, any morphism $g:\spec(\ca{O}_L)\to X$ in (\ref{item:lem:adm-etale-1}) lifts to $g':\spec(\ca{O}_{L'})\to Y\coprod X_s$ for a valuation field extension $L'/L$ of height $1$ (\cite[3.4]{he2024coh}). It is clear that $g'$ factors through $Y$. If moreover $f_K$ is quasi-finite, then the residue field $\kappa(y)$ of the image $y$ of $g':\spec(L')\to Y_K$ is finite over the residue field $\kappa(x)$ of the image $x$ of $g:\spec(L)\to X_K$. Hence, we may replace $L'$ by the valuation subfield generated by $L$ and $\kappa(y)$ so that we may assume that $L'$ is finite over $L$.
	
	(\ref{item:lem:adm-etale-1})$\Rightarrow$(\ref{item:lem:adm-etale-2}) By Raynaud-Gruson's flattening theorem \cite[\Luoma{1}.5.2.2]{raynaudgruson1971plat}, there exists an $X_K$-admissible blowup $X'\to X$ such that the strict transform $f':Y'\to X'$ of $f:Y\to X$ is flat of finite presentation. It remains to check the surjectivity of $f'$. Since generalizations lifts along $f'$ (\cite[\href{https://stacks.math.columbia.edu/tag/03HV}{03HV}]{stacks-project}), it suffices to show that any locally closed point $x'$ of $X'$ lies in the image of $f'$. 
	
	Let $U'\subseteq X'$ be an open neighborhood of $x'$ such that $x'$ is closed in $U'$. Then, there exists a closed point $x'_0$ of $U'_K$ specializing to $x'$ (\cite[\href{https://stacks.math.columbia.edu/tag/053U}{053U}]{stacks-project}). Thus, there exists a valuation field $L$ of height $1$ finite extension over $K$ with an $\ca{O}_K$-morphism $\spec(\ca{O}_L)\to X'$ sending the generic point to $x'_0$ and the closed point to $x'$ by \ref{lem:ht-1-val-lift}. Thus, there exists a lifting $g':\spec(\ca{O}_{L'})\to Y$ of $g:\spec(\ca{O}_L)\to X$ by (\ref{item:lem:adm-etale-1}). As $f':Y'\to X'$ is the strict transform of $f:Y\to X$ by an $X_K$-admissible blowup $X'\to X$, the morphism $g'$ factors uniquely through $Y'$ (\cite[\href{https://stacks.math.columbia.edu/tag/080E}{080E}, \href{https://stacks.math.columbia.edu/tag/0806}{0806}, \href{https://stacks.math.columbia.edu/tag/090Q}{090Q}]{stacks-project}).
	\begin{align}
		\xymatrix{
			\spec(\ca{O}_{L'})\ar[d]\ar[r]&Y'\ar[d]^-{f'}\ar[r]&Y\ar[d]^-{f}\\
			\spec(\ca{O}_L)\ar[r]&X'\ar[r]&X.
		}
	\end{align}
	Therefore, we see that the image of the closed point of $\spec(\ca{O}_{L'})$ in $Y'$ maps to $x'$ via $f'$, which completes the proof.
\end{proof}

\begin{myrem}\label{rem:blowup}
	Note that an admissible blowup is not locally of finite presentation in general. But for any scheme $X$ flat and locally of finite type over a valuation ring $\ca{O}_K$, any $X_K$-admissible blowup $X'$ of $X$ is flat and locally of finite presentation over $\ca{O}_K$. Indeed, $X'$ is projective over $X$ (\cite[\href{https://stacks.math.columbia.edu/tag/02NS}{02NS}]{stacks-project}) and thus locally of finite type over $\ca{O}_K$. It is also flat over $\ca{O}_K$, as the complement of the exceptional divisor is flat over $\ca{O}_K$ and scheme theoretically dense in $X'$ (\cite[\href{https://stacks.math.columbia.edu/tag/07ZU}{07ZU}]{stacks-project}). Thus, the claim follows from the fact that any flat $\ca{O}_K$-scheme locally of finite type is locally of finite presentation (\cite[\Luoma{1}.3.4.7]{raynaudgruson1971plat}, see also \cite[\href{https://stacks.math.columbia.edu/tag/081P}{081P}]{stacks-project}).
\end{myrem}

\begin{mydefn}[{cf. \cite[\Luoma{3}.3.1]{faltings1988p}, \cite[2.2.3]{temkin2017etale}}]\label{defn:adm-etale}
	Let $K$ be a valuation field of height $1$. For any $\ca{O}_K$-scheme $X$ of finite presentation, we denote by $\sch^{\aetale}_{/X}$ the category of $X$-schemes $Y$ of finite presentation with $Y_K$ \'etale over $X_K$. 
	
	We endow it with the topology generated by families of morphisms $\{Y_i\to Y\}_{i\in I}$ with $I$ finite such that for any valuation field $L$ of height $1$ finite extension of $K$ and any morphism $g:\spec(\ca{O}_L)\to Y$ over $\ca{O}_K$, there exists a valuation field $L'$ of height $1$ extension of $L$ and a morphism $g':\spec(\ca{O}_{L'})\to \coprod_{i\in I}Y_i$ such that the following diagram is commutative 
	\begin{align}\label{eq:defn:adm-etale}
		\xymatrix{
			\spec(\ca{O}_{L'})\ar[d]\ar[r]^-{g'}& \coprod_{i\in I}Y_i\ar[d]\\
			\spec(\ca{O}_L)\ar[r]^-{g}& Y
		}
	\end{align}
	(note that if such diagram exists then we can require further that $L'$ is finite over $L$). A covering family in $\sch^{\aetale}_{/X}$ is called an \emph{admissibly \'etale covering}. We call $\sch^{\aetale}_{/X}$ the \emph{admissibly \'etale site} of $X$ over $\ca{O}_K$.
\end{mydefn}

We remark that \cite[\Luoma{3}.3.1]{faltings1988p} and \cite[2.2.3]{temkin2017etale} only consider the category/site of flat $\ca{O}_K$-schemes of finite type. Note that such schemes are actually of finite presentation: they are locally of finite presentation by \cite[\Luoma{1}.3.4.7]{raynaudgruson1971plat} (see also \cite[\href{https://stacks.math.columbia.edu/tag/081P}{081P}]{stacks-project}); and they are quasi-compact and quasi-separated because the underlying topological spaces are Noetherian by \cite[4.10]{he2024falmain} (see the proof of \cite[\href{https://stacks.math.columbia.edu/tag/01OY}{01OY}]{stacks-project}). Conversely, any $\ca{O}_K$-scheme $Y$ of finite presentation is covered by a flat $\ca{O}_K$-scheme $\overline{Y}$ of finite type in the admissibly \'etale site in our sense (as shown in the following Corollary \ref{cor:adm-etale}). Therefore, one can check that the associated admissibly \'etale topoi in our sense or in the sense of Faltings and Temkin are actually equivalent. This comparison will not be used in the following.

\begin{mypara}
	It is easy to check that $\sch^{\aetale}_{/X}$ is stable under taking finite limits of $X$-schemes and that the families of morphisms described in \ref{defn:adm-etale} form a pretopology on $\sch^{\aetale}_{/X}$. Moreover, for any object $Y$ of $\sch^{\aetale}_{/X}$, a family of morphisms of $\sch^{\aetale}_{/Y}$ lies in the pretopology of $\sch^{\aetale}_{/Y}$ if and only if its image lies in the pretopology of $\sch^{\aetale}_{/X}$. Thus, the localization $(\sch^{\aetale}_{/X})_{/Y}$ of $\sch^{\aetale}_{/X}$ at $Y$ is canonically equivalent to $\sch^{\aetale}_{/Y}$ (\cite[\Luoma{3}.3.3, \Luoma{3}.5.2]{sga4-1}).
\end{mypara}

\begin{mycor}\label{cor:adm-etale}
	Let $K$ be a valuation field of height $1$, $Y\to X$ a morphism of finitely presented $\ca{O}_K$-schemes with $Y_K$ \'etale over $X_K$. Then, a family of morphisms with target $Y$ in $\sch^{\aetale}_{/X}$ is a covering family if and only if it can be refined by a finite family of morphisms in $\sch^{\aetale}_{/X}$, 
	\begin{align}
		\{Y'_i\to Y'\to \overline{Y}\to Y\}_{i\in I},
	\end{align}
	where $\overline{Y}$ is the scheme theoretic closure of $Y_K$ in $Y$, $Y'\to \overline{Y}$ is a $Y_K$-admissible blowup and $\coprod_{i\in I}Y'_i\to Y'$ is faithfully flat.
\end{mycor}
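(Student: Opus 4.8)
The plan is to deduce this from Proposition~\ref{prop:adm-etale} together with the valuative description of coverings in Definition~\ref{defn:adm-etale}. Since the topology on $\sch^{\aetale}_{/X}$ is generated by that pretopology, a family $\ca{A}$ with target $Y$ is a covering family (i.e.\ generates a covering sieve) if and only if some finite family lying in the pretopology refines it. Hence it suffices to prove: \emph{(A)} every family of the shape $\{Y'_i\to Y'\to\overline{Y}\to Y\}_{i\in I}$ (with $Y'\to\overline{Y}$ a $Y_K$-admissible blowup of the scheme-theoretic closure $\overline{Y}$ of $Y_K$ in $Y$, and $\coprod_{i\in I}Y'_i\to Y'$ faithfully flat) belongs to the pretopology of $\sch^{\aetale}_{/X}$; and \emph{(B)} every finite family in the pretopology with target $Y$ is refined by one of this shape. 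Granting these: if $\ca{A}$ is refined by a family $\ca{B}$ of this shape, then $\ca{B}$ is a pretopology cover contained in the sieve generated by $\ca{A}$, so $\ca{A}$ is covering; conversely, if $\ca{A}$ is covering it is refined by a finite pretopology cover, which by (B) is refined by a family of the required shape, and hence so is $\ca{A}$.

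For (A), let $g\colon\spec(\ca{O}_L)\to Y$ be an $\ca{O}_K$-morphism with $L/K$ finite of height $1$, and fix $0\neq\pi\in\ak{m}_K$. As $K$ has height $1$ we have $K=\ca{O}_K[1/\pi]$, so $\overline{Y}$ is the closed subscheme of $Y$ cut out by the $\pi$-power-torsion sections of $\ca{O}_Y$; since $\ca{O}_L$ is a domain with $\pi\neq0$, the map $g$ factors uniquely through some $\overline{g}\colon\spec(\ca{O}_L)\to\overline{Y}$. The blowup $Y'\to\overline{Y}$ is proper and restricts to an isomorphism over $(\overline{Y})_K=Y_K$, and $\overline{g}$ carries the generic point of $\spec(\ca{O}_L)$ into $(\overline{Y})_K$; by the valuative criterion of properness (equivalently, the universal property of the strict transform used in the proof of \ref{prop:adm-etale}) $\overline{g}$ lifts uniquely to $g'\colon\spec(\ca{O}_L)\to Y'$. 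Finally $\coprod_{i\in I}Y'_i\to Y'$ is faithfully flat of finite presentation, so, exactly as in the proof of \ref{prop:adm-etale} (via \cite[3.4]{he2024coh}), $g'$ lifts to $\spec(\ca{O}_{L'})\to\coprod_{i\in I}Y'_i$ for some height-$1$ extension $L'/L$, which one may take finite; composing, $g$ lifts to $\coprod_{i\in I}Y'_i$ compatibly with the structure maps, so $\ca{B}$ satisfies the condition of \ref{defn:adm-etale}.

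For (B), let $\{V_l\to Y\}_{l}$ be a finite family in the pretopology. Replacing each $V_l$ by the scheme-theoretic closure $\overline{V_l}$ of $V_{l,K}$ in $V_l$ keeps it a pretopology family, since any valuative lift $\spec(\ca{O}_{L'})\to\coprod_l V_l$ factors through $\coprod_l\overline{V_l}$ by the torsion argument of (A). Now each $\overline{V_l}$ is $\ca{O}_K$-flat (torsion-free over a height-$1$ valuation ring) and finitely presented, so $\overline{V_l}\to Y$ factors through $\overline{Y}$, and the resulting morphism $f\colon\coprod_l\overline{V_l}\to\overline{Y}$ has generic fibre étale over $Y_K$ --- hence flat and quasi-finite --- because $(\overline{V_l})_K=V_{l,K}$ and $Y_K$ are both étale over $X_K$. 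Since $\overline{Y}\to Y$ is a monomorphism, the pretopology hypothesis on $\{V_l\to Y\}$ translates into condition~(1) of \ref{prop:adm-etale} for $f$ over the base $\overline{Y}$; hence \ref{prop:adm-etale} provides a $Y_K(=(\overline{Y})_K)$-admissible blowup $Y'\to\overline{Y}$ whose strict transform $\coprod_l Y'_i\to Y'$ of $f$ is faithfully flat of finite presentation, where $Y'_i$ is the strict transform of $\overline{V_l}$ along $Y'\to\overline{Y}$. By \ref{rem:blowup} and the finite presentation of strict transforms, $Y'_i$ is finitely presented over $\ca{O}_K$ with $Y'_{i,K}=V_{l,K}$ étale over $X_K$, so $\{Y'_i\to Y'\to\overline{Y}\to Y\}$ is a family of the required shape in $\sch^{\aetale}_{/X}$ refining $\{V_l\to Y\}$.

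The substantive input is Proposition~\ref{prop:adm-etale}; the rest is pretopology bookkeeping. The step needing the most care --- and the most likely to require a separate citation --- is that the auxiliary schemes $\overline{Y}$, $\overline{V_l}$, $Y'$, $Y'_i$ are again objects of $\sch^{\aetale}_{/X}$, that is, finitely presented over $\ca{O}_K$ with étale generic fibre over $X_K$; this rests on the good behaviour of scheme-theoretic closures of finitely presented generic fibres over valuation rings, together with the finite presentation of admissible blowups and their strict transforms in the sense of \ref{rem:blowup}.
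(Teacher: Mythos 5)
Your proof is correct and follows essentially the same route as the paper: both directions reduce to Proposition~\ref{prop:adm-etale}, with the auxiliary schemes $\overline{Y}$, $Y'$, $Y'_i$ shown to lie in $\sch^{\aetale}_{/X}$ via flatness of scheme-theoretic closures over a valuation ring, Raynaud--Gruson, and \ref{rem:blowup}. The only cosmetic differences are that you spell out the valuative lifting through the admissible blowup directly (where the paper just cites \ref{prop:adm-etale}) and that you pass to the closures $\overline{V_l}$ before taking strict transforms rather than taking the strict transform of $\overline{Y}\times_Y\coprod_i Y_i$, which yields the same schemes.
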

\begin{proof}
	Notice that $\overline{Y}$ is a closed subscheme of $Y$ and thus quasi-separated of finite type over $\ca{O}_K$. As $\overline{Y}$ is also flat over $\ca{O}_K$, we see that it is of finite presentation by \cite[\Luoma{1}.3.4.7]{raynaudgruson1971plat} (or by the remark after \ref{defn:adm-etale}) and thus an object of $\sch^{\aetale}_{/X}$. It is clear that $\overline{Y}\to Y$ is an admissibly \'etale covering. This verifies the sufficiency of the statement as $\{Y'_i\to Y'\to \overline{Y}\}_{i\in I}$ is an admissibly \'etale covering by \ref{prop:adm-etale}.
	
	For the necessity, given a covering family $\{Y_i\to Y\}_{i\in I}$, we may assume that it satisfies the condition in \ref{defn:adm-etale} (\cite[\Luoma{2}.1.4]{sga4-1}). Then, there exists a $Y_K$-admissible blowup $Y'\to\overline{Y}$ such that the strict transform $Z\to Y'$ of $\overline{Y}\times_Y\coprod_{i\in I}Y_i\to \overline{Y}$ is faithfully flat of finite presentation by \ref{prop:adm-etale}.
	\begin{align}
		\xymatrix{
			Z\ar[r]\ar[d]&\overline{Y}\times_Y\coprod_{i\in I}Y_i\ar[r]\ar[d]&\coprod_{i\in I}Y_i\ar[d]\\
			Y'\ar[r]&\overline{Y}\ar[r]&Y.
		}
	\end{align}
	As $Z_K=\coprod_{i\in I}Y_{i,K}$ is \'etale over $X_K$, $Z$ is an object of $\sch^{\aetale}_{/X}$. Let $Y'_i\subseteq Z$ be the preimage of $Y_i$ of the morphism $Z\to \coprod_{i\in I}Y_i$, which is also an object of $\sch^{\aetale}_{/X}$. Notice that $Y'$ is flat of finite presentation over $\ca{O}_K$ by \ref{rem:blowup} and thus an object of $\sch^{\aetale}_{/X}$. In conclusion, $\{Y'_i\to Y'\to \overline{Y}\to Y\}_{i\in I}$ is a well-defined finite family of morphisms in $\sch^{\aetale}_{/X}$ which meets our requirements.
\end{proof}

\begin{mypara}\label{para:notation-regular-log}
	Let $(X,\scr{M}_X)$ be a regular fs log scheme, $X^{\triv}$ the maximal open subscheme of the underlying scheme $X$ on which the log structure is trivial. Recall that the log structure $\scr{M}_X\to \ca{O}_{X_\et}$ is the compactifying log structure associated to the open immersion $j:X^{\triv}\to X$, i.e., $\scr{M}_X\subseteq \ca{O}_{X_\et}$ is the preimage of $j_{\et*}\ca{O}_{X^{\triv}_\et}^\times$ under the canonical map $\ca{O}_{X_\et}\to j_{\et*}\ca{O}_{X^{\triv}_\et}$ (\cite[11.6]{kato1994toric}, \cite[2.6]{niziol2006toric}). 
\end{mypara}

\begin{mylem}[{cf. \cite[2.4.1]{temkin2017etale}}]\label{lem:log-smooth-1}
	 With the notation in {\rm\ref{para:notation-regular-log}}, assume that $X=\spec(A)$ is affine and let $a\in A$ be an element of $\scr{M}_X(X)\subseteq \ca{O}_{X_\et}(X)=A$. We put $X'=\spec(A[u,v]/(uv-a))$ endowed with the compactifying log structure $\scr{M}_{X'}$ associated to the open immersion $X'^{\triv}=X^{\triv}\times_XX'\to X'$. Then, the canonical morphism of log schemes $(X',\scr{M}_{X'})\to (X,\scr{M}_X)$ is a smooth and saturated morphism of regular fs log schemes.
\end{mylem}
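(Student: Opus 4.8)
The plan is to exhibit $(X',\scr{M}_{X'})$ as a base change, in fs log schemes, of the standard log smooth chart $\Delta\colon\bb{N}\to\bb{N}^2$ (the diagonal). Concretely, I would let $(\bb{A}^1_{\bb{Z}},\bb{N})=\spec(\bb{Z}[t])$ carry the log structure associated to $1\mapsto t$, and $(\bb{A}^2_{\bb{Z}},\bb{N}^2)=\spec(\bb{Z}[u,v])$ the one associated to $e_1\mapsto u,\ e_2\mapsto v$; the diagonal induces $g\colon(\bb{A}^2_{\bb{Z}},\bb{N}^2)\to(\bb{A}^1_{\bb{Z}},\bb{N})$ whose underlying morphism of schemes is $\bb{Z}[t]\to\bb{Z}[u,v]$, $t\mapsto uv$, so that $\underline{\bb{A}^2_{\bb{Z}}}\cong\spec(\bb{Z}[t,u,v]/(uv-t))$ over $\bb{A}^1_{\bb{Z}}$. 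The section $a\in\scr{M}_X(X)$ defines a morphism $f\colon(X,\scr{M}_X)\to(\bb{A}^1_{\bb{Z}},\bb{N})$ with underlying $t\mapsto a$. I would then form the fibre product $Z:=(X,\scr{M}_X)\times_{(\bb{A}^1_{\bb{Z}},\bb{N})}(\bb{A}^2_{\bb{Z}},\bb{N}^2)$ in fs log schemes and prove $Z\cong(X',\scr{M}_{X'})$ over $(X,\scr{M}_X)$; since $g$ will be shown to be log smooth and saturated, and these properties are stable under arbitrary base change of fs log schemes, the lemma follows.

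First I would verify that $g$ is log smooth and saturated. The monoid map $\Delta$ is injective with groupification cokernel $\bb{Z}^2/\bb{Z}(1,1)\cong\bb{Z}$, which is torsion free, so Kato's chart criterion gives log smoothness; and $\Delta$ is integral and saturated as a homomorphism of fs monoids (a short amalgamated-sum computation, cf. \cite{ogus2018log,gabber2004foundations}), whence $g$ is saturated. Next I would compute $Z$: integrality of $\Delta$ forces the underlying scheme of the integral (equivalently, since $\Delta$ is saturated, the fs) log fibre product to be the ordinary fibre product $\underline{X}\times_{\bb{A}^1_{\bb{Z}}}\bb{A}^2_{\bb{Z}}=\spec(A\otimes_{\bb{Z}[t]}\bb{Z}[u,v])=\spec(A[u,v]/(uv-a))=\underline{X'}$, and then $Z\to(X,\scr{M}_X)$ is smooth and saturated by the previous step. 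To finish I would identify the log structure of $Z$ with $\scr{M}_{X'}$. Since $(X,\scr{M}_X)$ is a regular fs log scheme and $Z\to(X,\scr{M}_X)$ is log smooth, $Z$ is a regular fs log scheme by Kato's log regularity theorem (\cite{kato1994toric}), so its log structure is the compactifying log structure associated to its triviality locus $Z^{\triv}\hookrightarrow\underline{Z}$; it then remains to check $Z^{\triv}=X^{\triv}\times_XX'$. On $X^{\triv}$ the section $a$ is a unit, so $uv=a$, hence $u$ and $v$, are units on $X^{\triv}\times_XX'$, making the pulled-back log structure trivial there; conversely $Z^{\triv}$ lies over $X^{\triv}$ because a chart of $\scr{M}_X$ becomes invertible on it. Hence $Z=(X',\scr{M}_{X'})$, and $(X',\scr{M}_{X'})\to(X,\scr{M}_X)$ is smooth and saturated between regular fs log schemes.

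The hard part is the monoid combinatorics: confirming that the diagonal $\Delta\colon\bb{N}\to\bb{N}^2$ is integral and saturated. This is precisely what makes the fs log fibre product ``see'' the naive scheme $\spec(A[u,v]/(uv-a))$ and simultaneously forces the morphism to $(X,\scr{M}_X)$ to be saturated; once this together with Kato's stability and log-regularity statements are in hand, the remaining identifications (of underlying schemes and of the triviality locus) are routine.
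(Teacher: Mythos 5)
Your proposal is correct and follows essentially the same route as the paper: both reduce to the chart morphism $\bb{A}_{\bb{N}^2}\to\bb{A}_{\bb{N}}$ induced by the diagonal $\Delta\colon\bb{N}\to\bb{N}^2$, check smoothness and saturatedness of $\Delta$, form the (fs) log fibre product with $(X,\scr{M}_X)$, and use log regularity plus the identification of triviality loci to recognize the result as $(X',\scr{M}_{X'})$. The only cosmetic difference is that the paper takes the fibre product in all log schemes and then invokes saturatedness to see it is fs, whereas you work in fs log schemes from the start and invoke integrality of $\Delta$ to identify the underlying scheme; these are equivalent.
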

\begin{proof}
	Consider the homomorphism of monoids $\Delta:\bb{N}\to \bb{N}^2$ sending $1$ to $(1,1)$. Since the homomorphism of the associated groups $\bb{Z}\to \bb{Z}^2$ is injective with cokernel isomorphic to $\bb{Z}$, we see that the morphism of fs log schemes $\bb{A}_{\bb{N}^2}\to \bb{A}_{\bb{N}}$ is smooth (\cite[\Luoma{4}.3.1.8]{ogus2018log}). Moreover, $\Delta$ is saturated by \cite[\Luoma{1}.4.1]{tsuji2019saturated} (see the arguments of \cite[12.3]{he2024purity}). Since $a$ is invertible over $X^{\triv}$, $u$ and $v$ are invertible over $X'^{\triv}$. Thus, there is a commutative diagram of monoids
	\begin{align}
		\xymatrix{
			\scr{M}_{X'}(X')&\bb{N}^2\ar[l]_-{\beta}\\
			\scr{M}_X(X)\ar[u]&\bb{N}\ar[l]_-{\alpha}\ar[u]_-{\Delta}
		}
	\end{align}
	where $\beta(n,m)=u^nv^m$ and $\alpha(n)=a^n$ for any $n,m\in\bb{N}$. It induces a commutative diagram of log schemes (\cite[\Luoma{3}.1.2.9]{ogus2018log})
	\begin{align}
		\xymatrix{
			(X',\scr{M}_{X'})\ar[r]\ar[d]&\bb{A}_{\bb{N}^2}\ar[d]\\
			(X,\scr{M}_X)\ar[r]&\bb{A}_{\bb{N}}.
		}
	\end{align}
	Consider the fibred product $(X'',\scr{M}_{X''})=(X,\scr{M}_X)\times_{\bb{A}_{\bb{N}}}\bb{A}_{\bb{N}^2}$ in the category of log schemes (see \cite[8.4]{he2022sen}). Note that the underlying scheme $X''$ is $X\times_{\bb{A}_{\bb{N}}}\bb{A}_{\bb{N}^2}=\spec(A[u,v]/(uv-a))=X'$ and we have $X''^{\triv}=X^{\triv}\times_{\bb{A}_{\bb{Z}}}\bb{A}_{\bb{Z}^2}=X^{\triv}\times_XX'=X'^{\triv}$. Since $\bb{A}_{\bb{N}^2}\to \bb{A}_{\bb{N}}$ is a saturated morphism of fs log schemes, $(X'',\scr{M}_{X''})$ is fs as $(X,\scr{M}_X)$ is so (\cite[\Luoma{3}.2.5.3.2]{ogus2018log}). Since $\bb{A}_{\bb{N}^2}\to \bb{A}_{\bb{N}}$ is also smooth, $(X'',\scr{M}_{X''})\to (X,\scr{M}_X)$ is smooth. In particular, $(X'',\scr{M}_{X''})$ is regular as $(X,\scr{M}_X)$ is so (\cite[\Luoma{4}.3.5.3]{ogus2018log}). Thus, the log structure on $X''$ is the compactifying log structure associated to the open immersion $X''^{\triv}\to X''$ (\cite[11.6]{kato1994toric}, \cite[2.6]{niziol2006toric}). Hence, the canonical morphism $(X',\scr{M}_{X'})\to (X'',\scr{M}_{X''})$ is an isomorphism of log schemes, which completes the proof.
\end{proof}

\begin{mylem}\label{lem:log-smooth-2}
	With the notation in {\rm\ref{para:notation-regular-log}}, assume that $X=\spec(A)$ is affine. We put $X'=\spec(A[T])$ and let $D'\subset X'$ be the closed subscheme defined by $T=0$. We endow $X'$ with the compactifying log structure $\scr{M}_{X'}$ associated to the open immersion $X'^{\triv}= (X^{\triv}\times_XX')\setminus D' \to X'$. Then, the canonical morphism of log schemes $(X',\scr{M}_{X'})\to (X,\scr{M}_X)$ is a smooth and saturated morphism of regular fs log schemes.
\end{mylem}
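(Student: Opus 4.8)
The plan is to realize $(X',\scr{M}_{X'})\to(X,\scr{M}_X)$ as a base change of the standard log smooth morphism $\bb{A}_{\bb{N}}\to\spec(\bb{Z})$, where $\spec(\bb{Z})$ carries the trivial log structure, following verbatim the strategy of \ref{lem:log-smooth-1}. Consider the homomorphism of monoids $1\to\bb{N}$. The induced homomorphism of associated groups $0\to\bb{Z}$ is injective with torsion-free cokernel $\bb{Z}$, so the morphism of fs log schemes $\bb{A}_{\bb{N}}\to\spec(\bb{Z})$ is smooth (\cite[\Luoma{4}.3.1.8]{ogus2018log}), and $1\to\bb{N}$ is saturated since the source is trivial and $\bb{N}$ is saturated (cf. \cite[\Luoma{1}.4.1]{tsuji2019saturated} and the argument of \cite[12.3]{he2024purity}). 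The element $T\in\scr{M}_{X'}(X')\subseteq\ca{O}_{X'_\et}(X')=A[T]$, which is invertible on $X'^{\triv}$, provides a chart defining a morphism $(X',\scr{M}_{X'})\to\bb{A}_{\bb{N}}$ over $(X,\scr{M}_X)\to\spec(\bb{Z})$.

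Next I would form the fibred product $(X'',\scr{M}_{X''})=(X,\scr{M}_X)\times_{\spec(\bb{Z})}\bb{A}_{\bb{N}}$ in the category of log schemes (\cite[8.4]{he2022sen}). Its underlying scheme is $X\times_{\spec(\bb{Z})}\spec(\bb{Z}[T])=\spec(A[T])=X'$, and its maximal open of triviality is $X''^{\triv}=X^{\triv}\times_{\spec(\bb{Z})}\bb{A}_{\bb{Z}}=X^{\triv}\times_{\spec(\bb{Z})}\spec(\bb{Z}[T,T^{-1}])=(X^{\triv}\times_XX')\setminus D'=X'^{\triv}$, since passing to associated groups replaces $\bb{A}_{\bb{N}}=\spec(\bb{Z}[T])$ by $\bb{A}_{\bb{Z}}=\mathbb{G}_m$ and thus removes exactly the divisor $D'=\{T=0\}$ on top of the preimage of the boundary of $(X,\scr{M}_X)$. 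Because $\bb{A}_{\bb{N}}\to\spec(\bb{Z})$ is saturated and $(X,\scr{M}_X)$ is fs, $(X'',\scr{M}_{X''})$ is fs (\cite[\Luoma{3}.2.5.3.2]{ogus2018log}); because that morphism is moreover smooth (resp. saturated), so is its base change $(X'',\scr{M}_{X''})\to(X,\scr{M}_X)$; and since $(X,\scr{M}_X)$ is regular, $(X'',\scr{M}_{X''})$ is regular too (\cite[\Luoma{4}.3.5.3]{ogus2018log}).

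Finally, regularity of $(X'',\scr{M}_{X''})$ forces its log structure to be the compactifying log structure associated to the open immersion $X''^{\triv}\to X''$ (\cite[11.6]{kato1994toric}, \cite[2.6]{niziol2006toric}); as $X''=X'$ and $X''^{\triv}=X'^{\triv}$, this is precisely $\scr{M}_{X'}$. Hence the canonical morphism $(X',\scr{M}_{X'})\to(X'',\scr{M}_{X''})$ is an isomorphism of log schemes, and composing it with $(X'',\scr{M}_{X''})\to(X,\scr{M}_X)$ yields the asserted smooth and saturated morphism of regular fs log schemes. The only genuinely delicate point — and the step I would check most carefully — is the identification $X''^{\triv}=X'^{\triv}$: one must make sure that the boundary of the fibred-product log structure consists exactly of the pullback of the boundary of $(X,\scr{M}_X)$ together with the new divisor $D'$, which is where the precise form of the open immersion in the statement is used; the rest is a direct transcription of \ref{lem:log-smooth-1}.
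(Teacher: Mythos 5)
Your proposal is correct and follows exactly the route the paper intends: the paper's proof of this lemma is the one-line remark that, as in \ref{lem:log-smooth-1}, one checks $(X',\scr{M}_{X'})=(X,\scr{M}_X)\times_{\spec(\bb{Z})}\bb{A}_{\bb{N}}$, and your argument is precisely the detailed verification of that identity (base change of the smooth saturated chart morphism, identification of the underlying scheme and of the trivial locus, and regularity forcing the log structure to be the compactifying one).
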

\begin{proof}
	Similar to the proof of \ref{lem:log-smooth-1}, it is easy to check that $(X',\scr{M}_{X'})=(X,\scr{M}_X)\times_{\spec(\bb{Z})}\bb{A}_{\bb{N}}$.
\end{proof}

\begin{myprop}[{cf. \cite[2.4.2]{temkin2017etale}}]\label{prop:log-smooth}
	With the notation in {\rm\ref{para:notation-regular-log}}, let $X'$ be a flat $X$-scheme, locally of finite presentation, smooth over $X^{\triv}\subseteq X$, whose non-empty fibres are equidimensional of dimension $1$ and have at-worst-nodal singularities {\rm(\cite[\href{https://stacks.math.columbia.edu/tag/0C47}{0C47}]{stacks-project})}, and let $D'\subseteq X'$ be a Cartier divisor \'etale over $X$ disjoint from the nodes of $X'$ (i.e., $(X',D')$ is a semi-stable multi-pointed curve over $X$ in the sense of {\rm\cite[1.2]{temkin2010stablecurve}}). We endow $X'$ with the compactifying log structure associated to the open immersion $X'^{\triv}=(X^{\triv}\times_XX')\setminus D'\to X'$. Then, the canonical morphism of log schemes $(X',\scr{M}_{X'})\to (X,\scr{M}_X)$ is a smooth and saturated morphism of regular fs log schemes.
\end{myprop}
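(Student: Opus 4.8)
The plan is to reduce the statement to the two local models established in Lemmas \ref{lem:log-smooth-1} and \ref{lem:log-smooth-2} by working \'etale-locally on $X'$. Since smoothness and saturatedness of a morphism of fs log schemes, as well as regularity of an fs log scheme, are \'etale-local properties on source and target, I may assume $X=\spec(A)$ is affine and replace $X'$ by an \'etale neighborhood of an arbitrary point $x'\in X'$. There are two cases according to whether $x'$ is a node of its fibre or not.

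First I would treat the non-nodal case. If $x'$ is not a node, then $X'\to X$ is smooth of relative dimension $1$ near $x'$. If moreover $x'\notin D'$, then $X'^{\triv}=X^{\triv}\times_X X'$ near $x'$, and the morphism $(X',\scr{M}_{X'})\to (X,\scr{M}_X)$ is strict and smooth in a neighborhood, so it is a smooth and saturated morphism of regular fs log schemes (strict morphisms are automatically saturated, and regularity is preserved under strict smooth morphisms by \cite[\Luoma{4}.3.5.3]{ogus2018log}). If $x'\in D'$, then since $D'$ is \'etale over $X$ and is a Cartier divisor disjoint from the nodes, \'etale-locally near $x'$ there is a coordinate $T$ on $X'$ cutting out $D'$, so that \'etale-locally $X'\cong \spec(A[T])$ with $D'=\{T=0\}$; this is exactly the situation of Lemma \ref{lem:log-smooth-2}, which gives the conclusion.

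Next I would treat the nodal case: $x'$ is a node of its fibre, and by hypothesis $x'\notin D'$, so $X'^{\triv}=X^{\triv}\times_X X'$ near $x'$. By the \'etale-local description of at-worst-nodal singularities (\cite[\href{https://stacks.math.columbia.edu/tag/0C47}{0C47}]{stacks-project}), there is an \'etale neighborhood of $x'$ and an \'etale neighborhood $\spec(A')$ of the image $x$ in $X$ such that $X'$ becomes \'etale over $\spec(A'[u,v]/(uv-a))$ for some $a\in A'$; moreover, since $x'$ lies over $X\setminus X^{\triv}$ precisely when $uv-a$ is not invertible, one checks that $a$ lies in $\scr{M}_X(\spec A')\subseteq \ca{O}_{X_\et}(\spec A')$ (a node can only occur over the non-trivial locus of the log structure, since $X'$ is smooth over $X^{\triv}$). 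This places us in the situation of Lemma \ref{lem:log-smooth-1}, which identifies the compactifying log structure pulled back from $X$ with that of $\spec(A'[u,v]/(uv-a))$ and shows the morphism is smooth and saturated between regular fs log schemes; composing with the strict \'etale map $X'\to \spec(A'[u,v]/(uv-a))$ preserves all three properties.

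The main obstacle I anticipate is the verification, in the nodal case, that the node genuinely occurs over the non-trivial locus of the log structure on $X$, so that the element $a$ presenting the singularity can be taken to lie in $\scr{M}_X$ rather than merely in $\ca{O}_{X_\et}$; without this one cannot invoke Lemma \ref{lem:log-smooth-1}. This follows because $X'\to X$ is assumed smooth over $X^{\triv}$, so no nodes lie over $X^{\triv}$, hence the locus where $uv=a$ fails to be smooth over $X$ maps into $X\setminus X^{\triv}$, forcing $a\in\scr{M}_X$ after possibly shrinking; one must also check compatibility of the compactifying log structures on the various \'etale neighborhoods, which is routine given that the compactifying log structure is functorial and \'etale-local (\cite[11.6]{kato1994toric}). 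A secondary technical point is ensuring that the coordinate $T$ cutting out $D'$ in the non-nodal boundary case, and the coordinates $u,v$ in the nodal case, can be chosen simultaneously \'etale-locally; this is handled by standard \'etale-local structure theorems for smooth morphisms and for \'etale divisors.
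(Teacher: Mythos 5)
Your proposal is correct and follows essentially the same route as the paper: an \'etale-local reduction to the two model computations of \ref{lem:log-smooth-1} and \ref{lem:log-smooth-2}, with the same case division (smooth point off $D'$, point of $D'$, node) and the same key observation that a node forces the presenting element $a$ to be invertible on $X^{\triv}$, hence to lie in $\scr{M}_X$. The one step you defer as ``standard'' --- producing, near a point of $D'$, an \'etale $A$-homomorphism $A[T]\to A'$ sending $T$ to a local equation of $D'$ --- is exactly where the paper's proof does its only real work, via completed local rings and \cite[17.5.3]{ega4-4}, so that argument (or an equivalent one) should be supplied.
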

\begin{proof}
	We follow closely the proof of \cite[12.6]{he2024purity}. The problem is \'etale local on $X'$ and $X$. For any geometric point $\overline{x}'$ of $X'$, after replacing $X'$ and $X$ by \'etale neighborhoods of $\overline{x}'$, we may assume that $X=\spec(A)$ and $X'=\spec(A')$ are affine. Note that $A$ is Noetherian as $(X,\scr{M}_X)$ is assumed to be regular.
	
	Assume firstly that the image $x'$ of $\overline{x}'\to X'$ does not lie in $D'$ (so that we may assume that $D'$ is empty). If $X'\to X$ is smooth at $x'$, then after shrinking $X$ and $X'$, we may assume that there is an \'etale homomorphism $A[T^{\pm1}]\to A'$. Thus, $(X',\scr{M}_{X'})\to (X,\scr{M}_X)$  is a smooth and saturated morphism of regular fs log schemes by \ref{lem:log-smooth-2}. If $X'\to X$ is not smooth at $x'$, then after shrinking $X$ and $X'$, we may assume that there exists an \'etale homomorphism $A[u,v]/(uv-a)\to A'$ for some $a\in A$ by \cite[\href{https://stacks.math.columbia.edu/tag/0CBY}{0CBY}]{stacks-project}. Since $X'\to X$ is smooth over $X^{\triv}$, $a$ is invertible over $X^{\triv}$. Thus, $(X',\scr{M}_{X'})\to (X,\scr{M}_X)$ is a smooth and saturated morphism of regular fs log schemes by \ref{lem:log-smooth-1}.
	
	If the image $x'$ of $\overline{x}'\to X'$ lies in $D'$, then we may assume further that $D'$ is defined by a non-zero divisor $t'\in A'$ and that the canonical morphism $D'\to X$ is an isomorphism. In particular, the canonical homomorphisms $A\to A'\to A'/tA'$ induce an isomorphism $A\iso A'/tA'$. Let $x\in X$ be the image of $x'$ and let $\ak{p}\subseteq A$ (resp. $\ak{p}'\subseteq A'$) be the prime ideal corresponding to $x$ (resp. $x'$). Note that $\ak{p}'$ is generated by $\ak{p}$ and $t$. Since $D'$ is disjoint from the nodes of $X'$, $A'$ is smooth over $A$ at $\ak{p}'$ of dimension $1$. Thus, the completion $\widehat{A'_{\ak{p}'}}$ of the Noetherian local ring $A'_{\ak{p}'}$ is isomorphic to the $\widehat{A_{\ak{p}}}$-algebra of formal power series $\widehat{A_{\ak{p}}}[[T]]$ by \cite[17.5.3]{ega4-4}. As $t$ generates the maximal ideal $(T)$ of $\widehat{A'_{\ak{p}'}}/\ak{p}\widehat{A'_{\ak{p}'}}=(A_{\ak{p}}/\ak{p}A_{\ak{p}})[[T]]$, we see that $t\in \widehat{A_{\ak{p}}}[[T]]$ is the multiple of $T$ with a unit. After replacing $T$ by multiplying by a unit, we may assume that $t=T\in \widehat{A_{\ak{p}}}[[T]]$. Hence, the $A$-homomorphism $A[T]\to A'$ sending $T$ to $t$ induces an isomorphism of the corresponding completed local rings at $(\ak{p},T)$ and $\ak{p}'$. Thus, it is \'etale at $\ak{p}'$ by \cite[17.5.3]{ega4-4}. After shrinking $X$ and $X'$, we may assume that $A[T]\to A'$ is \'etale. Thus, $(X',\scr{M}_{X'})\to (X,\scr{M}_X)$ is a smooth and saturated morphism of regular fs log schemes by \ref{lem:log-smooth-2}.
\end{proof}

\begin{mythm}[{cf. \cite[2.5.2]{temkin2017etale}}]\label{thm:uniformization}
	Let $K$ be a complete discrete valuation field, $\eta=\spec(K)$, $S=\spec(\ca{O}_K)$. Let $X$ be an $S$-scheme of finite presentation with smooth generic fibre $X_\eta$, $D_\eta$ a strict normal crossings divisor of $X_\eta$, $X^{\triv}=X_\eta\setminus D_\eta$. Then, there exists an admissibly \'etale covering in $\sch^{\aetale}_{/X}$,
	\begin{align}\label{eq:thm:uniformization}
		X'\longrightarrow S'\times_SX,
	\end{align}
	where $S'=\spec(\ca{O}_{K'})$, $K'$ is a finite field extension of $K$, and $X'$ is a flat $S'$-scheme of finite presentation such that the log scheme $(X',\scr{M}_{X'})$ endowed with the compactifying log structure associated to the open immersion $X'^{\triv}=X^{\triv}\times_XX'\to X'$ is a regular fs log scheme, smooth and saturated over the regular fs log scheme $(S',\scr{M}_{S'})$ endowed with the compactifying log structure defined by the closed point of $S'$.
\end{mythm}
\begin{proof}
	We follow the proof of \cite[2.5.2]{temkin2017etale} and we proceed by induction on the dimension $d$ of $X_\eta$. 
	
	Firstly, we note that if $X'\to S'=\spec(\ca{O}_{K'})$ satisfies the conditions for \eqref{eq:thm:uniformization}, then so does $X''=S''\times_{S'}X'\to S''=\spec(\ca{O}_{K''})$ for any finite field extension $K''$ of $K'$. Indeed, this follows from the fact that the base change of a morphism of regular fs log schemes by a smooth saturated morphism of regular fs log schemes in the category of log schemes is still a morphism of regular fs log schemes (see \cite[4.4]{he2024falmain}). 
	
	Then, we claim that the problem is admissibly \'etale local on $X$. Indeed, if $\{X_i\to X\}_{i\in I}$ is an admissibly \'etale covering such that for each $i\in I$ there exists an admissibly \'etale covering $X_i'\to S'_i\times_SX_i$ satisfying the (analogous) conditions for \eqref{eq:thm:uniformization}. As $X$ is quasi-compact in $\sch^{\aetale}_{/X}$, we may assume that $I$ is finite. Then, we take a finite field extension $K'$ of $K$ large enough such that $S'=\spec(\ca{O}_{K'})\to S$ factors through each $S_i'$. By the discussion above, we see that $X'=\coprod_{i\in I}S'\times_{S'_i}X_i'\to S'\times_SX$ is an admissibly \'etale covering satisfying the conditions for \eqref{eq:thm:uniformization}, which verifies the claim. 
	
	Now, we use the claim to make several simplifications. After replacing $X$ by the scheme theoretic closure of $X_\eta$ in $X$, we may assume that $X$ is flat over $\ca{O}_K$ (see \ref{cor:adm-etale}). As $D_\eta$ is a strict normal crossings divisor of $X_\eta$, after replacing $X$ by a Zariski covering of an $X_\eta$-admissible blowup (which can be chosen to refine any Zariski covering of $X_\eta$ by \cite[2.5.1]{temkin2017etale}), we may assume that $X=\spec(A)$ is an affine integral scheme and that there exists an \'etale morphism $X_\eta\to \spec(K[T_1,\dots,T_d])$ such that $D_\eta$ is defined by $T_1\cdots T_r=0$ for some integer $0\leq r\leq d$ by \cite[17.5.3]{ega4-4}. After replacing $T_1,\dots,T_d$ by their multiples by a power of $p$, we may extend $X_\eta\to \spec(K[T_1,\dots,T_d])$ to $X\to \spec(\ca{O}_K[T_1,\dots,T_d])$. 
	
	If $d=0$, after enlarging $K$, we see that $A$ is an $\ca{O}_K$-subalgebra of $K$ and thus equal to $K$ or $\ca{O}_K$. If $d>0$, let $f:X\to \bb{A}^{d-1}_S=\spec(\ca{O}_K[T_2,\dots,T_d])$ be the induced morphism. For $r>0$, we put $\bb{A}^{d-1,\triv}_S=\spec(K[T_2^{\pm1},\dots,T_r^{\pm1},T_{r+1},\dots,T_d])\subseteq \bb{A}^{d-1}_\eta$ and let $D^1_\eta\subseteq X_\eta$ be the closed subscheme defined by $T_1=0$; for $r=0$, we put $\bb{A}^{d-1,\triv}_S=\bb{A}^{d-1}_\eta$ and $D^1_\eta=\emptyset$. In particular, we have
	\begin{align}\label{eq:thm:uniformization-1}
		X^{\triv}=X_\eta\setminus D_\eta=(\bb{A}^{d-1,\triv}_S\times_{\bb{A}^{d-1}_S}X)\setminus D^1_\eta.
	\end{align}
	Note that $f_\eta:X_\eta\to \bb{A}^{d-1}_\eta$ is smooth of relative dimension $1$ and $D^1_\eta\to \bb{A}^{d-1}_\eta$ is \'etale. Let $D^1$ be the scheme theoretic closure of $D^1_\eta$ in $X$, which is thus flat of finite presentation over $S$ by \cite[\Luoma{1}.3.4.7]{raynaudgruson1971plat}. After replacing $X$ by a Zariski covering of an $X_\eta$-admissible blowup as above, we may assume that $X$ and $D^1$ are both affine and integral. 
	
	By Raynaud-Gruson's flattening theorem \cite[\Luoma{1}.5.2.2]{raynaudgruson1971plat} (and \cite[\href{https://stacks.math.columbia.edu/tag/080F}{080F}]{stacks-project}), there exists an $\bb{A}^{d-1}_\eta$-admissible blowup $Z\to \bb{A}^{d-1}_S$ such that the strict transform $f':X'\to Z$ of $f:X\to \bb{A}^{d-1}_S$ and the strict transform $D'^1\to Z$ of $D^1\to \bb{A}^{d-1}_S$ are both flat of finite presentation. Note that $D'^1\to X'$ is still a closed immersion as $D'\subseteq Z\times_{\bb{A}^{d-1}_S}D^1$ and $X'\subseteq Z\times_{\bb{A}^{d-1}_S}X$ are closed subschemes by definition (\cite[\href{https://stacks.math.columbia.edu/tag/080D}{080D}]{stacks-project}).
	\begin{align}
		\xymatrix{
			D^1_\eta\ar[r]\ar[d]&D^1\ar[d]&&D'^1_\eta\ar[r]\ar[d]&D'^1\ar[d]\\
			X_\eta\ar[r]\ar[d]_-{f_\eta}&X\ar[d]^-{f}&& X'_\eta\ar[r]\ar[d]_-{f'_\eta}&X'\ar[d]^-{f'}\\
			\bb{A}^{d-1}_\eta\ar[r]&\bb{A}^{d-1}_S&&Z_\eta\ar[r]&Z
		}
	\end{align}
	As $D'^1_\eta=D^1_\eta$, $X'_\eta=X_\eta$ and $Z_\eta=\bb{A}^{d-1}_\eta$ are integral and scheme theoretically dense in $D'^1$, $X'$ and $Z$ respectively (see \ref{rem:blowup}), we see that $D'^1$, $X'$ and $Z$ are also integral. Then, $f':X'\to Z$ is equidimensional of relative dimension $1$ and $D'^1\to Z$ is equidimensional of relative dimension $0$ (\cite[14.2.2]{ega4-3}). In particular, $(X',D'^1)$ is a multi-pointed $Z$-curve semi-stable over $Z_\eta$ in the sense of \cite[1.2]{temkin2010stablecurve}.
	
	By Temkin's stable modification theorem for relative curves \cite[2.3.3]{temkin2011rz}, after replacing $Z$ by an admissibly \'etale covering and $D'^1\subseteq X'$ by base change, there exists an $X'_\eta$-modification $(X'',D''^1)\to (X',D'^1)$ with $(X'',D''^1)$ semi-stable over $Z$. After replacing $(X',D'^1)$ by $(X'',D''^1)$, we may assume that $(X',D'^1)$ is semi-stable over $Z$.
	
	Since $Z_\eta$ is \'etale over $\bb{A}^{d-1}_\eta$, the equation $T_2\cdots T_r=0$ defines a strict normal crossings divisor on $Z_\eta$. Let $Z^{\triv}=\bb{A}^{d-1,\triv}_\eta\times_{\bb{A}^{d-1}_S}Z$ be its complement. By the induction hypothesis, after enlarging $K$, replacing $Z$ by an admissibly \'etale covering and replacing $D'^1\subseteq X'$ by base change, we may assume that $(Z,\scr{M}_{Z})$ (endowed with the compactifying log structure associated to the open immersion $Z^{\triv}\to Z$) is a regular fs log scheme smooth and saturated over $(S,\scr{M}_S)$. As $(X',D'^1)$ is still a semi-stable multi-pointed curve over $Z$, we conclude that $(X',\scr{M}_{X'})$ (endowed with the compactifying log structure associated to the open immersion $X'^{\triv}=(Z^{\triv}\times_ZX')\setminus D'^1\to X'$, see \eqref{eq:thm:uniformization-1}) is a regular fs log scheme smooth and saturated over $(S,\scr{M}_S)$ by \ref{prop:log-smooth}.
\end{proof}

\section{Review of Faltings extension and Sen Operators over Adequate Algebras}\label{sec:adequate}
We review the Faltings extension and Sen operators constructed in \cite{he2022sen} (see \ref{thm:A-fal-ext}, \ref{thm:sen-brinon-A} and \ref{thm:sen-lie-lift-A}). For simplicity, we only consider adequate algebras instead of quasi-adequate algebras, as they are sufficient for our subsequent applications. Especially, we emphasize the functoriality of these constructions (see \ref{rem:A-fal-ext}, \ref{rem:sen-brinon-A} and \ref{rem:sen-lie-lift-A}), which is a variant of the functoriality proved in \cite{he2022sen}. Finally, we discuss a relation between non-vanishing of the arithmetic Sen operator with infiniteness of inertia subgroups (see \ref{prop:geom-sen-nonzero}).

\begin{mypara}\label{para:zeta}
	In this section, we fix a complete discrete valuation field $K$ extension of $\bb{Q}_p$ with perfect residue field, an algebraic closure $\overline{K}$ of $K$, and a compatible system of primitive $n$-th roots of unity $(\zeta_n)_{n\in\bb{N}_{>0}}$ in $\overline{K}$. Sometimes we denote $\zeta_n$ by $t_{0,n}$.
\end{mypara}

%

\begin{mydefn}[{\cite[5.3]{he2024purity}, \cite[9.2]{he2022sen}}]\label{defn:triple}
	A \emph{$(K,\ca{O}_K,\ca{O}_{\overline{K}})$-triple} is a triple $(A_{\triv},A,\overline{A})$ where
	\begin{enumerate}
		\renewcommand{\labelenumi}{{\rm(\theenumi)}}
		\item $A$ is a Noetherian normal domain flat over $\ca{O}_K$ with $A/pA\neq 0$,
		\item $A_\triv$ is a $K$-algebra that is a localization of $A$ with respect to a nonzero element of $pA$,
		\item $\overline{A}$ is an $\ca{O}_{\overline{K}}$-algebra that is the integral closure of $A$ in a maximal unramified extension $\ca{K}_{\mrm{ur}}$ of the fraction field $\ca{K}$ of $A$ with respect to $(A_{\triv},A)$, i.e., $\ca{K}_{\mrm{ur}}$ is the union of all finite field extension $\ca{K}'$ of $\ca{K}$ contained in an algebraic closure $\overline{\ca{K}}$ such that the integral closure of $A_{\triv}$ in $\ca{K}'$ is \'etale over $A_{\triv}$,
	\end{enumerate}
	such that the diagram
	\begin{align}
		\xymatrix{
			A_{\triv}&A\ar[l]\ar[r]&\overline{A}\\
			K\ar[u]&\ca{O}_K\ar[l]\ar[r]\ar[u]&\ca{O}_{\overline{K}}\ar[u]
		}
	\end{align}
	formed by the structural morphisms is commutative. Note that $\overline{A}$ is stable under the action of $G_A=\gal(\ca{K}_{\mrm{ur}}/\ca{K})$ on $\ca{K}_{\mrm{ur}}$.
	
	A \emph{morphism of $(K,\ca{O}_K,\ca{O}_{\overline{K}})$-triples} $(A_{\triv},A,\overline{A})\to (A'_{\triv},A',\overline{A'})$ is a homomorphism of $\ca{O}_{\overline{K}}$-algebras $f:\overline{A}\to \overline{A'}$ such that $f(A)\subseteq A'$ and $f(A_{\triv})\subseteq A'_{\triv}$. If $f$ is injective, then it induces an extension of the fraction fields $\ca{K}_{\mrm{ur}}\to\ca{K}'_{\mrm{ur}}$ and thus a natural continuous homomorphism of Galois groups $G_{A'}\to G_A$.
\end{mydefn}

\begin{mydefn}[{\cite[5.4]{he2024purity}, \cite[9.5]{he2022sen}}]\label{defn:essential-adequate-alg}
	A $(K,\ca{O}_K,\ca{O}_{\overline{K}})$-triple $(A_{\triv},A,\overline{A})$ is called \emph{adequate} if there exists a commutative diagram of monoids
	\begin{align}
		\xymatrix{
			A& P\ar[l]_-{\beta}\\
			\ca{O}_K\ar[u]& \bb{N}\ar[l]_-{\alpha}\ar[u]_-{\gamma}
		}
	\end{align} 
	satisfying the following conditions:
	\begin{enumerate}
		\renewcommand{\labelenumi}{{\rm(\theenumi)}}
		\item The element $\alpha(1)$ is a uniformizer of $\ca{O}_K$.\label{item:defn:essential-adequate-alg-1}
		\item The monoid $P$ is fs (i.e., fine and saturated), and if we denote by $\gamma_\eta:\bb{Z}\to P_\eta=\bb{Z}\oplus_{\bb{N}}P$ the pushout of $\gamma$ by the inclusion $\bb{N}\to \bb{Z}$, then there exists an isomorphism for some $c, d\in \bb{N}$ with $c\leq d$,
		\begin{align}\label{eq:monoid-str}
			P_\eta\cong \bb{Z}\oplus \bb{Z}^c\oplus \bb{N}^{d-c},
		\end{align}
		identifying $\gamma_\eta$ with the inclusion of $\bb{Z}$ into the first component on the right hand side.\label{item:defn:essential-adequate-alg-2}
		\item The homomorphism $\beta$ induces an \'etale $\ca{O}_K$-algebra homomorphism $\ca{O}_K\otimes_{\bb{Z}[\bb{N}]}\bb{Z}[P]\to A$ such that $A\otimes_{\bb{Z}[P]}\bb{Z}[P^{\mrm{gp}}]=A_{\triv}$, where $P^{\mrm{gp}}$ is the associated group of $P$.\label{item:defn:essential-adequate-alg-3}
	\end{enumerate}
	We usually denote $(A_{\triv},A,\overline{A})$ by $A$, and call it an \emph{adequate $\ca{O}_K$-algebra} for simplicity. The triple $(\alpha:\bb{N}\to \ca{O}_K,\ \beta:P\to A,\ \gamma:\bb{N}\to P)$ is called an \emph{adequate chart} of $A$. If we fix an isomorphism \eqref{eq:monoid-str}, then we call the images $t_1,\dots,t_d\in A[1/p]$ of the standard basis of $\bb{Z}^c\oplus \bb{N}^{d-c}$ a \emph{system of coordinates} of the chart. We call $d$ the \emph{relative dimension} of $A$ over $\ca{O}_K$ (i.e., the Krull dimension of $A_{\triv}$ as $A_{\triv}$ is \'etale over $K[\bb{Z}^d]$).
\end{mydefn}

\begin{mypara}\label{para:adequate-setup}
	Let $A$ be an adequate $\ca{O}_K$-algebra with an adequate chart $(\alpha:\bb{N}\to \ca{O}_K,\ \beta:P\to A,\ \gamma:\bb{N}\to P)$ and an isomorphism
	$P_\eta\cong \bb{Z}\oplus \bb{Z}^c\oplus \bb{N}^{d-c}$ \eqref{eq:monoid-str}. Let $t_1,\dots,t_d\in A[1/p]$ be the associated system of coordinates of this chart.
	
	Following \cite[\textsection8]{tsuji2018localsimpson} (see also \cite[5.9]{he2024purity} and \cite[9.13]{he2022sen}), for any integer $1\leq i\leq d$, we fix a compatible system of $k$-th roots $(t_{i,k})_{k\in \bb{N}_{>0}}$ of $t_i$ in $\overline{A}[1/p]$. For any field extension $E'/E$, let $\scr{F}_{E'/E}$ (resp. $\scr{F}^{\mrm{fini}}_{E'/E}$) be the set of (resp. finite) field extensions of $E$ contained in $E'$, and we endow it with the partial order defined by the inclusion relation. For any $L\in \ff{K}$ and any $\underline{r}=(r_1,\dots,r_d)\in\bb{N}_{>0}^d$, we put
	\begin{align}\label{eq:para:adequate-setup}
		\ca{K}^L_{\underline{r}}=L\cdot \ca{K}(t_{1,r_1},\dots,t_{d,r_d})
	\end{align}
	where the composites of fields are taken in $\ca{K}_{\mrm{ur}}$. It is clear that $\ca{K}^L_{\underline{r}}$ forms an inductive system of fields over the directed partially ordered set $\ff{K}\times \bb{N}_{>0}^d$ (see \ref{para:product}). Let $A^L_{\underline{r}}$ be the integral closure of $A$ in $\ca{K}^L_{\underline{r}}$, $A^L_{\underline{r},\triv}=A_{\triv}\otimes_{A}A^L_{\underline{r}}$. We endow $X^L_{\underline{r}}=\spec(A^L_{\underline{r}})$ with the compactifying log structure $\scr{M}_{X^L_{\underline{r}}}$ associated to the open immersion $X^{L,\triv}_{\underline{r}}=\spec(A^L_{\underline{r},\triv})\to X^L_{\underline{r}}$.
	
	We extend the notation above to any $(L,\underline{r})\in\scr{F}_{\overline{K}/K}\times (\bb{N}_{>0}\cup\{\infty\})^d$ by taking filtered colimits, and we omit the index $L$ or $\underline{r}$ if $L=K$ or $\underline{r}=\underline{1}$ respectively. We endow $\overline{X}=\spec(\overline{A})$ with the log structure $\scr{M}'_{\overline{X}}$ inverse image of the log structure of $X^{\overline{K}}_{\underline{\infty}}$ via the canonical morphism $\spec(\overline{A})\to \spec(A^{\overline{K}}_{\underline{\infty}})$ as in \cite[9.21]{he2022sen}.
\end{mypara}

\begin{mythm}[{\cite[9.36]{he2022sen}}]\label{thm:A-fal-ext}
	Let $A$ be an adequate $\ca{O}_K$-algebra, $(X_K,\scr{M}_{X_K})$ the log scheme with underlying scheme $X_K=\spec(A[1/p])$ and with compactifying log structure $\scr{M}_{X_K}$ associated to the open immersion $\spec(A_{\triv})\to\spec(A[1/p])$. Then, there is a canonical exact sequence of finite free $\widehat{\overline{A}}[1/p]$-representations of $G_A$ {\rm(\ref{defn:repn})}, called the \emph{Faltings extension of $A$ over $\ca{O}_K$},
	\begin{align}\label{eq:thm:A-fal-ext-1}
		0\longrightarrow \widehat{\overline{A}}[\frac{1}{p}](1)\stackrel{\iota}{\longrightarrow}\scr{E}_A\stackrel{\jmath}{\longrightarrow} \widehat{\overline{A}}\otimes_{A}\Omega^1_{(X_K,\scr{M}_{X_K})/K}\longrightarrow 0,
	\end{align}
	together with a canonical $G_A$-equivariant group homomorphism
	\begin{align}\label{eq:thm:A-fal-ext-2}
		V_p(\overline{A}[1/p]\cap \overline{A}_{\triv}^\times)\longrightarrow \scr{E}_A,
	\end{align}
	for which we denote the image of an element $(s_{p^n})_{n\in\bb{N}}\in V_p(\overline{A}[1/p]\cap \overline{A}_{\triv}^\times)$ by $(\df\log(s_{p^n}))_{n\in\bb{N}}$, satisfying the following properties:
	\begin{enumerate}
		\renewcommand{\labelenumi}{{\rm(\theenumi)}}
		\item We have $\iota(1\otimes (\zeta_{p^n})_{n\in \bb{N}})=(\df\log(\zeta_{p^n}))_{n\in \bb{N}}$.\label{item:thm:A-fal-ext-1}
		\item For any element $s\in A[1/p]\cap A_{\triv}^\times$ and any compatible system of $p$-power roots $(s_{p^n})_{n\in \bb{N}}$ of $s$ in $\overline{A}[1/p]$, $\jmath((\df\log(s_{p^n}))_{n\in\bb{N}})=\df \log(s)$.\label{item:thm:A-fal-ext-2}
		\item With the notation in {\rm\ref{para:adequate-setup}}, there is a canonical isomorphism of finite free $\widehat{\overline{A}}[1/p]$-representations of $G_A$,
		\begin{align}
			V_p(\Omega^1_{(\overline{X},\scr{M}'_{\overline{X}})/(X,\scr{M}_X)})\iso \scr{E}_A,
		\end{align} 
		such that the induced map $V_p(\overline{A}[1/p]\cap \overline{A}_{\triv}^\times)\to V_p(\Omega^1_{(\overline{X},\scr{M}'_{\overline{X}})/(X,\scr{M}_X)})$ is the canonical map sending $(s_{p^n})_{n\in\bb{N}}$ to $(\df\log(s_{p^n}))_{n\in\bb{N}}$ (defined as in \eqref{eq:para:notation-fal-ext-2-4}, see {\rm\cite[9.31, 9.35]{he2022sen}}). In particular, the $\widehat{\overline{A}}[1/p]$-linear surjection $\jmath$ admits a section sending $\df\log(t_i)$ to $(\df \log(t_{i,p^n}))_{n\in\bb{N}}$ for any $1\leq i\leq d$. \label{item:thm:A-fal-ext-3}
	\end{enumerate} 
\end{mythm}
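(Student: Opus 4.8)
Since this statement is \cite[9.36, 9.38]{he2022sen}, I only outline the strategy, which runs parallel to the construction for general valuation rings in Section~\ref{sec:fal-ext-val}, with the toric geometry of an adequate chart replacing Gabber--Ramero's cotangent complex computations. The plan is to \emph{define} $\scr{E}_A$ to be $V_p(\Omega^1_{(\overline{X},\scr{M}'_{\overline{X}})/(X,\scr{M}_X)})$ --- so that property~(3) holds essentially by construction --- and then to extract the exact sequence \eqref{eq:thm:A-fal-ext-1} by passing to $p$-adic Tate modules in the transitivity sequence of log differentials for the structure morphisms $(\overline{X},\scr{M}'_{\overline{X}})\to(X,\scr{M}_X)\to\spec(\ca{O}_K)$. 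Throughout, fix an adequate chart with a system of coordinates $t_1,\dots,t_d\in A[1/p]$ and, as in \ref{para:adequate-setup}, compatible systems of $p$-power roots $(t_{i,p^n})_{n\in\bb{N}}$ of $t_i$ in $\overline{A}[1/p]$, together with the system $(\zeta_{p^n})_{n\in\bb{N}}$.

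The first and main step is the computation of the log differentials of $\overline{A}$ over $\ca{O}_K$. Starting from the transitivity sequence
\[
\overline{A}\otimes_A\Omega^1_{(X,\scr{M}_X)/\ca{O}_K}\stackrel{\varrho}{\longrightarrow}\Omega^1_{(\overline{X},\scr{M}'_{\overline{X}})/\ca{O}_K}\longrightarrow\Omega^1_{(\overline{X},\scr{M}'_{\overline{X}})/(X,\scr{M}_X)}\longrightarrow 0
\]
(in which $\Omega^1_{(X,\scr{M}_X)/\ca{O}_K}$ is locally free of rank $d$ over $A$, as $(X,\scr{M}_X)$ is log smooth over $\ca{O}_K$), I would check, exactly as in the proof of \ref{prop:fal-ext-val}: (i) $\ke(\varrho)$ is killed by a fixed power of $p$; (ii) using the further transitivity for $\ca{O}_K\to\ca{O}_{\overline{K}}\to\overline{A}$ and Fontaine's computation \cite[Th\'eor\`eme 1']{fontaine1982formes} of $\Omega^1_{\ca{O}_{\overline{K}}/\ca{O}_K}$, the inverse system $(\Omega^1_{(\overline{X},\scr{M}'_{\overline{X}})/\ca{O}_K}[p^n])_n$ is Mittag--Leffler and $\rr\plim_n(\Omega^1_{(\overline{X},\scr{M}'_{\overline{X}})/\ca{O}_K}[p^n])\iso\widehat{\overline{A}}(1)$, with $(\zeta_{p^n})_n$ corresponding to $(\df\log\zeta_{p^n})_n$; and (iii) $\Omega^1_{(\overline{X},\scr{M}'_{\overline{X}})/\ca{O}_K}/p^n$ is killed by a power of $p$ independent of $n$. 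Assertion~(iii) is where the adequate hypothesis is essential: by the chart, $\overline{A}$ is an integral closure of an ind-(log smooth over $\ca{O}_{\overline{K}}$) algebra obtained by adjoining all $t_{i,p^n}$, so that $\Omega^1_{(\overline{X},\scr{M}'_{\overline{X}})/\ca{O}_{\overline{K}}}$ is almost $p$-divisible; this is almost purity in the log-smooth setting, as in \cite{he2022sen} (cf.\ \cite[\textsection6]{gabber2003almost}).

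Granting (i)--(iii), apply $\ho_{\bb{Z}}(\bb{Z}/p^n\bb{Z},-)$ to $0\to\im(\varrho)\to\Omega^1_{(\overline{X},\scr{M}'_{\overline{X}})/\ca{O}_K}\to\Omega^1_{(\overline{X},\scr{M}'_{\overline{X}})/(X,\scr{M}_X)}\to 0$ and pass to $\rr\plim_n$; by (ii), (iii) and the boundedness in (i) one obtains an exact sequence $0\to\widehat{\overline{A}}(1)\stackrel{\iota}{\to}T_p(\Omega^1_{(\overline{X},\scr{M}'_{\overline{X}})/(X,\scr{M}_X)})\stackrel{\jmath}{\to}\widehat{\im(\varrho)}\to 0$ in which $\jmath$ is the connecting homomorphism, and $\iota$ is injective because every obstruction group in the long exact sequence is uniformly $p$-power torsion. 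Inverting $p$ and using $\widehat{\im(\varrho)}[1/p]=\widehat{\overline{A}}\otimes_A\Omega^1_{(X_K,\scr{M}_{X_K})/K}$ gives \eqref{eq:thm:A-fal-ext-1}. The map \eqref{eq:thm:A-fal-ext-2} is $\df\log$ as in \eqref{eq:para:notation-fal-ext-2-4} composed with $V_p(\Omega^1_{(\overline{X},\scr{M}'_{\overline{X}})/\ca{O}_K})\to\scr{E}_A$; properties~(1) and~(2) follow formally from the description of $\iota$ (via Fontaine) and of the connecting map $\jmath$, and~(3) follows from $\jmath((\df\log t_{i,p^n})_n)=\df\log t_i$ together with the fact that $\{\df\log t_1,\dots,\df\log t_d\}$ is an $\widehat{\overline{A}}[1/p]$-basis of $\widehat{\overline{A}}\otimes_A\Omega^1_{(X_K,\scr{M}_{X_K})/K}$: this exhibits the desired section and shows that $\{(\df\log\zeta_{p^n})_n\}\cup\{(\df\log t_{i,p^n})_n\}_{1\leq i\leq d}$ is an $\widehat{\overline{A}}[1/p]$-basis of $\scr{E}_A$.

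Finally, $G_A$ acts on $\overline{A}$ compatibly with all of the above, so \eqref{eq:thm:A-fal-ext-1} and \eqref{eq:thm:A-fal-ext-2} are $G_A$-equivariant; continuity of the action, with respect to the canonical topology on a finite free $\widehat{\overline{A}}[1/p]$-module, is checked by the stable-lattice argument of \ref{prop:fal-ext-val}: the $\widehat{\overline{A}}$-lattice on the exhibited basis is $G_A$-stable, and one passes to the limit over its reductions modulo $p^r$. The only genuinely hard point is~(iii), i.e.\ the uniform $p$-boundedness of $\Omega^1_{(\overline{X},\scr{M}'_{\overline{X}})/\ca{O}_K}/p^n$, equivalently the (almost) log smoothness of the $p$-completed $\overline{A}$ over $\ca{O}_{\widehat{\overline{K}}}$ along the Kummer tower; everything else is a matter of transcribing the limit-and-bookkeeping arguments of Section~\ref{sec:fal-ext-val}.
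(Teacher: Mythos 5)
The paper does not actually prove this statement: it is quoted from \cite[9.36, 9.38]{he2022sen}, so there is no in-paper proof to match your outline against. Your strategy is nonetheless sound, and it is essentially the valuation-ring argument of \ref{prop:fal-ext-val} transposed to adequate algebras. The proof in \emph{loc.\ cit.} is organized differently: there one first computes the log differentials of the explicit Kummer tower $(X^{\overline{K}}_{\underline{\infty}},\scr{M}_{X^{\overline{K}}_{\underline{\infty}}})$ over $(X,\scr{M}_X)$ directly from the monoid chart of \ref{para:adequate-setup} (this produces the generators $(\df\log\zeta_{p^n})_{n}$ and $(\df\log t_{i,p^n})_{n}$ and their relations with no limit bookkeeping), and then invokes almost purity to show that $\overline{A}$ contributes only almost zero differentials relative to $A^{\overline{K}}_{\underline{\infty}}$; the sequence \eqref{eq:thm:A-fal-ext-1} is then read off from the resulting explicit presentation. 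Your route --- transitivity over the base plus the three estimates (i)--(iii) --- buys a treatment uniform with Section \ref{sec:fal-ext-val}, at the cost of concentrating all the difficulty into divisibility statements for $\Omega^1_{(\overline{X},\scr{M}'_{\overline{X}})/\ca{O}_K}$; the toric route buys an explicit basis from the outset, at the cost of being tied to the chart. Both ultimately rest on the same almost purity input, so I regard your outline as a correct alternative organization rather than a gap.

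Two points of precision. First, you single out (iii) as the only hard step, but (ii) depends on almost purity just as much in this setting: in \ref{prop:fal-ext-val} the identification of $\rr\lim_{n}(\Omega^1[p^n])$ with $\ca{O}_{\widehat{\ca{F}}}(1)$ uses that $\Omega^1_{\ca{O}_{\ca{F}}/\ca{O}_{\overline{K}}}$ is torsion free, which is Gabber--Ramero's theorem for valuation rings and has no unconditional analogue for $\overline{A}$; to run your step (ii) you must know that the $p$-power torsion of $\Omega^1_{(\overline{X},\scr{M}'_{\overline{X}})/\ca{O}_{\overline{K}}}$ is (almost) negligible, and that is again the almost purity input. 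Relatedly, all the vanishing you can hope for here is almost vanishing rather than vanishing; this is harmless for the conclusion because a $p$-torsion-free module killed by $\ak{m}_{\overline{K}}$ is zero, so almost zero modules have vanishing $T_p$, but the point deserves to be made explicit since it is invisible in the valuation-ring model. Second, $\Omega^1_{(X,\scr{M}_X)/\ca{O}_K}$ is not locally free of rank $d$: the rank-$d$ locally free module is $\Omega^1_{(X,\scr{M}_X)/(S,\scr{M}_S)}$, the chart containing $\bb{N}\to\ca{O}_K$ hitting a uniformizer. This is immaterial after inverting $p$, since only $\Omega^1_{(X_K,\scr{M}_{X_K})/K}$ and finite generation enter, but the transitivity sequence you write should either be taken over $(S,\scr{M}_S)$ or corrected by the bounded $p$-torsion coming from $\df\log$ of the uniformizer.
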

\begin{myrem}\label{rem:A-fal-ext}
	We keep the notation in \ref{thm:A-fal-ext}.
	\begin{enumerate}
		\renewcommand{\labelenumi}{{\rm(\theenumi)}}
		\item Let $K'$ be a finite field extension of $K$ contained in $\overline{K}$. Then, $A'=A^{K'}$ (\ref{para:adequate-setup}) is an adequate $\ca{O}_{K'}$-algebra by \cite[5.10.(1)]{he2024purity} (which holds also for ``adequate algebras" instead of ``essentially adequate algebras" by its proof). Then, there is a canonical isomorphism of Faltings extensions by \cite[9.38]{he2022sen},
		\begin{align}\label{eq:rem:A-fal-ext-1}
			\xymatrix{
				0\ar[r]& \widehat{\overline{A'}}[\frac{1}{p}](1)\ar[r]^-{\iota}&\scr{E}_{A'}\ar[r]^-{\jmath}&\widehat{\overline{A'}}\otimes_{A'}\Omega^1_{(X'_{K'},\scr{M}_{X'_{K'}})/K'}\ar[r]& 0\\
				0\ar[r]& \widehat{\overline{A}}[\frac{1}{p}](1)\ar[r]^-{\iota}\ar[u]^-{\wr}&\scr{E}_{A}\ar[r]^-{\jmath}\ar[u]^-{\wr}&\widehat{\overline{A}}\otimes_{A}\Omega^1_{(X_K,\scr{M}_{X_K})/K}\ar[r]\ar[u]^-{\wr}& 0
			}
		\end{align}
		together with a canonical commutative diagram
		\begin{align}\label{eq:rem:A-fal-ext-2}
			\xymatrix{
				V_p(\overline{A'}[1/p]\cap \overline{A'}_{\triv}^\times)\ar[r]&\scr{E}_{A'}\\
				V_p(\overline{A}[1/p]\cap \overline{A}_{\triv}^\times)\ar[u]^-{\wr}\ar[r]&\scr{E}_{A}\ar[u]^-{\wr}
			}
		\end{align}
		where we used the fact that $\overline{A'}=\overline{A}$ and $K\to K'$ is \'etale.
		\label{item:rem:A-fal-ext-1}
		\item Let $K'$ be a complete discrete valuation field extension of $K$ with perfect residue field and we fix an extension of their algebraic closures $\overline{K}\to \overline{K'}$. Let $A_1$ (resp. $A_2$) be an adequate $\ca{O}_{K_1}$-algebra (resp. adequate $\ca{O}_{K_2}$-algebra) with fraction field $\ca{K}_1$ (resp. $\ca{K}_2$) for some finite field extension $K_1$ (resp. $K_2$) of $K$ (resp. $K'$) contained in $\overline{K}$ (resp. $\overline{K'}$). Then, for any commutative diagram of $(K,\ca{O}_K,\ca{O}_{\overline{K}})$-triples
		\begin{align}\label{eq:rem:A-fal-ext-3}
			\xymatrix{
				(A_{1,\triv},A_1,\overline{A_1})\ar[r]& (A_{2,\triv},A_2,\overline{A_2})\\
				(K,\ca{O}_K,\ca{O}_{\overline{K}})\ar[u]\ar[r]&(K',\ca{O}_{K'},\ca{O}_{\overline{K'}})\ar[u]
			}
		\end{align}
		with $\overline{A_1}\to \overline{A_2}$ injective (which thus induces an extension of the fraction fields $\ca{K}_{1,\mrm{ur}}\to \ca{K}_{2,\mrm{ur}}$), we consider a finite field extension $K_1'$ of $K_1$ contained in $\overline{K}$ and a finite field extension $K_2'$ of $K_1'K_2$ contained in $\overline{K'}$, and we put $A_i'=A_i^{K_i'}$ (so that its fraction field is the composite $K_i'\ca{K}_i$ in $\ca{K}_{i,\mrm{ur}}$). Then, we obtain commutative diagrams of fraction fields:
		\begin{align}\label{eq:rem:A-fal-ext-4}
			\xymatrix{
				\overline{K'}&K_2'\ar[l]&K_2\ar[l]&K'\ar[l]&\ca{K}_{2,\mrm{ur}}&K_2'\ca{K}_2\ar[l]&\ca{K}_2\ar[l]&K'\ar[l]\\
				\overline{K}\ar[u]&K_1'\ar[l]\ar[u]&K_1\ar[l]&K\ar[l]\ar[u]&\ca{K}_{1,\mrm{ur}}\ar[u]&K_1'\ca{K}_1\ar[l]\ar[u]&\ca{K}_1\ar[l]\ar[u]&K\ar[l]\ar[u]
			}
		\end{align}
		from which we see that \eqref{eq:rem:A-fal-ext-3} induces a commutative diagram of $(K,\ca{O}_K,\ca{O}_{\overline{K}})$-triples
		\begin{align}\label{eq:rem:A-fal-ext-5}
			\xymatrix{
				(A_{1,\triv},A_1,\overline{A_1})\ar[r]&(A'_{1,\triv},A'_1,\overline{A'_1})\ar[r]& (A'_{2,\triv},A'_2,\overline{A'_2})& (A_{2,\triv},A_2,\overline{A_2})\ar[l]\\
				(K_1,\ca{O}_{K_1},\ca{O}_{\overline{K}})\ar[u]\ar[r]&(K_1',\ca{O}_{K_1'},\ca{O}_{\overline{K}})\ar[u]\ar[r]&(K_2',\ca{O}_{K_2'},\ca{O}_{\overline{K'}})\ar[u]&(K_2,\ca{O}_{K_2},\ca{O}_{\overline{K'}})\ar[u]\ar[l]
			}
		\end{align}
		where we used the fact that $\overline{A_i'}=\overline{A_i}$ for $i=1,2$. Thus, it induces a canonical morphism of Faltings extensions
		\begin{align}\label{eq:rem:A-fal-ext-6}
			\xymatrix{
				0\ar[r]& \widehat{\overline{A_2}}[\frac{1}{p}](1)\ar[r]^-{\iota}&\scr{E}_{A_2}\ar[r]^-{\jmath}&\widehat{\overline{A_2}}\otimes_{A_2}\Omega^1_{(X_{2,K'},\scr{M}_{X_{2,K'}})/K'}\ar[r]& 0\\
				0\ar[r]& \widehat{\overline{A_1}}[\frac{1}{p}](1)\ar[r]^-{\iota}\ar[u]&\scr{E}_{A_1}\ar[r]^-{\jmath}\ar[u]&\widehat{\overline{A_1}}\otimes_{A_1}\Omega^1_{(X_{1,K},\scr{M}_{X_{1,K}})/K}\ar[r]\ar[u]& 0
			}
		\end{align}
		together with a canonical commutative diagram by (\ref{item:rem:A-fal-ext-1}) and \cite[9.38]{he2022sen},
		\begin{align}\label{eq:rem:A-fal-ext-7}
			\xymatrix{
				V_p(\overline{A_2}[1/p]\cap \overline{A_2}_{\triv}^\times)\ar[r]&\scr{E}_{A_2}\\
				V_p(\overline{A_1}[1/p]\cap \overline{A_1}_{\triv}^\times)\ar[u]\ar[r]&\scr{E}_{A_1}\ar[u]
			}
		\end{align}
		both independent of the choices of $K_1'$ and $K_2'$, where we used the fact that $K\to K_1$ and $K'\to K_2$ are \'etale. In particular, the canonical morphism
		\begin{align}\label{eq:rem:A-fal-ext-8}
			\widehat{\overline{A_2}}\otimes_{\widehat{\overline{A_1}}}\scr{E}_{A_1}\longrightarrow \scr{E}_{A_2}
		\end{align}
		is an isomorphism if $K\to K'$ and $(X_{2,K},\scr{M}_{X_{2,K}})\to (X_{1,K},\scr{M}_{X_{1,K}})$ are both \'etale.
		\label{item:rem:A-fal-ext-2}
		\item As in \cite[9.39]{he2022sen}, taking a Tate twist of the dual of the Faltings extension \eqref{eq:thm:A-fal-ext-1} of $A$, we obtain a canonical exact sequence of finite free $\widehat{\overline{A}}[1/p]$-representations of $G_A$,
		\begin{align}\label{eq:A-fal-ext-dual}
			0\longrightarrow \ho_A(\Omega^1_{(X_K,\scr{M}_{X_K})/K}(-1),\widehat{\overline{A}})\stackrel{\jmath^*}{\longrightarrow} \scr{E}^*_A(1)\stackrel{\iota^*}{\longrightarrow}\widehat{\overline{A}}[\frac{1}{p}]\longrightarrow 0
		\end{align} 
		where $\scr{E}^*_A=\ho_{\widehat{\overline{A}}[1/p]}(\scr{E}_A,\widehat{\overline{A}}[1/p])$. There is a canonical $G_A$-equivariant $\widehat{\overline{A}}[1/p]$-linear Lie algebra structure on $\scr{E}^*_A(1)$ associated to the linear form $\iota^*$, defined by
		\begin{align}
			[f_1,f_2]=\iota^*(f_1)f_2-\iota^*(f_2)f_1,\ \forall f_1,f_2\in \scr{E}^*_A(1).
		\end{align}
		In particular, $\ho_A(\Omega^1_{(X_K,\scr{M}_{X_K})/K}(-1),\widehat{\overline{A}})$ is a Lie ideal of $\scr{E}^*_A(1)$, and $\widehat{\overline{A}}[1/p]$ is the quotient by this ideal. It is clear that the induced Lie algebra structures on them are both trivial. Any $\widehat{\overline{A}}[1/p]$-linear splitting of \eqref{eq:A-fal-ext-dual} identifies $\scr{E}^*_A(1)$ with the semi-direct product of Lie algebras of $\widehat{\overline{A}}[1/p]$ acting on $\ho_A(\Omega^1_{(X_K,\scr{M}_{X_K})/K}(-1),\widehat{\overline{A}})$ by multiplication. 
		
		With the notation in {\rm\ref{para:adequate-setup}}, let $\{T_i=(\df\log(t_{i,p^n}))_{n\in\bb{N}}\otimes\zeta^{-1}\}_{0\leq i\leq d}$ (where $t_{0,p^n}=\zeta_{p^n}$) denote the basis of $\scr{E}_A(-1)$ (\ref{thm:A-fal-ext}.(\ref{item:thm:A-fal-ext-1}, \ref{item:thm:A-fal-ext-3})), and let $\{T_i^*\}_{0\leq i\leq d}$ be the dual basis of $\scr{E}^*_A(1)$. Then, we see that the Lie bracket on $\scr{E}^*_A(1)$ is determined by
		\begin{align}
			[T_0^*,T_i^*]=T_i^*\quad\trm{ and }\quad [T_i^*,T_j^*]=0,
		\end{align}
		for any $1\leq i,j\leq d$. Indeed, this dual basis induces an isomorphism of $\widehat{\overline{A}}[1/p]$-linear Lie algebras
		\begin{align}
			\widehat{\overline{A}}[\frac{1}{p}]\otimes_{\bb{Q}_p}\lie(\bb{Z}_p\ltimes\bb{Z}_p^d)\iso \scr{E}^*_A(1),\ 1\otimes \partial_i\mapsto T_i^*,
		\end{align}
		where $\{\partial_i\}_{0\leq i\leq d}$ is the standard basis of $\lie(\bb{Z}_p\ltimes\bb{Z}_p^d)$ (see \cite[4.20]{he2022sen}).\label{item:rem:A-fal-ext-3}
	\end{enumerate}
\end{myrem}

\begin{mypara}\label{para:notation-A-galois}
	Let $A$ be an adequate $\ca{O}_K$-algebra with fraction field $\ca{K}$. Consider finitely many elements $s_1,\dots,s_e\in A[1/p]\cap A_{\triv}^\times$ with compatible systems of $k$-th roots $(s_{1,k})_{k\in\bb{N}_{>0}},\dots,(s_{e,k})_{k\in\bb{N}_{>0}}$ contained in $\overline{A}[1/p]$ such that $\df s_1,\dots,\df s_e\in \Omega^1_{\ca{K}/K}$ are linearly independent over $\ca{K}$. We put $J\subseteq \bb{N}_{>0}^e$ the subset consisting of elements $\underline{N}=(N_1,\dots,N_e)$ with $N_i$ prime to $p$ for any $1\leq i\leq e$. We endow $J$ with the partial order defined by the divisibility relation (see \ref{para:product}). 
	
	Following \cite[10.1]{he2022sen}, for any $\underline{N}\in J$, $n\in\bb{N}$ and $\underline{m}=(m_1,\dots,m_e)\in \bb{N}^e$, we define finite field extensions of $K$ and $\ca{K}$ in $\overline{K}$ and $\ca{K}_{\mrm{ur}}$ respectively by
	\begin{align}
		K^{(\underline{N})}_n=K(\zeta_{p^nN_1}, \dots, \zeta_{p^nN_e}),\qquad \ca{K}^{(\underline{N})}_{n,\underline{m}}=K^{(\underline{N})}_n\ca{K}(s_{1,p^{m_1}N_1},\dots,s_{e,p^{m_e}N_e}).
	\end{align}
	It is clear that these fields $\ca{K}^{(\underline{N})}_{n,\underline{m}}$ form an inductive system of fields over the directed partially ordered set $J\times\bb{N}\times \bb{N}^e$ (see \ref{para:product}). We extend this notation for one of the components of $\underline{N},n,\underline{m}$ being $\infty$ by taking the corresponding filtered union, and we omit the index $\underline{N}$ or $n$ or $\underline{m}$ if $\underline{N}=\underline{1}$ or $n=0$ or $\underline{m}=\underline{0}$ respectively. We remark that if we take again the notation in \ref{para:adequate-setup} with $(s_1,\dots,s_e)=(t_1,\dots,t_d)$, then $\ca{K}^{(\underline{N})}_{n,\underline{m}}=\ca{K}^{K^{(\underline{N})}_n}_{p^{\underline{m}}\underline{N}}$.
	Similarly, we denote by $A^{(\underline{N})}_{n,\underline{m}}$ the integral closure of $A$ in $\ca{K}^{(\underline{N})}_{n,\underline{m}}$.
	
	Following \cite[4.7]{he2022sen}, for any $\underline{N}\in J$ and $(n,\underline{m})\in(\bb{N}\cup\{\infty\})^{1+e}$, we denote by $\widehat{A}^{(\underline{N})}_{n,\underline{m}}$ the $p$-adic completion of $A^{(\underline{N})}_{n,\underline{m}}$, and we set
	\begin{align}\label{eq:tilde-A}
		\widetilde{A}^{(\underline{N})}_{n,\underline{m}}=\colim_{(n',\underline{m'})\in(\bb{N}^{1+e})_{\leq (n,\underline{m})}} \widehat{A}^{(\underline{N})}_{n',\underline{m'}}.
	\end{align} 
	We remark that the transition maps in the colimit of \eqref{eq:tilde-A} are closed embeddings with respect to the $p$-adic topology, and that $\widetilde{A}^{(\underline{N})}_{n,\underline{m}}$ is identified with a topological subring of $\widehat{A}^{(\underline{N})}_{n,\underline{m}}$ (both endowed with the $p$-adic topology) (\cite[4.7]{he2022sen}). We name some Galois groups as indicated in the following diagram:
	\begin{align}
		\xymatrix{
			\ca{K}_{\mrm{ur}}&&\\
			\ca{K}^{(\underline{N})}_{\infty,\underline{\infty}}\ar[u]^-{H^{(\underline{N})}_{\underline{\infty}}}&&\\
			\ca{K}^{(\underline{N})}_{\infty,\underline{m}}\ar[u]^-{\Delta^{(\underline{N})}_{\underline{m}}}\ar@/^3pc/[uu]^-{H^{(\underline{N})}_{\underline{m}}}&\ca{K}^{(\underline{N})}_{n,\underline{m}} \ar[l]^-{\Sigma^{(\underline{N})}_{n,\underline{m}}}\ar[lu]|-{\Gamma^{(\underline{N})}_{n,\underline{m}}}&\ca{K}\ar@/_1pc/[lluu]|{G_A}\ar[l]\ar@/_1pc/[llu]|(0.6){\Xi^{(\underline{N})}}
		}
	\end{align}
	Recall that for any object $V\in \repnpr(\Xi^{(\underline{N})},\widetilde{A}^{(\underline{N})}_{\infty,\underline{m}}[1/p])$ (\ref{defn:repn}), there is a canonical morphism of Lie algebras over $\bb{Q}_p$, called the \emph{infinitesimal Lie algebra action} (\cite[4.13]{he2022sen}), 
	\begin{align}\label{eq:para:notation-A-galois-4}
		\varphi:\lie(\Xi^{(\underline{N})})\longrightarrow \mrm{End}_{\widetilde{A}^{(\underline{N})}_{\infty,\underline{m}}[1/p]}(V).
	\end{align}
	Following \cite[4.14]{he2022sen}, we say that $V$ is \emph{$\Delta^{(\underline{N})}_{\underline{m}}$-analytic} if $\exp(\varphi_\tau)(v)=\sum_{k=0}^{\infty} \varphi_\tau^k(v)/k!$ converges to $\tau(v)$ for any $v\in V$ and $\tau\in \Delta^{(\underline{N})}_{\underline{m}}$, where $\varphi_\tau=\varphi(\log(\tau))$.
	
	We remark that the assumption on $(s_1,\dots,s_e)$ implies that we have natural identifications of Lie algebras for any $\underline{N}\in J$, $n\in \bb{N}$ and $\underline{m}\in\bb{N}^e$ (\cite[11.3]{he2022sen}),
	\begin{align}\label{eq:para:general-Anm-tower-lie}
		\lie(\Delta^{(\underline{N})}_{\underline{m}})=\lie(\Delta)\cong \bb{Q}_p^e,\quad \lie(\Xi^{(\underline{N})})=\lie(\Gamma),\quad \lie(\Sigma^{(\underline{N})}_{n,\underline{m}})=\lie(\Sigma)\cong \bb{Q}_p.
	\end{align}
	Let $\partial_0\in\lie(\Sigma_{0,\underline{\infty}})$ and $\partial_1,\dots,\partial_e\in\lie(\Delta)$ be the standard bases for the Kummer tower $(A_{n,\underline{m}})_{(n,\underline{m})\in \bb{N}^{1+e}}$ defined by $\zeta_{p^n},s_{1,p^n},\dots,s_{e,p^n}$ (\cite[4.20]{he2022sen}).
	We define $1+e$ elements of the finite free $\widehat{\overline{A}}[1/p]$-module $\scr{E}_A(-1)$ (\ref{thm:A-fal-ext}),  
	\begin{align}\label{eq:para:general-Anm-tower}
		T_0=(\df\log(\zeta_{p^n}))_{n\in\bb{N}}\otimes\zeta^{-1},\ T_1=(\df\log (s_{1,p^n}))_{n\in\bb{N}}\otimes\zeta^{-1},\  \dots,\ T_e=(\df\log (s_{e,p^n}))_{n\in\bb{N}}\otimes\zeta^{-1},
	\end{align}
	where $\zeta=(\zeta_{p^n})_{n\in\bb{N}}$.
\end{mypara}

\begin{mythm}[{\cite[14.2]{tsuji2018localsimpson}}]\label{thm:descent}
	With the notation in {\rm\ref{para:adequate-setup}} and {\rm\ref{para:notation-A-galois}}, if we put $(s_1,\dots,s_e)=(t_1,\dots,t_d)$, then for any $\underline{N}\in J$ and $\underline{m}\in \bb{N}^d$, the functor
	\begin{align}
		\repnan{\Delta^{(\underline{N})}_{\underline{m}}}(\Xi^{(\underline{N})},\widetilde{A}^{(\underline{N})}_{\infty,\underline{m}}[\frac{1}{p}])\longrightarrow \repnpr(G_A,\widehat{\overline{A}}[\frac{1}{p}]),\  V\mapsto \widehat{\overline{A}}\otimes_{\widetilde{A}^{(\underline{N})}_{\infty,\underline{m}}} V
	\end{align}
	is fully faithful, where $\repnan{\Delta^{(\underline{N})}_{\underline{m}}}(\Xi^{(\underline{N})},\widetilde{A}^{(\underline{N})}_{\infty,\underline{m}}[1/p])$ is the full subcategory of $\Delta^{(\underline{N})}_{\underline{m}}$-analytic objects of $\repnpr(\Xi^{(\underline{N})},\widetilde{A}^{(\underline{N})}_{\infty,\underline{m}}[1/p])$. Moreover, any object of $\repnpr(G_A,\widehat{\overline{A}}[1/p])$ lies in the essential image of the above functor for some $\underline{N}\in J$ and $\underline{m}\in \bb{N}^d$ such that $N_1=\cdots=N_c=1$ and $m_1=\cdots =m_c=0$.
\end{mythm}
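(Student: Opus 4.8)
The plan is to deduce Theorem~\ref{thm:descent} from Tsuji's descent theorem \cite[14.2]{tsuji2018localsimpson}, equivalently from its reformulation \cite[10.16]{he2022sen}, by specializing and translating notation. Concretely, I would invoke the set-up of \ref{para:notation-A-galois} with $e=d$ and $(s_1,\dots,s_e)=(t_1,\dots,t_d)$ the system of coordinates of the chosen adequate chart; by the remark in \ref{para:notation-A-galois} this identifies $\ca{K}^{(\underline{N})}_{n,\underline{m}}$ with $\ca{K}^{K^{(\underline{N})}_n}_{p^{\underline{m}}\underline{N}}$ of \ref{para:adequate-setup}, so that the Kummer tower $(A_{n,\underline{m}})_{(n,\underline{m})\in\bb{N}^{1+d}}$ built from $\zeta_{p^n},t_{1,p^n},\dots,t_{d,p^n}$ is precisely the tower underlying the Faltings extension and Sen operators of \ref{thm:A-fal-ext}. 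It then remains only to check that our adequate $\ca{O}_K$-algebra $A$, equipped with these coordinates and with the compactifying log structure $\scr{M}_{X_K}$, satisfies the hypotheses of loc. cit.; this is immediate from Definitions~\ref{defn:triple} and \ref{defn:essential-adequate-alg} together with the normality and flatness of $A$.

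For full faithfulness I would follow the decompletion strategy of \cite[\textsection 14]{tsuji2018localsimpson}. The passage from $\widehat{\overline{A}}$ down to the rings attached to the tower $\ca{K}^{(\underline{N})}_{\infty,\underline{\infty}}/\ca{K}$ is governed by Faltings' almost purity theorem: $\overline{A}$ is, up to a bounded power of $p$, almost finite \'etale over the integral closures of $A$ along this tower, so that the higher continuous cohomology of $\widehat{\overline{A}}$ along the relevant subgroups is annihilated by a bounded power of $p$ and $(\widehat{\overline{A}})^{H^{(\underline{N})}_{\underline{m}}}$ recovers the appropriate completed subring after inverting $p$. Combined with the $\Delta^{(\underline{N})}_{\underline{m}}\cong\bb{Z}_p^d$-analyticity hypothesis, which through the infinitesimal Lie algebra action \eqref{eq:para:notation-A-galois-4} allows one to conjugate a continuous cocycle into the smaller ``perfectoid-free'' ring $\widetilde{A}^{(\underline{N})}_{\infty,\underline{m}}$, this shows that the $\mrm{Hom}$-groups computed before and after applying $\widehat{\overline{A}}\otimes_{\widetilde{A}^{(\underline{N})}_{\infty,\underline{m}}}(-)$ coincide, i.e. the functor is fully faithful.

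For the essential surjectivity (the ``moreover'' part), I would start from $W\in\repnpr(G_A,\widehat{\overline{A}}[1/p])$ and, by the almost purity input above, descend it to a compatible family of $\Xi^{(\underline{N})}$-representations over the partially completed rings $\widehat{A}^{(\underline{N})}_{\infty,\underline{m}}[1/p]$. The crucial point, due to Tsuji, is that after moving far enough up the tower this representation becomes $\Delta^{(\underline{N})}_{\underline{m}}$-analytic: a continuous action of $\bb{Z}_p^d$ on a finite free module over a $p$-adic Banach algebra is analytic on an open subgroup, and enlarging $\underline{m}$ (resp. $\underline{N}$) restricts the action to such an open subgroup. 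The normalization $N_1=\cdots=N_c=1$, $m_1=\cdots=m_c=0$ is part of the cited statements and reflects the structure of the chart $P_\eta\cong\bb{Z}\oplus\bb{Z}^c\oplus\bb{N}^{d-c}$: the first $c$ coordinate directions correspond to the free factor $\bb{Z}^c$, for which no further root extraction is needed to reach analyticity. One then decompletes from $\widehat{A}^{(\underline{N})}_{\infty,\underline{m}}$ to $\widetilde{A}^{(\underline{N})}_{\infty,\underline{m}}$ exactly as in the full faithfulness argument.

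The main obstacle, were one to prove the result from scratch rather than cite it, is the decompletion step: in this geometric situation (imperfect residue field, positive relative dimension, nontrivial log structure) the classical Tate--Sen normalized trace maps are not available directly, and one must instead verify the Tate--Sen axioms for the pair $(\widetilde{A}^{(\underline{N})}_{\infty,\underline{m}},\widehat{\overline{A}})$ with controlled norms, and carefully track how ``analyticity after a finite layer of the tower'' depends on the integers $c$ and $d$. All of this is carried out in \cite[14.2]{tsuji2018localsimpson} (and in \cite[10.16]{he2022sen}); in the present review section the only task is the notation matching described in the first paragraph.
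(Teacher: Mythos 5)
The paper supplies no proof of this statement: it is quoted directly from Tsuji's descent theorem \cite[14.2]{tsuji2018localsimpson} (cf.\ \cite[10.16]{he2022sen}), and the only content on the paper's side is the notation matching $(s_1,\dots,s_e)=(t_1,\dots,t_d)$, $\ca{K}^{(\underline{N})}_{n,\underline{m}}=\ca{K}^{K^{(\underline{N})}_n}_{p^{\underline{m}}\underline{N}}$ that you carry out in your first paragraph, so your approach coincides with the paper's. Your further sketch of the underlying decompletion argument (almost purity, Tate--Sen formalism, analyticity of a continuous $\bb{Z}_p^d$-action on an open subgroup, and the role of the chart $P_\eta\cong\bb{Z}\oplus\bb{Z}^c\oplus\bb{N}^{d-c}$ in the normalization $N_1=\cdots=N_c=1$, $m_1=\cdots=m_c=0$) is consistent with the cited sources but is not verified anywhere in this paper.
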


\begin{mythm}[{\cite[11.4]{he2022sen}}]\label{thm:sen-brinon-A}
	Let $A$ be an adequate $\ca{O}_K$-algebra with fraction field $\ca{K}$, $G_A=\gal(\ca{K}_{\mrm{ur}}/\ca{K})$. Then, for any object $W$ of $\repnpr(G_A,\widehat{\overline{A}}[1/p])$ {\rm(\ref{defn:repn})}, there is a canonical homomorphism of $\widehat{\overline{A}}[1/p]$-linear Lie algebras, called the \emph{Sen action of the $\widehat{\overline{A}}[1/p]$-representation $W$ of $G_A$},
	\begin{align}\label{eq:sen-brinon-A}
		\varphi_{\sen}|_W:\scr{E}_A^*(1)\longrightarrow \mrm{End}_{\widehat{\overline{A}}[\frac{1}{p}]}(W),
	\end{align}
	which is functorial in $W$ and $G_A$-equivariant with respect to the canonical action on $\scr{E}_A^*(1)$ {\rm(\ref{rem:A-fal-ext}.(\ref{item:rem:A-fal-ext-3}))} and the adjoint action on $\mrm{End}_{\widehat{\overline{A}}[1/p]}(W)$ (i.e. $g\in G$ sends an endomorphism $\phi$ to $g\circ \phi \circ g^{-1}$).
	
	Moreover, under the assumption in {\rm\ref{para:notation-A-galois}} and with the same notation, assume that there exists an object $V$ of $\repnan{\Delta^{(\underline{N})}_{\underline{m}}}(\Xi^{(\underline{N})},\widetilde{A}^{(\underline{N})}_{\infty,\underline{m}}[1/p])$ for some $\underline{N}\in J$ and $\underline{m}\in\bb{N}^d$ with $W= \widehat{\overline{A}}\otimes_{\widetilde{A}^{(\underline{N})}_{\infty,\underline{m}}}V$. Then, for any $f\in \scr{E}_A^*(1)=\ho_{\widehat{\overline{A}}[1/p]}(\scr{E}_A(-1),\widehat{\overline{A}}[1/p])$,
	\begin{align}\label{eq:thm:sen-brinon-A}
		\varphi_{\sen}|_W(f)=\sum_{i=0}^e f(T_i)\otimes\varphi_{\partial_i}|_V,
	\end{align}
	where $\varphi_{\partial_i}|_V\in\mrm{End}_{\widetilde{A}^{(\underline{N})}_{\infty,\underline{m}}[1/p]}(V)$ is the infinitesimal action of $\partial_i\in\lie(\Xi^{(\underline{N})})$ on $V$ defined in \eqref{eq:para:notation-A-galois-4}.
\end{mythm}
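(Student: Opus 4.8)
The plan is to construct $\varphi_{\sen}|_W$ by descending $W$ to a logarithmically analytic representation over a finite layer of the cyclotomic–Kummer tower and then reading off the associated infinitesimal Lie action. First I would invoke the descent theorem \ref{thm:descent} (applied with $(s_1,\dots,s_e)=(t_1,\dots,t_d)$): every object $W$ of $\repnpr(G_A,\widehat{\overline{A}}[1/p])$ is of the form $\widehat{\overline{A}}\otimes_{\widetilde{A}^{(\underline{N})}_{\infty,\underline{m}}}V$ for some $\underline{N}\in J$ and $\underline{m}\in\bb{N}^d$ with $N_1=\dots=N_c=1$, $m_1=\dots=m_c=0$, and some $\Delta^{(\underline{N})}_{\underline{m}}$-analytic $V$ in $\repnpr(\Xi^{(\underline{N})},\widetilde{A}^{(\underline{N})}_{\infty,\underline{m}}[1/p])$. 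For such a $V$ the infinitesimal Lie algebra action \eqref{eq:para:notation-A-galois-4} furnishes a $\bb{Q}_p$-linear Lie algebra homomorphism $\varphi\colon\lie(\Xi^{(\underline{N})})=\lie(\Gamma)\to\mrm{End}_{\widetilde{A}^{(\underline{N})}_{\infty,\underline{m}}[1/p]}(V)$; evaluating on the standard basis $\partial_0,\dots,\partial_e$ of $\lie(\Sigma)\oplus\lie(\Delta)\subseteq\lie(\Gamma)$ (see \eqref{eq:para:general-Anm-tower-lie}) yields endomorphisms $\varphi_{\partial_i}|_V$, and I would then \emph{define}
\[
\varphi_{\sen}|_W(f)=\sum_{i=0}^{e}f(T_i)\otimes\varphi_{\partial_i}|_V\in\mrm{End}_{\widehat{\overline{A}}[1/p]}(W),
\]
using the identification of $\scr{E}_A^*(1)$ with $\widehat{\overline{A}}[1/p]\otimes_{\bb{Q}_p}\lie(\bb{Z}_p\ltimes\bb{Z}_p^d)$ from \ref{rem:A-fal-ext}.(\ref{item:rem:A-fal-ext-3}), under which $f=\sum_i f(T_i)\,T_i^{*}$ and $T_i^{*}\leftrightarrow\partial_i$; this is manifestly the formula \eqref{eq:thm:sen-brinon-A}.

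Next I would check the algebraic properties of this formula. The $\widehat{\overline{A}}[1/p]$-linearity is built in. For the Lie algebra homomorphism property, the point is that the brackets $[\partial_0,\partial_i]=\partial_i$ and $[\partial_i,\partial_j]=0$ in $\lie(\Gamma)$ — coming from the semidirect product structure $\bb{Z}_p\ltimes\bb{Z}_p^e$ of the cyclotomic–Kummer tower — match exactly the brackets $[T_0^{*},T_i^{*}]=T_i^{*}$, $[T_i^{*},T_j^{*}]=0$ on $\scr{E}_A^{*}(1)$ recorded in \ref{rem:A-fal-ext}.(\ref{item:rem:A-fal-ext-3}); since $\varphi$ is a $\bb{Q}_p$-linear Lie algebra map and the bracket on $\scr{E}_A^{*}(1)$ is the semidirect-product bracket attached to the linear form $\iota^{*}$ (which sends $T_0^{*}\mapsto 1$, $T_i^{*}\mapsto 0$), the $\widehat{\overline{A}}[1/p]$-linear extension of $\partial_i\mapsto\varphi_{\partial_i}|_V$ preserves brackets, so $\varphi_{\sen}|_W$ is a homomorphism of $\widehat{\overline{A}}[1/p]$-linear Lie algebras.

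The step I expect to be the main obstacle is well-definedness: independence of $\varphi_{\sen}|_W$ from the chosen descent datum $(V,\underline{N},\underline{m})$. Given two such data I would pass to a common refinement $(\underline{N}'',\underline{m}'')$ above both, and use the full faithfulness part of \ref{thm:descent} to identify the analytic models over $\widetilde{A}^{(\underline{N}'')}_{\infty,\underline{m}''}[1/p]$ with $\widetilde{A}^{(\underline{N}'')}_{\infty,\underline{m}''}\otimes(-)$ applied to each of the original $V$'s. The crucial input is that the infinitesimal generators $\partial_i$ of the cyclotomic and Kummer directions are intrinsic to the tower $(A_{n,\underline{m}})$ — they do not change under enlarging $\underline{N}$ or $\underline{m}$, because $\lie(\Delta^{(\underline{N})}_{\underline{m}})=\lie(\Delta)$ and $\lie(\Sigma^{(\underline{N})}_{n,\underline{m}})=\lie(\Sigma)$ canonically by \eqref{eq:para:general-Anm-tower-lie} — and that $\varphi_{\partial_i}$ is compatible with restriction of scalars of analytic representations along the tower; together these force $\sum_i f(T_i)\otimes\varphi_{\partial_i}|_V$ to be unchanged. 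This simultaneously shows that the formula \eqref{eq:thm:sen-brinon-A} holds for \emph{any} presentation of $W$ as in \ref{para:notation-A-galois}, not just a chosen one.

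Finally, functoriality of $\varphi_{\sen}|_W$ in $W$ follows from full faithfulness of the descent functor \ref{thm:descent} together with the functoriality of the infinitesimal action $\varphi$ in the analytic representation $V$ (a morphism $W\to W'$ descends, for a common choice of $(\underline{N},\underline{m})$, to a morphism $V\to V'$ intertwining the $\varphi_{\partial_i}$). For $G_A$-equivariance I would track the natural $G_A$-action: an element $g\in G_A$ carries a descent datum at level $(\underline{N},\underline{m})$ to one at level $g(\underline{N},\underline{m})$, conjugating $\Xi^{(\underline{N})}$ and the basis $\partial_i$; combining this with the explicit description of the $G_A$-action on $\scr{E}_A^{*}(1)$ in \ref{rem:A-fal-ext}.(\ref{item:rem:A-fal-ext-3}) and the fact that $\varphi$ transforms equivariantly for conjugation gives that $\varphi_{\sen}|_W$ intertwines the $G_A$-action on $\scr{E}_A^{*}(1)$ with the adjoint action on $\mrm{End}_{\widehat{\overline{A}}[1/p]}(W)$, as claimed.
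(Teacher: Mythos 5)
First, note that the paper you are working from does not actually prove this statement: it is recalled verbatim from \cite[11.4]{he2022sen} (the section is explicitly a review), so there is no in-paper proof to compare against. That said, your overall strategy — descend $W$ to a $\Delta^{(\underline{N})}_{\underline{m}}$-analytic representation $V$ via \ref{thm:descent}, define $\varphi_{\sen}|_W$ by the formula \eqref{eq:thm:sen-brinon-A}, and verify the Lie algebra homomorphism property by matching the semidirect-product brackets $[\partial_0,\partial_i]=\partial_i$, $[\partial_i,\partial_j]=0$ against $[T_0^*,T_i^*]=T_i^*$, $[T_i^*,T_j^*]=0$ — is the right one and is how the cited source proceeds. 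The bracket computation and the $\widehat{\overline{A}}[1/p]$-linearity check are correct as written.

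The genuine gap is in your well-definedness step. Your common-refinement argument only handles varying $(\underline{N},\underline{m})$ and $V$ for a \emph{fixed} Kummer tower, namely the one built from a fixed adequate chart with coordinates $t_1,\dots,t_d$ and fixed compatible systems of $p$-power roots $t_{i,p^n}$. It does not address (a) independence of the choice of chart and of the roots (changing $t_{i,p^n}$ by roots of unity changes both $T_i$ and the splitting $\lie(\Sigma)\oplus\lie(\Delta)\subseteq\lie(\Gamma)$, and one must check these changes cancel), nor (b) the ``Moreover'' clause, which asserts the formula for an \emph{arbitrary} system $(s_1,\dots,s_e)$ with $\df s_1,\dots,\df s_e$ merely linearly independent, possibly with $e<d$. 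Two such systems generate Kummer towers that are not refinements of one another, and their union generally violates the linear-independence hypothesis of \ref{para:notation-A-galois}, so one cannot pass to a common descent datum within the framework you invoke; your claim that the argument ``simultaneously'' yields the formula for any presentation therefore does not follow. Closing this gap requires an intrinsic comparison — in \cite{he2022sen} this is done by relating each $\partial_i$ to $T_i$ through the $G_A$-action on the Faltings extension itself (the cocycle $\tau\mapsto(\tau-1)(T_i)=\xi_{s_i}(\tau)T_0$), which identifies the infinitesimal actions of different towers inside the common receptacle $\scr{E}_A^*(1)$. Without some such argument the map you construct is only defined relative to a chart, and both the canonicity claim and formula \eqref{eq:thm:sen-brinon-A} in its stated generality remain unproved.
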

\begin{myrem}\label{rem:sen-brinon-A}
	We keep the notation in \ref{thm:sen-brinon-A}.
	\begin{enumerate}
		\renewcommand{\labelenumi}{{\rm(\theenumi)}}
		\item Let $K'$ be a finite field extension of $K$ contained in $\overline{K}$, $A'=A^{K'}$, $W'=\widehat{\overline{A'}}\otimes_{\widehat{\overline{A}}}W$ the associated object of $\repnpr(G_{A'},\widehat{\overline{A'}}[1/p])$. Then, there is a canonical isomorphism of Sen actions of $W$ by \ref{rem:A-fal-ext}.(\ref{item:rem:A-fal-ext-1}) and \cite[11.7]{he2022sen},
		\begin{align}\label{eq:rem:sen-brinon-A-1}
			\xymatrix{
				\scr{E}^*_{A'}(1)\ar[rr]^-{\varphi_{\sen}|_{W'}}\ar[d]^-{\wr}&&\mrm{End}_{\widehat{\overline{A'}}[\frac{1}{p}]}(W')
				\\
				\scr{E}^*_A(1)
				\ar[rr]^-{\varphi_{\sen}|_W}&&\mrm{End}_{\widehat{\overline{A}}[\frac{1}{p}]}(W)\ar[u]^-{\wr}
			}
		\end{align}
		where we used the fact that $\overline{A'}=\overline{A}$ and $K\to K'$ is \'etale.
		\label{item:rem:sen-brinon-A-1}
		\item Let $K'$ be a complete discrete valuation field extension of $K$ with perfect residue field and we fix an extension of their algebraic closures $\overline{K}\to \overline{K'}$. Let $A_1$ (resp. $A_2$) be an adequate $\ca{O}_{K_1}$-algebra (resp. adequate $\ca{O}_{K_2}$-algebra) with fraction field $\ca{K}_1$ (resp. $\ca{K}_2$) for some finite field extension $K_1$ (resp. $K_2$) of $K$ (resp. $K'$) contained in $\overline{K}$ (resp. $\overline{K'}$). Then, for any commutative diagram of $(K,\ca{O}_K,\ca{O}_{\overline{K}})$-triples
		\begin{align}\label{eq:rem:sen-brinon-A-2}
			\xymatrix{
				(A_{1,\triv},A_1,\overline{A_1})\ar[r]& (A_{2,\triv},A_2,\overline{A_2})\\
				(K,\ca{O}_K,\ca{O}_{\overline{K}})\ar[u]\ar[r]&(K',\ca{O}_{K'},\ca{O}_{\overline{K'}})\ar[u]
			}
		\end{align}
		with $\overline{A_1}\to \overline{A_2}$ and $\ca{K}_2\otimes_{\ca{K}_1}\Omega^1_{\ca{K}_1/K}\to \Omega^1_{\ca{K}_2/K'}$ both injective, we consider a finite field extension $K_1'$ of $K_1$ contained in $\overline{K}$ and a finite field extension $K_2'$ of $K_1'K_2$ contained in $\overline{K'}$, and we put $A_i'=A_i^{K_i'}$. Then, \eqref{eq:rem:sen-brinon-A-2} induces a commutative diagram of $(K,\ca{O}_K,\ca{O}_{\overline{K}})$-triples (see \ref{rem:A-fal-ext}.(\ref{item:rem:A-fal-ext-2}))
		\begin{align}\label{eq:rem:sen-brinon-A-4}
			\xymatrix{
				(A_{1,\triv},A_1,\overline{A_1})\ar[r]&(A'_{1,\triv},A'_1,\overline{A'_1})\ar[r]& (A'_{2,\triv},A'_2,\overline{A'_2})& (A_{2,\triv},A_2,\overline{A_2})\ar[l]\\
				(K_1,\ca{O}_{K_1},\ca{O}_{\overline{K}})\ar[u]\ar[r]&(K_1',\ca{O}_{K_1'},\ca{O}_{\overline{K}})\ar[u]\ar[r]&(K_2',\ca{O}_{K_2'},\ca{O}_{\overline{K'}})\ar[u]&(K_2,\ca{O}_{K_2},\ca{O}_{\overline{K'}})\ar[u]\ar[l]
			}
		\end{align}
		where we used the fact that $\overline{A_i'}=\overline{A_i}$ for $i=1,2$. Thus, for any object $W_1$ of $\repnpr(G_{A_1},\widehat{\overline{A_1}}[1/p])$ with $W_2=\widehat{\overline{A_2}}\otimes_{\widehat{\overline{A_1}}}W_1$ the associated object of $\repnpr(G_{A_2},\widehat{\overline{A_2}}[1/p])$, there is a canonical commutative diagram by (\ref{item:rem:sen-brinon-A-1}), \ref{rem:A-fal-ext}.(\ref{item:rem:A-fal-ext-2}) and \cite[11.7]{he2022sen},
		\begin{align}\label{eq:rem:sen-brinon-A-5}
			\xymatrix{
				\scr{E}^*_{A_2}(1)\ar[rr]^-{\varphi_{\sen}|_{W_2}}\ar[d]&&\mrm{End}_{\widehat{\overline{A_2}}[\frac{1}{p}]}(W_2)\\
				\widehat{\overline{A_2}}\otimes_{\widehat{\overline{A_1}}}\scr{E}^*_{A_1}(1)
				\ar[rr]^-{\id_{\widehat{\overline{A_2}}}\otimes\varphi_{\sen}|_{W_1}}&&\widehat{\overline{A_2}}\otimes_{\widehat{\overline{A_1}}}\mrm{End}_{\widehat{\overline{A_1}}[\frac{1}{p}]}(W_1)
				\ar[u]_-{\wr}
			}
		\end{align}
		independent of the choices of $K_1'$ and $K_2'$.\label{item:rem:sen-brinon-A-2}
	\end{enumerate}
\end{myrem}

\begin{mythm}[{\cite[11.21]{he2022sen}}]\label{thm:sen-lie-lift-A}
	Let $A$ be an adequate $\ca{O}_K$-algebra with fraction field $\ca{K}$, $\ca{G}$ a quotient of $G_A=\gal(\ca{K}_{\mrm{ur}}/\ca{K})$ which is a (compact) $p$-adic analytic group (i.e., a closed subgroup of $\mrm{GL}_n(\bb{Z}_p)$ for some $n\in\bb{N}$, see \cite[Theorem 7.19]{ddms1999lie} and also {\rm\cite[3.9]{he2022sen}}), $\ca{L}$ the corresponding Galois extension of $\ca{K}$ (i.e., $\gal(\ca{L}/\ca{K})=\ca{G}$). Then, there is a canonical homomorphism of $\widehat{\overline{A}}[1/p]$-linear Lie algebras, called the \emph{universal Sen action of the $p$-adic analytic group quotient $\ca{G}$ of $G_A$} (or \emph{of the $p$-adic analytic Galois extension $\ca{L}$ of $\ca{K}$}),
	\begin{align}\label{eq:thm:sen-lie-lift-A-1}
		\varphi_{\sen}|_{\ca{G}}:\scr{E}^*_A(1)\longrightarrow \widehat{\overline{A}}[1/p]\otimes_{\bb{Q}_p}\lie(\ca{G}),
	\end{align}
	which is $G_A$-equivariant with respect to the canonical action on $\scr{E}_A^*(1)$ {\rm(\ref{rem:A-fal-ext}.(\ref{item:rem:A-fal-ext-3}))}, the canonical action on $\widehat{\overline{A}}[1/p]$ and the adjoint action on $\lie(\ca{G})$ {\rm(\cite[3.15]{he2022sen})}. Moreover, it makes the following diagram commutative for any object $V$ of $\repnpr(\ca{G},\bb{Q}_p)$,
	\begin{align}\label{eq:thm:sen-lie-lift-A-2}
		\xymatrix{
			\scr{E}^*_A(1)\ar[r]^-{\varphi_{\sen}|_{\ca{G}}}\ar[d]_-{\varphi_{\sen}|_W}&\widehat{\overline{A}}[\frac{1}{p}]\otimes_{\bb{Q}_p}\lie(\ca{G})\ar[d]^-{\id_{\widehat{\overline{A}}[\frac{1}{p}]}\otimes\varphi|_V}\\
			\mrm{End}_{\widehat{\overline{A}}[\frac{1}{p}]}(W)&\widehat{\overline{A}}[\frac{1}{p}]\otimes_{\bb{Q}_p}\mrm{End}_{\bb{Q}_p}(V)\ar[l]_-{\sim}
		}
	\end{align}
	where $W=\widehat{\overline{A}}[1/p]\otimes_{\bb{Q}_p}V$ is the associated object of $\repnpr(G_A,\widehat{\overline{A}}[1/p])$, $\varphi_{\sen}|_W$ is the Sen action of $W$ {\rm(\ref{thm:sen-brinon-A})}, and $\varphi|_V$ is the infinitesimal Lie algebra action of $\lie(\ca{G})$ on $V$ {\rm(\cite[4.13]{he2022sen})}. 
\end{mythm}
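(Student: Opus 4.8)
The plan is to deduce this from \ref{thm:sen-brinon-A} and \ref{thm:descent} as in \cite[11.21, 11.23]{he2022sen}. Write $R=\widehat{\overline{A}}[1/p]$. Since $\ca{G}$ is a compact $p$-adic analytic group it admits a faithful object $V_0$ of $\repnpr(\ca{G},\bb{Q}_p)$ — say $\bb{Q}_p^n$ coming from a closed embedding $\ca{G}\hookrightarrow\mrm{GL}_n(\bb{Z}_p)$ — and then $\varphi|_{V_0}:\lie(\ca{G})\hookrightarrow\mrm{End}_{\bb{Q}_p}(V_0)$ is an injective homomorphism of Lie algebras, $G_A$-equivariant for the adjoint actions when $G_A$ acts on $V_0$ through $\ca{G}$. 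With $W_0=R\otimes_{\bb{Q}_p}V_0$ and $\mrm{End}_R(W_0)=R\otimes_{\bb{Q}_p}\mrm{End}_{\bb{Q}_p}(V_0)$, the diagram \eqref{eq:thm:sen-lie-lift-A-2} for $V=V_0$ already pins $\varphi_{\sen}|_{\ca{G}}$ down as the corestriction of the Sen action $\varphi_{\sen}|_{W_0}$ along $\id_R\otimes\varphi|_{V_0}$; so the real task is to show that this corestriction is legitimate, i.e. the following claim, which is the geometric counterpart of the classical fact that the Sen operator of a representation valued in a $p$-adic analytic group lies in the corresponding Lie algebra.

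\textbf{Claim.} For every $V$ in $\repnpr(\ca{G},\bb{Q}_p)$ and $W=R\otimes_{\bb{Q}_p}V$, the image of $\varphi_{\sen}|_W$ is contained in $R\otimes_{\bb{Q}_p}\varphi|_V(\lie(\ca{G}))\subseteq R\otimes_{\bb{Q}_p}\mrm{End}_{\bb{Q}_p}(V)=\mrm{End}_R(W)$.

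To prove the Claim I would use \ref{thm:descent} to represent $W$ by a $\Delta^{(\underline{N})}_{\underline{m}}$-analytic object $\widetilde{V}$ over $\widetilde{A}^{(\underline{N})}_{\infty,\underline{m}}[1/p]$ for a suitable Kummer tower attached to a system of coordinates $(t_1,\dots,t_d)$, so that by \eqref{eq:thm:sen-brinon-A} one has $\varphi_{\sen}|_W(f)=\sum_{i=0}^d f(T_i)\otimes\varphi_{\partial_i}|_{\widetilde{V}}$, and it then suffices to see that the base change to $R$ of each $\varphi_{\partial_i}|_{\widetilde{V}}$ lies in $R\otimes_{\bb{Q}_p}\varphi|_V(\lie(\ca{G}))$. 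For the Kummer directions $i=1,\dots,d$ this is clean: the subgroup $\Delta^{(\underline{N})}_{\underline{m}}$ fixes the field $\ca{K}^{(\underline{N})}_{\infty,\underline{m}}$, hence acts trivially on $\widetilde{A}^{(\underline{N})}_{\infty,\underline{m}}[1/p]$, so after lifting $\partial_i$ to the Lie algebra of the decomposition subgroup and comparing the two descriptions of the resulting one-parameter action on $W$ — through the $\widetilde{A}^{(\underline{N})}_{\infty,\underline{m}}[1/p]$-semilinear $\Xi^{(\underline{N})}$-structure of $\widetilde{V}$ and through the $\ca{G}$-action on $V$ — the contributions along $\widehat{\overline{A}}$ cancel and one obtains $\id_R\otimes\varphi_{\partial_i}|_{\widetilde{V}}=1\otimes\varphi|_V(\bar\partial_i)$ for the image $\bar\partial_i\in\lie(\ca{G})$ of the chosen lift. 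The cyclotomic direction $i=0$ is handled in the same spirit once the normalization built into $\varphi_{\partial_0}$ (which renders it $\widetilde{A}^{(\underline{N})}_{\infty,\underline{m}}[1/p]$-linear, after subtracting the cyclotomic contribution to the base) is taken into account — this is precisely where the arithmetic Sen operator enters, and I expect it to be the delicate point. (The purely Tannakian argument, combining monoidality of the Sen action with Chevalley's theorem, only shows that $\varphi_{\sen}|_W(f)$ annihilates every $\ca{G}$-invariant tensor, hence lies in $R\otimes_{\bb{Q}_p}\lie(\overline{\ca{G}})$ for the Zariski closure $\overline{\ca{G}}$ of the image; since $\lie(\ca{G})$ can be strictly smaller than $\lie(\overline{\ca{G}})$ for a compact $p$-adic analytic $\ca{G}$, this does not by itself give the Claim, so one really does need the explicit formula.)

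Granting the Claim, I would define $\varphi_{\sen}|_{\ca{G}}$ to be the corestriction of $\varphi_{\sen}|_{W_0}$ along the injective $G_A$-equivariant Lie homomorphism $\id_R\otimes\varphi|_{V_0}$; it inherits from $\varphi_{\sen}|_{W_0}$ {\rm(\ref{thm:sen-brinon-A})} the property of being an $R$-linear homomorphism of Lie algebras and of being $G_A$-equivariant for the canonical action on $\scr{E}^*_A(1)$ {\rm(\ref{rem:A-fal-ext}.(\ref{item:rem:A-fal-ext-3}))}, the canonical action on $R$ and the adjoint action on $\lie(\ca{G})$. For an arbitrary $V$ one then proves \eqref{eq:thm:sen-lie-lift-A-2} by applying the Claim to the faithful representation $V_0\oplus V$: functoriality of the Sen action for the inclusion of and projection onto the summand $W_0$ gives $\varphi_{\sen}|_{W_0\oplus W}=\varphi_{\sen}|_{W_0}\oplus\varphi_{\sen}|_W$, and $\varphi|_{V_0\oplus V}=\varphi|_{V_0}\oplus\varphi|_V$, so the Claim produces for each $f$ an element $g\in R\otimes_{\bb{Q}_p}\lie(\ca{G})$ with $(\id_R\otimes\varphi|_{V_0})(g)=\varphi_{\sen}|_{W_0}(f)$ and $(\id_R\otimes\varphi|_V)(g)=\varphi_{\sen}|_W(f)$; injectivity of $\varphi|_{V_0}$ forces $g=\varphi_{\sen}|_{\ca{G}}(f)$, whence $\varphi_{\sen}|_W=(\id_R\otimes\varphi|_V)\circ\varphi_{\sen}|_{\ca{G}}$, and taking $V$ to be a second faithful representation shows $\varphi_{\sen}|_{\ca{G}}$ is independent of the choice of $V_0$. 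The diagram \eqref{eq:thm:sen-lie-lift-A-2} together with \ref{thm:sen-brinon-A} then yields all the asserted compatibilities. The hard part is the Claim, and within it the cyclotomic direction together with the semilinear-versus-linear bookkeeping for the descent datum $\widetilde{V}$, all of which is carried out in \cite{he2022sen}.
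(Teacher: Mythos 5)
The paper itself contains no proof of this theorem: it is quoted from \cite[11.21, 11.23]{he2022sen} as part of the review in Section 5, so there is no internal argument to compare against. Your overall architecture — pick a faithful $V_0$ so that $\varphi|_{V_0}$ is injective, prove the Claim that $\varphi_{\sen}|_W$ lands in $R\otimes_{\bb{Q}_p}\varphi|_V(\lie(\ca{G}))$ (writing $R=\widehat{\overline{A}}[1/p]$), define $\varphi_{\sen}|_{\ca{G}}$ by corestriction, and deduce \eqref{eq:thm:sen-lie-lift-A-2} for general $V$ from the faithful representation $V_0\oplus V$ — is the correct skeleton and matches the cited source, and your remark that the Tannakian/Chevalley argument only controls the Zariski closure is a genuine and correct objection to the shortcut.

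However, your proof of the Claim has a real error in the step you call ``clean''. You assert the identity $\id_R\otimes\varphi_{\partial_i}|_{\widetilde{V}}=1\otimes\varphi|_V(\bar\partial_i)$ for a single element $\bar\partial_i\in\lie(\ca{G})$. Combined with \eqref{eq:thm:sen-brinon-A} this would force $\varphi_{\sen}|_{\ca{G}}(T_i^*)=1\otimes\bar\partial_i$, i.e.\ the universal (geometric) Sen action would be \emph{constant}, with image inside $1\otimes\lie(\ca{G})$. That is false in general: for Shimura varieties the universal geometric Sen action is essentially the derivative of the Hodge--Tate period map (cf.\ \ref{thm:camargo}, \ref{prop:camargo-sen-comp}, and Pan's explicit non-constant matrix for modular curves), and its image genuinely varies over $\widehat{\overline{A}}[1/p]$; more basically, $\ca{L}$ need have nothing to do with the auxiliary Kummer tower defining the $\partial_i$, so there is no canonical element $\bar\partial_i$ of $\lie(\ca{G})$ at all (a lift $\tilde\tau\in G_A$ of $\tau\in\Delta^{(\underline{N})}_{\underline{m}}$ is only determined modulo $H^{(\underline{N})}_{\underline{\infty}}$, whose image in $\ca{G}$ can be large, and $\log$ of the image need not even be defined). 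The source of the error is the implicit identification of the descent $\widetilde{V}$ of \ref{thm:descent} with $\widetilde{A}^{(\underline{N})}_{\infty,\underline{m}}[1/p]\otimes_{\bb{Q}_p}V$ carrying the diagonal action: since $H^{(\underline{N})}_{\underline{\infty}}$ acts nontrivially on $V$ through $\ca{G}$, the comparison isomorphism $\widehat{\overline{A}}[1/p]\otimes_{\widetilde{A}^{(\underline{N})}_{\infty,\underline{m}}[1/p]}\widetilde{V}\cong R\otimes_{\bb{Q}_p}V$ involves a nontrivial period matrix, and $\id_R\otimes\varphi_{\partial_i}|_{\widetilde{V}}$ is a conjugate by that matrix. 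The actual content of \cite[11.21]{he2022sen} is that the decompletion can be performed compatibly with the $\ca{G}$-torsor structure, so that the period matrix takes values in (a completed form of) $\ca{G}$ itself and conjugation preserves $R\otimes_{\bb{Q}_p}\lie(\ca{G})$; only the containment survives, not your equality, and it is exactly this containment that your sketch does not establish. The cyclotomic direction, which you defer entirely, has the same difficulty compounded by the semilinearity of the $\Sigma$-action.
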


\begin{myrem}\label{rem:sen-lie-lift-A}
	We keep the notation in \ref{thm:sen-lie-lift-A}.
	\begin{enumerate}
		\renewcommand{\labelenumi}{{\rm(\theenumi)}}
		\item Let $K'$ be a finite field extension of $K$ contained in $\overline{K}$, $A'=A^{K'}$, $\ca{G}'$ the image of the composition of $G_{A'}\to G_A\to \ca{G}$ (i.e., $\ca{L}'=K'\ca{L}$). Then, there is a canonical isomorphism of universal Sen actions of $\ca{G}$ and $\ca{G}'$ by \ref{rem:A-fal-ext}.(\ref{item:rem:A-fal-ext-1}) and \cite[11.23.(4)]{he2022sen},
		\begin{align}\label{eq:rem:sen-lie-lift-A-1}
			\xymatrix{
				\scr{E}^*_{A'}(1)\ar[rr]^-{\varphi_{\sen}|_{\ca{G}'}}\ar[d]^-{\wr}&&\widehat{\overline{A'}}[\frac{1}{p}]\otimes_{\bb{Q}_p}\lie(\ca{G}')\ar[d]^-{\wr}\\
				\scr{E}^*_A(1)
				\ar[rr]^-{\varphi_{\sen}|_{\ca{G}}}&&\widehat{\overline{A}}[\frac{1}{p}]\otimes_{\bb{Q}_p}\lie(\ca{G})
			}
		\end{align}
		where we used the fact that $\overline{A'}=\overline{A}$ and $K\to K'$ is finite.
		\label{item:rem:sen-lie-lift-A-1}
		\item Let $K'$ be a complete discrete valuation field extension of $K$ with perfect residue field and we fix an extension of their algebraic closures $\overline{K}\to \overline{K'}$. Let $A_1$ (resp. $A_2$) be an adequate $\ca{O}_{K_1}$-algebra (resp. adequate $\ca{O}_{K_2}$-algebra) with fraction field $\ca{K}_1$ (resp. $\ca{K}_2$) for some finite field extension $K_1$ (resp. $K_2$) of $K$ (resp. $K'$) contained in $\overline{K}$ (resp. $\overline{K'}$). Then, for any commutative diagram of $(K,\ca{O}_K,\ca{O}_{\overline{K}})$-triples
		\begin{align}\label{eq:rem:sen-lie-lift-A-2}
			\xymatrix{
				(A_{1,\triv},A_1,\overline{A_1})\ar[r]& (A_{2,\triv},A_2,\overline{A_2})\\
				(K,\ca{O}_K,\ca{O}_{\overline{K}})\ar[u]\ar[r]&(K',\ca{O}_{K'},\ca{O}_{\overline{K'}})\ar[u]
			}
		\end{align}
		with $\overline{A_1}\to \overline{A_2}$ and $\ca{K}_2\otimes_{\ca{K}_1}\Omega^1_{\ca{K}_1/K}\to \Omega^1_{\ca{K}_2/K'}$ both injective, we consider a finite field extension $K_1'$ of $K_1$ contained in $\overline{K}$ and a finite field extension $K_2'$ of $K_1'K_2$ contained in $\overline{K'}$, and we put $A_i'=A_i^{K_i'}$. Then, \eqref{eq:rem:sen-lie-lift-A-2} induces a commutative diagram of $(K,\ca{O}_K,\ca{O}_{\overline{K}})$-triples (see \ref{rem:A-fal-ext}.(\ref{item:rem:A-fal-ext-2}))
		\begin{align}\label{eq:rem:sen-lie-lift-A-4}
			\xymatrix{
				(A_{1,\triv},A_1,\overline{A_1})\ar[r]&(A'_{1,\triv},A'_1,\overline{A'_1})\ar[r]& (A'_{2,\triv},A'_2,\overline{A'_2})& (A_{2,\triv},A_2,\overline{A_2})\ar[l]\\
				(K_1,\ca{O}_{K_1},\ca{O}_{\overline{K}})\ar[u]\ar[r]&(K_1',\ca{O}_{K_1'},\ca{O}_{\overline{K}})\ar[u]\ar[r]&(K_2',\ca{O}_{K_2'},\ca{O}_{\overline{K'}})\ar[u]&(K_2,\ca{O}_{K_2},\ca{O}_{\overline{K'}})\ar[u]\ar[l]
			}
		\end{align}
		where we used the fact that $\overline{A_i'}=\overline{A_i}$ for $i=1,2$. Thus, for any $p$-adic analytic Galois extension $\ca{L}_1$ of $\ca{K}_1$ contained in $\ca{K}_{1,\mrm{ur}}$ and any $p$-adic analytic Galois extension $\ca{L}_2$ of $\ca{K}_2$ containing $\ca{L}_1$  contained in $\ca{K}_{2,\mrm{ur}}$, 
		\begin{align}
			\xymatrix{
				\ca{L}_1\ar[r]&K_1'\ca{L}_1\ar[r]&K_2'\ca{L}_2&\ca{L}_2\ar[l]\\
				\ca{K}_1\ar[u]^-{\ca{G}_1}\ar[r]&K_1'\ca{K}_1\ar[r]\ar[u]^-{\ca{G}_1'}&K_2'\ca{K}_2\ar[u]_-{\ca{G}_2'}&\ca{K}_2\ar[l]\ar[u]_-{\ca{G}_2}\\
				K_1\ar[u]\ar[r]&K_1'\ar[r]\ar[u]&K_2'\ar[u]&K_2\ar[l]\ar[u]
			}
		\end{align}
		there is a canonical commutative diagram by (\ref{item:rem:sen-lie-lift-A-1}), \ref{rem:A-fal-ext}.(\ref{item:rem:A-fal-ext-2}) and \cite[11.23, (4), (6)]{he2022sen},
		\begin{align}\label{eq:rem:sen-lie-lift-A-5}
			\xymatrix{
				\scr{E}^*_{A_2}(1)\ar[rr]^-{\varphi_{\sen}|_{\ca{G}_2}}\ar[d]&&\widehat{\overline{A_2}}[\frac{1}{p}]\otimes_{\bb{Q}_p}\lie(\ca{G}_2)\ar[d]\\
				\widehat{\overline{A_2}}\otimes_{\widehat{\overline{A_1}}}\scr{E}^*_{A_1}(1)
				\ar[rr]^-{\id_{\widehat{\overline{A_2}}}\otimes\varphi_{\sen}|_{\ca{G}_1}}&&\widehat{\overline{A_2}}[\frac{1}{p}]\otimes_{\bb{Q}_p}\lie(\ca{G}_1)
			}
		\end{align}	
		independent of the choices of $K_1'$ and $K_2'$, where $\ca{G}_i=\gal(\ca{L}_i/\ca{K}_i)$ ($i=1,2$). \label{item:rem:sen-lie-lift-A-2}
	\end{enumerate}
\end{myrem}

\begin{myprop}\label{prop:geom-sen-nonzero}
	With the notation in {\rm\ref{thm:sen-lie-lift-A}}, let $L$ be a Galois extension of $K$ contained in $\ca{L}\cap\overline{K}$, $\ca{G}^{\mrm{geo}}=\gal(\ca{L}/L\ca{K})$ and $\ca{G}^{\mrm{ari}}=\gal(L\ca{K}/\ca{K})$. Then, $\varphi_{\sen}|_{\ca{G}}:\scr{E}^*_A(1)\to \widehat{\overline{A}}[1/p]\otimes_{\bb{Q}_p}\lie(\ca{G})$ induces a morphism of exact sequences of $\widehat{\overline{A}}[1/p]$-linear Lie algebras,
	\begin{align}\label{eq:prop:geom-sen-nonzero-1}
		\xymatrix{
			0\ar[r]& \ho_A(\Omega^1_{(X_K,\scr{M}_{X_K})/K}(-1),\widehat{\overline{A}})\ar[r]^-{\jmath^*}\ar[d]^-{\varphi^{\mrm{geo}}_{\sen}|_{\ca{G}}}& \scr{E}^*_A(1)\ar[r]^-{\iota^*}\ar[d]^-{\varphi_{\sen}|_{\ca{G}}}&\widehat{\overline{A}}[\frac{1}{p}]\ar[r]\ar[d]^-{\varphi^{\mrm{ari}}_{\sen}|_{\ca{G}}}& 0\\
			0\ar[r]& \widehat{\overline{A}}[1/p]\otimes_{\bb{Q}_p}\lie(\ca{G}^{\mrm{geo}})\ar[r]& \widehat{\overline{A}}[1/p]\otimes_{\bb{Q}_p}\lie(\ca{G})\ar[r]&\widehat{\overline{A}}[1/p]\otimes_{\bb{Q}_p}\lie(\ca{G}^{\mrm{ari}})\ar[r]& 0
		}
	\end{align}
	where the exact sequence on the top is \eqref{eq:A-fal-ext-dual}.
	Moreover, the induced homomorphism $\varphi^{\mrm{ari}}_{\sen}|_{\ca{G}}$ is not zero if and only if the inertia subgroup of $L/K$ is infinite, and in this case $\varphi^{\mrm{ari}}_{\sen}|_{\ca{G}}$ admits an $\widehat{\overline{A}}[1/p]$-linear retraction.
\end{myprop}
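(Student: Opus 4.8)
The plan is to reduce both assertions, via the functoriality of the universal Sen action, to classical Sen theory over the relative-dimension-zero adequate algebra $\ca{O}_K$. First I would record the two exact sequences in \eqref{eq:prop:geom-sen-nonzero-1}: the top row is \eqref{eq:A-fal-ext-dual} of \ref{rem:A-fal-ext}.(\ref{item:rem:A-fal-ext-3}); the bottom row is exact because $\ca{G}^{\mrm{geo}}$ is a closed normal subgroup of $\ca{G}$ with quotient $\ca{G}^{\mrm{ari}}$, so $0\to\lie(\ca{G}^{\mrm{geo}})\to\lie(\ca{G})\to\lie(\ca{G}^{\mrm{ari}})\to 0$ is exact, and remains so after the flat base change $-\otimes_{\bb{Q}_p}\widehat{\overline{A}}[1/p]$. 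To see that $\varphi_{\sen}|_{\ca{G}}$ carries $\im(\jmath^*)=\ke(\iota^*)$ into $\widehat{\overline{A}}[1/p]\otimes_{\bb{Q}_p}\lie(\ca{G}^{\mrm{geo}})$, I would compose $\varphi_{\sen}|_{\ca{G}}$ with the projection onto $\widehat{\overline{A}}[1/p]\otimes_{\bb{Q}_p}\lie(\ca{G}^{\mrm{ari}})$: applying \ref{rem:sen-lie-lift-A}.(\ref{item:rem:sen-lie-lift-A-2}) with $A_1=A_2=A$, $K_1=K_2=K$, $\ca{L}_1=L\ca{K}$ and $\ca{L}_2=\ca{L}$ (for which the comparison map \eqref{eq:rem:A-fal-ext-8} is the identity) shows that this composite is $\varphi_{\sen}|_{\ca{G}^{\mrm{ari}}}$.

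Next I would compute $\varphi_{\sen}|_{\ca{G}^{\mrm{ari}}}$ by base change from $\ca{O}_K$. One checks that $\ca{O}_K$ is an adequate $\ca{O}_K$-algebra of relative dimension $0$ (take $\alpha=\beta\colon\bb{N}\to\ca{O}_K$ sending $1$ to a uniformizer and $\gamma=\id_{\bb{N}}$), so that $\Omega^1_{(X_K,\scr{M}_{X_K})/K}=\Omega^1_{K/K}=0$ and its Faltings extension \eqref{eq:thm:A-fal-ext-1} degenerates to $\iota\colon\widehat{\overline{K}}(1)\iso\scr{E}_{\ca{O}_K}$; hence $\scr{E}^*_{\ca{O}_K}(1)=\widehat{\overline{K}}$ with trivial Lie bracket. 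Applying \ref{rem:sen-lie-lift-A}.(\ref{item:rem:sen-lie-lift-A-2}) to the morphism of triples $(K,\ca{O}_K,\ca{O}_{\overline{K}})\to(A_\triv,A,\overline{A})$ with $\ca{L}_1=L$ and $\ca{L}_2=L\ca{K}$ (the hypothesis on differentials is vacuous since $\Omega^1_{K/K}=0$, and $\gal(L/K)$ is $p$-adic analytic as it contains the finite-index subgroup $\gal(L/(L\cap\ca{K}))\cong\ca{G}^{\mrm{ari}}$), I obtain $\varphi_{\sen}|_{\ca{G}^{\mrm{ari}}}=(\id_{\widehat{\overline{A}}[1/p]}\otimes\varphi_{\sen}|_{\gal(L/K)})\circ\theta$, where $\varphi_{\sen}|_{\gal(L/K)}\colon\widehat{\overline{K}}\to\widehat{\overline{K}}\otimes_{\bb{Q}_p}\lie(\gal(L/K))$ is the universal (arithmetic) Sen action of $L/K$ and $\theta\colon\scr{E}^*_A(1)\to\widehat{\overline{A}}\otimes_{\widehat{\overline{K}}}\scr{E}^*_{\ca{O}_K}(1)=\widehat{\overline{A}}[1/p]$ is the comparison map. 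The key point is that $\theta=\iota^*$: it is the Tate twist of the dual of \eqref{eq:rem:A-fal-ext-8} for this morphism, and \eqref{eq:rem:A-fal-ext-8} here is the map $\widehat{\overline{A}}\otimes_{\widehat{\overline{K}}}\widehat{\overline{K}}(1)\to\scr{E}_A$ carrying $(\df\log\zeta_{p^n})_n$ to $(\df\log\zeta_{p^n})_n$, i.e.\ $\iota$ (\ref{thm:A-fal-ext}.(\ref{item:thm:A-fal-ext-1})). So $\varphi_{\sen}|_{\ca{G}^{\mrm{ari}}}$ factors through $\iota^*$, hence annihilates $\ke(\iota^*)=\im(\jmath^*)$; this yields the morphism of exact sequences \eqref{eq:prop:geom-sen-nonzero-1}, with $\varphi^{\mrm{ari}}_{\sen}|_{\ca{G}}=\id_{\widehat{\overline{A}}[1/p]}\otimes\varphi_{\sen}|_{\gal(L/K)}$ (under $\lie(\ca{G}^{\mrm{ari}})\cong\lie(\gal(L/K))$).

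For the last assertion, faithful flatness of $\widehat{\overline{A}}[1/p]$ over the field $\widehat{\overline{K}}$ gives $\varphi^{\mrm{ari}}_{\sen}|_{\ca{G}}\neq 0\iff\varphi_{\sen}|_{\gal(L/K)}\neq 0$. Since $\scr{E}^*_{\ca{O}_K}(1)=\widehat{\overline{K}}$ is one-dimensional, $\varphi_{\sen}|_{\gal(L/K)}$ is pinned down by $\varphi_{\sen}|_{\gal(L/K)}(1)$, which is the classical arithmetic Sen operator of $L/K$: by \ref{thm:sen-lie-lift-A}, for a $\bb{Q}_p$-representation $V$ of $\gal(L/K)$ its image under $\id\otimes\varphi|_V$ is the Sen operator of $\widehat{\overline{K}}\otimes_{\bb{Q}_p}V$. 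Choosing $V$ faithful (so $\varphi|_V$ is injective), $\varphi_{\sen}|_{\gal(L/K)}\neq 0$ iff the Sen operator of $\widehat{\overline{K}}\otimes_{\bb{Q}_p}V$ is nonzero; by Sen's theorem this operator vanishes iff the image of the inertia subgroup of $\gal(\overline{K}/K)$ in $\mrm{GL}(V)$ is finite, equivalently (as $V$ is faithful on $\gal(L/K)$) iff the inertia subgroup of $L/K$ is finite. This is the claimed equivalence. When $\varphi_{\sen}|_{\gal(L/K)}\neq 0$ it is an injective $\widehat{\overline{K}}$-linear map out of a one-dimensional space, so its image is a direct summand and it has a $\widehat{\overline{K}}$-linear retraction $r$; then $\id_{\widehat{\overline{A}}[1/p]}\otimes r$ is an $\widehat{\overline{A}}[1/p]$-linear retraction of $\varphi^{\mrm{ari}}_{\sen}|_{\ca{G}}$.

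The main obstacle is the two functoriality bookkeeping steps, above all the identification $\theta=\iota^*$ (so that the arithmetic Sen action really is the cokernel map of the top row) together with checking that all the Lie-algebra identifications $\lie(\ca{G}^{\mrm{ari}})\cong\lie(\gal(L/K))$ are the canonical ones; the remaining ingredient, Sen's criterion $\Theta_V=0\iff$ the inertia image is finite, is classical, and once it is in place the geometric content of the proposition is immediate.
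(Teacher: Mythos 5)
Your proposal is correct and follows essentially the same route as the paper: both reduce the right-hand vertical map to the base change of the classical arithmetic Sen action via the functoriality of \ref{rem:sen-lie-lift-A}.(\ref{item:rem:sen-lie-lift-A-2}) applied to the morphism of triples $(K,\ca{O}_K,\ca{O}_{\overline{K}})\to(A_{\triv},A,\overline{A})$, identify the comparison map with $\iota^*$ using $\scr{E}_{\ca{O}_K}=\widehat{\overline{K}}(1)$, and conclude with Sen's theorem plus faithful flatness of $\widehat{\overline{K}}\to\widehat{\overline{A}}[1/p]$. Your more explicit verification that $\theta=\iota^*$ and the unpacking of Sen's criterion through a faithful representation are just fuller versions of the citations the paper makes.
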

\begin{proof}
	Firstly, we note that $\ca{G}^{\mrm{geo}}$ and $\ca{G}^{\mrm{ari}}$ are still $p$-adic analytic groups (\cite[Theorems 9.6, 9.7]{ddms1999lie}) and we have a canonical exact sequence of $\bb{Q}_p$-linear Lie algebras $0\to \lie(\ca{G}^{\mrm{geo}})\to \lie(\ca{G})\to \lie(\ca{G}^{\mrm{ari}})\to 0$ (\cite[3.14]{he2022sen}). As the fraction field $\ca{K}$ of $A$ is a finitely generated field extension of $K$, $\ca{K}\cap L$ is finite over $K$  (\cite[\href{https://stacks.math.columbia.edu/tag/037J}{037J}]{stacks-project}). In particular, $\gal(L\ca{K}/\ca{K})$ is an open subgroup of $\gal(L/K)$. Consider the morphism of adequate $(K,\ca{O}_K,\ca{O}_{\overline{K}})$-triples $(K,\ca{O}_K,\ca{O}_{\overline{K}})\to (A_{\triv},A,\overline{A})$. By \ref{rem:sen-lie-lift-A}.(\ref{item:rem:sen-lie-lift-A-2}), there is a canonical commutative diagram 
	\begin{align}
		\xymatrix{
			\scr{E}^*_A(1)\ar[rrr]^-{\varphi_{\sen}|_{\gal(L\ca{K}/\ca{K})}}\ar[d]&&&\widehat{\overline{A}}[\frac{1}{p}]\otimes_{\bb{Q}_p}\lie(\gal(L\ca{K}/\ca{K}))\ar[d]^-{\wr}\\
			\widehat{\overline{A}}\otimes_{\ca{O}_{\widehat{\overline{K}}}}\scr{E}^*_{\ca{O}_K}(1)
			\ar[rrr]^-{\id_{\widehat{\overline{A}}}\otimes\varphi_{\sen}|_{\gal(L/K)}}&&&\widehat{\overline{A}}[\frac{1}{p}]\otimes_{\bb{Q}_p}\lie(\gal(L/K))
		}
	\end{align}
	where the right vertical arrow is an isomorphism.
	We note that the left vertical arrow is identified with $\iota^*:\scr{E}^*_A(1)\to \widehat{\overline{A}}[1/p]$ by the functoriality of Faltings extensions (\ref{rem:A-fal-ext}.(\ref{item:rem:A-fal-ext-2})) and by the fact that $\scr{E}_{\ca{O}_K}=\widehat{\overline{K}}(1)$ (as $\Omega^1_{K/K}=0$). 
	
	On the other hand, as $\ca{G}^{\mrm{ari}}=\gal(L\ca{K}/\ca{K})$ is a quotient of $\ca{G}=\gal(\ca{L}/\ca{K})$, there is a canonical commutative diagram
	\begin{align}
		\xymatrix{
			\scr{E}^*_A(1)\ar[rr]^-{\varphi_{\sen}|_{\ca{G}}}\ar@{=}[d]&&\widehat{\overline{A}}[\frac{1}{p}]\otimes_{\bb{Q}_p}\lie(\ca{G})\ar@{->>}[d]\\
			\scr{E}^*_A(1)\ar[rr]^-{\varphi_{\sen}|_{\ca{G}^{\mrm{ari}}}}&&\widehat{\overline{A}}[\frac{1}{p}]\otimes_{\bb{Q}_p}\lie(\ca{G}^{\mrm{ari}})
		}
	\end{align}
	which (together with the discussion above) implies that the right square of \eqref{eq:prop:geom-sen-nonzero-1} is commutative, where we put $\varphi^{\mrm{ari}}_{\sen}|_{\ca{G}}=\id_{\widehat{\overline{A}}}\otimes\varphi_{\sen}|_{\gal(L/K)}$. Thus, we obtain a homomorphism $\varphi^{\mrm{geo}}_{\sen}|_{\ca{G}}$ between the kernels (and thus a commutative diagram \eqref{eq:prop:geom-sen-nonzero-1}).
	
	Notice that $\varphi^{\mrm{ari}}_{\sen}|_{\ca{G}}$ is nonzero if and only if $\varphi_{\sen}|_{\gal(L/K)}$ is nonzero (as $\widehat{\overline{K}}\to \widehat{\overline{A}}[1/p]$ is faithfully flat). The latter is equivalent to the infiniteness of the inertia subgroup of $L/K$ by \cite[Theorem 11 and its Corollary]{sen1980sen} (see also \cite[11.23.(2)]{he2022sen}). In this case, $\varphi_{\sen}|_{\gal(L/K)}$ is injective and thus admits a $\widehat{\overline{K}}$-linear retraction. Then, $\varphi^{\mrm{ari}}_{\sen}|_{\ca{G}}$ admits an $\widehat{\overline{A}}[1/p]$-linear retraction.
\end{proof}

\begin{myrem}\label{rem:geom-sen-nonzero}
	In \ref{prop:geom-sen-nonzero}, if $K$ is a finite extension of $\bb{Q}_p$, then the inertia subgroup of $L/K$ is infinite if and only if $L$ is a pre-perfectoid field by Sen's theorem on ramifications of $p$-adic analytic Galois extension of local fields, see \cite[2.13]{coatesgreenberg1996kummer}.
\end{myrem}

\section{Induced Faltings Extension over Geometric Valuative Points}\label{sec:fal-ext}
Firstly, we deduce from our variant of Temkin's admissibly \'etale local uniformization theorem \ref{thm:uniformization} that adequate algebras form a neighborhood basis of any geometric valuative point of a smooth variety in the admissibly \'etale topology (see \ref{prop:aet-neighborhood}). Then, using the functorial construction \ref{thm:A-fal-ext} of Faltings extensions over adequate algebras, we define Faltings extensions at any geometric valuative point (see \ref{thm:fal-ext-val-pt}). Finally, we deduce a criterion for pointwise perfectoidness via the Galois action on these Faltings extensions from the criterion established in Section \ref{sec:trace} (see \ref{thm:perfd-val-log-1} and \ref{thm:perfd-val-log-2}).

\begin{mydefn}\label{defn:neighborhood}
	Let $K$ be a valuation field, $Y$ a $K$-scheme of finite presentation. A \emph{geometric valuative point} of $Y$ over $\ca{O}_K$ is a morphism of $K$-schemes $ \spec(\overline{\ca{F}})\to Y$, where $\overline{\ca{F}}$ is an algebraically closed valuation field extension of $K$. A \emph{neighborhood} of $Y$ over $\ca{O}_K$ at the geometric valuative point $\spec(\overline{\ca{F}})$ is a commutative diagram of schemes
	\begin{align}
		\xymatrix{
			\spec(\overline{\ca{F}})\ar[r]\ar[d]&\spec(\ca{O}_{\overline{\ca{F}}})\ar[d]\\
			X_K\ar[r]\ar[d]&X\ar[dd]\\
			Y\ar[d]&\\
			\spec(K)\ar[r]&\spec(\ca{O}_K).
		}
	\end{align}
	If moreover $X$ is of finite presentation over $\ca{O}_K$ (resp. and $X_K$ is \'etale over $Y$), then we call $X$ a \emph{finitely presented} (resp. \emph{admissibly \'etale}) neighborhood. We denote by $\nbd^{\mrm{fp}}_{\overline{\ca{F}}}(Y/\ca{O}_K)$ (resp. $\nbd^{\aetale}_{\overline{\ca{F}}}(Y/\ca{O}_K)$) the category of all finitely presented (resp. admissibly \'etale) neighborhoods of $Y$ over $\ca{O}_K$ at $\spec(\overline{\ca{F}})$. 
\end{mydefn}

\begin{mypara}
	One can check that $\nbd^{\mrm{fp}}_{\overline{\ca{F}}}(Y/\ca{O}_K)$ and $\nbd^{\aetale}_{\overline{\ca{F}}}(Y/\ca{O}_K)$ are both cofiltered. For a finitely presented (resp. admissibly \'etale) neighborhood $X$ of $Y/\ca{O}_K$ at $\spec(\overline{\ca{F}})$, it is clear that the forgetful functor from the localization at $X$ to the category of finitely presented (resp. admissibly \'etale, see \ref{defn:adm-etale}) $X$-schemes 
	\begin{align}
		\nbd^{\mrm{fp}}_{\overline{\ca{F}}}(Y/\ca{O}_K)_{/X}&\longrightarrow \sch^{\mrm{fp}}_{/X}\\
		(\trm{resp. }\nbd^{\aetale}_{\overline{\ca{F}}}(Y/\ca{O}_K)_{/X}&\longrightarrow \sch^{\aetale}_{/X})\nonumber
	\end{align}
	is faithful and stable under taking finite limits of $X$-schemes.
\end{mypara}

\begin{mylem}\label{lem:aet-et-neighborhood}
	Let $K$ be a valuation field, $Y$ a $K$-scheme of finite presentation, $ \spec(\overline{\ca{F}})\to Y$ a geometric valuative point of $Y$ over $\ca{O}_K$. Let $\nbd^{\et}_{\overline{\ca{F}}}(Y/K)$ be the category of \'etale neighborhoods of $Y$ at the geometric point $\spec(\overline{\ca{F}})$. Then, the canonical functor
	\begin{align}
		\nbd^{\aetale}_{\overline{\ca{F}}}(Y/\ca{O}_K)\longrightarrow \nbd^{\et}_{\overline{\ca{F}}}(Y/K),\ X\mapsto X_K
	\end{align}
	is cofinal.
\end{mylem}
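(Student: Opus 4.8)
The plan is to verify the standard criterion for a functor between cofiltered categories to be cofinal. Both $\nbd^{\aetale}_{\overline{\ca F}}(Y/\ca O_K)$ and $\nbd^{\et}_{\overline{\ca F}}(Y/K)$ are cofiltered and $X\mapsto X_K$ is manifestly a functor; I would prove it cofinal by checking: (i) every étale neighborhood $V$ of $Y$ at $\spec(\overline{\ca F})$ receives a morphism $X_K\to V$ in $\nbd^{\et}_{\overline{\ca F}}(Y/K)$ from the generic fibre of some $X\in\nbd^{\aetale}_{\overline{\ca F}}(Y/\ca O_K)$; and (ii) any two such $(X_1,g_1)$, $(X_2,g_2)$ over $V$ are linked by a zigzag, for which it is enough to produce $X_3\to X_1,X_2$ in $\nbd^{\aetale}$ and then $X_4\to X_3$ in $\nbd^{\aetale}$ after which the two induced $Y$-morphisms $X_{4,K}\to V$ agree. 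The content is in (i).

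For (i): let $\xi\in V$ be the image of $\spec(\overline{\ca F})$. After replacing $V$ by an affine open through $\xi$ (an étale refinement, the inclusion being a morphism in $\nbd^{\et}_{\overline{\ca F}}(Y/K)$) I may assume $V=\spec(R)$ with $R=K[y_1,\dots,y_n]/(f_1,\dots,f_m)$ finitely presented and étale over $Y$; let $a_i\in\overline{\ca F}$ be the images of the $y_i$. Since $\ca O_K\to\ca O_{\overline{\ca F}}$ is an extension of valuation rings whose value groups are cofinal — as in the height-one situation relevant to the applications, where both embed order-compatibly into $\bb R$ — there is $c\in K^\times$ with $ca_i\in\ca O_{\overline{\ca F}}$ for all $i$. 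Rewriting $R$ with generators $cy_i$ and clearing the denominators of the defining polynomials by a single nonzero element of $\ca O_K$ (possible since $\ca O_K$ is a valuation ring) yields a finitely presented $\ca O_K$-algebra $\ca R$ with $\ca R\otimes_{\ca O_K}K\cong R$ together with a homomorphism $\ca R\to\ca O_{\overline{\ca F}}$ lifting $R\to\overline{\ca F}$. Then $X:=\spec(\ca R)$, with the open immersion $X_K\cong\spec(R)\hookrightarrow V$ and the point $\spec(\ca O_{\overline{\ca F}})\to X$ just constructed, is an object of $\nbd^{\aetale}_{\overline{\ca F}}(Y/\ca O_K)$ mapping to $V$.

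For (ii): choose a common refinement $X_3\to X_1,X_2$ in the cofiltered category $\nbd^{\aetale}_{\overline{\ca F}}(Y/\ca O_K)$. The two composites $X_{3,K}\rightrightarrows V$ are $Y$-morphisms of étale $Y$-schemes agreeing at $\xi$, so, $V\to Y$ being unramified and separated, their equalizer $E\subseteq X_{3,K}$ is a clopen subscheme containing $\xi$. Take $X_4$ to be the scheme-theoretic closure of $E$ in $X_3$: it is $\ca O_K$-flat and locally of finite type over $\ca O_K$, hence finitely presented over $\ca O_K$ by \cite[\Luoma{1}.3.4.7]{raynaudgruson1971plat}; its generic fibre equals $E$ (since $E$ is also closed in $X_{3,K}$); and the point $\spec(\ca O_{\overline{\ca F}})\to X_3$ factors through $X_4$, because the image of $\spec(\ca O_{\overline{\ca F}})$ lies in the closure of $\{\xi\}$ and hence in $X_4$. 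Thus $X_4\in\nbd^{\aetale}_{\overline{\ca F}}(Y/\ca O_K)$, the closed immersion $X_4\to X_3$ is a morphism there, and the two induced maps $X_{4,K}=E\to V$ coincide by construction, producing the desired zigzag.

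The hard part is step (i): forcing the given valuative point to extend to an $\ca O_K$-integral point of a \emph{finitely presented} model of a refinement of $V$. Under the cofinality of value groups (which holds in the height-one setting of all our applications) this is the elementary rescaling above, and the only real care is with finite presentation over the possibly non-Noetherian $\ca O_K$, supplied by \cite[\Luoma{1}.3.4.7]{raynaudgruson1971plat}. The remaining ingredients — cofilteredness of the two neighborhood categories, the closure construction in (ii), and the fact (via unramifiedness of $V\to Y$) that the two morphisms to $V$ become equal after a clopen restriction — are routine.
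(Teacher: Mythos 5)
Your proposal is correct, and its essential content — step (i), where you shrink the étale neighborhood to an affine one and rescale its finitely many generators by an element of $\ca{O}_K$ so that they map into $\ca{O}_{\overline{\ca{F}}}$, producing a finitely presented $\ca{O}_K$-model — is exactly the paper's argument (the paper rescales by a power of $p$ and takes the generated $\ca{O}_K$-subalgebra rather than a presentation, an immaterial difference). Your step (ii), verifying the coequalizing half of cofinality via the clopen equalizer and the flat scheme-theoretic closure (finitely presented by Raynaud--Gruson), is a correct treatment of a point the paper leaves implicit as routine.
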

\begin{proof}
	For any \'etale neighborhood $U$ of $Y$ at $\spec(\overline{\ca{F}})$, after shrinking, we may assume that $U=\spec(B)$ is affine. Let $\{x_1,\dots,x_n\}$ be a finite family of generators of the $K$-algebra $B$. After multiplying $x_1,\dots,x_n$ by a power of $p$, we may assume that their images in $\overline{\ca{F}}$ are contained in $\ca{O}_{\overline{\ca{F}}}$. Thus, the $\ca{O}_K$-subalgebra $A$ of $B$ generated by $x_1,\dots,x_n$ gives an admissibly \'etale neighborhood $X=\spec(A)$ of $Y$ at $\spec(\overline{\ca{F}})$ with $X_K=U$.
\end{proof}

\begin{myrem}\label{rem:aet-et-neighborhood}
	For an admissibly \'etale neighborhood $X$ of $Y$, we don't know whether the functor $\sch^{\aetale}_{/X}\to X_{K,\et}$ is cocontinuous or not (although it ``covers" all the geometric points by \ref{lem:aet-et-neighborhood}).
\end{myrem}

\begin{myprop}\label{prop:aet-local}
	Let $K$ be a valuation field, $Y$ a $K$-scheme of finite presentation, $ \spec(\overline{\ca{F}})\to Y$ a geometric valuative point of $Y$ over $\ca{O}_K$. Then,
	\begin{align}\label{eq:prop:aet-local-1}
		\lim_{\nbd^{\aetale}_{\overline{\ca{F}}}(Y/\ca{O}_K)}X=\spec(R)
	\end{align}
	is the spectrum of a ring $R$ satisfying the following properties: 
	\begin{enumerate}
		\renewcommand{\labelenumi}{{\rm(\theenumi)}}
		\item The $K$-algebra $K\otimes_{\ca{O}_K}R$ is canonically identified with the strict Henselization of $Y$ at $\spec(\overline{\ca{F}})$.\label{item:prop:aet-local-1}
		\item The $\ca{O}_K$-algebra $R$ is local and integrally closed in $K\otimes_{\ca{O}_K}R$.\label{item:prop:aet-local-2}
		\item Let $\overline{y}=\spec(\kappa(\overline{y}))$ be the image of $\spec(\overline{\ca{F}})\to \spec(K\otimes_{\ca{O}_K}R)$ and we put $\ca{O}_{\kappa(\overline{y})}=\kappa(\overline{y})\cap\ca{O}_{\overline{\ca{F}}}$. Then, the canonical commutative diagram of local rings
		\begin{align}\label{eq:prop:aet-local-2}
			\xymatrix{
				\kappa(\overline{y})&\ca{O}_{\kappa(\overline{y})}\ar[l]\\
				K\otimes_{\ca{O}_K}R\ar@{->>}[u]&R\ar[l]\ar@{->>}[u]
			}
		\end{align}
		is Cartesian and co-Cartesian, and it induces an isomorphism
		\begin{align}\label{eq:prop:aet-local-3}
			R/\pi R\iso \ca{O}_{\kappa(\overline{y})}/\pi\ca{O}_{\kappa(\overline{y})}
		\end{align}  
		for any non-zero element $\pi$ of $\ak{m}_K$.\label{item:prop:aet-local-3}
	\end{enumerate}
	We call $R$ the \emph{strict localization of $Y$ over $\ca{O}_K$ at the geometric valuative point $\spec(\overline{\ca{F}})$}.
\end{myprop}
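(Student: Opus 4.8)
Throughout, write $\ca{K}=K\otimes_{\ca{O}_K}R$. The plan is to compute $R$ as a filtered colimit over a cofinal subcategory of especially nice neighborhoods, and then to verify (1)--(3) by realizing elements of $\ca{K}$ over individual neighborhoods and modifying those neighborhoods by admissible blowups. First I would show that the $\ca{O}_K$-flat \emph{affine} objects of $\nbd^{\aetale}_{\overline{\ca{F}}}(Y/\ca{O}_K)$ are cofinal. Given a neighborhood $X$, the image of $\spec(\ca{O}_{\overline{\ca{F}}})\to X$ is a chain of generizations of the image $x$ of the closed point $\ak{m}_{\overline{\ca{F}}}$ (because $\ca{O}_{\overline{\ca{F}}}$ is a valuation ring, $\spec(\ca{O}_{\overline{\ca{F}}})$ is a totally ordered chain with closed point $\ak{m}_{\overline{\ca{F}}}$); hence any affine open $U\ni x$ still receives $\spec(\ca{O}_{\overline{\ca{F}}})$ and lies in $\nbd^{\aetale}$. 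Replacing $U=\spec(B)$ by $\spec(B/(\ca{O}_K\textrm{-torsion}))$, i.e.\ by the scheme-theoretic closure of $U_K$ in $U$, is harmless: it still receives $\spec(\ca{O}_{\overline{\ca{F}}})$ since $\ca{O}_{\overline{\ca{F}}}$ is $\ca{O}_K$-torsion-free, its generic fibre is unchanged, and being $\ca{O}_K$-flat of finite type it is $\ca{O}_K$-finitely presented by \cite[\Luoma{1}.3.4.7]{raynaudgruson1971plat} (cf.\ \ref{rem:blowup}, \ref{cor:adm-etale}). Since $\nbd^{\aetale}_{\overline{\ca{F}}}(Y/\ca{O}_K)$ is cofiltered, $\lim X=\spec(R)$ with $R=\colim_X\ca{O}(X)$ over these flat affine neighborhoods; in particular $R$ is $\ca{O}_K$-flat, hence $\ca{O}_K$-torsion-free, so $R\inj\ca{K}=(\ca{O}_K\setminus\{0\})^{-1}R$.

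For (1): tensor products commute with filtered colimits, so $\ca{K}=\colim_X\ca{O}(X_K)$, and by the cofinality of $X\mapsto X_K$ from \ref{lem:aet-et-neighborhood} this colimit runs over all \'etale neighborhoods of the image point of $\spec(\overline{\ca{F}})\to Y$; by the standard description of strict Henselizations (\cite{stacks-project}), $\ca{K}$ is therefore $\ca{O}^{\mrm{sh}}_{Y,\spec(\overline{\ca{F}})}$. In particular $\ca{K}$ is local, $\overline{y}$ is its closed point, $\kappa(\overline{y})$ is the separable closure of the residue field of the image point of $Y$ inside $\overline{\ca{F}}$, the canonical map $\ca{K}\to\overline{\ca{F}}$ is $\ca{K}\surj\kappa(\overline{y})\inj\overline{\ca{F}}$ and restricts on $R$ to $R\to\ca{O}_{\overline{\ca{F}}}$, and $\ca{O}_{\kappa(\overline{y})}=\kappa(\overline{y})\cap\ca{O}_{\overline{\ca{F}}}$ is a valuation ring dominating $\ca{O}_K$. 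For (2): let $\ak{m}_R$ be the preimage of $\ak{m}_{\overline{\ca{F}}}$ under $R\to\ca{O}_{\overline{\ca{F}}}$; if $f\in R\setminus\ak{m}_R$ is represented by $f_X\in\ca{O}(X)$ nonvanishing at $x$, then $D(f_X)\subseteq X$ is again an admissibly \'etale neighborhood (it receives $\spec(\ca{O}_{\overline{\ca{F}}})$ since $1/f_X\in\ca{O}_{\overline{\ca{F}}}$), so $f\in R^\times$, proving $R$ is local. If $f\in\ca{K}$ satisfies a monic polynomial over $R$, realize it over a flat affine $X$ with coefficients in $\ca{O}(X)$; then $B:=\ca{O}(X)[f]\subseteq\ca{O}(X_K)$ is module-finite over $\ca{O}(X)$, $\ca{O}_K$-torsion-free and of finite type, hence $\ca{O}_K$-finitely presented, with generic fibre $X_K$, and the image of $f$ in $\overline{\ca{F}}$ is integral over the valuation ring $\ca{O}_{\overline{\ca{F}}}$ hence lies in $\ca{O}_{\overline{\ca{F}}}$; so $\spec(B)\in\nbd^{\aetale}$ and $f\in R$. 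Thus $R$ is integrally closed in $\ca{K}$.

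For (3): commutativity of \eqref{eq:prop:aet-local-2} is built into the construction, and all four rings are local by the above. The heart of the matter is that the square is Cartesian, i.e.\ every $g\in\ca{K}$ with $\bar g\in\ca{O}_{\overline{\ca{F}}}$ lies in $R$. Write $g=h/a$ over a flat affine neighborhood $X$, with $h\in\ca{O}(X)$, $a\in\ca{O}_K\setminus\{0\}$, and form the $X_K$-admissible blowup $X'=\mrm{Bl}_{(h,a)}X$ (admissible since $a$ is invertible on $X_K$); by \ref{rem:blowup} its affine charts are $\ca{O}_K$-flat and finitely presented, and $X'_K=X_K$, so each chart lies in $\nbd^{\aetale}$. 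As the pullback of $(h,a)$ to the valuation ring $\ca{O}_{\overline{\ca{F}}}$ is finitely generated, hence principal, hence invertible, $\spec(\ca{O}_{\overline{\ca{F}}})\to X$ lifts to $X'$ by the universal property of blowups, and since $\bar h/a\in\ca{O}_{\overline{\ca{F}}}$ it lands in the chart $\spec(\ca{O}(X)[h/a])$ on which $g$ is regular; hence $g\in R$. Granting this: $\ca{K}\surj\kappa(\overline{y})$, so lifting and applying the Cartesian property shows $R\surj\ca{O}_{\kappa(\overline{y})}$; its kernel $\ak{p}$ equals $R\cap\ak{m}_{\ca{K}}$, the largest prime of $R$ disjoint from $\ca{O}_K\setminus\{0\}$, so $\ak{m}_{\ca{K}}=\ak{p}\ca{K}$ for the localization $\ca{K}=(\ca{O}_K\setminus\{0\})^{-1}R$, whence $\ca{K}\otimes_R\ca{O}_{\kappa(\overline{y})}=\ca{K}/\ak{p}\ca{K}=\ca{K}/\ak{m}_{\ca{K}}=\kappa(\overline{y})$, giving co-Cartesianity. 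Finally, for $a\in\ak{p}$ the element $a/\pi\in\ca{K}$ has image $0\in\ca{O}_{\overline{\ca{F}}}$, so $a/\pi\in\ak{p}$ by the Cartesian property; thus $\ak{p}=\pi\ak{p}\subseteq\pi R$, and therefore \eqref{eq:prop:aet-local-3}, $R/\pi R\iso(R/\ak{p})/\pi=\ca{O}_{\kappa(\overline{y})}/\pi\ca{O}_{\kappa(\overline{y})}$.

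The only genuinely geometric step is the Cartesian assertion in the proof of (3): converting ``$\bar g\in\ca{O}_{\overline{\ca{F}}}$'' into ``$g\in R$'' by an $X_K$-admissible blowup, for which the valuation-ring structure of $\ca{O}_{\overline{\ca{F}}}$ (every finitely generated ideal is invertible) together with \ref{rem:blowup} does exactly what is needed. Everything else is formal manipulation with filtered colimits and localizations and the cofinality statements \ref{lem:aet-et-neighborhood} and the reduction of the first paragraph; I expect no further difficulty there.
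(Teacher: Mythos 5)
Your proof is correct. The underlying mechanism is the same as the paper's — everything ultimately rests on the fact that $\spec(\ca{O}_{\overline{\ca{F}}})\to X$ lifts along $X_K$-admissible blowups because finitely generated ideals of a valuation ring are invertible — but the organization is genuinely different: the paper identifies $R$ with a filtered colimit of stalks of Riemann--Zariski spaces $\ca{O}_{\rz_{X_K}(X),x}$ and then imports properties (1)--(3) for those stalks from \cite[9.13--9.15, 10.3]{he2024purity} and \cite[3.18]{he2024coh}, whereas you bypass the Riemann--Zariski formalism entirely and verify the closure properties of $R=\colim_X\ca{O}(X)$ by hand, producing a new neighborhood witnessing each membership claim (a localization $D(f_X)$ for locality, $\ca{O}(X)[f]$ for integral closedness, and the $a$-chart of $\mrm{Bl}_{(h,a)}X$ for the Cartesian property, with Raynaud--Gruson supplying finite presentation each time). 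What your route buys is a self-contained argument that makes the key step — converting ``$\bar g\in\ca{O}_{\overline{\ca{F}}}$'' into ``$g\in R$'' — completely explicit; what the paper's route buys is brevity and a uniform reduction of (1)--(3) to already-established structure theory of Riemann--Zariski stalks (in particular your derivation of \eqref{eq:prop:aet-local-3} from $\ak{p}=\pi\ak{p}$ is a clean replacement for the citation of \cite[9.14, 9.15]{he2024purity}). The only cosmetic remark is that the blowup packaging in your proof of (3) is not strictly needed: as in your own argument for (2), one can pass directly to $\spec(\ca{O}(X)[h/a])$, which is the relevant chart anyway.
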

\begin{proof}
	Notice that the canonical morphism $\spec(\ca{O}_{\overline{\ca{F}}})\to X$ factors through any $X_K$-modification $X'$ of $X$ (\cite[9.13]{he2024purity}). Note that $X'$ is quasi-separated, flat and of finite type over $\ca{O}_K$ and thus of finite presentation by \cite[\Luoma{1}.3.4.7]{raynaudgruson1971plat} (cf. \ref{rem:blowup}). Hence, $X'$ forms an admissibly \'etale neighborhood of $Y$ refining $X$. Consider the Riemann-Zariski space $\rz_{X_K}(X)=\lim X'$, where $X'$ runs through all the $X_K$-modifications of $X$ (see \cite[9.13]{he2024purity}). Let $x\in \rz_{X_K}(X)$ denote the image of the closed point under the canonical morphism of locally ringed spaces $\spec(\ca{O}_{\overline{\ca{F}}})\to \rz_{X_K}(X)$. Then, we have
	\begin{align}\label{eq:prop:aet-local-4}
		\lim_{\nbd^{\aetale}_{\overline{\ca{F}}}(Y/\ca{O}_K)}X=\lim_{\nbd^{\aetale}_{\overline{\ca{F}}}(Y/\ca{O}_K)}\spec(\ca{O}_{\rz_{X_K}(X),x})
	\end{align}
	and thus equal to the spectrum of 
	\begin{align}\label{eq:prop:aet-local-5}
		R=\colim_{\nbd^{\aetale}_{\overline{\ca{F}}}(Y/\ca{O}_K)}\ca{O}_{\rz_{X_K}(X),x},
	\end{align}
	which proves \eqref{eq:prop:aet-local-1}. 
	
	(\ref{item:prop:aet-local-1}) It follows from the following identifications by \ref{lem:aet-et-neighborhood}:
	\begin{align}\label{eq:prop:aet-local-6}
		\spec(K\otimes_{\ca{O}_K}R)=\lim_{X\in \nbd^{\aetale}_{\overline{\ca{F}}}(Y/\ca{O}_K)}X_K=\lim_{U\in\nbd^{\et}_{\overline{\ca{F}}}(Y/K)}U.
	\end{align}
	
	(\ref{item:prop:aet-local-2}) Firstly, $R$ is local since the transition homomorphisms in the directed system of \eqref{eq:prop:aet-local-5} are local. Moreover, since $\rz_{X_K}(X)$ is also the inverse limit of the integral closures of all the $X_K$-modifications $X'$ in $X'_K=X_K$, we see that $\ca{O}_{\rz_{X_K}(X),x}$ is integrally closed in $K\otimes_{\ca{O}_K}\ca{O}_{\rz_{X_K}(X),x}$ (\cite[3.18]{he2024coh}). Thus, $R$ shares the same properties by taking filtered colimits.

	(\ref{item:prop:aet-local-3}) Since for each object $X$ of $\nbd^{\aetale}_{\overline{\ca{F}}}(Y/\ca{O}_K)$ and $y_X\in X_K$ the image of $\spec(\overline{\ca{F}})$, the canonical commutative diagram of local rings
	\begin{align}
		\xymatrix{
			\kappa(y_X)&\ca{O}_{\kappa(y_X)}\ar[l]\\
			K\otimes_{\ca{O}_K}\ca{O}_{\rz_{X_K}(X),x}\ar@{->>}[u]&\ca{O}_{\rz_{X_K}(X),x}\ar[l]\ar@{->>}[u]
		}
	\end{align}
	is Cartesian and co-Cartesian, and it induces an isomorphism 
	\begin{align}
		\ca{O}_{\rz_{X_K}(X),x}/\pi\ca{O}_{\rz_{X_K}(X),x}\iso \ca{O}_{\kappa(y_X)}/\pi\ca{O}_{\kappa(y_X)},
	\end{align}
	where $\ca{O}_{\kappa(y_X)}=\kappa(y_X)\cap \ca{O}_{\overline{\ca{F}}}$ and $\pi\in\ak{m}_K$ (\cite[9.14, 9.15]{he2024purity}). Thus, $R$ shares the same properties by taking filtered colimits. 
\end{proof}

\begin{myprop}\label{prop:aet-neighborhood}
	Let $K$ be a complete discrete valuation field extension of $\bb{Q}_p$ with perfect residue field, $Y$ a smooth $K$-scheme of finite presentation, $D$ a normal crossings divisor on $Y$, $Y^{\triv}=Y\setminus D$, $\spec(\overline{\ca{F}})\to Y$ a geometric valuative point of $Y$ over $\ca{O}_K$. Then, the admissibly \'etale neighborhoods $X$ of $Y/\ca{O}_K$ at $\spec(\overline{\ca{F}})$ satisfying the following property form a cofinal full subcategory $\nbd^{Y^{\triv}\trm{-}\mrm{adq}}_{\overline{\ca{F}}}(Y/\ca{O}_K)$ of $\nbd^{\aetale}_{\overline{\ca{F}}}(Y/\ca{O}_K)$: 
	\begin{enumerate}
		\renewcommand{\labelenumi}{{\rm(\theenumi)}}
		\item[$(\star)$] There exists a finite field extension $K'$ of $K$ contained in the algebraic closure $\overline{K}$ of $K$ in $\overline{\ca{F}}$ and an adequate $(K',\ca{O}_{K'},\ca{O}_{\overline{K}})$-triple $(A_{\triv},A,\overline{A})$ {\rm(\ref{defn:essential-adequate-alg})} such that there is an isomorphism of open immersions of schemes over $\spec(K)\to \spec(\ca{O}_K)$,
		\begin{align}
			(\spec(A_{\triv})\to \spec(A))\iso (X^{\triv}\to X),
		\end{align}
		where $X^{\triv}=Y^{\triv}\times_YX_K$.
	\end{enumerate}
	
\end{myprop}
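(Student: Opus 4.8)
The plan is to reduce to \ref{thm:uniformization} and then recognize the resulting log smooth model, étale-locally, as an adequate algebra. Since $\nbd^{Y^{\triv}\trm{-}\mrm{adq}}_{\overline{\ca{F}}}(Y/\ca{O}_K)$ is a full subcategory of the cofiltered category $\nbd^{\aetale}_{\overline{\ca{F}}}(Y/\ca{O}_K)$, and since any two adequate neighborhoods refining a given $X$ are again refined by a common adequate one (apply the construction below to their fibred product over $X$), it suffices to show that every object $X$ of $\nbd^{\aetale}_{\overline{\ca{F}}}(Y/\ca{O}_K)$ is refined by some $X''$ satisfying $(\star)$. First I would normalize the situation. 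Replacing $X$ by the scheme theoretic closure of $X_K$ in $X$ — through which $\spec(\ca{O}_{\overline{\ca{F}}})\to X$ factors, $\ca{O}_{\overline{\ca{F}}}$ being a domain with generic point over $X_K$ — we may assume $X$ is flat of finite presentation over $\ca{O}_K$. As $X_K$ is étale over the smooth $K$-scheme $Y$ and $D$ is a normal crossings divisor, after replacing $X_K$ by an étale neighborhood of the image $\overline{y}$ of $\spec(\overline{\ca{F}})$ — which by \ref{prop:aet-local}.(\ref{item:prop:aet-local-1}) is the generic fibre of an admissibly étale neighborhood refining $X$ — we may assume there is an étale morphism $X_K\to\spec(K[T_1,\dots,T_d])$ under which $D\times_YX_K$ is the strict normal crossings divisor $\{T_1\cdots T_r=0\}$. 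Then $X$, together with $D_\eta=D\times_YX_K$ and $X^{\triv}=Y^{\triv}\times_YX_K$, satisfies the hypotheses of \ref{thm:uniformization}.

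Applying \ref{thm:uniformization} gives an admissibly étale covering $X'\to T:=S'\times_SX$ with $S'=\spec(\ca{O}_{K'})$, $K'/K$ finite, such that $(X',\scr{M}_{X'})$ — the compactifying log structure for $X'^{\triv}=X^{\triv}\times_XX'\to X'$ — is a regular fs log scheme, smooth and saturated over $(S',\scr{M}_{S'})$. Fixing an $\ca{O}_K$-embedding $\ca{O}_{K'}\hookrightarrow\ca{O}_{\overline{K}}\subseteq\ca{O}_{\overline{\ca{F}}}$ equips $T$ with a geometric valuative point induced by $\spec(\ca{O}_{\overline{\ca{F}}})\to X$, and the key step is to lift it along the covering \emph{without enlarging} $\overline{\ca{F}}$. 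By the proof of \ref{prop:adm-etale}, $X'\coprod T_s\to T$ is an arc-covering ($T_s$ the special fibre over $\ca{O}_{K'}$), so $\spec(\ca{O}_{\overline{\ca{F}}})\to T$ lifts to $\spec(W)\to X'$ for some height $1$ valuation ring $W$ dominating $\ca{O}_{\overline{\ca{F}}}$ — the lift lands in $X'$ rather than $T_s$ since the generic point of $\spec(\ca{O}_{\overline{\ca{F}}})$ maps into $T_{K'}$. Now $X'_{K'}\to X_K$ is étale and $\kappa(\overline{y})=\overline{\ca{F}}$ is algebraically closed, so the generic point of $\spec(W)$ lies over a point $\overline{y}''$ of $X'_{K'}$ with residue field $\overline{\ca{F}}$; a short diagram chase then shows $W\cap\overline{\ca{F}}=\ca{O}_{\overline{\ca{F}}}$ (otherwise $\spec(W)\to X'$ would factor through $X'_{K'}$, whereas the closed point of $\spec(W)$ lies over the special fibre of $T$), and that the local homomorphism $\ca{O}_{X',x''}\to W$ at the image $x''$ of the closed point of $\spec(W)$ factors through $\kappa(\overline{y}'')=\overline{\ca{F}}$, hence has image in $W\cap\overline{\ca{F}}=\ca{O}_{\overline{\ca{F}}}$. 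This produces a morphism $\spec(\ca{O}_{\overline{\ca{F}}})\to X'$, and it is compatible with $\spec(\ca{O}_{\overline{\ca{F}}})\to T$ because the two composites $\spec(\ca{O}_{\overline{\ca{F}}})\rightrightarrows T$ agree on the dense generic point and $T$ is separated.

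To conclude, by the structure theory of saturated log smooth morphisms of Abbes-Gros and Tsuji — the source of the notion of adequate algebra, see \ref{defn:essential-adequate-alg} — the regular fs log scheme $(X',\scr{M}_{X'})$ is, étale-locally around any point, isomorphic to $\spec(A)$ endowed with its compactifying log structure, for an adequate $(K',\ca{O}_{K'},\ca{O}_{\overline{K}})$-triple $(A_{\triv},A,\overline{A})$ with $\spec(A_{\triv})=X'^{\triv}$ on that neighborhood. Choosing such an étale neighborhood $X''$ of the image in $X'$ of the closed point of $\spec(\ca{O}_{\overline{\ca{F}}})\to X'$ and lifting the valuative point through the étale morphism $X''\to X'$ — once more using that $\overline{\ca{F}}$ is algebraically closed, so that the relevant local ring of $X''$, being étale over a valuation ring with algebraically closed residue field, is itself a valuation ring of $\overline{\ca{F}}$ dominating $\ca{O}_{\overline{\ca{F}}}$, hence equal to it — we obtain $\spec(\ca{O}_{\overline{\ca{F}}})\to X''$, and the composite $X''\to X'\to T\to X$ exhibits $X''\cong\spec(A)$ as an adequate neighborhood refining $X$ with $\spec(A_{\triv})=X^{\triv}\times_XX''=Y^{\triv}\times_YX''_K$, i.e. satisfying $(\star)$. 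The main obstacle is the valuative lifting of the second paragraph — crossing the admissibly étale covering furnished by \ref{thm:uniformization} without replacing $\overline{\ca{F}}$ by a larger algebraically closed field; the reductions in the first paragraph and the appeal to the local structure of saturated log smooth morphisms in the last are routine.
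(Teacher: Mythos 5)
Your proposal is correct and follows essentially the same route as the paper: shrink $X$ so that $X^{\triv}$ is the complement of a strict normal crossings divisor, apply Theorem \ref{thm:uniformization}, lift the geometric valuative point through the resulting admissibly \'etale covering, and invoke the Abbes--Gros--Tsuji structure theorem for saturated log smooth morphisms. The valuative lifting you elaborate at length in your second paragraph is exactly what the paper delegates to \ref{cor:adm-etale} (via the arc-covering argument of \ref{prop:adm-etale}), so your extra care there is sound but not a departure from the paper's argument.
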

\begin{proof}
	After shrinking $X$ by \ref{lem:aet-et-neighborhood}, we may assume that $X^{\triv}\to X_K$ is the complement of a strict normal crossings divisor on $X_K$. Thus, there exists an admissibly \'etale covering in $\sch^{\aetale}_{/X}$ by \ref{thm:uniformization},
	\begin{align}
		X'\longrightarrow S'\times_SX,
	\end{align}
	where $S'=\spec(\ca{O}_{K'})$, $K'$ is a finite field extension of $K$, and $X'$ is a flat $S'$-scheme of finite presentation such that the log scheme $(X',\scr{M}_{X'})$ is a regular fs log scheme smooth and saturated over $(S',\scr{M}_{S'})$. By \eqref{eq:defn:adm-etale} (and the remark after it), the morphism $\spec(\ca{O}_{\overline{\ca{F}}})\to X$ factors through $X'$ so that $X'$ is an admissibly \'etale neighborhood of $Y$. By the structure theorem of log smooth morphisms due to Abbes-Gros and Tsuji (see \cite[8.11]{he2022sen}), after enlarging $K$ and replacing $X$ by an \'etale $X'$-scheme, we may assume that $(X,\scr{M}_X)$ admits an adequate chart over $(S,\scr{M}_S)$ in the sense of \cite[8.8]{he2022sen}. As the underlying scheme $X$ is normal of finite type over $\ca{O}_K$, after replacing $X$ by an affine open, we may assume that $(X^{\triv}\to X)=(\spec(A_{\triv})\to\spec(A))$ for an adequate $\ca{O}_K$-algebra $A$ (\ref{defn:essential-adequate-alg}).
\end{proof}

\begin{mypara}\label{para:notation-neighborhood-1}	
	In the rest of this section, we fix the following data: 
	\begin{enumerate}
		\renewcommand{\labelenumi}{{\rm(\theenumi)}}
		\item a complete discrete valuation field $K$ extension of $\bb{Q}_p$ with perfect residue field,
		\item an irreducible smooth $K$-scheme $Y$ of finite presentation with a normal crossings divisor $D$,
		\item an algebraic closure $\overline{\ca{K}}$ of the fraction field $\ca{K}$ of $Y$,
		\item a point $\overline{y}=\spec(\overline{\ca{F}})$ of the integral closure $Y^{\overline{\ca{K}}}$ of $Y$ in $\overline{\ca{K}}$,
		\item a valuation ring $\ca{O}_{\overline{\ca{F}}}$ of height $1$ extension of $\ca{O}_K$ with fraction field $\overline{\ca{F}}$.
	\end{enumerate}
	In particular, $\overline{\ca{F}}$ is algebraically closed (\cite[\href{https://stacks.math.columbia.edu/tag/0DCK}{0DCK}]{stacks-project}) and $\spec(\overline{\ca{F}})$ is a geometric valuative point of $Y$ over $\ca{O}_K$. Let $\overline{K}$ be the algebraic closure of $K$ in $\overline{\ca{F}}$ and let $y=\spec(\ca{F})\in Y$ be the image of $\overline{y}=\spec(\overline{\ca{F}})\in Y^{\overline{\ca{K}}}$. We see that $\overline{\ca{F}}$ is an algebraic closure of $\ca{F}$ and thus $\mrm{trdeg}_K(\overline{\ca{F}})=\mrm{trdeg}_K(\ca{F})<\infty$ (\cite[\href{https://stacks.math.columbia.edu/tag/00P0}{00P0}]{stacks-project}).
\end{mypara}

\begin{mypara}\label{para:notation-neighborhood-2}	
	Following \ref{para:notation-neighborhood-1}, for any algebraic field extension $\ca{L}$ of $\ca{K}$ contained in $\overline{\ca{K}}$, we denote by $Y^{\ca{L}}$ the integral closure of $Y$ in $\ca{L}$ and by $y_{\ca{L}}=\spec(\ca{F}_{\ca{L}})\in Y^{\ca{L}}$ the image of $\overline{y}=\spec(\overline{\ca{F}})\in Y^{\overline{\ca{K}}}$. Let $\ca{O}_{\ca{F}_{\ca{L}}^{\mrm{h}}}$ be the Henselization of $\ca{O}_{\ca{F}_{\ca{L}}}=\ca{F}_{\ca{L}}\cap \ca{O}_{\overline{\ca{F}}}$. It is still a valuation ring of height $1$ with fraction field $\ca{F}_{\ca{L}}^{\mrm{h}}$ algebraic over $\ca{F}_{\ca{L}}$ (\cite[6.1.12.(\luoma{6})]{gabber2003almost}). Then, there is a canonical extension $\ca{O}_{\ca{F}_{\ca{L}}^{\mrm{h}}}\to \ca{O}_{\overline{\ca{F}}}$ of valuation rings (\cite[\href{https://stacks.math.columbia.edu/tag/04GS}{04GS}]{stacks-project}). By \ref{para:notation-fal-ext-2}, we have
	\begin{align}\label{eq:para:notation-neighborhood-2-1}
		\mrm{Aut}_{\ca{O}_{\ca{F}_{\ca{L}}}}(\ca{O}_{\overline{\ca{F}}})=\{\sigma\in \gal(\overline{\ca{F}}/\ca{F}_{\ca{L}})\ |\  \sigma(\ca{O}_{\overline{\ca{F}}})=\ca{O}_{\overline{\ca{F}}}\}=\gal(\overline{\ca{F}}/\ca{F}_{\ca{L}}^{\mrm{h}}).
	\end{align}
	On the other hand, since the stalk $\ca{O}_{Y^{\overline{\ca{K}}},\overline{y}}$ of $Y^{\overline{\ca{K}}}$ at $\overline{y}$ is a normal local ring with algebraically closed fraction field $\overline{\ca{K}}$, it is strictly Henselian (\cite[\href{https://stacks.math.columbia.edu/tag/0BSQ}{0BSQ}]{stacks-project}). Thus, there are canonical local homomorphisms (\cite[\href{https://stacks.math.columbia.edu/tag/04GU}{04GU}]{stacks-project})
	\begin{align}\label{eq:para:notation-neighborhood-2-2}
		\ca{O}_{Y^{\overline{\ca{K}}},\overline{y}}\longleftarrow\ca{O}_{Y^{\ca{L}},y_{\ca{L}}}^{\mrm{sh}}\longleftarrow\ca{O}_{Y^{\ca{L}},y_{\ca{L}}}^{\mrm{h}}\longleftarrow \ca{O}_{Y^{\ca{L}},y_{\ca{L}}},
	\end{align}
	where $\ca{O}_{Y^{\ca{L}},y_{\ca{L}}}^{\mrm{h}}$ (resp. $\ca{O}_{Y^{\ca{L}},y_{\ca{L}}}^{\mrm{sh}}$) is the Henselization of $Y^{\ca{L}}$ at $y_{\ca{L}}$ (resp. the strict Henselization of $Y^{\ca{L}}$ at $\overline{y}$). Let $\overline{\ca{K}}/\ca{L}^{\mrm{sh}}/\ca{L}^{\mrm{h}}/\ca{L}$ be the extensions of their fraction fields. Recall that (\cite[\href{https://stacks.math.columbia.edu/tag/0BSW}{0BSW}]{stacks-project})
	\begin{align}
		\gal(\overline{\ca{K}}/\ca{L}^{\mrm{h}})&=\{\sigma\in\gal(\overline{\ca{K}}/\ca{L})\ |\ \sigma(\ca{O}_{Y^{\overline{\ca{K}}},\overline{y}})=\ca{O}_{Y^{\overline{\ca{K}}},\overline{y}}\},\label{eq:para:notation-neighborhood-2-3}\\
		\gal(\overline{\ca{K}}/\ca{L}^{\mrm{sh}})&=\{\sigma\in\gal(\overline{\ca{K}}/\ca{L}^{\mrm{h}})\ |\ \sigma\equiv\id_{\ca{\overline{F}}}\mod \ak{m}_{\overline{y}}\},\label{eq:para:notation-neighborhood-2-4}\\
		\gal(\ca{L}^{\mrm{sh}}/\ca{L}^{\mrm{h}})&=\mrm{Aut}_{\ca{O}_{Y^{\ca{L}},y_{\ca{L}}}^{\mrm{h}}}(\ca{O}_{Y^{\ca{L}},y_{\ca{L}}}^{\mrm{sh}})=\gal(\overline{\ca{F}}/\ca{F}_{\ca{L}}).\label{eq:para:notation-neighborhood-2-5}
	\end{align}
	Let $\ca{L}^{\mrm{vh}}$ be subfield of $\ca{L}^{\mrm{sh}}$ fixed by the closed subgroup $\gal(\overline{\ca{F}}/\ca{F}_{\ca{L}}^{\mrm{h}})\subseteq \gal(\overline{\ca{F}}/\ca{F}_{\ca{L}})=\gal(\ca{L}^{\mrm{sh}}/\ca{L}^{\mrm{h}})$ and we denote by $\ca{O}_{Y^{\ca{L}},y_{\ca{L}}}^{\mrm{vh}}$ the integral closure of $\ca{O}_{Y^{\ca{L}},y_{\ca{L}}}^{\mrm{h}}$ in $\ca{L}^{\mrm{vh}}$. Then, there is a canonical exact sequence of Galois groups
	\begin{align}\label{eq:para:notation-neighborhood-2-6}
		\xymatrix{
			1\ar[r]&\gal(\overline{\ca{K}}/\ca{L}^{\mrm{sh}})\ar[r]&\gal(\overline{\ca{K}}/\ca{L}^{\mrm{vh}})\ar[r]&\gal(\overline{\ca{F}}/\ca{F}_{\ca{L}}^{\mrm{h}})\ar[r]&1,
		}
	\end{align}
	and we have
	\begin{align}\label{eq:para:notation-neighborhood-2-7}
		\gal(\overline{\ca{K}}/\ca{L}^{\mrm{vh}})=\{\sigma\in\gal(\overline{\ca{K}}/\ca{L})\ |\ \sigma(\ca{O}_{Y^{\overline{\ca{K}}},\overline{y}})=\ca{O}_{Y^{\overline{\ca{K}}},\overline{y}}\trm{ and } \sigma|_{\overline{\ca{F}}}(\ca{O}_{\overline{\ca{F}}})=\ca{O}_{\overline{\ca{F}}}\}.
	\end{align}
	We call $\gal(\overline{\ca{K}}/\ca{L}^{\mrm{sh}}),\gal(\overline{\ca{K}}/\ca{L}^{\mrm{vh}}),\gal(\ca{L}^{\mrm{sh}}/\ca{L}^{\mrm{vh}})$ respectively the \emph{inertia, decomposition, residue Galois groups} of $Y^{\ca{L}}$ over $\ca{O}_K$ at the geometric valuative point $\spec(\overline{\ca{F}})$. In particular, the canonical action of $\gal(\overline{\ca{K}}/\ca{L}^{\mrm{vh}})$ on $\ca{O}_{\overline{\ca{F}}}$ (factoring through $\gal(\ca{L}^{\mrm{sh}}/\ca{L}^{\mrm{vh}})=\gal(\overline{\ca{F}}/\ca{F}_{\ca{L}}^{\mrm{h}})$) uniquely extends to a continuous action on its $p$-adic completion $\ca{O}_{\widehat{\overline{\ca{F}}}}$. We remark that the invariant subfield of $\widehat{\overline{\ca{F}}}$ by $\gal(\overline{\ca{F}}/\ca{F}_{\ca{L}}^{\mrm{h}})$ is the completion of $\ca{F}_{\ca{L}}^{\mrm{h}}$ by Ax-Sen-Tate's theorem \cite[page 417]{ax1970ax},
	\begin{align}\label{eq:para:notation-neighborhood-2-8}
		\widehat{\ca{F}_{\ca{L}}^{\mrm{h}}}=(\widehat{\overline{\ca{F}}})^{\gal(\overline{\ca{F}}/\ca{F}_{\ca{L}}^{\mrm{h}})}.
	\end{align}
	
	In conclusion, we obtain a canonical commutative diagram 
	\begin{align}\label{eq:para:notation-neighborhood-2-9}
		\xymatrix{
			\spec(\overline{\ca{K}})\ar[d]\ar@{~>}[r]&\overline{y}=\spec(\overline{\ca{F}})\ar@{=}[d]\ar[r]&\spec(\ca{O}_{Y^{\overline{\ca{K}}},\overline{y}})\ar[d]\\
			\spec(\ca{L}^{\mrm{sh}})\ar[d]\ar@{~>}[r]&y_{\ca{L}}^{\mrm{sh}}=\spec(\overline{\ca{F}})\ar[d]\ar[r]&\spec(\ca{O}_{Y^{\ca{L}},y_{\ca{L}}}^{\mrm{sh}})\ar[d]\\
			\spec(\ca{L}^{\mrm{vh}})\ar[d]\ar@{~>}[r]&y_{\ca{L}}^{\mrm{vh}}=\spec(\ca{F}_{\ca{L}}^{\mrm{h}})\ar[d]\ar[r]&\spec(\ca{O}_{Y^{\ca{L}},y_{\ca{L}}}^{\mrm{vh}})\ar[d]\\
			\spec(\ca{L}^{\mrm{h}})\ar[d]\ar@{~>}[r]&y_{\ca{L}}^{\mrm{h}}=\spec(\ca{F}_{\ca{L}})\ar@{=}[d]\ar[r]&\spec(\ca{O}_{Y^{\ca{L}},y_{\ca{L}}}^{\mrm{h}})\ar[d]\\
			\spec(\ca{L})\ar@{~>}[r]&y_{\ca{L}}=\spec(\ca{F}_{\ca{L}})\ar[r]&\spec(\ca{O}_{Y^{\ca{L}},y_{\ca{L}}})
		}
	\end{align}
	which is functorial in $\ca{L}$. When $\ca{L}=\ca{K}$, we omit the superscript/subscript $\ca{L}$.
\end{mypara}

\begin{mypara}\label{para:notation-neighborhood-3}	
	Following \ref{para:notation-neighborhood-1} and \ref{para:notation-neighborhood-2}, for any neighborhood $X$ of $Y/\ca{O}_K$ at the geometric valuative point $\spec(\overline{\ca{F}})$ such that $X_K\to Y$ is a pro-\'etale morphism of coherent schemes (\cite[7.13]{he2024coh}), we put $X^{\triv}=Y^{\triv}\times_YX_K$, where $Y^{\triv}=Y\setminus D$, and endow $X$ with the compactifying log structure $\scr{M}_X$ associated to the open immersion $X^{\triv}\to X$. 
	
	As the strict Henselization of $X_K$ at $\overline{y}$ coincides with that of $Y$ at $\overline{y}$, there is a canonical morphism over $Y$ induced by \eqref{eq:para:notation-neighborhood-2-2},
	\begin{align}\label{eq:para:notation-neighborhood-3-1}
		\spec(\ca{O}_{Y^{\overline{\ca{K}}},\overline{y}})\longrightarrow X_K.
	\end{align}
	We denote by $y_X=\spec(\ca{F}_X)\in X_K$ (resp. $\spec(\ca{K}_X)\in X_K$) the image of $\overline{y}=\spec(\overline{\ca{F}})\in Y^{\overline{\ca{K}}}$ (resp. $\spec(\overline{\ca{K}})\in Y^{\overline{\ca{K}}}$), and we put $\ca{O}_{\ca{F}_X}=\ca{F}_X\cap \ca{O}_{\overline{\ca{F}}}$ which is still a valuation ring of height $1$ extension of $\ca{O}_K$ (\cite[3.1]{he2024purity}). In particular, the given morphism $\spec(\ca{O}_{\overline{\ca{F}}})\to X$ factors through $\spec(\ca{O}_{\ca{F}_X})$. 
	
	Let $\ca{K}_{X,\mrm{ur}}$ be the maximal unramified extension of $\ca{K}_X$ contained in $\overline{\ca{K}}$ with respect to $(X^{\triv},X)$ (i.e., the filtered union of all finite field extension $\ca{K}'$ of $\ca{K}_X$ contained in $\overline{\ca{K}}$ such that the integral closure of $X$ in $\ca{K}'$ is \'etale over $X^{\triv}$). Then, the integral closure $\overline{X}$ of $X$ in $\ca{K}_{X,\mrm{ur}}$ is again a neighborhood of $Y/\ca{O}_K$ at the geometric valuative point $\spec(\overline{\ca{F}})$. Indeed, since $\ca{O}_{Y^{\overline{\ca{K}}},\overline{y}}$ is normal with fraction field $\overline{\ca{K}}$ containing $\ca{K}_{X,\mrm{ur}}$, the canonical morphism $\spec(\ca{O}_{Y^{\overline{\ca{K}}},\overline{y}})\to X_K$ \eqref{eq:para:notation-neighborhood-3-1} factors through $\overline{X}_K$. Similarly, we denote by $y_{\overline{X}}=\spec(\ca{F}_{\overline{X}})\in \overline{X}_K$ the image of $\overline{y}=\spec(\overline{\ca{F}})\in Y^{\overline{\ca{K}}}$, and we put $\ca{O}_{\ca{F}_{\overline{X}}}=\ca{F}_{\overline{X}}\cap \ca{O}_{\overline{\ca{F}}}$. In particular, the given morphism $\spec(\ca{O}_{\overline{\ca{F}}})\to \overline{X}$ factors through $\spec(\ca{O}_{\ca{F}_{\overline{X}}})$.
	
	In conclusion, we obtain a canonical commutative diagram
	\begin{align}\label{eq:para:notation-neighborhood-3-2}
		\xymatrix{
			\spec(\overline{\ca{K}})\ar[r]\ar[d]&\spec(\ca{O}_{Y^{\overline{\ca{K}}},\overline{y}})\ar[d]&\overline{y}=\spec(\overline{\ca{F}})\ar[l]\ar[r]&\spec(\ca{O}_{\overline{\ca{F}}})\ar[d]\\
			\spec(\ca{K}_{X,\mrm{ur}})\ar[r]\ar[d]&\overline{X}_K\ar[d]\ar[r]&\overline{X}\ar[d]&\spec(\ca{O}_{\ca{F}_{\overline{X}}})\ar[l]\ar[d]\\
			\spec(\ca{K}_X)\ar[r]\ar[d]&X_K\ar[d]\ar[r]&X&\spec(\ca{O}_{\ca{F}_X})\ar[l]\ar[d]\\
			\spec(\ca{K})\ar[r]&Y&y=\spec(\ca{F})\ar[l]\ar[r]&\spec(\ca{O}_{\ca{F}})
		}
	\end{align}
	which is functorial in $X$. When $X=\spec(A)$ is affine, we replace the subscript $X$ in the above diagram by $A$ and we put $\overline{X}=\spec(\overline{A})$. If moreover $X^{\triv}$ is affine, then we put $X^{\triv}=\spec(A_{\triv})$. This notation is consistent with \ref{defn:triple}. In particular, if $X=\spec(A)\in \nbd^{Y^{\triv}\trm{-}\mrm{adq}}_{\overline{\ca{F}}}(Y/\ca{O}_K)$ (\ref{prop:aet-neighborhood}), then $(A_{\triv},A,\overline{A})$ is an adequate $(K',\ca{O}_{K'},\ca{O}_{\overline{K}})$-triple for some finite field extension $K'$ of $K$ contained in $\overline{K}$.
\end{mypara}

\begin{mylem}\label{lem:log-str-val-pt}
	With the notation in {\rm\ref{para:notation-neighborhood-1}}, let $\{t_1,\dots, t_s\}$ be a regular system of parameters of the strict Henselization $\ca{O}_{Y,y}^{\mrm{sh}}$ such that $t_1\cdots t_r=0$ defines the normal crossings divisor $D$ at $\overline{y}$, and let $t_{s+1},\dots,t_d$ be units of $\ca{O}_{Y,y}^{\mrm{sh}}$ whose images in the residue field $\overline{\ca{F}}$ form a transcendental basis over $K$, where $0\leq r\leq s\leq d=\dim(Y)$ {\rm(\cite[\href{https://stacks.math.columbia.edu/tag/00P1}{00P1}]{stacks-project})}. We put 
	\begin{align}
		t_1'=t_1,\ \dots,\ t_r'=t_r,\ t_{r+1}'=1+t_{r+1},\ \dots,\ t_s'=1+t_s,\ t_{s+1}'=t_{s+1},\ \dots,\ t_d'=t_d.
	\end{align}
	Let $\scr{M}_Y\to \ca{O}_{Y_\et}$ be the compactifying log structure associated to the open immersion $Y^{\triv}\to Y$. Then, the homomorphism of monoids
	\begin{align}\label{eq:lem:log-str-val-pt-1}
		\bb{N}^r\oplus \bb{Z}^{d-r}\longrightarrow \scr{M}_{Y,\overline{y}},\ (a_1,\dots,a_d)\mapsto t_1'^{a_1}\cdots t_d'^{a_d},
	\end{align}
	induces an isomorphism $\bb{N}^r\iso \scr{M}_{Y,\overline{y}}/\ca{O}_{Y,y}^{\mrm{sh},\times}$, where $\scr{M}_{Y,\overline{y}}$ is the stalk of the \'etale sheaf of monoids $\scr{M}_Y$ at $\overline{y}$. Moreover, the $\ca{O}_{Y,y}^{\mrm{sh}}$-module of log differentials $\Omega^1_{(\ca{O}_{Y,y}^{\mrm{sh}},\scr{M}_{Y,\overline{y}})/K}$ is finite free with basis $\df\log(t_1'),\dots,\df\log(t_d')$.
%
\end{mylem}
\begin{proof}
	After replacing $Y$ by an \'etale neighborhood $\spec(R)$ of $\overline{y}$, we may assume that $t_1,\dots,t_d\in R$ with $t_{r+1}',\dots,t_d'\in R^\times$, and that $D$ is defined by $t_1\cdots t_r=0$. Let $\ak{q}$ be the prime ideal of $R$ corresponding to $y$. We claim that the $K$-algebra homomorphism $f:K[T_1,\dots,T_d]\to R$ sending $T_i$ to $t_i$ for any $1\leq i\leq d$ is \'etale at $\ak{q}$. Let $\ak{p}$ be inverse image of $\ak{q}\subseteq R$. Since $t_{s+1},\dots,t_d$ are algebraically independent over $K$ in the residue field $\ca{F}$ of $\ak{q}$, we see that $K[T_{s+1},\dots,T_d]\to R$ is injective and has zero intersection with $\ak{q}$. Therefore, $f$ induces a $K$-algebra homomorphism $(K[T_1,\dots,T_d])_{\ak{p}}=(K(T_{s+1},\dots,T_d)[T_1,\dots,T_s])_{\ak{m}}\to R_{\ak{q}}$ by localization, where $\ak{m}$ is the maximal ideal generated by $T_1,\dots,T_s$. Taking the completions with respect to $\ak{p}$, $\ak{m}$ and $\ak{q}$ respectively, we obtain a $K$-algebra homomorphism $\widehat{f}:(K[T_1,\dots,T_d])_{\ak{p}}^{\wedge}=K(T_{s+1},\dots,T_d)[[T_1,\dots,T_s]]\to \widehat{R_{\ak{q}}}$.
	
	As $\ca{F}$ is a finitely generated field extension of $K$ by Noether normalization theorem \cite[\href{https://stacks.math.columbia.edu/tag/00OY}{00OY}]{stacks-project}, we see that $\ca{F}$ is finite (\'etale) over $K(T_{s+1},\dots,T_d)$ via $\widehat{f}$ (\cite[\href{https://stacks.math.columbia.edu/tag/037J}{037J}]{stacks-project}). Thus, there exists a unique lifting of $\widehat{f}:K(T_{s+1},\dots,T_d)[[T_1,\dots,T_s]]\to \widehat{R_{\ak{q}}}$ to $g:\ca{F}[[T_1,\dots,T_s]]\to \widehat{R_{\ak{q}}}$ which induces the identity of the residue fields $\ca{F}=\ca{F}$. Since $\{t_1,\dots,t_s\}$ is a regular system of parameters of the complete regular local ring $\widehat{R_{\ak{q}}}$, we see that $g$ is an isomorphism (\cite[\href{https://stacks.math.columbia.edu/tag/00NO}{00NO}]{stacks-project}). On the other hand, since $\widehat{f}:K(T_{s+1},\dots,T_d)[[T_1,\dots,T_s]]\to \ca{F}[[T_1,\dots,T_s]]=\widehat{R_{\ak{q}}}$ is formally \'etale, we conclude that $f$ is \'etale at $\ak{q}$ by \cite[17.6.3]{ega4-4}, which verifies the claim.
	
	The claim shows that $K[T_1,\dots,T_r,T_{r+1}^{\pm 1},\dots,T_d^{\pm1}]\to R$ sending $T_i$ to $t_i'$ is \'etale at $\ak{q}$. Therefore, \eqref{eq:lem:log-str-val-pt-1} induces a strictly \'etale morphism of log schemes $(Y,\scr{M}_Y)\to \spec(K)\times_{\spec(\bb{Z})}\bb{A}_{\bb{N}^r\oplus\bb{Z}^{d-r}} $ and thus the conclusion follows.

\end{proof}

\begin{mylem}\label{lem:galois-action-adq-nbd}
	With the notation in {\rm\ref{para:notation-neighborhood-1}}, {\rm\ref{para:notation-neighborhood-2}} and {\rm\ref{para:notation-neighborhood-3}}, there is a canonical action of $\gal(\overline{\ca{K}}/\ca{K}^{\mrm{vh}})$ (the decomposition group of $Y/\ca{O}_K$ at the geometric valuative point $\spec(\overline{\ca{F}})$) on $\nbd^{Y^{\triv}\trm{-}\mrm{adq}}_{\overline{\ca{F}}}(Y/\ca{O}_K)$ {\rm(\ref{prop:aet-neighborhood})} such that $X^{\sigma}=\spec(\sigma(A))$ for any $\sigma\in \gal(\overline{\ca{K}}/\ca{K}^{\mrm{vh}})$ and any $X=\spec(A)\in \nbd^{Y^{\triv}\trm{-}\mrm{adq}}_{\overline{\ca{F}}}(Y/\ca{O}_K)$.
\end{mylem}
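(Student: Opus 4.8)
The plan is to identify $\nbd^{Y^{\triv}\trm{-}\mrm{adq}}_{\overline{\ca{F}}}(Y/\ca{O}_K)$ with a partially ordered set of $\ca{O}_K$-subalgebras of the strictly Henselian local ring $\ca{O}_{Y^{\overline{\ca{K}}},\overline{y}}$ and then to let $\sigma\in\gal(\overline{\ca{K}}/\ca{K}^{\mrm{vh}})$ act by transport of structure. First I would record this identification. For $X=\spec(A)$ in $\nbd^{Y^{\triv}\trm{-}\mrm{adq}}_{\overline{\ca{F}}}(Y/\ca{O}_K)$ (\ref{prop:aet-neighborhood}), the composite $A\hookrightarrow A[1/p]\to\ca{O}_{Y^{\overline{\ca{K}}},\overline{y}}$ of the localization (injective since $A$ is an $\ca{O}_K$-flat domain) with the ring homomorphism underlying \eqref{eq:para:notation-neighborhood-3-1} is injective: as $X_K\to Y$ is \'etale, the image of the generic point $\spec(\overline{\ca{K}})$ of $\spec(\ca{O}_{Y^{\overline{\ca{K}}},\overline{y}})$ in $X_K$ is the generic point, so $A[1/p]\to\ca{O}_{Y^{\overline{\ca{K}}},\overline{y}}$ extends to a field embedding $\ca{K}_X\hookrightarrow\overline{\ca{K}}$. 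I would then observe that the whole neighborhood datum of $X$---the morphisms $X_K\to Y$, $\spec(\overline{\ca{F}})\to X_K$ and $\spec(\ca{O}_{\overline{\ca{F}}})\to X$ of \ref{para:notation-neighborhood-3}---is recovered from the subalgebra $A\subseteq\ca{O}_{Y^{\overline{\ca{K}}},\overline{y}}$ together with the \'etale $Y$-algebra structure on $A[1/p]$; in particular the existence of $\spec(\ca{O}_{\overline{\ca{F}}})\to X$ over $\ca{O}_K$ is equivalent to the condition that the composite $A\to\ca{O}_{Y^{\overline{\ca{K}}},\overline{y}}\to\overline{\ca{F}}=\kappa(\overline{y})$ (reduction modulo $\ak{m}_{\overline{y}}$) take values in $\ca{O}_{\overline{\ca{F}}}$. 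Moreover, by $\ca{O}_K$-flatness any morphism $X\to X'$ in the category corresponds to the inclusion $A'\subseteq A$ of subalgebras and is unique. Hence $X\mapsto A$ is a fully faithful embedding of $\nbd^{Y^{\triv}\trm{-}\mrm{adq}}_{\overline{\ca{F}}}(Y/\ca{O}_K)$ into the poset of $\ca{O}_K$-subalgebras of $\ca{O}_{Y^{\overline{\ca{K}}},\overline{y}}$.

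Next I would let $\gal(\overline{\ca{K}}/\ca{K}^{\mrm{vh}})$ act. By the description \eqref{eq:para:notation-neighborhood-2-7}, an element $\sigma$ of this group fixes $\ca{K}$ (hence the image in $\ca{O}_{Y^{\overline{\ca{K}}},\overline{y}}$ of the local ring of $Y$ at the image of $\overline{y}$), stabilizes the subring $\ca{O}_{Y^{\overline{\ca{K}}},\overline{y}}$ of $\overline{\ca{K}}$, and the automorphism $\sigma|_{\overline{\ca{F}}}$ it induces on the residue field $\overline{\ca{F}}$ stabilizes $\ca{O}_{\overline{\ca{F}}}$, hence also the algebraic closure $\overline{K}$ of $K$ in $\overline{\ca{F}}$ and $\ca{O}_{\overline{K}}=\overline{K}\cap\ca{O}_{\overline{\ca{F}}}$. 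Thus $\sigma$ induces a $Y$-automorphism $\spec(\sigma)$ of $\spec(\ca{O}_{Y^{\overline{\ca{K}}},\overline{y}})$, and I would check that it carries a subalgebra $A$ as above to another subalgebra $\sigma(A)$ in the image of the embedding: it is again a finitely generated $\ca{O}_K$-subalgebra (apply $\sigma$ to a set of generators), normal (abstractly isomorphic to $A$ via $\sigma$), its localization $\sigma(A)[1/p]=\sigma(A[1/p])$ inherits an \'etale $Y$-algebra structure by transport along $\spec(\sigma)$, the reduction modulo $\ak{m}_{\overline{y}}$ of $\sigma(A)$ is $\sigma|_{\overline{\ca{F}}}$ applied to that of $A$ and hence still lies in $\sigma|_{\overline{\ca{F}}}(\ca{O}_{\overline{\ca{F}}})=\ca{O}_{\overline{\ca{F}}}$, and finally property $(\star)$ of \ref{prop:aet-neighborhood} is preserved---$\sigma$ fixes $Y^{\triv}\subseteq Y$ so that $X^{\sigma,\triv}=Y^{\triv}\times_YX^\sigma_K$, and the ring isomorphism $\sigma\colon A\iso\sigma(A)$ carries an adequate $(K',\ca{O}_{K'},\ca{O}_{\overline{K}})$-triple structure on $(A_{\triv},A,\overline{A})$ to an adequate $(K'',\ca{O}_{K''},\ca{O}_{\overline{K}})$-triple structure with $K''=\sigma|_{\overline{\ca{F}}}(K')$ again a finite extension of $K$ in $\overline{K}$. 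Therefore $X\mapsto X^{\sigma}:=\spec(\sigma(A))$ is a well-defined endofunctor of $\nbd^{Y^{\triv}\trm{-}\mrm{adq}}_{\overline{\ca{F}}}(Y/\ca{O}_K)$, functorial for morphisms since $A'\subseteq A$ forces $\sigma(A')\subseteq\sigma(A)$.

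Finally, a direct check of the identities $X^{\id}=X$ and $X^{\sigma\tau}=(X^{\sigma})^{\tau}$---with the composition law on $\gal(\overline{\ca{K}}/\ca{K}^{\mrm{vh}})$ normalized as in the paper so that the Galois action is on the right---exhibits $X\mapsto X^{\sigma}=\spec(\sigma(A))$ as the desired right action. I expect the only real difficulty to be bookkeeping in the middle paragraph: verifying that $\sigma$ transports the \emph{entire} neighborhood diagram of \ref{defn:neighborhood}, in particular the valuative morphism $\spec(\ca{O}_{\overline{\ca{F}}})\to X$, to the corresponding one for $X^{\sigma}$. This is precisely the point at which one must work with $\gal(\overline{\ca{K}}/\ca{K}^{\mrm{vh}})$ rather than with the larger stabilizer $\gal(\overline{\ca{K}}/\ca{K}^{\mrm{h}})$ of $\overline{y}$ in \eqref{eq:para:notation-neighborhood-2-3}: only the extra condition $\sigma|_{\overline{\ca{F}}}(\ca{O}_{\overline{\ca{F}}})=\ca{O}_{\overline{\ca{F}}}$ ensures that the reduction $\sigma(A)\to\overline{\ca{F}}$ still factors through $\ca{O}_{\overline{\ca{F}}}$, without which $\spec(\sigma(A))$ would be only a finitely presented neighborhood and not one admissibly \'etale over the valuation ring carrying a compatible valuative point.
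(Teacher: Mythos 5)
Your proposal is correct and follows essentially the same route as the paper: regard $A$ as a subring of $\overline{\ca{K}}$ (equivalently of $\ca{O}_{Y^{\overline{\ca{K}}},\overline{y}}$), transport the adequate-triple structure along $\sigma$, and use the defining condition $\sigma|_{\overline{\ca{F}}}(\ca{O}_{\overline{\ca{F}}})=\ca{O}_{\overline{\ca{F}}}$ from \eqref{eq:para:notation-neighborhood-2-7} to transport the valuative morphism $\spec(\ca{O}_{\overline{\ca{F}}})\to X$ to $X^{\sigma}$. The extra detail you supply (injectivity of $A\to\ca{O}_{Y^{\overline{\ca{K}}},\overline{y}}$, preservation of property $(\star)$, functoriality on inclusions) only expands what the paper compresses into ``it is easy to see,'' and your closing remark correctly isolates the stabilization of $\ca{O}_{\overline{\ca{F}}}$ as the reason one must work with $\gal(\overline{\ca{K}}/\ca{K}^{\mrm{vh}})$.
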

\begin{proof}
	Firstly, we note that $A\subseteq \ca{K}_X$ as $A$ is integral. In particular, $\sigma(A)$ is a subring of $\overline{\ca{K}}$. Since $(A^{\triv},A,\overline{A})$ is an adequate $(K',\ca{O}_{K'},\ca{O}_{\overline{K}})$-triple for some finite field extension $K'$ of $K$ contained in $\overline{K}$, we see that $(\sigma(A^{\triv}),\sigma(A),\sigma(\overline{A}))$ is an adequate $(\sigma(K'),\sigma(\ca{O}_{K'}),\sigma(\ca{O}_{\overline{K}}))$-triple. Moreover, the canonical morphism $\spec(\ca{O}_{\overline{\ca{F}}})\to X$ induces a morphism $\spec(\ca{O}_{\overline{\ca{F}}})=\spec(\sigma(\ca{O}_{\overline{\ca{F}}}))\to X^\sigma=\spec(\sigma(A))$, which shows that $X^{\sigma}$ is still naturally a neighborhood of $Y/\ca{O}_K$ at $\spec(\overline{\ca{F}})$ (see \ref{defn:neighborhood}). Therefore, we have $X^{\sigma}\in \nbd^{Y^{\triv}\trm{-}\mrm{adq}}_{\overline{\ca{F}}}(Y/\ca{O}_K)$. It is easy to see that this defines an action of $\gal(\overline{\ca{K}}/\ca{K}^{\mrm{vh}})$ on $\nbd^{Y^{\triv}\trm{-}\mrm{adq}}_{\overline{\ca{F}}}(Y/\ca{O}_K)$.
\end{proof}

\begin{mypara}\label{para:fal-ext-val-pt}
	With the notation in {\rm\ref{para:notation-neighborhood-1}}, {\rm\ref{para:notation-neighborhood-2}} and {\rm\ref{para:notation-neighborhood-3}}, for any $X=\spec(A)\in \nbd^{Y^{\triv}\trm{-}\mrm{adq}}_{\overline{\ca{F}}}(Y/\ca{O}_K)$ given by an adequate $(K',\ca{O}_{K'},\ca{O}_{\overline{K}})$-triple $(A^{\triv},A,\overline{A})$ for some finite field extension $K'$ of $K$ contained in $\overline{K}$, consider the Faltings extension of $A$ over $\ca{O}_{K'}$ \eqref{eq:thm:A-fal-ext-1},
	\begin{align}\label{eq:para:fal-ext-val-pt-1}
		0\longrightarrow \widehat{\overline{A}}[\frac{1}{p}](1)\stackrel{\iota}{\longrightarrow}\scr{E}_A\stackrel{\jmath}{\longrightarrow} \widehat{\overline{A}}\otimes_{A}\Omega^1_{(X_K,\scr{M}_{X_K})/K}\longrightarrow 0,
	\end{align}
	where $\scr{M}_{X_K}$ is the compactifying log structure associated to the open immersion $X^{\triv}\to X_K$ and where we used the fact that $\Omega^1_{(X_{K'},\scr{M}_{X_{K'}})/K'}=\Omega^1_{(X_K,\scr{M}_{X_K})/K}$ (as $K'$ is \'etale over $K$).
	
	Any morphism $X'=\spec(A')\to X=\spec(A)$ in $\nbd^{Y^{\triv}\trm{-}\mrm{adq}}_{\overline{\ca{F}}}(Y/\ca{O}_K)$ induces a morphism of $(K,\ca{O}_K,\ca{O}_{\overline{K}})$-triples $(A_{\triv},A,\overline{A})\to (A'_{\triv},A',\overline{A'})$ by \ref{para:notation-neighborhood-3}. Notice that $\overline{A}\to \overline{A'}$ is injective (since both of them are contained in $\overline{\ca{K}}$ by definition \ref{para:notation-neighborhood-3}). Thus, it induces a canonical isomorphism of Faltings extensions
	\begin{align}\label{eq:para:fal-ext-val-pt-2}
		\xymatrix{
			0\ar[r]& \widehat{\overline{\ca{F}}}(1)\ar[r]^-{\iota}&\widehat{\overline{\ca{F}}}\otimes_{\widehat{\overline{A'}}}\scr{E}_{A'}\ar[r]^-{\jmath}& \widehat{\overline{\ca{F}}}\otimes_{A'}\Omega^1_{(X'_K,\scr{M}_{X'_K})/K}\ar[r]& 0\\
			0\ar[r]& \widehat{\overline{\ca{F}}}(1)\ar[r]^-{\iota}\ar[u]^-{\wr}&\widehat{\overline{\ca{F}}}\otimes_{\widehat{\overline{A}}}\scr{E}_A\ar[r]^-{\jmath}\ar[u]^-{\wr}& \widehat{\overline{\ca{F}}}\otimes_{A}\Omega^1_{(X_K,\scr{M}_{X_K})/K}\ar[r]\ar[u]^-{\wr}& 0
		}
	\end{align}
	together with a canonical commutative diagram by \ref{rem:A-fal-ext}.(\ref{item:rem:A-fal-ext-2}),
	\begin{align}\label{eq:para:fal-ext-val-pt-3}
		\xymatrix{
			V_p(\overline{A'}[1/p]\cap \overline{A'}_{\triv}^\times)\ar[r]&\widehat{\overline{\ca{F}}}\otimes_{\widehat{\overline{A'}}}\scr{E}_{A'}\\
			V_p(\overline{A}[1/p]\cap \overline{A}_{\triv}^\times)\ar[u]\ar[r]&\widehat{\overline{\ca{F}}}\otimes_{\widehat{\overline{A}}}\scr{E}_A\ar[u]^-{\wr}
		}
	\end{align}
	where we used the fact that $(X'_K,\scr{M}_{X'_K})\to (X_K,\scr{M}_{X_K})$ is strictly \'etale.
	
	Taking filtered colimit over $\nbd^{Y^{\triv}\trm{-}\mrm{adq}}_{\overline{\ca{F}}}(Y/\ca{O}_K)$ (\ref{prop:aet-neighborhood}), we obtain a canonical exact sequence of $\widehat{\overline{\ca{F}}}$-modules
	\begin{align}\label{eq:para:fal-ext-val-pt-4}
		0\longrightarrow \widehat{\overline{\ca{F}}}(1)\stackrel{\iota}{\longrightarrow}\colim_{X=\spec(A)\in \nbd^{Y^{\triv}\trm{-}\mrm{adq}}_{\overline{\ca{F}}}(Y/\ca{O}_K)}\widehat{\overline{\ca{F}}}\otimes_{\widehat{\overline{A}}}\scr{E}_A\stackrel{\jmath}{\longrightarrow} \widehat{\overline{\ca{F}}}\otimes_{\ca{O}_Y}\Omega^1_{(Y,\scr{M}_Y)/K}\longrightarrow 0,
	\end{align}
	together with a canonical group homomorphism
	\begin{align}\label{eq:para:fal-ext-val-pt-5}
		\colim_{X=\spec(A)\in \nbd^{Y^{\triv}\trm{-}\mrm{adq}}_{\overline{\ca{F}}}(Y/\ca{O}_K)}V_p(\overline{A}[1/p]\cap \overline{A}_{\triv}^\times)\longrightarrow \colim_{X=\spec(A)\in \nbd^{Y^{\triv}\trm{-}\mrm{adq}}_{\overline{\ca{F}}}(Y/\ca{O}_K)}\widehat{\overline{\ca{F}}}\otimes_{\widehat{\overline{A}}}\scr{E}_A,
	\end{align}
	where we used the fact that $(X_K,\scr{M}_{X_K})$ is strictly \'etale over $(Y,\scr{M}_Y)$. We denote the image of an element $(s_{p^n})_{n\in\bb{N}}$ under \eqref{eq:para:fal-ext-val-pt-5} by $(\df\log(s_{p^n}))_{n\in\bb{N}}$.
\end{mypara}

\begin{mythm}\label{thm:fal-ext-val-pt}
	With the notation in {\rm\ref{para:notation-neighborhood-1}}, {\rm\ref{para:notation-neighborhood-2}} and {\rm\ref{para:notation-neighborhood-3}}, the finite free $\widehat{\overline{\ca{F}}}$-module (endowed with the canonical topology induced by the $p$-adic topology on $\ca{O}_{\widehat{\overline{\ca{F}}}}$) 
	\begin{align}\label{eq:thm:fal-ext-val-pt-1}
		\scr{E}_{Y^{\triv}\to Y,\overline{y}}=\colim_{X=\spec(A)\in \nbd^{Y^{\triv}\trm{-}\mrm{adq}}_{\overline{\ca{F}}}(Y/\ca{O}_K)}\widehat{\overline{\ca{F}}}\otimes_{\widehat{\overline{A}}}\scr{E}_A
	\end{align}
	is endowed with a canonical continuous action of $\gal(\overline{\ca{K}}/\ca{K}^{\mrm{vh}})$ (the decomposition group of $Y/\ca{O}_K$ at the geometric valuative point $\spec(\overline{\ca{F}})$). Moreover, the map \eqref{eq:para:fal-ext-val-pt-5}
	\begin{align}\label{eq:thm:fal-ext-val-pt-2}
		\colim_{X=\spec(A)\in \nbd^{Y^{\triv}\trm{-}\mrm{adq}}_{\overline{\ca{F}}}(Y/\ca{O}_K)}V_p(\overline{A}[1/p]\cap \overline{A}_{\triv}^\times)\longrightarrow\scr{E}_{Y^{\triv}\to Y,\overline{y}}
	\end{align}
	is $\gal(\overline{\ca{K}}/\ca{K}^{\mrm{vh}})$-equivariant, and the sequence \eqref{eq:para:fal-ext-val-pt-4},
	\begin{align}\label{eq:thm:fal-ext-val-pt-3}
		0\longrightarrow \widehat{\overline{\ca{F}}}(1)\stackrel{\iota}{\longrightarrow}\scr{E}_{Y^{\triv}\to Y,\overline{y}}\stackrel{\jmath}{\longrightarrow} \widehat{\overline{\ca{F}}}\otimes_{\ca{O}_Y}\Omega^1_{(Y,\scr{M}_Y)/K}\longrightarrow 0,
	\end{align}
	is an exact sequence of finite free $\widehat{\overline{\ca{F}}}$-representations of $\gal(\overline{\ca{K}}/\ca{K}^{\mrm{vh}})$	satisfying the following properties:
	\begin{enumerate}
		\renewcommand{\labelenumi}{{\rm(\theenumi)}}
		\item For any compatible system of $p$-power roots of unity $(\zeta_{p^n})_{n\in\bb{N}}$ contained in $\overline{\ca{F}}$, we have $\iota((\zeta_{p^n})_{n\in\bb{N}})=(\df\log(\zeta_{p^n}))_{n\in\bb{N}}$.\label{item:thm:fal-ext-val-pt-1}
		\item For any $s\in \scr{M}_{Y,\overline{y}}\subseteq \ca{K}^{\mrm{sh}}$ with compatible system of $p$-power roots $(s_{p^n})_{n\in\bb{N}}$ contained in $\overline{\ca{K}}$, we have $\jmath((\df\log(s_{p^n}))_{n\in\bb{N}})=\df\log(s)$ (note that $\widehat{\overline{\ca{F}}}\otimes_{\ca{O}_Y}\Omega^1_{(Y,\scr{M}_Y)/K}=\widehat{\overline{\ca{F}}}\otimes_{\ca{O}_{Y,y}^{\mrm{sh}}}\Omega^1_{(\ca{O}_{Y,y}^{\mrm{sh}},\scr{M}_{Y,\overline{y}})/K}$). \label{item:thm:fal-ext-val-pt-2}
		\item With the notation in {\rm\ref{lem:log-str-val-pt}}, let $(t_{1,p^n}')_{n\in\bb{N}}, \dots,(t_{d,p^n}')_{n\in\bb{N}}$ be compatible systems of $p$-power roots of $t_1',\dots,t_d'$ contained in $\overline{\ca{K}}$. Then, the $\widehat{\overline{\ca{F}}}$-linear surjection $\jmath$ admits a section sending $\df\log(t_i')$ to $(\df\log(t_{i,p^n}'))_{n\in\bb{N}}$.\label{item:thm:fal-ext-val-pt-3}
	\end{enumerate}
\end{mythm}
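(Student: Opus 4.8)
The plan is to obtain everything by taking the filtered colimit of the functorial Faltings extension of Theorem~\ref{thm:A-fal-ext} over the cofinal category $\nbd^{Y^{\triv}\trm{-}\mrm{adq}}_{\overline{\ca{F}}}(Y/\ca{O}_K)$ supplied by Proposition~\ref{prop:aet-neighborhood}, precisely as assembled in~\ref{para:fal-ext-val-pt}. Three things must be produced: the $\gal(\overline{\ca{K}}/\ca{K}^{\mrm{vh}})$-action together with its continuity; the exactness of~\eqref{eq:thm:fal-ext-val-pt-3} together with finite freeness; and the three compatibility properties.

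For the Galois action I would use Lemma~\ref{lem:galois-action-adq-nbd}: for $\sigma\in\gal(\overline{\ca{K}}/\ca{K}^{\mrm{vh}})$ and an object $X=\spec(A)$ given by an adequate $(K',\ca{O}_{K'},\ca{O}_{\overline{K}})$-triple $(A_{\triv},A,\overline{A})$, the translate $X^\sigma=\spec(\sigma(A))$ is again such an object (over $(\sigma(K'),\sigma(\ca{O}_{K'}),\sigma(\ca{O}_{\overline{K}}))$), and $\sigma$ defines a morphism of triples $(A_{\triv},A,\overline{A})\to(\sigma(A_{\triv}),\sigma(A),\sigma(\overline{A}))$ with $\overline{A}\to\sigma(\overline{A})$ an isomorphism. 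Feeding this into the functoriality of Faltings extensions (Remark~\ref{rem:A-fal-ext}.(\ref{item:rem:A-fal-ext-2}), with the trivial base-field extension $K\to K$, so that~\eqref{eq:rem:A-fal-ext-8} is an isomorphism) yields, after tensoring with $\widehat{\overline{\ca{F}}}$, a $\sigma$-semilinear isomorphism between the $X$-term and the $X^\sigma$-term of the colimit~\eqref{eq:thm:fal-ext-val-pt-1}, compatible with $\iota$, $\jmath$ and the $\df\log$-maps, where one uses that $\sigma$ restricts to a continuous automorphism of $\widehat{\overline{\ca{F}}}$ preserving $\ca{O}_{\widehat{\overline{\ca{F}}}}$ via~\eqref{eq:para:notation-neighborhood-2-1}. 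As $X\mapsto X^\sigma$ is an auto-equivalence of the index category, these isomorphisms assemble in the colimit into a $\sigma$-semilinear automorphism of $\scr{E}_{Y^{\triv}\to Y,\overline{y}}$; the canonicity of all the maps in Remark~\ref{rem:A-fal-ext} forces the cocycle identity, so one gets a genuine action, visibly $\gal(\overline{\ca{K}}/\ca{K}^{\mrm{vh}})$-equivariant for $\iota$, $\jmath$ and the map~\eqref{eq:thm:fal-ext-val-pt-2}.

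The transition maps in~\eqref{eq:thm:fal-ext-val-pt-1} are isomorphisms by~\eqref{eq:para:fal-ext-val-pt-2} (each arising from a strictly étale morphism of log schemes over an étale base-field extension), so for any single $X=\spec(A)$ the canonical map $\widehat{\overline{\ca{F}}}\otimes_{\widehat{\overline{A}}}\scr{E}_A\to\scr{E}_{Y^{\triv}\to Y,\overline{y}}$ is an isomorphism; hence $\scr{E}_{Y^{\triv}\to Y,\overline{y}}$ is finite free over $\widehat{\overline{\ca{F}}}$ of rank $1+\dim(Y)$, and~\eqref{eq:thm:fal-ext-val-pt-3} is exact, being the filtered colimit of the $\widehat{\overline{\ca{F}}}$-base changes of~\eqref{eq:para:fal-ext-val-pt-1}. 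To identify the third term I would use that $K\otimes_{\ca{O}_K}\colim_X A=\ca{O}_{Y,y}^{\mrm{sh}}$ (Proposition~\ref{prop:aet-local}.(\ref{item:prop:aet-local-1}) together with the cofinality of Proposition~\ref{prop:aet-neighborhood}) and that $(X_K,\scr{M}_{X_K})\to(Y,\scr{M}_Y)$ is strict, so log differentials base-change and $\colim_X\widehat{\overline{\ca{F}}}\otimes_A\Omega^1_{(X_K,\scr{M}_{X_K})/K}=\widehat{\overline{\ca{F}}}\otimes_{\ca{O}_{Y,y}^{\mrm{sh}}}\Omega^1_{(\ca{O}_{Y,y}^{\mrm{sh}},\scr{M}_{Y,\overline{y}})/K}=\widehat{\overline{\ca{F}}}\otimes_{\ca{O}_Y}\Omega^1_{(Y,\scr{M}_Y)/K}$, free with basis $\df\log(t_1),\dots,\df\log(t_d)$ by Lemma~\ref{lem:log-str-val-pt}. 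Properties~(\ref{item:thm:fal-ext-val-pt-1})--(\ref{item:thm:fal-ext-val-pt-3}) then follow by passing to the colimit in Theorem~\ref{thm:A-fal-ext}.(\ref{item:thm:A-fal-ext-1})--(\ref{item:thm:A-fal-ext-3}), once one notes that every element of $\scr{M}_{Y,\overline{y}}$ and every compatible system of $p$-power roots of unity is realized in $A[1/p]\cap A_{\triv}^\times$ for some $X=\spec(A)$ in the category, and that, after shrinking by Proposition~\ref{prop:aet-neighborhood} and Lemma~\ref{lem:log-str-val-pt}, a given regular system of parameters $t_1,\dots,t_d$ may be taken to be a system of coordinates of an adequate chart of such an $A$, to which Theorem~\ref{thm:A-fal-ext}.(\ref{item:thm:A-fal-ext-3}) applies.

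The main obstacle is the continuity of the action, since as constructed it is ``external'', permuting the index category rather than acting on a fixed module. To deal with it I would fix one $X=\spec(A)$ and identify $\scr{E}_{Y^{\triv}\to Y,\overline{y}}$ with $\widehat{\overline{\ca{F}}}\otimes_{\widehat{\overline{A}}}\scr{E}_A$. The subgroup $H_0\subseteq\gal(\overline{\ca{K}}/\ca{K}^{\mrm{vh}})$ of elements fixing $\mrm{Frac}(A)$ pointwise is open of finite index (because $\mrm{Frac}(A)$ is a finite extension of $\ca{K}$ and $H_0$ preserves $X$), it maps into the decomposition subgroup of $G_A$ at $\overline{y}$, and it acts on $\widehat{\overline{\ca{F}}}\otimes_{\widehat{\overline{A}}}\scr{E}_A$ as the base change of the continuous $G_A$-action on $\scr{E}_A$ (Theorem~\ref{thm:A-fal-ext}) along the $H_0$-equivariant ring map $\widehat{\overline{A}}\to\widehat{\overline{\ca{F}}}$. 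Continuity of this $H_0$-action for the canonical topology then follows from the Mittag--Leffler / lattice argument carried out in the proof of Proposition~\ref{prop:fal-ext-val} (take the $\ca{O}_{\widehat{\overline{\ca{F}}}}$-lattice spanned by the images of the $T_i$), and since $H_0$ has finite index the full $\gal(\overline{\ca{K}}/\ca{K}^{\mrm{vh}})$-action is continuous. I expect the bookkeeping around this last step --- matching the external permutation action with the internal $G_A$-structure and verifying the topological claims --- to be the only genuinely delicate point; everything else is a formal colimit of statements already established in Section~\ref{sec:adequate}.
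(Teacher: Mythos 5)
Your proposal is correct and follows essentially the same route as the paper: the action is defined by translating the index category via $X\mapsto X^\sigma$ and invoking the functoriality of \ref{rem:A-fal-ext}.(\ref{item:rem:A-fal-ext-2}), the exactness and properties (\ref{item:thm:fal-ext-val-pt-1})--(\ref{item:thm:fal-ext-val-pt-3}) come from passing \ref{thm:A-fal-ext} to the colimit using $\scr{M}_{Y,\overline{y}}=\colim_X A[1/p]\cap A_{\triv}^\times$, and continuity is checked exactly as you propose, by restricting to the open subgroup $\gal(\overline{\ca{K}}/\ca{K}_X\ca{K}^{\mrm{vh}})$ fixing the fraction field of a single $A$ (your $H_0$), where the action is the base change of the continuous $G_A$-action on $\scr{E}_A$. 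The only cosmetic difference is that the paper deduces continuity of the base-changed action directly from \cite[2.4]{he2022sen} rather than rerunning the lattice argument of \ref{prop:fal-ext-val}.
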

\begin{proof}
	For any $\sigma\in \gal(\overline{\ca{K}}/\ca{K}^{\mrm{vh}})$ and $X=\spec(A)\in \nbd^{Y^{\triv}\trm{-}\mrm{adq}}_{\overline{\ca{F}}}(Y/\ca{O}_K)$, there is a canonical morphism $X^\sigma\to X$ in $\nbd^{Y^{\triv}\trm{-}\mrm{adq}}_{\overline{\ca{F}}}(Y/\ca{O}_K)$ given by $\sigma:A\to\sigma(A)$ (see \ref{lem:galois-action-adq-nbd}). Thus, we obtain a canonical isomorphism $\widehat{\overline{\ca{F}}}\otimes_{\widehat{\overline{A}}}\scr{E}_A\iso \widehat{\overline{\ca{F}}}\otimes_{\widehat{\sigma(\overline{A})}}\scr{E}_{\sigma(A)}$ by \eqref{eq:para:fal-ext-val-pt-2}. Taking filtered colimits over $\nbd^{Y^{\triv}\trm{-}\mrm{adq}}_{\overline{\ca{F}}}(Y/\ca{O}_K)$, this defines a canonical $\widehat{\overline{\ca{F}}}$-semi-linear action of $\sigma$ (and thus of $\gal(\overline{\ca{K}}/\ca{K}^{\mrm{vh}})$) on $\scr{E}_{Y^{\triv}\to Y,\overline{y}}$. Except its continuity, the rest of the properties follow directly from \ref{thm:A-fal-ext} and the fact that $\scr{M}_{Y,\overline{y}}=\colim_{X=\spec(A)\in \nbd^{Y^{\triv}\trm{-}\mrm{adq}}_{\overline{\ca{F}}}(Y/\ca{O}_K)}A[1/p]\cap A_{\triv}^{\times}$ (\ref{lem:aet-et-neighborhood} and \ref{prop:aet-neighborhood}). 
	
	We fix $X=\spec(A)\in \nbd^{Y^{\triv}\trm{-}\mrm{adq}}_{\overline{\ca{F}}}(Y/\ca{O}_K)$. Note that its fraction field $\ca{K}_X$ is finite over the fraction field $\ca{K}$ of $Y$. Consider the finite field extension $\ca{K}_X\ca{K}^{\mrm{vh}}$ of $\ca{K}^{\mrm{vh}}$ generated by $\ca{K}_X$ in $\overline{\ca{K}}$. Then, we see that the canonical morphism $\overline{A}\to \ca{O}_{\overline{\ca{F}}}$ is compatible with the actions of $\gal(\overline{\ca{K}}/\ca{K}_X\ca{K}^{\mrm{vh}})$ (via the canonical group homomorphism $\gal(\overline{\ca{K}}/\ca{K}_X\ca{K}^{\mrm{vh}})\to \gal(\ca{K}_{X,\mrm{ur}}/\ca{K}_X)$), and thus so is $\widehat{\overline{A}}[1/p]\to \widehat{\overline{\ca{F}}}$. Recall that the $\gal(\ca{K}_{X,\mrm{ur}}/\ca{K}_X)$-action on the finite free $\widehat{\overline{A}}[1/p]$-module $\scr{E}_A$ (endowed with the canonical topology) is continuous (\ref{thm:A-fal-ext}). Thus, the $\gal(\overline{\ca{K}}/\ca{K}_X\ca{K}^{\mrm{vh}})$-action on the finite free $\widehat{\overline{\ca{F}}}$-module $\widehat{\overline{\ca{F}}}\otimes_{\widehat{\overline{A}}}\scr{E}_A$ is also continuous (see \cite[2.4]{he2022sen}). Notice that the vertical isomorphisms in the following canonical commutative diagram (induced by \eqref{eq:para:fal-ext-val-pt-2})
	\begin{align}
		\xymatrix{
			0\ar[r]& \widehat{\overline{\ca{F}}}(1)\ar[r]^-{\iota}&\scr{E}_{Y^{\triv}\to Y,\overline{y}}\ar[r]^-{\jmath}&\widehat{\overline{\ca{F}}}\otimes_{\ca{O}_Y}\Omega^1_{(Y,\scr{M}_Y)/K}\ar[r]& 0\\
			0\ar[r]& \widehat{\overline{\ca{F}}}(1)\ar[r]^-{\iota}\ar[u]^-{\wr}&\widehat{\overline{\ca{F}}}\otimes_{\widehat{\overline{A}}}\scr{E}_A\ar[r]^-{\jmath}\ar[u]^-{\wr}& \widehat{\overline{\ca{F}}}\otimes_{A}\Omega^1_{(X_K,\scr{M}_{X_K})/K}\ar[r]\ar[u]^-{\wr}& 0
		}
	\end{align}
	are $\gal(\overline{\ca{K}}/\ca{K}_X\ca{K}^{\mrm{vh}})$-equivariant and homeomorphic. Thus, the $\gal(\overline{\ca{K}}/\ca{K}_X\ca{K}^{\mrm{vh}})$-action on $\scr{E}_{Y^{\triv}\to Y,\overline{y}}$ is also continuous. As $\gal(\overline{\ca{K}}/\ca{K}_X\ca{K}^{\mrm{vh}})$ is an open subgroup of $\gal(\overline{\ca{K}}/\ca{K}^{\mrm{vh}})$, the $\gal(\overline{\ca{K}}/\ca{K}^{\mrm{vh}})$-action on $\scr{E}_{Y^{\triv}\to Y,\overline{y}}$ is also continuous.
\end{proof}

\begin{mydefn}\label{defn:fal-ext-val-pt}
	We call the exact sequence \eqref{eq:thm:fal-ext-val-pt-3} the \emph{Faltings extension of the open immersion $Y^{\triv}\to Y$ over $\ca{O}_K$ at the geometric valuative point $\overline{y}=\spec(\overline{\ca{F}})$}. If $Y^{\triv}$ is clearly fixed in the context, then we write $\scr{E}_{Y,\overline{y}}$ instead of $\scr{E}_{Y^{\triv}\to Y,\overline{y}}$ for simplicity.
\end{mydefn}

Note that the inertia subgroup $\gal(\overline{\ca{K}}/\ca{K}^{\mrm{sh}})\subseteq \gal(\overline{\ca{K}}/\ca{K}^{\mrm{vh}})$ acts trivially on both $\widehat{\overline{\ca{F}}}(1)$ and $\widehat{\overline{\ca{F}}}\otimes_{\ca{O}_Y}\Omega^1_{(Y,\scr{M}_Y)/K}$, but it may act non-trivially on $\scr{E}_{Y^{\triv}\to Y,\overline{y}}$ (see the proof of \ref{thm:perfd-val-log-2}).

\begin{myrem}\label{rem:fal-ext-val-pt}
	The construction of \eqref{eq:thm:fal-ext-val-pt-3} is functorial in the data in \ref{para:notation-neighborhood-1}. More precisely, consider another set of data in \ref{para:notation-neighborhood-1} by adding a prime superscript. Given a commutative diagram of $\bb{Z}_p$-schemes extending the structural morphisms given in \ref{para:notation-neighborhood-1},
	\begin{align}\label{eq:rem:fal-ext-val-pt-1}
		\xymatrix{
			\spec(\ca{O}_{\overline{\ca{F}'}})\ar[ddd]\ar@/^1pc/[rrr]&\spec(\overline{\ca{F}'})\ar[l]\ar[r]\ar[d]&\spec(\overline{\ca{F}})\ar[r]\ar[d]&\spec(\ca{O}_{\overline{\ca{F}}})\ar[ddd]\\
			&Y'^{\overline{\ca{K}'}}\ar[r]\ar[d]&Y^{\overline{\ca{K}}}\ar[d]&\\
			&Y'\ar[r]\ar[d]&Y\ar[d]&\\
			\spec(\ca{O}_{K'})\ar@/_1pc/[rrr]&\spec(K')\ar[l]\ar[r]&\spec(K)\ar[r]&\spec(\ca{O}_K)
		}
	\end{align}
	such that $Y'^{\triv}$ is over $Y^{\triv}$ and that $\spec(\overline{\ca{K}'})\in Y'^{\overline{\ca{K}'}}$ is over $\spec(\overline{\ca{K}})\in Y^{\overline{\ca{K}}}$, for any $X=\spec(A)\in \nbd^{Y^{\triv}\trm{-}\mrm{adq}}_{\overline{\ca{F}}}(Y/\ca{O}_K)$ and $X'=\spec(A')\in \nbd^{Y'^{\triv}\trm{-}\mrm{adq}}_{\overline{\ca{F'}}}(Y'/\ca{O}_{K'})$, we define $X\leq X'$ if and only if the inclusion $\overline{\ca{K}}\subseteq \overline{\ca{K}'}$ induces a morphism of $(K,\ca{O}_K,\ca{O}_{\overline{K}})$-triples $(A_{\triv},A,\overline{A})\to (A'_{\triv},A',\overline{A'})$. It is clear that the full subcategory $(\nbd^{Y'^{\triv}\trm{-}\mrm{adq}}_{\overline{\ca{F'}}}(Y'/\ca{O}_{K'}))_{\geq X}$ is cofinal in $\nbd^{Y'^{\triv}\trm{-}\mrm{adq}}_{\overline{\ca{F'}}}(Y'/\ca{O}_{K'})$ by \ref{prop:aet-neighborhood}. Hence, by \ref{rem:A-fal-ext}.(\ref{item:rem:A-fal-ext-2}), there is a natural morphism of exact sequences 
	\begin{align}
		\xymatrix{
			0\ar[r]& \widehat{\overline{\ca{F}'}}(1)\ar[r]^-{\iota}&\colim_{X'\in(\nbd^{Y'^{\triv}\trm{-}\mrm{adq}}_{\overline{\ca{F'}}}(Y'/\ca{O}_{K'}))_{\geq X}}\widehat{\overline{\ca{F}'}}\otimes_{\widehat{\overline{A'}}}\scr{E}_{A'}\ar[r]^-{\jmath}& \widehat{\overline{\ca{F}'}}\otimes_{A'}\Omega^1_{(X'_{K'},\scr{M}_{X'_{K'}})/K'}\ar[r]& 0\\
			0\ar[r]& \widehat{\overline{\ca{F}}}(1)\ar[r]^-{\iota}\ar[u]&\widehat{\overline{\ca{F}}}\otimes_{\widehat{\overline{A}}}\scr{E}_A\ar[r]^-{\jmath}\ar[u]& \widehat{\overline{\ca{F}}}\otimes_{A}\Omega^1_{(X_K,\scr{M}_{X_K})/K}\ar[r]\ar[u]& 0.
		}
	\end{align}
	Taking filtered colimit over $X\in \nbd^{Y^{\triv}\trm{-}\mrm{adq}}_{\overline{\ca{F}}}(Y/\ca{O}_K)$, we obtain a canonical morphism of exact sequences
	\begin{align}\label{eq:rem:fal-ext-val-pt-2}
		\xymatrix{
			0\ar[r]& \widehat{\overline{\ca{F}'}}(1)\ar[r]^-{\iota}&\scr{E}_{Y'^{\triv}\to Y',\overline{y'}}\ar[r]^-{\jmath}&\widehat{\overline{\ca{F}'}}\otimes_{\ca{O}_{Y'}}\Omega^1_{(Y',\scr{M}_{Y'})/K'}\ar[r]& 0\\
			0\ar[r]& \widehat{\overline{\ca{F}}}(1)\ar[r]^-{\iota}\ar[u]&\scr{E}_{Y^{\triv}\to Y,\overline{y}}\ar[r]^-{\jmath}\ar[u]& \widehat{\overline{\ca{F}}}\otimes_{\ca{O}_Y}\Omega^1_{(Y,\scr{M}_Y)/K}\ar[r]\ar[u]& 0.
		}
	\end{align}
	Similarly, we obtain a natural commutative diagram
	\begin{align}\label{eq:rem:fal-ext-val-pt-3}
		\xymatrix{
			\colim_{X'=\spec(A')\in \nbd^{Y'^{\triv}\trm{-}\mrm{adq}}_{\overline{\ca{F'}}}(Y'/\ca{O}_{K'})}V_p(\overline{A'}[1/p]\cap \overline{A'}_{\triv}^\times)\ar[r]&\scr{E}_{Y'^{\triv}\to Y',\overline{y'}}\\
			\colim_{X=\spec(A)\in \nbd^{Y^{\triv}\trm{-}\mrm{adq}}_{\overline{\ca{F}}}(Y/\ca{O}_K)}V_p(\overline{A}[1/p]\cap \overline{A}_{\triv}^\times)\ar[r]\ar[u]&\scr{E}_{Y^{\triv}\to Y,\overline{y}}.\ar[u]
		}
	\end{align}
	In particular, if $K'$ is finite over $K$, $(Y',\scr{M}_{Y'})$ is \'etale over $(Y,\scr{M}_Y)$ and $\ca{O}_{\overline{\ca{F}'}}=\ca{O}_{\overline{\ca{F}}}$, then the vertical homomorphisms in \eqref{eq:rem:fal-ext-val-pt-2} are isomorphisms.
\end{myrem}

\begin{mylem}\label{lem:fal-ext-val-pt-nolog-1}
	With the notation in {\rm\ref{para:notation-neighborhood-1}}, {\rm\ref{para:notation-neighborhood-2}} and {\rm\ref{para:notation-neighborhood-3}}, there is a canonical $\gal(\overline{\ca{K}}/\ca{K}^{\mrm{vh}})$-equivariant injection of Faltings extensions \eqref{eq:thm:fal-ext-val-pt-3},
	\begin{align}\label{eq:lem:fal-ext-val-pt-nolog-1-1}
		\xymatrix{
			0\ar[r]& \widehat{\overline{\ca{F}}}(1)\ar[r]^-{\iota}&\scr{E}_{Y^{\triv}\to Y,\overline{y}}\ar[r]^-{\jmath}&\widehat{\overline{\ca{F}}}\otimes_{\ca{O}_{Y}}\Omega^1_{(Y,\scr{M}_{Y})/K}\ar[r]& 0\\
			0\ar[r]& \widehat{\overline{\ca{F}}}(1)\ar[r]^-{\iota}\ar@{=}[u]&\scr{E}_{Y\to Y,\overline{y}}\ar[r]^-{\jmath}\ar@{^{(}->}[u]& \widehat{\overline{\ca{F}}}\otimes_{\ca{O}_Y}\Omega^1_{Y/K}\ar[r]\ar@{^{(}->}[u]& 0,
		}
	\end{align}
	where $Y\to Y$ is the identity morphism corresponding to the case where the associated divisor $D$ is empty and $Y^{\triv}=Y$.
\end{mylem}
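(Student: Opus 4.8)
The plan is to obtain the vertical maps of \eqref{eq:lem:fal-ext-val-pt-nolog-1-1} from the functoriality of the induced Faltings extension recorded in \ref{rem:fal-ext-val-pt}, and then to deduce injectivity from the explicit bases furnished by \ref{lem:log-str-val-pt}.

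First I would apply \ref{rem:fal-ext-val-pt} to the morphism of data of the type \ref{para:notation-neighborhood-1} in which $K'=K$, $Y'=Y$, $\overline{\ca{K}'}=\overline{\ca{K}}$, $\overline{\ca{F}'}=\overline{\ca{F}}$, $\ca{O}_{\overline{\ca{F}'}}=\ca{O}_{\overline{\ca{F}}}$ and all the structural morphisms are the identity, but where the open immersion on the source (unprimed) side is taken to be the identity $Y\to Y$ (empty divisor, trivial compactifying log structure) while the one on the target (primed) side is the given $Y^{\triv}=Y\setminus D\hookrightarrow Y$. The only hypothesis of \ref{rem:fal-ext-val-pt} beyond the obvious commutative diagram \eqref{eq:rem:fal-ext-val-pt-1} is that the primed trivial locus lie over the unprimed one, which here is exactly the open immersion $Y\setminus D\hookrightarrow Y$, so it is satisfied. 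Hence \ref{rem:fal-ext-val-pt} provides a canonical $\gal(\overline{\ca{K}}/\ca{K}^{\mrm{vh}})$-equivariant morphism of exact sequences of finite free $\widehat{\overline{\ca{F}}}$-representations, which is exactly \eqref{eq:lem:fal-ext-val-pt-nolog-1-1}. Since $K'=K$ and $\overline{\ca{F}'}=\overline{\ca{F}}$, the leftmost vertical arrow is the identity of $\widehat{\overline{\ca{F}}}(1)$; and tracing the construction \eqref{eq:rem:fal-ext-val-pt-2} through the adequate-chart description of \ref{thm:A-fal-ext} shows that the rightmost vertical arrow is the base change along $\ca{O}_Y\to\widehat{\overline{\ca{F}}}$ of the canonical $\ca{O}_Y$-linear morphism $\Omega^1_{Y/K}\to\Omega^1_{(Y,\scr{M}_Y)/K}$ induced by the morphism of log schemes from $(Y,\scr{M}_Y)$ to $Y$ with the trivial log structure (coming from the inclusion of the trivial log structure into $\scr{M}_Y$), which on sections sends $\df a\mapsto \df a$, hence $\df t_i\mapsto t_i\,\df\log t_i$ in the notation of \ref{lem:log-str-val-pt}.

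It then remains to see that the rightmost vertical arrow is injective, for then a short diagram chase (the four lemma, using that the leftmost arrow is an isomorphism) forces the middle arrow $\scr{E}_{Y\to Y,\overline{y}}\to\scr{E}_{Y^{\triv}\to Y,\overline{y}}$ to be injective, its $\gal(\overline{\ca{K}}/\ca{K}^{\mrm{vh}})$-equivariance being part of \ref{rem:fal-ext-val-pt}. For injectivity I would invoke \ref{lem:log-str-val-pt}: it identifies $\Omega^1_{(\ca{O}_{Y,y}^{\mrm{sh}},\scr{M}_{Y,\overline{y}})/K}$ with the finite free module on $\df\log t_1,\dots,\df\log t_d$, while the smoothness of $Y$ of dimension $d$ over $K$ together with the fact that $t_1,\dots,t_d$ is a regular system of parameters of $\ca{O}_{Y,y}^{\mrm{sh}}$ shows that $\Omega^1_{\ca{O}_{Y,y}^{\mrm{sh}}/K}$ is finite free on $\df t_1,\dots,\df t_d$; combining this with the base-change identifications of \ref{thm:fal-ext-val-pt}.(\ref{item:thm:fal-ext-val-pt-2}), the rightmost vertical arrow is the diagonal map $\df t_i\mapsto t_i\,\df\log t_i$, which is injective. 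The step I expect to be the main obstacle is the identification, in the second paragraph, of the functoriality map of \ref{rem:fal-ext-val-pt} on the quotient terms with the canonical morphism $\Omega^1_{Y/K}\to\Omega^1_{(Y,\scr{M}_Y)/K}$: this requires unwinding, over a cofinal system of adequate neighborhoods of $\overline{y}$, that replacing the trivial compactifying log structure (for $Y\to Y$) by the one attached to $Y^{\triv}\to Y$ acts on the Faltings extensions \eqref{eq:thm:A-fal-ext-1} precisely through the induced morphism of sheaves of log differentials, which is the functoriality from \cite{he2022sen} underlying \ref{rem:A-fal-ext}.(\ref{item:rem:A-fal-ext-2}). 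The remaining verifications are routine diagram chases.
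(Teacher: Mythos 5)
Your proposal follows the same route as the paper's own proof: the morphism of extensions is obtained from the functoriality statement \ref{rem:fal-ext-val-pt} applied to the data with empty divisor mapping to the given data, and the injectivity of the middle arrow is reduced, by a diagram chase, to the injectivity of the right-hand vertical arrow, which you (like the paper) trace back to the injectivity of $\Omega^1_{Y/K}\hookrightarrow\Omega^1_{(Y,\scr{M}_Y)/K}$.

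There is, however, a genuine gap in that last step whenever $\overline{y}$ lies over the divisor $D$ (i.e.\ $r\geq 1$), and your more explicit computation actually makes it visible. You correctly identify the right-hand vertical arrow as the base change along $\ca{O}_Y\to\widehat{\overline{\ca{F}}}$ of the canonical inclusion of locally free sheaves, whose matrix in the bases $\{\df t_i\}$ and $\{\df\log t_1,\dots,\df\log t_r,\df t_{r+1},\dots,\df t_d\}$ is diagonal with entries $t_1,\dots,t_r,1,\dots,1$. But the $\ca{O}_Y$-algebra structure on $\widehat{\overline{\ca{F}}}$ factors through the residue field $\kappa(y)$, and $t_1,\dots,t_r$ lie in the maximal ideal of $\ca{O}_{Y,y}^{\mrm{sh}}$, so these diagonal entries become $0$ after base change: injectivity of a map of locally free sheaves does not pass to the fibre at a point where the cokernel has torsion. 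The failure is real, not just a failure of the argument: for $Y=\spec(K[u])$, $D=\{u=0\}$ and $y$ the origin, the unit $v=u+1$ gives a basis element $(\df\log(v_{p^n}))_{n}$ of $\scr{E}_{Y\to Y,\overline{y}}$ whose image in $\scr{E}_{Y^{\triv}\to Y,\overline{y}}$ lies in $\ke(\jmath)=\iota(\widehat{\overline{\ca{F}}}(1))$ (since $\df\log v=\tfrac{u}{u+1}\df\log u\mapsto 0$ in the fibre), so a suitable combination $(\df\log(v_{p^n}))_{n}-\lambda\,(\df\log(\zeta_{p^n}))_{n}$ is a nonzero element of the kernel of the comparison map. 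Thus the claimed injection holds only when $\overline{y}$ does not lie over $D$, where the two extensions coincide. This defect is shared with the paper's one-line justification; you should either restrict the injectivity assertion to that case or note that the sole downstream use of the lemma (in the proof of \ref{thm:perfd-val-log-1}) requires only the existence of the morphism of extensions, which makes the two coboundary maps $\delta$ compatible, and not its injectivity.
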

\begin{proof}
	Firstly, note that replacing the divisor $D$ in \ref{para:notation-neighborhood-1} by the empty set (and thus replacing $Y^{\triv}$ by $Y$) again gives a set of data satisfying the conditions in \ref{para:notation-neighborhood-1}. Then, the canonical morphism is given by \ref{rem:fal-ext-val-pt} and the injectivity follows from the injectivity of $\Omega^1_{Y/K}\to \Omega^1_{(Y,\scr{M}_Y)/K}$ (as $D$ is a normal crossings divisor, see \ref{lem:log-str-val-pt}).
\end{proof}

\begin{mylem}\label{lem:fal-ext-val-pt-nolog-2}
	With the notation in {\rm\ref{para:notation-neighborhood-1}}, {\rm\ref{para:notation-neighborhood-2}} and {\rm\ref{para:notation-neighborhood-3}}, let $\scr{E}_{\ca{O}_{\overline{\ca{F}}}}$ be the canonical finite free $\widehat{\overline{\ca{F}}}$-representation of $\gal(\overline{\ca{F}}/\ca{F}^{\mrm{h}})$ defined in \eqref{eq:thm:fal-ext-val-1} (regarded as a finite free $\widehat{\overline{\ca{F}}}$-representation of $\gal(\overline{\ca{K}}/\ca{K}^{\mrm{vh}})$ via the canonical surjection $\gal(\overline{\ca{K}}/\ca{K}^{\mrm{vh}})\to \gal(\overline{\ca{F}}/\ca{F}^{\mrm{h}})$ \eqref{eq:para:notation-neighborhood-2-6}). Then, there is a canonical $\gal(\overline{\ca{K}}/\ca{K}^{\mrm{vh}})$-equivariant surjection between Faltings extensions \eqref{eq:thm:fal-ext-val-pt-3} and \eqref{eq:thm:fal-ext-val-3},
	\begin{align}\label{eq:prop:fal-ext-val-pt-nolog-2-1}
		\xymatrix{
			0\ar[r]& \widehat{\overline{\ca{F}}}(1)\ar[r]^-{\iota}&\scr{E}_{\ca{O}_{\overline{\ca{F}}}}\ar[r]^-{\jmath}&\widehat{\overline{\ca{F}}}\otimes_{\ca{F}}\Omega^1_{\ca{F}/K}\ar[r]& 0\\
			0\ar[r]& \widehat{\overline{\ca{F}}}(1)\ar[r]^-{\iota}\ar@{=}[u]&\scr{E}_{Y\to Y,\overline{y}}\ar[r]^-{\jmath}\ar@{->>}[u]& \widehat{\overline{\ca{F}}}\otimes_{\ca{O}_Y}\Omega^1_{Y/K}\ar[r]\ar@{->>}[u]& 0,
		}
	\end{align}
	where $Y\to Y$ is the identity morphism corresponding to the case where the associated divisor $D$ is empty and $Y^{\triv}=Y$.
\end{mylem}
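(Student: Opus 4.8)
Since the statement concerns the empty divisor, I would work throughout with $D=\emptyset$; then $\scr{M}_Y$ is the trivial log structure and, for every $X=\spec(A)\in\nbd^{Y^{\triv}\trm{-}\mrm{adq}}_{\overline{\ca{F}}}(Y/\ca{O}_K)$ with adequate $(K',\ca{O}_{K'},\ca{O}_{\overline{K}})$-triple $(A_{\triv},A,\overline{A})$, one has $A_{\triv}=A[1/p]$ and the coordinates $t_1,\dots,t_d$ of $A$ lie in $A[1/p]^\times$. Hence $\overline{A}_{\triv}^\times=\overline{A}[1/p]^\times$, and by \ref{thm:A-fal-ext}.(\ref{item:thm:A-fal-ext-1}) and \ref{thm:A-fal-ext}.(\ref{item:thm:A-fal-ext-3}) the finite free $\widehat{\overline{A}}[1/p]$-module $\scr{E}_A$ admits a basis lying in the image of the canonical map \eqref{eq:thm:A-fal-ext-2}, so that image generates $\scr{E}_A$ over $\widehat{\overline{A}}[1/p]$. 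Symmetrically, by \ref{prop:fal-ext-val}, $V_p(\Omega^1_{\ca{O}_{\overline{\ca{F}}}/B})$ is generated over $\widehat{\overline{\ca{F}}}$ by the image of \eqref{eq:para:notation-fal-ext-2-4} for any $(L,B)\in\scr{C}_{\ca{O}_{\overline{\ca{F}}}/\ca{O}_K}$.

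\emph{Construction of the middle arrow.} For $X=\spec(A)$ as above, the canonical morphism $A\to\ca{O}_{\overline{\ca{F}}}$ of \eqref{eq:para:notation-neighborhood-3-2} has image a finitely generated $\ca{O}_{K'}$-subalgebra $B\subseteq\ca{O}_{\overline{\ca{F}}}$ with $\mrm{Frac}(B)=\ca{F}_A$; since $\overline{\ca{F}}$ is algebraic over $\ca{F}\subseteq\ca{F}_A$, the pair $(K',B)$ belongs to $\scr{C}_{\ca{O}_{\overline{\ca{F}}}/\ca{O}_K}$, and $V_p(\Omega^1_{\ca{O}_{\overline{\ca{F}}}/B})$ is a term of the colimit \eqref{eq:thm:fal-ext-val-1}. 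I would then produce the (unique, by the previous paragraph) $\widehat{\overline{A}}[1/p]$-semilinear map $\scr{E}_A\to V_p(\Omega^1_{\ca{O}_{\overline{\ca{F}}}/B})$, along $\widehat{\overline{A}}[1/p]\to\widehat{\overline{\ca{F}}}$, sending $(\df\log(s_{p^n}))_{n\in\bb{N}}$ to $(\df\log(\overline{s}_{p^n}))_{n\in\bb{N}}$ for $s\in\overline{A}[1/p]^\times$ with image $\overline{s}\in\overline{\ca{F}}^\times$; its existence follows either from the explicit basis of \ref{thm:A-fal-ext}.(\ref{item:thm:A-fal-ext-3}), or, more structurally, from functoriality of log differentials along $\overline{A}\to\ca{O}_{\overline{\ca{F}}}$ (over $A\to B$) together with \ref{lem:val-log-diff}, which identifies $V_p$ of the log differentials of $\ca{O}_{\overline{\ca{F}}}$ relative to $B$ with $V_p(\Omega^1_{\ca{O}_{\overline{\ca{F}}}/B})$. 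Composing with the structural map $V_p(\Omega^1_{\ca{O}_{\overline{\ca{F}}}/B})\to\scr{E}_{\ca{O}_{\overline{\ca{F}}}}$, base-changing to $\widehat{\overline{\ca{F}}}$, and passing to the filtered colimit over $\nbd^{Y^{\triv}\trm{-}\mrm{adq}}_{\overline{\ca{F}}}(Y/\ca{O}_K)$ — where compatibility with transition morphisms and with the action of $\gal(\overline{\ca{K}}/\ca{K}^{\mrm{vh}})$ is checked from the functoriality statements \ref{rem:fal-ext-val-pt} and \ref{rem:defn:fal-ext-val}, noting that $\gal(\overline{\ca{K}}/\ca{K}^{\mrm{vh}})$ acts on $\ca{O}_{\overline{\ca{F}}}$, hence on $\scr{E}_{\ca{O}_{\overline{\ca{F}}}}$, through its quotient $\gal(\overline{\ca{F}}/\ca{F}^{\mrm{h}})$ of \eqref{eq:para:notation-neighborhood-2-6} — yields the desired $\gal(\overline{\ca{K}}/\ca{K}^{\mrm{vh}})$-equivariant map $\scr{E}_{Y\to Y,\overline{y}}\to\scr{E}_{\ca{O}_{\overline{\ca{F}}}}$.

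\emph{The square and surjectivity.} Since $\iota((\zeta_{p^n})_{n\in\bb{N}})=(\df\log(\zeta_{p^n}))_{n\in\bb{N}}$ for both extensions (\ref{thm:fal-ext-val-pt}.(\ref{item:thm:fal-ext-val-pt-1}) and \ref{thm:fal-ext-val}.(\ref{item:thm:fal-ext-val-1})), the middle arrow restricts to the identity on $\widehat{\overline{\ca{F}}}(1)$, so the left square of \eqref{eq:prop:fal-ext-val-pt-nolog-2-1} commutes with an isomorphism on the left. For the right square, with $D=\emptyset$ one has $\widehat{\overline{\ca{F}}}\otimes_{\ca{O}_Y}\Omega^1_{Y/K}=\widehat{\overline{\ca{F}}}\otimes_{\ca{O}_{Y,y}^{\mrm{sh}}}\Omega^1_{\ca{O}_{Y,y}^{\mrm{sh}}/K}$, and the right vertical arrow is the base change along $\ca{F}\to\widehat{\overline{\ca{F}}}$ of the surjection $\Omega^1_{\ca{O}_{Y,y}/K}\otimes_{\ca{O}_{Y,y}}\ca{F}\twoheadrightarrow\Omega^1_{\ca{F}/K}$ from the cotangent exact sequence of $K\to\ca{O}_{Y,y}\to\ca{F}$; on the generators $(\df\log(s_{p^n}))_{n\in\bb{N}}$, $\jmath$ sends them to $\df\log(s)$ on the top and to $\df\log(\overline{s})$ on the bottom (\ref{thm:fal-ext-val-pt}.(\ref{item:thm:fal-ext-val-pt-2}) and \ref{thm:fal-ext-val}.(\ref{item:thm:fal-ext-val-2})), and $\df\log(s)\mapsto\df\log(\overline{s})$ under that surjection, so the right square commutes. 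Thus \eqref{eq:prop:fal-ext-val-pt-nolog-2-1} is a morphism of short exact sequences whose left vertical arrow is an isomorphism and whose right vertical arrow is surjective, and a routine diagram chase then gives surjectivity of the middle vertical arrow.

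\emph{Main obstacle.} The delicate point is the construction in the second paragraph: matching the log-differential presentation of $\scr{E}_A$ provided by \ref{thm:A-fal-ext} — whose log structures, even for empty $D$, still remember $p$-power roots of $p$ — with the non-logarithmic module $V_p(\Omega^1_{\ca{O}_{\overline{\ca{F}}}/B})$, and checking that the map so obtained is genuinely canonical (independent of the chosen coordinates, $p$-power roots, and field $K'$) and behaves well under both filtered colimits and the Galois action. The cleanest way through is to pin the map down by $\widehat{\overline{A}}[1/p]$-semilinearity plus compatibility with the two $\df\log$-maps out of $V_p$ of units, its existence being furnished by \ref{lem:val-log-diff}; everything else is bookkeeping.
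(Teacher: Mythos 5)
Your proposal is correct and follows essentially the same route as the paper: the map is constructed from the morphism $\overline{A}\to\ca{O}_{\overline{\ca{F}}}$ via (log) differentials, normalized and checked for compatibility by its effect on the elements $(\df\log(s_{p^n}))_{n\in\bb{N}}$ coming from $V_p$ of units (which generate both sides), with \ref{lem:val-log-diff} bridging the logarithmic and non-logarithmic modules on the valuation-ring side, and surjectivity reduced to that of $\ca{F}\otimes_{\ca{O}_Y}\Omega^1_{Y/K}\to\Omega^1_{\ca{F}/K}$. The "bookkeeping" you defer is exactly what the paper carries out, by factoring through $V_p(\Omega^1_{(\ca{O}_{\overline{\ca{F}}},\ca{O}_{\overline{\ca{F}}}\setminus 0)/A_{\ca{F}}})$ and checking the functoriality square on the image of $V_p(\overline{A}[1/p]\cap\overline{A}_{\triv}^\times)$.
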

\begin{proof}
	For any $X=\spec(A)\in \nbd^{Y\trm{-}\mrm{adq}}_{\overline{\ca{F}}}(Y/\ca{O}_K)$, recall that $\scr{E}_A=V_p(\Omega^1_{(\overline{X},\scr{M}'_{\overline{X}})/(X,\scr{M}_X)})$ with the notation in \ref{para:adequate-setup}. Let $\scr{M}_{\overline{X}}$ be the compactifying log structure associated to the open immersion $\overline{X}_K\to \overline{X}$. Then, there is a canonical morphism of log schemes $(\overline{X},\scr{M}_{\overline{X}})\to (\overline{X},\scr{M}'_{\overline{X}})$ by the definition of $\scr{M}'_{\overline{X}}$. Notice that $\Omega^1_{(X,\scr{M}_X)/X}$ is annihilated by a certain power of $p$, since it is of finite type over $A$ (\cite[\Luoma{4}.1.2.8]{ogus2018log}) and $\Omega^1_{(X,\scr{M}_X)/X}[1/p]=\Omega^1_{X_K/X_K}=0$. Thus, we deduce from the canonical exact sequence 
	\begin{align}
		\xymatrix{
			\overline{A}\otimes_A\Omega^1_{(X,\scr{M}_X)/X}\ar[r]&\Omega^1_{(\overline{X},\scr{M}'_{\overline{X}})/X}\ar[r]&\Omega^1_{(\overline{X},\scr{M}'_{\overline{X}})/(X,\scr{M}_X)}\ar[r]&0
		}
	\end{align}
	a canonical isomorphism (\cite[7.3.(2)]{he2022sen})
	\begin{align}\label{eq:prop:fal-ext-val-pt-nolog-2-2-0}
		V_p(\Omega^1_{(\overline{X},\scr{M}'_{\overline{X}})/X})\iso V_p(\Omega^1_{(\overline{X},\scr{M}'_{\overline{X}})/(X,\scr{M}_X)}).
	\end{align} 
	
	On the other hand, the log scheme defined by the log ring $\ca{O}_{\overline{\ca{F}}}\setminus 0\to \ca{O}_{\overline{\ca{F}}}$ is the scheme $\spec(\ca{O}_{\overline{\ca{F}}})$ endowed with the compactifying log structure defined by the closed point, since $\ca{O}_{\overline{\ca{F}}}$ is a strictly Henselian local ring. Thus, the morphism $\spec(\ca{O}_{\overline{\ca{F}}})\to \overline{X}$ \eqref{eq:para:notation-neighborhood-3-2} upgrades to a morphism of log schemes $\spec(\ca{O}_{\overline{\ca{F}}},\ca{O}_{\overline{\ca{F}}}\setminus 0)\to (\overline{X},\scr{M}_{\overline{X}})$. Thus, we obtain a canonical morphism 
	\begin{align}\label{eq:prop:fal-ext-val-pt-nolog-2-2}
		V_p(\Omega^1_{(\overline{X},\scr{M}'_{\overline{X}})/X})\longrightarrow V_p(\Omega^1_{(\ca{O}_{\overline{\ca{F}}},\ca{O}_{\overline{\ca{F}}}\setminus 0)/A_{\ca{F}}}),
	\end{align}
	where $A_{\ca{F}}$ denotes the image of $A\to\ca{O}_{\overline{\ca{F}}}$. Note that $\nbd^{Y\trm{-}\mrm{adq}}_{\overline{\ca{F}}}(Y/\ca{O}_K)\to\scr{C}_{\ca{O}_{\ca{F}}/\ca{O}_K}$ \eqref{para:fal-ext-val-func}
	sending $X=\spec(A)$ to the pair $(K,A_{\ca{F}})$ is a well-defined functor. For any morphism $X=\spec(A)\to X'=\spec(A')$ in $\nbd^{Y\trm{-}\mrm{adq}}_{\overline{\ca{F}}}(Y/\ca{O}_K)$, consider the following canonical diagram
	\begin{align}\label{eq:prop:fal-ext-val-pt-nolog-2-3}
		\xymatrix{
			V_p(\Omega^1_{(\ca{O}_{\overline{\ca{F}}},\ca{O}_{\overline{\ca{F}}}\setminus 0)/A'_{\ca{F}}})&V_p(\Omega^1_{(\overline{X'},\scr{M}'_{\overline{X'}})/(X',\scr{M}_{X'})})\ar[l]\ar[r]^-{\sim}&\scr{E}_{A'}\\
			V_p(\Omega^1_{(\ca{O}_{\overline{\ca{F}}},\ca{O}_{\overline{\ca{F}}}\setminus 0)/A_{\ca{F}}})\ar[u]&V_p(\Omega^1_{(\overline{X},\scr{M}'_{\overline{X}})/(X,\scr{M}_X)})\ar[l]\ar[r]^-{\sim}&\scr{E}_A\ar[u]
		}
	\end{align}
	where the left horizontal arrows are the compositions of \eqref{eq:prop:fal-ext-val-pt-nolog-2-2} with the inverses of \eqref{eq:prop:fal-ext-val-pt-nolog-2-2-0}.	We claim that it is commutative. Indeed, there is a canonical commutative diagram 
	\begin{align}\label{eq:prop:fal-ext-val-pt-nolog-2-4}
		\xymatrix{
			V_p(\Omega^1_{(\ca{O}_{\overline{\ca{F}}},\ca{O}_{\overline{\ca{F}}}\setminus 0)/A'_{\ca{F}}})&V_p(\Omega^1_{(\overline{X'},\scr{M}'_{\overline{X'}})/(X',\scr{M}_{X'})})\ar[l]&V_p(\overline{A'}[1/p]\cap \overline{A'}_{\triv}^\times)\ar[r]\ar[l]&\scr{E}_{A'}\\
			V_p(\Omega^1_{(\ca{O}_{\overline{\ca{F}}},\ca{O}_{\overline{\ca{F}}}\setminus 0)/A_{\ca{F}}})\ar[u]&V_p(\Omega^1_{(\overline{X},\scr{M}'_{\overline{X}})/(X,\scr{M}_X)})\ar[l]&V_p(\overline{A}[1/p]\cap \overline{A}_{\triv}^\times)\ar[r]\ar[l]\ar[u]&\scr{E}_A\ar[u]
		}
	\end{align}
	where the left rectangle is induced by \ref{thm:A-fal-ext}.(\ref{item:thm:A-fal-ext-3}) (see \eqref{eq:para:notation-fal-ext-2-4}) and the right square is induced by \eqref{eq:rem:A-fal-ext-7}. Since the image of $V_p(\overline{A}[1/p]\cap \overline{A}_{\triv}^\times)$ in $V_p(\Omega^1_{(\overline{X},\scr{M}'_{\overline{X}})/(X,\scr{M}_X)})$ generates the whole module by \ref{thm:A-fal-ext}, we see that \eqref{eq:prop:fal-ext-val-pt-nolog-2-3} is commutative.
	
	Recall that the canonical morphism 
	\begin{align}\label{eq:prop:fal-ext-val-pt-nolog-2-5}
		V_p(\Omega^1_{\ca{O}_{\overline{\ca{F}}}/A_{\ca{F}}})\longrightarrow V_p(\Omega^1_{(\ca{O}_{\overline{\ca{F}}},\ca{O}_{\overline{\ca{F}}}\setminus 0)/A_{\ca{F}}})
	\end{align}
	is an isomorphism by \ref{lem:val-log-diff}. Thus, we obtain a canonical morphism of Faltings extensions by \ref{prop:fal-ext-val} and \ref{thm:A-fal-ext},
	\begin{align}\label{eq:prop:fal-ext-val-pt-nolog-2-6}
		\xymatrix{
			0\ar[r]& \widehat{\overline{\ca{F}}}(1)\ar[r]^-{\iota}&V_p(\Omega^1_{\ca{O}_{\overline{\ca{F}}}/A_{\ca{F}}})\ar[r]^-{\jmath}&\widehat{\overline{\ca{F}}}\otimes_{\ca{F}}\Omega^1_{\ca{F}/K}\ar[r]& 0\\
			0\ar[r]& \widehat{\overline{A}}[\frac{1}{p}](1)\ar[r]^-{\iota}\ar[u]&\scr{E}_A\ar[r]^-{\jmath}\ar[u]& \widehat{\overline{A}}\otimes_{\ca{O}_Y}\Omega^1_{Y/K}\ar[r]\ar[u]& 0,
		}
	\end{align}
	which is functorial in $X=\spec(A)\in \nbd^{Y\trm{-}\mrm{adq}}_{\overline{\ca{F}}}(Y/\ca{O}_K)$ by the commutativity of \eqref{eq:prop:fal-ext-val-pt-nolog-2-3}. Taking filtered colimit over $\nbd^{Y\trm{-}\mrm{adq}}_{\overline{\ca{F}}}(Y/\ca{O}_K)$, we obtain a canonical morphism of Faltings extensions
	\begin{align}\label{eq:prop:fal-ext-val-pt-nolog-2-7}
		\xymatrix{
			0\ar[r]& \widehat{\overline{\ca{F}}}(1)\ar[r]^-{\iota}&\colim_{ \nbd^{Y\trm{-}\mrm{adq}}_{\overline{\ca{F}}}(Y/\ca{O}_K)}V_p(\Omega^1_{\ca{O}_{\overline{\ca{F}}}/A_{\ca{F}}})\ar[r]^-{\jmath}&\widehat{\overline{\ca{F}}}\otimes_{\ca{F}}\Omega^1_{\ca{F}/K}\ar[r]& 0\\
			0\ar[r]& \widehat{\overline{\ca{F}}}(1)\ar[r]^-{\iota}\ar@{=}[u]&\scr{E}_{Y\to Y,\overline{y}}\ar[r]^-{\jmath}\ar@{->>}[u]& \widehat{\overline{\ca{F}}}\otimes_{\ca{O}_Y}\Omega^1_{Y/K}\ar[r]\ar@{->>}[u]& 0,
		}
	\end{align} 
	where the surjectivity follows from that of $\ca{F}\otimes_{\ca{O}_Y}\Omega^1_{Y/K}\to \Omega^1_{\ca{F}/K}$. By construction, it is $\gal(\overline{\ca{K}}/\ca{K}^{\mrm{vh}})$-equivariant. The conclusion follows from the canonical isomorphism (see \ref{para:fal-ext-val-func})
	\begin{align}
		\colim_{ \nbd^{Y\trm{-}\mrm{adq}}_{\overline{\ca{F}}}(Y/\ca{O}_K)}V_p(\Omega^1_{\ca{O}_{\overline{\ca{F}}}/A_{\ca{F}}})\iso \scr{E}_{\ca{O}_{\overline{\ca{F}}}}.
	\end{align}
\end{proof}

\begin{mylem}\label{lem:perfd-val-log-1}
	With the notation in {\rm\ref{para:notation-neighborhood-1}} and {\rm\ref{para:notation-neighborhood-2}}, let $\ca{L}$ be an algebraic field extension of $\ca{K}$ contained in $\overline{\ca{K}}$ containing a compatible system of primitive $p$-power roots of unity. Then, the following statements are equivalent: 
	\begin{enumerate}
		\renewcommand{\labelenumi}{{\rm(\theenumi)}}
		\item The subspace $\scr{E}_{Y^{\triv}\to Y,\overline{y}}^{\gal(\overline{\ca{K}}/\ca{L}^{\mrm{vh}})}$ of $\gal(\overline{\ca{K}}/\ca{L}^{\mrm{vh}})$-invariant elements of $\scr{E}_{Y^{\triv}\to Y,\overline{y}}$ \eqref{eq:thm:fal-ext-val-pt-1} has dimension $1+\dim(Y)$ over $\widehat{\ca{F}_{\ca{L}}^{\mrm{h}}}$.\label{item:lem:perfd-val-log-1-1}
		\item The morphism $\jmath: \scr{E}_{Y^{\triv}\to Y,\overline{y}}^{\gal(\overline{\ca{K}}/\ca{L}^{\mrm{vh}})}\to \widehat{\ca{F}_{\ca{L}}^{\mrm{h}}}\otimes_{\ca{O}_Y}\Omega^1_{(Y,\scr{M}_{Y})/K}$ induced by \eqref{eq:thm:fal-ext-val-pt-3} is surjective.\label{item:lem:perfd-val-log-1-2}
		\item The coboundary map $\delta:\widehat{\ca{F}_{\ca{L}}^{\mrm{h}}}\otimes_{\ca{O}_Y}\Omega^1_{(Y,\scr{M}_{Y})/K}\to H^1(\gal(\overline{\ca{K}}/\ca{L}^{\mrm{vh}}),\widehat{\overline{\ca{F}}}(1))$ induced by \eqref{eq:thm:fal-ext-val-pt-3} is zero.\label{item:lem:perfd-val-log-1-3}
	\end{enumerate}
\end{mylem}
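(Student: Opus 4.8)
The plan is to imitate the proof of Lemma~\ref{lem:perfd-val} in the present relative setting. By Theorem~\ref{thm:fal-ext-val-pt}, the Faltings extension \eqref{eq:thm:fal-ext-val-pt-3} is a short exact sequence of continuous finite free $\widehat{\overline{\ca{F}}}$-representations of $\gal(\overline{\ca{K}}/\ca{K}^{\mrm{vh}})$, hence restricts to such a sequence for the closed subgroup $\gal(\overline{\ca{K}}/\ca{L}^{\mrm{vh}})\subseteq\gal(\overline{\ca{K}}/\ca{K}^{\mrm{vh}})$. Taking continuous $\gal(\overline{\ca{K}}/\ca{L}^{\mrm{vh}})$-cohomology, I would obtain the left-exact sequence
\begin{align*}
0\longrightarrow \widehat{\overline{\ca{F}}}(1)^{\gal(\overline{\ca{K}}/\ca{L}^{\mrm{vh}})}&\stackrel{\iota}{\longrightarrow}\scr{E}_{Y^{\triv}\to Y,\overline{y}}^{\gal(\overline{\ca{K}}/\ca{L}^{\mrm{vh}})}\stackrel{\jmath}{\longrightarrow}\big(\widehat{\overline{\ca{F}}}\otimes_{\ca{O}_Y}\Omega^1_{(Y,\scr{M}_Y)/K}\big)^{\gal(\overline{\ca{K}}/\ca{L}^{\mrm{vh}})}\\
&\stackrel{\delta}{\longrightarrow}H^1(\gal(\overline{\ca{K}}/\ca{L}^{\mrm{vh}}),\widehat{\overline{\ca{F}}}(1)).
\end{align*}
The three conditions in the lemma concern exactly this sequence, so everything reduces to computing the $\gal(\overline{\ca{K}}/\ca{L}^{\mrm{vh}})$-invariants of its first and third terms.

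For this I would use two inputs. First, since $\ca{L}$ contains a compatible system of primitive $p$-power roots of unity, the cyclotomic character is trivial on $\gal(\overline{\ca{K}}/\ca{L}^{\mrm{vh}})$, so the Tate twists may be ignored. Second, the inertia group $\gal(\overline{\ca{K}}/\ca{L}^{\mrm{sh}})$ acts trivially on the residue field $\overline{\ca{F}}=\kappa(\overline{y})$ by \eqref{eq:para:notation-neighborhood-2-4}, hence on $\widehat{\overline{\ca{F}}}$; combining this with the exact sequence \eqref{eq:para:notation-neighborhood-2-6} and Ax-Sen-Tate's theorem in the form \eqref{eq:para:notation-neighborhood-2-8}, I get $\widehat{\overline{\ca{F}}}(1)^{\gal(\overline{\ca{K}}/\ca{L}^{\mrm{vh}})}=\widehat{\ca{F}_{\ca{L}}^{\mrm{h}}}(1)$, of dimension $1$ over $\widehat{\ca{F}_{\ca{L}}^{\mrm{h}}}$. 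The same two ingredients, applied to the third term, whose underlying $\ca{O}_Y$-module $\Omega^1_{(Y,\scr{M}_Y)/K}$ is pulled back from $Y$ and hence Galois-fixed, yield $\big(\widehat{\overline{\ca{F}}}\otimes_{\ca{O}_Y}\Omega^1_{(Y,\scr{M}_Y)/K}\big)^{\gal(\overline{\ca{K}}/\ca{L}^{\mrm{vh}})}=\widehat{\ca{F}_{\ca{L}}^{\mrm{h}}}\otimes_{\ca{O}_Y}\Omega^1_{(Y,\scr{M}_Y)/K}$, which by Lemma~\ref{lem:log-str-val-pt} is free of rank $\dim(Y)$ over $\widehat{\ca{F}_{\ca{L}}^{\mrm{h}}}$.

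With these identifications the conclusion is formal, exactly as in Lemma~\ref{lem:perfd-val}: since the kernel of $\jmath$ on $\scr{E}_{Y^{\triv}\to Y,\overline{y}}^{\gal(\overline{\ca{K}}/\ca{L}^{\mrm{vh}})}$ equals $\iota(\widehat{\ca{F}_{\ca{L}}^{\mrm{h}}}(1))$ and is one-dimensional, the $\widehat{\ca{F}_{\ca{L}}^{\mrm{h}}}$-dimension of $\scr{E}_{Y^{\triv}\to Y,\overline{y}}^{\gal(\overline{\ca{K}}/\ca{L}^{\mrm{vh}})}$ equals $1$ plus the dimension of the image of $\jmath$, which is at most $1+\dim(Y)$, with equality if and only if $\jmath$ is surjective onto $\widehat{\ca{F}_{\ca{L}}^{\mrm{h}}}\otimes_{\ca{O}_Y}\Omega^1_{(Y,\scr{M}_Y)/K}$; and by exactness of the sequence above this surjectivity is in turn equivalent to $\delta=0$. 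This gives (\ref{item:lem:perfd-val-log-1-1})$\Leftrightarrow$(\ref{item:lem:perfd-val-log-1-2})$\Leftrightarrow$(\ref{item:lem:perfd-val-log-1-3}). The step I expect to require the most care is the identification of the Galois action on the two outer terms — in particular checking that $\gal(\overline{\ca{K}}/\ca{L}^{\mrm{vh}})$ acts on $\widehat{\overline{\ca{F}}}$ and on $\widehat{\overline{\ca{F}}}\otimes_{\ca{O}_Y}\Omega^1_{(Y,\scr{M}_Y)/K}$ only through the quotient $\gal(\overline{\ca{F}}/\ca{F}_{\ca{L}}^{\mrm{h}})$ and, on the latter, only on the coefficient field — together with the bookkeeping of the various subquotients of the decomposition group encoded in \eqref{eq:para:notation-neighborhood-2-4}--\eqref{eq:para:notation-neighborhood-2-8}; once those are pinned down, the remainder is the same linear algebra as in Lemma~\ref{lem:perfd-val}.
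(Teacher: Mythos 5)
Your proposal is correct and follows essentially the same route as the paper: take continuous $\gal(\overline{\ca{K}}/\ca{L}^{\mrm{vh}})$-invariants of the Faltings extension, use the roots of unity in $\ca{L}$ to trivialize the Tate twist, apply Ax-Sen-Tate via \eqref{eq:para:notation-neighborhood-2-8} to identify the outer terms over $\widehat{\ca{F}_{\ca{L}}^{\mrm{h}}}$, and conclude by the rank count from \ref{lem:log-str-val-pt}. The extra care you flag about the action factoring through $\gal(\overline{\ca{F}}/\ca{F}_{\ca{L}}^{\mrm{h}})$ is exactly what the paper's citation of \eqref{eq:para:notation-neighborhood-2-8} is doing implicitly.
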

\begin{proof}
	As $\ca{L}$ contains a compatible system of primitive $p$-power roots of unity, the $\gal(\overline{\ca{K}}/\ca{L}^{\mrm{vh}})$-representation $\widehat{\overline{\ca{F}}}(1)$ is isomorphic to $\widehat{\overline{\ca{F}}}$. Taking $\gal(\overline{\ca{K}}/\ca{L}^{\mrm{vh}})$-invariants of the Faltings extension \eqref{eq:thm:fal-ext-val-pt-3}, we obtain an exact sequence by Ax-Sen-Tate's theorem \cite[page 417]{ax1970ax} \eqref{eq:para:notation-neighborhood-2-8},
	\begin{align}
		0\longrightarrow \widehat{\ca{F}_{\ca{L}}^{\mrm{h}}}(1)\stackrel{\iota}{\longrightarrow}\scr{E}_{Y^{\triv}\to Y,\overline{y}}^{\gal(\overline{\ca{K}}/\ca{L}^{\mrm{vh}})}\stackrel{\jmath}{\longrightarrow} \widehat{\ca{F}_{\ca{L}}^{\mrm{h}}}\otimes_{\ca{O}_Y}\Omega^1_{(Y,\scr{M}_Y)/K}\stackrel{\delta}{\longrightarrow} H^1(\gal(\overline{\ca{K}}/\ca{L}^{\mrm{vh}}),\widehat{\overline{\ca{F}}}(1)),
	\end{align}
	where $H^1$ is the continuous group cohomology. The conclusion follows immediately from the fact that $\Omega^1_{(Y,\scr{M}_Y)/K}$ is a finite locally free $\ca{O}_Y$-module of rank $\dim(Y)$ (see \ref{lem:log-str-val-pt}).
\end{proof}

\begin{mythm}\label{thm:perfd-val-log-1}
	With the notation in {\rm\ref{para:notation-neighborhood-1}} and {\rm\ref{para:notation-neighborhood-2}}, let $\ca{L}$ be an algebraic field extension of $\ca{K}$ contained in $\overline{\ca{K}}$ containing a compatible system of primitive $p$-power roots of unity. Assume that the equivalent conditions in {\rm\ref{lem:perfd-val-log-1}} hold. Then, $\ca{F}_{\ca{L}}$ is a pre-perfectoid field.
\end{mythm}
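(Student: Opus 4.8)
The plan is to reduce to the perfectoidness criterion \ref{thm:perfd-val} for the valuation ring $\ca{O}_{\overline{\ca{F}}}$, transferring the hypothesis on the geometric Faltings extension $\scr{E}_{Y^{\triv}\to Y,\overline{y}}$ to the corresponding hypothesis on $\scr{E}_{\ca{O}_{\overline{\ca{F}}}}$ by means of the comparison Lemmas \ref{lem:fal-ext-val-pt-nolog-1} and \ref{lem:fal-ext-val-pt-nolog-2}.

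First I would fix the Henselian valuation field to which \ref{thm:perfd-val} applies. Since pre-perfectoidness depends only on the $p$-adic completion and $\widehat{\ca{F}_{\ca{L}}}=\widehat{\ca{F}_{\ca{L}}^{\mrm{h}}}$, it suffices to prove that $\ca{F}_{\ca{L}}^{\mrm{h}}$ is pre-perfectoid. Let $\ca{F}^{\flat}=\ca{F}_{\ca{L}}^{\mrm{h}}\overline{K}\subseteq\overline{\ca{F}}$; as $\overline{K}$ is algebraic over $K$, the extension $\ca{F}^{\flat}/\ca{F}_{\ca{L}}^{\mrm{h}}$ is algebraic, so $\ca{F}^{\flat}$ is Henselian, $\overline{\ca{F}}$ is algebraic over it, and $\mrm{trdeg}_K(\ca{F}^{\flat})=\mrm{trdeg}_K(\ca{F}_{\ca{L}})=\mrm{trdeg}_K(\ca{F})<\infty$; thus $(\overline{\ca{F}},\ca{F}^{\flat},\overline{K},K)$ realizes the setup of \ref{para:notation-trace}, $G_{\ca{F}^{\flat}}=\gal(\overline{\ca{F}}/\ca{F}^{\flat})$ is a closed subgroup of $\gal(\overline{\ca{F}}/\ca{F}_{\ca{L}}^{\mrm{h}})$, and $\scr{E}_{\ca{O}_{\overline{\ca{F}}}}$ (which by \ref{thm:fal-ext-val} depends only on $\ca{O}_{\overline{\ca{F}}}$ over $\ca{O}_K$) is exactly the $\widehat{\overline{\ca{F}}}$-representation of $G_{\ca{F}^{\flat}}$ featuring in \ref{thm:perfd-val}.

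Next I would transfer condition \ref{lem:perfd-val-log-1}.(\ref{item:lem:perfd-val-log-1-2}), which by hypothesis (valid since $\ca{L}$ contains $\zeta_{p^\infty}$) states that $\jmath:\scr{E}_{Y^{\triv}\to Y,\overline{y}}^{\gal(\overline{\ca{K}}/\ca{L}^{\mrm{vh}})}\to\widehat{\ca{F}_{\ca{L}}^{\mrm{h}}}\otimes_{\ca{O}_Y}\Omega^1_{(Y,\scr{M}_Y)/K}$ is surjective. By \ref{lem:fal-ext-val-pt-nolog-1} the sub-Faltings-extension $\scr{E}_{Y\to Y,\overline{y}}$ is precisely the preimage of $\widehat{\overline{\ca{F}}}\otimes_{\ca{O}_Y}\Omega^1_{Y/K}$ inside $\scr{E}_{Y^{\triv}\to Y,\overline{y}}$, so this surjectivity restricts to the surjectivity of $\jmath$ from $\scr{E}_{Y\to Y,\overline{y}}^{\gal(\overline{\ca{K}}/\ca{L}^{\mrm{vh}})}$ onto $\widehat{\ca{F}_{\ca{L}}^{\mrm{h}}}\otimes_{\ca{O}_Y}\Omega^1_{Y/K}$. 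Now push this forward along the $\gal(\overline{\ca{K}}/\ca{L}^{\mrm{vh}})$-equivariant surjection $\scr{E}_{Y\to Y,\overline{y}}\twoheadrightarrow\scr{E}_{\ca{O}_{\overline{\ca{F}}}}$ of \ref{lem:fal-ext-val-pt-nolog-2}, whose differential part is $\widehat{\overline{\ca{F}}}\otimes_{\ca{O}_Y}\Omega^1_{Y/K}\twoheadrightarrow\widehat{\overline{\ca{F}}}\otimes_{\ca{F}}\Omega^1_{\ca{F}/K}$: any element of $\widehat{\ca{F}_{\ca{L}}^{\mrm{h}}}\otimes_{\ca{F}}\Omega^1_{\ca{F}/K}$ lifts through $\widehat{\ca{F}_{\ca{L}}^{\mrm{h}}}\otimes_{\ca{O}_Y}\Omega^1_{Y/K}$ and then to a $\gal(\overline{\ca{K}}/\ca{L}^{\mrm{vh}})$-invariant element of $\scr{E}_{Y\to Y,\overline{y}}$, whose image in $\scr{E}_{\ca{O}_{\overline{\ca{F}}}}$ is the desired preimage under $\jmath$. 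As, by \ref{lem:fal-ext-val-pt-nolog-2} and \eqref{eq:para:notation-neighborhood-2-6}, all three terms of the Faltings extension of $\ca{O}_{\overline{\ca{F}}}$ carry the $\gal(\overline{\ca{K}}/\ca{L}^{\mrm{vh}})$-action through the quotient $\gal(\overline{\ca{F}}/\ca{F}_{\ca{L}}^{\mrm{h}})$, their $\gal(\overline{\ca{K}}/\ca{L}^{\mrm{vh}})$-invariants coincide with their $\gal(\overline{\ca{F}}/\ca{F}_{\ca{L}}^{\mrm{h}})$-invariants; thus $\jmath$ is surjective from $\scr{E}_{\ca{O}_{\overline{\ca{F}}}}^{\gal(\overline{\ca{F}}/\ca{F}_{\ca{L}}^{\mrm{h}})}$ onto $\widehat{\ca{F}_{\ca{L}}^{\mrm{h}}}\otimes_{\ca{F}}\Omega^1_{\ca{F}/K}$. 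Extending scalars to $\widehat{\ca{F}^{\flat}}$ (and using $\widehat{\ca{F}^{\flat}}\otimes_{\ca{F}}\Omega^1_{\ca{F}/K}=\widehat{\ca{F}^{\flat}}\otimes_{\ca{F}^{\flat}}\Omega^1_{\ca{F}^{\flat}/K}$, since $\ca{F}^{\flat}/\ca{F}$ is algebraic) gives condition \ref{lem:perfd-val}.(\ref{item:lem:perfd-val-2}) for $\ca{F}^{\flat}$. Then \ref{thm:perfd-val} shows $\Omega^1_{\ca{O}_{\ca{F}^{\flat}}/\ca{O}_{\overline{K}}}$ is $p$-divisible and $\ca{F}^{\flat}$ is a pre-perfectoid field. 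Finally, as $\ca{F}_{\ca{L}}^{\mrm{h}}$ contains $\zeta_{p^\infty}$ its constant field $K_0=\overline{K}\cap\ca{F}_{\ca{L}}^{\mrm{h}}$ is an algebraic extension of $K(\zeta_{p^\infty})$, hence itself pre-perfectoid, so passing from $\ca{F}_{\ca{L}}^{\mrm{h}}$ to $\ca{F}^{\flat}$ only enlarges the (pre-perfectoid, relatively algebraically closed) constant field; therefore the pre-perfectoidness of $\ca{F}^{\flat}$ descends to $\ca{F}_{\ca{L}}^{\mrm{h}}$, hence to $\ca{F}_{\ca{L}}$, by the differential characterization of pre-perfectoid fields, cf. \cite[10.17]{he2024purity}.

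The main obstacle is the Galois bookkeeping in the transfer step: one must keep track of which group acts on each of $\scr{E}_{Y^{\triv}\to Y,\overline{y}}$, $\scr{E}_{Y\to Y,\overline{y}}$ and $\scr{E}_{\ca{O}_{\overline{\ca{F}}}}$, and verify that surjectivity of $\jmath$ on invariants genuinely propagates through the injection of \ref{lem:fal-ext-val-pt-nolog-1} and the surjection of \ref{lem:fal-ext-val-pt-nolog-2}. It is essential to transfer condition \ref{lem:perfd-val-log-1}.(\ref{item:lem:perfd-val-log-1-2}) rather than \ref{lem:perfd-val-log-1}.(\ref{item:lem:perfd-val-log-1-3}): because the action on $\scr{E}_{\ca{O}_{\overline{\ca{F}}}}$ factors through $\gal(\overline{\ca{F}}/\ca{F}_{\ca{L}}^{\mrm{h}})$, the $\gal(\overline{\ca{K}}/\ca{L}^{\mrm{vh}})$- and $\gal(\overline{\ca{F}}/\ca{F}_{\ca{L}}^{\mrm{h}})$-invariants agree and surjectivity of $\jmath$ passes down cleanly, whereas transferring the vanishing of the coboundary $\delta$ would instead require controlling $H^1$ of the (in general nontrivial) inertia group $\gal(\overline{\ca{K}}/\ca{L}^{\mrm{sh}})$ via inflation--restriction. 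By contrast the introduction of $\ca{F}^{\flat}$ and the concluding descent along the constant field extension are routine, once one notes that adjoining $\overline{K}$ only affects the arithmetic directions, which are already deeply ramified because $\zeta_{p^\infty}\in\ca{F}_{\ca{L}}$.
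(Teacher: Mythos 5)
Your overall strategy coincides with the paper's: transfer the invariance hypothesis from $\scr{E}_{Y^{\triv}\to Y,\overline{y}}$ to $\scr{E}_{\ca{O}_{\overline{\ca{F}}}}$ via \ref{lem:fal-ext-val-pt-nolog-1} and \ref{lem:fal-ext-val-pt-nolog-2}, observe that the action on $\scr{E}_{\ca{O}_{\overline{\ca{F}}}}$ factors through $\gal(\overline{\ca{F}}/\ca{F}_{\ca{L}}^{\mrm{h}})$, and conclude with \ref{lem:perfd-val} and \ref{thm:perfd-val}. Your choice to transfer condition (\ref{item:lem:perfd-val-log-1-2}) of \ref{lem:perfd-val-log-1} rather than condition (\ref{item:lem:perfd-val-log-1-3}) is a harmless variant (the paper transfers $\delta=0$, which only uses the commutativity of the diagram in \ref{lem:fal-ext-val-pt-nolog-1}, then converts back to surjectivity of $\jmath$ via the equivalence of \ref{lem:perfd-val-log-1}; your route additionally uses the injectivity asserted there to identify $\scr{E}_{Y\to Y,\overline{y}}$ with $\jmath^{-1}(\widehat{\overline{\ca{F}}}\otimes\Omega^1_{Y/K})$, but that is available). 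The scalar extension from $\widehat{\ca{F}_{\ca{L}}^{\mrm{h}}}$-invariants to $G_{\ca{F}^{\flat}}$-invariants is also fine.

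The genuine gap is your final descent from $\ca{F}^{\flat}=\ca{F}_{\ca{L}}^{\mrm{h}}\overline{K}$ back to $\ca{F}_{\ca{L}}^{\mrm{h}}$, which you dismiss as routine. Pre-perfectoidness does not descend along arbitrary algebraic extensions ($\bb{C}_p$ is perfectoid, $\bb{Q}_p$ is not), so "only enlarges the constant field" is an intuition, not an argument; and invoking the differential criterion of \cite[10.17]{he2024purity} does not close it, because you would first have to deduce divisibility of $\Omega^1_{\ca{O}_{\ca{F}_{\ca{L}}^{\mrm{h}}}/\ca{O}_{K_0}}$ from that of $\Omega^1_{\ca{O}_{\ca{F}^{\flat}}/\ca{O}_{\overline{K}}}$, which is itself the same descent problem in disguise (the natural map $\ca{O}_{\ca{F}^{\flat}}\otimes\Omega^1_{\ca{O}_{\ca{F}_{\ca{L}}^{\mrm{h}}}/\ca{O}_{K_0}}\to\Omega^1_{\ca{O}_{\ca{F}^{\flat}}/\ca{O}_{K_0}}$ need be neither injective nor an isomorphism onto a divisible submodule). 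The paper resolves exactly this point by performing the enlargement at the outset, replacing $\ca{L}$ by $\ca{L}'=\overline{K}\ca{L}$: since $K_\infty\subseteq\ca{F}_{\ca{L}}$ is pre-perfectoid, almost purity (\cite[6.6.2]{gabber2003almost}) makes $\ca{O}_{\overline{K}}$ a filtered colimit of almost finite \'etale $\ca{O}_{K_\infty}$-algebras, hence $\ca{O}_{\ca{F}_{\ca{L}'}}$ is almost weakly \'etale and faithfully flat over $\ca{O}_{\ca{F}_{\ca{L}}}$, and pre-perfectoidness passes both ways along such maps (\cite[5.37.(2)]{he2024coh}). You need this almost-purity input (or an equivalent) to make your last step valid; once it is supplied, your proof is correct and is essentially the paper's with the reduction reordered.
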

\begin{proof}	
	Firstly, we claim that we may assume that $\overline{K}\subseteq \ca{L}$. Note that $K_{\infty}=\bigcup_{n\in\bb{N}}K(\zeta_{p^n})$ is a pre-perfectoid field (see the proof of \cite[13.1]{he2022sen}). Hence, $\ca{O}_{\overline{K}}$ is a filtered colimit of almost finite \'etale $\ca{O}_{K_{\infty}}$-algebras by almost purity (\cite[6.6.2]{gabber2003almost}, see also \cite[7.12]{he2022sen}). Let $\ca{L}'$ be the composite of $\overline{K}$ and $\ca{L}$ in $\overline{\ca{K}}$. It is again an algebraic field extension of $\ca{K}$ in $\overline{\ca{K}}$ and we can apply the construction in $\ref{para:notation-neighborhood-2}$ to $\ca{L}'$. As $\overline{K}$ is ind-\'etale over $K_{\infty}$, $Y^{\ca{L}'}\to \spec(\overline{K})\times_{\spec(K_\infty)}Y^{\ca{L}}$ is a closed immersion (cf. \ref{lem:profet-component}). In particular, we see that $\ca{F}_{\ca{L}'}$ (the residue field of $Y^{\ca{L}'}$ at $y_{\ca{L}'}$) is the composite of $\overline{K}$ and $\ca{F}_{\ca{L}}$ (the residue field of $Y^{\ca{L}}$ at $y_{\ca{L}}$) in $\overline{\ca{F}}$. Therefore, $\ca{O}_{\ca{F}_{\ca{L}'}}$ is a filtered colimit of almost finite \'etale $\ca{O}_{\ca{F}_{\ca{L}}}$-algebras (\cite[\Luoma{5}.7.11]{abbes2016p}). In particular, $\ca{O}_{\ca{F}_{\ca{L}'}}$ is almost weakly \'etale and faithfully flat over $\ca{O}_{\ca{F}_{\ca{L}}}$. We see that it is pre-perfectoid if and only if $\ca{O}_{\ca{F}_{\ca{L}}}$ is so (cf. \cite[5.37.(2)]{he2024coh}). The assumption in the statement implies that the morphism $\jmath: \scr{E}_{Y^{\triv}\to Y,\overline{y}}^{\gal(\overline{\ca{K}}/\ca{L}^{\mrm{vh}})}\to \widehat{\ca{F}_{\ca{L}}^{\mrm{h}}}\otimes_{\ca{O}_Y}\Omega^1_{(Y,\scr{M}_{Y})/K}$ is surjective. Hence, $\jmath: \scr{E}_{Y^{\triv}\to Y,\overline{y}}^{\gal(\overline{\ca{K}}/\ca{L}'^{\mrm{vh}})}\to \widehat{\ca{F}_{\ca{L}'}^{\mrm{h}}}\otimes_{\ca{O}_Y}\Omega^1_{(Y,\scr{M}_{Y})/K}$ is also surjective. Therefore, we can replace $\ca{L}$ by $\ca{L}'$.
	
	Now assume that $\overline{K}\subseteq \ca{L}$. The assumption in the statement implies that the coboundary map $\delta:\widehat{\ca{F}_{\ca{L}}^{\mrm{h}}}\otimes_{\ca{O}_Y}\Omega^1_{(Y,\scr{M}_{Y})/K}\to H^1(\gal(\overline{\ca{K}}/\ca{L}^{\mrm{vh}}),\widehat{\overline{\ca{F}}}(1))$ induced by \eqref{eq:thm:fal-ext-val-pt-3} is zero. Hence, the coboundary map $\delta:\widehat{\ca{F}_{\ca{L}}^{\mrm{h}}}\otimes_{\ca{O}_Y}\Omega^1_{Y/K}\to H^1(\gal(\overline{\ca{K}}/\ca{L}^{\mrm{vh}}),\widehat{\overline{\ca{F}}}(1))$ induced by \eqref{eq:thm:fal-ext-val-pt-3} for the case $Y^{\triv}=Y$ is also zero by \ref{lem:fal-ext-val-pt-nolog-1}. Thus, the morphism $\jmath: \scr{E}_{Y\to Y,\overline{y}}^{\gal(\overline{\ca{K}}/\ca{L}^{\mrm{vh}})}\to \widehat{\ca{F}_{\ca{L}}^{\mrm{h}}}\otimes_{\ca{O}_Y}\Omega^1_{Y/K}$ induced by \eqref{eq:thm:fal-ext-val-pt-3} for the case $Y^{\triv}=Y$ is surjective by \ref{lem:perfd-val-log-1}. Hence, the morphism $\jmath: \scr{E}_{\ca{O}_{\overline{\ca{F}}}}^{\gal(\overline{\ca{K}}/\ca{L}^{\mrm{vh}})}\to \widehat{\ca{F}_{\ca{L}}^{\mrm{h}}}\otimes_{\ca{F}}\Omega^1_{\ca{F}/K}$ induced by \eqref{eq:thm:fal-ext-val-3} is surjective by \ref{lem:fal-ext-val-pt-nolog-2}. We see that $\dim_{\widehat{\ca{F}_{\ca{L}}^{\mrm{h}}}}\scr{E}_{\ca{O}_{\overline{\ca{F}}}}^{\gal(\overline{\ca{F}}/\ca{F}_{\ca{L}}^{\mrm{h}})}=1+\mrm{trdeg}_K(\overline{\ca{F}})$ by \ref{lem:perfd-val}. Notice that we are in the situation of \ref{para:notation-trace} for $\ca{O}_{\overline{\ca{F}}}\leftarrow\ca{O}_{\ca{F}_{\ca{L}}^{\mrm{h}}}\leftarrow \ca{O}_{\overline{K}}\leftarrow \ca{O}_K$. Hence, $\ca{F}_{\ca{L}}^{\mrm{h}}$ is a pre-perfectoid field by \ref{thm:perfd-val}, which implies that $\ca{F}_{\ca{L}}$ is also a pre-perfectoid field since $\ca{O}_{\ca{F}_{\ca{L}}}\to \ca{O}_{\ca{F}_{\ca{L}}^{\mrm{h}}}$ is faithfully flat and ind-\'etale (cf. \cite[5.37.(2)]{he2024coh}).
\end{proof}

\begin{mythm}\label{thm:perfd-val-log-2}
	With the notation in {\rm\ref{para:notation-neighborhood-1}} and {\rm\ref{para:notation-neighborhood-2}}, let $\ca{L}$ be an algebraic field extension of $\ca{K}$ contained in $\overline{\ca{K}}$ containing a compatible system of primitive $p$-power roots of unity, and let $\{t_1,\dots, t_s\}$ be a regular system of parameters of the strict Henselization $\ca{O}_{Y,y}^{\mrm{sh}}$ such that $t_1\cdots t_r=0$ defines the normal crossings divisor $D$ at $\overline{y}$, where $0\leq r\leq s\leq \dim(Y)$. Assume that the equivalent conditions in {\rm\ref{lem:perfd-val-log-1}} hold. Then, $t_1,\dots,t_r$ admit compatible systems of $p$-power roots in $\ca{L}^{\mrm{sh}}$.
\end{mythm}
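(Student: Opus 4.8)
The plan is to express the $p$-infinite ramification of $t_1,\dots,t_r$ as the vanishing of certain Kummer characters on the inertia subgroup $\gal(\overline{\ca{K}}/\ca{L}^{\mrm{sh}})$, and to deduce this vanishing from the triviality of the coboundary map of the Faltings extension at $\overline{y}$, exactly in the spirit of \ref{cor:ax-sen-tate} and \ref{thm:perfd-val}.

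First I would reduce to a statement about characters. Fix compatible systems $(t_{i,p^n})_{n\in\bb{N}}$ of $p$-power roots of $t_i$ in $\overline{\ca{K}}$ for $1\le i\le d$, together with a compatible system $\zeta=(\zeta_{p^n})_{n\in\bb{N}}$ of primitive $p$-power roots of unity contained in $\ca{L}\subseteq\ca{L}^{\mrm{vh}}$. Since $\gal(\overline{\ca{K}}/\ca{L}^{\mrm{vh}})$ fixes $\zeta$, for each $i$ the relation $\sigma(t_{i,p^n})=\zeta_{p^n}^{\xi_{t_i}(\sigma)}t_{i,p^n}$ defines a continuous homomorphism $\xi_{t_i}\colon\gal(\overline{\ca{K}}/\ca{L}^{\mrm{vh}})\to\bb{Z}_p$ (cf.\ \ref{para:tower}). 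Since $\ca{L}^{\mrm{sh}}$ is the fixed field of the inertia subgroup $\gal(\overline{\ca{K}}/\ca{L}^{\mrm{sh}})$ and $\zeta\subseteq\ca{L}\subseteq\ca{L}^{\mrm{sh}}$, the element $t_i$ admits a compatible system of $p$-power roots in $\ca{L}^{\mrm{sh}}$ if and only if $\xi_{t_i}$ vanishes on $\gal(\overline{\ca{K}}/\ca{L}^{\mrm{sh}})$. Hence it suffices to prove that $\xi_{t_i}|_{\gal(\overline{\ca{K}}/\ca{L}^{\mrm{sh}})}=0$ for $1\le i\le r$.

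Next I would compute the image of $\df\log(t_i)$ under the coboundary map of \ref{lem:perfd-val-log-1}.(\ref{item:lem:perfd-val-log-1-3}). By \ref{lem:log-str-val-pt} and \ref{thm:fal-ext-val-pt}.(\ref{item:thm:fal-ext-val-pt-3}), the elements $\df\log(t_1),\dots,\df\log(t_d)$ form an $\widehat{\overline{\ca{F}}}$-basis of $\widehat{\overline{\ca{F}}}\otimes_{\ca{O}_Y}\Omega^1_{(Y,\scr{M}_Y)/K}$ on which $\gal(\overline{\ca{K}}/\ca{L}^{\mrm{vh}})$ acts only through the coefficient field $\widehat{\overline{\ca{F}}}$; in particular each $\df\log(t_i)$ is $\gal(\overline{\ca{K}}/\ca{L}^{\mrm{vh}})$-invariant, hence lies in the source $\widehat{\ca{F}_{\ca{L}}^{\mrm{h}}}\otimes_{\ca{O}_Y}\Omega^1_{(Y,\scr{M}_Y)/K}$ of $\delta$. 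Let $T_i=(\df\log(t_{i,p^n}))_{n\in\bb{N}}\in\scr{E}_{Y^{\triv}\to Y,\overline{y}}$ be the image of $(t_{i,p^n})_{n\in\bb{N}}$ under \eqref{eq:thm:fal-ext-val-pt-2}; by \ref{thm:fal-ext-val-pt}.(\ref{item:thm:fal-ext-val-pt-1})--(\ref{item:thm:fal-ext-val-pt-3}) it satisfies $\jmath(T_i)=\df\log(t_i)$ and $\iota(\zeta)=(\df\log(\zeta_{p^n}))_{n\in\bb{N}}$. Using the $\gal(\overline{\ca{K}}/\ca{L}^{\mrm{vh}})$-equivariance of \eqref{eq:thm:fal-ext-val-pt-2} and the identity $\sigma(t_{i,p^n})=\zeta_{p^n}^{\xi_{t_i}(\sigma)}t_{i,p^n}$, one obtains $(\sigma-1)(T_i)=\xi_{t_i}(\sigma)\,\iota(\zeta)$ for every $\sigma\in\gal(\overline{\ca{K}}/\ca{L}^{\mrm{vh}})$, so that $\delta(\df\log(t_i))$ is represented by the $1$-cocycle $\sigma\mapsto\xi_{t_i}(\sigma)\iota(\zeta)$ valued in $\widehat{\overline{\ca{F}}}(1)$.

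Finally, the hypothesis gives $\delta=0$ by \ref{lem:perfd-val-log-1}, so the above cocycle is a coboundary: there exists $y\in\widehat{\overline{\ca{F}}}$ with $(\sigma-1)(y\,\iota(\zeta))=\xi_{t_i}(\sigma)\iota(\zeta)$ for all $\sigma\in\gal(\overline{\ca{K}}/\ca{L}^{\mrm{vh}})$; as $\gal(\overline{\ca{K}}/\ca{L}^{\mrm{vh}})$ fixes $\iota(\zeta)$ (because $\zeta\subseteq\ca{L}$), this reads $(\sigma-1)(y)=\xi_{t_i}(\sigma)$. Restricting to $\sigma\in\gal(\overline{\ca{K}}/\ca{L}^{\mrm{sh}})\subseteq\gal(\overline{\ca{K}}/\ca{L}^{\mrm{vh}})$: by \eqref{eq:para:notation-neighborhood-2-6} the action of $\gal(\overline{\ca{K}}/\ca{L}^{\mrm{vh}})$ on $\widehat{\overline{\ca{F}}}$ factors through $\gal(\overline{\ca{F}}/\ca{F}_{\ca{L}}^{\mrm{h}})$, whose kernel is exactly the inertia subgroup $\gal(\overline{\ca{K}}/\ca{L}^{\mrm{sh}})$, hence $(\sigma-1)(y)=0$ and therefore $\xi_{t_i}(\sigma)=0$ for every $\sigma\in\gal(\overline{\ca{K}}/\ca{L}^{\mrm{sh}})$. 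This yields $\xi_{t_i}|_{\gal(\overline{\ca{K}}/\ca{L}^{\mrm{sh}})}=0$ for $1\le i\le r$, which would complete the proof. The main obstacle is the bookkeeping in the third paragraph: checking carefully that $\df\log(t_i)$ is a $\gal(\overline{\ca{K}}/\ca{L}^{\mrm{vh}})$-invariant class (so that it lies in the domain of $\delta$) and that the Kummer cocycle computation $(\sigma-1)(T_i)=\xi_{t_i}(\sigma)\iota(\zeta)$ is compatible with the construction of the $\gal$-action on $\scr{E}_{Y^{\triv}\to Y,\overline{y}}$ in \ref{thm:fal-ext-val-pt}; once these are in place, the passage from the decomposition group to the inertia subgroup is immediate, and in particular, unlike in \ref{thm:perfd-val-log-1}, no preliminary reduction to the case $\overline{K}\subseteq\ca{L}$ is needed.
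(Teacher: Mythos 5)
Your proposal is correct and follows essentially the same route as the paper's proof: both identify $\delta(\df\log(t_i))$ with the Kummer cocycle $\sigma\mapsto\xi_{t_i}(\sigma)\,\iota(\zeta)$, use $\delta=0$ to write it as a coboundary $(\sigma-1)(x)$ with $x\in\widehat{\overline{\ca{F}}}$, and conclude from the fact that the inertia subgroup $\gal(\overline{\ca{K}}/\ca{L}^{\mrm{sh}})$ acts trivially on $\widehat{\overline{\ca{F}}}$ via \eqref{eq:para:notation-neighborhood-2-6}. The only cosmetic difference is that the paper argues by contradiction (picking $\tau_0$ in the inertia group with $\xi_{t_i}(\tau_0)\neq 0$) whereas you prove the vanishing of $\xi_{t_i}$ on the inertia group directly.
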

\begin{proof}
	Assume that $t=t_i$ does not admit a compatible system of $p$-power roots in $\ca{L}^{\mrm{sh}}$ for some $1\leq i\leq r$. Let $(t_{p^n})_{n\in\bb{N}}$ be a compatible system of $p$-power roots of $t$ in $\overline{\ca{K}}$. Then, $\ca{L}^{\mrm{sh}}_\infty=\bigcup_{n\in\bb{N}}\ca{L}^{\mrm{sh}}(t_{p^n})$ is a non-trivial field extension of $\ca{L}^{\mrm{sh}}$. Consider the continuous group homomorphism 
	\begin{align}\label{eq:thm:perfd-val-log-2-1}
		\xi_t:\gal(\overline{\ca{K}}/\ca{L}^{\mrm{vh}})\longrightarrow \bb{Z}_p
	\end{align}
	characterized by $\tau(t_{p^n})=\zeta_{p^n}^{\xi_t(\tau)}t_{p^n}$ for any $\tau\in \gal(\overline{\ca{K}}/\ca{L}^{\mrm{vh}})$ and $n\in\bb{N}$. We remark that there exists an element $\tau_0$ of $\gal(\overline{\ca{K}}/\ca{L}^{\mrm{sh}})\subseteq \gal(\overline{\ca{K}}/\ca{L}^{\mrm{vh}})$ (the inertia group of $Y^{\ca{L}}/\ca{O}_K$ at the geometric valuative point $\spec(\overline{\ca{F}})$) such that $\xi_t(\tau_0)\neq 0$.
	
	By the assumption in the statement, the coboundary map $\delta:\widehat{\ca{F}_{\ca{L}}^{\mrm{h}}}\otimes_{\ca{O}_Y}\Omega^1_{(Y,\scr{M}_{Y})/K}\to H^1(\gal(\overline{\ca{K}}/\ca{L}^{\mrm{vh}}),\widehat{\overline{\ca{F}}}(1))$ induced by \eqref{eq:thm:fal-ext-val-pt-3} is zero. By definition, $\delta(\df\log(t))$ is represented by the $1$-cocycle (see \eqref{eq:thm:perfd-val-log-2-1} and \ref{thm:fal-ext-val-pt}.(\ref{item:thm:fal-ext-val-pt-2}))
	\begin{align}
		\gal(\overline{\ca{K}}/\ca{L}^{\mrm{vh}})\longrightarrow \widehat{\overline{\ca{F}}}(1),\ \tau\mapsto (\tau-1)((\df\log t_{p^n})_{n\in\bb{N}})=\xi_t(\tau)(\zeta_{p^n})_{n\in\bb{N}}.
	\end{align}
	The assumption $\delta=0$ implies that there exists an element $x\in \widehat{\overline{\ca{F}}}$ such that $(\tau-1)(x)=\xi_t(\tau)$ for any $\tau\in \gal(\overline{\ca{K}}/\ca{L}^{\mrm{vh}})$. However, $(\tau_0-1)(x)=0$ as $\tau_0$ fixes $\ca{L}^{\mrm{sh}}$ and thus $\widehat{\overline{\ca{F}}}$ (see \eqref{eq:para:notation-neighborhood-2-9}), but $\xi_t(\tau_0)\neq 0$ by construction. This is a contradiction.
\end{proof}

\section{Induced Sen Operators over Geometric Valuative Points}\label{sec:sen}
Similar to Section \ref{sec:fal-ext}, using the functorial construction \ref{thm:sen-lie-lift-A} of universal Sen actions over adequate algebras, we define universal Sen actions at any geometric valuative point of a smooth variety (see \ref{thm:sen-val-pt}). Finally, we deduce a criterion for pointwise perfectoidness via non-vanishing of the universal geometric Sen action (see \ref{thm:sen-val-perfd}).

\begin{mypara}\label{para:notation-sen-val}	
	In this section, we fix the following data: 
	\begin{enumerate}
		\renewcommand{\labelenumi}{{\rm(\theenumi)}}
		\item a complete discrete valuation field $K$ extension of $\bb{Q}_p$ with perfect residue field,
		\item an irreducible smooth $K$-scheme $Y$ of finite presentation with a normal crossings divisor $D$,
		\item an algebraic closure $\overline{\ca{K}}$ of the fraction field $\ca{K}$ of $Y$,
		\item a point $\overline{y}=\spec(\overline{\ca{F}})$ of the integral closure $Y^{\overline{\ca{K}}}$ of $Y$ in $\overline{\ca{K}}$,
		\item a valuation ring $\ca{O}_{\overline{\ca{F}}}$ of height $1$ extension of $\ca{O}_K$ with fraction field $\overline{\ca{F}}$,
		\item a Galois extension $\ca{L}$ of $\ca{K}$ contained in the maximal unramified extension $\ca{K}_{\mrm{ur}}\subseteq \overline{\ca{K}}$ with respect to $(Y^{\triv},Y)$ (where $Y^{\triv}=Y\setminus D$, see \ref{para:notation-neighborhood-3}) such that $\ca{G}=\gal(\ca{L}/\ca{K})$ is a $p$-adic analytic group (\cite[3.8]{he2022sen}).
	\end{enumerate}
	In particular, we are again in the situation of \ref{para:notation-neighborhood-1} and we adopt the same notation as in \ref{para:notation-neighborhood-2} and \ref{para:notation-neighborhood-3}.
\end{mypara}

\begin{mypara}\label{para:val-fal-ext-dual}
	Consider the Faltings extension \eqref{eq:thm:fal-ext-val-pt-3} of the open immersion $Y^{\triv}\to Y$ over $\ca{O}_K$ at the geometric valuative point $\overline{y}=\spec(\overline{\ca{F}})$,
	\begin{align}\label{eq:para:val-fal-ext-dual-1}
		0\longrightarrow \widehat{\overline{\ca{F}}}(1)\stackrel{\iota}{\longrightarrow}\scr{E}_{Y^{\triv}\to Y,\overline{y}}\stackrel{\jmath}{\longrightarrow} \widehat{\overline{\ca{F}}}\otimes_{\ca{O}_Y}\Omega^1_{(Y,\scr{M}_Y)/K}\longrightarrow 0.
	\end{align}
	As in \ref{rem:A-fal-ext}.(\ref{item:rem:A-fal-ext-3}), taking its dual and a Tate twist, we obtain a canonical exact sequence of finite free $\widehat{\overline{\ca{F}}}$-representations of $\gal(\overline{\ca{K}}/\ca{K}^{\mrm{vh}})$ (the decomposition group of $Y/\ca{O}_K$ at the geometric valuative point $\spec(\overline{\ca{F}})$, see \ref{para:notation-neighborhood-3}),
	\begin{align}\label{eq:para:val-fal-ext-dual-2}
		0\longrightarrow \ho_{\ca{O}_Y}(\Omega^1_{(Y,\scr{M}_Y)/K}(-1),\widehat{\overline{\ca{F}}})\stackrel{\jmath^*}{\longrightarrow} \scr{E}^*_{Y^{\triv}\to Y,\overline{y}}(1)\stackrel{\iota^*}{\longrightarrow}\widehat{\overline{\ca{F}}}\longrightarrow 0,
	\end{align} 
	where $\scr{E}^*_{Y^{\triv}\to Y,\overline{y}}=\ho_{\widehat{\overline{\ca{F}}}}(\scr{E}_{Y^{\triv}\to Y,\overline{y}},\widehat{\overline{\ca{F}}})$. There is a canonical $\gal(\overline{\ca{K}}/\ca{K}^{\mrm{vh}})$-equivariant $\widehat{\overline{\ca{F}}}$-linear Lie algebra structure on $\scr{E}^*_{Y^{\triv}\to Y,\overline{y}}(1)$ associated to the linear form $\iota^*$ given by
	\begin{align}\label{eq:para:val-fal-ext-dual-3}
		[f_1,f_2]=\iota^*(f_1)f_2-\iota^*(f_2)f_1,\ \forall f_1,f_2\in \scr{E}^*_{Y^{\triv}\to Y,\overline{y}}(1).
	\end{align}
	In particular, $\ho_{\ca{O}_Y}(\Omega^1_{(Y,\scr{M}_Y)/K}(-1),\widehat{\overline{\ca{F}}})$ is a Lie ideal of $\scr{E}^*_{Y^{\triv}\to Y,\overline{y}}(1)$, and $\widehat{\overline{\ca{F}}}$ is the quotient by this ideal. It is clear that the induced Lie algebra structures on them are both trivial. Any $\widehat{\overline{\ca{F}}}$-linear splitting of \eqref{eq:para:val-fal-ext-dual-1} identifies $\scr{E}^*_{Y^{\triv}\to Y,\overline{y}}(1)$ with the semi-direct product of Lie algebras of $\widehat{\overline{\ca{F}}}$ acting on $\ho_{\ca{O}_Y}(\Omega^1_{(Y,\scr{M}_Y)/K}(-1),\widehat{\overline{\ca{F}}})$ by multiplication. By the construction of $\scr{E}_{Y^{\triv}\to Y,\overline{y}}$ \eqref{eq:thm:fal-ext-val-pt-1}, we see that
	\begin{align}\label{eq:para:val-fal-ext-dual-4}
		\scr{E}^*_{Y^{\triv}\to Y,\overline{y}}(1)=\lim_{X=\spec(A)\in \nbd^{Y^{\triv}\trm{-}\mrm{adq}}_{\overline{\ca{F}}}(Y/\ca{O}_K)}\widehat{\overline{\ca{F}}}\otimes_{\widehat{\overline{A}}}\scr{E}^*_A(1)
	\end{align}
	is an equality of $\widehat{\overline{\ca{F}}}$-linear Lie algebras (see \ref{rem:A-fal-ext}.(\ref{item:rem:A-fal-ext-3})), where the transition morphisms in the limit are isomorphisms (see \ref{para:fal-ext-val-pt}).
\end{mypara}

\begin{mypara}\label{para:sen-val-pt}
	For any $X=\spec(A)\in \nbd^{Y^{\triv}\trm{-}\mrm{adq}}_{\overline{\ca{F}}}(Y/\ca{O}_K)$ (\ref{prop:aet-neighborhood}) given by an adequate $(K',\ca{O}_{K'},\ca{O}_{\overline{K}})$-triple $(A^{\triv},A,\overline{A})$ for some finite field extension $K'$ of $K$ contained in $\overline{K}$, let $\ca{G}_X$ be the image of the composition of $\gal(\ca{K}_{X,\mrm{ur}}/\ca{K}_X)\to \gal(\ca{K}_{\mrm{ur}}/\ca{K})\to \ca{G}=\gal(\ca{L}/\ca{K})$ and let $\ca{L}_X$ be the corresponding Galois extension of $\ca{K}_X$.
	\begin{align}
		\xymatrix{
			\overline{\ca{K}}&\ca{K}_{X,\mrm{ur}}\ar[l]&\ca{K}_{\mrm{ur}}\ar[l]\\
			&\ca{L}_X\ar[u]&\ca{L}\ar[l]\ar[u]\\
			&\ca{K}_X\ar[u]^-{\ca{G}_X}&\ca{K}\ar[u]_-{\ca{G}}\ar[l]
		}
	\end{align}
	We note that $\ca{L}_X=\ca{K}_X\ca{L}$ and $\ca{G}_X\subseteq \ca{G}$ is an open subgroup of finite index (as $\ca{K}_X$ is finite over $\ca{K}$). Consider the universal Sen action of the $p$-adic analytic Galois extension $\ca{L}_X$ of $\ca{K}_X$ \eqref{eq:thm:sen-lie-lift-A-1},
	\begin{align}\label{eq:para:sen-val-pt-2}
		\varphi_{\sen}|_{\ca{G}_X}:\scr{E}^*_A(1)\longrightarrow \widehat{\overline{A}}[1/p]\otimes_{\bb{Q}_p}\lie(\ca{G}_X).
	\end{align}
	
	Any morphism $X'=\spec(A')\to X=\spec(A)$ in $\nbd^{Y^{\triv}\trm{-}\mrm{adq}}_{\overline{\ca{F}}}(Y/\ca{O}_K)$ induces a morphism of $(K,\ca{O}_K,\ca{O}_{\overline{K}})$-triples $(A_{\triv},A,\overline{A})\to (A'_{\triv},A',\overline{A'})$ by \ref{para:notation-neighborhood-3}. Notice that $\overline{A}\to \overline{A'}$ is injective (since both of them are contained in $\overline{\ca{K}}$ by definition \ref{para:notation-neighborhood-3}) and that $\ca{K}_{X'}\otimes_{\ca{K}_X}\Omega^1_{\ca{K}_X/K}\to \Omega^1_{\ca{K}_{X'}/K}$ is an isomorphism (as $\ca{K}_{X'}$ is finite separable over $\ca{K}_X$). Thus, this morphism induces a canonical isomorphism of universal Sen actions by \ref{rem:sen-lie-lift-A}.(\ref{item:rem:sen-lie-lift-A-2}),
	\begin{align}\label{eq:para:sen-val-pt-3}
		\xymatrix{
			\widehat{\overline{\ca{F}}}\otimes_{\widehat{\overline{A'}}}\scr{E}^*_{A'}(1)\ar[rrr]^-{\id_{\widehat{\overline{\ca{F}}}}\otimes_{\widehat{\overline{A'}}}\varphi_{\sen}|_{\ca{G}_{X'}}}\ar[d]^-{\wr}&&&\widehat{\overline{\ca{F}}}\otimes_{\bb{Q}_p}\lie(\ca{G}_{X'})\ar[d]^-{\wr}\\
			\widehat{\overline{\ca{F}}}\otimes_{\widehat{\overline{A}}}\scr{E}^*_{A}(1)
			\ar[rrr]^-{\id_{\widehat{\overline{\ca{F}}}}\otimes_{\widehat{\overline{A}}}\varphi_{\sen}|_{\ca{G}_X}}&&&\widehat{\overline{\ca{F}}}\otimes_{\bb{Q}_p}\lie(\ca{G}_X).
		}
	\end{align}
	where we used the fact that $\ca{G}_{X'}\subseteq \ca{G}_X$ are open subgroups of $\ca{G}$.
	
	Taking cofiltered limit over $\nbd^{Y^{\triv}\trm{-}\mrm{adq}}_{\overline{\ca{F}}}(Y/\ca{O}_K)$ (\ref{prop:aet-neighborhood}), we obtain a canonical homomorphism of $\widehat{\overline{\ca{F}}}$-linear Lie algebras by \eqref{eq:para:val-fal-ext-dual-4},
	\begin{align}\label{eq:para:sen-val-pt-4}
		\varphi_{\sen}|_{\ca{G},\overline{y}}: \scr{E}^*_{Y^{\triv}\to Y,\overline{y}}(1)\longrightarrow \widehat{\overline{\ca{F}}}\otimes_{\bb{Q}_p}\lie(\ca{G}).
	\end{align}
\end{mypara}
\begin{mythm}\label{thm:sen-val-pt}
	The canonical homomorphism of $\widehat{\overline{\ca{F}}}$-linear Lie algebras defined in {\rm\ref{para:sen-val-pt}},
	\begin{align}\label{eq:thm:sen-val-pt-1}
		\varphi_{\sen}|_{\ca{G},\overline{y}}: \scr{E}^*_{Y^{\triv}\to Y,\overline{y}}(1)\longrightarrow \widehat{\overline{\ca{F}}}\otimes_{\bb{Q}_p}\lie(\ca{G})
	\end{align}
	is $\gal(\overline{\ca{K}}/\ca{K}^{\mrm{vh}})$-equivariant with respect to the canonical action on $\scr{E}^*_{Y^{\triv}\to Y,\overline{y}}(1)$ defined in {\rm\ref{para:val-fal-ext-dual}}, the canonical action on $\widehat{\overline{\ca{F}}}$ via the canonical homomorphism $\gal(\overline{\ca{K}}/\ca{K}^{\mrm{vh}})\to \gal(\overline{\ca{F}}/\ca{F}^{\mrm{h}})$ \eqref{eq:para:notation-neighborhood-2-6}, and the adjoint action on $\lie(\ca{G})$ {\rm(\cite[3.15]{he2022sen})} via the canonical homomorphism $\gal(\overline{\ca{K}}/\ca{K}^{\mrm{vh}})\to \gal(\ca{K}_{\mrm{ur}}/\ca{K})$.
\end{mythm}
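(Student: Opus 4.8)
The plan is to deduce the $\gal(\overline{\ca{K}}/\ca{K}^{\mrm{vh}})$-equivariance from the functoriality of the universal Sen action over adequate algebras, in exactly the way the proof of \ref{thm:fal-ext-val-pt} deduced the $\gal(\overline{\ca{K}}/\ca{K}^{\mrm{vh}})$-action on $\scr{E}_{Y^{\triv}\to Y,\overline{y}}$ itself from the functoriality of Faltings extensions. Recall from \ref{para:sen-val-pt} that $\varphi_{\sen}|_{\ca{G},\overline{y}}$ is the cofiltered limit, over $X=\spec(A)\in\nbd^{Y^{\triv}\trm{-}\mrm{adq}}_{\overline{\ca{F}}}(Y/\ca{O}_K)$, of the universal Sen actions $\varphi_{\sen}|_{\ca{G}_X}$ of \ref{thm:sen-lie-lift-A} base-changed to $\widehat{\overline{\ca{F}}}$, with all transition morphisms isomorphisms, and that $\scr{E}^*_{Y^{\triv}\to Y,\overline{y}}(1)$ carries a continuous $\gal(\overline{\ca{K}}/\ca{K}^{\mrm{vh}})$-action, namely the dual of the action produced in \ref{thm:fal-ext-val-pt}. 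Thus it suffices to check equivariance of the map; continuity plays no further role.

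First I would fix $\sigma\in\gal(\overline{\ca{K}}/\ca{K}^{\mrm{vh}})$, write $\overline{\sigma}$ for its image in $\gal(\ca{K}_{\mrm{ur}}/\ca{K})$, and recall from \ref{lem:galois-action-adq-nbd} that $\sigma$ gives an auto-equivalence of $\nbd^{Y^{\triv}\trm{-}\mrm{adq}}_{\overline{\ca{F}}}(Y/\ca{O}_K)$ sending $X=\spec(A)$ to $X^\sigma=\spec(\sigma(A))$, together with a morphism $X^\sigma\to X$ over $\ca{O}_K$ given by $\sigma\colon A\iso\sigma(A)$ (note $\sigma$ fixes $K$ since it fixes $\ca{K}^{\mrm{vh}}\supseteq\ca{K}\supseteq K$, but it moves the finite subextension $K'\subseteq\overline{K}$ underlying the adequate chart of $X$ to $\sigma(K')\subseteq\overline{K}$). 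This morphism identifies $\ca{G}_{X^\sigma}$ with the conjugate $\overline{\sigma}\,\ca{G}_X\,\overline{\sigma}^{-1}$ inside $\ca{G}$; since $\ca{G}_X$ and $\ca{G}_{X^\sigma}$ are open in $\ca{G}$ we have $\lie(\ca{G}_X)=\lie(\ca{G}_{X^\sigma})=\lie(\ca{G})$, and the induced isomorphism $\lie(\ca{G}_X)\iso\lie(\ca{G}_{X^\sigma})$ is $\mrm{Ad}(\overline{\sigma})$. Applying the functoriality of the universal Sen action, \ref{rem:sen-lie-lift-A}.(\ref{item:rem:sen-lie-lift-A-2}) (i.e. \cite[11.23.(4),(6)]{he2022sen}), to the commutative diagram of $(K,\ca{O}_K,\ca{O}_{\overline{K}})$-triples determined by $\sigma$ — where one takes the base field to be $K$ itself and the bottom structural map $\ca{O}_{\overline{K}}\to\ca{O}_{\overline{K}}$ to be $\sigma|_{\ca{O}_{\overline{K}}}$ — produces a commutative square relating $\id_{\widehat{\overline{\ca{F}}}}\otimes\varphi_{\sen}|_{\ca{G}_X}$ with $\id_{\widehat{\overline{\ca{F}}}}\otimes\varphi_{\sen}|_{\ca{G}_{X^\sigma}}$, whose vertical arrows are the $\sigma$-semilinear isomorphism $\widehat{\overline{\ca{F}}}\otimes_{\widehat{\overline{A}}}\scr{E}^*_A(1)\iso\widehat{\overline{\ca{F}}}\otimes_{\widehat{\sigma(\overline{A})}}\scr{E}^*_{\sigma(A)}(1)$ dual to \eqref{eq:para:fal-ext-val-pt-2} and $\sigma\otimes\mrm{Ad}(\overline{\sigma})$ on the target $\widehat{\overline{\ca{F}}}\otimes_{\bb{Q}_p}\lie(\ca{G})$.

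Then I would pass to the cofiltered limit over $\nbd^{Y^{\triv}\trm{-}\mrm{adq}}_{\overline{\ca{F}}}(Y/\ca{O}_K)$. Because the transition maps in \eqref{eq:para:val-fal-ext-dual-4} (and those on the target) are isomorphisms and $\sigma$ only permutes the cofinal index category, the squares of the previous paragraph assemble into a single commutative square saying precisely that $\varphi_{\sen}|_{\ca{G},\overline{y}}$ carries the action of $\sigma$ on $\scr{E}^*_{Y^{\triv}\to Y,\overline{y}}(1)$ to $\sigma\otimes\mrm{Ad}(\overline{\sigma})$ on $\widehat{\overline{\ca{F}}}\otimes_{\bb{Q}_p}\lie(\ca{G})$; as $\sigma$ ranges over $\gal(\overline{\ca{K}}/\ca{K}^{\mrm{vh}})$ (acting on $\widehat{\overline{\ca{F}}}$ through $\gal(\overline{\ca{F}}/\ca{F}^{\mrm{h}})$ via \eqref{eq:para:notation-neighborhood-2-6} and on $\lie(\ca{G})$ adjointly through $\gal(\ca{K}_{\mrm{ur}}/\ca{K})$) this is the asserted equivariance. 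That $\varphi_{\sen}|_{\ca{G},\overline{y}}$ is a homomorphism of $\widehat{\overline{\ca{F}}}$-linear Lie algebras is already established in \ref{para:sen-val-pt}.

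The main obstacle is bookkeeping rather than substance: one must check that the functoriality package of \cite[11.23]{he2022sen} genuinely applies to the ``twist by $\sigma$'' — in particular that a nontrivial automorphism $\sigma|_{\ca{O}_{\overline{K}}}$ of the fixed algebraic closure over $K$ is an admissible instance of the diagram \eqref{eq:rem:sen-lie-lift-A-5} — and, most importantly, correctly identify the induced transformation on $\widehat{\overline{A}}[1/p]\otimes_{\bb{Q}_p}\lie(\ca{G}_X)$ as $\sigma\otimes\mrm{Ad}(\overline{\sigma})$ and not $\sigma\otimes\id$; this forces one to track the embedding $\ca{G}_X\subseteq\ca{G}$ and its transport under conjugation by $\overline{\sigma}$, exactly as in the analogous point of \ref{thm:fal-ext-val-pt}. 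Everything else — the cofiltered-limit passage and the reduction to the affine adequate case — is parallel to that proof.
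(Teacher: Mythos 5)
Your proposal is correct and follows essentially the same route as the paper's proof: both invoke the Galois action on $\nbd^{Y^{\triv}\trm{-}\mrm{adq}}_{\overline{\ca{F}}}(Y/\ca{O}_K)$ from \ref{lem:galois-action-adq-nbd}, the canonical morphism $X^\sigma\to X$, and the functoriality \ref{rem:sen-lie-lift-A}.(\ref{item:rem:sen-lie-lift-A-2}) of universal Sen actions, in parallel with the argument of \ref{thm:fal-ext-val-pt}. Your extra care in identifying the induced map on $\lie(\ca{G})$ as $\mrm{Ad}(\overline{\sigma})$ via the conjugation $\ca{G}_{X^\sigma}=\overline{\sigma}\,\ca{G}_X\,\overline{\sigma}^{-1}$ simply makes explicit what the paper leaves implicit.
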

\begin{proof}
	Recall that by \ref{lem:galois-action-adq-nbd}, there is a canonical action of $\gal(\overline{\ca{K}}/\ca{K}^{\mrm{vh}})$ on $\nbd^{Y^{\triv}\trm{-}\mrm{adq}}_{\overline{\ca{F}}}(Y/\ca{O}_K)$. Moreover, its construction implies that there is a canonical morphism $X^{\sigma}\to X$ in $\nbd^{Y^{\triv}\trm{-}\mrm{adq}}_{\overline{\ca{F}}}(Y/\ca{O}_K)$ for any $\sigma\in \gal(\overline{\ca{K}}/\ca{K}^{\mrm{vh}})$. Thus, the conclusion follows from again the functoriality \ref{rem:sen-lie-lift-A}.(\ref{item:rem:sen-lie-lift-A-2}) of the universal Sen actions with respect to $X^\sigma\to X$ (cf. \ref{thm:fal-ext-val-pt}).
\end{proof}

\begin{mydefn}\label{defn:sen-val-pt}
	We call the homomorphism \eqref{eq:thm:sen-val-pt-1} the \emph{universal Sen action of the $p$-adic analytic quotient $\ca{G}$ of $\gal(\ca{K}_{\mrm{ur}}/\ca{K})$} (or \emph{of the $p$-adic analytic Galois extension $\ca{L}$ of $\ca{K}$}) at the geometric valuative point $\overline{y}=\spec(\overline{\ca{F}})$ of the open immersion $Y^{\triv}\to Y$ over $\ca{O}_K$.
\end{mydefn}

\begin{myrem}\label{rem:sen-val-pt}
	The construction of \eqref{eq:thm:sen-val-pt-1} is functorial in the data in \ref{para:notation-sen-val}	. More precisely, consider another set of data in \ref{para:notation-sen-val}	 by adding a prime superscript. Then, any commutative diagram of $\bb{Z}_p$-schemes extending the structural morphisms
	\begin{align}\label{eq:rem:sen-val-pt-1}
		\xymatrix{
			\spec(\ca{O}_{\overline{\ca{F}'}})\ar[ddd]\ar@/^1pc/[rrr]&\spec(\overline{\ca{F}'})\ar[l]\ar[r]\ar[d]&\spec(\overline{\ca{F}})\ar[r]\ar[d]&\spec(\ca{O}_{\overline{\ca{F}}})\ar[ddd]\\
			&Y'^{\overline{\ca{K}'}}\ar[r]\ar[d]&Y^{\overline{\ca{K}}}\ar[d]&\\
			&Y'\ar[r]\ar[d]&Y\ar[d]&\\
			\spec(\ca{O}_{K'})\ar@/_1pc/[rrr]&\spec(K')\ar[l]\ar[r]&\spec(K)\ar[r]&\spec(\ca{O}_K)
		}
	\end{align}
	such that $Y'^{\triv}$ is over $Y^{\triv}$, that $\spec(\overline{\ca{K}'})\in Y'^{\overline{\ca{K}'}}$ is over $\spec(\overline{\ca{K}})\in Y^{\overline{\ca{K}}}$, that $\ca{L}\subseteq \ca{L}'$ via the inclusion $\overline{\ca{K}}\subseteq \overline{\ca{K}'}$, and that $\ca{K}'\otimes_{\ca{K}}\Omega^1_{\ca{K}/K}\to \Omega^1_{\ca{K}'/K'}$ is injective, induces a canonical commutative diagram by the same arguments of \ref{rem:fal-ext-val-pt} using \ref{rem:sen-lie-lift-A}.(\ref{item:rem:sen-lie-lift-A-2}),
	\begin{align}\label{eq:rem:sen-val-pt-2}
		\xymatrix{
			\scr{E}^*_{Y'^{\triv}\to Y',\overline{y'}}(1)\ar[rrr]^-{\varphi_{\sen}|_{\ca{G}',\overline{y'}}}\ar[d]&&& \widehat{\overline{\ca{F}'}}\otimes_{\bb{Q}_p}\lie(\ca{G}')\ar[d]\\
			\widehat{\overline{\ca{F}'}}\otimes_{\widehat{\overline{\ca{F}}}}\scr{E}^*_{Y^{\triv}\to Y,\overline{y}}(1)\ar[rrr]^-{\id_{\widehat{\overline{\ca{F}'}}}\otimes_{\widehat{\overline{\ca{F}}}}\varphi_{\sen}|_{\ca{G},\overline{y}}}&&& \widehat{\overline{\ca{F}'}}\otimes_{\bb{Q}_p}\lie(\ca{G}).
		}
	\end{align}	
	In particular, if $K'$ is finite over $K$, $(Y',\scr{M}_{Y'})$ is \'etale over $(Y,\scr{M}_Y)$, and $\ca{L}'$ is finite over $\ca{K}'\ca{L}$, then the vertical homomorphisms in \eqref{eq:rem:sen-val-pt-2} are isomorphisms (cf. \eqref{eq:rem:A-fal-ext-8}).
\end{myrem}

\begin{myrem}\label{rem:sen-val-pt-2}
	One can globalize the construction of Sen actions over adequate algebras \ref{thm:sen-brinon-A} to $Y$. More precisely, one can consider the Faltings site fibred over the admissibly \'etale site of $Y$. Then, the Faltings extension globalizes as an exact sequence of vector bundles over the Faltings site, and any vector bundle admits a canonical Lie algebra action of the twisted dual of the Faltings extension. Although this site theoretic perspective is adopted everywhere in this article, we actually do not need this formulation for any proof. We plan to develop it in the future.
\end{myrem}

\begin{myprop}\label{prop:val-geom-sen-nonzero}
	Let $L$ be a Galois extension of $K$ contained in $\ca{L}\cap\overline{K}$, $\ca{G}^{\mrm{geo}}=\gal(\ca{L}/L\ca{K})$ and $\ca{G}^{\mrm{ari}}=\gal(L\ca{K}/\ca{K})$. Then, $\varphi_{\sen}|_{\ca{G},\overline{y}}: \scr{E}^*_{Y^{\triv}\to Y,\overline{y}}(1)\to \widehat{\overline{\ca{F}}}\otimes_{\bb{Q}_p}\lie(\ca{G})$ induces a morphism of exact sequences of $\widehat{\overline{\ca{F}}}$-linear Lie algebras,
	\begin{align}\label{eq:prop:val-geom-sen-nonzero-1}
		\xymatrix{
			0\ar[r]& \ho_{\ca{O}_Y}(\Omega^1_{(Y,\scr{M}_Y)/K}(-1),\widehat{\overline{\ca{F}}})\ar[r]^-{\jmath^*}\ar[d]^-{\varphi^{\mrm{geo}}_{\sen}|_{\ca{G},\overline{y}}}& \scr{E}^*_{Y^{\triv}\to Y,\overline{y}}(1)\ar[r]^-{\iota^*}\ar[d]^-{\varphi_{\sen}|_{\ca{G},\overline{y}}}&\widehat{\overline{\ca{F}}}\ar[r]\ar[d]^-{\varphi^{\mrm{ari}}_{\sen}|_{\ca{G},\overline{y}}}& 0\\
			0\ar[r]& \widehat{\overline{\ca{F}}}\otimes_{\bb{Q}_p}\lie(\ca{G}^{\mrm{geo}})\ar[r]& \widehat{\overline{\ca{F}}}\otimes_{\bb{Q}_p}\lie(\ca{G})\ar[r]&\widehat{\overline{\ca{F}}}\otimes_{\bb{Q}_p}\lie(\ca{G}^{\mrm{ari}})\ar[r]& 0.
		}
	\end{align}
	Moreover, the induced homomorphism $\varphi^{\mrm{ari}}_{\sen}|_{\ca{G},\overline{y}}$ is not zero if and only if the inertia subgroup of $L/K$ is infinite, and in this case $\varphi^{\mrm{ari}}_{\sen}|_{\ca{G},\overline{y}}$ admits an $\widehat{\overline{\ca{F}}}$-linear retraction.
\end{myprop}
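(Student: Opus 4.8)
The plan is to transcribe the proof of \ref{prop:geom-sen-nonzero}, replacing the functoriality of Faltings extensions and universal Sen actions over adequate algebras by their counterparts at geometric valuative points, namely \ref{rem:fal-ext-val-pt} and \ref{rem:sen-val-pt}. First I would record the purely group-theoretic input: $\ca{G}^{\mrm{geo}}$ and $\ca{G}^{\mrm{ari}}$ are again $p$-adic analytic groups and sit in a short exact sequence of $\bb{Q}_p$-linear Lie algebras $0\to\lie(\ca{G}^{\mrm{geo}})\to\lie(\ca{G})\to\lie(\ca{G}^{\mrm{ari}})\to 0$ (\cite[3.11, 3.14]{he2022sen}); moreover $\gal(L/K)$ is $p$-adic analytic, since $\ca{G}^{\mrm{ari}}=\gal(L\ca{K}/\ca{K})$ is an open subgroup of it, because $\ca{K}\cap L$ is finite over $K$ ($\ca{K}$ being a finitely generated field extension of $K$, \cite[\href{https://stacks.math.columbia.edu/tag/037J}{037J}]{stacks-project}).

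Next I would apply the functoriality \ref{rem:sen-val-pt} to the morphism of data going from the ``small'' data $(K,\spec(K),\overline{K},\spec(\overline{K}),\ca{O}_{\overline{K}},L)$ (with empty divisor, so that $\Omega^1_{K/K}=0$, and with $\ca{K}_{\mrm{ur}}=\overline{K}$, so $L\subseteq\overline{K}=\ca{K}_{\mrm{ur}}$ automatically) up to the data of \ref{para:notation-sen-val}, with transition morphism $Y\to\spec(K)$ and with $L\subseteq\ca{L}$ via $\overline{K}\subseteq\overline{\ca{K}}$; the differential hypothesis of \ref{rem:sen-val-pt} is vacuous as $\Omega^1_{K/K}=0$. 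This produces a commutative square relating $\varphi_{\sen}|_{\ca{G},\overline{y}}$ on top with $\id_{\widehat{\overline{\ca{F}}}}\otimes_{\widehat{\overline{K}}}\varphi_{\sen}|_{\gal(L/K),\spec(\overline{K})}$ on the bottom. By \ref{thm:fal-ext-val-pt} (or \ref{lem:fal-ext-val-pt-nolog-2}) one has $\scr{E}_{\spec(K)\to\spec(K),\spec(\overline{K})}=\widehat{\overline{K}}(1)$ because $\Omega^1_{K/K}=0$, so its twisted dual is $\widehat{\overline{K}}$ and the bottom-left corner of the square becomes $\widehat{\overline{\ca{F}}}$; tracking the morphism of Faltings extensions induced by $Y\to\spec(K)$ through the dualizing-and-twisting operation of \ref{para:val-fal-ext-dual}, and using \ref{rem:fal-ext-val-pt}, the left vertical map of the square is identified with $\iota^*\colon\scr{E}^*_{Y^{\triv}\to Y,\overline{y}}(1)\to\widehat{\overline{\ca{F}}}$ of \eqref{eq:para:val-fal-ext-dual-2}, while by construction the right vertical $\lie(\ca{G})\to\lie(\gal(L/K))$ factors as the surjection $\lie(\ca{G})\to\lie(\ca{G}^{\mrm{ari}})$ followed by the isomorphism $\lie(\ca{G}^{\mrm{ari}})\xrightarrow{\sim}\lie(\gal(L/K))$ coming from the inclusion of the open subgroup $\ca{G}^{\mrm{ari}}$. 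Defining $\varphi^{\mrm{ari}}_{\sen}|_{\ca{G},\overline{y}}$ to be $\id_{\widehat{\overline{\ca{F}}}}\otimes\varphi_{\sen}|_{\gal(L/K),\spec(\overline{K})}$ transported through this isomorphism, the square just obtained says exactly that the right square of \eqref{eq:prop:val-geom-sen-nonzero-1} commutes. Since the top row \eqref{eq:para:val-fal-ext-dual-2} and the bottom row of \eqref{eq:prop:val-geom-sen-nonzero-1} are exact, commutativity of the right square yields the induced map $\varphi^{\mrm{geo}}_{\sen}|_{\ca{G},\overline{y}}$ between the kernels and hence the whole diagram; all three vertical maps are $\widehat{\overline{\ca{F}}}$-linear Lie algebra homomorphisms ($\varphi_{\sen}|_{\ca{G},\overline{y}}$ by \ref{thm:sen-val-pt}, the other two as composites and restrictions), and the rows are exact sequences of Lie algebras by \ref{para:val-fal-ext-dual} and \cite[3.14]{he2022sen}.

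For the last assertion, note that $\widehat{\overline{K}}\to\widehat{\overline{\ca{F}}}$ is a field extension, hence faithfully flat, so $\varphi^{\mrm{ari}}_{\sen}|_{\ca{G},\overline{y}}$ is nonzero if and only if $\varphi_{\sen}|_{\gal(L/K),\spec(\overline{K})}\colon\widehat{\overline{K}}\to\widehat{\overline{K}}\otimes_{\bb{Q}_p}\lie(\gal(L/K))$ is nonzero. The latter map is, by its very construction (the neighbourhoods $\spec(\ca{O}_{K'})$ for $K'/K$ finite being cofinal among the adequate neighbourhoods of $\spec(K)/\ca{O}_K$ at $\spec(\overline{K})$, and $\ca{O}_K\to\ca{O}_{K'}$ being \'etale), the base change of the universal Sen action $\varphi_{\sen}|_{\gal(L/K)}$ of $\gal(L/K)$ over the adequate algebra $\ca{O}_K$, i.e. the arithmetic Sen operator of $L/K$; by Sen's theorem \cite[Theorem 11]{sen1980sen} (cf. \cite[11.23.(2)]{he2022sen}) it is nonzero precisely when the inertia subgroup of $L/K$ is infinite. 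In that case it is a nonzero $\widehat{\overline{K}}$-linear map out of the one-dimensional space $\widehat{\overline{K}}$, hence injective and admits a $\widehat{\overline{K}}$-linear retraction, which base-changes to a $\widehat{\overline{\ca{F}}}$-linear retraction of $\varphi^{\mrm{ari}}_{\sen}|_{\ca{G},\overline{y}}$.

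I expect the step needing most care to be the precise identification of the left vertical map of the square coming from \ref{rem:sen-val-pt} with $\iota^*$: one must unwind the morphism of Faltings extensions at geometric valuative points attached to $Y\to\spec(K)$, check its compatibility with the maps $\iota$ on both sides, and verify that under the canonical isomorphism $\scr{E}_{\spec(K)\to\spec(K),\spec(\overline{K})}\cong\widehat{\overline{K}}(1)$ its twisted dual is the canonical linear form $\iota^*$ of \eqref{eq:para:val-fal-ext-dual-2}. Once this bookkeeping is done, the remainder is a routine transcription of the proof of \ref{prop:geom-sen-nonzero}.
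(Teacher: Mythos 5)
Your proof is correct and follows essentially the same route as the paper: the paper's own proof simply reduces to the adequate-algebra statement \ref{prop:geom-sen-nonzero} via the limit definition \ref{para:sen-val-pt}, and that statement is itself proved exactly as you do here (functoriality of the universal Sen action applied to the structural morphism to $\spec(K)$, the identification of the left vertical arrow with $\iota^*$ via $\scr{E}_{\ca{O}_K}=\widehat{\overline{K}}(1)$, and Sen's theorem \cite[Theorem 11]{sen1980sen} for the non-vanishing criterion). Your transcription of that argument directly at the level of geometric valuative points, using \ref{rem:sen-val-pt} in place of \ref{rem:sen-lie-lift-A}, is a valid and equivalent packaging of the same idea.
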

\begin{proof}
	It follows directly from \ref{para:sen-val-pt} and \ref{prop:geom-sen-nonzero}.
\end{proof}

\begin{mythm}\label{thm:sen-val-perfd}
	Let $\{t_1,\dots, t_s\}$ be a regular system of parameters of the strict Henselization $\ca{O}_{Y,y}^{\mrm{sh}}$ such that $t_1\cdots t_r=0$ defines the normal crossings divisor $D$ at $\overline{y}$, where $0\leq r\leq s\leq\dim(Y)$, and let $y_{\ca{L}}=\spec(\ca{F}_{\ca{L}})\in Y^{\ca{L}}$ be the image of $\overline{y}\in Y^{\overline{\ca{K}}}$ {\rm(\ref{para:notation-neighborhood-2})}. Assume that $\ca{L}$ contains a compatible system of primitive $p$-power roots of unity $(\zeta_{p^n})_{n\in\bb{N}}$ and that the restriction of the universal Sen action \eqref{eq:thm:sen-val-pt-1} (called the \emph{universal geometric Sen action}),
	\begin{align}\label{eq:thm:sen-val-perfd-1}
		\varphi^{\mrm{geo}}_{\sen}|_{\ca{G},\overline{y}}: \ho_{\ca{O}_Y}(\Omega^1_{(Y,\scr{M}_Y)/K}(-1),\widehat{\overline{\ca{F}}})\longrightarrow \widehat{\overline{\ca{F}}}\otimes_{\bb{Q}_p}\lie(\ca{G}),
	\end{align}
	is injective. Then, $\ca{F}_{\ca{L}}$ is a pre-perfectoid field with respect to the valuation ring $\ca{O}_{\ca{F}_{\ca{L}}}=\ca{F}_{\ca{L}}\cap\ca{O}_{\overline{\ca{F}}}$, and $t_1,\dots,t_r$ admit compatible systems of $p$-power roots in the strict Henselization $\ca{O}_{Y^{\ca{L}},y_{\ca{L}}}^{\mrm{sh}}$ of $Y^{\ca{L}}$ at $\overline{y}$.
\end{mythm}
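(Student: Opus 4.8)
The plan is to reduce both conclusions to the criteria \ref{thm:perfd-val-log-1} and \ref{thm:perfd-val-log-2}, whose common hypothesis is one of the equivalent conditions of \ref{lem:perfd-val-log-1} for the field $\ca{L}$; so it suffices to verify, say, condition \ref{lem:perfd-val-log-1}.(\ref{item:lem:perfd-val-log-1-1}), i.e. that $\scr{E}_{Y^{\triv}\to Y,\overline{y}}^{\gal(\overline{\ca{K}}/\ca{L}^{\mrm{vh}})}$ has dimension $1+\dim(Y)$ over $\widehat{\ca{F}_{\ca{L}}^{\mrm{h}}}$. Granting this, \ref{thm:perfd-val-log-1} gives that $\ca{F}_{\ca{L}}$ is pre-perfectoid with respect to $\ca{O}_{\ca{F}_{\ca{L}}}=\ca{F}_{\ca{L}}\cap\ca{O}_{\overline{\ca{F}}}$, and \ref{thm:perfd-val-log-2} gives that $t_1,\dots,t_r$ admit compatible systems of $p$-power roots in $\ca{L}^{\mrm{sh}}$; since $\ca{L}^{\mrm{sh}}$ is the fraction field of the integrally closed local ring $\ca{O}_{Y^{\ca{L}},y_{\ca{L}}}^{\mrm{sh}}$ and these roots of $t_i\in\ca{O}_{Y,y}^{\mrm{sh}}$ are integral, they lie in $\ca{O}_{Y^{\ca{L}},y_{\ca{L}}}^{\mrm{sh}}$, which is the asserted statement.

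\emph{Step 1: upgrading to the full universal Sen action.} First I would deduce the injectivity of $\varphi_{\sen}|_{\ca{G},\overline{y}}$ from that of $\varphi^{\mrm{geo}}_{\sen}|_{\ca{G},\overline{y}}$. Since $\ca{L}$ contains a compatible system of primitive $p$-power roots of unity, it contains the cyclotomic extension $K_\infty=\bigcup_{n}K(\zeta_{p^n})$, whose inertia subgroup over $K$ is infinite (e.g. because $K_\infty$ is a pre-perfectoid field, cf. the proof of \ref{thm:perfd-val-log-1}). Applying \ref{prop:val-geom-sen-nonzero} with $L=K_\infty$, the arithmetic part $\varphi^{\mrm{ari}}_{\sen}|_{\ca{G},\overline{y}}\colon\widehat{\overline{\ca{F}}}\to\widehat{\overline{\ca{F}}}\otimes_{\bb{Q}_p}\lie(\ca{G}^{\mrm{ari}})$ is nonzero, hence injective (its source is one-dimensional). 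In the morphism of short exact sequences \eqref{eq:prop:val-geom-sen-nonzero-1} the two outer vertical maps $\varphi^{\mrm{geo}}_{\sen}|_{\ca{G},\overline{y}}$ and $\varphi^{\mrm{ari}}_{\sen}|_{\ca{G},\overline{y}}$ are then injective, so the middle map $\varphi_{\sen}|_{\ca{G},\overline{y}}$ is injective by the five lemma.

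\emph{Step 2: enough invariants of the Faltings extension.} As $\widehat{\overline{\ca{F}}}$ is a field and $\varphi_{\sen}|_{\ca{G},\overline{y}}$ is an injective map of finite free $\widehat{\overline{\ca{F}}}$-modules, dualizing over $\widehat{\overline{\ca{F}}}$ and Tate-twisting yields a $\gal(\overline{\ca{K}}/\ca{K}^{\mrm{vh}})$-equivariant surjection $q\colon P\twoheadrightarrow\scr{E}_{Y^{\triv}\to Y,\overline{y}}$, where $P=\ho_{\bb{Q}_p}(\lie(\ca{G}),\widehat{\overline{\ca{F}}}(1))\cong\widehat{\overline{\ca{F}}}(1)\otimes_{\bb{Q}_p}\ho_{\bb{Q}_p}(\lie(\ca{G}),\bb{Q}_p)$. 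Put $H=\gal(\overline{\ca{K}}/\ca{L}^{\mrm{vh}})$. Because $\ca{L}\subseteq\ca{L}^{\mrm{vh}}$ we have $H\subseteq\gal(\overline{\ca{K}}/\ca{L})$, so the image of $H$ in $\ca{G}$ is trivial and the adjoint $H$-action on $\lie(\ca{G})$ is trivial; and since $K_\infty\subseteq\ca{L}\subseteq\ca{L}^{\mrm{vh}}$ the cyclotomic character is trivial on $H$, so $\widehat{\overline{\ca{F}}}(1)\cong\widehat{\overline{\ca{F}}}$ as $H$-representations, $H$ acting through $\gal(\overline{\ca{F}}/\ca{F}_{\ca{L}}^{\mrm{h}})$ \eqref{eq:para:notation-neighborhood-2-6}. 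Hence $P$ has an $H$-stable $\widehat{\ca{F}_{\ca{L}}^{\mrm{h}}}$-structure, namely $\widehat{\overline{\ca{F}}}\otimes_{\widehat{\ca{F}_{\ca{L}}^{\mrm{h}}}}P^{H}\to P$ is an isomorphism, using $\widehat{\overline{\ca{F}}}^{H}=\widehat{\ca{F}_{\ca{L}}^{\mrm{h}}}$ (Ax--Sen--Tate, \eqref{eq:para:notation-neighborhood-2-8}). Applying $q$, the quotient $\scr{E}_{Y^{\triv}\to Y,\overline{y}}$ is generated over $\widehat{\overline{\ca{F}}}$ by $q(P^{H})\subseteq\scr{E}_{Y^{\triv}\to Y,\overline{y}}^{H}$; combined with the bound $\dim_{\widehat{\ca{F}_{\ca{L}}^{\mrm{h}}}}\scr{E}_{Y^{\triv}\to Y,\overline{y}}^{H}\le 1+\dim(Y)$ obtained by taking $H$-invariants of \eqref{eq:thm:fal-ext-val-pt-3} (again Ax--Sen--Tate, as in the proof of \ref{lem:perfd-val-log-1}), this forces equality, which is condition \ref{lem:perfd-val-log-1}.(\ref{item:lem:perfd-val-log-1-1}). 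Feeding this into \ref{thm:perfd-val-log-1} and \ref{thm:perfd-val-log-2} completes the proof as explained above.

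\emph{Main obstacle.} The substantive point is Step 2: one must pin down the action of the decomposition group $H=\gal(\overline{\ca{K}}/\ca{L}^{\mrm{vh}})$ exactly --- trivial on $\lie(\ca{G})$ via $\ca{L}\subseteq\ca{L}^{\mrm{vh}}$, and trivial on $\bb{Z}_p(1)$ via $K_\infty\subseteq\ca{L}$ --- so that the surjection $q$ transports enough $H$-invariants from the manifestly induced representation $P$ onto $\scr{E}_{Y^{\triv}\to Y,\overline{y}}$ and the dimension count matches \ref{lem:perfd-val-log-1} on the nose; by contrast, everything genuinely analytic is already packaged in Sections \ref{sec:trace} and \ref{sec:fal-ext}, principally \ref{thm:perfd-val-log-1}, \ref{thm:perfd-val-log-2}, and the identification \eqref{eq:para:notation-neighborhood-2-8}.
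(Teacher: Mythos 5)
Your proposal is correct and follows essentially the same route as the paper: first upgrade the injectivity of $\varphi^{\mrm{geo}}_{\sen}|_{\ca{G},\overline{y}}$ to that of $\varphi_{\sen}|_{\ca{G},\overline{y}}$ via \ref{prop:val-geom-sen-nonzero} applied to the cyclotomic extension, then dualize and Tate-twist to get a $\gal(\overline{\ca{K}}/\ca{K}^{\mrm{vh}})$-equivariant surjection from $\ho_{\bb{Q}_p}(\lie(\ca{G}),\widehat{\overline{\ca{F}}}(1))$ onto $\scr{E}_{Y^{\triv}\to Y,\overline{y}}$, use triviality of the $\gal(\overline{\ca{K}}/\ca{L}^{\mrm{vh}})$-action on $\lie(\ca{G})$ and on $\bb{Z}_p(1)$ together with Ax--Sen--Tate to transport enough invariants, and conclude via \ref{lem:perfd-val-log-1}, \ref{thm:perfd-val-log-1} and \ref{thm:perfd-val-log-2}. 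Your explicit justification that the $p$-power roots land in $\ca{O}_{Y^{\ca{L}},y_{\ca{L}}}^{\mrm{sh}}$ (integrality over an integrally closed local ring with fraction field $\ca{L}^{\mrm{sh}}$) is a detail the paper leaves implicit, but it is the right one.
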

\begin{proof}
	 Taking $L=\bigcup_{n\in\bb{N}}K(\zeta_{p^n})$ in \ref{prop:val-geom-sen-nonzero}, we deduce that $\varphi_{\sen}|_{\ca{G},\overline{y}}: \scr{E}^*_{Y^{\triv}\to Y,\overline{y}}(1)\to \widehat{\overline{\ca{F}}}\otimes_{\bb{Q}_p}\lie(\ca{G})$ is injective. Taking dual and Tate twist, we see that
	 \begin{align}\label{eq:thm:sen-val-perfd-2}
	 	\widehat{\overline{\ca{F}}}\otimes_{\bb{Q}_p}\lie(\ca{G})^*(1)\longrightarrow \scr{E}_{Y^{\triv}\to Y,\overline{y}}
	 \end{align}
	 is surjective, where $\lie(\ca{G})^*=\ho_{\bb{Q}_p}(\lie(\ca{G}),\bb{Q}_p)$. Note that it is $\gal(\overline{\ca{K}}/\ca{K}^{\mrm{vh}})$-equivariant by \ref{thm:sen-val-pt}. By the functoriality of \eqref{eq:para:notation-neighborhood-2-9}, there is a canonical commutative diagram of fields
	 \begin{align}\label{eq:thm:sen-val-perfd-3}
	 	\xymatrix{
	 		\overline{\ca{K}}&\ca{L}^{\mrm{vh}}\ar[l]&\ca{L}\ar[l]\\
	 		&\ca{K}^{\mrm{vh}}\ar[u]&\ca{K}.\ar[l]\ar[u]
	 	}
	 \end{align}
	 Since $\gal(\overline{\ca{K}}/\ca{L}^{\mrm{vh}})$ fixes $\ca{G}$ and $\widehat{\ca{F}_{\ca{L}}^{\mrm{h}}}=(\widehat{\overline{\ca{F}}})^{\gal(\overline{\ca{F}}/\ca{F}_{\ca{L}}^{\mrm{h}})}$ by Ax-Sen-Tate's theorem \eqref{eq:para:notation-neighborhood-2-8}, we see that 
	 \begin{align}\label{eq:thm:sen-val-perfd-4}
	 	\widehat{\overline{\ca{F}}}\otimes_{\widehat{\ca{F}_{\ca{L}}^{\mrm{h}}}}(\widehat{\overline{\ca{F}}}\otimes_{\bb{Q}_p}\lie(\ca{G})^*(1))^{\gal(\overline{\ca{K}}/\ca{L}^{\mrm{vh}})}\longrightarrow \widehat{\overline{\ca{F}}}\otimes_{\bb{Q}_p}\lie(\ca{G})^*(1)
	 \end{align}
	 is an isomorphism. In particular, the surjectivity of \eqref{eq:thm:sen-val-perfd-2} implies that
	 \begin{align}
	 	\widehat{\overline{\ca{F}}}\otimes_{\widehat{\ca{F}_{\ca{L}}^{\mrm{h}}}}\scr{E}_{Y^{\triv}\to Y,\overline{y}}^{\gal(\overline{\ca{K}}/\ca{L}^{\mrm{vh}})}\longrightarrow\scr{E}_{Y^{\triv}\to Y,\overline{y}}
	 \end{align}
	 is surjective. Hence, we see that the equivalent conditions of \ref{lem:perfd-val-log-1} are satisfied. Then, the conclusion follows directly from \ref{thm:perfd-val-log-1} and \ref{thm:perfd-val-log-2}.
\end{proof}

\section{Vanishing of \'Etale Cohomology in Higher Degrees}\label{sec:vanish}
We show that pointwise perfectoidness with $p$-infinite ramification at boundary points suffices for the vanishing of \'etale cohomology in higher degrees (see \ref{thm:vanish}). One key ingredient is to compare \'etale cohomology of a regular variety with its compactification by results on $K(\pi,1)$-schemes and Ahbyankar's lemma (see \ref{prop:open-et-coh}). The other key ingredient is Faltings' main $p$-adic comparison theorem for pro-schemes (see \ref{thm:fal-comp}) in order to relate the \'etale cohomology with the cohomology of Riemann-Zariski spaces (see \ref{cor:fal-comp-perfd}). We refer to \cite[\textsection7]{he2024coh} for basic definitions and properties of Faltings ringed sites.

\begin{mydefn}[{cf. \cite[3.3]{achinger2015kpi1}, \cite[2.2.2]{abbes2020suite}}]\label{defn:K-pi-1}
	Let $Y$ be a coherent scheme, $\rho:Y_\et\to Y_\fet$ the canonical morphism from the \'etale site to the finite \'etale site of $Y$. We say that $Y$ is $K(\pi,1)$ if for any finite locally constant abelian sheaf $\bb{L}$ on $Y_\et$, $\rr^q\rho_*\bb{L}=0$ for any integer $q>0$.
\end{mydefn}
We remark that the pullback functor $\rho^{-1}$ induces an equivalence $\mbf{LocSys}(Y_\fet)\to \mbf{LocSys}(Y_\et)$ between the categories of finite locally constant abelian sheaves with quasi-inverse $\rho_*$ (\cite[5.2]{he2024falmain}). In \cite{achinger2015kpi1} and \cite{abbes2020suite}, the authors impose an extra (unnecessary) condition that $Y$ has finitely many connected components to guarantee this result. Therefore, for such a coherent scheme $Y$, our definition of ``$K(\pi,1)$-scheme" coincides with the definition of ``$K(\pi,1)$-scheme for $\bb{P}$-torsion abelian coefficients" in \cite[3.3]{achinger2015kpi1} and \cite[2.2.2]{abbes2020suite}, where $\bb{P}$ is the set of prime numbers. In particular, for coherent $\bb{Q}$-schemes with finitely many connected components (which we will focus on later), our definition of ``$K(\pi,1)$-scheme" coincides with that in \cite[3.3]{achinger2015kpi1} and \cite[2.2.2]{abbes2020suite}. The following basic properties \ref{lem:K-pi-1} and \ref{lem:K-pi-1-limit} on $K(\pi,1)$-schemes extend naturally to our general setting.

\begin{mylem}[{cf. \cite[2.2.4]{abbes2020suite}}]\label{lem:K-pi-1}
	Let $f:Y'\to Y$ be a finite \'etale morphism of coherent schemes. If $Y$ is $K(\pi,1)$, then so is $Y'$. The converse is true if $f$ is surjective.
\end{mylem}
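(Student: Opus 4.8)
\textbf{Proof plan for Lemma \ref{lem:K-pi-1}.}

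The plan is to work directly with the defining property via the canonical morphisms $\rho_Y : Y_\et \to Y_\fet$ and $\rho_{Y'} : Y'_\et \to Y'_\fet$, exploiting that a finite \'etale morphism $f : Y' \to Y$ induces compatible morphisms of sites on both the \'etale and finite \'etale levels. For the forward direction, let $\bb{L}$ be a finite locally constant abelian sheaf on $Y'_\et$; I want to show $\rr^q \rho_{Y'*}\bb{L} = 0$ for $q > 0$. The key point is that, because $f$ is finite \'etale, the functor $f_{\et*}$ is exact (finite morphisms have no higher direct images for the \'etale topology, \cite[\href{https://stacks.math.columbia.edu/tag/03QP}{03QP}]{stacks-project}) and similarly $f_{\fet*}$ is exact on the finite \'etale topos. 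Moreover there is a commutative square of sites relating $\rho_Y \circ f_\et$ and $f_\fet \circ \rho_{Y'}$. First I would check that $f_{\fet*}$ is exact: a finite \'etale $Y$-scheme is, locally on a finite \'etale cover of $Y$, a disjoint union of copies of the base, and pushing forward a finite \'etale $Y'$-scheme along $f$ amounts to a Weil restriction which stays finite \'etale; the stalk computation shows $f_{\fet*}$ has no higher cohomology. Then from the commutative square one gets $\rr\rho_{Y*}(\rr f_{\et*}\bb{L}) = \rr f_{\fet*}(\rr\rho_{Y'*}\bb{L})$, i.e. $\rho_{Y*}(f_{\et*}\bb{L}) = f_{\fet*}(\rr\rho_{Y'*}\bb{L})$ since both $f_{\et*}$ and $f_{\fet*}$ are exact and $Y$ is $K(\pi,1)$ (so the left side collapses: $f_{\et*}\bb{L}$ is again finite locally constant, hence $\rr^q\rho_{Y*}(f_{\et*}\bb{L}) = 0$ for $q>0$). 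Since $f_{\fet*}$ is exact and conservative on the subcategory generated by a surjective-enough family (or more directly, since $f$ is finite locally free so $f_{\fet*}$ reflects vanishing after checking on a cover), I conclude $\rr^q\rho_{Y'*}\bb{L} = 0$ for $q > 0$, i.e. $Y'$ is $K(\pi,1)$.

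For the converse, assume $f$ is surjective and $Y'$ is $K(\pi,1)$; I want $Y$ to be $K(\pi,1)$. Given a finite locally constant $\bb{L}$ on $Y_\et$, I would consider the finite \'etale surjection $f$ as a cover and use descent: the pullback $f^*\bb{L}$ is finite locally constant on $Y'_\et$, and $\rr^q\rho_{Y'*}(f^*\bb{L}) = 0$ for $q > 0$ by hypothesis. The morphism $\bb{L} \to f_{\et*}f^*\bb{L}$ is split injective (using the trace/transfer for the finite locally free $f$, after inverting the degree — but here we work with abelian sheaves, so instead I would use that $\bb{L}$ is a direct summand of $f_{\et*}f^*\bb{L}$ \'etale-locally, or argue via the \v{C}ech-to-derived spectral sequence for the cover $\{f\}$). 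Concretely: the Cartesian diagram relating $Y'_\et \to Y'_\fet \to Y_\fet$ and $Y'_\et \to Y_\et \to Y_\fet$, combined with $\rr f_{\et*}$ and $\rr f_{\fet*}$ being exact (same remarks as above, since $f$ is finite \'etale in both topologies), gives $\rr f_{\fet*} \rr\rho_{Y'*} f^*\bb{L} = \rr\rho_{Y*} \rr f_{\et*} f^*\bb{L} = \rr\rho_{Y*}(f_{\et*}f^*\bb{L})$. The left side is concentrated in degree $0$; hence $\rr^q\rho_{Y*}(f_{\et*}f^*\bb{L}) = 0$ for $q > 0$. Finally, since $f$ is surjective finite \'etale, $\bb{L}$ is a retract of $f_{\et*}f^*\bb{L}$ as sheaves of abelian groups — this is where I would invoke that for a finite locally free morphism of degree $n$ the composite $\bb{L} \to f_*f^*\bb{L} \xrightarrow{\mrm{tr}} \bb{L}$ is multiplication by $n$; if $n$ is not invertible one localizes at each prime, or more robustly one observes that locally $f$ is a disjoint union of copies of $Y$ so $f_{\et*}f^*\bb{L}$ contains $\bb{L}$ as a direct summand locally, and this suffices to propagate the vanishing of $\rr^q\rho_{Y*}$. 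Therefore $\rr^q\rho_{Y*}\bb{L} = 0$ for $q > 0$, so $Y$ is $K(\pi,1)$.

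The main obstacle I anticipate is making the comparison of the two factorizations $Y'_\et \to Y'_\fet \to Y_\fet$ and $Y'_\et \to Y_\et \to Y_\fet$ precise at the level of derived pushforwards — specifically, verifying cleanly that $f_{\fet*}$ (the pushforward on finite \'etale topoi along a finite \'etale $f$) is exact and that the relevant square of morphisms of topoi is commutative up to canonical isomorphism. Once that base-change-type statement is in hand, both directions are formal manipulations with the composite $\rho = \rho_Y$ and the exactness of $f_{\et*}$, $f_{\fet*}$. I would cite \cite[2.2.4]{abbes2020suite} for the precise form of this argument in the forward direction and adapt it for the converse, the extra input being the descent/retract argument for the surjective $f$.
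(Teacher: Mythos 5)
Your forward direction is essentially the paper's proof: exactness of $f_{\et*}$ and $f_{\fet*}$ for the finite \'etale $f$, commutativity of the square of sites, the fact that $f_{\et*}\bb{L}$ is again finite locally constant, and then the deduction of $\rr^q\rho'_*\bb{L}=0$ from $f_{\fet*}\rr^q\rho'_*\bb{L}=0$. For that last step the precise fact you want (and which the paper isolates) is that the counit $f_{\fet}^{-1}f_{\fet*}\ca{M}'\to\ca{M}'$ is an epimorphism for every abelian sheaf $\ca{M}'$ on $Y'_\fet$, which follows because $U'\to U'\times_YY'$ is an open and closed immersion for any $U'\in Y'_\fet$ (equivalently, from the finite \'etale cover of $Y$ trivializing $Y'$); your phrase ``conservative on the subcategory generated by a surjective-enough family'' is gesturing at exactly this, and note that no surjectivity of $f$ is needed here.

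For the converse your primary route has a real pitfall that you half-notice: the trace argument only exhibits $\bb{L}$ as a retract of $f_{\et*}f_\et^*\bb{L}$ after inverting the degree $n$, and ``localizing at each prime'' does not rescue it when $n$ is divisible by the exponent of $\bb{L}$ (e.g.\ $\bb{L}=\bb{Z}/p\bb{Z}$ and $\deg f=p$, which is precisely the relevant case in this paper). Your fallback — split $f_{\et*}f_\et^*\bb{L}$ only after pulling back to a finite \'etale cover, and check vanishing there — does close the gap, but the two ingredients it uses (that $\rr^q\rho_*$ commutes with restriction along a finite \'etale covering, since $Y'_\et\to Y'_\fet$ is the localization of $Y_\et\to Y_\fet$ at $Y'$, and that a sheaf on $Y_\fet$ vanishes if its restriction to a covering does) already give the statement directly: restrict $\rr^q\rho_*\bb{L}$ along the covering $Y'\to Y$ in $Y_\fet$ to get $\rr^q\rho'_*(f_\et^{-1}\bb{L})$, which vanishes because $Y'$ is $K(\pi,1)$. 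That one-line argument is what the paper does, and it avoids $f_{\et*}f_\et^*\bb{L}$ and the retract entirely. So your proof is correct once the fallback is spelled out, but the detour through the trace is both unnecessary and, as primarily stated, invalid for the coefficients that matter.
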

\begin{proof}
	For any finite locally constant abelian sheaf $\bb{L}'$ on $Y'_\et$, we have $f_{\et*}\bb{L}'=\rr f_{\et*}\bb{L}'$ (as $f$ is finite) and it is still a finite locally constant abelian sheaf on $Y_\et$ (\cite[\href{https://stacks.math.columbia.edu/tag/095B}{095B}]{stacks-project}). On the other hand, as $f$ is finite \'etale, there is a finite \'etale covering $V\to Y$ such that $V\times_YY'$ is a finite disjoint union of copies of $V$ (\cite[5.1]{he2024falmain}). In particular, for any abelian sheaf $\ca{M}'$ on $Y'_{\fet}$, we have $f_{\fet *}\ca{M}'=\rr f_{\fet *}\ca{M}'$ and $f_{\fet}^{-1}f_{\fet *}\ca{M}'\to \ca{M}'$ is surjective.
	\begin{align}
		\xymatrix{
			Y'_\et\ar[d]_-{f_\et}\ar[r]^-{\rho'}&Y'_\fet\ar[d]^-{f_\fet}\\
			Y_\et\ar[r]^-{\rho}&Y_\fet
		}
	\end{align}
	Hence, the condition that $Y$ is $K(\pi,1)$ implies that for any integer $q>0$,
	\begin{align}
		0=\rr^q\rho_*f_{\et *}\bb{L}'=H^q(\rr\rho_*\rr f_{\et *}\bb{L}')=H^q(\rr f_{\fet *}\rr\rho'_*\bb{L}')=f_{\fet *}\rr^q\rho'_*\bb{L}'.
	\end{align}
	This further implies that $\rr^q\rho'_*\bb{L}'=0$. Hence, $Y'$ is also $K(\pi,1)$.
	
	Conversely, if $Y'$ is $K(\pi,1)$ and $f$ is finite \'etale surjective, then we see that $\rr^q\rho_*\bb{L}=0$ for any finite locally constant abelian sheaf $\bb{L}$ on $Y_\et$ by restricting to the covering $Y'\to Y$ in $Y_\fet$. Hence, $Y$ is also $K(\pi,1)$.
\end{proof}

\begin{mylem}[{cf. \cite[2.2.5]{abbes2020suite}}]\label{lem:K-pi-1-limit}
	Let $(Y_\lambda)_{\lambda\in\Lambda}$ be a directed inverse system of coherent schemes with affine transition morphisms. If each $Y_\lambda$ is $K(\pi,1)$, then so is $Y=\lim_{\lambda\in\Lambda}Y_\lambda$.
\end{mylem}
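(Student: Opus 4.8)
The plan is to verify the defining property of $K(\pi,1)$ (Definition~\ref{defn:K-pi-1}) for $Y=\lim_{\lambda}Y_\lambda$ by descending all the data involved to a finite stage, where the hypothesis applies. Recall first that for the canonical morphism $\rho\colon Y_\et\to Y_\fet$ and a finite locally constant abelian sheaf $\bb{L}$ on $Y_\et$, the higher direct image $\rr^q\rho_*\bb{L}$ is the sheaf on $Y_\fet$ associated to the presheaf sending a finite \'etale $Y$-scheme $V$ to $H^q(V_\et,\bb{L}|_V)$. Hence $\rr^q\rho_*\bb{L}=0$ for all $q>0$ is equivalent to the following statement: for every finite \'etale $Y$-scheme $V$, every integer $q>0$, and every class $\alpha\in H^q(V_\et,\bb{L}|_V)$, there exists a (surjective) finite \'etale covering $V'\to V$ such that $\alpha|_{V'}=0$. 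This is what I would prove.

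First I would use the limit formalism for coherent schemes: since the transition morphisms of $(Y_\lambda)_\lambda$ are affine, $Y$ is coherent and $Y=\lim_\lambda Y_\lambda$ as schemes, the category of finite \'etale $Y$-schemes is the $2$-colimit of the categories of finite \'etale $Y_\lambda$-schemes, and \'etale cohomology of finite locally constant sheaves commutes with this limit (the standard passage-to-the-limit results, cf.\ \cite[\Luoma{7}.5.7--5.9]{sga4-2}, see also \cite{stacks-project}). Concretely, $\bb{L}$ is the pullback of a finite locally constant sheaf $\bb{L}_{\lambda_0}$ on $Y_{\lambda_0,\et}$ for some $\lambda_0$, the finite \'etale $Y$-scheme $V$ is the base change of some finite \'etale $V_{\lambda_1}\to Y_{\lambda_1}$ with $\lambda_1\geq\lambda_0$, and $H^q(V_\et,\bb{L}|_V)=\colim_{\lambda\geq\lambda_1}H^q(V_{\lambda,\et},\bb{L}_\lambda|_{V_\lambda})$; thus the given class $\alpha$ is the image of some $\alpha_{\lambda_2}\in H^q(V_{\lambda_2,\et},\bb{L}_{\lambda_2}|_{V_{\lambda_2}})$ with $\lambda_2\geq\lambda_1$.

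Then I would invoke the hypothesis at stage $\lambda_2$. Since $Y_{\lambda_2}$ is $K(\pi,1)$ and $V_{\lambda_2}\to Y_{\lambda_2}$ is finite \'etale, Lemma~\ref{lem:K-pi-1} shows that $V_{\lambda_2}$ is also $K(\pi,1)$; applying the presheaf description of $\rr^q\rho_{V_{\lambda_2},*}$ recalled above to the finite locally constant sheaf $\bb{L}_{\lambda_2}|_{V_{\lambda_2}}$, there is a finite \'etale covering $V'_{\lambda_2}\to V_{\lambda_2}$ with $\alpha_{\lambda_2}|_{V'_{\lambda_2}}=0$. Base-changing to $Y$, the morphism $V'=V'_{\lambda_2}\times_{Y_{\lambda_2}}Y\to V$ is a finite \'etale covering, and $\alpha|_{V'}$ is the image of $\alpha_{\lambda_2}|_{V'_{\lambda_2}}=0$ under the comparison map, hence $\alpha|_{V'}=0$. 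This establishes the claim, so $\rr^q\rho_*\bb{L}=0$ for all $q>0$ and $Y$ is $K(\pi,1)$.

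The only genuine input is the passage-to-the-limit package used in the second paragraph — that the finite \'etale site, finite locally constant sheaves, and their \'etale cohomology are all compatible with cofiltered limits of coherent schemes along affine transition morphisms. This is classical, and I expect the main effort to be purely notational (keeping track of indices and restrictions); once it is in place, the proof is a formal combination of the presheaf description of $\rr^q\rho_*$ with Lemma~\ref{lem:K-pi-1}, exactly parallel to the argument already used there.
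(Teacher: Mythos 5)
Your proof is correct, and it rests on the same mechanism as the paper's: descend the sheaf (and the test data) to a finite stage $\lambda$ and use the compatibility of \'etale cohomology with cofiltered limits along affine transition morphisms. The paper's own proof is shorter because it works entirely at the sheaf level: after descending $\bb{L}$ to some $\bb{L}_{\lambda_0}$, it invokes the commutation of higher direct images with cofiltered limits of sites (using $Y_\et=\lim Y_{\lambda,\et}$ and $Y_\fet=\lim Y_{\lambda,\fet}$) to get $\rr^q\rho_*\bb{L}=\colim_{\lambda}\varphi_\lambda^{-1}\rr^q\rho_{\lambda*}\bb{L}_\lambda=0$ directly, with no need for the presheaf description of $\rr^q\rho_*$, the descent of individual classes, or Lemma \ref{lem:K-pi-1}. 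Your class-by-class unwinding buys nothing extra here but is a legitimate hands-on substitute for that sheaf-level limit theorem; the one point worth making explicit is that a covering in the finite \'etale site of a coherent scheme may be refined by a single surjective finite \'etale morphism, so that the covering $V'_{\lambda_2}\to V_{\lambda_2}$ you extract base-changes to an honest finite \'etale covering of $V$.
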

\begin{proof}
	Any finite locally constant abelian sheaf $\bb{L}$ on $Y_\et$ is represented by a finite \'etale abelian group $Y$-scheme $V$ (\cite[\href{https://stacks.math.columbia.edu/tag/03RV}{03RV}]{stacks-project}). By \cite[8.8.2, 8.10.5]{ega4-3} and \cite[17.7.8]{ega4-4}, there exists an index $\lambda_0\in\Lambda$ and a finite \'etale abelian group $Y_{\lambda_0}$-scheme $V_{\lambda_0}$ such that $V=Y\times_{Y_{\lambda_0}}V_{\lambda_0}$. For any $\lambda\in\Lambda_{\geq \lambda_0}$, let $\bb{L}_\lambda$ be the finite locally constant abelian sheaf on $Y_{\lambda,\et}$ represented by $V_\lambda=Y_\lambda\times_{Y_{\lambda_0}}V_{\lambda_0}$. Since $Y_\et=\lim_{\lambda\in\Lambda}Y_{\lambda,\et}$ and $Y_\fet=\lim_{\lambda\in\Lambda}Y_{\lambda,\fet}$ (see \cite[\Luoma{7}.5.6]{sga4-2}), we have
	\begin{align}
		\rr^q\rho_*\bb{L}=\colim_{\lambda\in\Lambda_{\geq \lambda_0}} \varphi_\lambda^{-1}\rr^q\rho_{\lambda*}\bb{L}_{\lambda}=0
	\end{align}
	for any integer $q>0$ by \cite[\Luoma{6}.8.7.3]{sga4-2}, where $\varphi_\lambda:Y_\fet\to Y_{\lambda,\fet}$ is the canonical morphism of sites, which shows that $Y$ is $K(\pi,1)$.
\end{proof}

\begin{mylem}\label{lem:open-et-coh}
	Let $(R,\ak{m})$ be a strictly Henselian regular local ring over $\bb{Q}$ with a regular system of parameters $\{t_1,\dots,t_s\}$, $Y=\spec(R)$. Let $V=\spec(R')$ be an affine scheme pro-finite \'etale {\rm(\cite[7.13]{he2024coh})} over $Y^{\triv}=\spec(R[1/t_1\cdots t_r])$ for some integer $0\leq r\leq s$.
	\begin{enumerate}
		\renewcommand{\labelenumi}{{\rm(\theenumi)}}
		\item If $t_1,\dots,t_r$ admit compatible systems of roots of arbitrary order in $R'$, then for any finite locally constant abelian sheaf $\bb{L}$ on the \'etale site $V_\et$, we have $H^q(V_\et,\bb{L})=0$ for any integer $q>0$.\label{item:lem:open-et-coh-1}
		\item If $t_1,\dots,t_r$ admit compatible systems of $p$-power roots in $R'$, then for any finite locally constant abelian sheaf $\bb{L}$ killed by a power of $p$ on the \'etale site $V_\et$, we have $H^q(V_\et,\bb{L})=0$ for any integer $q>0$.\label{item:lem:open-et-coh-2}
	\end{enumerate}
\end{mylem}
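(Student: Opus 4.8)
\textbf{Proof proposal for Lemma \ref{lem:open-et-coh}.}

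The plan is to reduce the statement to the classical fact that a strictly Henselian regular local ring over $\bb{Q}$ minus a normal crossings divisor is a $K(\pi,1)$ and that its finite \'etale cohomology vanishes in higher degrees. First I would treat part (\ref{item:lem:open-et-coh-1}). The base $Y^{\triv}=\spec(R[1/t_1\cdots t_r])$ is a regular scheme over $\bb{Q}$ whose complement in $Y$ is the normal crossings divisor defined by $t_1\cdots t_r=0$; by the theory of $K(\pi,1)$-schemes in the tame case (for instance \cite[\Luoma{6}.9.21]{abbes2016p} together with absolute purity, or the log-smooth analogue), $Y^{\triv}$ is $K(\pi,1)$ and moreover $H^q(Y^{\triv}_\fet,\bb{L})=0$ for $q>0$ for any finite locally constant $\bb{L}$ --- the finite \'etale site of $Y^{\triv}$ has cohomological dimension $0$ in the relevant sense because $R$ is strictly Henselian and the fundamental group of $Y^{\triv}$ is (pro-)abelian, generated by the loops around the branches $t_i=0$, each contributing a $\widehat{\bb{Z}}(1)$ (resp. $\widehat{\bb{Z}}^{(p')}(1)$) factor. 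Combining the $K(\pi,1)$ property with the Leray spectral sequence for $\rho:Y^{\triv}_\et\to Y^{\triv}_\fet$, we get $H^q(Y^{\triv}_\et,\bb{L})=H^q(Y^{\triv}_\fet,\rho_*\bb{L})=0$ for $q>0$.

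Next I would pass from $Y^{\triv}$ to $V$. Since $V\to Y^{\triv}$ is pro-finite \'etale, write $V=\lim_\lambda V_\lambda$ with each $V_\lambda\to Y^{\triv}$ finite \'etale; by \ref{lem:K-pi-1} each $V_\lambda$ is again $K(\pi,1)$, and by \ref{lem:K-pi-1-limit} so is $V$. The hypothesis that $t_1,\dots,t_r$ admit compatible systems of roots of all orders in $R'$ means exactly that the monodromy generators around the branches act trivially on (the pro-object computing) $\pi_1(V)$; more precisely, the corresponding Kummer covers $Y^{\triv}(t_i^{1/m})$ are dominated by $V$, so the image of $\pi_1^\fet(V)$ in $\pi_1^\fet(Y^{\triv})$ is trivial modulo the ``ramified'' part, i.e. $V$ has trivial (profinite) fundamental group in the tame direction. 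I would make this precise by noting that $V_\fet$ is equivalent to the category of finite sets with continuous action of $\pi_1^\fet(V)$, and the vanishing of $H^q(V_\fet,-)$ for $q>0$ follows because this group is, up to the tame Kummer characters which are killed by the hypothesis, of cohomological dimension $0$ over a strictly Henselian base --- here I would invoke the limit computation $V_\fet=\lim_\lambda V_{\lambda,\fet}$ together with \cite[\Luoma{6}.8.7.3]{sga4-2} to reduce to each $V_\lambda$, and on each $V_\lambda$ the finite \'etale cohomology is computed by the finite group $\pi_1^\fet(Y^{\triv})/\pi_1^\fet(V_\lambda)$, whose order can be taken coprime to the residue characteristics and which has vanishing higher cohomology on coefficients where enough roots of unity (hence enough Kummer torsors) are split --- finally combining with the $K(\pi,1)$ property of $V$ gives $H^q(V_\et,\bb{L})=H^q(V_\fet,\rho_*\bb{L})=0$ for $q>0$.

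For part (\ref{item:lem:open-et-coh-2}) the argument is identical except that one only needs the $p$-primary Kummer covers $Y^{\triv}(t_i^{1/p^n})$ to be dominated by $V$, which is exactly the hypothesis that $t_1,\dots,t_r$ admit compatible systems of $p$-power roots in $R'$; since the coefficient sheaf $\bb{L}$ is killed by a power of $p$, the relevant monodromy acts through the pro-$p$ quotient $\widehat{\bb{Z}}_p(1)$ of each inertia factor, and that pro-$p$ part is trivialized over $V$. One subtlety I would be careful about: to split the Kummer torsor $\mu_{p^n}$-bundle $Y^{\triv}(t_i^{1/p^n})$ and deduce cohomology vanishing with $\bb{Z}/p^n$-coefficients, one wants $V$ to contain the relevant roots of unity as well; but this is automatic here since over the strictly Henselian local ring $R$ the roots of unity of order prime to the residue characteristic are already present, and the only issue is $p$-power roots of unity in characteristic $0$ --- and these become available after adjoining $p$-power roots of the $t_i$ only if one is careful, so I would instead argue directly with the group-cohomology computation of $H^*(\widehat{\bb{Z}}_p, \bb{Z}/p^n) $ and its vanishing in degrees $\geq 2$, reducing the potentially nonzero $H^1$ to zero precisely by the surjectivity of the monodromy-trivialization coming from the compatible systems of $p$-power roots. \textbf{The main obstacle} I anticipate is making the reduction ``$V$ kills the tame (resp. $p$-primary) monodromy'' fully rigorous at the level of the pro-finite-\'etale site, i.e. translating the concrete hypothesis about roots of $t_i$ in $R'$ into the statement that the higher finite \'etale cohomology of $V$ vanishes; everything else is a formal consequence of \ref{lem:K-pi-1}, \ref{lem:K-pi-1-limit}, the absolute purity / $K(\pi,1)$ input, and the Leray spectral sequence for $\rho$.
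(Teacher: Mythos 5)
Your overall strategy coincides with the paper's: use Achinger's result that $Y^{\triv}$ is $K(\pi,1)$, transfer this to $V$ via \ref{lem:K-pi-1} and \ref{lem:K-pi-1-limit}, and then show that the \emph{finite} \'etale cohomology of $V$ vanishes because $V$ dominates the relevant Kummer covers. However, two things in your write-up are wrong, and the step you yourself flag as ``the main obstacle'' is precisely the step you do not carry out. First, the claim in your opening paragraph that $H^q(Y^{\triv}_\fet,\bb{L})=0$ (hence $H^q(Y^{\triv}_\et,\bb{L})=0$) for $q>0$ is false: by Abhyankar's lemma the fundamental group of $Y^{\triv}$ is $\widehat{\bb{Z}}(1)^r$, which has cohomological dimension $r$, not $0$; already $H^1(Y^{\triv}_\et,\mu_n)$ contains the classes of $t_1,\dots,t_r$. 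Being strictly Henselian and having pro-abelian $\pi_1$ does not give cohomological dimension zero. This claim is not load-bearing for the final statement about $V$, but the same confusion resurfaces later: your assertion that $H^q(V_{\lambda,\fet},\bb{L})$ is ``computed by the finite group $\pi_1(Y^{\triv})/\pi_1(V_\lambda)$'' is also incorrect (it is computed by $\pi_1(V_\lambda)$ itself), and your remark about killing the ``potentially nonzero $H^1$'' of $\widehat{\bb{Z}}_p$ has the logic backwards: in part (2) the hypothesis trivializes the pro-$p$ direction of the monodromy entirely, and what one must control is the \emph{remaining prime-to-$p$} group, which has $p$-cohomological dimension zero.

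The genuine gap is the rigorous passage from ``$t_1,\dots,t_r$ have compatible roots in $R'$'' to ``$H^q(V_\fet,-)=0$''. The paper closes it as follows. For (1), the factorization $V\to Y_\infty$ combined with Abhyankar's lemma shows that $V_\fet\to W_\fet$ is an equivalence, where $W$ is the integral closure of $Y$ in $V$; since $W$ is integral over the strictly Henselian local $Y$, it is an ind-(finite product of strictly Henselian local) scheme, and a limit argument gives $H^q(W_\fet,-)=0$ for $q>0$. For (2), one sets $V_\infty=Y_\infty\times_{Y_{p^\infty}}V$, notes that $V_\infty\to V$ is a torsor under $\prod_{i=1}^r G$ with $G=\lim_{(n,p)=1}\bb{Z}/n\bb{Z}$, and computes $H^q(V_\fet,\bb{L})$ by the \v{C}ech complex of this covering, i.e.\ by continuous group cohomology of $\prod_{i=1}^rG$; this vanishes for $q>0$ on $p$-torsion coefficients because $G$ has $p$-cohomological dimension zero. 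If you want to argue via fundamental groups as you propose, you must (a) invoke Abhyankar's lemma to identify $\pi_1(Y^{\triv})$ with $\widehat{\bb{Z}}(1)^r$ in the first place, (b) deal with the fact that $V$ need not be connected, and (c) in case (2) correctly identify the image of $\pi_1$ of (each component of) $V$ as a subgroup of the prime-to-$p$ part and then use the $p$-cohomological-dimension-zero statement; none of these is present in your sketch.
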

\begin{proof}
	For any $n\in\bb{N}_{>0}$, we define a $Y$-scheme
	\begin{align}
		Y_n=\spec\nolimits_{\ca{O}_Y}(\ca{O}_Y[T_1,\dots,T_r]/(T_1^n-t_1,\dots,T_r^n-t_r)).
	\end{align}
	Notice that $(Y_n)_{n\in\bb{N}_{>0}}$ forms a directed inverse system over $\bb{N}_{>0}$ whose transition morphisms are finite free and \'etale over $Y^{\triv}$. We put $Y_\infty=\lim_{n\in\bb{N}_{>0}}Y_n$ and $Y_{p^\infty}=\lim_{n\in\bb{N}}Y_{p^n}$.
	
	Notice that since $(Y,\scr{M}_Y)$ (endowed with the compactifying log structure associated to the open immersion $Y^{\triv}\to Y$) is regular (\cite[\Luoma{3}.1.11.9]{ogus2018log}), $Y^{\triv}$ is $K(\pi,1)$ by \cite[8.1]{achinger2015kpi1} and \cite[2.2.7]{abbes2020suite}. Thus, the pro-finite \'etale $Y^{\triv}$-scheme $V$ is also $K(\pi,1)$ by \ref{lem:K-pi-1} and \ref{lem:K-pi-1-limit}.
	
	(\ref{item:lem:open-et-coh-1}) In this case, $V\to Y$ factors through $Y_\infty$. Let $W$ be the integral closure of $Y$ in $V$ (so that $V=Y^{\triv}\times_YW$). By Ahbyankar's lemma \cite[8.21]{he2024coh}, the canonical morphism of sites $j:V_\fet\to W_\fet$ is an equivalence (see \cite[8.22]{he2024coh}). Hence, we have $\rr\Gamma(V_\et,\bb{L})=\rr\Gamma(V_\fet,\rho_*\bb{L})=\rr\Gamma(W_\fet,j_*\rho_*\bb{L})$. Notice that $j_*\rho_*\bb{L}$ is a finite locally constant abelian sheaf on $W_\fet$ (\cite[5.2]{he2024falmain}). Since $W$ is integral over the strictly Henselian local scheme $Y$, it is the spectrum of a filtered colimit of finite products of strictly Henselian local rings local over $R$. We see that $H^q(W_\fet,j_*\rho_*\bb{L})=0$ for any integer $q>0$ by a limit argument as in \ref{lem:K-pi-1-limit}, which proves (\ref{item:lem:open-et-coh-1}).
	
	(\ref{item:lem:open-et-coh-2}) As $V$ is $K(\pi,1)$, it suffices to show that $H^q(V_\fet,\bb{L})=0$ for any integer $q>0$. We put $V_\infty=Y_\infty\times_{Y_{p^\infty}}V$. Then, we have $H^q(V_{\infty,\fet},\bb{L}|_{V_\infty})=0$ for any integer $q>0$ by (\ref{item:lem:open-et-coh-1}). Thus, $H^q(V_\fet,\bb{L})=H^q(V_\profet,\bb{L})$ (cf. \cite[7.32]{he2024coh}) is computed by the \v Cech cohomology $\check{H}^q(\{V_\infty\to V\},\bb{L})$.
	
	Note that $Y^{\triv}$ is an integral regular scheme. Fixing a geometric point $\overline{y}$ of $Y^{\triv}$, recall that the fibre functor induces an equivalence between the category of pro-finite \'etale $Y^{\triv}$-schemes $Y^{\triv}_\profet$ and the category of profinite $\pi_1(Y^{\triv},\overline{y})$-sets (\cite[3.5]{scholze2013hodge}). We put $G=\lim_{n\in(\bb{N}_{>0}\setminus p\bb{N})}\bb{Z}/n\bb{Z}$ endowed with trivial $\pi_1(Y^{\triv},\overline{y})$-action. Notice that $Y_\infty$ is a $(\prod_{i=1}^rG)$-torsor over $Y_{p^\infty}$ and there is a canonical isomorphism of profinite $\pi_1(Y^{\triv},\overline{y})$-sets
	\begin{align}
		(Y_\infty\times_{Y_{p^{\infty}}}Y_\infty)_{\overline{y}}\cong \left(\prod_{i=1}^rG\right)\times (Y_\infty)_{\overline{y}}.
	\end{align}
	One checks easily that the \v Cech cohomology $\check{H}^q(\{V_\infty\to V\},\bb{L})$ is computed by the continuous group cohomology $H^q(\prod_{i=1}^rG,H^0(V_{\infty,\fet},\bb{L}|_{V_\infty}))$. Since $\bb{L}$ is assumed to be killed by a power of $p$ and the $p$-cohomological dimension of $G$ is zero (\cite[Page 19, Corollary 1]{serre2002galois}), we see that $H^q(\prod_{i=1}^rG,H^0(V_{\infty,\fet},\bb{L}|_{V_\infty}))=0$ for any integer $q>0$.
\end{proof}

\begin{myprop}\label{prop:open-et-coh}
	Let $Y^{\triv}\to Y$ be an open immersion of Noetherian regular schemes over $\bb{Q}$ whose complement $D=Y\setminus Y^{\triv}$ is a normal crossings divisor on $Y$, $V$ a pro-finite \'etale $Y^{\triv}$-scheme, $W$ the integral closure of $Y$ in $V$ (so that $V=Y^{\triv}\times_YW$), $j:V\to W$ the open immersion. 
	\begin{enumerate}
		\renewcommand{\labelenumi}{{\rm(\theenumi)}}
		\item Assume that for any geometric point $\overline{y}$ of $W$, there exists a regular system of parameters $\{t_1,\dots,t_s\}$ of the strict Henselization of $Y$ at $\overline{y}$ such that $D$ is defined by $t_1\cdots t_r=0$ for some integer $0\leq r\leq s$ and that $t_1,\dots,t_r$ admit compatible systems of roots of arbitrary order in the strict Henselization of $W$ at $\overline{y}$. Then, for any finite locally constant abelian sheaf $\bb{L}$ on the \'etale site $V_\et$, we have $\rr^qj_{\et*}\bb{L}=0$ for any integer $q>0$.\label{item:prop:open-et-coh-1}
		\item Assume that for any geometric point $\overline{y}$ of $W$, there exists a regular system of parameters $\{t_1,\dots,t_s\}$ of the strict Henselization of $Y$ at $\overline{y}$ such that $D$ is defined by $t_1\cdots t_r=0$ for some integer $0\leq r\leq s$ and that $t_1,\dots,t_r$ admit compatible systems of $p$-power roots in the strict Henselization of $W$ at $\overline{y}$. Then, for any finite locally constant abelian sheaf $\bb{L}$ killed by a power of $p$ on the \'etale site $V_\et$, we have $\rr^qj_{\et*}\bb{L}=0$ for any integer $q>0$.\label{item:prop:open-et-coh-2}
	\end{enumerate}
\end{myprop}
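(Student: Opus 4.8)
The plan is to reduce both vanishing statements to the local computation \ref{lem:open-et-coh} by passing to stalks of $\rr^qj_{\et*}\bb{L}$ on $W_\et$. First I would observe that, being the base change along $W\to Y$ of the quasi-compact open immersion $Y^{\triv}\to Y$, the morphism $j\colon V\to W$ is quasi-compact and (as an immersion) quasi-separated, so that for every geometric point $\overline{y}$ of $W$ the stalk is given by
\[
(\rr^qj_{\et*}\bb{L})_{\overline{y}}\;=\;H^q_\et\bigl(V\times_W\spec(\ca{O}_{W,\overline{y}}^{\mrm{sh}}),\,\bb{L}\bigr),
\]
where $\ca{O}_{W,\overline{y}}^{\mrm{sh}}$ denotes the strict Henselization of $W$ at $\overline{y}$. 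Thus it is enough to show that this group vanishes for every such $\overline{y}$ and every $q>0$.

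In the notation of \ref{lem:open-et-coh}, I would then set $R=\ca{O}_{Y,\overline{y}}^{\mrm{sh}}$ (where $\overline{y}$ also denotes its image geometric point of $Y$), which by hypothesis is a strictly Henselian regular local ring over $\bb{Q}$ admitting a regular system of parameters $\{t_1,\dots,t_d\}$ with $D$ cut out by $t_1\cdots t_r=0$ over $\spec(R)$, so that $Y^{\triv}\times_Y\spec(R)=\spec(R[1/(t_1\cdots t_r)])$. Since $W$ is the integral closure of $Y$ in $V$ and $\spec(\ca{O}_{W,\overline{y}}^{\mrm{sh}})\to Y$ factors through $\spec(R)$, the scheme $V\times_W\spec(\ca{O}_{W,\overline{y}}^{\mrm{sh}})$ is the complement of $\{t_1\cdots t_r=0\}$ in $\spec(\ca{O}_{W,\overline{y}}^{\mrm{sh}})$; writing $R'=\ca{O}_{W,\overline{y}}^{\mrm{sh}}[1/(t_1\cdots t_r)]$, I would check that $\spec(R')$ is pro-finite \'etale over $\spec(R[1/(t_1\cdots t_r)])$ by identifying $\ca{O}_{W,\overline{y}}^{\mrm{sh}}$ with the integral closure of $R$ in the appropriate connected component of the pullback of $V$. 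The hypothesis then supplies compatible systems of roots of $t_1,\dots,t_r$ of arbitrary order (resp.\ of $p$-power order) inside $\ca{O}_{W,\overline{y}}^{\mrm{sh}}\subseteq R'$, whence \ref{lem:open-et-coh}.(\ref{item:lem:open-et-coh-1}) (resp.\ \ref{lem:open-et-coh}.(\ref{item:lem:open-et-coh-2})) gives $H^q_\et(\spec(R'),\bb{L})=0$ for every $q>0$ (resp.\ for $\bb{L}$ killed by a power of $p$), which is exactly the stalk displayed above.

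The step that needs the most care is the verification that forming the integral closure of $Y$ in $V$ commutes with strict Henselization, i.e.\ that $\ca{O}_{W,\overline{y}}^{\mrm{sh}}$ is genuinely the integral closure of $R=\ca{O}_{Y,\overline{y}}^{\mrm{sh}}$ in a connected component of the pullback of $V$ over the trivial locus $\spec(R[1/(t_1\cdots t_r)])$; this is what makes $R'$ pro-finite \'etale there and hence allows \ref{lem:open-et-coh} to apply, and it is the precise point at which the hypotheses on the strict Henselization of $W$ enter. It is a routine manipulation with integral morphisms, strictly Henselian local rings, and cofiltered limits over the finite subextensions of $V/Y^{\triv}$, so no genuinely new input should be required beyond \ref{lem:open-et-coh}.
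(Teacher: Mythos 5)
Your proposal is correct and follows essentially the same route as the paper: reduce to the stalk at a geometric point $\overline{y}$ of $W$, identify that stalk with the \'etale cohomology of $Y^{\triv}\times_Y W_{(\overline{y})}$ over the strict Henselization of $Y$, check that this scheme is pro-finite \'etale over the punctured strictly Henselian local scheme (using that $W$ is integral over the strictly Henselian base, so its local rings are already strictly Henselian), and conclude by Lemma \ref{lem:open-et-coh}. The step you flag as needing care is exactly the one the paper handles, and in the same way.
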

\begin{proof}
	It suffices to show that for any geometric point $\overline{y}$ of $W$, we have $(\rr^qj_{\et*}\bb{L})_{\overline{y}}=0$. Thus, after replacing $Y$ by its strict Henselization at $\overline{y}$, we may assume that $Y$ is strictly Henselian local and that $\overline{y}\to Y$ is local. Since $W$ is integral over $Y$, it is the spectrum of a filtered colimit of finite products of strictly Henselian local rings finite and local over $Y$. Thus, the localization $W_{(\overline{y})}$ of $W$ at the image of $\overline{y}\to W$ is strictly Henselian local and integral over $Y$. Moreover, $V_{(\overline{y})}=V\times_WW_{(\overline{y})}=Y^{\triv}\times_YW_{(\overline{y})}$ is again pro-finite \'etale over $Y^{\triv}$. Thus, $(\rr^qj_{\et*}\bb{L})_{\overline{y}}=(\rr^qj_{\et*}\bb{L})(W_{(\overline{y})})=H^q(V_{(\overline{y}),\et},\bb{L})=0$ by \ref{lem:open-et-coh}, which implies that $\rr^qj_{\et*}\bb{L}=0$ in general.
\end{proof}

\begin{mypara}\label{para:notation-fal-comp}
	Let $L$ be an algebraically closed valuation field of height $1$ extension of $\bb{Q}_p$. We put
	\begin{align}\label{eq:para:notation-fal-comp-1}
		\eta=\spec(L), \quad S=\spec(\ca{O}_L),\quad s=\spec(\ca{O}_L/\ak{m}_L).
	\end{align}
	Similarly as in \cite[10.1]{he2024purity}, let $\schqcqs_{/S}$ be the category of coherent $S$-schemes and let $\pro(\schqcqs_{/S})$ the category of pro-objects of $\schqcqs_{/S}$ (\cite[\Luoma{1}.8.10]{sga4-1}). In other words, an object of $\pro(\schqcqs_{/S})$ is a directed inverse system $(X_\lambda)_{\lambda\in\Lambda}$ of coherent $S$-schemes, and the set of morphisms of such two objects is given by
	\begin{align}\label{eq:para:notation-fal-comp-2}
		\mo_{\pro(\schqcqs_{/S})}((X'_\xi)_{\xi\in\Xi},( X_\lambda)_{\lambda\in\Lambda})=\lim_{\lambda\in\Lambda}\colim_{\xi\in\Xi}\mo_{\schqcqs_{/S}}(X'_\xi,X_\lambda).
	\end{align}
	We regard $\schqcqs_{/S}$ as a full subcategory of $\pro(\schqcqs_{/S})$ (\cite[\Luoma{1}.8.10.6]{sga4-1}). 
	
	For any object $(X_\lambda)_{\lambda\in\Lambda}$ in $\pro(\schqcqs_{/S})$, we put
	\begin{align}\label{eq:fal-limit}
		X_{\eta,\et}=\lim_{\lambda\in\Lambda}X_{\lambda,\eta,\et},\quad(\fal^{\et}_{X_\eta\to X},\falb)=\lim_{\lambda\in\Lambda}(\fal^{\et}_{X_{\lambda,\eta}\to X_\lambda},\falb)
	\end{align}
	the cofiltered limits of (ringed) sites defined in \cite[8.2.3, 8.6.2]{sga4-2}, where $X_{\lambda,\eta,\et}$ is the \'etale site of $X_{\lambda,\eta}$ and $(\fal^{\et}_{X_{\lambda,\eta}\to X_\lambda},\falb)$ is the Faltings ringed site associated to the morphism of coherent schemes $X_{\lambda,\eta}\to X_\lambda$ (\cite[7.7]{he2024coh}). Note that $\falb$ is flat over $\ca{O}_L$. We remark that if the transition morphisms of $(X_\lambda)_{\lambda\in\Lambda}$ are affine, then $X_{\eta,\et}$ is canonically equivalent to the \'etale site of $\lim_{\lambda\in\Lambda} X_{\lambda,\eta}$ by \cite[\Luoma{7}.5.6]{sga4-2} and $(\fal^{\et}_{X_\eta\to X},\falb)$ is canonically equivalent to the Faltings ringed site associated to the morphism of coherent schemes $\lim_{\lambda\in\Lambda} X_{\lambda,\eta}\to \lim_{\lambda\in\Lambda} X_\lambda$ by \cite[7.12]{he2024coh}. 
	
	Similarly, we put
	\begin{align}\label{eq:rz-limit}
		 X_\eta=\lim_{\lambda\in\Lambda}X_{\lambda,\eta},\quad X=\lim_{\lambda\in\Lambda}X_\lambda
	\end{align}
	the cofiltered limits of locally ringed spectral spaces, which are also the cofiltered limits of ringed sites (\cite[9.12]{he2024purity}). The canonical morphisms of ringed sites 
	\begin{align}\label{eq:sigma_lambda}
		\sigma_\lambda:(\fal^{\et}_{X_{\lambda,\eta}\to X_\lambda},\falb)\longrightarrow (X_\lambda,\ca{O}_{X_\lambda}),
	\end{align}
	defined by the left exact continuous functor $\sigma^+_\lambda:X_{\lambda,\mrm{Zar}}\to \fal^{\et}_{X_{\lambda,\eta}\to X_\lambda}$ sending each quasi-compact open subset $U$ of $X_\lambda$ to $U_\eta\to U$ (\cite[7.8]{he2024coh}), define a canonical morphism of ringed sites by taking cofiltered limits,
	\begin{align}\label{eq:sigma}
		\sigma: (\fal^{\et}_{X_\eta\to X},\falb)\longrightarrow (X,\ca{O}_X).
	\end{align}
	We remark that any construction above is functorial in $\pro(\schqcqs_{/S})$, any site above is coherent (\cite[\Luoma{6}.2.3]{sga4-2}), and thus any morphism of sites above is coherent (\cite[\Luoma{6}.3.1]{sga4-2}).
\end{mypara}

\begin{mythm}[{Faltings' main $p$-adic comparison theorem, \cite[5.17]{he2024falmain}}]\label{thm:fal-comp}
	With the notation in {\rm\ref{para:notation-fal-comp}}, let $(X_\lambda)_{\lambda\in\Lambda}$ be a directed inverse system of proper $S$-schemes of finite presentation. Then, for any $n\in\bb{N}$, there exists a canonical morphism
	\begin{align}\label{eq:thm:fal-comp-1}
		\rr\Gamma(X_{\eta,\et},\bb{Z}/p^n\bb{Z})\otimes^{\dl}_{\bb{Z}}\ca{O}_L\longrightarrow \rr\Gamma(\fal^{\et}_{X_\eta\to X},\falb/p^n\falb)
	\end{align}
	which is an almost isomorphism {\rm(\cite[5.7]{he2024coh})}.
\end{mythm}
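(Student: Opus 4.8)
The plan is to reduce the statement to the case of a single proper scheme, namely Faltings' main $p$-adic comparison theorem in the form \cite[5.17]{he2024falmain}, and then to pass to the cofiltered limit over $\Lambda$. For each $\lambda\in\Lambda$ the scheme $X_\lambda$ is a proper $\ca{O}_L$-scheme of finite presentation, so \cite[5.17]{he2024falmain} furnishes a canonical morphism
\begin{align*}
	\rr\Gamma(X_{\lambda,\eta,\et},\bb{Z}/p^n\bb{Z})\otimes^{\dl}_{\bb{Z}}\ca{O}_L\longrightarrow \rr\Gamma(\fal^{\et}_{X_{\lambda,\eta}\to X_\lambda},\falb/p^n\falb)
\end{align*}
which is an almost isomorphism. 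First I would record that this construction is functorial in the proper $\ca{O}_L$-scheme: a transition morphism $X_\mu\to X_\lambda$ induces a morphism of \'etale sites $X_{\mu,\eta,\et}\to X_{\lambda,\eta,\et}$ and a morphism of Faltings ringed sites $(\fal^{\et}_{X_{\mu,\eta}\to X_\mu},\falb)\to(\fal^{\et}_{X_{\lambda,\eta}\to X_\lambda},\falb)$ compatible with the structural morphisms $\sigma_\lambda$ of \eqref{eq:sigma_lambda}, and the comparison maps above intertwine these; hence they assemble into a morphism of inductive systems indexed by the filtered category $\Lambda^{\oppo}$.

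The second step is to identify each side of \eqref{eq:thm:fal-comp-1} with the corresponding filtered colimit. On the \'etale side, $X_{\eta,\et}=\lim_{\lambda}X_{\lambda,\eta,\et}$ is a cofiltered limit of coherent sites along coherent morphisms and $\bb{Z}/p^n\bb{Z}$ is the pullback of the constant sheaves at finite level, so by the commutation of cohomology with such limits (\cite[\Luoma{6}.8.7.3]{sga4-2}) one has $\rr^q\Gamma(X_{\eta,\et},\bb{Z}/p^n\bb{Z})=\colim_{\lambda}\rr^q\Gamma(X_{\lambda,\eta,\et},\bb{Z}/p^n\bb{Z})$ for every $q$. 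On the Faltings side, $(\fal^{\et}_{X_\eta\to X},\falb)$ is by construction the cofiltered limit of the ringed sites $(\fal^{\et}_{X_{\lambda,\eta}\to X_\lambda},\falb)$ (see \ref{para:notation-fal-comp}), so its structure sheaf is the filtered colimit of the pullbacks of the structure sheaves at finite level; since reduction modulo $p^n$ commutes with pullback and with filtered colimits, $\falb/p^n\falb$ is the colimit of the pullbacks of the $\falb/p^n\falb$ at finite level, and again by \cite[\Luoma{6}.8.7.3]{sga4-2} (the sites and morphisms being coherent, cf. \ref{para:notation-fal-comp}) we get $\rr^q\Gamma(\fal^{\et}_{X_\eta\to X},\falb/p^n\falb)=\colim_{\lambda}\rr^q\Gamma(\fal^{\et}_{X_{\lambda,\eta}\to X_\lambda},\falb/p^n\falb)$; the foundational facts about limits of Faltings ringed sites are in \cite[\textsection7]{he2024coh} (cf. \cite[7.12]{he2024coh}).

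The third step is to assemble everything. The functor $-\otimes^{\dl}_{\bb{Z}}\ca{O}_L$ commutes with filtered colimits (it is the derived functor of a left adjoint, and $\ca{O}_L$ is in fact $\bb{Z}$-flat, being a subring of a field of characteristic $0$), so the left side of \eqref{eq:thm:fal-comp-1} is $\colim_{\lambda}\big(\rr\Gamma(X_{\lambda,\eta,\et},\bb{Z}/p^n\bb{Z})\otimes^{\dl}_{\bb{Z}}\ca{O}_L\big)$. I would then define the morphism \eqref{eq:thm:fal-comp-1} to be the filtered colimit of the levelwise comparison maps, which is well posed by the functoriality recorded in the first step. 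To see it is an almost isomorphism it suffices to note that its cone is the filtered colimit of the cones of the levelwise maps, each of which has almost zero cohomology; since being killed by $\ak{m}_L$ (equivalently, being almost zero) is preserved under filtered colimits, the cohomology of the cone of \eqref{eq:thm:fal-comp-1} is almost zero in every degree, which is the claim.

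The part requiring genuine care, rather than the formal limit argument, is the verification of the two compatibility inputs. One is the naturality of the comparison morphism of \cite[5.17]{he2024falmain} along the transition maps of the system; since these maps are proper but in general far from affine, one must confirm that the construction of the comparison map (via the morphisms $\sigma_\lambda$ and the pushforward of $\bb{Z}/p^n\bb{Z}$) is functorial in this generality. The other is the identification of the structure sheaf of the limit ringed topos with the filtered colimit of the pulled-back structure sheaves, so that the Faltings cohomology of the limit genuinely computes the colimit of the levelwise Faltings cohomologies; this rests on the limit formalism for ringed topoi of \cite[\Luoma{6}, \Luoma{8}]{sga4-2} together with the treatment of limits of Faltings ringed sites in \cite[\textsection7]{he2024coh}. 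Both are in essence bookkeeping, the key point being that no coherence hypothesis is lost in passing to the limit.
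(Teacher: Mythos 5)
Your proposal is correct and follows essentially the same route as the paper: apply \cite[5.17]{he2024falmain} levelwise, invoke functoriality in $\lambda$, and pass to the filtered colimit using the commutation of cohomology with cofiltered limits of coherent sites (\cite[\Luoma{6}.8.7.7]{sga4-2}). The paper's proof is simply a terser version of the same argument, with the flatness of $\ca{O}_L$ over $\bb{Z}$ and the preservation of almost-zero modules under filtered colimits left implicit.
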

\begin{proof}
	By Faltings' main $p$-adic comparison theorem for non-smooth schemes \cite[5.17, 5.3]{he2024falmain}, there is a canonical morphism
	\begin{align}\label{eq:thm:fal-comp-2}
		\rr\Gamma(X_{\lambda,\eta,\et},\bb{Z}/p^n\bb{Z})\otimes^{\dl}_{\bb{Z}}\ca{O}_L\longrightarrow \rr\Gamma(\fal^{\et}_{X_{\lambda,\eta}\to X_\lambda},\falb/p^n\falb)
	\end{align}
	which is an almost isomorphism and functorial in $\lambda\in\Lambda$. Taking filtered colimit over $\lambda\in\Lambda$, the conclusion follows from \cite[\Luoma{6}.8.7.7]{sga4-2}.
\end{proof}

\begin{mylem}\label{lem:fal-comp-perfd}
	With the notation in {\rm\ref{para:notation-fal-comp}}, let $(X_\lambda)_{\lambda\in\Lambda}$ be a directed inverse system of coherent $S$-schemes. Assume that the stalk at any point of the locally ringed space $X=\lim_{\lambda\in\Lambda}X_\lambda$ is a pre-perfectoid $\ca{O}_L$-algebra. Then, for any $n\in\bb{N}$, the canonical morphism
	\begin{align}
		\ca{O}_X/p^n\ca{O}_X\longrightarrow \rr\sigma_*(\falb/p^n\falb)
	\end{align}
	is an almost isomorphism.
\end{mylem}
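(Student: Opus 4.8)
The plan is to make the statement local on $X$ and reduce it to the almost acyclicity of the Faltings structure sheaf over a pre-perfectoid algebra, which is then a citation. First I would note that the canonical morphism $\ca{O}_X/p^n\ca{O}_X\to\rr\sigma_*(\falb/p^n\falb)$ is a map of complexes of abelian sheaves on the spectral space $X$, so it is an almost isomorphism if and only if it is so on the stalk at every point $x\in X$; the stalk of the source is $\ca{O}_{X,x}/p^n\ca{O}_{X,x}$. For the target I would use that $\rr^q\sigma_*(\falb/p^n\falb)$ is the sheaf associated to the presheaf $U\mapsto\rr^q\Gamma(\fal^{\et}_{U_\eta\to U},\falb/p^n\falb)$ (because $\sigma^+(U)=(U_\eta\to U)$ localizes the Faltings site to $\fal^{\et}_{U_\eta\to U}$), so its stalk at $x$ is $\colim_{U\ni x}\rr^q\Gamma(\fal^{\et}_{U_\eta\to U},\falb/p^n\falb)$ over the affine open neighbourhoods of $x$. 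Writing $x_\lambda\in X_\lambda$ for the image of $x$, the scheme $\spec(\ca{O}_{X,x})=\lim_\lambda\spec(\ca{O}_{X_\lambda,x_\lambda})$ is a cofiltered limit of affine $S$-schemes with affine transition morphisms; since every site and morphism occurring in \ref{para:notation-fal-comp} is coherent, the compatibility of the Faltings ringed site with such limits (\cite[7.12]{he2024coh}) together with the commutation of cohomology with filtered limits of coherent topoi (\cite[\Luoma{6}.8.7.3]{sga4-2}) identifies the above colimit, compatibly with the canonical morphism from $\ca{O}_{X,x}/p^n\ca{O}_{X,x}$, with $\rr^q\Gamma(\fal^{\et}_{\spec(\ca{O}_{X,x}[1/p])\to\spec(\ca{O}_{X,x})},\falb/p^n\falb)$.

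The lemma is thus reduced to the following local assertion, applied to $R=\ca{O}_{X,x}$ (which is pre-perfectoid by hypothesis): for a pre-perfectoid $\ca{O}_L$-algebra $R$ with $\spec(R)$ coherent, the canonical morphism $R/p^nR\to\rr\Gamma(\fal^{\et}_{\spec(R[1/p])\to\spec(R)},\falb/p^n\falb)$ is an almost isomorphism. For this I would pass to $p$-adic completions, which is harmless modulo $p^n$: $\widehat R$ is a perfectoid $\ca{O}_{\widehat L}$-algebra by \ref{para:notation-perfd}, and the assertion is then the almost acyclicity of $\falb$ over an affine perfectoid scheme, which rests on the almost purity theorem; I would invoke it from \cite{he2024coh}. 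It is also the local ingredient underlying Faltings' main $p$-adic comparison theorem \ref{thm:fal-comp} (see \cite{he2024falmain}).

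I expect the main obstacle to lie in the first step, namely the verification that taking the stalk at $x$ turns $\rr\sigma_*(\falb/p^n\falb)$ into the Faltings cohomology of $\spec(\ca{O}_{X,x})$ and the canonical morphism into the canonical morphism there. This requires keeping careful track of the coherence of all the topoi in \ref{para:notation-fal-comp}, of the limit-compatibility \cite[7.12]{he2024coh} of the Faltings construction, and of the passage to the localization $\spec(\ca{O}_{X,x})\to X$ as a further cofiltered limit of coherent schemes with affine transition morphisms. Granting that bookkeeping, the remaining content is entirely the cited almost acyclicity of $\falb$ over pre-perfectoid algebras, so no genuinely new argument is needed for this lemma.
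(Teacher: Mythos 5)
Your proposal is correct and follows essentially the same route as the paper: reduce to stalks, identify the stalk of $\rr^q\sigma_*(\falb/p^n\falb)$ at $x$ with the Faltings cohomology of $\spec(\ca{O}_{X,x})$ via the double cofiltered limit $\spec(\ca{O}_{X,x})=\lim_\lambda\lim_{U_\lambda\ni x_\lambda}U_\lambda$ together with \cite[7.12]{he2024coh} and SGA 4 VI.8.7.7, and then conclude from the almost acyclicity of $\falb$ over a pre-perfectoid affine. The paper cites the last step as cohomological descent for Faltings ringed sites \cite[8.24]{he2024coh}, which is exactly the local input you gesture at.
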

\begin{proof}
	For any point $x\in X$, we put $x_\lambda\in X_\lambda$ its image under the canonical morphism $X\to X_\lambda$. Then, it suffices to show that the canonical morphism of stalks 
	\begin{align}
		\ca{O}_{X,x}/p^n\ca{O}_{X,x}\longrightarrow (\rr\sigma_*(\falb/p^n\falb))_x
	\end{align}
	is an almost isomorphism. Note that $\ca{O}_{X,x}=\colim_{\lambda\in\Lambda}\ca{O}_{X_\lambda,x_{\lambda}}$ and that for any integer $q\geq 0$,
	\begin{align}
		(\rr^q\sigma_*(\falb/p^n\falb))_x=\colim_{\lambda\in\Lambda}\colim_{x_\lambda\in U_\lambda} H^q(\fal^{\et}_{U_{\lambda,\eta}\to U_\lambda},\falb/p^n\falb)
	\end{align}
	where $U_\lambda$ runs through all the quasi-compact open neighborhoods of $x_\lambda$ in $X_\lambda$ (\cite[\Luoma{5}.5.1, \Luoma{6}.8.7.7]{sga4-2}). We put $X_{(x)}=\spec(\ca{O}_{X,x})$ and $X_{\lambda,(x_\lambda)}=\spec(\ca{O}_{X_\lambda,x_\lambda})$. Since $\lim_{x_\lambda\in U_\lambda}U_\lambda=X_{\lambda,(x_\lambda)}$ and $X_{(x)}=\lim_{\lambda\in\Lambda}X_{\lambda,(x_\lambda)}$, we have
	\begin{align}
		(\rr^q\sigma_*(\falb/p^n\falb))_x=\colim_{\lambda\in\Lambda} H^q(\fal^{\et}_{X_{\lambda,(x_\lambda),\eta}\to X_{\lambda,(x_\lambda)}},\falb/p^n\falb)=H^q(\fal^{\et}_{X_{(x),\eta}\to X_{(x)}},\falb/p^n\falb)
	\end{align}
	by \cite[7.12]{he2024coh} and \cite[\Luoma{6}.8.7.7]{sga4-2}. Since $\ca{O}_{X,x}$ is assumed to be a pre-perfectoid $\ca{O}_L$-algebra, the canonical morphism
	\begin{align}
		\ca{O}_{X,x}/p^n\ca{O}_{X,x}\longrightarrow \rr\Gamma(\fal^{\et}_{X_{(x),\eta}\to X_{(x)}},\falb/p^n\falb)
	\end{align} 
	is an almost isomorphism by cohomological descent for Faltings ringed sites \cite[8.24]{he2024coh}.
\end{proof}

\begin{mycor}[{cf. \cite[12.8]{he2024purity}}]\label{cor:fal-comp-perfd}
	With the notation in {\rm\ref{para:notation-fal-comp}}, let $(X_\lambda)_{\lambda\in\Lambda}$ be a directed inverse system of flat proper $S$-schemes. Assume that the stalk at any point of the locally ringed space $X=\lim_{\lambda\in\Lambda}X_\lambda$ is a pre-perfectoid $\ca{O}_L$-algebra. Then, for any $n\in\bb{N}$, there exists a canonical isomorphism in the derived category of almost $\ca{O}_L$-modules {\rm(\cite[5.7]{he2024coh})},
	\begin{align}\label{eq:prop:fal-comp-1}
		\rr\Gamma(X_{\eta,\et},\bb{Z}/p^n\bb{Z})\otimes^{\dl}_{\bb{Z}}\ca{O}_L\iso \rr\Gamma(X,\ca{O}_X/p^n\ca{O}_X).
	\end{align}
	In particular, for any integer $q>\limsup_{\lambda\in\Lambda}\{\dim(X_{\lambda,\eta})\}$, we have
	\begin{align}
		H^q(X_{\eta,\et},\bb{Z}/p^n\bb{Z})=0.
	\end{align}
\end{mycor}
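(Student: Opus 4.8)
The plan is to obtain \eqref{eq:prop:fal-comp-1} by splicing the comparison isomorphism of \ref{thm:fal-comp} together with the local perfectoidness statement \ref{lem:fal-comp-perfd}, and then to read off the vanishing from the coherent side by Grothendieck's vanishing theorem for spectral spaces. First I would apply $\rr\Gamma(X,-)$ to the almost isomorphism $\ca{O}_X/p^n\ca{O}_X\to\rr\sigma_*(\falb/p^n\falb)$ furnished by \ref{lem:fal-comp-perfd}; using the identification $\rr\Gamma(X,\rr\sigma_*(-))=\rr\Gamma(\fal^{\et}_{X_\eta\to X},-)$ (composition of derived pushforwards along $\fal^{\et}_{X_\eta\to X}\stackrel{\sigma}{\to}X\to\ast$, for the limit morphism $\sigma$ of \ref{para:notation-fal-comp}), this shows that $\rr\Gamma(X,\ca{O}_X/p^n\ca{O}_X)\to\rr\Gamma(\fal^{\et}_{X_\eta\to X},\falb/p^n\falb)$ is an almost isomorphism. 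On the other hand \ref{thm:fal-comp}, applied to the system $(X_\lambda)_{\lambda\in\Lambda}$, yields an almost isomorphism $\rr\Gamma(X_{\eta,\et},\bb{Z}/p^n\bb{Z})\otimes^{\dl}_{\bb{Z}}\ca{O}_L\to\rr\Gamma(\fal^{\et}_{X_\eta\to X},\falb/p^n\falb)$. Both arrows become isomorphisms in the derived category of almost $\ca{O}_L$-modules, so composing the first with the inverse of the second gives \eqref{eq:prop:fal-comp-1}.

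For the vanishing, set $d=\limsup_{\lambda\in\Lambda}\{\dim(X_{\lambda,\eta})\}$. After passing to a cofinal subsystem (which changes none of the limits $X$, $X_{\eta,\et}$, $\fal^{\et}_{X_\eta\to X}$) I may assume $\dim(X_{\lambda,\eta})\le d$ for all $\lambda$; since $X_\lambda$ is flat and proper over the one-dimensional base $S$ and $X_{\lambda,s}$ is its fibre over the closed point, also $\dim(X_{\lambda,s})\le d$. As $p$ is invertible on $X_{\lambda,\eta}=X_\lambda\times_S\eta$, the sheaf $\ca{O}_X/p^n\ca{O}_X$ is supported on the closed subspace $X_s=\lim_\lambda X_{\lambda,s}$, so $\rr\Gamma(X,\ca{O}_X/p^n\ca{O}_X)=\rr\Gamma(X_s,(\ca{O}_X/p^n\ca{O}_X)|_{X_s})$. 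Each $X_{\lambda,s}$ is a Noetherian spectral space of Krull dimension $\le d$, hence $X_s$ is a spectral space of Krull dimension $\le d$, and Grothendieck's vanishing theorem for spectral spaces of bounded dimension gives $H^q(X_s,\scr{G})=0$ for every abelian sheaf $\scr{G}$ and every integer $q>d$. Thus $H^q(X,\ca{O}_X/p^n\ca{O}_X)=0$ for $q>d$; since $\ca{O}_L$ is flat over $\bb{Z}$, the degree-$q$ cohomology of $\rr\Gamma(X_{\eta,\et},\bb{Z}/p^n\bb{Z})\otimes^{\dl}_{\bb{Z}}\ca{O}_L$ is $H^q(X_{\eta,\et},\bb{Z}/p^n\bb{Z})\otimes_{\bb{Z}}\ca{O}_L$, and by \eqref{eq:prop:fal-comp-1} this almost $\ca{O}_L$-module is almost zero whenever $q>d$.

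Finally I would upgrade this to the honest vanishing $H^q(X_{\eta,\et},\bb{Z}/p^n\bb{Z})=0$. Here the non-discreteness of $L$ is essential: since $\ak{m}_L\ne p\ca{O}_L$, the almost $\ca{O}_L$-module $(\ca{O}_L/p\ca{O}_L)^{\al}$ is nonzero. If $M:=H^q(X_{\eta,\et},\bb{Z}/p^n\bb{Z})$ were nonzero, then, being a nonzero $\bb{Z}/p^n\bb{Z}$-module, it would admit a surjection onto $\bb{Z}/p\bb{Z}$, whence by right-exactness of $-\otimes_{\bb{Z}}\ca{O}_L$ the module $M\otimes_{\bb{Z}}\ca{O}_L$ would surject onto $\ca{O}_L/p\ca{O}_L$, forcing $(M\otimes_{\bb{Z}}\ca{O}_L)^{\al}\ne 0$, a contradiction; hence $M=0$. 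I expect most of the write-up to be bookkeeping: tracking the two almost isomorphisms inside the derived category of almost $\ca{O}_L$-modules and justifying $\rr\Gamma(X,\rr\sigma_*(-))=\rr\Gamma(\fal^{\et}_{X_\eta\to X},-)$. The one genuinely delicate point is this last step---passing from ``almost zero'' to ``zero''---together with the need to localize the vanishing on the support $X_s$ rather than on all of $X$ (whose Krull dimension is $d+1$, one too large), both of which hinge on $L$ having non-discrete valuation.
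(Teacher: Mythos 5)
Your derivation of \eqref{eq:prop:fal-comp-1} is the same as the paper's (splice \ref{thm:fal-comp} with \ref{lem:fal-comp-perfd}); the only thing you skip is the observation that a flat proper $S$-scheme is automatically of finite presentation (Raynaud--Gruson), which is needed to invoke \ref{thm:fal-comp} as stated. For the vanishing, however, you take a genuinely different route from the paper in the middle step. The paper never leaves the schemes $X_\lambda$: after a d\'evissage to $n=1$ it proves $H^q(X_\lambda,\ca{O}_{X_\lambda}/p\ca{O}_{X_\lambda})=\rr^qf_{\lambda*}(\ca{O}_{X_\lambda}/p\ca{O}_{X_\lambda})=0$ using $\dim(X_{\lambda,s})=\dim(X_{\lambda,\eta})$ and Grothendieck's vanishing for proper morphisms with fibres of bounded dimension, and then passes to the colimit; the conclusion is extracted from the almost vanishing of $H^q(X_{\eta,\et},\bb{F}_p)\otimes_{\bb{F}_p}\ca{O}_L/p\ca{O}_L$ using that the left factor is free over $\bb{F}_p$. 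You instead work directly on the limit space: you restrict $\ca{O}_X/p^n\ca{O}_X$ to its support $X_s=\lim_\lambda X_{\lambda,s}$, bound the Krull dimension of this spectral space by $d$, and invoke Scheiderer's theorem that sheaf cohomology of a spectral space vanishes above its Krull dimension; your passage from almost zero to zero uses a surjection of a nonzero $\bb{Z}/p^n\bb{Z}$-module onto $\bb{Z}/p\bb{Z}$ and works uniformly in $n$. Both arguments are correct (and, amusingly, your version is the one the introduction of the paper advertises). What your route buys is independence of coherent duality-type inputs for proper morphisms and no d\'evissage; what it costs is reliance on the nontrivial cohomological-dimension theorem for spectral spaces (and on the standard but not-entirely-free facts that the limit of spectral spaces of dimension $\le d$ has dimension $\le d$ and that a sheaf vanishing on an open has the cohomology of its restriction to the closed complement), all of which you should cite precisely in a write-up.
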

\begin{proof}
	Firstly, note that each $X_\lambda$ is of finite presentation over $S$ by \cite[\Luoma{1}.3.4.7]{raynaudgruson1971plat} (cf. \ref{rem:blowup}). Thus, \eqref{eq:prop:fal-comp-1} follows directly from \ref{thm:fal-comp} and \ref{lem:fal-comp-perfd}. For the ``in particular" part, we may assume that $q>\sup_{\lambda\in\Lambda}\{\dim(X_{\lambda,\eta})\}$ after replacing $\Lambda$ by a cofinal subsystem. By d\'evissage, it suffices to prove the case where $n=1$. 
	
	We claim that  $H^q(X_\lambda,\ca{O}_{X_\lambda}/p\ca{O}_{X_\lambda})=0$. Indeed, for any $\lambda\in\Lambda$, as $X_\lambda$ is flat, proper, of finite presentation over $S$, we have $\dim(X_\eta)=\dim(X_s)$ by \cite[\href{https://stacks.math.columbia.edu/tag/0D4J}{0D4J}]{stacks-project}. Let $f_\lambda:X_\lambda\to S$ denote the canonical morphism of schemes. As $f_\lambda$ is proper with special fibre of dimension strictly less than $q$, we have
	\begin{align}
		H^q(X_\lambda,\ca{O}_{X_\lambda}/p\ca{O}_{X_\lambda})=\rr^q f_{\lambda*}(\ca{O}_{X_\lambda}/p\ca{O}_{X_\lambda})=0
	\end{align}
	by Grothendieck's vanishing \cite[\href{https://stacks.math.columbia.edu/tag/0E7D}{0E7D}]{stacks-project}.
	
	The claim implies that $H^q(X_{\eta,\et},\bb{F}_p)\otimes_{\bb{F}_p}\ca{O}_L/p\ca{O}_L$ is almost zero by \eqref{eq:prop:fal-comp-1}. As $H^q(X_{\eta,\et},\bb{F}_p)$ is an (free) $\bb{F}_p$-module, its base change to $\ca{O}_L/p\ca{O}_L$ is almost zero if and only if itself is zero. This completes the proof.
\end{proof}

\begin{mylem}\label{lem:vanish}
	Let $L$ be a valuation field, $\eta=\spec(L)$, $S=\spec(\ca{O}_L)$, $Y$ a proper $L$-scheme, $\widetilde{Y}$ a $Y$-scheme. Consider the category $\scr{C}^{\widetilde{Y}}_{Y/S}$ formed by the following commutative diagrams of schemes
	\begin{align}
		\xymatrix{
			\widetilde{Y}\ar[r]\ar[dr]&X_\eta\ar[r]\ar[d]&X\ar[dd]\\
			&Y\ar[d]&\\
			&\eta\ar[r]&S
		}
	\end{align}
	where $X$ is a flat proper $S$-scheme. Then, we have the following properties:
	\begin{enumerate}
		\renewcommand{\labelenumi}{{\rm(\theenumi)}}
		\item The category $\scr{C}^{\widetilde{Y}}_{Y/S}$ is cofiltered.\label{item:lem:vanish-1}
		\item In the category of locally ringed spaces, we have $\lim_{X\in\scr{C}^{\widetilde{Y}}_{Y/S}}X=\lim_{X\in\scr{C}^{\widetilde{Y}}_{Y/S}}\rz_{X_\eta}(X)$, where $\rz_{X_\eta}(X)$ is the Riemann-Zariski space {\rm(\cite[9.13]{he2024purity})} of $X$ with respect to $X_\eta$.\label{item:lem:vanish-2}
		\item If $\widetilde{Y}\to Y$ is integral, then the full subcategory $\scr{C}^{\widetilde{Y},\mrm{fini}}_{Y/S}$ of $\scr{C}^{\widetilde{Y}}_{Y/S}$ consisting of those $X$ with $X_\eta$ finite over $Y$ is initial. Moreover, we have $\widetilde{Y}=\lim_{X\in\scr{C}^{\widetilde{Y},\mrm{fini}}_{Y/S}}X_\eta$ in the category of schemes.\label{item:lem:vanish-3}
	\end{enumerate}
\end{mylem}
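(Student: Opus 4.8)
The plan is to establish the three statements in order, using standard limit techniques for schemes together with the theory of Riemann–Zariski spaces recalled in \cite[\textsection9]{he2024purity}. For \eqref{item:lem:vanish-1}, I would first check that $\scr{C}^{\widetilde{Y}}_{Y/S}$ is nonempty: since $Y$ is proper over $L$, one can spread out a projective embedding to obtain a flat projective model $X_0$ of $Y$ over $S$ (after possibly taking the scheme-theoretic closure of $Y$ in a projective $S$-scheme, which is flat by \cite[\Luoma{1}.3.4.7]{raynaudgruson1971plat}), and the composite $\widetilde{Y}\to Y=X_{0,\eta}$ makes $X_0$ an object. For cofilteredness, given two objects $X_1,X_2$, I would form the scheme-theoretic closure of the diagonal image of $Y$ in $X_1\times_S X_2$ and observe that it is again flat and proper over $S$ with generic fibre $Y$, dominating both; and given two parallel morphisms $X\rightrightarrows X'$ over $Y$, their equalizer restricted to a flat proper model (again taking scheme-theoretic closure of $Y$) gives the required object. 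This is the routine part.

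For \eqref{item:lem:vanish-2}, the key point is that for a \emph{fixed} flat proper $S$-scheme $X$ with generic fibre $X_\eta$, the Riemann–Zariski space $\rz_{X_\eta}(X)=\lim_{X'} X'$ (limit over $X_\eta$-modifications $X'\to X$) is already computed inside $\scr{C}^{\widetilde{Y}}_{Y/S}$ whenever $X_\eta$ admits a morphism from $\widetilde{Y}$ compatibly, because each such modification $X'$ is itself flat and proper over $S$ (cf.\ \ref{rem:blowup}) with the same generic fibre, hence is again an object of $\scr{C}^{\widetilde{Y}}_{Y/S}$ lying over $X$. Therefore the $X_\eta$-modifications of objects of $\scr{C}^{\widetilde{Y}}_{Y/S}$ are cofinal in $\scr{C}^{\widetilde{Y}}_{Y/S}$, and taking the limit over this cofinal subsystem yields $\lim_X X=\lim_X\rz_{X_\eta}(X)$ in locally ringed spaces. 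I would phrase this as: the inclusion of the full subcategory of such modifications is cofinal, and limits of locally ringed spaces are insensitive to cofinal subsystems.

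For \eqref{item:lem:vanish-3}, assume $\widetilde{Y}\to Y$ integral. Given any object $X$ of $\scr{C}^{\widetilde{Y}}_{Y/S}$, I would let $\overline{X_\eta}$ be the scheme-theoretic image of $\widetilde{Y}$ in $X_\eta$; since $\widetilde{Y}\to Y$ is integral and $Y$ is the generic fibre, $\widetilde{Y}\to\overline{X_\eta}$ is dominant with integral target, and one replaces $X_\eta$ by the integral closure of $Y$ in the function ring(s) of $\overline{X_\eta}$ to get a finite $Y$-scheme; taking the scheme-theoretic closure in $X$ produces an object of $\scr{C}^{\widetilde{Y},\mrm{fini}}_{Y/S}$ dominated appropriately, proving initiality. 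Finally, $\widetilde{Y}$ being integral over $Y$ means $\widetilde{Y}$ is the cofiltered limit of its finite $Y$-subschemes (the integral closures of $Y$ in the finite sub-$\ca{O}_Y$-algebras of $\ca{O}_{\widetilde{Y}}$), and each such finite $Y$-scheme is the generic fibre of its scheme-theoretic closure in a flat proper model; running through $\scr{C}^{\widetilde{Y},\mrm{fini}}_{Y/S}$ realizes exactly these, so $\widetilde{Y}=\lim_{X\in\scr{C}^{\widetilde{Y},\mrm{fini}}_{Y/S}}X_\eta$. The main obstacle I anticipate is the bookkeeping in \eqref{item:lem:vanish-3}: one must check carefully that the finite $Y$-schemes arising as $X_\eta$ for $X\in\scr{C}^{\widetilde{Y},\mrm{fini}}_{Y/S}$ are cofinal among \emph{all} finite sub-$Y$-schemes through which $\widetilde{Y}$ factors, which requires knowing that every finite $Y$-scheme of the relevant form extends to a flat proper $S$-model compatibly with the maps from $\widetilde{Y}$ — handled by repeated use of scheme-theoretic closure and flatness over a valuation ring, but needing care that these closures do not alter the generic fibre.
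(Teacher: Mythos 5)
Your overall strategy coincides with the paper's: a Nagata-type compactification plus scheme-theoretic closure for (1), the observation that $X_\eta$-modifications of objects are again objects for (2), and writing $\widetilde{Y}$ as a cofiltered limit of finite $Y$-schemes for (3). There is, however, a concrete problem in your cofilteredness argument for (1). In an object of $\scr{C}^{\widetilde{Y}}_{Y/S}$ the arrows run $\widetilde{Y}\to X_\eta\to Y$, so $Y$ does \emph{not} map to $X_1$ or $X_2$, and ``the diagonal image of $Y$ in $X_1\times_SX_2$'' is undefined; likewise ``taking the scheme-theoretic closure of $Y$'' in your equalizer step. What must be closed up is the image of $\widetilde{Y}$ (equivalently of $X_{1,\eta}\times_YX_{2,\eta}$, which does receive a map from $\widetilde{Y}$): the paper takes $X_3$ to be the scheme-theoretic image of $X_{1,\eta}\times_YX_{2,\eta}\to X_1\times_SX_2$, and for two parallel arrows $X_1\rightrightarrows X_0$ it takes the scheme-theoretic image of $\widetilde{Y}\to X_1$ and concludes using that $\widetilde{Y}\to X_2$ is scheme-theoretically dominant and $X_0\to S$ is separated. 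Since the whole point is that the new object must receive a compatible map from $\widetilde{Y}$, replacing ``$Y$'' by ``$\widetilde{Y}$'' here is not cosmetic. A smaller issue: non-emptiness cannot be obtained by spreading out a projective embedding, since $Y$ is only assumed proper; the paper invokes Nagata's compactification and then passes to the scheme-theoretic closure of $Y$ to get flatness.

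For (3), the step you yourself flag as delicate — producing, for a given $X$, a finite $Y$-scheme through which $\widetilde{Y}\to X_\eta$ factors — is left vague in your proposal (``the integral closure of $Y$ in the function ring(s) of $\overline{X_\eta}$'' is not obviously finite over $Y$, nor obviously what is needed). Two clean ways to close it: either check directly that the scheme-theoretic image $Z$ of $\widetilde{Y}$ in $X_\eta$ is finite over $Y$ (since $\widetilde{Y}\to Y$ is universally closed and $X_\eta\to Y$ is separated of finite type, $Z\to Y$ is proper with finite fibres, hence finite), or do as the paper does: write $\widetilde{Y}=\lim_\lambda Y_\lambda$ with $Y_\lambda$ finite over $Y$ and use the limit theorem \cite[8.14.2]{ega4-3} to factor $\widetilde{Y}\to X_\eta$ through some $Y_\lambda$, thereby reducing to the case where $\widetilde{Y}$ is finite over $Y$, in which Nagata produces an object $X$ with $X_\eta=\widetilde{Y}$ and any $X'\to X$ can be replaced by the scheme-theoretic image of $\widetilde{Y}$ in $X'$. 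Parts (2) and the remaining bookkeeping in your proposal match the paper's proof.
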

\begin{proof}
	(\ref{item:lem:vanish-1}) Firstly, $\scr{C}^{\widetilde{Y}}_{Y/S}$ is non-empty. Indeed, by Nagata's compactification theorem \cite[\href{https://stacks.math.columbia.edu/tag/0F41}{0F41}]{stacks-project} there exists a proper $S$-scheme $X$ with an open immersion $Y\to X$. Replacing $X$ by the scheme theoretic closure of $Y$, we may assume that $X$ is flat and $X_\eta=Y$ (as $Y$ is proper over $\eta$) so that $X\in\scr{C}^{\widetilde{Y}}_{Y/S}$. 
	
	Secondly, for any two objects $X_1,X_2\in\scr{C}^{\widetilde{Y}}_{Y/S}$, let $X_3$ be the scheme theoretic image of $X_{1,\eta}\times_YX_{2,\eta}\to X_1\times_SX_2$. Then, $X_3$ is flat and proper over $S$ with $X_{3,\eta}=X_{1,\eta}\times_YX_{2,\eta}$ (as $X_{1,\eta}\times_YX_{2,\eta}\to X_{1,\eta}\times_\eta X_{2,\eta}$ is a closed immersion by \cite[\href{https://stacks.math.columbia.edu/tag/01KR}{01KR}]{stacks-project}). We see that $X_3\in\scr{C}^{\widetilde{Y}}_{Y/S}$. 
	
	Thirdly, for any two morphisms $X_1\rightrightarrows X_0$ in $\scr{C}^{\widetilde{Y}}_{Y/S}$, let $X_2$ be the scheme theoretic image of $\widetilde{Y}\to X_1$. Then, $X_2$ is flat and proper over $S$. Since the compositions of $\widetilde{Y}\to X_1\rightrightarrows X_0$ are equal to the given morphism  $\widetilde{Y}\to X_0$, we see that the compositions of $X_2\to X_1\rightrightarrows X_0$ are also equal to each other since $\widetilde{Y}\to X_2$ is scheme theoretically dominant and $X_0\to S$ is separated (\cite[5.4.1]{ega1-2}). This completes the proof of (\ref{item:lem:vanish-1}).
	
	(\ref{item:lem:vanish-2}) Recall that $\rz_{X_\eta}(X)=\lim_{X'\in\scr{MD}_{X_\eta}(X)}X'$, where $\scr{MD}_{X_\eta}(X)$ is the cofiltered category of $X_\eta$-modifications of $X$. In particular, $X_\eta=X'_\eta$ is scheme theoretically dense in $X'$ and $X'\to X$ is proper. This implies that each $X'$ is flat and proper over $S$ if $X$ is so, and thus $X'\in \scr{C}^{\widetilde{Y}}_{Y/S}$ if $X$ is so. Therefore, we have $\lim_{X\in\scr{C}^{\widetilde{Y}}_{Y/S}}X=\lim_{X\in\scr{C}^{\widetilde{Y}}_{Y/S}}\lim_{X'\in\scr{MD}_{X_\eta}(X)}X'=\lim_{X\in\scr{C}^{\widetilde{Y}}_{Y/S}}\rz_{X_\eta}(X)$.
	
	(\ref{item:lem:vanish-3}) If $\widetilde{Y}\to Y$ is finite, then there exists $X\in\scr{C}^{\widetilde{Y}}_{Y/S}$ with $X_\eta=\widetilde{Y}$ by Nagata's compactification theorem, see the arguments of (\ref{item:lem:vanish-1}). Then, for any morphism $X'\to X$ in $\scr{C}^{\widetilde{Y}}_{Y/S}$, after replacing $X'$ by the scheme theoretic image of $\widetilde{Y}$ in $X'$, we may assume that $X'_\eta=X_\eta=\widetilde{Y}$. Hence, $\scr{C}^{\widetilde{Y},\mrm{fini}}_{Y/S}\subseteq \scr{C}^{\widetilde{Y}}_{Y/S}$ is initial and we have $\widetilde{Y}=\lim_{X\in\scr{C}^{\widetilde{Y},\mrm{fini}}_{Y/S}}X_\eta$.
	
	In general, we write $\widetilde{Y}=\lim_{\lambda\in\Lambda}Y_\lambda$ as a cofiltered limit of finite $Y$-schemes. On the one hand, for any $\lambda\in\Lambda$, there is a canonical functor $\scr{C}^{Y_\lambda}_{Y/S}\to \scr{C}^{\widetilde{Y}}_{Y/S}$ sending $X$ to $X$. On the other hand, for any $X\in \scr{C}^{\widetilde{Y}}_{Y/S}$, the morphism $\widetilde{Y}=\lim_{\lambda\in\Lambda}Y_\lambda\to X$ is given by an $S$-morphism $Y_\lambda\to X$ for some $\lambda$ large enough by \cite[8.14.2]{ega4-3} as $X$ is of finite presentation over $S$, which implies that $X$ lies in the essential image of $\scr{C}^{Y_\lambda}_{Y/S}\to \scr{C}^{\widetilde{Y}}_{Y/S}$. Combining with the special case discussed in the beginning, we see that $\scr{C}^{\widetilde{Y},\mrm{fini}}_{Y/S}\subseteq \scr{C}^{\widetilde{Y}}_{Y/S}$ is initial and we have $\lim_{X\in\scr{C}^{\widetilde{Y},\mrm{fini}}_{Y/S}}X_\eta=\lim_{\lambda\in\Lambda}\lim_{X\in\scr{C}^{Y_\lambda,\mrm{fini}}_{Y/S}}X_\eta=\lim_{\lambda\in\Lambda}Y_{\lambda}=\widetilde{Y}$.
\end{proof}

\begin{mythm}\label{thm:vanish}
	Let $L$ be an algebraically closed valuation field of height $1$ extension of $\bb{Q}_p$, $Y$ a proper smooth $L$-scheme, $D$ a normal crossings divisor on $Y$, $\widetilde{Y}^{\triv}$ a coherent scheme pro-finite \'etale over $Y^{\triv}=Y\setminus D$, $\widetilde{Y}$ the integral closure of $Y$ in $\widetilde{Y}^{\triv}$ (so that $\widetilde{Y}^{\triv}=Y^{\triv}\times_Y\widetilde{Y}$). Assume that the following conditions hold for any point $\widetilde{y}\in \widetilde{Y}$:
	\begin{enumerate}
		\renewcommand{\labelenumi}{{\rm(\theenumi)}}
		\item Its residue field $\kappa(\widetilde{y})$ is a pre-perfectoid field with respect to any valuation ring $W$ of height $1$ extension of $\ca{O}_L$ with fraction field $W[1/p]=\kappa(\widetilde{y})$.\label{item:thm:vanish-1}
		\item There exists a regular system of parameters $\{t_1,\dots,t_s\}$ of the strict Henselization $\ca{O}_{Y,y}^{\mrm{sh}}$ of $\ca{O}_{Y,y}$ (where $y\in Y$ is the image of $\widetilde{y}\in\widetilde{Y}$) such that $D$ is defined by $t_1\cdots t_r=0$ over $\ca{O}_{Y,y}^{\mrm{sh}}$ for some integer $0\leq r\leq s$ and that $t_1,\dots,t_r$ admit compatible systems of $p$-power roots in the strict Henselization $\ca{O}_{\widetilde{Y},\widetilde{y}}^{\mrm{sh}}$ of $\ca{O}_{\widetilde{Y},\widetilde{y}}$.\label{item:thm:vanish-2}
	\end{enumerate}
	Then, for any integer $q>\dim(Y)$ and $n\in\bb{N}$, we have
	\begin{align}
		H^q(\widetilde{Y}^{\triv}_\et,\bb{Z}/p^n\bb{Z})=0.
	\end{align}
\end{mythm}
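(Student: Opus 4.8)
The plan is to carry out the two reductions sketched in \ref{intro-para:vanish}. First, using the $p$-infinite ramification on the boundary (condition (\ref{item:thm:vanish-2})) I would pass from the étale cohomology of the open part $\widetilde{Y}^{\triv}$ to that of its compactification $\widetilde{Y}$; then, using the stalkwise perfectoidness (condition (\ref{item:thm:vanish-1})) I would compute the cohomology of $\widetilde{Y}$ via Faltings' main $p$-adic comparison theorem in terms of a Riemann--Zariski space assembled from flat proper models over $S=\spec(\ca{O}_L)$, where the degree bound of \ref{cor:fal-comp-perfd} finishes the argument.

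For the first reduction, observe that $L\supseteq\bb{Q}_p\supseteq\bb{Q}$, so $Y$ is a Noetherian regular $\bb{Q}$-scheme, and condition (\ref{item:thm:vanish-2}) says exactly that the open immersion $j\colon\widetilde{Y}^{\triv}\to\widetilde{Y}$ (with $\widetilde{Y}$ the integral closure of $Y$ in $\widetilde{Y}^{\triv}$) satisfies the hypotheses of \ref{prop:open-et-coh}.(\ref{item:prop:open-et-coh-2}); hence $\rr^qj_{\et*}(\bb{Z}/p^n\bb{Z})=0$ for all $q>0$. To see that $j_{\et*}(\bb{Z}/p^n\bb{Z})$ is the constant sheaf on $\widetilde{Y}$, I would compute its stalk at a geometric point $\overline{y}$ of $\widetilde{Y}$, as in the proof of \ref{prop:open-et-coh}, as $H^0$ of $Y^{\triv}\times_YW_{(\overline{y})}$, where $W_{(\overline{y})}$ is the strict localization of $\widetilde{Y}$ at $\overline{y}$; since $W_{(\overline{y})}$ is a filtered colimit of strictly Henselian local domains integral over the strict Henselization of the normal scheme $Y$, and $Y^{\triv}$ is schematically dense in $Y$, this base change is integral, so the $H^0$ is $\bb{Z}/p^n\bb{Z}$. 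The Leray spectral sequence for $j$ then gives $H^q(\widetilde{Y}^{\triv}_\et,\bb{Z}/p^n\bb{Z})\iso H^q(\widetilde{Y}_\et,\bb{Z}/p^n\bb{Z})$ for all $q$ and $n$.

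For the second reduction, since $\widetilde{Y}\to Y$ is integral, \ref{lem:vanish}.(\ref{item:lem:vanish-3}) writes $\widetilde{Y}=\lim_{X\in\scr{C}^{\widetilde{Y},\mrm{fini}}_{Y/S}}X_\eta$ with each $X$ a flat proper $S$-scheme whose generic fibre is finite over $Y$ (hence of dimension $\le\dim(Y)$), and since the transition morphisms are affine this identifies $\widetilde{Y}_\et$ with $\lim X_{\eta,\et}$. Set $\widetilde{X}=\lim_{X\in\scr{C}^{\widetilde{Y},\mrm{fini}}_{Y/S}}X$ as a locally ringed space, which by \ref{lem:vanish}.(\ref{item:lem:vanish-2}) coincides with $\lim\rz_{X_\eta}(X)$, so that the stalk $\ca{O}_{\widetilde{X},x}$ at any point $x$ is a filtered colimit of the (valuation-ring) stalks of the Riemann--Zariski spaces $\rz_{X_\eta}(X)$; I would check that $\ca{O}_{\widetilde{X},x}$ is a pre-perfectoid $\ca{O}_L$-algebra (\ref{para:notation-perfd}) by relating it to the height-one valuation ring $W$ of $\kappa(\widetilde{y})$ obtained by localizing at the minimal prime over $p$, $\widetilde{y}$ being the image of $x$ in $\widetilde{Y}$, to which condition (\ref{item:thm:vanish-1}) applies verbatim (cf.\ \cite[12.8, 12.15]{he2024purity}). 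Then \ref{cor:fal-comp-perfd} applies to the system $(X)_{X\in\scr{C}^{\widetilde{Y},\mrm{fini}}_{Y/S}}$, giving the almost isomorphism $\rr\Gamma(\widetilde{Y}_\et,\bb{Z}/p^n\bb{Z})\otimes^{\dl}_{\bb{Z}}\ca{O}_L\iso\rr\Gamma(\widetilde{X},\ca{O}_{\widetilde{X}}/p^n\ca{O}_{\widetilde{X}})$ and, crucially, the vanishing $H^q(\widetilde{Y}_\et,\bb{Z}/p^n\bb{Z})=0$ for $q>\limsup_X\dim(X_\eta)$, hence for $q>\dim(Y)$. Combined with the first reduction this yields $H^q(\widetilde{Y}^{\triv}_\et,\bb{Z}/p^n\bb{Z})=0$ for all $q>\dim(Y)$ and $n\in\bb{N}$. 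The step I expect to be the main obstacle is the valuation-theoretic bookkeeping in the second reduction: identifying the stalks of $\widetilde{X}$ precisely enough with height-one valuation rings $W$ satisfying $W[1/p]=\kappa(\widetilde{y})$ so that hypothesis (\ref{item:thm:vanish-1}) can be invoked; once that is in place, both the sheaf-theoretic descent of Step 1 and the cohomological degree bound are formal consequences of the quoted results.
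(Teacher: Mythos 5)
Your proposal is correct and follows essentially the same route as the paper's proof: the same reduction from $\widetilde{Y}^{\triv}$ to $\widetilde{Y}$ via \ref{prop:open-et-coh}, the same use of \ref{lem:vanish} to form the limit space $\widetilde{X}$, and the same identification of its stalks with height-one valuation rings of the residue fields (which the paper handles by citing \cite[10.3.(5)]{he2024purity}) so that \ref{cor:fal-comp-perfd} applies. The step you flag as the main obstacle is precisely the one the paper resolves by that citation, so no gap remains.
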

\begin{proof}
	We put $\eta=\spec(L)$ and $S=\spec(\ca{O}_L)$. Consider the cofiltered category $\scr{C}^{\widetilde{Y}}_{Y/S}$ defined in \ref{lem:vanish}. We put $\widetilde{X}=\lim_{X\in \scr{C}^{\widetilde{Y}}_{Y/S}}X$ in the category of locally ringed spaces (\cite[9.12]{he2024purity}). Note that $\scr{C}^{\widetilde{Y},\mrm{fini}}_{Y/S}\subseteq \scr{C}^{\widetilde{Y}}_{Y/S}$ is initial and $\widetilde{Y}=\lim_{X\in \scr{C}^{\widetilde{Y},\mrm{fini}}_{Y/S}}X_\eta$ by \ref{lem:vanish}.(\ref{item:lem:vanish-3}).
	
	We claim that the stalk of any point of $\widetilde{X}$ is a pre-perfectoid $\ca{O}_L$-algebra. Indeed, for any $\widetilde{x}\in \widetilde{X}$, as $\widetilde{X}=\lim_{X\in \scr{C}^{\widetilde{Y}}_{Y/S}}\rz_{X_\eta}(X)$ by \ref{lem:vanish}.(\ref{item:lem:vanish-2}), the image of $\ca{O}_{\widetilde{X},\widetilde{x}}\to \ca{O}_{\widetilde{Y},\widetilde{y}}\to \kappa(\widetilde{y})$ is a valuation ring $W$ extension of $\ca{O}_L$ with fraction field $W[1/p]=\kappa(\widetilde{y})$ by \cite[10.3.(5), (10.2.5)]{he2024purity}. Moreover, it induces an isomorphism of $p$-adic completions $\widehat{\ca{O}_{\widetilde{X},\widetilde{x}}}\iso \widehat{W}$. Note that the localization $W_{\sqrt{pW}}$ of $W$ at the radical ideal generated by $p$ is a valuation ring of height $1$ extension of $\ca{O}_L$ with fraction field $W_{\sqrt{pW}}[1/p]=W[1/p]=\kappa(\widetilde{y})$ and that the canonical morphism $W\to W_{\sqrt{pW}}$ is an almost isomorphism (\cite[10.4]{he2024purity}). By assumption (\ref{item:thm:vanish-1}), $\kappa(\widetilde{y})$ is a pre-perfectoid field with respect to $W_{\sqrt{pW}}$. Hence, $\ca{O}_{\widetilde{X},\widetilde{x}}$ is a pre-perfectoid $\ca{O}_L$-algebra by definition \ref{para:notation-perfd}.
	
	The claim implies that for any integer $q>\dim(Y)\geq\sup_{X\in \scr{C}^{\widetilde{Y},\mrm{fini}}_{Y/S}}\dim(X_\eta)$, we have
	\begin{align}
		H^q(\widetilde{Y}_\et,\bb{Z}/p^n\bb{Z})=\colim_{X\in \scr{C}^{\widetilde{Y},\mrm{fini}}_{Y/S}}H^q(X_{\eta,\et},\bb{Z}/p^n\bb{Z})=0
	\end{align}
	for any $n\in\bb{N}$ by \ref{cor:fal-comp-perfd} and \cite[\Luoma{7}.5.6, \Luoma{6}.8.7.7]{sga4-2}.
	
	On the other hand, since the open immersion $j:\widetilde{Y}^{\triv}\to \widetilde{Y}$ is a scheme theoretically dominant morphism of normal schemes, we see that $j_{\et*}(\bb{Z}/p^n\bb{Z})=\bb{Z}/p^n\bb{Z}$ by checking the stalks at each geometric point. Hence, the assumption (\ref{item:thm:vanish-2}) implies that 
	\begin{align}
		H^q(\widetilde{Y}^{\triv}_\et,\bb{Z}/p^n\bb{Z})=H^q(\widetilde{Y}_\et,\bb{Z}/p^n\bb{Z})
	\end{align}
	for any integers $q,n\in\bb{N}$ by \ref{prop:open-et-coh}. This completes the proof.
\end{proof}

\begin{mylem}\label{lem:profet-component}
	Let $Y$ be a coherent normal scheme with finitely many irreducible components, $\widetilde{Y}^{\triv}$ a coherent scheme pro-finite \'etale over an open subset $Y^{\triv}$ of $Y$, $\widetilde{Y}$ the integral closure of $Y$ in $\widetilde{Y}^{\triv}$. Then, any connected component of $\widetilde{Y}$ is an integral normal scheme.
\end{mylem}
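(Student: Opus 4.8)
The plan is to reduce the claim about connected components of $\widetilde{Y}$ to the two classical facts that an integral closure in an \'etale extension is ``locally'' well-behaved, and that a normal scheme is the disjoint union of its irreducible components, which coincide with its connected components. First I would reduce to the affine case: the statement is local on $Y$, so we may assume $Y=\spec(A)$ with $A$ a normal ring having finitely many minimal primes, $Y^{\triv}=\spec(A[1/f])$ for some $f\in A$ (or more generally a quasi-compact open), and $\widetilde{Y}^{\triv}=\spec(B^{\triv})$ with $B^{\triv}$ a filtered colimit $\colim_\lambda B_\lambda$ of finite \'etale $A[1/f]$-algebras, each $B_\lambda$ being a finite product of normal domains (after refining, since $Y^{\triv}$ is normal with finitely many components, each finite \'etale cover is again such). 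Then $\widetilde{Y}=\spec(B)$ where $B=\colim_\lambda B'_\lambda$ and $B'_\lambda$ is the integral closure of $A$ in $B_\lambda$.

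Next I would establish that $B$ is a normal ring. Each $B'_\lambda$ is normal, being a finite product of integral closures of the normal domains that are the factors of $A$ in the normal domains that are the factors of $B_\lambda$; integral closures of normal domains in finite extensions of their fraction fields are normal domains. A filtered colimit of normal rings along injective transition maps is again normal (normality is a condition that can be checked on the total ring of fractions and is preserved under filtered colimits of normal domains; more carefully, since $B_\lambda\to B_{\lambda'}$ is finite \'etale hence $B'_\lambda\to B'_{\lambda'}$ is injective and integral, and each $B'_\lambda$ is a finite product of normal domains, the colimit $B$ is a reduced ring integrally closed in its total ring of fractions, i.e.\ normal). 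I would invoke the Stacks Project tags on normality of integral closures (\href{https://stacks.math.columbia.edu/tag/0307}{0307}, \href{https://stacks.math.columbia.edu/tag/035E}{035E}) and stability of normality under filtered colimits (\href{https://stacks.math.columbia.edu/tag/037D}{037D} or the analogous statement) for the routine parts.

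Once $\widetilde{Y}$ is known to be a normal scheme, the conclusion is immediate from the structure theory of normal schemes: a normal scheme is a disjoint union (topologically and scheme-theoretically) of its irreducible components, and for a normal scheme the connected components coincide with the irreducible components, each of which is an integral normal scheme (\href{https://stacks.math.columbia.edu/tag/033M}{033M}, \href{https://stacks.math.columbia.edu/tag/0357}{0357}). The one subtlety I should address is that $\widetilde{Y}$ a priori is not locally Noetherian, so ``irreducible component'' must be understood correctly; however $\widetilde{Y}$ is coherent by hypothesis, hence quasi-compact, and a normal coherent scheme still decomposes as a finite disjoint union of integral normal coherent schemes because its ring of global sections on each affine piece is a finite product of normal domains (coherence bounds the number of minimal primes; this is where I use that $Y$ has finitely many irreducible components, which passes to the finite covers $B'_\lambda$ and then to the colimit).

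\textbf{Main obstacle.} The delicate point is verifying that $B=\colim_\lambda B'_\lambda$ is normal \emph{and} has only finitely many connected components, rather than a possibly infinite (even if discrete) set of components; this is what genuinely uses the pro-\emph{finite}-\'etale and finite-irreducible-component hypotheses together. The normality in the colimit is standard, but one must be slightly careful that the transition maps $B'_\lambda\to B'_{\lambda'}$ induced on integral closures are still flat enough (they are at least injective and integral, with $\spec B'_{\lambda'}\to\spec B'_\lambda$ surjective by going-up), so that connected components behave well in the limit; I expect this bookkeeping — identifying $\pi_0(\widetilde{Y})=\lim_\lambda\pi_0(\spec B'_\lambda)$ as a profinite set and checking it is finite — to be the part requiring the most care, though no hard new idea beyond \href{https://stacks.math.columbia.edu/tag/04GS}{04GS}-style limit arguments and the finiteness input from $Y$.
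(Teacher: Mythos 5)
Your first step (normality of $\widetilde{Y}$ as a filtered colimit of normal rings along injective transition maps) is correct but is not where the content of the lemma lies, and your second step contains a genuine gap. You propose to conclude via the decomposition of a normal scheme into a disjoint union of integral normal schemes, supplying the required finiteness by claiming that "coherence bounds the number of minimal primes" and that finiteness of the set of irreducible components of $Y$ "passes to the colimit". Both claims are false: $\pi_0(\widetilde{Y})=\lim_\lambda\pi_0(Y_\lambda)$ is a profinite set that is typically infinite (already for the trivial pro-finite \'etale cover $\lim_n\coprod_{i=1}^{2^n}Y^{\triv}$ of $Y^{\triv}=Y$, and for towers of Shimura varieties, where the number of connected components grows without bound), and an affine normal scheme can have infinitely many minimal primes (e.g.\ $\spec(\prod_{n\in\bb{N}}k)$). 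The decomposition theorem you invoke requires locally finitely many irreducible components, and even the weaker statement you actually need --- that a connected normal qcqs scheme is irreducible --- fails in general: there exist connected affine schemes, all of whose local rings are domains, that are not integral (see the Examples chapter of the Stacks Project); such pathologies occur precisely when there are infinitely many irreducible components, which is the situation here. So normality of $\widetilde{Y}$ does not yield the conclusion, and the finiteness you flag as the "main obstacle" is not merely delicate but unavailable. (The preliminary reduction to $Y$ affine is also not justified as stated, since "connected component" and "integral" are not local notions, but this is minor by comparison.)

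The paper closes the gap by an argument that never needs $\pi_0(\widetilde{Y})$ to be finite and does not pass through global normality of $\widetilde{Y}$. For each generic point $\widetilde{\xi}$ of $\widetilde{Y}$, it identifies the irreducible component $\overline{\{\widetilde{\xi}\}}$ with the integral closure $Y^{\widetilde{\xi}}$ of $Y$ in $\widetilde{\xi}$, and writes it as the cofiltered limit of the irreducible components $Y^{\xi_\lambda}$ of the finite levels $Y_\lambda$. The finiteness hypothesis on $Y$ is used only at finite level: each $Y_\lambda$ is normal with finitely many irreducible components, so each $Y^{\xi_\lambda}$ is open and closed in $Y_\lambda$; hence $Y^{\widetilde{\xi}}$ is an intersection of clopen subschemes of $\widetilde{Y}$, therefore a union of connected components, therefore (being irreducible, hence connected) a single connected component --- and it is integral and normal because it is an integral closure of a normal scheme in a field. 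Your approach could be repaired along exactly these lines (each connected component of $\widetilde{Y}$ is a cofiltered limit of clopen integral normal pieces at finite level, hence the spectrum of a filtered colimit of normal domains), but that limit argument on clopen pieces is precisely what your write-up replaces with the false finiteness claim.
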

\begin{proof}
	As $Y$ is a finite disjoint union of integral normal schemes, we may assume that $Y^{\triv}$ is dense in $Y$. Let $f:\widetilde{Y}\to Y$ denote the canonical morphism. Since $f$ is pro-finite \'etale over $Y^{\triv}$ and $\widetilde{Y}^{\triv}=Y^{\triv}\times_Y\widetilde{Y}$ is a dense open subset of $\widetilde{Y}$, we have $f^{-1}(\ak{G}(Y))=f^{-1}(\ak{G}(Y^{\triv}))=\ak{G}(\widetilde{Y}^{\triv})=\ak{G}(\widetilde{Y})$ (\cite[3.5.(2)]{he2024purity}), where $\ak{G}(X)$ denotes the set of generic points of irreducible components of a scheme $X$.
	
	We write $\widetilde{Y}^{\triv}=\lim_{\lambda\in\Lambda}Y^{\triv}_\lambda$ as a cofiltered limit of finite \'etale $Y^{\triv}$-schemes and we put $Y_\lambda$ the integral closure of $Y$ in $Y^{\triv}_\lambda$. As the transition morphisms are finite \'etale, we see that the sets of generic points form a directed inverse system $(\ak{G}(Y_\lambda))_{\lambda\in\Lambda}$ with limit $\ak{G}(\widetilde{Y})$ by \cite[3.7]{he2024purity}. For any generic point $\widetilde{\xi}\in \ak{G}(\widetilde{Y})$ of $\widetilde{Y}$, we put $\xi_\lambda\in \ak{G}(Y_\lambda)$ its image in $Y_\lambda$. In particular, we have $\widetilde{\xi}=\lim_{\lambda\in\Lambda}\xi_\lambda$. Thus, the integral closure $Y^{\widetilde{\xi}}$ of $Y$ in $\widetilde{\xi}$ is the cofiltered limit of the integral closures $Y^{\xi_\lambda}$ of $Y$ in $\xi_\lambda$ (\cite[3.18]{he2024coh}). As $Y_\lambda\to Y$ is finite \'etale over $Y^{\triv}$, each $Y^{\xi_\lambda}$ is the irreducible component of $Y_\lambda$ with generic point $\xi_\lambda$. As $Y_\lambda$ is normal with finitely many irreducible components, we see that $Y^{\xi_\lambda}$ is an open and closed subscheme of $Y_\lambda$. Taking cofiltered limit, we see that $Y^{\widetilde{\xi}}$ is the intersection of open and closed subschemes of $\widetilde{Y}$. In particular, $Y^{\widetilde{\xi}}$ is a union of connected components of $\widetilde{Y}$ (\cite[\href{https://stacks.math.columbia.edu/tag/04PL}{04PL}]{stacks-project}). But $\widetilde{\xi}\to Y^{\widetilde{\xi}}$ is dominant, we see that $Y^{\widetilde{\xi}}$ is the irreducible component of $\widetilde{Y}$ with generic point $\widetilde{\xi}$ which is also a connected component. This shows that any irreducible component of $\widetilde{Y}$ is also a connected component, which completes the proof.
\end{proof}

\begin{mycor}\label{cor:vanish}
	Let $K$ be a complete discrete valuation field extension of $\bb{Q}_p$ with perfect residue field, $Y$ a proper smooth $K$-scheme, $D$ a normal crossings divisor on $Y$, $\widetilde{Y}^{\triv}$ a coherent scheme pro-finite \'etale over $Y^{\triv}=Y\setminus D$. Assume that the following conditions hold:
	\begin{enumerate}
		\renewcommand{\theenumi}{\roman{enumi}}
		\renewcommand{\labelenumi}{{\rm(\theenumi)}}
		\item For any irreducible component of $\widetilde{Y}^{\triv}$, we denote its generic point by $\spec(\ca{L})\in \widetilde{Y}^{\triv}$. Let $\spec(\ca{K})\in Y^{\triv}$ be the image of $\spec(\ca{L})\in \widetilde{Y}^{\triv}$ (so that it is the generic point of its irreducible component of $Y^{\triv}$ by {\rm\cite[3.5.(2)]{he2024purity}}). Then, $\ca{L}$ is a Galois extension of $\ca{K}$ and $\ca{G}=\gal(\ca{L}/\ca{K})$ is a $p$-adic analytic group. \label{item:cor:vanish-cond-1}
		\item Let $\overline{\ca{K}}$ be an algebraic closure of $\ca{L}$, $\overline{y}=\spec(\overline{\ca{F}})$ a point of the integral closure $Y^{\overline{\ca{K}}}$ of $Y$ in $\overline{\ca{K}}$, $\ca{O}_{\overline{\ca{F}}}$ a valuation ring of height $1$ extension of $\ca{O}_K$ with fraction field $\overline{\ca{F}}$. Then, the universal geometric Sen action of the $p$-adic analytic Galois extension $\ca{L}$ of $\ca{K}$ \eqref{eq:thm:sen-val-perfd-1},
		\begin{align}
			\varphi^{\mrm{geo}}_{\sen}|_{\ca{G},\overline{y}}: \ho_{\ca{O}_Y}(\Omega^1_{(Y,\scr{M}_Y)/K}(-1),\widehat{\overline{\ca{F}}})\longrightarrow \widehat{\overline{\ca{F}}}\otimes_{\bb{Q}_p}\lie(\ca{G}),
		\end{align}
		is injective.\label{item:cor:vanish-cond-2}
	\end{enumerate}
	Then, the conditions in {\rm\ref{thm:vanish}} hold for any point $\widetilde{y}_{\overline{K}}\in\widetilde{Y}_{\overline{K}}$, where $\widetilde{Y}$ is the integral closure of $Y$ in $\widetilde{Y}^{\triv}$, $\widetilde{Y}_{\overline{K}}=\spec(\overline{K})\times_{\spec(K)}\widetilde{Y}$ and $\overline{K}$ is an algebraic closure of $K$. More precisely, we have:
	\begin{enumerate}
		\renewcommand{\labelenumi}{{\rm(\theenumi)}}
		\item The residue field $\kappa(\widetilde{y}_{\overline{K}})$ of $\widetilde{y}_{\overline{K}}\in\widetilde{Y}_{\overline{K}}$ is a pre-perfectoid field with respect to any valuation ring $W$ of height $1$ extension of $\ca{O}_{\overline{K}}$ with fraction field $W[1/p]=\kappa(\widetilde{y}_{\overline{K}})$.\label{item:cor:vanish-1}
		\item For any regular system of parameters $\{t_1,\dots,t_s\}$ of the strict Henselization $\ca{O}_{Y_{\overline{K}},y_{\overline{K}}}^{\mrm{sh}}$ of $\ca{O}_{Y_{\overline{K}},y_{\overline{K}}}$ (where $y_{\overline{K}}\in Y_{\overline{K}}$ is the image of $\widetilde{y}_{\overline{K}}\in\widetilde{Y}_{\overline{K}}$) such that $D_{\overline{K}}$ is defined by $t_1\cdots t_r=0$ over $\ca{O}_{Y_{\overline{K}},y_{\overline{K}}}^{\mrm{sh}}$ for some integer $0\leq r\leq s$, the elements $t_1,\dots,t_r$ admit compatible systems of $p$-power roots in the strict Henselization $\ca{O}_{\widetilde{Y}_{\overline{K}},\widetilde{y}_{\overline{K}}}^{\mrm{sh}}$ of $\ca{O}_{\widetilde{Y}_{\overline{K}},\widetilde{y}_{\overline{K}}}$.\label{item:cor:vanish-2}
	\end{enumerate}
	In particular, for any integer $q>\dim(Y)$ and $n\in\bb{N}$, we have
	\begin{align}
		H^q(\widetilde{Y}^{\triv}_{\overline{K},\et},\bb{Z}/p^n\bb{Z})=0,
	\end{align}
	where $\widetilde{Y}^{\triv}_{\overline{K}}=\spec(\overline{K})\times_{\spec(K)}\widetilde{Y}^{\triv}$.
\end{mycor}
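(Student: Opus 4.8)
The plan is to reduce \ref{cor:vanish} to \ref{thm:vanish} applied over the base $L=\overline{K}$ (an algebraically closed valuation field of height $1$ extension of $\bb{Q}_p$, indeed a complete discretely valued one). First I would verify the geometric hypotheses: $Y_{\overline{K}}$ is proper smooth over $\overline{K}$, $D_{\overline{K}}$ is a normal crossings divisor, $\widetilde{Y}^{\triv}_{\overline{K}}=Y^{\triv}_{\overline{K}}\times_{Y^{\triv}}\widetilde{Y}^{\triv}$ is pro-finite \'etale over $Y^{\triv}_{\overline{K}}$, and $\widetilde{Y}_{\overline{K}}=\spec(\overline{K})\times_{\spec(K)}\widetilde{Y}$ is the integral closure of $Y_{\overline{K}}$ in $\widetilde{Y}^{\triv}_{\overline{K}}$ (base change commutes with integral closure here since $K\to\overline{K}$ is ind-\'etale, so one invokes a statement like \ref{lem:profet-component} together with \cite[3.18]{he2024coh}). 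Thus it suffices to check conditions (\ref{item:thm:vanish-1}) and (\ref{item:thm:vanish-2}) of \ref{thm:vanish} at every point $\widetilde{y}_{\overline{K}}\in\widetilde{Y}_{\overline{K}}$; these are exactly assertions (\ref{item:cor:vanish-1}) and (\ref{item:cor:vanish-2}) in the statement, and the ``in particular'' clause is then immediate from \ref{thm:vanish}.

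Next I would fix a point $\widetilde{y}_{\overline{K}}\in\widetilde{Y}_{\overline{K}}$ and produce the data of \ref{para:notation-sen-val}. By \ref{lem:profet-component}, the connected component of $\widetilde{Y}^{\triv}$ (hence of $\widetilde{Y}$) through the image of $\widetilde{y}_{\overline{K}}$ is integral normal; let $\spec(\ca{L})$ be its generic point, $\spec(\ca{K})\in Y^{\triv}$ the image, and note $\ca{L}$ is Galois over $\ca{K}$ contained in $\ca{K}_{\mrm{ur}}$ (with respect to $(Y^{\triv},Y)$) with $\ca{G}=\gal(\ca{L}/\ca{K})$ a $p$-adic analytic group by hypothesis (\ref{item:cor:vanish-cond-1}). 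One may replace $Y$ by the irreducible component carrying this point (still proper smooth over $K$, with $D$ restricting to a normal crossings divisor). Then choose an algebraic closure $\overline{\ca{K}}$ of $\ca{L}$, a point $\overline{y}=\spec(\overline{\ca{F}})\in Y^{\overline{\ca{K}}}$ lying over $\widetilde{y}_{\overline{K}}$ with a height-$1$ valuation ring $\ca{O}_{\overline{\ca{F}}}\supseteq\ca{O}_K$ of fraction field $\overline{\ca{F}}$; such a choice exists because $Y^{\overline{\ca{K}}}\to\widetilde{Y}$ is integral and one can extend any height-$1$ valuation on $\kappa(\widetilde{y}_{\overline{K}})/\ca{O}_{\overline{K}}$ to $\overline{\ca{F}}$. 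Since $\ca{L}$ contains a compatible system of primitive $p$-power roots of unity (it contains $\overline{K}$, because the component of $\widetilde{Y}$ through $\widetilde{y}_{\overline{K}}$ maps to a point of $\widetilde{Y}_{\overline{K}}$, so $\overline{K}\subseteq\kappa(\widetilde{y}_{\overline{K}})\subseteq\overline{\ca{F}}$ and $\overline{K}\subseteq\ca{L}^{\mrm{sh}}$... more carefully: the generic fibre over $\overline{K}$ forces $\overline{K}\subseteq\ca{L}$), the hypothesis (\ref{item:cor:vanish-cond-2}) gives that $\varphi^{\mrm{geo}}_{\sen}|_{\ca{G},\overline{y}}$ is injective.

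Now \ref{thm:sen-val-perfd} applies verbatim: with $\{t_1,\dots,t_d\}$ a regular system of parameters of $\ca{O}_{Y,y}^{\mrm{sh}}$ cutting out $D$ by $t_1\cdots t_r=0$, injectivity of the universal geometric Sen action yields that $\ca{F}_{\ca{L}}=\kappa(y_{\ca{L}})$ is a pre-perfectoid field with respect to $\ca{O}_{\ca{F}_{\ca{L}}}=\ca{F}_{\ca{L}}\cap\ca{O}_{\overline{\ca{F}}}$, and that $t_1,\dots,t_r$ admit compatible systems of $p$-power roots in $\ca{O}_{Y^{\ca{L}},y_{\ca{L}}}^{\mrm{sh}}$. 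To conclude (\ref{item:cor:vanish-1}) I would identify $\kappa(\widetilde{y}_{\overline{K}})$ with (a henselization/localization of) $\ca{F}_{\ca{L}}$: the component of $\widetilde{Y}$ through our point has function field $\ca{L}$, so its residue field at the chosen point is the relevant quotient, and pre-perfectoidness of a valuation field of height $1$ is insensitive to ind-\'etale faithfully flat extensions of the valuation ring (cf.\ \cite[5.37.(2)]{he2024coh}), which also handles passage between $\ca{F}_{\ca{L}}$, $\ca{F}_{\ca{L}}^{\mrm{h}}$, and $\kappa(\widetilde{y}_{\overline{K}})$ and the independence from the choice of $W$. For (\ref{item:cor:vanish-2}): a regular system of parameters of $\ca{O}_{Y_{\overline{K}},y_{\overline{K}}}^{\mrm{sh}}$ agrees (after the ind-\'etale base change $K\to\overline{K}$, which is an isomorphism on strict henselizations in the relevant sense) with one of $\ca{O}_{Y,y}^{\mrm{sh}}$, and $\ca{O}_{\widetilde{Y}_{\overline{K}},\widetilde{y}_{\overline{K}}}^{\mrm{sh}}=\ca{O}_{Y^{\ca{L}},y_{\ca{L}}}^{\mrm{sh}}$ (both are the strict henselization at $\overline{y}$ of the integral closure of $Y$ in $\ca{L}$, using \eqref{eq:para:notation-neighborhood-2-2}), so the $p$-power roots produced by \ref{thm:sen-val-perfd} land where required. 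The main obstacle I anticipate is bookkeeping: matching the various residue fields / strict henselizations ($\kappa(\widetilde{y}_{\overline{K}})$ versus $\ca{F}_{\ca{L}}$ versus $\ca{F}_{\ca{L}}^{\mrm{h}}$, and $\ca{O}_{\widetilde{Y}_{\overline{K}},\widetilde{y}_{\overline{K}}}^{\mrm{sh}}$ versus $\ca{O}_{Y^{\ca{L}},y_{\ca{L}}}^{\mrm{sh}}$) and checking that the pre-perfectoid property and the existence of $p$-power roots are preserved under the ind-\'etale (hence almost weakly \'etale faithfully flat) transitions among them; none of this is deep, but it must be done compatibly so that the hypotheses of \ref{thm:vanish} are literally verified. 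Finally, the cohomology vanishing follows by applying \ref{thm:vanish} over $L=\overline{K}$ to $Y_{\overline{K}}$, $D_{\overline{K}}$, $\widetilde{Y}^{\triv}_{\overline{K}}$.
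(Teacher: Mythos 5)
Your overall strategy matches the paper's: reduce to \ref{thm:vanish} over $\overline{K}$, use \ref{lem:profet-component} to isolate the connected component through $\widetilde{y}_{\overline{K}}$, apply \ref{thm:sen-val-perfd} at a chosen $\overline{y}$, and then transfer the conclusions across the algebraic extension $\kappa(\widetilde{y}_{\overline{K}})/\ca{F}_{\ca{L}}$ by almost purity and across the identifications of strict Henselizations. However, there is one genuine gap: \ref{thm:sen-val-perfd} cannot be applied ``verbatim'' because it requires that $\ca{L}$ contain a compatible system of primitive $p$-power roots of unity, and the hypotheses of the corollary do not guarantee this. Your justification --- that ``the generic fibre over $\overline{K}$ forces $\overline{K}\subseteq\ca{L}$'' --- is false: $\widetilde{Y}^{\triv}$ is a $K$-scheme pro-finite \'etale over $Y^{\triv}$, and the function field $\ca{L}$ of one of its irreducible components need not contain $\overline{K}$ or even $\zeta_p$ (already $\widetilde{Y}^{\triv}=Y^{\triv}$ gives $\ca{L}=\ca{K}$ with field of constants $K$). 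Passing to $\ca{L}_{\overline{K}}=\overline{K}\ca{L}$ does not help directly either, since $\gal(\overline{K}\ca{L}/\ca{K})$ is no longer a $p$-adic analytic group.

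The paper fixes this by a preliminary reduction that your proposal omits: replace $\widetilde{Y}^{\triv}$ by $\widetilde{Y}'^{\triv}=\spec(K_\infty)\times_{\spec(K)}\widetilde{Y}^{\triv}$ with $K_\infty=\bigcup_n K(\zeta_{p^n})$, so that the new $\ca{L}'=K_\infty\ca{L}$ contains $\mu_{p^\infty}$. One must then check (a) that the hypotheses persist: $\ca{G}'=\gal(\ca{L}'/\ca{K})$ is still $p$-adic analytic because $\gal(K_\infty/K)$ is, and the injectivity of $\varphi^{\mrm{geo}}_{\sen}|_{\ca{G}',\overline{y}}$ follows from that of $\varphi^{\mrm{geo}}_{\sen}|_{\ca{G},\overline{y}}$ via the functoriality diagram of \ref{rem:sen-val-pt}; and (b) that the conclusions descend: since $K_\infty\otimes_K\overline{K}=\colim_n\prod_{\gal(K_n/K)}\overline{K}$, the scheme $\widetilde{Y}^{\triv}_{\overline{K}}$ is an intersection of open and closed subschemes of $\widetilde{Y}'^{\triv}_{\overline{K}}$, so the relevant strict Henselizations and residue fields coincide. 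Without this step your application of \ref{thm:sen-val-perfd} is not licensed. (A minor separate slip: $\overline{K}$ is neither complete nor discretely valued, though this is harmless since \ref{thm:vanish} only requires an algebraically closed valuation field of height $1$.)
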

\begin{proof}
	We claim that we may assume that $\ca{L}$ contains a compatible system of primitive $p$-power roots of unity $(\zeta_{p^n})_{n\in\bb{N}}$. Indeed, firstly we check that if the conditions (\ref{item:cor:vanish-cond-1}) and (\ref{item:cor:vanish-cond-2}) of \ref{cor:vanish} hold for $\widetilde{Y}^{\triv}$ then they also hold for $\widetilde{Y}'^{\triv}=\spec(K_\infty)\times_{\spec(K)}\widetilde{Y}^{\triv}$ (where $K_\infty=\bigcup_{n\in\bb{N}}K(\zeta_{p^n})$) in the following steps: 
	\begin{enumerate}
		\setcounter{enumi}{0}
		\renewcommand{\theenumi}{\alph{enumi}}
		\renewcommand{\labelenumi}{{\rm(\theenumi)}}
		\item Since $\widetilde{Y}'^{\triv}$ is pro-finite \'etale over $\widetilde{Y}^{\triv}$, it is also pro-finite \'etale over $Y^{\triv}$.
		\item For any irreducible component of $\widetilde{Y}'^{\triv}$, we denote its generic point by $\spec(\ca{L}')\in \widetilde{Y}'^{\triv}$. Let $\spec(\ca{L})\in \widetilde{Y}^{\triv}$ be the image of $\spec(\ca{L}')$, which is thus the generic point of its irreducible component of $\widetilde{Y}^{\triv}$. Notice that $\ca{L}'=K_\infty\ca{L}$. As $\ca{G}=\gal(\ca{L}/\ca{K})$ and $\gal(K_\infty/K)$ are $p$-adic analytic groups, so is $\ca{G}'=\gal(\ca{L}'/\ca{K})$ (\cite[3.11]{he2022sen}). 
		\item As $\overline{\ca{K}}$ is also an algebraic closure of $\ca{L}'$, applying the functoriality of the universal geometric Sen action \ref{rem:sen-val-pt} to the inclusion $\ca{L}\subseteq \ca{L}'$ (with all arrows in \eqref{eq:rem:sen-val-pt-1} being identity), we obtain a canonical commutative diagram
		\begin{align}
			\xymatrix{
				\ho_{\ca{O}_Y}(\Omega^1_{(Y,\scr{M}_Y)/K}(-1),\widehat{\overline{\ca{F}}})\ar[rrr]^-{\varphi^{\mrm{geo}}_{\sen}|_{\ca{G}',\overline{y}}}\ar@{=}[d]&&& \widehat{\overline{\ca{F}}}\otimes_{\bb{Q}_p}\lie(\ca{G}')\ar[d]\\
				\ho_{\ca{O}_Y}(\Omega^1_{(Y,\scr{M}_Y)/K}(-1),\widehat{\overline{\ca{F}}})\ar[rrr]^-{\varphi^{\mrm{geo}}_{\sen}|_{\ca{G},\overline{y}}}&&& \widehat{\overline{\ca{F}}}\otimes_{\bb{Q}_p}\lie(\ca{G}).
			}
		\end{align}	
		Thus, the injectivity of $\varphi^{\mrm{geo}}_{\sen}|_{\ca{G},\overline{y}}$ implies that of $\varphi^{\mrm{geo}}_{\sen}|_{\ca{G}',\overline{y}}$.
	\end{enumerate}
	Suppose that the conclusions (\ref{item:cor:vanish-1}) and (\ref{item:cor:vanish-2}) of \ref{cor:vanish} hold for $\widetilde{Y}'^{\triv}$. Then, we have
	\begin{align}
		\widetilde{Y}'^{\triv}_{\overline{K}}=\spec(K_\infty)\times_{\spec(K)}\widetilde{Y}^{\triv}_{\overline{K}}=\spec(K_\infty\otimes_K\overline{K})\times_{\spec(\overline{K})}\widetilde{Y}^{\triv}_{\overline{K}}.
	\end{align}
	Since $K_\infty\otimes_K\overline{K}=\colim_{n\in\bb{N}}\prod_{\gal(K_n/K)}\overline{K}$, we see that
	\begin{align}
		\widetilde{Y}'^{\triv}_{\overline{K}}=\lim_{n\in\bb{N}}\coprod_{\gal(K_n/K)}\widetilde{Y}^{\triv}_{\overline{K}},
	\end{align}
	which implies that $\widetilde{Y}^{\triv}_{\overline{K}}$ is isomorphic to an intersection of open and closed subschemes of $\widetilde{Y}'^{\triv}_{\overline{K}}$. In particular, the strict Henselization at a geometric point of $\widetilde{Y}^{\triv}_{\overline{K}}$ coincides with that of $\widetilde{Y}'^{\triv}_{\overline{K}}$. Hence, the conclusions (\ref{item:cor:vanish-1}) and (\ref{item:cor:vanish-2}) of \ref{cor:vanish} also hold for $\widetilde{Y}^{\triv}$. This proves the claim, and we shall assume that $\ca{L}$ contains a compatible system of primitive $p$-power roots of unity in the following.
	
	Back to the verification of (\ref{item:cor:vanish-1}) and (\ref{item:cor:vanish-2}), firstly note that the base change $\widetilde{Y}_{\overline{K}}$ is the integral closure of $Y_{\overline{K}}$ in $\widetilde{Y}^{\triv}_{\overline{K}}$, see \cite[3.17, 3.18]{he2024coh}. Let $\spec(\ca{L}_{\overline{K}})\in\widetilde{Y}_{\overline{K}}$ be the generic point of the irreducible component of $\widetilde{Y}_{\overline{K}}$ containing $\widetilde{y}_{\overline{K}}$. Note that the integral closure $Y^{\ca{L}_{\overline{K}}}$ of $Y$ in $\ca{L}_{\overline{K}}$ is a connected component of $\widetilde{Y}_{\overline{K}}$ by \ref{lem:profet-component}. We take an algebraic closure $\overline{\ca{K}}$ of $\ca{L}_{\overline{K}}$. As $Y^{\overline{\ca{K}}}\to Y^{\ca{L}_{\overline{K}}}$ is surjective, we take a point $\overline{y}=\spec(\overline{\ca{F}})\in Y^{\overline{\ca{K}}}$ lying over $\widetilde{y}_{\overline{K}}\in\widetilde{Y}_{\overline{K}}$. Then, we take a valuation ring $\ca{O}_{\overline{\ca{F}}}$ of height $1$ extension of $W\subseteq \kappa(\widetilde{y}_{\overline{K}})$ with fraction field $\overline{\ca{F}}$. We put $\spec(\ca{L})\in \widetilde{Y}^{\triv}$ (resp. $\spec(\ca{K})\in Y^{\triv}$) the image of $\spec(\ca{L}_{\overline{K}})\in \widetilde{Y}^{\triv}_{\overline{K}}$, and we put $y_{\ca{L}}=\spec(\ca{F}_{\ca{L}})\in\widetilde{Y}$ (resp. $y=\spec(\ca{F})$) the image of $\widetilde{y}_{\overline{K}}\in\widetilde{Y}_{\overline{K}}$. Note that the integral closure $Y^{\ca{L}}$ (resp. $Y^{\ca{K}}$) of $Y$ in $\ca{L}$ (resp. $\ca{K}$) is a connected component of $\widetilde{Y}$ (resp. $Y$) by \ref{lem:profet-component} and thus pro-finite \'etale over $Y^{\triv}$. Therefore, we are in the situation of \ref{para:notation-sen-val}.
	\begin{align}
		\xymatrix{
			\overline{y}=\spec(\overline{\ca{F}})\ar[d]\ar[r]&\widetilde{y}_{\overline{K}}=\spec(\kappa(\widetilde{y}_{\overline{K}}))\ar[d]\ar[r]&y_{\ca{L}}=\spec(\ca{F}_{\ca{L}})\ar[d]\ar[r]&y=\spec(\ca{F})\ar[d]\\
			Y^{\overline{\ca{K}}}\ar[r]&Y^{\ca{L}_{\overline{K}}}\ar[d]\ar[r]&Y^{\ca{L}}\ar[d]\ar[r]&Y^{\ca{K}}\ar[d]\\
			&\widetilde{Y}_{\overline{K}}\ar[r]&\widetilde{Y}\ar[r]&Y
		}
	\end{align}
	Then, we deduce from the assumptions (\ref{item:cor:vanish-cond-1}) and (\ref{item:cor:vanish-cond-2}) and \ref{thm:sen-val-perfd} that $\ca{F}_{\ca{L}}$ is a pre-perfectoid field with respect to the valuation ring $\ca{O}_{\ca{F}_{\ca{L}}}=\ca{F}_{\ca{L}}\cap\ca{O}_{\overline{\ca{F}}}=\ca{F}_{\ca{L}}\cap W$, and that $t_1,\dots,t_r\in \ca{O}_{Y_{\overline{K}},y_{\overline{K}}}^{\mrm{sh}}=\ca{O}_{Y,y}^{\mrm{sh}}$ admit compatible systems of $p$-power roots in the strict Henselization $\ca{O}_{Y^{\ca{L}},y_{\ca{L}}}^{\mrm{sh}}$ of $Y^{\ca{L}}$ at $\overline{y}$. As $\overline{K}$ is ind-finite \'etale over $K$, we see that $Y^{\ca{L}_{\overline{K}}}$ is a connected component of the base change $Y^{\ca{L}}_{\overline{K}}$ by \ref{lem:profet-component}. In particular, $\kappa(\widetilde{y}_{\overline{K}})$ is algebraic over $\ca{F}_{\ca{L}}$ and we have $\ca{O}_{Y^{\ca{L}_{\overline{K}}},\widetilde{y}_{\overline{K}}}^{\mrm{sh}}=\ca{O}_{Y^{\ca{L}},y_{\ca{L}}}^{\mrm{sh}}$. Therefore, $W$ is pre-perfectoid by almost purity (\cite[6.6.2]{gabber2003almost}, see also \cite[7.12]{he2022sen}) and $t_1,\dots,t_r$ admit compatible systems of $p$-power roots in $\ca{O}_{\widetilde{Y}_{\overline{K}},\widetilde{y}_{\overline{K}}}^{\mrm{sh}}=\ca{O}_{Y^{\ca{L}_{\overline{K}}},\widetilde{y}_{\overline{K}}}^{\mrm{sh}}$ (where the equality follows from the fact that $Y^{\ca{L}_{\overline{K}}}$ is an intersection of open and closed subscheme of $\widetilde{Y}_{\overline{K}}$ by \ref{lem:profet-component}). This verifies the conditions in \ref{thm:vanish} for any point $\widetilde{y}_{\overline{K}}$ of $\widetilde{Y}_{\overline{K}}$.
	
	Finally, the ``in particular" part follows directly from \ref{thm:vanish}.
\end{proof}

\section{Comparison with Rodr\'iguez Camargo's Construction of Geometric Sen Operators}\label{sec:camargo}
We compare our construction of the universal geometric Sen action with Rodr\'iguez Camargo's \cite{camargo2022sen} at the geometric valuative points (see \ref{prop:camargo-sen-comp}). We refer to \cite{dllz2023log} for a systematic development of the theory of log adic spaces and their pro-Kummer \'etale sites.

\begin{mypara}
	In this section, we fix a complete discrete valuation field $K$ extension of $\bb{Q}_p$ with perfect residue field, and an algebraic closure $\overline{K}$ of $K$ with $p$-adic completion $C=\widehat{\overline{K}}$. Then, for any $K$-scheme $X$ locally of finite type, we put $X^{\mrm{ad}}=\spa(K,\ca{O}_K)\times_{\spec(K)}X$ the analytification of $X$ as an adic space (\cite[3.8]{huber1994general}). In particular, there is a canonical morphism of locally ringed spaces $X^{\mrm{ad}}\to X$.
	
	For any integers $0\leq c\leq d$ and $n>0$, we put
	\begin{align}
		\bb{T}^{c,d-c}_n=\spec(\bb{Z}[T_1^{\pm\frac{1}{n}},\dots,T_c^{\pm\frac{1}{n}},T_{c+1}^{\frac{1}{n}},\dots,T_d^{\frac{1}{n}}]).
	\end{align}
	We obtain a directed inverse system of schemes $(\bb{T}^{c,d-c}_n)_{n\in\bb{N}_{>0}}$, and we put $\bb{T}^{c,d-c}_\infty=\lim_{n\in\bb{N}_{>0}}\bb{T}^{c,d-c}_n$ and $\bb{T}^{c,d-c}_{p^\infty}=\lim_{n\in\bb{N}}\bb{T}^{c,d-c}_{p^n}$. Consider the analytifications over $(K,\ca{O}_K)$ and over $(C,\ca{O}_C)$:
	\begin{align}
		(\bb{T}^{c,d-c}_{n,K})^{\mrm{ad}}&=\spa(K\langle T_1^{\pm\frac{1}{n}},\dots,T_c^{\pm\frac{1}{n}},T_{c+1}^{\frac{1}{n}},\dots,T_d^{\frac{1}{n}}\rangle, \ca{O}_K\langle T_1^{\pm\frac{1}{n}},\dots,T_c^{\pm\frac{1}{n}},T_{c+1}^{\frac{1}{n}},\dots,T_d^{\frac{1}{n}}\rangle),\\
		(\bb{T}^{c,d-c}_{n,C})^{\mrm{ad}}&=\spa(C\langle T_1^{\pm\frac{1}{n}},\dots,T_c^{\pm\frac{1}{n}},T_{c+1}^{\frac{1}{n}},\dots,T_d^{\frac{1}{n}}\rangle, \ca{O}_C\langle T_1^{\pm\frac{1}{n}},\dots,T_c^{\pm\frac{1}{n}},T_{c+1}^{\frac{1}{n}},\dots,T_d^{\frac{1}{n}}\rangle).
	\end{align}
	We still denote by $(\bb{T}^{c,d-c}_{\infty,C})^{\mrm{ad}}$ (resp. $(\bb{T}^{c,d-c}_{p^\infty,C})^{\mrm{ad}}$) the adic space of the perfectoid affinoid algebra (cf. \cite[5.20]{scholze2012perfectoid})
	\begin{align}
		&(C\langle T_1^{\pm\frac{1}{\infty}},\dots,T_c^{\pm\frac{1}{\infty}},T_{c+1}^{\frac{1}{\infty}},\dots,T_d^{\frac{1}{\infty}}\rangle, \ca{O}_C\langle T_1^{\pm\frac{1}{\infty}},\dots,T_c^{\pm\frac{1}{\infty}},T_{c+1}^{\frac{1}{\infty}},\dots,T_d^{\frac{1}{\infty}}\rangle)\\
		\trm{(resp. }&(C\langle T_1^{\pm\frac{1}{p^\infty}},\dots,T_c^{\pm\frac{1}{p^\infty}},T_{c+1}^{\frac{1}{p^\infty}},\dots,T_d^{\frac{1}{p^\infty}}\rangle, \ca{O}_C\langle T_1^{\pm\frac{1}{p^\infty}},\dots,T_c^{\pm\frac{1}{p^\infty}},T_{c+1}^{\frac{1}{p^\infty}},\dots,T_d^{\frac{1}{p^\infty}}\rangle)\trm{)}\nonumber
	\end{align}
	which is the $p$-adic completion of the colimits of the affinoid algebras of $(\bb{T}^{c,d-c}_{n,C})^{\mrm{ad}}$. We omit the subscript $n$ when $n=1$.
\end{mypara}

\begin{mypara}\label{para:camargo-sen}
	Let $(X,\scr{M}_X)$ be an fs log smooth adic space over $\spa(K,\ca{O}_K)$ with log structure defined by a normal crossings divisor $D$ (\cite[2.3.17]{dllz2023log}). We denote by $X_\proket$ the pro-Kummer \'etale site of $(X,\scr{M}_X)$ and by $\widehat{\ca{O}}_{X_\proket}$ the completed structural sheaf (\cite[5.1.2, 5.4.1]{dllz2023log}). Let $\widetilde{X}$ be a $\ca{G}$-torsor over $X_\proket$ under a compact $p$-adic analytic group $\ca{G}$. For any finite free $\bb{Q}_p$-representation $V$ of $\ca{G}$ (\ref{defn:repn}), we denote by $V_\ket$ the finite locally free $\bb{Q}_p$-module over $X_\proket$ defined by the descent along $\widetilde{X}\to X$ (which is characterized by the $\ca{G}$-equivariant identity $V_\ket(\widetilde{X})=V$). Then, Rodr\'iguez Camargo defines a Higgs field on the finite locally free $\widehat{\ca{O}}_{X_\proket}$-module $V_\ket\otimes_{\bb{Q}_p}\widehat{\ca{O}}_{X_\proket}$ \cite[Theorem 3.3.2]{camargo2022sen} (see also \cite[Theorem 5.2.1]{camargo2022completed}),
	\begin{align}\label{para:camargo-sen-1}
		\theta_V:V_\ket\otimes_{\bb{Q}_p}\widehat{\ca{O}}_{X_\proket}\longrightarrow V_\ket\otimes_{\bb{Q}_p}\widehat{\ca{O}}_{X_\proket}\otimes_{\ca{O}_X}\Omega^1_{(X,\scr{M}_X)/K}(-1),
	\end{align}
	or equivalently a homomorphism of finite locally free $\widehat{\ca{O}}_{X_\proket}$-modules with commuting images,
	\begin{align}\label{para:camargo-sen-2}
		\varphi_{\mrm{RC}}^{\geo}|_V:\ca{H}om_{\ca{O}_X}(\Omega^1_{(X,\scr{M}_X)/K}(-1),\widehat{\ca{O}}_{X_\proket})\longrightarrow \ca{E}nd_{\widehat{\ca{O}}_{X_\proket}}(V_\ket\otimes_{\bb{Q}_p}\widehat{\ca{O}}_{X_\proket}),
	\end{align}
	which is functorial in $V$ and Kummer \'etale local on $X$. 
	
	More concretely, assume that $X$ admits a toric chart, i.e., an \'etale morphism $X\to (\bb{T}^{c,d-c}_K)^{\mrm{ad}}$ that is the composition of a finite sequence of finite \'etale morphisms and rational localizations such that $D\subseteq X$ is defined by $T_{c+1}\cdots T_d=0$ (such $X$ forms a topological generating family of the \'etale site of $X$ by \cite[3.1.13]{dllz2023log}). For any $n\in\bb{N}_{>0}\cup\{p^\infty,\infty\}$, we denote by $X_{n,C}\to X_C$ the (pro-)Kummer \'etale covering given by the base change of $(\bb{T}^{c,d-c}_{n,C})^{\mrm{ad}}\to (\bb{T}^{c,d-c}_C)^{\mrm{ad}}$ along the toric chart $X_C\to (\bb{T}^{c,d-c}_C)^{\mrm{ad}}$ (\cite[\textsection3.2.1]{camargo2022sen}). Note that for any integer $N$ prime to $p$, $X_{p^\infty N,C}\to X_{N,C}$ is a $\bb{Z}_p^d$-torsor regarded as a sheaf over $X_{N,C,\proket}$ and its quotient by $(p^m\bb{Z}_p)^d$ is $X_{p^mN,C}$. We put
	\begin{align}\label{para:camargo-sen-3}
		B_n=\widehat{\ca{O}}_{X_\proket}(X_{n,C}).
	\end{align}
	Then, $(V_\ket\otimes_{\bb{Q}_p}\widehat{\ca{O}}_{X_\proket})(X_{p^\infty,C})$ is a finite free $B_{p^\infty}$-representation of $\bb{Z}_p^d$ and for $m\in\bb{N}$ large enough, its submodule $V'$ of $(p^m\bb{Z}_p)^d$-analytic vectors is a finite free $B_{p^m}$-representation of $\bb{Z}_p^d$ \cite[Proposition 2.2.14, Theorem 2.4.4]{camargo2022sen} such that
	\begin{align}\label{para:camargo-sen-4}
		(V_\ket\otimes_{\bb{Q}_p}\widehat{\ca{O}}_{X_\proket})(X_{p^\infty,C})=B_{p^\infty}\otimes_{B_{p^m}}V'
	\end{align}
	and that for any $v'\in V'$, we have
	\begin{align}\label{para:camargo-sen-5}
		\theta_V|_{X_{p^\infty,C}}:(V_\ket\otimes_{\bb{Q}_p}\widehat{\ca{O}}_{X_\proket})(X_{p^\infty,C})&\longrightarrow (V_\ket\otimes_{\bb{Q}_p}\widehat{\ca{O}}_{X_\proket})(X_{p^\infty,C})\otimes_{\ca{O}_X}\Omega^1_{(X,\scr{M}_X)/K}(-1),\\
		1\otimes v'&\longmapsto \sum_{i=1}^d 1\otimes \varphi_{\partial_i}|_{V'}(v') \otimes \df\log(t_i)\otimes\zeta^{-1},\nonumber
	\end{align}
	where $(\partial_1,\dots,\partial_d)$ is the standard basis of the Lie algebra $\lie(\bb{Z}_p^d)$, $\varphi|_{V'}:\lie(\bb{Z}_p^d)\to \mrm{End}_{B_{p^m}}(V')$ is the infinitesimal action, $\df\log(t_i)\in \Omega^1_{(X,\scr{M}_X)/K}$ denotes the pullback of $\df\log(T_i)$ via the toric chart $X\to (\bb{T}^{c,d-c}_K)^{\mrm{ad}}$, and $\zeta=(\zeta_{p^n})_{n\in\bb{N}}$ is a basis of $\bb{Z}_p(1)$.
	
	Furthermore, Rodr\'iguez Camargo defines a homomorphism of finite locally free $\widehat{\ca{O}}_{X_\proket}$-modules \cite[Theorem 3.3.4]{camargo2022sen} (see also \cite[Theorem 5.2.1]{camargo2022completed}),
	\begin{align}\label{para:camargo-sen-6}
		\varphi_{\mrm{RC}}^{\geo}|_{\widetilde{X}}: \ca{H}om_{\ca{O}_X}(\Omega^1_{(X,\scr{M}_X)/K}(-1),\widehat{\ca{O}}_{X_\proket})\longrightarrow\widehat{\ca{O}}_{X_\proket}\otimes_{\bb{Q}_p}\lie(\ca{G})_\ket,
	\end{align}
	which is functorial in $\ca{G}$, Kummer \'etale local on $X$, and makes the following diagram commutative for any finite free $\bb{Q}_p$-representation $V$ of $\ca{G}$,
	\begin{align}\label{para:camargo-sen-7}
		\xymatrix{
			\ca{H}om_{\ca{O}_X}(\Omega^1_{(X,\scr{M}_X)/K}(-1),\widehat{\ca{O}}_{X_\proket})\ar[r]^-{\varphi_{\mrm{RC}}^{\geo}|_{\widetilde{X}}}\ar[d]_-{\varphi_{\mrm{RC}}^{\geo}|_V}&\widehat{\ca{O}}_{X_\proket}\otimes_{\bb{Q}_p}\lie(\ca{G})_\ket \ar[d]^-{\id_{\widehat{\ca{O}}_{X_\proket}}\otimes\varphi|_V}\\
			\ca{E}nd_{\widehat{\ca{O}}_{X_\proket}}(V_\ket\otimes_{\bb{Q}_p}\widehat{\ca{O}}_{X_\proket})&\widehat{\ca{O}}_{X_\proket}\otimes_{\bb{Q}_p}\mrm{End}_{\bb{Q}_p}(V)_\ket\ar[l]_-{\sim}
		}
	\end{align}
	where $\varphi|_V:\lie(\ca{G})\to \mrm{End}_{\bb{Q}_p}(V)$ is the infinitesimal action.
\end{mypara}

\begin{mypara}\label{para:camargo-sen-comp}
	Let $Y$ be a quasi-compact smooth $K$-scheme, $D$ a normal crossings divisor on $Y$, $(Y^{\triv}_\lambda)_{\lambda\in\Lambda}$ a directed inverse system of coherent schemes finite \'etale over $Y^{\triv}=Y\setminus D$, $Y_\lambda$ the integral closure of $Y$ in $Y_\lambda^{\triv}$ (so that $Y^{\triv}_\lambda=Y^{\triv}\times_YY_\lambda$). We put
	\begin{align}\label{eq:para:camargo-sen-comp-1}
		\widetilde{Y}^{\triv}=\lim_{\lambda\in\Lambda}Y^{\triv}_\lambda,\quad \widetilde{Y}=\lim_{\lambda\in\Lambda}Y_\lambda,
	\end{align}
	and assume that $\widetilde{Y}^{\triv}$ is a $\ca{G}$-torsor regarded as a sheaf over $Y^{\triv}_\profet$ under a compact $p$-adic analytic group $\ca{G}$.
	
	Let $Y^{\mrm{ad}}$ be the analytification of $Y$ as an adic space over $\spa(K,\ca{O}_K)$ endowed with the log structure $\scr{M}_{Y^{\mrm{ad}}}$ defined by the normal crossings divisor $D^{\mrm{ad}}$ so that $(Y^{\mrm{ad}},\scr{M}_{Y^{\mrm{ad}}})$ is an fs log smooth adic space (\cite[2.3.17]{dllz2023log}). Note that $Y^{\triv,\mrm{ad}}=Y^{\mrm{ad}}\setminus D^{\mrm{ad}}$ (\cite[4.6.(\luoma{1})]{huber1994general}, \cite[9.1.9]{fujiwarakato2018rigid}). For any $\lambda\in\Lambda$, let $Y_{\lambda}^{\mrm{ad}}$ be the analytification of $Y_{\lambda}$ as an adic space endowed with the log structure $\scr{M}_{Y_{\lambda}^{\mrm{ad}}}$ defined by $(Y_{\lambda}\setminus Y_{\lambda}^{\triv})^{\mrm{ad}}=Y_{\lambda}^{\mrm{ad}}\setminus Y_{\lambda}^{\triv,\mrm{ad}}$ \cite[2.3.16]{dllz2023log}. Then, $(Y_{\lambda}^{\mrm{ad}},\scr{M}_{Y_{\lambda}^{\mrm{ad}}})$ is an fs log smooth adic space finite Kummer \'etale over $(Y^{\mrm{ad}},\scr{M}_{Y^{\mrm{ad}}})$ by \cite[4.2.1]{dllz2023log} and \cite[1.6]{hansen2020vanish}. Therefore, we put
	\begin{align}\label{eq:para:camargo-sen-comp-2}
		\widetilde{Y}^{\triv,\mrm{ad}}=\lim_{\lambda\in\Lambda}Y^{\triv,\mrm{ad}}_{\lambda},\quad \widetilde{Y}^{\mrm{ad}}=\lim_{\lambda\in\Lambda}Y_{\lambda}^{\mrm{ad}}
	\end{align}
	as objects of $Y^{\mrm{ad}}_\proket$ (\cite[5.1.2]{dllz2023log}). In particular, $\widetilde{Y}^{\mrm{ad}}$ is a $\ca{G}$-torsor regarded as a sheaf over $Y^{\mrm{ad}}_\proket$ (as $\widetilde{Y}^{\triv,\mrm{ad}}$ is a $\ca{G}$-torsor regarded as a sheaf over $\widetilde{Y}^{\triv,\mrm{ad}}_\profet$).
	
	As in \ref{cor:vanish}, we fix an irreducible component of $\widetilde{Y}^{\triv}$ and denote its generic point by $\spec(\ca{L})\in \widetilde{Y}^{\triv}$. Let $\overline{\ca{K}}$ be an algebraic closure of $\ca{L}$, $\overline{y}=\spec(\overline{\ca{F}})$ a point of the integral closure $Y^{\overline{\ca{K}}}$ of $Y$ in $\overline{\ca{K}}$, $\ca{O}_{\overline{\ca{F}}}$ a valuation ring of height $1$ extension of $\ca{O}_K$ with fraction field $\overline{\ca{F}}$. We identify $\overline{K}$ with the algebraic closure of $K$ in $\overline{\ca{K}}$. We denote by $\spec(\ca{K})\in Y^{\triv}$ the image of the generic point $\spec(\ca{L})\in \widetilde{Y}^{\triv}$ (which is thus the generic point of its irreducible component of $Y^{\triv}$ by {\rm\cite[3.5.(2)]{he2024purity}}). Note that
	\begin{align}\label{eq:para:camargo-sen-comp-3}
		\ca{G}^{\ca{L}}=\gal(\ca{L}/\ca{K})
	\end{align}
	is naturally identified with the closed subgroup of $\ca{G}$ which stabilizes the connected component $Y^{\ca{L}}$ (\ref{lem:profet-component}) and thus is still a $p$-adic analytic group. Therefore, there is a canonical commutative diagram of locally ringed spaces for any $\lambda\in\Lambda$,
	\begin{align}\label{eq:para:camargo-sen-comp-4}
		\xymatrix{
			\widehat{\overline{y}}=\spa(\widehat{\overline{\ca{F}}},\ca{O}_{\widehat{\overline{\ca{F}}}})\ar[r]\ar[d]&Y_{\lambda}^{\mrm{ad}}\ar[r]\ar[d]&Y^{\mrm{ad}}\ar[d]\\
			\overline{y}=\spec(\overline{\ca{F}})\ar[r]&Y_\lambda\ar[r]&Y.
		}
	\end{align}
	In particular, $\widehat{\overline{y}}\to Y^{\mrm{ad}}$ is a geometric point of adic spaces, and we endow $\widehat{\overline{y}}$ with a log structure $\scr{M}_{\widehat{\overline{y}}}$ such that $(\widehat{\overline{y}},\scr{M}_{\widehat{\overline{y}}})\to (Y^{\mrm{ad}},\scr{M}_{Y^{\mrm{ad}}})$ is a log geometric point (\cite[4.4.3]{dllz2023log}). Notice that for each $\lambda\in\Lambda$, the set of liftings of a $Y^{\mrm{ad}}$-morphism $\widehat{\overline{y}}\to Y^{\mrm{ad}}_\lambda$ to a $(Y^{\mrm{ad}},\scr{M}_{Y^{\mrm{ad}}})$-morphism $(\widehat{\overline{y}},\scr{M}_{\widehat{\overline{y}}})\to (Y^{\mrm{ad}}_\lambda,\scr{M}_{Y^{\mrm{ad}}_\lambda})$ is a non-empty finite set \cite[4.4.7]{dllz2023log}. Thus, we can fix a compatible system of morphisms of log adic spaces $((\widehat{\overline{y}},\scr{M}_{\widehat{\overline{y}}})\to (Y^{\mrm{ad}}_\lambda,\scr{M}_{Y^{\mrm{ad}}_\lambda}))_{\lambda\in\Lambda}$ lifting the given compatible system of morphisms of adic spaces $(\widehat{\overline{y}}\to Y^{\mrm{ad}}_\lambda)_{\lambda\in\Lambda}$ by the non-emptiness of cofiltered limits of non-empty finite sets.
	
	Recall that associated to the data above, there is a universal geometric Sen action of the $p$-adic analytic Galois extension $\ca{L}$ of $\ca{K}$ \eqref{eq:thm:sen-val-perfd-1},
	\begin{align}\label{eq:para:camargo-sen-comp-5}
		\varphi^{\mrm{geo}}_{\sen}|_{\ca{G}^{\ca{L}},\overline{y}}: \ho_{\ca{O}_Y}(\Omega^1_{(Y,\scr{M}_Y)/K}(-1),\widehat{\overline{\ca{F}}})\longrightarrow \widehat{\overline{\ca{F}}}\otimes_{\bb{Q}_p}\lie(\ca{G}^{\ca{L}}).
	\end{align}
	
	On the other hand, taking the pullback of the geometric Sen action constructed by Rodr\'iguez Camargo \eqref{para:camargo-sen-6} along the log geometric point $(\widehat{\overline{y}},\scr{M}_{\widehat{\overline{y}}})\to (Y^{\mrm{ad}},\scr{M}_{Y^{\mrm{ad}}})$ (cf. \cite[4.4.4]{dllz2023log}), we obtain an $\widehat{\overline{\ca{F}}}$-linear homomorphism
	\begin{align}\label{eq:para:camargo-sen-comp-6}
		\varphi_{\mrm{RC}}^{\geo}|_{\widetilde{Y}^{\mrm{ad}},\widehat{\overline{y}}}: \ho_{\ca{O}_{Y^{\mrm{ad}}}}(\Omega^1_{(Y^{\mrm{ad}},\scr{M}_{Y^{\mrm{ad}}})/K}(-1),\widehat{\overline{\ca{F}}})\longrightarrow\widehat{\overline{\ca{F}}}\otimes_{\bb{Q}_p}\lie(\ca{G})
	\end{align}
	where the identification of the stalk of $(\lie(\ca{G}))_\ket$ with $\lie(\ca{G})$ is defined by the chosen compatible system of morphisms $(\widehat{\overline{y}}\to Y^{\mrm{ad}}_\lambda)_{\lambda\in\Lambda}$.
\end{mypara}

\begin{myprop}\label{prop:camargo-sen-comp}
	Under the assumptions in {\rm\ref{para:camargo-sen-comp}} and with the same notation, the following canonical diagram is commutative,
	\begin{align}\label{eq:prop:camargo-sen-comp-1}
		\xymatrix{
			\ho_{\ca{O}_{Y^{\mrm{ad}}}}(\Omega^1_{(Y^{\mrm{ad}},\scr{M}_{Y^{\mrm{ad}}})/K}(-1),\widehat{\overline{\ca{F}}})\ar[rr]^-{\varphi_{\mrm{RC}}^{\geo}|_{\widetilde{Y}^{\mrm{ad}},\widehat{\overline{y}}}}\ar[d]^-{\wr}&&\widehat{\overline{\ca{F}}}\otimes_{\bb{Q}_p}\lie(\ca{G})\\
			\ho_{\ca{O}_Y}(\Omega^1_{(Y,\scr{M}_Y)/K}(-1),\widehat{\overline{\ca{F}}})\ar[rr]^-{\varphi^{\mrm{geo}}_{\sen}|_{\ca{G}^{\ca{L}},\overline{y}}}&& \widehat{\overline{\ca{F}}}\otimes_{\bb{Q}_p}\lie(\ca{G}^{\ca{L}})\ar@{^{(}->}[u]
		}
	\end{align}
	where the left vertical isomorphism is induced by the comparison morphism $Y^{\mrm{ad}}\to Y$.
\end{myprop}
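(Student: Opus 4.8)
The plan is to reduce everything to the local model where both constructions are explicitly computable, namely an adequate algebra $A$ with a toric chart, and then to compare the two formulas \eqref{eq:thm:sen-brinon-A} and \eqref{para:camargo-sen-5}, which are literally the same expression once one matches up the two descent data. First I would observe that the statement is local on $Y$ in the admissibly \'etale (equivalently, Kummer \'etale) topology: by \ref{prop:aet-neighborhood} the adequate neighborhoods $X=\spec(A)\in\nbd^{Y^{\triv}\trm{-}\mrm{adq}}_{\overline{\ca{F}}}(Y/\ca{O}_K)$ are cofinal, and our construction \ref{thm:sen-val-pt} is a cofiltered limit over these via the functoriality \ref{rem:sen-lie-lift-A}.(\ref{item:rem:sen-lie-lift-A-2}); on the other hand Camargo's construction \eqref{para:camargo-sen-6} is Kummer \'etale local on $X$ by \cite[4.0.3]{camargo2022sen}, and the analytification of an adequate chart $A\to A[1/p]$ gives precisely a toric chart $X^{\mrm{ad}}_K\to(\bb{T}^{c,d-c}_K)^{\mrm{ad}}$ after shrinking. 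So it suffices to produce, for one such $X=\spec(A)$, a compatible identification of the two maps after pulling back to $\widehat{\overline{y}}$, and then pass to the limit.

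The core of the comparison is a dictionary between the pro-Kummer \'etale tower $(X_{p^\infty N,C})$ of \cite[4.1.1]{camargo2022sen} and the Kummer tower $(\ca{K}^{(\underline{N})}_{n,\underline{m}})$ of \ref{para:notation-A-galois}. Concretely, $B_{p^m}$ in \eqref{para:camargo-sen-3} should be identified with (the $p$-adic completion of) $\widetilde{A}^{(\underline{N})}_{\infty,\underline{m}}[1/p]$ after enlarging $K$ to contain enough roots of unity, the submodule $V'$ of $(p^m\bb{Z}_p)^d$-analytic vectors of \eqref{para:camargo-sen-4} matches the $\Delta^{(\underline{N})}_{\underline{m}}$-analytic descent datum of \ref{thm:descent}, and the standard basis $\partial_1,\dots,\partial_d$ of $\lie(\bb{Z}_p^d)$ in \eqref{para:camargo-sen-5} matches $\partial_1,\dots,\partial_e$ of \ref{para:notation-A-galois} (with $e=d$, $(s_1,\dots,s_d)=(t_1,\dots,t_d)$, and $\df\log(t_i)\in\Omega^1_{(X,\scr{M}_X)/K}$ corresponding to $\df\log(T_i)$ on both sides). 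Granting this dictionary, formula \eqref{eq:thm:sen-brinon-A} for $\varphi_{\sen}|_W(f)=\sum f(T_i)\varphi_{\partial_i}|_V$ restricted to $\ho(\Omega^1(-1),\widehat{\overline{A}})\subseteq\scr{E}^*_A(1)$ (i.e. dropping the $i=0$ term by definition of $\jmath^*$) becomes exactly the Higgs field \eqref{para:camargo-sen-5}, and the compatible-Lie-algebra versions \eqref{eq:thm:sen-lie-lift-A-2} and \eqref{para:camargo-sen-7} both characterize their respective $\varphi^{\mrm{geo}}$ by the same commutation with all $\varphi|_V$, forcing $\varphi^{\mrm{geo}}_{\sen}|_{\ca{G}^{\ca{L}},\overline{y}}$ and $\varphi^{\mrm{geo}}_{\mrm{Cam}}|_{\widetilde{Y}^{\mrm{ad}},\widehat{\overline{y}}}$ to agree. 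One must also check that the two identifications of the Sen operator's target $\widehat{\overline{\ca{F}}}\otimes_{\bb{Q}_p}\lie(\ca{G})$ agree: ours comes from the chosen compatible system of adequate neighborhoods and the $(K,\ca{O}_K,\ca{O}_{\overline{K}})$-triple morphisms, Camargo's from the chosen compatible liftings $(\widehat{\overline{y}},\scr{M}_{\widehat{\overline{y}}})\to(Y^{\mrm{ad}}_\lambda,\scr{M}_{Y^{\mrm{ad}}_\lambda})$ in \ref{para:camargo-sen-comp}; a log geometric point pinning down the fiber functor on $(\lie\ca{G})_\ket$ matches the strict Henselization pinning down $G_A$, and the inclusion $\lie(\ca{G}^{\ca{L}})\hookrightarrow\lie(\ca{G})$ on the right of \eqref{eq:prop:camargo-sen-comp-1} comes from the open subgroup $\ca{G}^{\ca{L}}\subseteq\ca{G}$ stabilizing the component $Y^{\ca{L}}$, which on Camargo's side is just restricting the torsor $\widetilde{Y}^{\mrm{ad}}$ to that component.

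The main obstacle I expect is the careful matching of the two descent formalisms at the level of completed structure sheaves: one has to know that the value $\widehat{\ca{O}}_{X^{\mrm{ad}}_\proket}(X_{p^m,C}^{\mrm{ad}})$ of the completed structure sheaf on the Kummer \'etale tower coincides, after pulling back to the geometric valuative point $\widehat{\overline{y}}$, with the completed ring $\widetilde{A}^{(\underline{N})}_{\infty,\underline{m}}[1/p]$ appearing in \ref{thm:descent}, and that the analytic-vectors construction of \cite[3.3.3, 4.1.5]{camargo2022sen} is compatible with the $\Delta^{(\underline{N})}_{\underline{m}}$-analyticity of \cite[4.14]{he2022sen} under this identification. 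This is essentially a bookkeeping comparison between the adic-space pro-Kummer \'etale site and the scheme-theoretic Faltings/Kummer tower, but it requires invoking the comparison between adic and algebraic \'etale cohomology for the smooth affinoid pieces and checking that the stalk of $\Omega^1_{(Y^{\mrm{ad}},\scr{M}_{Y^{\mrm{ad}}})/K}$ at $\widehat{\overline{y}}$ is canonically $\widehat{\overline{\ca{F}}}\otimes_{\ca{O}_Y}\Omega^1_{(Y,\scr{M}_Y)/K}$ (the left vertical isomorphism in \eqref{eq:prop:camargo-sen-comp-1}), using \ref{lem:log-str-val-pt} and the fact that $Y^{\triv,\mrm{ad}}=Y^{\mrm{ad}}\setminus D^{\mrm{ad}}$. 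Once the dictionary is in place the commutativity of \eqref{eq:prop:camargo-sen-comp-1} is formal from the two characterizing diagrams \eqref{eq:thm:sen-lie-lift-A-2} and \eqref{para:camargo-sen-7}, so I would spend the bulk of the proof setting up that dictionary and then conclude in a short paragraph.
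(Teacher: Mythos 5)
Your proposal follows essentially the same route as the paper's proof: reduce by functoriality to an adequate neighborhood whose chart analytifies to a toric chart, match the pro-Kummer \'etale tower $(X_{p^mN,C})$ with the Kummer tower $(\ca{K}^{(\underline{N})}_{n,\underline{m}})$ via the Cartesian diagrams over $\bb{T}^{c,d-c}_{p^m}$, use the descent theorem \ref{thm:descent} to identify the analytic-vector modules $V'$ and $M$, and conclude by comparing the explicit formulas \eqref{eq:thm:sen-brinon-A} and \eqref{para:camargo-sen-5} through the characterizing diagrams \eqref{eq:thm:sen-lie-lift-A-2} and \eqref{para:camargo-sen-7} (the paper pins this down with a single faithful representation $V$ with injective $\varphi|_V$ rather than quantifying over all $V$, a cosmetic difference). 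The dictionary you flag as the main obstacle is exactly what the paper establishes, and it does so without any adic-versus-algebraic cohomology comparison, only the canonical morphism of locally ringed spaces and the base-change diagrams.
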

\begin{proof}
	Firstly, by the functoriality of $\varphi_{\mrm{RC}}^{\geo}|_{\widetilde{Y}^{\mrm{ad}}}$, after replacing $\widetilde{Y}$ by the connected component $Y^{\ca{L}}$ (\ref{lem:profet-component}), we may assume that $\widetilde{Y}=Y^{\ca{L}}$ is irreducible and $\ca{G}=\ca{G}^{\ca{L}}$.
	
	Since $\varphi^{\mrm{geo}}_{\sen}|_{\ca{G},\overline{y}}$ (resp. $\varphi_{\mrm{RC}}^{\geo}|_{\widetilde{Y}^{\mrm{ad}},\widehat{\overline{y}}}$) is invariant after replacing $Y$ by an \'etale neighborhood of $\overline{y}$ and replacing $K$ by a finite field extension by \ref{rem:sen-val-pt} (resp. by \cite[\textsection3]{camargo2022sen}). Thus, we may assume that there exists an adequate $\ca{O}_K$-algebra $A$ with $(Y^{\triv}\to Y)=(\spec(A_{\triv})\to\spec(A[1/p]))$ with $A\subseteq \ca{O}_{\overline{\ca{F}}}$ by \ref{prop:aet-neighborhood}. Thus, we can use the Sen action over $A$ defined in \ref{thm:sen-lie-lift-A} to compute $\varphi^{\mrm{geo}}_{\sen}|_{\ca{G},\overline{y}}$. On the other hand, $X=\spa(\widehat{A}[1/p],\widehat{A})$ is canonically identified with an open subspace of $Y^{\mrm{ad}}$ containing the image of $\widehat{\overline{y}}$ (\cite[4.6.(\luoma{1})]{huber1994general}, \cite[7.4.16]{abbes2010rigid}). Thus, we can use the Sen action over $X$ defined in \ref{para:camargo-sen} to compute $\varphi_{\mrm{RC}}^{\geo}|_{\widetilde{Y}^{\mrm{ad}},\widehat{\overline{y}}}$. We remark that there is a canonical morphism of locally ringed spaces
	\begin{align}\label{eq:prop:camargo-sen-comp-2}
		X=\spa(\widehat{A}[1/p],\widehat{A})\longrightarrow Y=\spec(A[1/p]).
	\end{align}
	
	We take an adequate chart $(\alpha:\bb{N}\to \ca{O}_K,\ \beta:P\to A,\ \gamma:\bb{N}\to P)$ and an isomorphism
	$P_\eta\cong \bb{Z}\oplus \bb{Z}^c\oplus \bb{N}^{d-c}$ \eqref{eq:monoid-str}. Let $t_1,\dots,t_d\in A[1/p]$ be the associated system of coordinates of this chart and we take again the notation in \ref{para:notation-A-galois} for $(s_1,\dots,s_e)=(t_1,\dots,t_d)$. In particular, this chart induces an \'etale morphism
	\begin{align}\label{eq:prop:camargo-sen-comp-3}
		Y=\spec(A[1/p])\longrightarrow \spec(K\otimes_{\bb{Z}[\bb{Z}]}\bb{Z}[P_\eta])\cong \bb{T}^{c,d-c}_K,
	\end{align}
	which induces further an \'etale morphism $X\to (\bb{T}^{c,d-c}_K)^{\mrm{ad}}$ by analytification (\cite[1.7.3.(\luoma{1})]{huber1996etale}). After replacing $X$ by an \'etale neighborhood of $\widehat{\overline{y}}$, we may assume that $X\to (\bb{T}^{c,d-c}_K)^{\mrm{ad}}$ is the composition of a finite sequence of finite \'etale morphisms and rational localizations (\cite[2.2.8]{huber1996etale}) so that it is a toric chart.
	
	Recall that for any integers $n,m\in\bb{N}$ and any integer $N$ prime to $p$, the canonical commutative diagram of schemes
	\begin{align}\label{eq:prop:camargo-sen-comp-4}
		\xymatrix{
			Y^{(\underline{N})}_{n,\underline{m}}=\spec(A^{(\underline{N})}_{n,\underline{m}}[1/p])\ar[r]\ar[d]&\bb{T}^{c,d-c}_{p^m,K}\ar[d]\\
			Y^{(\underline{N})}_{n,\underline{0}}=\spec(A^{(\underline{N})}_{n,\underline{0}}[1/p])\ar[r]&\bb{T}^{c,d-c}_K
		}
	\end{align}
	is Cartesian (\cite[4.8]{tsuji2018localsimpson}, see \cite[5.10.(5), 5.12]{he2024purity}). Combining with the notation in \ref{para:camargo-sen}, we obtain a canonical commutative diagram of locally ringed spaces
	\begin{align}\label{eq:prop:camargo-sen-comp-5}
		\xymatrix{
			X_{p^mN,C}\ar[r]\ar[d]&Y^{(\underline{N})}_{n,\underline{m}}=\spec(A^{(\underline{N})}_{n,\underline{m}}[1/p])\ar[r]\ar[d]&\bb{T}^{c,d-c}_{p^m,K}\ar[d]\\
			X_{N,C}\ar[r]\ar[d]&Y^{(\underline{N})}_{n,\underline{0}}=\spec(A^{(\underline{N})}_{n,\underline{0}}[1/p])\ar[r]\ar[d]&\bb{T}^{c,d-c}_K\ar[d]\\
			\spa(C,\ca{O}_C)\ar[r]&\spec(K(\zeta_{p^nN}))\ar[r]&\spec(K)
		}
	\end{align}
	since $X_{p^mN,C}=X_{N,C}\times_{(\bb{T}^{c,d-c}_C)^{\mrm{ad}}}(\bb{T}^{c,d-c}_{p^m,C})^{\mrm{ad}}$ by definition. Recall that $B_{p^mN}=\widehat{\ca{O}}_{Y^{\mrm{ad}}_\proket}(X_{p^mN,C})$.
	
	We take a faithful finite free $\bb{Q}_p$-representation $V$ of $\ca{G}$ (\cite[3.9]{he2022sen}). Note that the infinitesimal action $\varphi|_V:\lie(\ca{G})\to\mrm{End}_{\bb{Q}_p}(V)$ is injective (\cite[4.10.(3)]{he2022sen}). By the universal properties of $\varphi^{\mrm{geo}}_{\sen}|_{\ca{G},\overline{y}}$ and $\varphi_{\mrm{RC}}^{\geo}|_{\widetilde{Y}^{\mrm{ad}},\widehat{\overline{y}}}$ (\ref{thm:sen-lie-lift-A} and \eqref{para:camargo-sen-7}), it suffices to check the commutativity of the following diagram
	\begin{align}\label{eq:prop:camargo-sen-comp-6}
	 	\xymatrix{
	 		\ho_{\ca{O}_{Y^{\mrm{ad}}}}(\Omega^1_{(Y^{\mrm{ad}},\scr{M}_{Y^{\mrm{ad}}})/K}(-1),\widehat{\overline{\ca{F}}})\ar[rr]^-{\id_{\widehat{\overline{\ca{F}}}}\otimes\varphi_{\mrm{RC}}^{\geo}|_V}\ar[d]^-{\wr}&&\widehat{\overline{\ca{F}}}\otimes_{\bb{Q}_p}\mrm{End}_{\bb{Q}_p}(V)\ar[d]^-{\wr}\\
	 		\ho_{\ca{O}_Y}(\Omega^1_{(Y,\scr{M}_Y)/K}(-1),\widehat{\overline{\ca{F}}})\ar[rr]^-{\id_{\widehat{\overline{\ca{F}}}}\otimes\varphi^{\mrm{geo}}_{\sen}|_W}&& \widehat{\overline{\ca{F}}}\otimes_{\widehat{\overline{A}}[1/p]}\mrm{End}_{\widehat{\overline{A}}[1/p]}(W)
	 	}
	\end{align}
	where $W=V\otimes_{\bb{Q}_p}\widehat{\overline{A}}[1/p]$ is the associated object of $\repnpr(G_A,\widehat{\overline{A}}[1/p])$. 
	\begin{align}\label{eq:prop:camargo-sen-comp-7}
		\xymatrix{
			\ca{K}_{\mrm{ur}}&&\\
			\ca{K}^{(\underline{N})}_{\infty,\underline{\infty}}\ar[u]^-{H^{(\underline{N})}_{\underline{\infty}}}&&\\
			\ca{K}^{(\underline{N})}_{\infty,\underline{m}}\ar[u]^-{\Delta^{(\underline{N})}_{\underline{m}}}\ar@/^3pc/[uu]^-{H^{(\underline{N})}_{\underline{m}}}&\ca{K}^{(\underline{N})}_{n,\underline{m}} \ar[l]^-{\Sigma^{(\underline{N})}_{n,\underline{m}}}\ar[lu]|-{\Gamma^{(\underline{N})}_{n,\underline{m}}}&\ca{K}\ar@/_1pc/[lluu]|{G_A}\ar[l]\ar@/_1pc/[llu]|(0.6){\Xi^{(\underline{N})}}
		}
	\end{align}
	By \ref{thm:descent}, there exists $\underline{N}\in J$, $\underline{m}\in \bb{N}^d$, and a $\Delta^{(\underline{N})}_{\underline{m}}=\gal(\ca{K}^{(\underline{N})}_{\infty,\underline{\infty}}/\ca{K}^{(\underline{N})}_{\infty,\underline{m}})$-analytic finite projective $\widetilde{A}^{(\underline{N})}_{\infty,\underline{m}}[1/p]$-representation $M$ of $\Xi^{(\underline{N})}=\gal(\ca{K}^{(\underline{N})}_{\infty,\underline{\infty}}/\ca{K})$ such that
	\begin{align}\label{eq:prop:camargo-sen-comp-8}
		V\otimes_{\bb{Q}_p}\widehat{\overline{A}}[1/p]=\widehat{\overline{A}}[1/p]\otimes_{\widetilde{A}^{(\underline{N})}_{\infty,\underline{m}}[1/p]}M.
	\end{align} 
	Taking $H^{(\underline{N})}_{\underline{\infty}}=\gal(\ca{K}_{\mrm{ur}}/\ca{K}^{(\underline{N})}_{\infty,\underline{\infty}})$-invariant of \eqref{eq:prop:camargo-sen-comp-8}, we get (\cite[6.5]{tsuji2018localsimpson}, cf. \cite[10.15]{he2024purity})
	\begin{align}\label{eq:prop:camargo-sen-comp-9}
		V\otimes_{\bb{Q}_p}\widehat{A}^{(\underline{N})}_{\infty,\underline{\infty}}[1/p]=\widehat{A}^{(\underline{N})}_{\infty,\underline{\infty}}[1/p]\otimes_{\widetilde{A}^{(\underline{N})}_{\infty,\underline{m}}[1/p]}M.
	\end{align}
	Notice that $\varphi^{\mrm{geo}}_{\sen}|_{\ca{G},\overline{y}}$ and $\varphi_{\mrm{RC}}^{\geo}|_{\widetilde{Y}^{\mrm{ad}},\widehat{\overline{y}}}$ are invariant after replacing $Y=\spec(A[1/p])$ by $Y^{(\underline{N})}=\spec(A^{(\underline{N})}[1/p])$ by \ref{rem:sen-val-pt} and \cite[\textsection3]{camargo2022sen}, since $(Y^{(\underline{N})},\scr{M}_{Y^{(\underline{N})}})$ is Kummer \'etale over $(Y,\scr{M}_Y)$ by \cite[\Luoma{9}.2.1]{gabber2014travaux}. Therefore, we may assume that $\underline{N}=\underline{1}$. Hence, tensoring \eqref{eq:prop:camargo-sen-comp-9} with $B_{p^\infty}$ over $\widehat{A}_{\infty,\underline{\infty}}[1/p]$ (cf. \eqref{eq:prop:camargo-sen-comp-5}), we obtain
	\begin{align}\label{eq:prop:camargo-sen-comp-10}
		V\otimes_{\bb{Q}_p}B_{p^\infty}=B_{p^\infty}\otimes_{B_{p^m}}(B_{p^m}\otimes_{\widetilde{A}_{\infty,\underline{m}}[1/p]}M).
	\end{align}
	By construction, $V'=B_{p^m}\otimes_{\widetilde{A}^{(\underline{N})}_{\infty,\underline{m}}[1/p]}M$ is a $\Delta_{\underline{m}}$-analytic finite projective $B_{p^m}$-representation of $\Delta=\gal(\ca{K}^{(\underline{N})}_{\infty,\underline{\infty}}/\ca{K}^{(\underline{N})}_{\infty,\underline{0}})\cong \bb{Z}_p^d$ (note that $\Delta_{\underline{m}}=p^m\Delta\cong (p^m\bb{Z}_p)^d$). After enlarging $m$, we may assume that this $V'$ coincides with that in \eqref{para:camargo-sen-4}. In conclusion, if $(\df\log(t_1)^*\otimes\zeta,\dots,\df\log(t_d)^*\otimes\zeta)$ denotes the dual basis of the basis $(\df\log(t_1)\otimes\zeta^{-1},\dots,\df\log(t_d)\otimes\zeta^{-1})$ of $\Omega^1_{(Y,\scr{M}_Y)/K}(-1)$, then for any integer $1\leq i\leq d$, we have
	\begin{align}
		\varphi^{\mrm{geo}}_{\sen}|_W(\df\log(t_i)^*\otimes\zeta)&=\id_{\widehat{\overline{A}}[1/p]}\otimes \varphi_{\partial_i}|_{M},\\
		\varphi_{\mrm{RC}}^{\geo}|_{V,X_{p^\infty,C}}(\df\log(t_i)^*\otimes\zeta)&=\id_{B_{p^\infty}}\otimes \varphi_{\partial_i}|_{V'},
	\end{align}
	by \eqref{eq:thm:sen-brinon-A} and \eqref{para:camargo-sen-5} respectively, where $(\partial_1,\dots,\partial_d)$ is the standard basis of $\lie(\Delta)\cong \lie(\bb{Z}_p^d)$ and $\varphi|_M:\lie(\Delta)\to \mrm{End}_{\widetilde{A}_{\infty,\underline{m}}[1/p]}(M)$ (resp. $\varphi|_{V'}:\lie(\bb{Z}_p^d)\to \mrm{End}_{B_{p^m}}(V')$) is the infinitesimal action. This verifies the commutativity of \eqref{eq:prop:camargo-sen-comp-6}.
\end{proof}

\begin{mycor}\label{cor:camargo-sen-vanish}
	Under the assumptions in {\rm\ref{para:camargo-sen-comp}} and with the same notation, assume moreover that $Y$ is proper over $K$ and that the universal geometric Sen action \eqref{para:camargo-sen-6},
	\begin{align}
		\varphi_{\mrm{RC}}^{\geo}|_{\widetilde{Y}^{\mrm{ad}}}: \ca{H}om_{\ca{O}_{Y^{\mrm{ad}}}}(\Omega^1_{(Y^{\mrm{ad}},\scr{M}_{Y^{\mrm{ad}}})/K}(-1),\widehat{\ca{O}}_{Y^{\mrm{ad}}_\proket})\longrightarrow\widehat{\ca{O}}_{Y^{\mrm{ad}}_\proket}\otimes_{\bb{Q}_p}\lie(\ca{G})_\ket,
	\end{align}
	is locally splitting injective. Then, for any integer $q>\dim(Y)$ and $n\in\bb{N}$, we have
	\begin{align}
		H^q(\widetilde{Y}^{\triv}_{\overline{K},\et},\bb{Z}/p^n\bb{Z})=0,
	\end{align}
	where $\widetilde{Y}^{\triv}_{\overline{K}}=\spec(\overline{K})\times_{\spec(K)}\widetilde{Y}^{\triv}$ and $\overline{K}$ is an algebraic closure of $K$.
\end{mycor}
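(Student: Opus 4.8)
The plan is to deduce the corollary from Corollary~\ref{cor:vanish} together with the comparison Proposition~\ref{prop:camargo-sen-comp}. First I would observe that $Y$, being quasi-compact, smooth and proper over the field $K$, is a proper smooth $K$-scheme of finite type, and that $\widetilde{Y}^{\triv}=\lim_{\lambda}Y^{\triv}_{\lambda}$ is a coherent scheme pro-finite \'etale over $Y^{\triv}$, so that our data fit the hypotheses of Corollary~\ref{cor:vanish}. That corollary reduces the desired vanishing $H^q(\widetilde{Y}^{\triv}_{\overline{K},\et},\bb{Z}/p^n\bb{Z})=0$ for $q>\dim(Y)$ to its conditions~(i) and~(ii): for every irreducible component of $\widetilde{Y}^{\triv}$, with generic point $\spec(\ca{L})\in\widetilde{Y}^{\triv}$ lying over $\spec(\ca{K})\in Y^{\triv}$, the group $\ca{G}^{\ca{L}}=\gal(\ca{L}/\ca{K})$ (playing the role of the group denoted $\ca{G}$ in Corollary~\ref{cor:vanish}) must be $p$-adic analytic, and for every point $\overline{y}=\spec(\overline{\ca{F}})$ of the integral closure $Y^{\overline{\ca{K}}}$ of $Y$ in a fixed algebraic closure $\overline{\ca{K}}$ of $\ca{L}$, equipped with a height-$1$ valuation ring $\ca{O}_{\overline{\ca{F}}}$ extending $\ca{O}_K$ with fraction field $\overline{\ca{F}}$, the universal geometric Sen action $\varphi^{\mrm{geo}}_{\sen}|_{\ca{G}^{\ca{L}},\overline{y}}$ must be injective. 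So the whole task is to extract these two conditions from the injectivity of Camargo's operator $\varphi_{\mrm{Cam}}^{\geo}|_{\widetilde{Y}^{\mrm{ad}}}$.

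Condition~(i) is immediate from the discussion in~\ref{para:camargo-sen-comp}: by Lemma~\ref{lem:profet-component} the integral closure $Y^{\ca{L}}$ of $Y$ in $\ca{L}$ is a connected component of $\widetilde{Y}$, and $\ca{G}^{\ca{L}}$ identifies with the closed subgroup of the compact $p$-adic analytic group $\ca{G}$ stabilizing $Y^{\ca{L}}$, hence is itself $p$-adic analytic.

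For condition~(ii), I would fix such a component and such a point $\overline{y}$, and adopt all the notation of~\ref{para:camargo-sen-comp}: in particular, the log geometric point $(\widehat{\overline{y}},\scr{M}_{\widehat{\overline{y}}})\to(Y^{\mrm{ad}},\scr{M}_{Y^{\mrm{ad}}})$ together with a chosen compatible system of liftings $((\widehat{\overline{y}},\scr{M}_{\widehat{\overline{y}}})\to(Y^{\mrm{ad}}_{\lambda},\scr{M}_{Y^{\mrm{ad}}_{\lambda}}))_{\lambda}$, along which the pullback $\varphi_{\mrm{Cam}}^{\geo}|_{\widetilde{Y}^{\mrm{ad}},\widehat{\overline{y}}}$ of $\varphi_{\mrm{Cam}}^{\geo}|_{\widetilde{Y}^{\mrm{ad}}}$ is defined. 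Since by hypothesis $\varphi_{\mrm{Cam}}^{\geo}|_{\widetilde{Y}^{\mrm{ad}}}$ is an injection of finite locally free $\widehat{\ca{O}}_{Y^{\mrm{ad}}_\proket}$-modules with finite locally free cokernel, it is pro-Kummer \'etale locally a split injection of $\widehat{\ca{O}}_{Y^{\mrm{ad}}_\proket}$-modules, and this property is preserved by pullback to the log geometric point, so $\varphi_{\mrm{Cam}}^{\geo}|_{\widetilde{Y}^{\mrm{ad}},\widehat{\overline{y}}}$ is again (a split, hence) an injection of $\widehat{\overline{\ca{F}}}$-modules. Now Proposition~\ref{prop:camargo-sen-comp} identifies $\varphi_{\mrm{Cam}}^{\geo}|_{\widetilde{Y}^{\mrm{ad}},\widehat{\overline{y}}}$, after the comparison isomorphism $Y^{\mrm{ad}}\to Y$ on the source, with the composite of $\varphi^{\mrm{geo}}_{\sen}|_{\ca{G}^{\ca{L}},\overline{y}}$ followed by the canonical inclusion $\widehat{\overline{\ca{F}}}\otimes_{\bb{Q}_p}\lie(\ca{G}^{\ca{L}})\hookrightarrow\widehat{\overline{\ca{F}}}\otimes_{\bb{Q}_p}\lie(\ca{G})$. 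As the comparison isomorphism is an isomorphism and the inclusion is injective, the injectivity of $\varphi_{\mrm{Cam}}^{\geo}|_{\widetilde{Y}^{\mrm{ad}},\widehat{\overline{y}}}$ forces that of $\varphi^{\mrm{geo}}_{\sen}|_{\ca{G}^{\ca{L}},\overline{y}}$, which is condition~(ii). Having verified both conditions, I would conclude by Corollary~\ref{cor:vanish} that $H^q(\widetilde{Y}^{\triv}_{\overline{K},\et},\bb{Z}/p^n\bb{Z})=0$ for $q>\dim(Y)$ and $n\in\bb{N}$.

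The point demanding the most care is the invocation of Proposition~\ref{prop:camargo-sen-comp}: one must check that the log geometric point $\widehat{\overline{y}}$ and the chosen compatible system of liftings used to form $\varphi_{\mrm{Cam}}^{\geo}|_{\widetilde{Y}^{\mrm{ad}},\widehat{\overline{y}}}$ --- hence the identification of the stalk of $(\lie(\ca{G}))_\ket$ at $\widehat{\overline{y}}$ with $\lie(\ca{G})$ --- are precisely those fixed in~\ref{para:camargo-sen-comp}, so that the proposition applies verbatim. Apart from this bookkeeping I do not expect any genuine obstacle, as the substantive comparison of the two constructions of the geometric Sen operator has already been carried out in Proposition~\ref{prop:camargo-sen-comp}.
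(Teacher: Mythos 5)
Your proposal is correct and follows exactly the paper's own argument: the hypothesis makes $\varphi_{\mrm{Cam}}^{\geo}|_{\widetilde{Y}^{\mrm{ad}}}$ locally split injective, hence its pullback $\varphi_{\mrm{Cam}}^{\geo}|_{\widetilde{Y}^{\mrm{ad}},\widehat{\overline{y}}}$ at each log geometric point is injective, and then \ref{prop:camargo-sen-comp} transfers this to the injectivity of $\varphi^{\mrm{geo}}_{\sen}|_{\ca{G}^{\ca{L}},\overline{y}}$ so that \ref{cor:vanish} applies. Your extra care about condition~(i) and about matching the chosen compatible system of liftings is sound bookkeeping but adds nothing beyond what the paper's two-line proof already relies on.
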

\begin{proof}
	The assumption implies that the pullback $\varphi_{\mrm{RC}}^{\geo}|_{\widetilde{Y}^{\mrm{ad}},\widehat{\overline{y}}}$ \eqref{eq:para:camargo-sen-comp-6} of $\varphi_{\mrm{RC}}^{\geo}|_{\widetilde{Y}^{\mrm{ad}}}$ along the log geometric point $(\widehat{\overline{y}},\scr{M}_{\widehat{\overline{y}}})\to (Y^{\mrm{ad}},\scr{M}_{Y^{\mrm{ad}}})$ is still injective. Hence, the conclusion follows directly from \ref{prop:camargo-sen-comp} and \ref{cor:vanish}.
\end{proof}

\begin{myrem}\label{rem:camargo-sen-vanish}
	In fact, we can replace the ``splitting injectivity" assumption in \ref{cor:camargo-sen-vanish} by the ``surjectivity" of the dual of the universal geometric Sen action
	\begin{align}
		\varphi_{\mrm{RC}}^{\geo*}|_{\widetilde{Y}^{\mrm{ad}}}:\ca{H}om_{\bb{Q}_p}(\lie(\ca{G})_\ket,\widehat{\ca{O}}_{Y^{\mrm{ad}}_\proket})\longrightarrow \widehat{\ca{O}}_{Y^{\mrm{ad}}_\proket}\otimes_{\ca{O}_{Y^{\mrm{ad}}}}\Omega^1_{(Y^{\mrm{ad}},\scr{M}_{Y^{\mrm{ad}}})/K}(-1)
	\end{align}
	which still guarantees the ``pointwise injectivity" of the the universal geometric Sen action $\varphi_{\mrm{RC}}^{\geo}|_{\widetilde{Y}^{\mrm{ad}},\widehat{\overline{y}}}$.
\end{myrem}

\section{Applications to Shimura Varieties}\label{sec:shimura}
In this section, we prove Calegari-Emerton's conjecture on the vanishing of higher completed cohomology groups of Shimura varieties (see \ref{thm:vanish-shimura}). A key ingredient is the computation of universal geometric Sen action over Shimura varieties at infinite level due to Pan and Rodr\'iguez Camargo (see \ref{thm:camargo}).

\begin{mypara}\label{para:notation-shimura}
	Following \cite[\textsection4]{camargo2022completed}, we fix a Shimura datum $(G,X)$ (\cite[2.1.1]{deligne1979shimura}, see also \cite[5.5]{milne2005shimura}) and let $E\subseteq \bb{C}$ be its reflex field (which is a finite extension of $\bb{Q}$, \cite[2.2.1]{deligne1979shimura}, see also \cite[12.2]{milne2005shimura}). We denote by $\bb{A}_f$ (resp.  $\bb{A}_f^p$) the ring of (resp. prime-to-$p$) finite ad\`eles of $\bb{Q}$. For any neat compact open subgroup $K\subseteq G(\bb{A}_f)$ (\cite[0.6]{pink1990compact}), we denote by $\mrm{Sh}_K$ the canonical model of the Shimura variety associated to $(G,X)$ of level $K$ (see \cite[page 128]{milne2005shimura}). It is a quasi-projective smooth $E$-scheme, whose $\bb{C}$-points are canonically identified with
	\begin{align}
		\mrm{Sh}_K(\bb{C})=G(\bb{Q})\backslash (X\times G(\bb{A}_f))/K.
	\end{align}
	Moreover, these canonical models form a directed inverse system of $E$-schemes $(\mrm{Sh}_K)_{K\subseteq G(\bb{A}_f)}$ (note that open subgroups of $K$ are also neat) with finite \'etale transition morphisms (see \cite[2.1.2]{deligne1979shimura}). We denote by $d$ the common dimension of $\mrm{Sh}_K$.
	
	We fix a compact open subgroup $K^p\subseteq G(\bb{A}_f^p)$. Consider the directed inverse system of $E$-schemes $(\mrm{Sh}_{K^pK_p})_{K_p\subseteq G(\bb{Q}_p)}$, where $K_p$ runs through all the neat compact open subgroups of $G(\bb{Q}_p)$ (\cite[2.12]{hansenjohansson2023perfectoid}). Its limit
	\begin{align}
		\mrm{Sh}_{K^p}=\lim_{K_p\subseteq G(\bb{Q}_p)}\mrm{Sh}_{K^pK_p}
	\end{align}
	is pro-finite \'etale over each $\mrm{Sh}_{K^pK_p}$ and is a $\ca{G}_{K_p}$-torsor as a sheaf over $\mrm{Sh}_{K^pK_p,\profet}$ under a compact $p$-adic analytic group $\ca{G}_{K_p}$ (see \cite[\textsection4.1]{camargo2022completed}).
	
	Then, we also fix a neat compact open subgroup $K_p\subseteq G(\bb{Q}_p)$ and a toroidal compactification $\mrm{Sh}^{\mrm{tor}}_{K^pK_p}$ of $\mrm{Sh}_{K^pK_p}$ which is a projective smooth $E$-scheme with $D=\mrm{Sh}^{\mrm{tor}}_{K^pK_p}\setminus \mrm{Sh}_{K^pK_p}$ a normal crossings divisor (\cite[9.21, 12.4]{pink1990compact}). For any compact open subgroup $K'_p\subseteq K_p$, we denote by $\mrm{Sh}^{\mrm{tor}}_{K^pK_p'}$ the integral closure of $\mrm{Sh}^{\mrm{tor}}_{K^pK_p}$ in $\mrm{Sh}_{K^pK_p'}$. Then, the transition morphisms of the directed inverse system of $E$-schemes $(\mrm{Sh}^{\mrm{tor}}_{K^pK_p'})_{K_p'\subseteq K_p}$ are finite Kummer \'etale with respect to the divisor $D\subseteq \mrm{Sh}^{\mrm{tor}}_{K^pK_p}$ (\cite[\Luoma{9}.2.1]{gabber2014travaux}). We put
	\begin{align}
		\mrm{Sh}^{\mrm{tor}}_{K^p}=\lim_{K_p'\subseteq K_p}\mrm{Sh}^{\mrm{tor}}_{K^pK_p'}.
	\end{align}
	
	We fix a finite field extension $L$ of $\bb{Q}_p$ containing the reflex field $E$. Taking base change along $\spec(L)\to\spec(E)$, we obtain
	\begin{align}
		\mrm{Sh}_{K^p,L}=\lim_{K_p'\subseteq K_p}\mrm{Sh}_{K^pK_p',L},\quad \mrm{Sh}^{\mrm{tor}}_{K^p,L}=\lim_{K_p'\subseteq K_p}\mrm{Sh}^{\mrm{tor}}_{K^pK_p',L}.
	\end{align}
	Note that $\mrm{Sh}_{K^p,L}$ is a $\ca{G}_{K_p}$-torsor over $\mrm{Sh}_{K^pK_p,L,\profet}$. Thus, we are in the situation of \ref{para:camargo-sen-comp} and we denote by $\ca{S}h_{K^pK_p',L}$ (resp. $\ca{S}h^{\mrm{tor}}_{K^pK_p',L}$) the analytification of $\mrm{Sh}_{K^pK_p',L}$ (resp. $\mrm{Sh}^{\mrm{tor}}_{K^pK_p',L}$) as an adic space over $\spa(L,\ca{O}_L)$. Then, by \ref{para:camargo-sen-comp}, we obtain a directed inverse system of fs log adic spaces $(\ca{S}h^{\mrm{tor}}_{K^pK_p',L},\scr{M}_{K^pK'_p,L})_{K_p'\subseteq K_p}$ proper smooth over $\spa(L,\ca{O}_L)$ with finite Kummer \'etale transition morphisms and we put
	\begin{align}
		\ca{S}h_{K^p,L}=\lim_{K_p'\subseteq K_p}\ca{S}h_{K^pK_p',L},\quad \ca{S}h^{\mrm{tor}}_{K^p,L}=\lim_{K_p'\subseteq K_p}\ca{S}h^{\mrm{tor}}_{K^pK_p',L}
	\end{align}
	the corresponding objects of $\ca{S}h^{\mrm{tor}}_{K^pK_p,L,\proket}$. Note that $\ca{S}h^{\mrm{tor}}_{K^p,L}$ is a $\ca{G}_{K_p}$-torsor over $\ca{S}h^{\mrm{tor}}_{K^pK_p,L,\proket}$.
\end{mypara}

\begin{mythm}[{\cite[Theorem 5.2.5]{camargo2022completed}, cf. \cite[4.2.7]{pan2021locally}}]\label{thm:camargo}
	With the notation in {\rm\ref{para:notation-shimura}}, the universal geometric Sen action of the $\ca{G}_{K_p}$-torsor $\ca{S}h^{\mrm{tor}}_{K^p,L}$ over $\ca{S}h^{\mrm{tor}}_{K^pK_p,L,\proket}$ (in the sense of \cite{camargo2022sen}, see \eqref{para:camargo-sen-6}),
	\begin{align}
		\varphi_{\mrm{RC}}^{\geo}|_{\ca{S}h^{\mrm{tor}}_{K^p,L}}:\ca{H}om(\Omega^1_{(\ca{S}h^{\mrm{tor}}_{K^pK_p,L},\scr{M}_{K^pK_p,L})/L}(-1),\widehat{\ca{O}}_{\ca{S}h^{\mrm{tor}}_{K^pK_p,L,\proket}})\to\widehat{\ca{O}}_{\ca{S}h^{\mrm{tor}}_{K^pK_p,L,\proket}}\otimes_{\bb{Q}_p}\lie(\ca{G}_{K_p})_\ket,
	\end{align}
	 is locally splitting injective.
\end{mythm}

\begin{mythm}\label{thm:vanish-shimura}
	With the notation in {\rm\ref{para:notation-shimura}}, for any integers $q>d$ and $n\in\bb{N}$, we have
	\begin{align}
		H^q_\et(\mrm{Sh}_{K^p,\bb{C}},\bb{Z}/p^n\bb{Z})=\colim_{K_p\subseteq G(\bb{Q}_p)}H^q_\et(\mrm{Sh}_{K^pK_p,\bb{C}},\bb{Z}/p^n\bb{Z})=0,
	\end{align}
	where $\mrm{Sh}_{K^p,\bb{C}}=\spec(\bb{C})\times_{\spec(E)}\mrm{Sh}_{K^p}$ is the Shimura variety of $(G,X)$ over $\bb{C}$ at the infinite level $K^p$.
\end{mythm}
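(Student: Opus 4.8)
The plan is to reduce the vanishing of higher étale cohomology of the infinite-level Shimura variety over $\bb{C}$ to the already-established vanishing result \ref{cor:camargo-sen-vanish} applied to a toroidal compactification. First I would replace the coefficient field $\bb{C}$ by $\overline{\bb{Q}}_p$: since $\mrm{Sh}_{K^pK_p}$ is a scheme of finite type over $E\subseteq L$, and both $\bb{C}$ and $\overline{\bb{Q}}_p$ are algebraically closed fields of characteristic $0$ containing $E$, the étale cohomology with torsion coefficients is invariant under such base change (by the smooth base change theorem, or more directly by \cite[\href{https://stacks.math.columbia.edu/tag/03RX}{03RX}]{stacks-project} and a limit argument, noting that $\mrm{Sh}_{K^p}$ is a cofiltered limit of schemes of finite type with affine transition maps, so $\mrm{Sh}_{K^p,\bb{C},\et}=\lim \mrm{Sh}_{K^pK_p,\bb{C},\et}$ and likewise over $\overline{\bb{Q}}_p$). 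Thus it suffices to prove $H^q_\et(\mrm{Sh}^{\triv}_{K^p,\overline{\bb{Q}}_p},\bb{Z}/p^n\bb{Z})=0$ for $q>d$, where I write $\mrm{Sh}^{\triv}_{K^p,\overline{\bb{Q}}_p}=\spec(\overline{\bb{Q}}_p)\times_{\spec(L)}\mrm{Sh}_{K^p,L}$ and $\overline{\bb{Q}}_p=\overline{L}$.

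Next I would set up the hypotheses of \ref{cor:camargo-sen-vanish} with $K:=L$, $Y:=\mrm{Sh}^{\mrm{tor}}_{K^pK_p,L}$ (a projective smooth $L$-scheme of dimension $d$), $D$ the normal crossings boundary divisor, and $(Y^{\triv}_\lambda)_{\lambda}$ the directed system $(\mrm{Sh}_{K^pK_p',L})_{K_p'\subseteq K_p}$ of finite étale covers of $Y^{\triv}=\mrm{Sh}_{K^pK_p,L}$, so that $\widetilde{Y}^{\triv}=\mrm{Sh}_{K^p,L}$ and $\widetilde{Y}=\mrm{Sh}^{\mrm{tor}}_{K^p,L}$. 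By \ref{para:notation-shimura}, $\mrm{Sh}_{K^p,L}$ is a $\ca{G}_{K_p}$-torsor over $\mrm{Sh}_{K^pK_p,L,\profet}$ under the compact $p$-adic analytic group $\ca{G}=\ca{G}_{K_p}$, and its analytification is the log adic space $\ca{S}h^{\mrm{tor}}_{K^p,L}$, a $\ca{G}_{K_p}$-torsor over $\ca{S}h^{\mrm{tor}}_{K^pK_p,L,\proket}$. Then \ref{thm:camargo} says precisely that the universal geometric Sen action $\varphi_{\mrm{Cam}}^{\geo}|_{\ca{S}h^{\mrm{tor}}_{K^p,L}}$ is injective with finite locally free cokernel. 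Feeding this into \ref{cor:camargo-sen-vanish} yields $H^q(\mrm{Sh}^{\triv}_{K^p,\overline{L},\et},\bb{Z}/p^n\bb{Z})=0$ for $q>d=\dim(Y)$ and all $n\in\bb{N}$, which is exactly what we want after the base-change reduction of the first paragraph. Finally, the identification $\colim_{K_p}H^q_\et(\mrm{Sh}_{K^pK_p,\bb{C}},\bb{Z}/p^n\bb{Z})=H^q_\et(\mrm{Sh}_{K^p,\bb{C}},\bb{Z}/p^n\bb{Z})$ is the standard commutation of étale cohomology with cofiltered limits of schemes along affine transition maps (\cite[\Luoma{7}.5.6, \Luoma{6}.8.7.7]{sga4-2}), noting the transition maps $\mrm{Sh}_{K^pK_p'}\to \mrm{Sh}_{K^pK_p}$ are finite hence affine.

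One technical point to verify carefully is that \ref{cor:camargo-sen-vanish} is genuinely applicable: its hypotheses require $Y$ proper smooth over $K=L$ with $D$ a normal crossings divisor, $(Y^{\triv}_\lambda)$ a directed inverse system of coherent schemes \emph{finite} étale over $Y^{\triv}$, and $\widetilde{Y}^{\triv}$ a $\ca{G}$-torsor over $Y^{\triv}_\profet$; all of these hold by the construction in \ref{para:notation-shimura}, where the transition maps in $(\mrm{Sh}_{K^pK_p'})_{K_p'\subseteq K_p}$ are finite étale (being restrictions of the finite étale transition maps of the Shimura tower to neat level) and the torsor structure is recorded in \cite[4.1]{camargo2022completed}. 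I would also note that since $L$ contains $E$, $\mrm{Sh}^{\mrm{tor}}_{K^pK_p,L}$ really is a \emph{projective} smooth $L$-scheme (being the base change of the projective $E$-scheme $\mrm{Sh}^{\mrm{tor}}_{K^pK_p}$), so in particular proper. The main obstacle in the argument is not any step of this reduction — it is entirely carried by \ref{thm:camargo} (the Pan–Camargo computation of the geometric Sen operator) and by \ref{cor:camargo-sen-vanish}, itself resting on \ref{prop:camargo-sen-comp}, \ref{cor:vanish}, and ultimately the perfectoidness criterion \ref{thm:sen-val-perfd} and Faltings' comparison theorem \ref{cor:fal-comp-perfd}. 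So the proof of \ref{thm:vanish-shimura} proper is short: it is the act of matching notation and invoking the machinery, with the only genuine care needed being the coefficient base-change from $\bb{C}$ to $\overline{\bb{Q}}_p$ and the commutation of cohomology with the limit over $K_p$.
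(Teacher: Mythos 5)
Your proposal is correct and follows essentially the same route as the paper's proof: reduce from $\bb{C}$ to $\overline{\bb{Q}}_p$ by invariance of étale cohomology with torsion coefficients under change of algebraically closed base field of characteristic $0$, then apply \ref{cor:camargo-sen-vanish} together with \ref{thm:camargo} to the toroidal tower set up in \ref{para:notation-shimura}. The paper's proof is just a terser version of the same argument, so no further comparison is needed.
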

\begin{proof}
	Let $\overline{\bb{Q}}_p$ be an algebraic closure of $L$. By \ref{cor:camargo-sen-vanish} and \ref{thm:camargo}, we have  
	\begin{align}
		H^q_\et(\mrm{Sh}_{K^p,\overline{\bb{Q}}_p},\bb{Z}/p^n\bb{Z})=\colim_{K_p\subseteq G(\bb{Q}_p)}H^q_\et(\mrm{Sh}_{K^pK_p,\overline{\bb{Q}}_p},\bb{Z}/p^n\bb{Z})=0.
	\end{align}
	Then, the conclusion follows immediately as both $\overline{\bb{Q}}_p$ and $\bb{C}$ are algebraically closed fields of characteristic $0$ (\cite[\href{https://stacks.math.columbia.edu/tag/0F0B}{0F0B}]{stacks-project}).
\end{proof}

\begin{myrem}\label{rem:vanish-shimura-1}
	We have actually proven the pointwise perfectoidness and $p$-infinite ramification at boundary points of the compactified Shimura variety $\mrm{Sh}^{\mrm{tor}}_{K^p,\overline{\bb{Q}}_p}$. More precisely, it follows directly from \ref{cor:vanish} (whose assumptions are satisfied by \ref{prop:camargo-sen-comp} and \ref{thm:camargo}) that every point $\widetilde{y}\in \mrm{Sh}^{\mrm{tor}}_{K^p,\overline{\bb{Q}}_p}$ satisfies the following properties:
	\begin{enumerate}
		\renewcommand{\labelenumi}{{\rm(\theenumi)}}
		\item Its residue field $\kappa(\widetilde{y})$ is a pre-perfectoid field with respect to any valuation ring $W$ of height $1$ extension of $\overline{\bb{Z}}_p$ with fraction field $W[1/p]=\kappa(\widetilde{y})$.\label{item:rem:vanish-shimura-1-1}
		\item For any regular system of parameters $\{t_1,\dots,t_s\}$ of the strict Henselization $\ca{O}_{\mrm{Sh}^{\mrm{tor}}_{K^pK_p,\overline{\bb{Q}}_p},y}^{\mrm{sh}}$ of $\ca{O}_{\mrm{Sh}^{\mrm{tor}}_{K^pK_p,\overline{\bb{Q}}_p},y}$ (where $y\in \mrm{Sh}^{\mrm{tor}}_{K^pK_p,\overline{\bb{Q}}_p}$ is the image of $\widetilde{y}\in\mrm{Sh}^{\mrm{tor}}_{K^p,\overline{\bb{Q}}_p}$) such that $D_{\overline{\bb{Q}}_p}$ is defined by $t_1\cdots t_r=0$ over $\ca{O}_{\mrm{Sh}^{\mrm{tor}}_{K^pK_p,\overline{\bb{Q}}_p},y}^{\mrm{sh}}$ for some integer $0\leq r\leq s$, the elements $t_1,\dots,t_r$ admit compatible systems of $p$-power roots in the strict Henselization $\ca{O}_{\mrm{Sh}^{\mrm{tor}}_{K^p,\overline{\bb{Q}}_p},\widetilde{y}}^{\mrm{sh}}$ of $\ca{O}_{\mrm{Sh}^{\mrm{tor}}_{K^p,\overline{\bb{Q}}_p},\widetilde{y}}$.\label{item:rem:vanish-shimura-1-2} 
	\end{enumerate}
\end{myrem}

\begin{myrem}\label{rem:vanish-shimura-2}
	In fact, \ref{thm:vanish-shimura} can be also proved without using geometric Sen operators but only Faltings extension. Indeed, the key step in Rodr\'iguez Camargo's proof of \ref{thm:camargo} is to prove that the Faltings extension of $\ca{S}h^{\mrm{tor}}_{K^pK_p,L}$ (as an extension of finite locally free $\widehat{\ca{O}}_{\ca{S}h^{\mrm{tor}}_{K^pK_p,L,\proket}}$-modules) is trivialized by an \'etale covering of the $\ca{G}_{K_p}$-torsor $\ca{S}h^{\mrm{tor}}_{K^p,L}$ (\cite[Theorem 5.1.4]{camargo2022completed}, see \cite[4.2.2]{pan2021locally} for the curve case). This verifies the condition \ref{lem:perfd-val-log-1}.(\ref{item:lem:perfd-val-log-1-1}) for any geometric valuative point. Hence, \ref{thm:vanish-shimura} follows from \ref{thm:perfd-val-log-1}, \ref{thm:perfd-val-log-2} and \ref{thm:vanish}. Rodr\'iguez Camargo's proof for this fact essentially relies on the $p$-adic Riemann-Hilbert correspondence for Shimura varieties established in \cite{dllz2023rh} and its comparison with the complex counterpart. We wish to find a more elementary argument in the future.
	
	We also remark that assuming the poly-stable modification conjecture \cite[12.5]{he2024purity}, one can avoid the analysis \ref{thm:perfd-val-log-2} of the ramification at the boundary points in order to deduce \ref{thm:vanish-shimura}. Indeed, the perfectoidness of all the Riemann-Zariski stalks lying over the open subspace $\mrm{Sh}_{K^p,\overline{\bb{Q}}_p}\subseteq \mrm{Sh}^{\mrm{tor}}_{K^p,\overline{\bb{Q}}_p}$ is sufficient for the vanishing of \'etale cohomology in higher degrees by purity of perfectoidness \cite[12.15]{he2024purity}.
\end{myrem}

\begin{myrem}\label{rem:camargo-vanishing}
	Roughly speaking, geometric Sen theory is known for computing the \'etale cohomology with rational coefficients by the Lie algebra cohomology associated to the universal geometric Sen action. The latter vanishes due to the ``splitting injectivity" condition and a variant of Poincar\'e's lemma (see \cite[Proposition 6.2.8]{camargo2022completed}). This is roughly how Rodr\'iguez Camargo \cite[Corollary 6.2.12]{camargo2022completed} proves the vanishing for \'etale cohomology of Shimura varieties in higher degrees with rational coefficients, i.e., $(\lim_{n\to\infty}	H^q_\et(\mrm{Sh}_{K^p,\bb{C}},\bb{Z}/p^n\bb{Z}))[1/p]=0$ for any integer $q>d$. These arguments only work for rational coefficients. Nevertheless, our approach actually uncovers the integral $p$-adic geometric structure hidden within the geometric Sen theory.
\end{myrem}

\bibliographystyle{myalpha}
\bibliography{bibli}
\end{document}